\newtheorem*{maintheorem}{Main Theorem} 
\newtheorem{theorem}{Theorem}[section]
\newtheorem{lemma}[theorem]{Lemma}
\newtheorem{fact}[theorem]{Fact}
\newtheorem{claim}[theorem]{Claim}
\newtheorem{corollary}[theorem]{Corollary}
\newtheorem{proposition}[theorem]{Proposition}
\newtheorem*{theorem*}{Theorem}
\newtheorem*{assumptions*}{Assumptions}
\newtheorem*{proposition*}{Proposition}
\theoremstyle{remark}
\newtheorem{remark}[theorem]{Remark}
\newtheorem*{remark*}{Remark}
\newtheorem{example}[theorem]{Example} 
\newtheorem{figureexample}[theorem]{Figure} 
\theoremstyle{definition}
\newtheorem{definition}[theorem]{Definition}
\begin{document}

\title{On the dimension theory of random walks and group actions by circle diffeomorphisms}

\author{Weikun He, Yuxiang Jiao and Disheng Xu}

\date{}

\maketitle

\begin{abstract}
We establish new results on the dimensional properties of
measures and invariant sets associated to random walks and group actions by circle diffeomorphisms.
This leads to several dynamical applications.
Among the applications, we show, strengthening of a recent result of Deroin-Kleptsyn-Navas~\cite{DKN18}, that the minimal set of a finitely generated group of real-analytic circle diffeomorphisms, if exceptional, must have Hausdorff dimension less than one. 
Moreover, if the minimal set contains a fixed point of multiplicity $k+1$ of an diffeomorphism of the group, then its Hausdorff dimension must be greater than $k/(k+1)$. These results generalize classical results about Fuchsian group actions on the circle to non-linear settings.


This work is built on three novel components, each of which holds its own interest: a structure theorem for smooth random walks on the circle, several dimensional properties of smooth random walks on the circle and a dynamical generalization of the critical exponent of Fuchsian groups.

\end{abstract}

\tableofcontents

\section{Introduction}
The dimension of fractal sets and associated measures defined through dynamical systems has been extensively researched, including investigating their relationships with other dynamically-defined objects and properties such as entropies, Lyapunov exponents, critical exponents, and uniformity of hyperbolicity. This topic has been explored in various areas of mathematics, such as the dimension of Julia sets in holomorphic dynamics \cite{AL1,AL2, Shi, Buff, Mc87, EL84, ADU}, etc.; the dimension of attractors and invariant measures of smooth dynamical systems and their relationship with entropies, Lyapunov exponents \cite{Led81,Led84,LY1,LY2, Y82,BaPeSc}, etc.; the study of self-affine sets/measures of iterated function systems (IFS) in fractal geometry \cite{Hoch14,FengHu,Var19,Hut,Fal88,Fal92,BHR,MS, Rap22}, etc.; the study of stationary measures for random walks \cite{HS17,LedLe,Rap,Led83}, etc.; and the dimension of limit sets of hyperbolic group actions \cite{Thu78,Sul,Bow79,BJ97,Pat76,Beardon}, etc.

In this paper, we focus on the dimension theory of smooth group actions and random walks on the circle. We establish several results concerning the dimensions of invariant sets and stationary measures and derive several applications. 

To illustrate the power of our theory, in the following we show how our results extend classical results on actions by Fuchsian groups on the circle to the non-linear setting. Suppose  $\Gamma\sbs\PSL(2,\RR)$ is a finitely generated Fuchsian group. Then $\Gamma$ acts isometrically on the Poincar\'e disk $\DD$ which extends to the projective action on $\SS^1 = \partial \DD$. 
Assuming that $\Gamma$ is non-elementary, then its limit set $\Lambda_\Gamma \sbs\partial\DD$ is either the  circle or homeomorphic to a Cantor set. The group $\Gamma$ is then said to be of the first kind in the former case and of the second kind in the latter case. 
To each Fuchsian group is associated a quantity $\delta(\Gamma)$ called its critical exponent, defined to be the abscissa of convergence of the classical Poincaré series. 
The relationship between the critical exponent and the dimension theory of the limit set has been systematically studied in works of Beardon, Patterson and Sullivan. 
The following summarizes classical results on dimension of limit sets of the second kind Fuchsian groups. 

\begin{theorem}[Beardon~\cite{Beardon, Bear71}, Patterson~\cite{Pat76}, Sullivan~\cite{Sul}]\label{thm: Fuchsian case intro}
Let $\Gamma$ be a non-elementary finitely generated Fuchsian group of the second kind, then we have
\begin{enumerate}
    \item $0<\delta(\Gamma)<1.$
    \item $\dimH \Lambda_\Gamma =\delta(\Gamma).$
    \item If $\Gamma$ has parabolic elements then $\delta(\Gamma)>1/2.$
\end{enumerate}
\end{theorem}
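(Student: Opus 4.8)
The plan is to route all three claims through the Poincar\'e series $\mathcal P_\Gamma(s)=\sum_{\gamma\in\Gamma}e^{-s\,d(o,\gamma o)}$, where $o\in\DD$ is a fixed base point and $d$ the hyperbolic metric, $\delta(\Gamma)$ being by definition its abscissa of convergence; throughout I use the dictionary $e^{-d(o,\gamma o)}\asymp 1-\abs{\gamma o}$ in the disk model and the visual-derivative estimate $\abs{\gamma'(\xi)}\asymp e^{\beta_\xi(\gamma^{-1}o,\,o)}$ on $\SS^1=\partial\DD$ ($\beta$ the Busemann cocycle). For (1): $\delta(\Gamma)>0$ is soft, since a non-elementary Fuchsian group contains a rank-two free Schottky subgroup $F$, which is convex cocompact, so $\#\lb{\gamma\in F:d(o,\gamma o)\le R}\gtrsim e^{cR}$ for some $c>0$ and $\mathcal P_\Gamma(s)\ge\mathcal P_F(s)=\infty$ for $s<c$. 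For $\delta(\Gamma)<1$ one uses that $\Gamma$ is of the second kind: pick a nonempty open arc $J\sbs\SS^1\sm\Lambda_\Gamma$; after passing to a transversal of $\Stab_\Gamma(J)$ the arcs $\gamma J$ are pairwise disjoint, so $\sum_\gamma\len{\gamma J}\le 2\pi$, and because $J$ stays a definite distance from $\Lambda_\Gamma$ each $\gamma$ has uniformly bounded distortion on $J$, giving $\len{\gamma J}\asymp\abs{\gamma'(x_0)}\asymp e^{-d(o,\gamma o)}$ for a fixed $x_0\in J$; hence $\mathcal P_\Gamma(1)<\infty$, so $\delta(\Gamma)\le1$, and Beardon's refinement (using that $\bigcup_\gamma\gamma J$ omits a set of positive length) gives strictness.

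For (2), the bound $\dimH\Lambda_\Gamma\le\delta(\Gamma)$ is a covering estimate: for fixed large $R_0$ and each $n$, the shadows $\mathrm{Sh}(\gamma o)$ of $B(\gamma o,R_0)$ with $d(o,\gamma o)\ge n$ cover $\Lambda_\Gamma$, satisfy $\diam\mathrm{Sh}(\gamma o)\asymp e^{-d(o,\gamma o)}$, and $\sum_{d(o,\gamma o)\ge n}\bigl(\diam\mathrm{Sh}(\gamma o)\bigr)^{s}$ is the tail of $\mathcal P_\Gamma(s)$, hence tends to $0$ for $s>\delta(\Gamma)$, so $\mathcal H^{s}(\Lambda_\Gamma)=0$; if $\Gamma$ has parabolics one adds horoball-shadow neighbourhoods of the $\Gamma$-orbit of each parabolic point, with the same exponent. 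For $\dimH\Lambda_\Gamma\ge\delta(\Gamma)$ one runs the Patterson--Sullivan construction: let $\mu$ be a weak-$*$ limit as $s\downarrow\delta(\Gamma)$ of $\mathcal P_\Gamma(s)^{-1}\sum_\gamma e^{-s\,d(o,\gamma o)}\delta_{\gamma o}$ (with Patterson's slowly varying correction if $\mathcal P_\Gamma(\delta)<\infty$); it is supported on $\Lambda_\Gamma$ and $\delta$-conformal, $\tfrac{d\gamma_*\mu}{d\mu}(\xi)=\abs{(\gamma^{-1})'(\xi)}^{\delta}$. Sullivan's shadow lemma gives $\mu(\mathrm{Sh}(\gamma o))\asymp e^{-\delta d(o,\gamma o)}$ in the thick part; together with the fact that along $\mu$-almost every ray the geodesic's cusp excursions are only sublinearly deep (which holds once $\delta(\Gamma)>1/2$, i.e. once (3) is known, so that $\Gamma$ is of divergence type at $\delta$) this yields $\liminf_{r\to0}\tfrac{\log\mu(B(\xi,r))}{\log r}\ge\delta(\Gamma)$ for $\mu$-a.e. $\xi$, and the mass distribution principle gives $\dimH\Lambda_\Gamma\ge\delta(\Gamma)$.

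For (3), a parabolic $p\in\Gamma$ with fixed point $\xi_0$ has $d(o,p^{n}o)=2\log\abs{n}+O(1)$, so $\mathcal P_{\langle p\rangle}(s)\asymp\sum_{n\ne0}\abs{n}^{-2s}$: finite for $s>1/2$ but $\to+\infty$ as $s\downarrow1/2$, i.e. $\langle p\rangle$ is of divergence type at $\delta(\langle p\rangle)=1/2$. Since $\Gamma$ is non-elementary, ping-pong (after replacing a suitable element by a high power) gives $h\in\Gamma$ with $\langle p,h\rangle=\langle p\rangle*\langle h\rangle$ free of rank two; the reduced words $p^{n_1}hp^{n_2}h\cdots hp^{n_k}$ are distinct and, by the triangle inequality, have displacement $\le\sum_i d(o,p^{n_i}o)+(k-1)\bigl(d(o,ho)+O(1)\bigr)$, so $\mathcal P_\Gamma(s)\ge\mathcal P_{\langle p,h\rangle}(s)\gtrsim\sum_{k\ge1}\mathcal P_{\langle p\rangle}(s)^{k}\bigl(e^{-s\,d(o,ho)-O(s)}\bigr)^{k-1}$, a geometric series that diverges as soon as $\mathcal P_{\langle p\rangle}(s)\cdot e^{-s\,d(o,ho)-O(s)}\ge1$. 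As $s\downarrow1/2$ the first factor blows up while the second stays positive, so this holds for some $s>1/2$; hence $\mathcal P_\Gamma(s)=\infty$ there and $\delta(\Gamma)>1/2$.

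The hard part is the cusp analysis behind the lower bound in (2): the shadow lemma at radial points is standard, but controlling $\mu$ (and the covering) near parabolic fixed points — through the Stratmann--Velani global measure formula and the sublinearity of geodesic cusp excursions — requires a careful thick--thin (horoball) decomposition of the convex core and uniform distortion estimates for the cyclic parabolic subgroups; this is what forces the lower bound to come out to exactly $\delta(\Gamma)$. Parts (1) and (3) are, by contrast, essentially elementary once set up — exponential orbit growth of a free Schottky subgroup, and a $\log$-growth computation for one parabolic upgraded to a strict inequality by a free-product/ping-pong argument — and the weak-$*$ compactness and conformality in the Patterson construction are formal. It is no accident that the cusp case is precisely the feature the present paper must recreate in the non-linear setting, via the condition that the minimal set contain a fixed point of multiplicity $k+1$.
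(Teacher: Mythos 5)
Your proposal is essentially correct, but it is the classical geometric proof of Beardon--Patterson--Sullivan, which is not the route this paper takes. The paper reproves Theorem~\ref{thm: Fuchsian case intro} as a corollary of its Main Theorem: in Section~\ref{subsec: classical critical exponents} it shows (Proposition~\ref{pr: delta are same}, via the Cartan decomposition and explicit derivative formulas for diagonal matrices acting on the circle) that the classical critical exponent $\wt\delta(\Gamma)$ coincides with the dynamical critical exponent $\delta(\Gamma)$ of Definition~\ref{def: C1 critical exponent}, verifies property $(\star)$/$(\Lambda\star)$ for Fuchsian groups through the identity $g'(x)=e^{B_x(o,g^{-1}o)}$ and horocyclic limit points, and then reads off items (1)--(3) from the Main Theorem (a parabolic element having a fixed point of multiplicity $2$ in $\Lambda_\Gamma$, so $k=1$ gives $\delta(\Gamma)>1/2$). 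In other words, the paper deliberately avoids everything your proof leans on -- the Poincar\'e series on $\DD$, shadows, the Patterson--Sullivan measure and the horoball/thick--thin analysis of the convex core -- and replaces it with purely one-dimensional data (local contraction rates, distortion, random walks and dynamically constructed conformal measures), which is exactly why its argument extends to $\Diff_+^\omega(\SS^1)$. Your route buys the original, sharper structural statements (divergence type, global measure formula, explicit cusp geometry); the paper's route buys a proof that never uses the isometric action on the hyperbolic plane and hence generalizes to the non-linear setting.

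Two caveats on your write-up, since the hardest points are cited rather than argued. First, your treatment of $\delta(\Gamma)<1$ only yields $\mathcal P_\Gamma(1)<\infty$, hence $\delta(\Gamma)\leq 1$; the strict inequality is a genuine theorem requiring finite generation (it fails for infinitely generated groups of the second kind, as the paper itself remarks), and ``Beardon's refinement'' is doing real work there -- in the paper this strictness is exactly the content of Theorem~\ref{thm: critical exponent and exceptional}, obtained through pointwise critical exponents and atomless conformal measures. Second, the lower bound $\dimH\Lambda_\Gamma\geq\delta(\Gamma)$ in the presence of cusps is only sketched (Stratmann--Velani, sublinear excursions), and you correctly flag that it depends on (3); your (3) is proved independently by the free-product estimate, so there is no circularity, but as written the cusp case of (2) is an outline, not a proof. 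These gaps are tolerable for a classical cited statement, but they are precisely the two points the paper's dynamical machinery is built to handle (its item (1) via conformal measures, its item (3) via the multiplicity-$(k+1)$ estimate of Proposition~\ref{prop: delta k k+1}).
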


Now we turn to the non-linear setting.
Let $G \sbs\Diff^1_+(\SS^1)$ be a subgroup of orientation preserving circle diffeomorphisms.
The classification of the minimal set of $G$-action is well-known (cf. \cite[Theorem 2.1.1]{Na06}) for such group $G$ (or more generally groups formed by circle homeomorphisms):  
exactly one of the following holds.
\begin{enumerate}
	\item $G$ has a finite orbit.
	\item $G$ acts minimally on the circle.
	\item $G$ has a unique $G$-invariant minimal set on the circle which is homeomorphic to a Cantor set.
\end{enumerate}
In the last case, we say $G$ has an \emph{exceptional minimal set}.
In particular, for finitely generated Fuchsian groups, having an exceptional minimal set if and only if the Fuchsian group is of the second kind.

In this paper, we introduce a dynamical analogue of the critical exponents for groups of circle diffeomorphisms.
\begin{definition}\label{def: C1 critical exponent}
Let $G$ be a subgroup of $\Diff^1_+(\SS^1)$ without finite orbits. Let $\Lambda$ be the unique minimal set of $G.$ The \emph{dynamical critical exponent} of $G$ is defined as 
\begin{equation}\label{eq:ceDiffomega}
\delta(G) = \lim_{\ve \to 0^+} \limsup_{n\to +\infty} \frac{1}{n}\log\#\{\,g \in  G : \exists x \in \Lambda,\, g'|_{B(x,\ve)} \geq 2^{-n} \,\}.
\end{equation}
Here and throughout this paper, $\log$ is taken of base $2$.
\end{definition}
It can be easily checked that this quantity coincides with the critical exponent if $G$ is a Fuchsian group (see Section~\ref{subsec: classical critical exponents}).
The reason we refer to $\delta(\cd )$ as the \emph{dynamical} critical exponent is because unlike the classical critical exponent which is determined by geometric information on hyperbolic spaces, the definition of the dynamical critical exponent is based solely on the local contracting rate of the group elements acting on the circle.
This makes it easier to generalize to more general group actions on manifolds.

One of our main results is the following theorem, which not only extends Theorem \ref{thm: Fuchsian case intro} to the non-linear setting by considering the dynamical critical exponent, but also provides a purely dynamical comprehension of Theorem~\ref{thm: Fuchsian case intro}, see Section \ref{subsec: classical critical exponents}. 
\begin{maintheorem}\label{main: intro}
Let $G$ be a finitely generated subgroup of $\Diff_+^\omega(\SS^1)$ with an exceptional minimal set $\Lambda$, then 
\begin{enumerate}
\item $0<\dimH \Lambda<1$.
\item $\dimH\Lambda =\delta(G)$.
\item If $G$ contains a nontrivial element having a fixed point in $\Lambda$ and of multiplicity $(k+1)$ with $k \in \NN$, then $\dimH\Lambda>k/(k+1).$ Consequently, there is an upper bound for the multiplicities of fixed points of elements in $G.$
\end{enumerate}
\end{maintheorem}
Here a fixed point $x_0$ of a diffeomorphism $f \in \Diff_+^{k+1}(\SS^1)$ is said to be of \emph{multiplicity} $(k+1)$ if $x_0$ is a $(k+1)$-multiple zero of the function $\vp(x)=f(x)-x$.


\begin{remark}\label{rem: intro rem}
	\begin{enumerate}[(1)]
    \item Our investigation also applies to settings with less regularity (but requires some technical assumptions). See the next section for details.
		\item The conditions ``$x\in\Lambda$'' in \eqref{eq:ceDiffomega} and ``the fixed point is in $\Lambda$'' in the third item are both necessary, as will be shown in Section \ref{subsec: counterexamples}. 
		\item The finitely generated condition of $G$ cannot be relaxed. It is easy to construct an infinitely generated Fuchsian group for which $\dimH\Lambda=1$.
		\item In a Fuchsian group, a parabolic element has a fixed point of multiplicity $2$ and there is no fixed point of higher multiplicity. Higher multiplicity could appear in general diffeomorphisms, thanks to additional flexibility, which results in a stronger lower bound for the dimension of the minimal set.
  \item The real analyticity assumption is sharp for the first item of the theorem. Specifically, if the real analyticity assumption is replaced by $C^\infty$ assumption, $\dimH \Lambda$ can indeed reach one, see Proposition~\ref{prop: dim 1 exmp} proved in Appendix~\ref{sec:A}.
	\end{enumerate}
\end{remark}

Under the same assumption as in the above theorem, Deroin, Kleptsyn and Navas~\cite{DKN18} showed that $\Lambda$ has zero Lebesgue measure, which made an important progress towards a problem raised by Ghys and Sullivan asking whether the exceptional minimal set of a finitely-generated subgroup of $\Diff^2(\SS^1)$ has zero Lebesgue measure. See also \cite[Question 14]{NavICM} and \cite{Hector,Hur02} corresponding results in foliation theory.
Thus, our main theorem  strengthens their result from $\mathrm{Leb}(\Lambda)=0$ to $\dimH \Lambda<1$.

One of the applications of our main result is the classification of orbit closures. The orbit closures of group actions is an important object in the study of homogeneous dynamics, for instance, Ratner's proof of Raghunathan conjecture \cite{Rat91}. Recent developments in this area include \cite{BQ, BFLM, EMM, BRH}, among others. The following is what we can say in our setting.

\begin{corollary}\label{cor: orbit closure classifications}
Let $G\sbs \Diff^\omega_+(\SS^1)$ be a finitely generated subgroup. Then any $G$-orbit closure is   
\begin{enumerate}
\item an infinite countable set, in which case $G$ is isomorphic to either $\ZZ\times \ZZ/k\ZZ$ or a semi-direct product $\ZZ\rtimes\ZZ/(2k\ZZ) $ with the presentation $\pair{a,b~|~bab^{-1}=a^{-1},~b^{2k}=1},$ for some positive integer $k$;
\item or a submanifold, i.e. a finite set, a finite union of closed intervals or the whole circle;
\item or the union of a countable set with the unique $G$-invariant exceptional minimal set $\Lambda$ which has Hausdorff dimension $\delta(G)$.
\end{enumerate}

Consequently, if $G$ is a nonabelian free group freely generated by $f_1,\cdots,f_n\in\Diff_+^\omega(\SS^1)$ satisfying 
$\max_{1\leqslant i\leqslant n}\lb{\|f_i'\|_{C^0}, \|(f_i^{-1})'\|_{C^0}}\leqslant 2n-1,$
then every $G$-orbit closure is a submanifold, and the bound $2n-1$ is sharp.
\end{corollary}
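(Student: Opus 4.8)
The plan is to combine the trichotomy for minimal sets recalled above with the three parts of the Main Theorem, and then to unwind what each case forces on the group structure. Fix a $G$-orbit closure $\overline{G\cdot x}$. First I would treat the case in which $G$ has a finite orbit. Here the rotation-number quasimorphism $G \to \SS^1$ has image contained in a finite cyclic group, so after passing to the kernel $G_0$ of the induced homomorphism $G \to \ZZ/k\ZZ$, the subgroup $G_0$ fixes each of the finitely many points of the finite orbit. Since $G_0 \subset \Diff^\omega_+(\SS^1)$ and analytic diffeomorphisms that are not the identity have isolated fixed points, the stabilizer of a point in $\Diff^\omega_+$ restricted to an adjacent arc injects (via, e.g., the germ at the fixed point together with analyticity and Kopell's lemma-type rigidity, or directly H\"older's theorem on the interval) into a group acting freely on an interval, hence into an archimedean ordered abelian group, i.e. into $\RR$; combined with finite generation this forces $G_0$ to be cyclic. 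Tracking the extension $1 \to G_0 \to G \to \ZZ/k\ZZ \to 1$ and the action of $\ZZ/k\ZZ$ on $G_0 \cong \ZZ$ (which is by $\pm 1$) yields exactly the two presentations listed, and the orbit closure is then the countable set $G \cdot x$, which is either finite or a countable accumulation onto the finite orbit — giving case (1) (and its appearance as the countable part of case (3) must be excluded here because an exceptional minimal set cannot coexist with a finite orbit). If instead $G$ acts minimally, every orbit closure is the whole circle — part of case (2); and if the orbit closure of $x$ happens to be a proper closed submanifold, then its boundary is a finite $G$-invariant set, returning us to the finite-orbit case, so the remaining possibility under case (2) is a finite union of closed intervals whose endpoints form a finite orbit. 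Finally, if $G$ has an exceptional minimal set $\Lambda$, then every orbit closure either is finite/countable (reducing again to the arguments above) or contains $\Lambda$; since $\Lambda$ is the unique minimal set, $\overline{G\cdot x} \setminus \Lambda$ consists of wandering intervals and their $G$-orbit, a countable set, and $\dimH \Lambda = \delta(G)$ by Main Theorem (2). This gives case (3).

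For the consequence about free groups, suppose $G$ is freely generated by $f_1,\dots,f_n$ with $\max_i\{\|f_i'\|_{C^0},\|(f_i^{-1})'\|_{C^0}\} \leq 2n-1$. A nonabelian free group has no finite orbit (a finite orbit would give a homomorphism onto a finite or metabelian group with infinite kernel acting freely on an interval, hence abelian kernel, contradicting freeness for $n \geq 2$) and is not of the excluded type in case (1); so by the trichotomy it either acts minimally or has an exceptional minimal set $\Lambda$. I would rule out the latter by contradiction: if $\Lambda$ is exceptional, then by Main Theorem (2), $\dimH \Lambda = \delta(G)$, and I would show the derivative bound forces $\delta(G) \geq 1$, contradicting Main Theorem (1). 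Indeed, consider reduced words $g = f_{i_1}^{\pm 1}\cdots f_{i_m}^{\pm 1}$ of length $m$; by the chain rule and the hypothesis, $\|g'\|_{C^0} \leq (2n-1)^m$, and more importantly there are $2n(2n-1)^{m-1}$ such words, all distinct in $G$ by freeness. A covering/pigeonhole argument (in the spirit of the Main Theorem's proof relating $\delta(G)$ to exponential growth of elements with controlled contraction on a neighborhood of $\Lambda$) shows that among these, a definite exponential proportion must have $g'|_{B(x,\varepsilon)} \geq 2^{-Cm}$ for some $x \in \Lambda$ with $C$ close to $\log(2n-1)$; letting $n$ grow relative to the contraction makes $\#\{g : g'|_{B(x,\varepsilon)} \geq 2^{-m}\}$ grow at rate $\geq \log(2n-1) - \varepsilon' \geq 1$ once $2n-1 \geq 2$, i.e. $n \geq 2$, which is exactly our range. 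Hence $\delta(G) \geq 1$, so $\Lambda$ is not exceptional: $G$ acts minimally and every orbit closure is the circle, a submanifold.

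It remains to see the bound $2n-1$ is sharp, which I would do by exhibiting, for each $n$, a free group of rank $n$ in $\Diff^\omega_+(\SS^1)$ with $\max_i\{\|f_i'\|_{C^0},\|(f_i^{-1})'\|_{C^0}\}$ equal to (or arbitrarily close to) $2n-1$ and possessing an exceptional minimal set (equivalently, a proper orbit closure). The natural candidate is a Fuchsian Schottky group: take $2n$ pairwise disjoint arcs $I_1^{\pm},\dots,I_n^{\pm}$ on $\SS^1$ and hyperbolic M\"obius maps $f_i$ (restricted to $\SS^1 = \partial\DD$) with $f_i(\SS^1 \setminus I_i^-) = I_i^+$, chosen so that the complement of $\bigcup I_i^\pm$ has $n$ arcs summing appropriately; by a ping-pong argument the $f_i$ generate a free group of rank $n$ with limit set a Cantor set, hence an exceptional minimal set, and as the arcs are taken symmetric and the configuration approaches the extremal one the derivative norms approach $2n-1$. (The constraint $\sum |I_i^\pm| + (\text{gaps}) = 1$ together with the M\"obius distortion estimate $\|f_i'\|_{C^0} \cdot \|(f_i^{-1})'\|_{C^0} \geq$ a quantity governed by the arc lengths pins the extremal value at $2n-1$, matching the computation in Section~\ref{subsec: counterexamples}.) The main obstacle in the whole argument is the second paragraph: carefully extracting the inequality $\delta(G) \geq \log(2n-1)$ from the chain-rule bound requires the same type of pressure/covering estimate used in proving Main Theorem (2), and one must ensure the $\varepsilon \to 0^+$ limit in the definition of $\delta$ does not destroy the bound — this is where I expect to have to invoke, rather than reprove, the quantitative machinery (Furstenberg-type derivative estimates and the structure theorem) developed earlier in the paper.
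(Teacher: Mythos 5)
Your handling of the finite-orbit case contains a genuine gap. You claim that the kernel $G_0$ of the homomorphism to $\ZZ/k\ZZ$, which fixes every point of the finite orbit, injects via H\"older's theorem into an archimedean ordered group and is therefore cyclic. H\"older's theorem needs the action on the adjacent arc to be \emph{free}, and nothing in the hypotheses gives that: elements of $G_0$ may have many fixed points inside the complementary arcs, and analyticity only makes those fixed points isolated. Concretely, the group $\langle x\mapsto 2x,\ x\mapsto x+1\rangle$ acts analytically on $\RR\PP^1$ fixing the point at infinity and is nonabelian, and one can even build nonabelian free subgroups of $\Diff_+^\omega(\SS^1)$ fixing a point (ping-pong away from the common fixed point). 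So $G_0$ need not be cyclic, and in that subcase the orbit closures are not countable: the paper splits the finite-orbit case according to whether the kernel is infinite cyclic, handling the non-cyclic case by \cite[Proposition 3.5]{Na06}, which places those groups in case (2) (submanifolds). Your argument would wrongly force every finite-orbit group into case (1). The same invalid freeness argument underlies your claim that a nonabelian free group cannot have a finite orbit; that claim is not needed (in that situation the non-cyclic-kernel case again yields submanifolds), but as written it is unjustified. Also, the identification of the two presentations ($\ZZ\times\ZZ/k\ZZ$ versus $\pair{a,b\mid bab^{-1}=a^{-1},\,b^{2k}=1}$) is only asserted; the paper derives it from the relations $gfg^{-1}=f^{s}$, $g^n=f^m$, $s^n=1$, and $m=0$ in the twisted case. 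Your treatment of the minimal and exceptional cases does match the paper (orbit closure equals $Gx\cup\omega(x)$, with $\omega(x)$ equal to $\Lambda$ union at most two orbits, and $\dimH\Lambda=\delta(G)$ by the Main Theorem).

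For the consequence, your outline agrees with the paper's, but the key estimate is both overcomplicated and miscounted. Since $\|(f_i^{\mp1})'\|_{C^0}\leq 2n-1$ gives $\inf (f_i^{\pm1})'\geq (2n-1)^{-1}$, the chain rule alone shows that \emph{every} reduced word of length $m$ has derivative at least $(2n-1)^{-m}$ at \emph{every} point of $\SS^1$; by freeness there are at least $(2n-1)^m$ such words, so at scale $2^{-N}$ with $N=m\log(2n-1)$ one already has at least $2^N$ admissible elements, hence $\delta(G)\geq 1$. No pigeonhole over a ``definite exponential proportion'', no covering argument, and no appeal to the structure theorem is needed, and the rate this argument yields is $1$, not $\log(2n-1)$. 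Combined with $\dimH\Lambda=\delta(G)<1$ from the Main Theorem, this excludes an exceptional minimal set, exactly as in the paper. Finally, for sharpness you must exhibit, for each $\ve>0$, a ping-pong (Schottky-type) free system with norms at most $2n-1+\ve$ possessing an exceptional minimal set; an example with norms exactly $2n-1$ and an exceptional minimal set cannot exist, as it would contradict the corollary itself, so the ``equal to $2n-1$'' option in your construction should be dropped.
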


\begin{remark}
In Remark~\ref{rem: finite orbit example}, we will see that the group  $G\cong\pair{a,b~|~bab^{-1}=a^{-1},~b^{2k}=1}$ can be realized as a subgroup of $\Diff^\omega_+(\SS^1)$.
The group is solvable but not nilpotent and satisfies that $\{g^2: g\in G\}$ is an abelian subgroup.
Thus it falls into the first case of the classification of the solvable group actions on the circle in \cite[Theorem 1.9]{BW04}.
For $k=1$, it is $D_\infty$, the infinite dihedral group. 
    In general, every solvable subgroup of $\Diff_+^\omega(\SS^1)$ is metabelian \cite{Ghys93}, while every nilpotent subgroup of $\Diff_+^2(\SS^1)$ is abelian according to \cite{PT76}, see also \cite{FF03}.
\end{remark}


This work grew out of an investigation into the dimension theory of smooth random walks on the circle, which is intimately connected to that of group actions on the circle.
Main Theorem turns out to be the most interesting consequence of our study.
We will state other meaningful results, including a general version of the Main Theorem in the $C^2$-regularity, in the next section. 
Now we highlight several concepts developed in our theory, each of them leads to several applications and other results which are interesting on their own.



\paragraph{Structure theorem of smooth random walks on the circle.} The first key ingredient 
is a thorough study of \textit{structures and symmetries} of smooth random walks on the circle.

Thanks to a result due to Furstenberg~\cite{Fur63}, we know that,  under mild assumptions the action of a typical random product of matrices in $\SL(2,\RR)$ on the $\RR\PP^1$ displays the (non-stationary) \emph{North-South dynamics}. 
We generalize this to the smooth setting and establish a structure theorem for $C^2$ random walks on the circle (Theorems \ref{thm: structure of random walk 1}, \ref{thm: d,r top inv} and \ref{thm: structure of random walk 2}). Roughly speaking, the dynamics of a typical random iteration by circle diffeomorphisms looks like a finite copies of North-South dynamics, with certain \emph{rotation symmetry}. The structure theorem gives a clear description of the global dynamics for smooth random walks and provides a bridge to extend results for $\SL(2,\RR)$ action on $\RR\PP^1$ to the non-linear case.

In this paper, 
we also present two direct consequences of the structure theorem, which are of independent interests. The first one reveals a \emph{time reverse symmetry} 
for semigroup actions
on the circle. That is, every sub-semigroup of $\Diff_+^2(\SS^1)$ admits the same number of minimal sets to its inverse semigroup (Theorem \ref{thm: numb min sets}). Another application is a variant of Margulis's theorem (\cite{Mar00}, also known as Ghys's conjecture) in the $C^2$ case, i.e. the existence of a \emph{perfect ping-pong pair}. More precisely, under a mild assumption, within a sub-semigroup of $\Diff_+^2(\SS^1)$, we show that there exists a pair of  elements which freely generates a free group. Furthermore, this pair generates the simplest dynamic as possible, a finite copies of ``pingpong-dynamics'' on the circle, see~Theorem~\ref{thm: pingpong pair}.

\paragraph{Dimension theory of stationary measures on the circle (I): exact dimensionality and a dimension formula.}
Our estimate of the Hausdorff dimension of minimal sets is based on an estimation of the dimension of measures supported on the set. 
One of the key steps is to establish the exact dimensionality and the dimension formula for stationary measures.

In the context of $\SL(2,\RR)$ random walks, Ledrappier \cite{Led83} first showed a Ledrappier-Young type formula for the Furstenberg measures. The exact dimensionality of these measures is known since then, see Hochman-Solomyak~\cite{HS17}. Furthermore, under certain Diophantine condition, they established the dimension formula 
$\dim\nu=\min\lb{1,\frac{h_{\mr{RW}}(\mu)}{2\chi(\mu)} },$
where $\nu$ is the Furstenberg measure of $\mu,$ $h_{\mr{RW}}(\mu)$ is the random walk entropy and $\chi(\mu)$ is the Lyapunov exponent. This formula provides a tool to estimate the dimension of stationary measure.

In our setting, namely smooth random walks on the circle, we can also show the exact dimensionality of stationary measures, Theorem \ref{thm: exact dimensionality}. Moreover, under a discreteness condition, we establish a dimension formula using an entropy argument, see Theorem \ref{thm: dim formula C2}.

\paragraph{Dimension theory of stationary measures on the circle (II): approximation techniques and variational principles.}
Another key point to estimate the 
Hausdorff dimension of minimal sets is to approximate it by the dimension of stationary measures supported on minimal sets. 
This problem is often known as the variational principle for dimensions. For conformal IFSs (hence semigroup actions), Feng and Hu \cite{FengHu} systemically studied the variational principle for the associated attractor.
In this paper, we obtain the variational principle for the minimal set of smooth group actions on the circle (Theorem \ref{thm: real analytic variational principle}) by using a covering argument and results about group actions on the circle developed in \cite{DKN09,DKN18}. 
Moreover, the covering argument helps us obtain an approximation result of 
smooth random walks on the circle. Namely, under mild assumptions, we show that smooth random walks on the circle can be approximated by uniform hyperbolic pingpong dynamics with approximately the same entropy, Lyapunov exponents and dimension of stationary measures simultaneously (Theorem \ref{thm: ACW approximation}). This can be viewed as a variant of the approximation result of Avila-Crovisier-Wilkinson in \cite{ACW} and a non-linear extension of \cite{MS} on the circle.

\paragraph{The study of dynamical critical exponents.}
As previously mentioned, we introduce the concept of the dynamical critical exponent (DCE), which gives rise to several applications. 
The first goal is show that it is equal to the Hausdorff dimension of the exceptional minimal set, namely the item (2) of Main Theorem.
The proof combines the structural theorem of random walks above, the dimensional tools associated with the stationary measure of random walks, and the combinatorics of the free group. 
The main idea behind the proof is the  \textit{variational principle of the critical exponent} (Proposition \ref{prop: C1 dynamical critical exponent}). Some related results in the linear/homogeneous setting occurred in \cite{MS, MS23, JLPX}. In our setting, the nonlinear and non-uniform hyperbolic nature of our problem complicates the proof. The novelty of our argument is the use of a combinatorial argument to control simultaneously the group structure and the dynamics on the circle.

Next, to obtain the item (3) of Main Theorem, we use the equality between the DCE and the Hausdorff dimension combined with an estimate of the DCE.
The latter is partially inspired by the methods in \cite{Beardon} and \cite{Pat76}.

Finally, to establish that the dimension of the exceptional minimal set is less than $1$, i.e. the item (1) of Main Theorem, we further localize the DCE, leading to the definition of pointwise dynamical critical exponents (PDCE). We then connect the PDCE with the existence of conformal measures and with the dimension of the exceptional minimal set. 
Utilizing a delicate proof by contradiction, we show the dimension is less than 1.

\subsection*{Organization}
We will state in full generality the main results in Section \ref{se:2}. In Section \ref{se:3}, we recall some notations and results. We then establish the structure of random walks on $\Diff_+^2(\SS^1)$ in Section \ref{se:4}. Sections \ref{se:5}-\ref{se:7} are devoted to  study the dimension theory of stationary measures. We will prove the exact dimensionality, the dimension formula and an approximation result for stationary measures in these sections. We study the variational principle and the dynamical critical exponent in Sections \ref{se:8}-\ref{se:11}. In Section \ref{se:12}, we complete the proof of the main theorem and proofs of remaining results. Section \ref{se:12} also contains some (counter-)examples and  further discussions.

Here is a diagram illustrating the logical dependency between the main  sections.
\[\begin{tikzcd}
	& {\text{\S \ref{se:3}}} & {\text{\S \ref{se:8}}} & {\text{\S \ref{se:10}}} \\
	{\text{\S \ref{se:4}}} & {\text{\S \ref{se:5}}} & {\text{\S \ref{se:6}}} & {\text{\S \ref{se:9}}} & {\text{\S \ref{se:11}}} & \text{Main Theorem}\\
	&& {\text{\S \ref{se:7.1}, \S \ref{se:7.2}}} & {\text{\S \ref{se:7.3}}}
    \arrow[from=1-2, to=2-1]
	\arrow[from=2-1, to=2-2]
	\arrow[from=2-2, to=2-3]
	\arrow[from=2-3, to=3-3]
	\arrow[from=1-3, to=2-4]
	\arrow[from=2-3, to=2-4]
	\arrow[from=3-3, to=2-4]
	\arrow[from=1-4, to=2-5]
	\arrow[from=2-4, to=2-5]
	\arrow[from=3-3, to=3-4]
	\arrow[from=1-2, to=1-3]
	\arrow[from=1-2, to=2-3]
	\arrow[curve={height=-18pt}, from=1-2, to=1-4]
    \arrow[from=2-5, to=2-6]
    \arrow[from=1-4, to=2-6]
    \arrow[curve={height=18pt}, from=2-4, to=2-6]
\end{tikzcd}\]

\newpage
\subsection*{Notation}

We summarize here our notation and conventions.
\vspace{0.5cm}

\begin{small}
	
\noindent \hspace{-0.5cm}
\begin{tabular}{ll}
\hline 
& \tabularnewline
$\log$ & Logarithm with base $2.$ \tabularnewline
$d(\cd,\cd)$ & The metric on $\SS^1.$\tabularnewline
$B(x,\rho)$ & The open ball with center $x$ and radius $\rho.$ \tabularnewline
$A^{(\rho)}$ & The $\rho$-neighborhood of $A,$ i.e., $\bigcup_{x\in A}B(x,\rho)$.\tabularnewline
$|I|$ & Lebesgue measure of the interval $I.$ \tabularnewline
$tI$ & The interval with the same center as $I$ and $t$-times its length, $t>0.$  \tabularnewline
$\Homeo_+(\SS^1)$& The group of orientation preserving circle homeomorphisms. \tabularnewline
$\Diff_+^k(\SS^1)$& The group of $C^k$ orientation preserving circle diffeomorphisms. \tabularnewline
$C_+^k(I,I)$& The semigroup of $C^k$ orientation preserving maps on $I$ without critical points. \tabularnewline
$\Lambda$ & The minimal set of a subgroup of $\Homeo_+(\SS^1)$ that has no finite orbits. \tabularnewline 
$\mu$ & A finitely supported probability measure on $\Diff_+^2(\SS^1)$ or $C_+^2(I,I).$\tabularnewline
$\nu$ & A probability measure on $\SS^1$ or $I.$ Usually take $\nu$ to be a stationary measure.\tabularnewline
$\cS$ & A finite subset of $\Diff_+^2(\SS^1)$ or $C_+^2(I,I),$ usually denotes the support of $\mu.$ \tabularnewline
$\mu^{*n}$ & $n$-fold convolution of a measure $\mu$ with itself in a (semi)group.\tabularnewline 
$\cA^{*n},\cA^{*\leqslant n}$ & Set of products of $n$ (resp. at most $n$) elements of a subset $\cA$ in a (semi)group.\tabularnewline 
$T_\mu$ & The semigroup generated by $\supp\mu.$ \tabularnewline
$\Sigma,\Sigma^+,\Sigma^-$ & The underlying space $\Sigma=\cS^\ZZ$ and $\Sigma^+=\cS^{\ZZ_{\geq 0}},$ $\Sigma^-=\cS^{\ZZ_{<0}},$ see Section \ref{sec: preliminaries on random walks}.\tabularnewline
$\omega,\omega^+,\omega^-$ & Elements in $\Sigma,\Sigma^+,\Sigma^-,$ respectively.\tabularnewline 
$\sigma$ & The left shift map on $\Sigma$ or $\Sigma^+.$\tabularnewline
$\P,\P^+,\P^-$ & The probability measure on the underlying spaces with $\P=\mu^\ZZ$, see Section \ref{sec: preliminaries on random walks}. \tabularnewline
$F,F^+,f_\omega^n,f_{\omega^+}^n$ & The cocycle over $(\Sigma,\sigma)$ or $(\Sigma^+,\sigma),$ see Section \ref{sec: preliminaries on random walks}.\tabularnewline
$\pi^\pm$ & Projections of $\Sigma$ down to $\Sigma^\pm.$\tabularnewline
$P,Q$ & Projections of $\Sigma\times\SS^1$ down to $\Sigma$ and $\SS^1,$ respectively.\tabularnewline
$\vk,\wt\vk$ & The distortion coefficient and distortion norm on an interval, see Section \ref{sec: distortion}.\tabularnewline
$\mu^+,\mu^-$ & The probability measure $\mu$ and $\mu^{*(-1)},$ see Section \ref{sec: random walks 4.1}.\tabularnewline
$d,r$ & The constants of a random walk induced by $\mu,$ see Theorem \ref{thm: structure of random walk 1}, \ref{thm: structure of random walk 2}.\tabularnewline
$\Zmodd$& The set $\lb{0, \dotsc, d-1}$ which is equipped with addition modulo $d$.\tabularnewline 
$\nu_i^\pm,i\in[d].$ & The ergodic $\mu^\pm$-stationary measures, see Theorem \ref{thm: structure of random walk 1}. \tabularnewline
$m_i^\pm,i\in[d].$ & The ergodci $u/s$-states on $\Sigma\times\SS^1,$ see Theorem \ref{thm: structure of random walk 1}. \tabularnewline
$\lambda_i^\pm,i\in[d].$ & The Lyapunov exponents correspond to $\nu_i^\pm,$ see Theorem \ref{thm: structure of random walk 2}. \tabularnewline
$\SS_k$ & Family of $k$-element subsets of $\SS_1$ equipped with a metric, see Section \ref{sec: random walks 4.1}. \tabularnewline
$\Pi(\omega,i),\Xi(\omega,i)$ & The maps from $\Sigma$ to $\SS_r,$ see Theorem \ref{thm: structure of random walk 1}. \tabularnewline
$\Pi(\omega),\Xi(\omega)$ & The maps from $\Sigma$ to $\SS_{dr},$ see Theorem \ref{thm: structure of random walk 1}. \tabularnewline
$W^s(\omega,i),W^u(\omega,i)$& The $s$-manifolds and $u$-manifolds, see \eqref{eqn: su-manifolds} and Theorem \ref{thm: structure of random walk 2}. \tabularnewline 
$\Sigma_\ve$& A set of uniform good words with parameter $\ve>0,$ see Proposition \ref{prop: good words}.\tabularnewline
$\ul x$ & An element in $\SS_k,$ which is a $k$-element subset of $\SS^1.$ \tabularnewline
$u_{\ul x}$ & Uniform measure on $\ul x\sbs\SS_1,$ see Section \ref{sec: random walks 4.3}.\tabularnewline
$\ul d(\cd,\cd)$ & The metric on $\SS_k$ induced by $d(\cd,\cd),$ see Section \ref{sec: random walks 4.3}.\tabularnewline 
$\ul\nu$ & A probability measure on $\SS_k.$ \tabularnewline
$\ul\nu_i^\pm,i\in[d]$ & The probability measures on $\SS_r$ corresponds to $\nu_i^\pm,$ see Section \ref{sec: effective convergence}. \tabularnewline
$H(\mu)$ & The discrete entropy of a finitely supported probability measure $\mu.$ \tabularnewline
$h_{\mr{RW}}(\mu),h_{\mr F}(\mu,\nu)$ & The random walk entropy and Furstenberg entropy, see Definitions \ref{def: random walk entropy} and \ref{def: Furstenberg entropy}.\tabularnewline
$H(\nu,\cA),H(\nu,\cA|\cB)$ & The Shannon entropy and conditional Shannon entropy.\tabularnewline
$\cD_n$ & The level-$n$ dyadic partition of $\SS^1.$ \tabularnewline
$\dim\nu$ & The exact dimension of a probability measure (if exists), see \eqref{eqn: exact dim}.\tabularnewline
$H^\alpha(\cd)$ & $\alpha$-Hausdorff outer measure.\tabularnewline
$\dim_{\mr H}E,\dim_{\mr H}\nu$ & Hausdorff dimension of a set $E$ or a measure $\nu.$ \tabularnewline
$\delta(G),\delta_2(G)$ & The $C^1,C^2$ dynamical critical exponents of $G,$ see Definition \ref{def: C1 critical exponent} and Section \ref{sec: 2.2}. \tabularnewline
$\delta(G,\Delta)$ & The dynamical critical exponent of $G$ on $\Delta$, see Section \ref{sec: 2.2}.\tabularnewline
$\phi\ll \psi,\phi\gg \psi$ & $\phi\leqslant C\psi$ (resp. $\psi\leqslant C\phi$) where $C>0$ is an absolute constant.\tabularnewline
$\phi\ll_{\square} \psi,\phi\gg_\square \psi$ & $\phi\leqslant C\psi$ (resp. $\psi\leqslant C\phi$) where the constant $C>0$ only depends on $\square$. \tabularnewline
$\phi\asymp_\square\psi$ & $\phi\ll_\square\psi$ and $\psi\ll_\square\phi.$\tabularnewline
&\tabularnewline
\hline 
\end{tabular}
\end{small}

\newpage

\section{Statements of the main results}
\label{se:2}
\subsection{Dimension properties of stationary measures}\label{sec: 2.1}
\paragraph{Exact dimensionality of stationary measures.}
In this paper, a probability measure on an underlying topological space always refers to a Borel probability measure. Let $\nu$ be a probability measure on a metric space. Recall that $\nu$ is said to be \emph{exact dimensional} if there exists a constant $\alpha$ such that  
\begin{equation}\label{eqn: exact dim}
	\lim_{\rho\to 0^+}\frac{\log\nu(B(x,\rho))}{\log\rho}=\alpha,\quad\ae[\nu\mr{-}] x.
\end{equation}
In this case, we call $\alpha$ the \emph{exact dimension} of $\nu$ and denote it by $\dim\nu.$
In many situations, dynamically-defined measures (e.g. self-similar measures of IFSs and stationary measures of random walks) are shown/believed to be exact-dimensional. Works on this topic includes \cite{Led83,LY2,BaPeSc,FengHu,HS17,Rap,LedLe,LedLe2}. 

Let $\mu$ be a finitely supported probability measure on $\Diff_+^1(\SS^1)$. A Borel probability measure $\nu$ on $\SS^1$ is said to be \emph{$\mu$-stationary} if
\[\nu=\mu*\nu=\int f_*\nu\dd\mu(f),\]
where $f_*\nu$ is the pushforward of $\nu$ by the diffeomorphism $f$. We call a $\mu$-stationary measure \emph{ergodic} if it cannot be written as a nontrivial convex combination of two $\mu$-stationary measures. The \emph{Lyapunov exponent} of an ergodic $\mu$-stationary measure $\nu$ is given by
\begin{equation}\label{eqn: Lyapunov exponent}
	\lambda(\mu,\nu)=\iint \log f'(x) \dd \mu(f)\dd\nu(x).
\end{equation} 

\begin{definition}\label{def: Furstenberg entropy}
Let $\nu$ be a $\mu$-stationary measure, the \emph{Furstenberg entropy} of $(\mu,\nu)$ is defined by
\[h_{\mr F}(\mu,\nu)\defeq \iint \log \frac{\dr f_*\nu}{\dr\nu}(x)\,\dr f_*\nu(x)\dr \mu(f).\]
 \end{definition}
In this paper, we show the exact dimensionality of stationary measures for general $C^2$ random walks on circle, generalizing the results in \cite{HS17,Led83} to the smooth setting.
\begin{theorem}\label{thm: exact dimensionality}
	Let $\mu$ be a finitely supported probability measure on $\Diff_+^2(\SS^1)$ such that $\supp\mu$ does not preserve any probability measure on $\SS^1.$ Let $\nu$ be an ergodic $\mu$-stationary measure on $\SS^1,$ then $\nu$ is exact dimensional and
	\[\dim\nu=\frac{h_{\mr F}(\mu,\nu)}{|\lambda(\mu,\nu)|}. \] 
\end{theorem}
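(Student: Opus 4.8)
The plan is to transfer the Ledrappier–Young machinery for $\mathrm{SL}(2,\mathbb{R})$ random walks (Ledrappier~\cite{Led83}, Hochman–Solomyak~\cite{HS17}) to the smooth setting, using the structure theorem for $C^2$ random walks as a substitute for Furstenberg's North–South dynamics. Work on the one-sided shift space $(\Sigma^+,\sigma,\mathbb{P}^+)$ with the cocycle $f^n_{\omega^+}$. Since $\supp\mu$ preserves no probability measure, $\nu$ is non-atomic and the random products contract: for $\mathbb{P}^+$-a.e.\ $\omega^+$ the measures $(f^n_{\omega^+})_*\nu$ converge weakly to a random Dirac mass $\delta_{z(\omega^+)}$, and the law of $z(\cdot)$ is $\nu$ (this is the standard martingale/boundary argument, valid for $C^1$ actions without invariant measure; one may also read it off the structure theorem, which describes the $u$-states $m^\pm_i$ explicitly). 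This realizes $\nu$ as the distribution of a boundary map and provides the backward-in-time picture needed for the dimension formula.

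The core is a Furstenberg-type entropy identity for the conditional measures. Let $\mathcal{D}_n$ be the level-$n$ dyadic partition of $\mathbb{S}^1$. First I would establish the \emph{entropy side}: following Ledrappier, the Furstenberg entropy $h_{\mathrm F}(\mu,\nu)$ equals the mean decrease of Shannon entropy of $\nu$ under one step of the random walk, made quantitative via the Abramov–Rokhlin / relative-entropy formalism, so that for the skew product one gets
\[
h_{\mathrm F}(\mu,\nu)=\lim_{n\to\infty}\frac{1}{n}\Big(H(\nu,\mathcal{D}_n)-H\big((f_1)_*\cdots\text{-averaged }\nu,\mathcal{D}_n\big)\Big)
\]
in the appropriate conditional sense. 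Next the \emph{geometric side}: by the $C^2$ hypothesis the distortion coefficients $\varkappa$ on the shrinking intervals $f^n_{\omega^+}(\mathbb{S}^1)$ are uniformly bounded (bounded-distortion is exactly where $C^2$, rather than $C^1$, is used), so $\log|(f^n_{\omega^+})'|$ is essentially constant on these intervals up to $O(1)$, and the contraction rate is governed by the Lyapunov exponent: $\tfrac1n\log\|(f^n_{\omega^+})'\|\to\lambda(\mu,\nu)<0$ a.e.\ by Oseledets/Furstenberg–Kesten together with the fact that a non-atomic stationary measure forces $\lambda(\mu,\nu)\neq0$ (positivity of the Lyapunov exponent for non-elementary actions; here we only need $\lambda\neq0$). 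Combining: the preimage under $f^n_{\omega^+}$ of a $\rho$-ball is comparable to a ball of radius $\rho\cdot 2^{n|\lambda|}$ up to bounded distortion, which converts the entropy identity into the pointwise statement $\log\nu(B(x,\rho))/\log\rho\to h_{\mathrm F}(\mu,\nu)/|\lambda(\mu,\nu)|$.

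To get genuine \emph{exact} dimensionality (a.e.\ pointwise limit, not just almost-sure lower/upper bounds or an integrated version), I would run the standard doubling argument: show that $\int \log\nu(B(x,\rho))\,d\nu(x)$ is, up to $o(\log\rho)$, additive in $\log\rho$ along the geometric scales $\rho_n=2^{-n|\lambda|}$ by exploiting the Markov/stationarity structure and bounded distortion, deduce the limit exists $\nu$-a.e.\ and equals the constant $h_{\mathrm F}/|\lambda|$ via an ergodic-theorem argument on $(\Sigma^+,\sigma)$, and finally interpolate between dyadic-type scales using non-atomicity plus a Fubini/Borel–Cantelli estimate to pass from scales $\rho_n$ to all $\rho\to0$. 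One technical point to watch: the structure theorem may produce several ergodic stationary measures $\nu^\pm_i$ and a rotation symmetry of order $d$; since the statement fixes one ergodic $\nu$, this is harmless, but the boundary map $z(\cdot)$ must be built on the component supporting $\nu$, so I would invoke ergodicity of $\nu$ (equivalently ergodicity of the corresponding $u$-state $m^+_i$) at the outset to justify that the relevant limits are a.e.\ constant.

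The main obstacle I anticipate is the entropy side — proving that $h_{\mathrm F}(\mu,\nu)$ is exactly the asymptotic per-step Shannon-entropy drop of $\nu$ along dyadic partitions, with all error terms controlled. In the linear case Ledrappier exploits homogeneity of $\mathbb{R}\mathbb{P}^1$ and explicit formulas for the cocycle; here one must instead combine bounded distortion with a careful conditional-entropy bookkeeping on the skew product $\Sigma^+\times\mathbb{S}^1$ (tracking the drop of $H(\cdot,\mathcal D_n)$ under the dynamics and identifying the limit with the Radon–Nikodym integral defining $h_{\mathrm F}$). Everything else — the contraction estimate, bounded distortion, the boundary map, the doubling/interpolation — is robust and follows the established template once the $C^2$ distortion control from Section~\ref{sec: distortion} and the structure theorem are in hand.
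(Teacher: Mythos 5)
There are genuine gaps, and they sit exactly at the points where the smooth setting differs from the $\SL(2,\RR)$ case. First, your boundary-map step is wrong as stated: for a general $C^2$ random walk the pushforwards do not converge to a random Dirac mass but (by Theorem~\ref{thm: structure of random walk 1}) to the uniform measure on the $r$-point set $\Pi(\omega,i)$, and $r>1$ is possible; assuming proximality quietly removes one of the real difficulties. Second, your geometric side relies on ``uniformly bounded distortion on the shrinking intervals $f^n_{\omega^+}(\SS^1)$,'' but $f^n_{\omega^+}$ is a diffeomorphism of the whole circle: nothing shrinks globally, there is expansion near $\Xi(\omega)$, and the $C^2$ distortion control (Proposition~\ref{prop: distortion estimate}) is only available on a ball whose radius is comparable to $\bigl(\sum_k (f^k_\omega)'(x)\bigr)^{-1}$, i.e.\ along stable manifolds and only at times when the non-uniform contraction has accumulated. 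Handling this non-uniformity is the crux, and your proposal does not address it; the paper does so with a stopping-time (hyperbolic-times) argument, choosing $N(\omega,\rho)$ so that $\sigma^{-N}\omega$ lies in a Pesin-type set, which simultaneously gives $\frac{\log\rho}{N}\to\lambda$ and the crucial lower bound $\nu\bigl(f_\omega^{-N}B(x,\rho)\bigr)\geq \frac{1}{2r}$ (Corollary~\ref{cor: contraction of intervals 2}).

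Third, the ``entropy side'' you flag as the main obstacle is indeed a gap, and as written the identity cannot be right: since $\nu$ is stationary, $\mu*\nu=\nu$, so the naive per-step drop of $H(\cdot,\cD_n)$ is zero, and making a conditional version precise is essentially the theorem itself. The paper never goes through Shannon-entropy bookkeeping. Instead it telescopes $-\log\nu(B(x,\rho))$ along the backward orbit using the ratios $\vp(f,I)=f_*\nu(I)/\nu(I)$, replaces each $\log\vp$ by the Radon--Nikodym derivative $\log\frac{\dr f_*\nu}{\dr\nu}$ (Besicovitch differentiation), controls the accumulated errors by the weak-type maximal inequalities imported from Hochman--Solomyak (Lemma~\ref{lem: m phi too big}), which make $\sup_\rho\cO$ integrable, and then applies Maker's generalized ergodic theorem (Theorem~\ref{thm: Maker's Theorem}) to the scale-dependent family $\cO(\cdot,|I_n|)$; Birkhoff's theorem identifies the main term with $h_{\mr F}(\mu,\nu)$, and the stopping time above makes the boundary term $-\log\nu(I_N)$ bounded. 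If you want to salvage your outline, you would need to either reproduce this Radon--Nikodym/Maker/hyperbolic-time scheme or supply a genuinely new proof of the conditional entropy identity together with a local (Pesin-type) distortion argument replacing the false global one; as it stands, the proposal leaves both the analytic heart and the non-uniform hyperbolicity unaddressed.
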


Here we make the assumption that the support of $\mu$ does not preserve any invariant measure. While this assumption may appear restrictive, it is actually quite mild. If it were not satisfied, then either the support of $\mu$ would preserve a finite orbit, or it would lie within an abelian group. In the latter case, the associated stationary measure could be non-exact dimensional if its rotation number is Liouville, as shown in \cite{Sad}.

\paragraph{Dimension formulas for smooth actions on the circle and the interval.}
In general, computing the Furstenberg entropy mentioned in Theorem \ref{thm: exact dimensionality} seems to be difficult. In practice, it may be necessary to use alternative quantities to determine the exact dimension of the stationary measure. Under certain discreteness or separation assumptions, the random walk entropy can be a viable candidate to substitute for $h_{\mr F}$ in Theorem \ref{thm: exact dimensionality}.
\begin{definition}\label{def: random walk entropy}
For a finitely supported measure $\mu$ in a group $G$, the \emph{Shannon entropy} $H(\mu)$ of $\mu$ is given by $-\sum_{f\in\supp\mu} \mu(f)\log \mu(f)$. Let $\mu^{*n}$ denote the $n$-fold convolution of $\mu$ in $G$. The \emph{random walk entropy} of $\mu$ is then defined as
\[h_{\rm{RW}}(\mu)\defeq \lim_{n\to \infty} \frac{1}{n}H(\mu^{\ast n}).\]
\end{definition}

The limit is guaranteed to exist by sub-additivity, as shown in \cite{KV83}.
The random walk entropy can capture to which extend the semigroup generated by $\supp\mu$ is free. Specifically, we have $\hRW(\mu) \leq H(\mu)$, and the two are equal if and only if the semigroup generated by $\supp\mu$ is generated freely by it. 

In general, $\hRW$ provides an upper bound for $\hFur$, as is shown, for example, in \cite{HS17}. 
Moreover, two quantities coincide for random walks on flag varieties by discrete subgroups of linear groups \cite{Ka85,KV83,Le85}, see also \cite{Fur02}. Under a local discreteness condition, we extend this equality to groups of circle diffeomorphisms. Let us first recall the definition of local discreteness.
\begin{definition}\label{def: discreteness}
A group $G\subset \Diff^1_+(\SS^1)$ is called \emph{$C^1$-locally discrete} (abbreviate to \emph{locally discrete}) if for any interval $I\subset \SS^1$, there does not exists sequence $(g_n) \sbs G$ of distinct elements such that $g_n|_I\to \rm{id}$ in the $C^1$-topology. 
\end{definition}

The local discreteness condition is implicitly or explicitly used in \cite{Ghys87,Ghys93,Reb99,Reb01,DKN09,DKN18,pingpong19,pingpong21}, particularly in the case of actions of subgroups of $\Diff^\omega(\SS^1)$. In the analytic setting, local non-discreteness usually implies the existence of a local flow in the $C^1$ local closure of the group action \cite{Reb99,Reb01}, and it often implies that the group acts minimally on $\SS^1$ \cite[Proposition 3.2]{Mat09}.

\begin{remark}
The authors of \cite{pingpong19, pingpong21}  considered a slightly different definition of local discreteness. They only consider intervals $I$ that have non-empty intersection with the group invariant minimal set. Basically, all of the results in our paper that involve the discreteness assumption can be strengthened to the case in \cite{pingpong19, pingpong21} without much difficulty, after some necessary modifications of the statements. 
\end{remark}

\begin{theorem}\label{thm: dim formula C2}
	 Let $\mu$ be a finitely supported probability measure on $\Diff^2_+(\SS^1).$ Assume that the group generated by $\supp\mu$ is locally discrete and has no finite orbits. Then $h_{\mr F}(\mu,\nu)=h_{\mr{RW}}(\mu)\leqslant|\lambda(\mu,\nu)|$ for every $\mu$-stationary measure $\nu.$ Consequently, every ergodic $\mu$-stationary measure is of exact dimension $\frac{h_{\mr{RW}}(\mu)}{|\lambda(\mu,\nu)|}.$
\end{theorem}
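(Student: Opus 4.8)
\textbf{Proof plan for Theorem~\ref{thm: dim formula C2}.}
The plan is to deduce the dimension formula from Theorem~\ref{thm: exact dimensionality} by identifying the Furstenberg entropy $h_{\mr F}(\mu,\nu)$ with the random walk entropy $h_{\mr{RW}}(\mu)$ under the local discreteness hypothesis. First I would note that the hypotheses immediately put us in the setting of Theorem~\ref{thm: exact dimensionality}: since the group generated by $\supp\mu$ has no finite orbits and is locally discrete, it cannot preserve a probability measure on $\SS^1$ (a locally discrete group preserving a measure would, after semiconjugation, either have a finite orbit or be abelian, and the abelian case is excluded by local discreteness unless it is conjugate to a group of rotations, which has no locally discrete infinite subgroups). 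Hence every ergodic $\mu$-stationary measure $\nu$ is exact dimensional with $\dim\nu = h_{\mr F}(\mu,\nu)/|\lambda(\mu,\nu)|$, and it remains only to show $h_{\mr F}(\mu,\nu) = h_{\mr{RW}}(\mu) \leq |\lambda(\mu,\nu)|$ for every stationary $\nu$ (by ergodic decomposition it suffices to treat ergodic $\nu$, since both sides are affine/constant appropriately).

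The inequality $h_{\mr F}(\mu,\nu) \leq h_{\mr{RW}}(\mu)$ is the general bound already cited in the excerpt (valid for any finitely supported $\mu$ on a countable group acting measurably), so the real content is the reverse inequality $h_{\mr{RW}}(\mu) \leq h_{\mr F}(\mu,\nu)$, together with $h_{\mr F}(\mu,\nu) \leq |\lambda(\mu,\nu)|$. For the latter, I would use the Ledrappier-type estimate: the Furstenberg entropy controls the entropy of the measure, which cannot exceed the Lyapunov exponent — concretely, $h_{\mr F}(\mu,\nu)/|\lambda(\mu,\nu)| = \dim\nu \leq 1$ since $\nu$ lives on the circle, giving $h_{\mr F}(\mu,\nu)\le|\lambda(\mu,\nu)|$ directly from Theorem~\ref{thm: exact dimensionality} once we know $\lambda(\mu,\nu)\ne 0$ (non-vanishing of the Lyapunov exponent for a non-measure-preserving $C^2$ random walk is standard, e.g. via Deroin--Kleptsyn--Navas). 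So the crux reduces to $h_{\mr{RW}}(\mu) \leq h_{\mr F}(\mu,\nu)$.

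To prove $h_{\mr{RW}}(\mu) \leq h_{\mr F}(\mu,\nu)$, I would follow the Kaimanovich--Vershik / Furstenberg entropy-criterion philosophy: the Furstenberg entropy of $(\mu,\nu)$ equals the Kullback--Leibler/entropy gap detecting non-triviality of the Poisson boundary, and $h_{\mr F}(\mu,\nu) = h_{\mr{RW}}(\mu)$ exactly when $(\SS^1,\nu)$ is (a model of) the full Poisson boundary of $(\Gamma,\mu)$. The strategy is therefore to show that, under local discreteness and no finite orbits, the stationary space $(\SS^1,\nu)$ is the Poisson boundary. This is where the structure theorem for $C^2$ random walks (Theorem~\ref{thm: structure of random walk 1}) and the contraction/proximality properties enter: one shows that $\P^+$-almost every forward trajectory $f_{\omega^+}^n$ contracts the circle (off a shrinking exceptional set) toward a single point, giving a measurable boundary map $\Sigma^+ \to \SS^1$ that realizes $\nu$; then the local discreteness is used to prove a \emph{strip} or \emph{ray} criterion — the group elements $f_{\omega_1}\cdots f_{\omega_n}$ are asymptotically determined (up to subexponentially many choices) by the limiting point in $\SS^1$ together with the backward trajectory, because two distinct group elements agreeing to high $C^1$-precision on an interval meeting $\Lambda$ would violate discreteness. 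Quantitatively, I would run an entropy-comparison: $H(\mu^{*n}) \leq$ (entropy of the distribution of $f_{\omega^+}^n$ given its ``boundary data'') $+ o(n)$, and the conditional entropy given the boundary is bounded by $n\,h_{\mr F}(\mu,\nu) + o(n)$ via the standard identity $h_{\mr F}(\mu,\nu) = \lim \frac1n H(\text{position} \mid \text{boundary})$-type formula (the Furstenberg formula for the boundary entropy). The main obstacle I anticipate is precisely this last step: making the passage from ``contraction on the circle'' to ``the group element is essentially recovered from its boundary action'' effective, i.e. controlling the number of $g \in \supp\mu^{*n}$ whose action on a definite interval around a point of $\Lambda$ is $C^1$-close — this is exactly where one must quantitatively exploit local discreteness (no sequence $g_n|_I \to \mathrm{id}$), presumably via a compactness/pigeonhole argument on the finitely many germ-types, and convert it into a subexponential bound $e^{o(n)}$ on the fiber cardinality. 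Combined with the general upper bound $h_{\mr F}\le h_{\mr{RW}}$, this yields equality; plugging into Theorem~\ref{thm: exact dimensionality} then gives $\dim\nu = h_{\mr{RW}}(\mu)/|\lambda(\mu,\nu)|$ and completes the proof.
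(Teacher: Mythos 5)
Your overall frame (reduce to Theorem~\ref{thm: exact dimensionality}, use the general bound $h_{\mr F}\leq h_{\mr{RW}}$, get $h_{\mr F}\leq|\lambda|$ from $\dim\nu\leq 1$, and locate the crux at $h_{\mr{RW}}\leq h_{\mr F}$) matches the paper, but the crux step itself is where your proposal has a genuine gap. You propose to prove it positively, by identifying $(\SS^1,\nu)$ with the Poisson boundary via a strip/ray-type criterion, and you say the needed input is a subexponential bound $e^{o(n)}$ on the number of $g\in\supp\mu^{*n}$ whose actions on a fixed interval are $C^1$-close, to be extracted from local discreteness ``via a compactness/pigeonhole argument on finitely many germ-types.'' Local discreteness is purely qualitative (no sequence of distinct elements converging to $\Id$ in $C^1$ on an interval) and does not yield any such quantitative fiber bound: the relevant scale of closeness cannot be fixed, because the length-$n$ elements genuinely cluster at scale $2^{\lambda n}$ around the boundary point and have derivative $\approx 2^{\lambda n}$ on the interval, so deciding whether $g^{-1}f$ is close to $\Id$ requires comparing $f$ and $g$ at precision exponentially fine in $n$ (errors get magnified by $2^{|\lambda|n}$ when composing with $g^{-1}$); there is no fixed finite list of ``germ types'' to pigeonhole into. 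Constructing Kaimanovich-type strips for a general locally discrete subgroup of $\Diff^2_+(\SS^1)$ is likewise not a routine verification and is nowhere close to being supplied by your sketch.

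The paper avoids needing any positive quantitative statement by arguing by contradiction. Assuming $\dim\nu\,|\lambda|<h_{\mr{RW}}(\mu)$, it first shows (Lemma~\ref{lem: entropy of RW on Sr}, via the effective convergence to the boundary map $\Pi$ of Proposition~\ref{prop: convergence in probability}) that $\frac1n H(\mu^{*n}*\delta_x,\cD_{-\lambda n})\to -\lambda\dim\nu$; the assumed gap then forces a conditional-entropy lower bound $H(\mu_n\,|\,e_x^{-1}\cD_{-\lambda n})\geq cn$, i.e. at least $2^{cn}$ elements of $T_\mu$ which on a fixed interval $J$ have derivative in $[2^{(\lambda-\ve_n)n},2^{(\lambda+\ve_n)n}]$, uniformly bounded distortion $\wt\vk(\cdot,J)\leq C$ (this comes from the uniform-good-words structure theory, Proposition~\ref{prop: good words}, and is missing from your sketch but essential), and images of $J$ inside a common interval of length $2^{(\lambda+\ve_n)n}$ (Lemma~\ref{lem: generate similar elements}). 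A pigeonhole calibrated at precision $2^{(\lambda-\ve_n)n-\sqrt n}$ in positions and $1/n$ in log-derivatives at $\sim c n/(\ve_n n+\sqrt n)$ sample points then produces distinct $f_n\ne g_n$ with $f_n(J')\sbs g_n(J)$ and $g_n^{-1}f_n\to\Id$ in $C^1$ on $J'$ (Lemma~\ref{lem: approximate identity}), contradicting local discreteness in its qualitative form. So the entropy gap itself supplies the exponential multiplicity that your plan tries to rule out a priori; without adopting this contradiction structure (at which point you would be reproducing the paper's proof), your route does not go through. A minor additional point: your reduction to ``no invariant measure'' via ``conjugate to a group of rotations'' is too weak, since an invariant measure only gives a topological (semi)conjugacy, which does not transport $C^1$-local discreteness; the paper's fix is Herman's theorem that a $C^2$ diffeomorphism with irrational rotation number generates a $C^1$-nondiscrete cyclic group, after which rational rotation numbers plus an invariant measure force a finite orbit.
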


This follows from a stronger version as below. For a closed interval $I,$ we denote $C_+^2(I,I)$ to be the semigroup of $C^2$ orientation preserving maps on $I$ that have no critical point. For a probability measure $\mu$ supported on a semigroup, we denote by $T_\mu$ the semigroup generated by $\supp \mu$.
\begin{theorem}\label{thm: dim formula on interval}
	Let $\mu$ be a finitely supported probability measure on $\Diff^2_+(\mathbb S^1)$ such that $\supp\mu$ does not preserve any probability measure on $\SS^1$. Let $\nu$ be an ergodic $\mu$-stationary measure on $\SS^1$. Then $\nu$ is exact dimensional and
	\begin{enumerate}
		\item either $\dim\nu=\frac{h_{\mr{RW}}(\mu)}{|\lambda(\mu,\nu)|},$
		\item or there exists a closed interval $J\sbs \SS^1$ and two sequence of elements $\{g_n\},\{f_n\}\sbs T_\mu$ with $g_n\ne f_n,$ such that $g_n^{-1}f_n$ tends to $\Id$ on $J$ in the $C^1$-topology.
	\end{enumerate}
    
    Moreover, the conclusion holds when replacing $\SS^1$ with a closed interval $I$ and $\Diff_+^2(\SS^1)$ with $C_+^2(I,I),$ where the elements $g_n$ and $f_n$ found in the second case additionally satisfy $f_n(J) \sbs g_n(I),$ which ensures that $g_n^{-1} f_n$ is well-defined on $J.$
\end{theorem}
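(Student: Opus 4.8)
The plan is to prove the exact dimensionality and the dichotomy by reducing to the Furstenberg-entropy formula of Theorem~\ref{thm: exact dimensionality} and then analyzing when the Furstenberg entropy can fail to equal the random walk entropy. First I would establish the general inequality $h_{\mr F}(\mu,\nu)\leq h_{\mr{RW}}(\mu)$, which is standard (it comes from the fact that the Furstenberg entropy is the entropy of the boundary $\sigma$-algebra of the random walk, dominated by the Kaimanovich–Vershik random walk entropy; see the references \cite{KV83,HS17} cited before the statement). Combined with the dimension formula $\dim\nu = h_{\mr F}(\mu,\nu)/|\lambda(\mu,\nu)|$ from Theorem~\ref{thm: exact dimensionality}, this already gives exact dimensionality and the inequality $\dim\nu\leq h_{\mr{RW}}(\mu)/|\lambda(\mu,\nu)|$, so the content is entirely in showing that \emph{either} $h_{\mr F}(\mu,\nu)=h_{\mr{RW}}(\mu)$ \emph{or} the non-discreteness alternative (2) holds. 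The circle case then follows from the interval case by noting that if $\supp\mu\sbs\Diff_+^2(\SS^1)$ does not preserve a measure, the stationary measure $\nu$ is non-atomic and one can pass to a sub-interval avoiding a point of $\supp\nu$'s complement or work directly on $\SS^1$ with the same argument; I would state and prove the interval version and remark that the circle version is identical.

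**The core step** is the contrapositive: assuming alternative (2) fails — i.e. the semigroup $T_\mu$ is ``locally discrete'' in the weak sense that no two distinct words $g_n\ne f_n$ have $g_n^{-1}f_n\to\Id$ in $C^1$ on any fixed closed interval $J$ (with the well-definedness caveat $f_n(J)\sbs g_n(I)$ in the interval case) — I must show $h_{\mr F}(\mu,\nu)=h_{\mr{RW}}(\mu)$, equivalently that the boundary/Poisson-type $\sigma$-algebra of the stationary measure captures \emph{all} of the random walk's asymptotic information, i.e. the hitting measure map $\omega^+\mapsto$ (limit point / limit configuration of $f_{\omega^+}^n$) is injective modulo $\P^+$-null sets up to the random walk equivalence. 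The strategy: by the structure theorem (Theorem~\ref{thm: structure of random walk 1}, applicable since $\supp\mu$ does not preserve a measure) the forward dynamics $f_{\omega^+}^n$ contracts, for $\P^+$-a.e.\ $\omega^+$, the complement of a finite set $\Xi(\omega^+)\in\SS_{dr}$ onto another finite set $\Pi(\omega^+)$, uniformly on compact subsets away from $\Xi(\omega^+)$ — this is the non-stationary North–South picture. One then argues that two trajectories $(\omega^+_1)$, $(\omega^+_2)$ with the same limit data $\Pi$ and, say, the same tail, would force the corresponding products $f^n_{\omega^+_1}$ and $f^n_{\omega^+_2}$ (suitably re-indexed / after matching the rotation symmetry given by the constants $d,r$) to be $C^1$-close to each other on a fixed interval, contradicting the assumed discreteness. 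Quantitatively I would use the distortion estimates (Section~\ref{sec: distortion}, the coefficients $\vk,\wt\vk$) together with the uniform contraction on good words $\Sigma_\ve$ (Proposition~\ref{prop: good words}) to turn ``same limit point, close Lyapunov behaviour'' into ``$C^1$-close on $J$''.

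**The main obstacle** I anticipate is precisely this last implication: bootstrapping agreement of the \emph{asymptotic} (boundary) data into a \emph{uniform $C^1$} statement on a fixed interval. In the linear $\SL_2(\RR)$ setting this is clean because elements are determined by a bounded amount of data and contraction is genuinely exponential and conformal; in the $C^2$ non-linear, non-uniformly hyperbolic setting one has to control distortion along random words of unbounded length and handle the finitely-many ``sheets'' ($r$ limit points permuted cyclically) coming from the structure theorem, making sure the matching of the two trajectories respects the rotation symmetry. I would handle this by passing to the induced system on good words so that contraction becomes uniform, using the bounded-distortion property to get that $f^n_{\omega^+}$ restricted to a suitable interval is, up to a uniformly bounded multiplicative error, an affine-like contraction, and then a pigeonhole/compactness argument over the finitely many configurations in $\SS_{dr}$ produces, if $h_{\mr F}<h_{\mr{RW}}$, genuinely distinct finite words $g_n\ne f_n$ in $T_\mu$ with $g_n^{-1}f_n\to\Id$ on a fixed $J$. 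A secondary technical point is the bookkeeping in the interval case to guarantee $f_n(J)\sbs g_n(I)$ so that $g_n^{-1}f_n$ makes sense; this should follow automatically from choosing $J$ inside the region that all the relevant long words contract into, but it needs to be stated carefully.
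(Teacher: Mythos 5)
Your opening reduction (exact dimensionality and the formula from Theorem~\ref{thm: exact dimensionality}, plus the general inequality $h_{\mr F}(\mu,\nu)\leq h_{\mr{RW}}(\mu)$, so that the whole content is ``either $h_{\mr F}=h_{\mr{RW}}$ or alternative (2)'') agrees with the paper. The gap is in the core mechanism you propose for that dichotomy. The implication you lean on --- that two trajectories with the same limit configuration (even with the same tail) force the corresponding products to be $C^1$-close on a fixed interval, contradicting discreteness --- is false: the boundary map is massively non-injective even in the Fuchsian case (e.g.\ $g$ and $g^2$, or two words differing in one initial letter that lands in the same basin, have identical limit data but very different restrictions to any interval), and injectivity of the hitting map is neither what $h_{\mr F}=h_{\mr{RW}}$ amounts to nor something your argument can use quantitatively. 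Likewise, a pigeonhole ``over the finitely many configurations in $\SS_{dr}$'' cannot produce two distinct words $g_n\ne f_n$ with $g_n^{-1}f_n$ $C^1$-small: there are only boundedly many classes, membership in a class puts no constraint on the action on an interval at scale $2^{\lambda n}$, so you get exponentially many words in one class with no mutual closeness whatsoever. (A smaller imprecision: the limit configuration $\Pi$ is a function of the past $\omega^-$, not of $\omega^+$; the forward products $f^n_{\omega^+}$ do not converge.)

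What is missing is the entropy-at-scale argument that actually converts the gap $|\lambda|\dim\nu<h_{\mr{RW}}(\mu)$ into candidates to pigeonhole among, i.e.\ the content of Lemmas~\ref{lem: entropy of RW on Sr}, \ref{lem: generate similar elements} and \ref{lem: approximate identity}: exact dimensionality identifies $\dim\nu$ with the entropy dimension; the effective convergence to the boundary (Proposition~\ref{prop: convergence in probability}) gives $\frac{1}{n}H(\mu^{*n}*u_{\ul x},\cD_{-\lambda n})\to|\lambda|\dim\nu$ for $\ul x\in\supp\ul\nu$; and then the conditional entropy bound $H(\mu_n\mid e_x^{-1}\cD_{-\lambda n})\geq cn$ produces $2^{cn}$ elements of $T_\mu$ --- restricted to good words, hence with derivative comparable to $2^{\lambda n}$ and uniformly bounded distortion on a fixed interval $J$ --- all sending $x$ into one interval of length about $2^{\lambda n}$. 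Only at that point does a second pigeonhole, on the values and log-derivatives at boundedly many points of $J$, yield $g_n\ne f_n$ with $g_n^{-1}f_n\to\Id$ in $C^1$. Your distortion/good-word input is the right tool for this last step, but without the entropy counting there is nothing to pigeonhole. A secondary structural point: your reduction between the two settings goes the wrong way --- elements of $\Diff^2_+(\SS^1)$ need not preserve any proper subinterval, so the circle case cannot be deduced from the interval case; the paper instead proves a hybrid statement for measures on $\Diff^2_+(\SS^1)$ preserving an interval (Proposition~\ref{prop: dim formula circle interval}) and gets the pure interval case by extending elements of $C^2_+(I,I)$ to circle diffeomorphisms (Lemma~\ref{lem: interval embedding}); the containment $f_n(J)\sbs g_n(I)$ is arranged there, not ``automatically''.
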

\begin{remark}
The item (2) is similar in spirit with the notion of weak separation condition defined in~\cite{LauNgai} in the context of self-similar IFSs.
\end{remark}

Theorem~\ref{thm: dim formula on interval} specialized to the setting of Furstenberg measures on $\RR\PP^1$ is weaker than the main result of Hochman-Solomyak~\cite{HS17}.
They showed that the dimension formula holds unless there exists a sequence of maps $g_n^{-1}\circ f_n$ tending to $\Id$ super-exponentially fast, similarly to the result of~\cite{Hoch14}.

\subsection{Dimension properties of minimal sets}\label{sec: 2.2}

\paragraph{Variational principle for dimensions.}
For a Borel probability measure $\nu$ on a metric space, recall that its Hausdorff dimension is defined by 
\[\dimH\nu=\inf\lb{\,\dimH E: \nu(E)>0\,},\]
where $\dimH E$ denotes the Hausdorff dimension of the Borel set $E.$
A well-known result of Young~\cite{Y82} says that whenever $\nu$ is exact dimensional, we have $\dimH\nu=\dim\nu$.

By definition, we clearly have $\dimH\nu\leq \dimH\supp \nu$. 
Now, when dealing with the minimal set of a semigroup action, a natural question arises: can we find a random walk on this semigroup whose stationary measure approximates the minimal set in terms of Hausdorff dimensions? This type of variational principle for contracting IFSs has been studied in \cite{FengHu}. We consider this variational principle of dimensions for smooth group actions on the circle. 
\begin{theorem}\label{thm: real analytic variational principle}
	Let $G\sbs \Diff_+^\omega(\SS^1)$ be a finitely generated subgroup which preserves an exceptional minimal set $\Lambda.$ Then
	\[\dim_{\mr H}\Lambda=\sup\lb{\dim_{\mr H} \nu:\begin{aligned}
		&\ \nu \text{ is an ergodic $\mu$-stationary measure on }\Lambda,
		\\ &\ \text{where } \mu \text{ is a finitely supported probability measure on }G
		\end{aligned}}.\]
\end{theorem}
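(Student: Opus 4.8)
The plan is to prove the two inequalities separately. The inequality
$\dimH\Lambda \geq \sup\{\dimH\nu : \nu \text{ as above}\}$ is immediate: any $\mu$-stationary measure $\nu$ supported on $\Lambda$ satisfies $\dimH\nu \leq \dimH\supp\nu \leq \dimH\Lambda$. So the content is the reverse inequality: given $\alpha < \dimH\Lambda$, one must produce a finitely supported $\mu$ on $G$ and an ergodic $\mu$-stationary $\nu$ living on $\Lambda$ with $\dimH\nu > \alpha$. First I would fix a finite symmetric generating set $\cS$ of $G$ and, because $\Lambda$ is exceptional, pick a "gap" interval of $\SS^1 \sm \Lambda$ and use the classical combinatorics of the action on the circle (as in \cite{DKN09,DKN18}) to organize the connected components of $\SS^1\sm\Lambda$ into a tree indexed by reduced words in $\cS$: to each component $U$ we associate the shortest word sending a fixed reference gap onto $U$, and the collection of components at "level $n$" gives a natural cover of $\Lambda$ by the complementary intervals. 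The Hausdorff dimension of $\Lambda$ is then controlled, via a standard mass-distribution / pressure argument, by the exponential growth rate of the number of level-$n$ components weighted by powers of their lengths; concretely, $\dimH\Lambda$ equals the critical exponent $s$ at which $\sum_{U \text{ level } n}|U|^s$ neither decays nor blows up exponentially. This is where the covering argument and the input from \cite{DKN09,DKN18} enter.

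Next, having $\alpha < \dimH\Lambda$, I would choose a large scale $n_0$ and a large finite collection of group elements $g_1,\dots,g_N$ — each a word of length comparable to $n_0$ — realizing the relevant growth, i.e. such that the images $g_i(J_0)$ of a fixed gap closure are pairwise "almost disjoint" subintervals of a fixed interval and $\sum_i \|g_i'\|_{C^0}^{\alpha} $ (or an appropriate distortion-corrected version using $\wt\vk$ from Section \ref{sec: distortion}) is $\geq 1$. Using bounded distortion on these long words (which holds because the $g_i$ are obtained as compositions and we keep track of the distortion norm), the sub-semigroup generated by $\{g_1,\dots,g_N\}$ acts on a subinterval like a conformal-IFS-type system with a genuine gap structure, hence is freely generated and exhibits ping-pong dynamics on $\Lambda \cap (\text{that interval})$. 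Then I would take $\mu$ to be a Bernoulli measure on $\{g_1,\dots,g_N\}$ with weights $p_i \propto \|g_i'\|_{C^0}^{\alpha}$, let $\nu$ be the (unique, ergodic) $\mu$-stationary measure — which is supported on the attractor of this sub-IFS, hence on $\Lambda$ — and compute, using Theorem \ref{thm: dim formula on interval} (the sub-semigroup is free, so the weak-separation alternative is excluded and the dimension formula $\dim\nu = \hRW(\mu)/|\lambda(\mu,\nu)|$ holds) together with Young's theorem $\dimH\nu = \dim\nu$. A Gibbs-type computation identifies $\hRW(\mu)/|\lambda(\mu,\nu)|$ with the pressure-theoretic quantity, showing $\dim\nu \geq \alpha - o(1)$ as $n_0 \to \infty$; letting $\alpha \uparrow \dimH\Lambda$ finishes the proof.

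The main obstacle I anticipate is the passage from the purely topological/combinatorial tree of components $\SS^1\sm\Lambda$ to a genuinely hyperbolic, bounded-distortion sub-system on which the dimension formula of Theorem \ref{thm: dim formula on interval} can be applied. A priori the group elements $g$ sending $J_0$ to small components need not have uniformly bounded distortion on a definite neighborhood, and $\Lambda$ carries no a priori Markov structure; controlling distortion requires choosing the words $g_i$ so that each composition stays inside a region where the $\wt\vk$-norm is summable, and simultaneously arranging the almost-disjointness of the images $g_i(J_0)$ to guarantee freeness and to prevent the weak-separation alternative in Theorem \ref{thm: dim formula on interval}. Handling the analytic (or merely $C^2$ with local discreteness) regularity so that these distortion estimates are uniform in $n_0$ — rather than degrading — is the technical heart of the argument; once that is in place, the entropy/Lyapunov-exponent bookkeeping and the limiting argument are routine.
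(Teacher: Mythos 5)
Your overall skeleton (the trivial inequality, then producing $\approx 2^{\alpha n}$ elements with controlled contraction and disjoint images, running an IFS inside $\Lambda$, and lower-bounding the dimension of its stationary measure) matches the paper's strategy, but the two steps that carry the real weight are not justified and one of them, as stated, would fail. The crucial extraction step is where you lean on an unproved identity: you claim $\dimH\Lambda$ equals the critical exponent of $\sum_{U\text{ level }n}|U|^s$ over the tree of connected components of $\SS^1\sm\Lambda$. Gap lengths of a Cantor set control at best its upper box dimension (Besicovitch--Taylor type statements), not its Hausdorff dimension, and the word-indexed tree of gaps carries no distortion control whatsoever, so there is no way to pass from it to group elements with uniformly bounded $\wt\vk$ on a definite interval. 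The paper's actual mechanism is the Deroin--Kleptsyn--Navas expansion property: by Theorem~\ref{thm: DKN18} the analytic exceptional case satisfies $(\Lambda\star)$, so Proposition~\ref{prop: DKN expandable} gives, at every $x\in\Lambda\sm G(\NE)$, arbitrarily small intervals $I$ with $gI=B(gx,\ve_0)$ and $\vk(g,I)\leq 1$; one then applies the maximal-disjoint-subfamily covering lemma (Lemma~\ref{lem: cover and dim}) to these expandable intervals at scale $2^{-n}$, together with $\dimH(\Lambda\sm G(\NE))=\dimH\Lambda$ from Theorem~\ref{thm: DKN09}, to get Proposition~\ref{prop: find generators 1}. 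Without this input your $g_i$ have neither the counting rate $\dimH\Lambda$ nor bounded distortion. You also do not explain how to force all images into a common invariant interval; the paper does this by composing with a single auxiliary element supplied by Lemma~\ref{lem: contracting constant}, whose proof itself uses the random-walk structure theorem.

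The second problem is your use of Theorem~\ref{thm: dim formula on interval}: freeness of the sub-semigroup does \emph{not} exclude the second alternative there, which concerns distinct elements $f_n\neq g_n$ with $g_n^{-1}f_n\to\Id$ in $C^1$ on an interval; a free semigroup can perfectly well contain such sequences. What excludes it is local discreteness of $G$ (true here by Corollary~\ref{cor: nondiscrete minimal}, since a locally nondiscrete analytic group without finite orbits acts minimally), so your argument is repairable but as written rests on a wrong implication. In fact the paper avoids the dimension formula and the Gibbs bookkeeping entirely: taking $\mu$ uniform on the family $\cF_n$ of maps with derivative comparable to $2^{-n}$ on $I_0$ and pairwise disjoint images in $I_0$, a ball of radius $2^{-(n+C)m}|I_0|$ meets at most two level-$m$ images, which gives $\dimH\nu\geq \log\#\cF_n/(n+C)$ by a direct mass-distribution estimate — considerably more elementary than invoking exact dimensionality, entropy, and Lyapunov exponents of a weighted Bernoulli measure.
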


As a byproduct of the proof of Theorem \ref{thm: real analytic variational principle}, we show the following result, which asserts that we can always approximate the Hausdorff dimension of $\Lambda$ by attractors of contracting IFSs.
\begin{theorem}\label{thm: IFS approximation}
	Let $G\sbs \Diff_+^\omega(\SS^1)$ be a finitely generated subgroup which preserves an exceptional minimal set $\Lambda.$ For every $\ve>0,$ there exists a finite subset $\lb{g_i}_{i=1}^\ell\sbs G$ and a closed interval $I\sbs\SS^1$ such that
	\begin{enumerate}
		\item $\lb{g_i}_{i=1}^\ell$ defines a contracting IFS on $I,$ namely for every $1\leqslant i\leqslant \ell,$ $g_i(I)\sbs I$ and $(g_i)'|_I<1;$
		\item $\lb{g_i|_I}_{i=1}^\ell$ satisfies the open set condition, namely $g_i(I)\cap g_j(I)=\vn$ for every $i\ne j;$
		\item the attractor $\Lambda'$ of $\lb{g_i|_I}_{i=1}^\ell,$ which is given by
		\[\Lambda'=\bigcap_{n=1}^\infty\bigcup_{1\leqslant i_1,\cdots,i_n\leqslant \ell}g_{i_n}\circ\cdots\circ g_{i_1}(I),\]
		is contained in $\Lambda$ and $\dimH\Lambda'\geqslant \dimH\Lambda-\ve.$
	\end{enumerate}
\end{theorem}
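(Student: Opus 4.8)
\textbf{Proof proposal for Theorem~\ref{thm: IFS approximation}.}

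The plan is to build the contracting IFS by first producing, via the variational principle (Theorem~\ref{thm: real analytic variational principle}), a finitely supported $\mu$ on $G$ whose ergodic stationary measure $\nu$ on $\Lambda$ satisfies $\dimH\nu \geq \dimH\Lambda - \ve/2$, and then to extract from the random walk driven by $\mu$ a finite family of group elements that genuinely contract a fixed interval while keeping almost all of the dimension. The key structural input is that, since $G$ has an exceptional minimal set, $G$ acts on $\Lambda$ with contractions available in abundance: by the results of Deroin--Kleptsyn--Navas used in the proof of Theorem~\ref{thm: real analytic variational principle}, for any point $x\in\Lambda$ and any small scale there are group elements whose derivative on a neighborhood of $x$ is uniformly small, i.e.\ the Lyapunov exponent $\lambda(\mu,\nu)$ is strictly negative. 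Combining negative Lyapunov exponent with the exact-dimensionality and dimension formula $\dim\nu = h_{\mathrm F}(\mu,\nu)/|\lambda(\mu,\nu)|$ from Theorem~\ref{thm: exact dimensionality}, one has quantitative control of how the entropy and the contraction balance.

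First I would fix $\mu$ and $\nu$ as above and pass to the one-sided shift space $(\Sigma^+,\P^+,\sigma)$ with its cocycle $f^n_{\omega^+}$. Using the Oseledets/Furstenberg theory for this cocycle (negative Lyapunov exponent, contraction toward a random point $\Pi(\omega^+)$), choose $n$ large and a large-measure set $\Omega_n \sbs \Sigma^+$ of words $\omega^+$ of length $n$ such that the composed diffeomorphism $g_{\omega^+} \defeq f^n_{\omega^+}$ has derivative $\leq e^{-cn}$ on a common interval $I_0$ containing $\supp\nu|_{\text{good part}}$, with distortion controlled by the distortion-norm estimates of Section~\ref{sec: distortion} (Koebe-type bounds, using the $C^2$ hypothesis). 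Shrinking to a subinterval $I$ and discarding a $\P^+$-small set of words, I would arrange simultaneously: (i) $g_{\omega^+}(I)\sbs I$ and $(g_{\omega^+})'|_I < 1$ for every chosen word, so $\{g_{\omega^+}\}$ is a contracting IFS on $I$; and (ii) the images $g_{\omega^+}(I)$ are pairwise disjoint — this is the open set condition, achieved by a further combinatorial pigeonholing among words of length $n$, or by passing to length $2n$ and using that two distinct $n$-words push $I$ into separated regions along the $\Lambda$-structure (here one exploits that $\Lambda$ is a Cantor set, so it has gaps one can use to separate images). Call the surviving family $\{g_i\}_{i=1}^\ell$.

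The dimension bookkeeping is then: the attractor $\Lambda'$ of $\{g_i|_I\}$ is a sub-self-conformal set sitting inside $\Lambda$ (each $g_i\in G$ maps $\Lambda\cap I$ into $\Lambda\cap I$), and because the $g_i$ satisfy the open set condition with uniformly bounded distortion, Bowen's formula / the classical conformal IFS dimension theory (as in \cite{FengHu}) gives $\dimH\Lambda' = s$ where $s$ solves the pressure equation $\sum_i \|(g_i)'|_I\|^s \asymp 1$; equivalently $\dimH\Lambda' \gtrsim \frac{\log \ell}{n\,|\lambda|} - o(1)$. Since $\log\ell \approx n\,h_{\mathrm{RW}}(\mu) \geq n\,h_{\mathrm F}(\mu,\nu)$ up to the discarded small-measure set (the number of $\nu$-typical length-$n$ words is $\approx e^{n h_{\mathrm F}}$ by Shannon--McMillan--Breiman for the random walk), this lower bound is $\geq \dim\nu - \ve/2 \geq \dimH\Lambda - \ve$ for $n$ large. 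The main obstacle is step (ii): arranging the \emph{genuine} open set condition (exact disjointness of images, not just bounded overlap) while losing only $o(n)$ in $\log\ell$ — one must select a large subset of $n$-words whose $I$-images are literally disjoint, which requires exploiting the Cantor gap structure of $\Lambda$ together with the uniform contraction to place the images into distinct gaps; controlling the distortion so that "derivative $<1$ on $I$" survives the shrinking of $I$ needed for disjointness is the delicate quantitative point, and is where the $C^2$ distortion estimates and the negativity (with a definite gap) of the Lyapunov exponent are essential.
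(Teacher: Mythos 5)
Your outline has the right flavor (produce exponentially many elements of $G$ contracting a common interval at rate roughly $2^{-n}$, with disjoint images, and count them), but the step you yourself flag as "the main obstacle" is a genuine gap, and the dimension bookkeeping built around it does not hold together as written. First, the counting is confused: Shannon--McMillan--Breiman for the Bernoulli shift gives about $2^{nH(\mu)}$ typical words, not $2^{n h_{\mathrm F}}$; the number of distinct diffeomorphisms they represent is governed by $h_{\mathrm{RW}}(\mu)$, and $h_{\mathrm F}\leq h_{\mathrm{RW}}\leq H(\mu)$ with possibly strict inequalities, so "$\log\ell\approx n\,h_{\mathrm{RW}}\geq n\,h_{\mathrm F}$" is not the inequality you need and does not survive the selection of a disjoint subfamily. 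Indeed, once you demand literally disjoint images of length $\approx 2^{n\lambda}$ inside a fixed interval, the cardinality is capped at $2^{n|\lambda|}$, and what must be \emph{proved} is that one can retain at least $2^{n|\lambda|(\dim\nu-\ve)}$ of them; "pigeonholing among words" or "placing images in Cantor gaps" is not an argument for this, because nothing you have said controls how the images of distinct typical words are distributed relative to one another. This is exactly the content of Theorem~\ref{thm: separating approximation} (equivalently Proposition~\ref{prop: approximate the dimension}), and the paper proves it not by entropy counting but by a covering argument: the images of a fixed interval under the good words cover, infinitely often, a set carrying $\nu$, and Lemma~\ref{lem: cover and dim} converts the dimension of that set into a lower bound on the cardinality of a \emph{maximal disjoint} subfamily. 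Without that (or an equivalent mass-distribution argument), your step (ii) is an assertion, not a proof.

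It is also worth noting that the paper's own proof of this theorem bypasses stationary measures and entropy entirely, and is in that sense simpler than your route: Theorem~\ref{thm: IFS approximation} is read off from the construction used to prove Theorem~\ref{thm: real analytic variational principle} (i.e.\ the proof of Theorem~\ref{thm: C2 dimension variation}). There, Proposition~\ref{prop: DKN expandable} (Deroin--Kleptsyn--Navas expandable intervals with distortion $\leq 1$) produces, via a maximal disjoint subcollection and Lemma~\ref{lem: cover and dim}, a family $\cG_n\sbs G$ with pairwise disjoint images of a fixed interval $I_0$, derivatives $\asymp 2^{-n}$ on $I_0$, and $\limsup\frac1n\log\#\cG_n\geq\dimH\Lambda$ (Proposition~\ref{prop: find generators 1}); composing with a single element from Lemma~\ref{lem: contracting constant} brings the images back inside $I_0$, so the contraction, the open set condition, and the cardinality bound are all built in from the start, and the attractor lies in $\Lambda$ because $I_0$ meets $\Lambda$ and the maps belong to $G$. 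If you want to keep your two-stage plan (variational principle first, then a separating approximation of the resulting random walk), you would in addition have to justify applying the random-walk structure theory to the measure furnished by the variational principle (no common invariant measure, negative exponent) and invoke local discreteness to identify $h_{\mathrm F}=h_{\mathrm{RW}}$; but the essential missing ingredient remains the disjoint-selection count, which requires the covering-lemma mechanism rather than word counting.
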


The case for groups of real analytic circle diffeomorphisms is derived from a more general version applicable to $C^2$ diffeomorphisms. 
The general version requires some technical assumptions : properties $(\star)$ and $(\Lambda\star)$, as introduced by Deroin-Kleptsyn-Navas \cite{DKN09}. 
These properties are expected to hold for most finitely generated subgroups $G \subset \Diff_+^2(\SS^1)$ without finite orbits and is verified in the real analytic cases in \cite{DKN18}. Further discussions will be presented in Section \ref{subsec: group actions on circle}. 

\begin{definition}\label{def: NE}
	A point $x\in\SS^1$ is \emph{non-expandable} for the action of $G\sbs\Diff_+^1(\SS^1)$ if $g'(x)\leq 1$ for every $g\in G.$ We denote by $\NE=\NE(G)$ the set of non-expandable points.
\end{definition}
\begin{definition}\label{def: property star}
	We say $G\sbs\Diff^1(\SS^1)$ satisfies \emph{property $(\star)$} if it acts minimally and for every $x\in\NE,$ there exist $g_+,g_-\in G$ such that $g_+(x)=g_-(x)=x$ and $x$ is an isolated-from-the-right (resp. isolated-from-the-left) point of the set of fixed points $\Fix(g_+)$ (resp. $\Fix(g_-)$).
\end{definition}
\begin{definition}\label{def: property Lambda star}
	We say $G\sbs\Diff^1(\SS^1)$ satisfies \emph{property $(\Lambda\star)$} if it admits an exceptional minimal set $\Lambda$ for every $x\in\NE\cap\Lambda,$ there exists $g_+,g_-\in G$ such that $g_+(x)=g_-(x)=x$ and $x$ is an isolated-from-the-right (resp. isolated-from-the-left) point of the set of fixed points $\Fix(g_+)$ (resp. $\Fix(g_-)$).
\end{definition} 

\begin{theorem}\label{thm: C2 dimension variation}
	Let $G\sbs \Diff_+^2(\SS^1)$ be a finitely generated subgroup which does not preserve any probability measure and satisfies property $(\star)$ or $(\Lambda\star)$. Let $\Lambda$ be the unique minimal set of $G$, then
	\begin{equation}\label{eqn: vp1}
		\dim_{\mr H}\Lambda=\sup\lb{\dim_{\mr H} \nu:\begin{aligned}
		&\ \nu \text{ is an ergodic $\mu$-stationary measure on }\Lambda,
		\\ &\ \text{where } \mu \text{ is a finitely supported probability measure on }G 
	\end{aligned}}.
	\end{equation}
\end{theorem}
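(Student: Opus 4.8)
The plan is to establish the two inequalities separately. The inequality
\[
\dim_{\mr H}\Lambda \ \geqslant\ \sup\lb{\dim_{\mr H}\nu : \nu \text{ an ergodic $\mu$-stationary measure on } \Lambda}
\]
is immediate: any $\mu$-stationary measure $\nu$ on $\Lambda$ has $\supp\nu\sbs\Lambda$, so $\dim_{\mr H}\nu\leqslant\dim_{\mr H}\supp\nu\leqslant\dim_{\mr H}\Lambda$. All the work is in the reverse inequality, i.e.\ producing, for every $\ve>0$, a finitely supported $\mu$ on $G$ and an ergodic $\mu$-stationary $\nu$ on $\Lambda$ with $\dim_{\mr H}\nu\geqslant\dim_{\mr H}\Lambda-\ve$. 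My strategy is to first reduce to a \emph{contracting IFS} approximation inside $\Lambda$ — this is exactly the content of Theorem~\ref{thm: IFS approximation} in the real-analytic case, and the $C^2$ version under $(\star)$/$(\Lambda\star)$ is what I would prove in parallel — and then run the classical Hutchinson/Moran machinery on that IFS, choosing weights to realize a self-similar-type measure of nearly maximal dimension.

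The construction of the IFS is the crux. Using property $(\star)$ or $(\Lambda\star)$ together with the Deroin–Kleptsyn–Navas analysis (\cite{DKN09,DKN18}), one knows the $G$-action is expanding in a Markov/ping-pong sense on $\Lambda$: there is a cover of $\Lambda$ (or of the relevant gaps/intervals) and elements of $G$ that uniformly contract designated intervals into one another, with bounded distortion coming from the $C^2$ hypothesis (the distortion norm $\wt\vk$ of the Notation table). Concretely, I would fix a small scale $\rho$, cover $\Lambda$ by a controlled number $N(\rho)$ of intervals of size $\asymp\rho$, and — exploiting minimality and the expansion from $\NE$-points having fixed points $g_+,g_-$ with isolated fixed points on the appropriate side — select group elements $g_1,\dots,g_\ell$ (compositions of generators) that map a fixed interval $I$ meeting $\Lambda$ diffeomorphically onto disjoint subintervals $g_i(I)\sbs I$ with $\|(g_i)'|_I\|<1$ and with $\log\#\{\text{branches of length }\geqslant 2^{-n}\text{ at scale }I\}$ growing like $n\cdot(\dim_{\mr H}\Lambda)$ up to $\ve$. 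This is where the covering argument is genuinely delicate: one must simultaneously (a) keep the images disjoint (open set condition), (b) keep distortion bounded uniformly in $n$, and (c) not lose more than $\ve$ in the exponential growth rate of the number of branches compared to the Hausdorff dimension. The uniform-hyperbolicity/ping-pong structure available in $\Diff_+^2$ under $(\star)$ — essentially the structure theorem and the ping-pong pair results referenced in the introduction — is what makes (a)–(c) compatible.

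Once the IFS $\{g_i|_I\}_{i=1}^\ell$ with the open set condition and bounded distortion is in hand, I would take $\mu=\sum_i p_i\delta_{g_i}$ (or its symmetrization padded with inverses to make it a genuine measure on $G$) and let $\nu$ be the associated $\mu$-stationary measure, which is supported on the attractor $\Lambda'\sbs\Lambda$. By Theorem~\ref{thm: exact dimensionality} (or directly by a Moran-cover / mass-distribution estimate, since the IFS is uniformly contracting with the open set condition), $\nu$ is exact dimensional with $\dim_{\mr H}\nu=\dim\nu=\frac{h}{\chi}$ where $h$ is the entropy $-\sum p_i\log p_i$ (equal to the random-walk entropy as the semigroup is free by the open set condition) and $\chi=-\sum_i p_i\int\log(g_i)'\dd\nu$ is the Lyapunov exponent; bounded distortion forces $\chi\asymp -\sum_i p_i\log\|(g_i)'|_I\|_{C^0}$ up to $O(1)$. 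Choosing the weights $p_i$ to be the Moran weights, i.e.\ $p_i\propto \|(g_i)'|_I\|^{s}$ for the $s$ solving the pressure equation $\sum_i\|(g_i)'|_I\|^{s}=1$, makes $h/\chi$ equal to $s$, which by construction of the IFS is within $\ve$ (after absorbing the $O(1)$ distortion losses into another harmless shrinking, by iterating the IFS to high depth) of $\dim_{\mr H}\Lambda$. Letting $\ve\to0$ gives the supremum, and one checks the supremum is attained in the limit but need not be a maximum, which matches the statement.

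The main obstacle I anticipate is item (a)+(c) in the IFS construction: arranging disjoint images while keeping the branch-counting exponent essentially equal to $\dim_{\mr H}\Lambda$. Naively forcing disjointness (open set condition) can cost a definite fraction of the branches and hence a definite amount of dimension. The resolution is to pass to a high iterate of the natural Markov structure on $\Lambda$ where, by the uniform expansion and bounded distortion, the overlaps are confined to a negligible (exponentially small proportion) sub-collection of branches — a standard but technical ``thermodynamic'' estimate — so that discarding the overlapping branches changes the exponential growth rate by only $o(1)$. Justifying this in the non-linear, a priori non-uniformly hyperbolic setting, and verifying that property $(\star)$/$(\Lambda\star)$ supplies exactly the uniform control needed, is the heart of the matter and is where I would spend most of the effort; everything downstream is the classical self-similar-measure dimension computation.
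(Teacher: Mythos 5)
Your overall architecture is the same as the paper's: reduce to a contracting IFS with disjoint images inside $\Lambda$, with $\approx 2^{n(\dimH\Lambda-\ve)}$ branches each contracting at rate $\approx 2^{-n}$ with bounded distortion, and then read off the dimension of the resulting stationary measure (and indeed, since all branches contract at essentially the same rate, uniform weights and a direct mass-distribution estimate suffice; your Moran-weight/pressure-equation step and the appeal to exact dimensionality are unnecessary detours, though not wrong). The gap is in the one step you yourself flag as the heart of the matter, and the resolution you sketch for it does not work in this setting. You propose to obtain disjointness without dimension loss by passing to ``a high iterate of the natural Markov structure on $\Lambda$'' with ``uniform expansion,'' confining overlaps to an exponentially small proportion of branches. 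But there is no Markov structure and no uniform expansion here: the action is genuinely non-uniformly hyperbolic (the set $\NE$ of non-expandable points is generally non-empty), and the whole point of the hypothesis $(\star)$/$(\Lambda\star)$ is to cope with that. Moreover the ping-pong/structure-theorem machinery you invoke for producing the branches is not what supplies them; in the paper it is only used at a later, softer step (Lemma~\ref{lem: contracting constant}, to compose with one fixed element $f_0$ that brings all the images back into a common interval $I_0$).

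The mechanism that actually closes this gap is different and simpler. First, property $(\star)$/$(\Lambda\star)$ enters through the Deroin--Kleptsyn--Navas expansion result (Proposition~\ref{prop: DKN expandable}): every point of $\Lambda'=\Lambda\sm G(\NE)$ (which has full dimension in $\Lambda$ by Theorem~\ref{thm: DKN09}) admits ``expandable'' intervals of arbitrarily small length that some $g\in G$ maps, with distortion $\vk\leq 1$, onto a ball of fixed radius $\ve_0$ centered in $\Lambda$. Second, disjointness costs nothing at all: taking, at each scale $2^{-n}$, a \emph{maximal} pairwise disjoint subfamily $\wt\cE_n$ of these expandable intervals, the Vitali-type covering argument of Lemma~\ref{lem: cover and dim} (enlarged intervals of $\wt\cE_n$ cover $\bigcup\cE_n\sps\Lambda'$, then a Hausdorff-measure estimate) gives $\limsup_n\frac1n\log\#\wt\cE_n\geq\dimH\Lambda$ directly --- no thermodynamic formalism, no claim that overlaps are exponentially rare, and no Markov coding. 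Inverting the expanding maps, applying a pigeonhole over a finite collection of target balls, and composing with the single return map $f_0$ from Lemma~\ref{lem: contracting constant} then yields the families $\cF_n$ with disjoint images in a common interval $I_0$ meeting $\Lambda$ and $|\log f'+n|\leq C$ on $I_0$, after which your downstream computation goes through. So the statement you set out to prove is right and the skeleton matches, but the key counting-with-separation step is not established by your argument as written, and the structure it relies on (uniform expansion, Markov partition) is precisely what is unavailable.
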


\paragraph{Dynamical critical exponents.}
Recall the notion of the dynamical critical exponent introduced in Definition \ref{def: C1 critical exponent}. 
The following result reveals the relation between the dynamical critical exponent and the Hausdorff dimension of the minimal set for general group actions on the circle by $C^2$ diffeomorphisms.

\begin{theorem}\label{thm: C1 dynamical critical exponent}
Let $G\sbs \Diff_+^2(\SS^1)$ be a finitely generated subgroup without finite orbits and let $\Lambda$ be its unique minimal set. We have the following:
\begin{enumerate}
    \item If $G$ satisfies property $(\star)$ or $(\Lambda\star)$, then $\delta(G)\geqslant \dim_{\rm H}\Lambda.$
    \item If $G$ is locally discrete and virtually free, then $\delta(G)\leqslant \dim_{\rm H}\Lambda.$
\end{enumerate}
In particular, if both assumptions hold, we have $\delta(G)=\dimH\Lambda.$
\end{theorem}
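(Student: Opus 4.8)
The plan is to prove the two inequalities (1) and (2) separately; the final assertion $\delta(G)=\dimH\Lambda$ is then immediate, so I focus on those.

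\emph{Item (1): $\delta(G)\geq\dimH\Lambda$.} Here I would pass through stationary measures. By the variational principle for dimensions (Theorem~\ref{thm: C2 dimension variation}), $\dimH\Lambda=\sup_\nu\dimH\nu$, the supremum being over ergodic $\mu$-stationary measures $\nu$ carried by $\Lambda$ with $\mu$ finitely supported on $G$; for such $\nu$ one has $\lambda(\mu,\nu)<0$ (as $G$ preserves no probability measure), and Theorem~\ref{thm: exact dimensionality} together with Young's result~\cite{Y82} gives $\dimH\nu=\dim\nu=\hFur(\mu,\nu)/\abs{\lambda(\mu,\nu)}\leq\hRW(\mu)/\abs{\lambda(\mu,\nu)}$. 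So it suffices to show $\delta(G)\geq\hRW(\mu)/\abs{\lambda(\mu,\nu)}$, and for this I would count random-walk words. Birkhoff's theorem for the forward skew product on $\Sigma^+\times\SS^1$ and the observable $(\omega^+,x)\mapsto\log f'_{\omega_0^+}(x)$ gives $\tfrac1n\log (f_{\omega^+}^n)'(x)\to\lambda(\mu,\nu)$ for $\P^+\times\nu$-a.e.\ $(\omega^+,x)$; combining this with Egorov's theorem and the distortion control for uniformly good words (Proposition~\ref{prop: good words}), for each $\eta>0$ and all small $\ve>0$ there is, for $n$ large, a set of words of $\P^+$-probability $\geq\tfrac12$, each admitting some $x\in\Lambda$ with $(f_{\omega^+}^n)'|_{B(x,\ve)}\geq2^{-n(\abs{\lambda(\mu,\nu)}+\eta)}$. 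By a Shannon--McMillan--Breiman theorem for random walks~\cite{KV83}, such a set of words realizes at least $2^{n(\hRW(\mu)-\eta)}$ pairwise distinct elements of $G$. With $m=\lceil n(\abs{\lambda(\mu,\nu)}+\eta)\rceil$ these elements lie in $\lb{g\in G:\exists x\in\Lambda,\ g'|_{B(x,\ve)}\geq2^{-m}}$, so $\limsup_m\tfrac1m\log\#\lb{\cdots}\geq\tfrac{\hRW(\mu)-\eta}{\abs{\lambda(\mu,\nu)}+\eta}$; sending $\ve\to0$, then $\eta\to0$, then taking the supremum over $\nu$ yields $\delta(G)\geq\dimH\Lambda$.

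\emph{Item (2): $\delta(G)\leq\dimH\Lambda$.} I would deduce this from a \emph{variational principle for the critical exponent} (essentially Proposition~\ref{prop: C1 dynamical critical exponent}): under the hypotheses of (2),
\[\delta(G)\leq\sup\lb{\frac{\hRW(\mu)}{\abs{\lambda(\mu,\nu)}}:\ \mu\text{ fin.\ supp.\ on }G,\ \nu\text{ ergodic }\mu\text{-stationary on }\Lambda,\ \pair{\supp\mu}\text{ without finite orbit}}.\]
Granting this, local discreteness passes to subgroups, so Theorem~\ref{thm: dim formula C2} gives $\hFur(\mu,\nu)=\hRW(\mu)$ for every pair $\mu,\nu$ in the supremum, and then Theorem~\ref{thm: exact dimensionality} with~\cite{Y82} gives $\hRW(\mu)/\abs{\lambda(\mu,\nu)}=\dim\nu=\dimH\nu\leq\dimH\Lambda$ since $\supp\nu\sbs\Lambda$. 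To prove the displayed inequality I would first reduce to $G$ free of finite rank: passing to a finite-index free subgroup changes neither $\delta$ (as $G$ is a finite union of cosets whose representatives, with their inverses, have uniformly bounded derivatives) nor $\Lambda$. Then, using the structure theorem for $C^2$ random walks (Theorems~\ref{thm: structure of random walk 1}--\ref{thm: structure of random walk 2}), the perfect ping-pong pair (Theorem~\ref{thm: pingpong pair}) and local discreteness, I would fix a ping-pong/Markov coding of $(\Lambda,G)$ adapted to the free generators (nested interval family $J_w$, disjoint siblings, bounded distortion of coding maps, $\abs{J_w}$ comparable to the derivative of the map coded by $w$). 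Fixing $\eta>0$, taking $\ve$ small and $N$ large with $\#A_N\geq 2^{N(\delta(G)-\eta)}$ where $A_N=\lb{g:\exists x\in\Lambda,\ g'|_{B(x,\ve)}\geq2^{-N}}$, the combinatorial heart is to extract from $A_N$ a subcollection of size $\geq2^{N(\delta(G)-2\eta)-O(\log N)}$ that splits into boundedly many ping-pong families $\cS'$: each $\cS'$ freely generates a subgroup whose unique stationary measure $\nu$ sits on $\Lambda$ (its attractor lies in $\Lambda$ since $g(\Lambda)=\Lambda$ for $g\in G$), and along the support of $\nu$ every element of $\cS'$ contracts by at most $2^{N+O(1)}$, so for $\mu$ uniform on $\cS'$ one has $\abs{\lambda(\mu,\nu)}\leq N+O(1)$ and $\hRW(\mu)=H(\mu)=\log\#\cS'$; dividing and letting $N\to\infty$, $\eta\to0$ gives the inequality.

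\emph{Main obstacle.} The crux is precisely this extraction in item (2): one must control \emph{simultaneously} the reduced-word combinatorics of the free group $G$ and the geometry of the images $g(B(x_g,\ve))$ on $\SS^1$ — in particular rule out, via the word combinatorics together with the interval coding and local discreteness, that too many elements of $A_N$ have pairwise overlapping (hence non-pingpong) images — so that an exponentially large portion of $A_N$ survives as ping-pong families carrying a stationary measure on $\Lambda$ with the correct Lyapunov exponent. Item (1) is lighter; its only subtlety is the joint use of distortion control, Birkhoff/Egorov and the asymptotic equipartition property to produce many \emph{distinct} group elements with a uniform contraction bound on a ball of fixed radius centred in $\Lambda$.
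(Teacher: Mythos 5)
Your overall architecture for item (2) is the same as the paper's (reduce to a finite-index free subgroup, prove a variational principle bounding $\delta(G)$ by quantities $\hRW(\mu)/\abs{\lambda(\mu,\nu)}$, then conclude with local discreteness, Theorem~\ref{thm: dim formula on interval}/\ref{thm: dim formula C2} and exact dimensionality), but the step you yourself flag as the crux — extracting from $A_N$ an exponentially large family that carries a stationary measure on $\Lambda$ with Lyapunov exponent of size about $N$ and entropy about $\log\#A_N$ — is not carried out, and the route you sketch for it is doubtful. You posit a ``ping-pong/Markov coding of $(\Lambda,G)$'' with nested intervals $J_w$, disjoint siblings, bounded distortion and $\abs{J_w}$ comparable to the derivative of the word $w$: no such coding is established in the paper, and in this merely non-uniformly hyperbolic setting its existence is essentially tantamount to the conformal-like structure one is trying to prove. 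You also insist that the extracted subfamily split into genuine ping-pong families, i.e.\ with pairwise disjoint images; the paper's proof of Proposition~\ref{prop: C1 dynamical critical exponent} deliberately avoids this requirement. There, after appending bounded-length words manufactured from a perfect ping-pong pair (Proposition~\ref{prop: generate pingpong pair}), a cascade of pigeonhole arguments on the reduced-word structure (killing cancellations, equalizing lengths, fixing a conjugator $p$ and the first/last letters, removing prefix relations) produces the set $\cA_8$ of Lemma~\ref{lem: A8}, which preserves a common interval $I$, has controlled derivative there, and freely generates a free sub-semigroup \emph{for purely algebraic reasons}; only a single pair of elements with disjoint images is needed (to rule out an invariant measure on $I$), and the entropy identity $\hRW(\mu)=\log\#\cA_8$ comes from algebraic freeness plus local discreteness (distinct restrictions to $I$), not from geometric separation. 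Producing geometric pairwise disjointness for an exponentially large portion of $A_N$ directly is exactly the kind of statement (compare Theorem~\ref{thm: separating approximation}) that the paper only obtains downstream of the dimension formula, with exponent $\hRW(\mu)$ rather than $\delta(G)$; as a starting point it presupposes something very close to the conclusion. So item (2) in your write-up is a plan with the decisive combinatorial lemma missing.

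Your item (1) is workable but heavier than the paper's, which deduces $\delta(G)\geq\dimH\Lambda$ in one line from Proposition~\ref{prop: find generators 1} (the covering argument with expandable intervals of Deroin--Kleptsyn--Navas), with no random walk needed. Two caveats on your version: first, you silently assume that $G$ preserves no probability measure — you need this both to have $\lambda(\mu,\nu)<0$ and to invoke Theorem~\ref{thm: C2 dimension variation} — whereas item (1) only assumes absence of finite orbits and property $(\star)$ or $(\Lambda\star)$, so this reduction needs a separate justification or a separate treatment of the invariant-measure case (the paper's route does not need it). Second, in the Birkhoff/Egorov/good-words step you must also secure a uniform lower bound on the distance from $\Pi(\omega)$ to $\Xi(\omega)$ on a large-probability set of words so that the ball $B(x,\ve)$ of fixed radius lies in one component of $U(\omega)$; this is available but should be said. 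With these points addressed, your (1) gives in fact the stronger bound $\delta(G)\geq \hRW(\mu)/\abs{\lambda(\mu,\nu)}$, which is a legitimate alternative to the paper's argument.
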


Recall that in the definition of a dynamical critical exponent (Definition~\ref{def: C1 critical exponent}), the point $x$ is required to be in the set $\Lambda$.
This may not always be desirable, as it restricts the applicability of the concept. This raises the question of whether there exists an a priori criterion to determine if a group action is minimal via a variant of the dynamical critical exponent. To address this issue, we generalize the concept of dynamical critical exponents to arbitrary subsets of $\SS^1,$ thus allowing a more flexible and useful tool for analyzing the dynamics of group actions.
\begin{definition}\label{def: generalized dynamical critical exponents}
	Let $G\sbs\Diff_+^1(\SS^1)$ be a subgroup. For a subset $\Delta\sbs \SS^1,$ we define the \emph{dynamical critical exponent of $G$ on $\Delta$} as
	\[\delta(G,\Delta)\defeq\lim_{\ve\to0}\limsup_{n\to\infty}\frac{1}{n}\log\# \lb{g\in G:\exists x\in\Delta,g'|_{B(x,\ve)}\geq 2^{-n}}.\]
\end{definition}
\begin{remark}
	The discussions in Section \ref{se:11} reveal that for the real analytic case, the dynamical critical exponent at a point outside of $\Lambda$ can be viewed as the exponent of convergence for certain Poincar\'e series. Likewise, the dynamical critical exponent $\delta(G)$ can be interpreted as the exponent of convergence for certain series. This interpretation is utilized in the proof of the third item in our main theorem, as described in Section \ref{sec: 10.2}.
\end{remark}

For a subgroup $G\sbs\Diff^1_+(\SS^1)$ without finite orbits, we note that $\delta(G,\Lambda)=\delta(G)$ where $\Lambda$ is its unique minimal set. Besides, if we take $\Delta = \SS^1$, the dynamical critical exponent $\delta(G,\SS^1)$ is a quantity independent with $\Lambda.$
The following theorem is a stronger version of the first item in our main theorem, which also provides a criterion for the $G$-action being minimal by asking whether $\delta(G,\SS^1)\geqslant1.$ 
\begin{theorem}\label{thm: critical exponent and exceptional}
Let $G\sbs \Diff_+^\omega(\SS^1)$ be a finitely generated subgroup with an exceptional minimal set $\Lambda.$ Then
	\[\dim_{\mr H}\Lambda=\delta(G)\leq\delta(G,\SS^1)<1.\]
More precisely, we have
	\[\delta(G,\SS^1)=\max\lb{\dim_{\mr H}\Lambda,~\sup_{x\in\SS^1}\frac{k(x)}{k(x)+1}}=\max\lb{\dim_{\mr H}\Lambda,~\max_{x\in\SS^1\sm\Lambda}\frac{k(x)}{k(x)+1}},\]
	where $k(x)$ is defined in Definition \ref{def: multiplicity at points}, which relates to the multiplicity at $x.$
\end{theorem}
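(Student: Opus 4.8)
\textbf{Proof proposal for Theorem~\ref{thm: critical exponent and exceptional}.}

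The plan is to decompose the statement into three parts — the main equality $\dim_{\mr H}\Lambda=\delta(G)$, the identification of $\delta(G,\SS^1)$ via the formula with $\sup_x\frac{k(x)}{k(x)+1}$, and the strict inequality $\delta(G,\SS^1)<1$ — and to attack them in that order, since each relies on the previous one. The first equality $\dim_{\mr H}\Lambda=\delta(G)$ is exactly item~(2) of the Main Theorem, so I would invoke Theorem~\ref{thm: C1 dynamical critical exponent}: for a finitely generated $G\sbs\Diff_+^\omega(\SS^1)$ with an exceptional minimal set, $G$ is virtually free (it acts on the Cantor set $\Lambda$ and the real-analytic hypothesis forces discreteness via the results of Rebelo and the property $(\Lambda\star)$ verified in \cite{DKN18}), hence both $\delta(G)\geq\dim_{\mr H}\Lambda$ and $\delta(G)\leq\dim_{\mr H}\Lambda$ apply and give equality. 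The trivial bound $\delta(G)=\delta(G,\Lambda)\leq\delta(G,\SS^1)$ is immediate from the definitions, since enlarging the set over which $x$ ranges only increases the counting function.

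For the formula for $\delta(G,\SS^1)$, I would split the circle into $\Lambda$ and its complement. A point $x\in\SS^1\sm\Lambda$ lies in a connected component (a ``gap'') $I$ of $\SS^1\sm\Lambda$; the stabilizer of $I$ in $G$ is a (possibly trivial) subgroup $G_I$, and the key local observation is that if $g\in G$ satisfies $g'|_{B(x,\ve)}\geq 2^{-n}$ then (for $\ve$ small, $B(x,\ve)\sbs I$) $g$ must either map $I$ into another gap $g(I)$ — of which there are boundedly many of size $\geq 2^{-n}|I|$ by a standard distortion/volume argument of Deroin--Kleptsyn--Navas — or fix $I$. In the first case the count is governed by $\delta(G)=\dim_{\mr H}\Lambda$ (the $g(I)$ shadow the Cantor set). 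In the second case $g\in G_I$, and $G_I$ acting on the interval $I$ with a non-expandable point at $x$ forces, via the Szekeres / Takens vector-field normal-form argument, that the local contraction rate of $g^{\pm}$ near a fixed point of multiplicity $k(x)+1$ decays polynomially: $|(g^n)'(x\pm)|\asymp n^{-(k(x)+1)/k(x)}$-type behavior, so the number of $g\in G_I$ with $g'$ near $x$ bounded below by $2^{-n}$ grows like $2^{n\cdot k(x)/(k(x)+1)}$. This is precisely the Beardon--Patterson parabolic-cusp count transplanted to the non-linear setting, and it contributes the exponent $\frac{k(x)}{k(x)+1}$. Taking the exponential growth rate of the sum of the two contributions gives $\delta(G,\SS^1)=\max\{\dim_{\mr H}\Lambda,\ \sup_{x\notin\Lambda}\frac{k(x)}{k(x)+1}\}$; and since for $x\in\Lambda$ the quantity $\frac{k(x)}{k(x)+1}$ is by item~(3) of the Main Theorem already dominated by $\dim_{\mr H}\Lambda$, one may replace $\sup_{x\in\SS^1\sm\Lambda}$ by $\sup_{x\in\SS^1}$ without changing the value. (Finite generation guarantees $\sup_x k(x)<\infty$, so all suprema are attained and finite.)

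Finally, $\delta(G,\SS^1)<1$ follows by bounding each term in the $\max$ strictly below $1$: $\dim_{\mr H}\Lambda<1$ is item~(1) of the Main Theorem (proved independently, by the proof-by-contradiction through pointwise dynamical critical exponents and conformal measures sketched in the introduction), and $\frac{k(x)}{k(x)+1}<1$ holds for every finite $k(x)$, the supremum over the finitely many relevant orbit types of gaps being a maximum and hence still $<1$. The main obstacle I anticipate is the second part: making rigorous the claim that the contribution to $\delta(G,\SS^1)$ from a gap $I$ with a parabolic-type fixed point on its boundary is exactly $\frac{k(x)}{k(x)+1}$ — this requires the polynomial asymptotics of iterates near a multiplicity-$(k+1)$ fixed point (Szekeres vector field, controlled distortion uniformly in $n$) together with a careful bookkeeping that the elements of $G$ realizing a given local contraction near $x$ are, up to bounded multiplicity, powers of the finitely many elements fixing $I$, so that no extra exponential growth sneaks in from the ambient free-group combinatorics. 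The cross-term — $g\in G$ that neither fixes $I$ nor sends it to a comparably large gap but accumulates small contraction along $\Lambda$ near $x$ — must be shown not to exceed the $\dim_{\mr H}\Lambda$ rate, which is where the variational principle (Theorem~\ref{thm: real analytic variational principle}) and the covering argument of Section~\ref{se:8} do the work.
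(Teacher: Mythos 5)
Your lower bounds and reductions are fine (monotonicity of $\delta(G,\cdot)$, Proposition~\ref{prop: derivative around parabolic} for the parabolic contribution, Proposition~\ref{prop: delta k k+1} to replace $\sup_{x\in\SS^1}$ by $\sup_{x\in\SS^1\sm\Lambda}$, Lemma~\ref{lem: finite multiplicity choices} for finiteness), but the two hard parts of the statement are not established. First, your upper bound $\delta(G,\SS^1)\leq\max\lb{\dimH\Lambda,\sup k(x)/(k(x)+1)}$ rests on the claim that an element $g$ with $g'|_{B(x,\ve)}\geq 2^{-n}$, $x$ in a gap $I$, is ``up to bounded multiplicity a power of the finitely many elements fixing $I$'' or else maps $I$ to a comparably large gap whose count is governed by $\dimH\Lambda$. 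Neither half survives scrutiny: generic such $g$ are interleaved words $g_m p^{a_m}\cdots g_1 p^{a_1}$ ($p$ generating $\Stab_G(I)$), and controlling the exponential growth rate of these products under a derivative constraint is exactly the delicate point (the potential ``cross-term boost'' is what makes Fuchsian groups with parabolics in the limit set have $\delta>1/2$); moreover counting elements mapping $I$ into gaps by $\dimH\Lambda$ is not a consequence of the variational principle of Section~\ref{se:8}, which produces measures on $\Lambda$, not upper bounds on orbit counts at a point off $\Lambda$. The paper does not attempt this direct count: it proves the upper bound by contradiction, fixing a wandering point $x$, running the Patterson--Sullivan construction at $x$ (Proposition~\ref{prop: nonatomic conformal measure}), showing via the polynomial iterate asymptotics that the limit measure has no atom at any $y$ with $\delta(G,x)>k(y)/(k(y)+1)$ (Lemma~\ref{lem: conformal 4}), hence is atomless and supported on $\Lambda$, and then invoking Theorem~\ref{thm: conformal dimension} to force $\delta(G,x)=\dimH\Lambda$ (Proposition~\ref{prop: estimate of critical exponents}). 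That conformal-measure mechanism, or a genuine substitute for it, is missing from your proposal.

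Second, your proof of $\delta(G,\SS^1)<1$ is circular: you quote item~(1) of the Main Theorem ($\dimH\Lambda<1$), but in the paper that item is deduced \emph{from} Theorem~\ref{thm: critical exponent and exceptional}; it is not available as an independent input, and the parenthetical appeal to the ``proof sketched in the introduction'' is precisely the argument you are being asked to supply. In the paper the strict inequality comes out of the same conformal-measure contradiction: if $\delta(G,x)\geq 1$ then $\dimH\Lambda=1$ and $\delta>k(y)/(k(y)+1)$ for every $y$, so Proposition~\ref{prop: nonatomic conformal measure} yields an atomless $1$-conformal measure on the exceptional minimal set, contradicting \cite[Theorem F(2)]{DKN09}. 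Without this (or an alternative proof that $\dimH\Lambda<1$ not relying on the present theorem), the inequality $\delta(G,\SS^1)<1$ is unproved. The first part of your proposal, $\dimH\Lambda=\delta(G)\leq\delta(G,\SS^1)$ via Theorem~\ref{thm: C1 dynamical critical exponent}, does agree with the paper.
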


\begin{corollary}\label{cor: critical exponent and minimal}
	Let $G\sbs \Diff_+^\omega(\SS^1)$ be a finitely generated subgroup without finite orbits. Assume that $\delta(G,\SS^1)\geq 1,$ then $G$ acts minimally on the circle.
\end{corollary}

To relax assumptions on the group structure as in Theorem \ref{thm: C1 dynamical critical exponent}, we introduce another notion of dynamical critical exponent, the $C^2$-dynamical critical exponent.

\begin{definition}
Let $G\sbs \Diff_+^2(\SS^1)$ be a finitely generated subgroup without finite orbits.
Let $\Lambda$ be its unique minimal set.
We define the \emph{$C^2$-dynamical critical exponent} of $G$ as
\[\delta_2(G)\defeq \lim_{\ve\to 0^+}\lim_{C\to+\infty} \limsup_{n\to +\infty}\frac{1}{n}\log\#\lb{\,g\in G:\exists x\in\Lambda, g'|_{B(x,\ve)}\geq 2^{-n},\wt\vk(g,B(x,\ve))\leq C\,},\]
where $\wt\vk(g,I)\defeq\sup_{x\in I}(\log g'(x))'$ controls the distortion, as explained in Section \ref{sec: distortion}.
\end{definition}


\begin{theorem}\label{thm: C2 dynamical critical exponent}
	Let $G\sbs \Diff_+^2(\SS^1)$ be a finitely generated, locally discrete subgroup without finite orbits. Let $\Lambda$ be the unique minimal set and assume that $G$ satisfies property $(\star)$ or $(\Lambda\star).$ Then
	\[\dim_{\mr H}\Lambda=\delta_2(G).\]
\end{theorem}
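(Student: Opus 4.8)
The statement is the conjunction of two inequalities, $\dimH\Lambda\le\delta_2(G)$ and $\delta_2(G)\le\dimH\Lambda$. Throughout write
\[
\cG_{n,\ve,C}\defeq\lb{\,g\in G:\exists\,x\in\Lambda,\ g'|_{B(x,\ve)}\ge2^{-n},\ \wt\vk(g,B(x,\ve))\le C\,},
\]
so that $\delta_2(G)=\lim_{\ve\to0^+}\lim_{C\to+\infty}\limsup_{n\to\infty}\frac{1}{n}\log\#\cG_{n,\ve,C}$. Note first that property $(\star)$ or $(\Lambda\star)$ together with the absence of finite orbits prevents $G$ from preserving a probability measure on $\SS^1$, so that Theorems~\ref{thm: C2 dimension variation} and~\ref{thm: dim formula C2} are available for $G$.

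\emph{The bound $\dimH\Lambda\le\delta_2(G)$.} Fix $\eta>0$. By the variational principle (Theorem~\ref{thm: C2 dimension variation}) there is a finitely supported probability measure $\mu$ on $G$ with an ergodic $\mu$-stationary measure $\nu$ supported on $\Lambda$ and $\dimH\nu\ge\dimH\Lambda-\eta$; by Theorem~\ref{thm: dim formula C2}, for which local discreteness is used, $\dim\nu=\hRW(\mu)/|\lambda(\mu,\nu)|$. Run the random walk driven by $\mu$ and apply the structure theorem (Theorems~\ref{thm: structure of random walk 1} and~\ref{thm: structure of random walk 2}) together with the uniform good-word estimate of Proposition~\ref{prop: good words}: for each small $\ve>0$ there is $C_\ve<\infty$ such that along a set of length-$n$ words whose $\P$-measure tends to $1$, the product $f_\omega^n$ contracts some ball $B(x_\omega,\ve)$ with $x_\omega\in\supp\nu\sbs\Lambda$ at rate $2^{-n(|\lambda(\mu,\nu)|+o(1))}$ while $\wt\vk(f_\omega^n,B(x_\omega,\ve))\le C_\ve$. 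Since $H(\mu^{*n})=n\hRW(\mu)(1+o(1))$, at least $2^{n\hRW(\mu)(1-o(1))}$ distinct group elements occur among these $f_\omega^n$, and setting $N=n(|\lambda(\mu,\nu)|+o(1))$ they all lie in $\cG_{N,\ve,C_\ve}$; hence $\limsup_N\frac{1}{N}\log\#\cG_{N,\ve,C_\ve}\ge\hRW(\mu)/|\lambda(\mu,\nu)|=\dim\nu$. Letting $C\to\infty$ and then $\ve\to0$ gives $\delta_2(G)\ge\dim\nu=\dimH\nu\ge\dimH\Lambda-\eta$, and $\eta\to0$ concludes. (Alternatively one may feed the $C^2$ analogue of the IFS approximation extracted from the proof of Theorem~\ref{thm: C2 dimension variation} into the conformal pressure formula: nested compositions of a $C^2$ contracting IFS automatically have uniformly bounded distortion, so the words realising $\dimH$ of the attractor $\Lambda'\sbs\Lambda$ fall into a single $\cG_{n,\ve,C}$ with $C$ independent of $n$.)

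\emph{The bound $\delta_2(G)\le\dimH\Lambda$.} Here the distortion cutoff $\wt\vk\le C$ is essential, and is the reason one works with $\delta_2$ rather than $\delta$ --- for $\delta$ the analogous inequality required $G$ to be virtually free (Theorem~\ref{thm: C1 dynamical critical exponent}). The plan is the dynamical analogue of Patterson's construction. One first checks that $\delta_2(G)$ is the abscissa of convergence of a dynamical Poincar\'e series that sums, over group elements $g$ with $\wt\vk(g,\cdot)\le C$, the $s$-th power of the contraction rate of $g$ on a ball meeting $\Lambda$ --- the interpretation alluded to after Definition~\ref{def: generalized dynamical critical exponents} --- and then, via Patterson's trick, builds a probability measure $\nu$ on $\Lambda$ that is $\delta_2(G)$-\emph{conformal}, i.e.\ $g_*\nu\ll\nu$ with $\tfrac{\dr g_*\nu}{\dr\nu}\asymp(g')^{\delta_2(G)}$ for group elements $g$ with $\wt\vk(g,\cdot)$ bounded. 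Local discreteness is what makes the series sum over genuinely distinct elements and the limiting measure non-degenerate; minimality, with the base point taken in $\Lambda$, places $\nu$ on $\Lambda$. The uniform bound $\wt\vk\le C$ then yields a \emph{shadow lemma}: for $\nu$-typical $x\in\Lambda$ and $\rho$ small, a boundedly-distorted element contracts a fixed-size ball onto $B(x,\rho)$ with derivative $\asymp\rho$, and conformality transports the mass with uniform constants, so $\nu(B(x,\rho))\asymp\rho^{\delta_2(G)}$. The non-expandable points are the only obstruction to the construction and to the shadow lemma, and this is exactly where property $(\star)$ / $(\Lambda\star)$ enters, in the spirit of Deroin--Kleptsyn--Navas: at $x\in\NE\cap\Lambda$ the only available contracting elements are the $g_\pm$ furnished by $(\star)/(\Lambda\star)$, for which $x$ is a fixed point isolated on one side (hence parabolic-like of finite multiplicity), and one checks these suffice to carry the argument through and that, affecting $\nu$ only on the small exceptional set $\NE\cap\Lambda$, they leave the $\nu$-a.e.\ local dimension unchanged. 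Thus $\dim\nu=\delta_2(G)$, whence $\dimH\Lambda\ge\dimH\nu=\delta_2(G)$. (Organised as a covering estimate the same ingredients give $\#\cG_{n,\ve,C}\ll_{\ve,C}2^{n\dimH\Lambda}$: the image intervals $g(B(x_g,\ve))$ have comparable length $\asymp2^{-n}$ by the distortion bound and multiplicity bounded by a constant by local discreteness --- two elements producing nearby image intervals differ by an element which, after conjugation to a fixed interval by the minimal action on $\Lambda$, is $C^1$-close to the identity there --- so a mass-distribution argument bounds the essentially disjoint ones by $2^{n\dimH\Lambda}$.)

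The main obstacle is the bound $\delta_2(G)\le\dimH\Lambda$: the construction of the conformal measure and the shadow lemma must be pushed through while simultaneously controlling the nonlinearity (absorbed by the $\wt\vk\le C$ cutoff, which is the structural reason the statement holds for locally discrete $G$ with no freeness hypothesis), the non-uniform hyperbolicity and the degeneracy of the conformal picture at non-expandable points of $\Lambda$ (handled by $(\star)/(\Lambda\star)$), and the failure of $G$ to be free (handled by local discreteness, which makes the dynamical Poincar\'e series genuinely count distinct elements). The same circle of ideas, tracked near a fixed point of multiplicity $k+1$ via a Patterson--Sullivan--Beardon horoball count, is what produces the lower bound $k/(k+1)\le\dimH\Lambda$ in the Main Theorem.
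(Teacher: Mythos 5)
Your first inequality, $\dimH\Lambda\le\delta_2(G)$, is workable in outline, though it is a much heavier route than necessary: the paper obtains it directly by upgrading the expandability statement to one with bounded distortion norm (Proposition~\ref{prop: C2 expandable}) and feeding it into the construction of Proposition~\ref{prop: find generators 1}, with no local discreteness and no random walk needed. If you insist on going through Theorems~\ref{thm: C2 dimension variation} and~\ref{thm: dim formula C2}, you still owe two verifications: that $G$ preserves no probability measure (this follows from local discreteness plus absence of finite orbits via Herman's theorem, as in the proof of Theorem~\ref{thm: C1 dynamical critical exponent}, not from property $(\star)$/$(\Lambda\star)$ as you assert), and that the subgroup generated by $\supp\mu$ coming out of the variational principle has no finite orbits, which is a hypothesis of Theorem~\ref{thm: dim formula C2} and is not automatic for that $\mu$.

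The genuine gap is in the converse inequality $\delta_2(G)\le\dimH\Lambda$. Your Patterson--Sullivan construction is not available at this level of generality: (i) the measure you would build is only approximately conformal, and only for bounded-distortion elements, so Theorem~\ref{thm: conformal dimension} and the Deroin--Kleptsyn--Navas dichotomy cannot be invoked and every ``shadow lemma'' step would have to be proved from scratch; (ii) the paper's conformal-measure machinery (Section~\ref{se:11}) exists only for finitely generated subgroups of $\Diff_+^\omega(\SS^1)$ with exceptional minimal set --- it rests on Hector's theorem and on the finite multiplicity of one-sided isolated fixed points (Proposition~\ref{prop: derivative around parabolic}) to exclude atoms, whereas for a $C^2$ group the elements $g_\pm$ furnished by $(\star)/(\Lambda\star)$ at non-expandable points may be infinitely tangent to the identity, so ``parabolic-like of finite multiplicity'' is simply false, non-atomicity fails to follow, and in the minimal case (property $(\star)$) there is not even a wandering point at which to base the Poincar\'e series; (iii) your fallback covering remark is also incorrect: exhibiting $\asymp 2^{n\delta_2(G)}$ pairwise disjoint intervals of length $\asymp 2^{-n}$ meeting $\Lambda$ bounds $\delta_2(G)$ by the upper box dimension of $\Lambda$, not by its Hausdorff dimension. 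The paper's argument goes in the opposite direction: after reducing to a single ball $B(x,\ve_1)$ (Lemma~\ref{lem: equiv def of C2 CE}), local discreteness together with the pigeonhole/approximate-identity mechanism of Lemma~\ref{lem: approximate identity} is used twice --- once to rule out infinitely many elements with comparable derivative and distortion $\le C$ on that ball, and once to extract a subfamily of exponential rate $\ge\delta_2(G)-\ve$ whose images of the ball are pairwise disjoint --- and then the IFS construction of Section~\ref{sec: dimension variation} (bring the images back with Lemma~\ref{lem: contracting constant}, get an open-set-condition IFS) produces a measure supported on $\Lambda$ of dimension $\ge\delta_2(G)-2\ve$, which is what converts the counting into a Hausdorff-dimension lower bound. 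Your parenthetical contains the germ of the separation step, but without this measure-construction step the argument does not close.
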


\paragraph{Conformal measures.}
The notion of conformal measure is a powerful tool for studying conformal dynamics, particularly in one-dimensional dynamics. This concept was initially explored in the context of hyperbolic geometries. Patterson introduced a measure that satisfies a specific homogeneous condition to study limit sets of Fuchsian groups, now known as the Patterson-Sullivan measure, in \cite{Pat76}. Later, Sullivan provided a general definition of conformal measures to investigate the fractal geometry of limit sets in conformal dynamics, as detailed in \cite{Sul83}. He also constructed a conformal measure for rational maps on the Julia set. Let us first review the definition of conformal measures.

\begin{definition}\label{def: conformal measures}
	Let $G$ be a group of conformal transformations. A measure $\nu$ on the underlying space is said to be \emph{conformal} with exponent $\delta$ (or simply $\delta$-conformal), if for every Borel set $A$ and for every map $g\in G$ one has
\[\nu(gA)=\int_A |g'(x)|^\delta \dr\nu(x).\]
\end{definition}

We focus on actions of subgroups $G\sbs\Diff_+^2(\SS^1)$ having an exceptional minimal set $\Lambda.$ 
Conformal measures are useful to understand the fractal $\Lambda.$ 
To do so, it is important to know whether there exist atomless conformal measures supported on $\Lambda$.
In the special case where $\NE(G)=\vn,$ the techniques from \cite{Sul83} allows to show the existence of atomless $\delta$-conformal measures $\Lambda$.
Furthermore, $\delta$ is equal to the Hausdorff dimension of $\Lambda$ and $0 < \delta=\dim_{\mr H}\Lambda < 1$. 

Dropping the condition of $\NE(G)=\vn$, it is unknown whether atomless conformal measures supported on $\Lambda$ exist. Nevertheless, it is shown in \cite{DKN09} that if that is the case then the value of the exponent $\delta$ is unique and strictly less than $1$.

In this paper, we establish the following.
\begin{theorem}\label{thm: conformal dimension}
	Let $G\sbs\Diff_+^2(\SS^1)$ be a finitely generated subgroup with an exceptional minimal set $\Lambda.$ Assume that $G$ satisfies property $(\Lambda\star).$ If there exists an atomless $\delta$-conformal measure supported on $\Lambda$, then $\delta=\dim_{\mr H}\Lambda.$
\end{theorem}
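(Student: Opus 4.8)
\textbf{Proof strategy for Theorem~\ref{thm: conformal dimension}.}
The plan is to prove the two inequalities $\delta \leq \dim_{\mr H}\Lambda$ and $\delta \geq \dim_{\mr H}\Lambda$ by separate arguments, both exploiting the $\delta$-conformality relation $\nu(gA) = \int_A (g')^\delta \dd\nu$ together with the bounded-distortion machinery for $C^2$ maps recorded in Section~\ref{sec: distortion}. Throughout, $\nu$ denotes the given atomless $\delta$-conformal measure on $\Lambda$. The key geometric objects are the connected components of $\SS^1 \sm \Lambda$ (the ``gaps''), whose $G$-images tile $\SS^1 \sm \Lambda$, and the Markov-type covering of $\Lambda$ by images $g(I)$ of a fixed interval $I$; these are precisely the structures used in the variational principle (Theorem~\ref{thm: C2 dimension variation}) and in the construction of the contracting IFS in Theorem~\ref{thm: IFS approximation}.

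For the upper bound $\delta \leq \dim_{\mr H}\Lambda$, I would show that $\nu$ itself has Hausdorff dimension at least $\delta$, so that $\dim_{\mr H}\Lambda \geq \dim_{\mr H}\nu \geq \delta$. The mechanism is a standard mass-distribution / Frostman argument: given a small ball $B(x,\rho)$ with $x \in \Lambda$, one uses property $(\Lambda\star)$ and minimality of the $G$-action on $\Lambda$ to find a group element $g$ (a product of controlled generators) such that $g(B(x,\rho))$ has size comparable to a fixed constant and $g$ has bounded distortion on $B(x,\rho)$; then conformality gives $\nu(B(x,\rho)) \asymp (g'(x))^\delta \asymp \rho^{\delta}$ up to the distortion constant and a sub-exponential error coming from the word length of $g$. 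Taking logarithms and letting $\rho \to 0$ yields $\liminf_{\rho\to 0}\frac{\log\nu(B(x,\rho))}{\log\rho}\geq \delta$ for $\nu$-a.e.\ $x$, hence $\dim_{\mr H}\nu \geq \delta$ by the mass distribution principle. The non-expandable points require care: near a point of $\NE\cap\Lambda$ the contraction rates degenerate, and this is exactly where property $(\Lambda\star)$ is invoked — the existence of $g_+, g_-$ fixing $x$ with $x$ isolated in $\Fix(g_\pm)$ from the appropriate side lets one replace a badly-contracted ball by a comparable one after finitely many steps, as in \cite{DKN09}.

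For the lower bound $\dim_{\mr H}\Lambda \leq \delta$, I would combine the variational principle with an energy estimate. By Theorem~\ref{thm: C2 dimension variation}, $\dim_{\mr H}\Lambda = \sup \dim_{\mr H}\nu_\mu$ over ergodic $\mu$-stationary measures $\nu_\mu$ supported on $\Lambda$; alternatively, by Theorem~\ref{thm: IFS approximation} it suffices to bound $\dim_{\mr H}\Lambda'$ for the attractor $\Lambda'$ of a contracting IFS $\{g_i|_I\}$ satisfying the open set condition with $\Lambda' \subset \Lambda$. For such an IFS, conformality of $\nu$ gives for each word $\mathbf{i} = (i_1,\dots,i_n)$ that $\nu(g_{\mathbf{i}}(I)) = \int_I ((g_{\mathbf{i}})')^\delta \dd\nu \asymp \diam(g_{\mathbf{i}}(I))^\delta$ with a uniform distortion constant, and the open set condition makes these sets essentially disjoint; summing over words of generation $n$ and using $\nu(\SS^1) < \infty$ forces $\sum_{\mathbf{i}} \diam(g_{\mathbf{i}}(I))^\delta \ll 1$ uniformly in $n$. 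This is precisely a Moran-type covering argument showing $\dim_{\mr H}\Lambda' \leq \delta$; letting $\ve \to 0$ in Theorem~\ref{thm: IFS approximation} gives $\dim_{\mr H}\Lambda \leq \delta$. (One subtlety: the natural cover by $n$-th generation pieces need not have uniformly small diameter, so one passes instead to a stopping-time cover where each piece has diameter $\asymp$ a fixed small $\rho$ — again a routine bounded-distortion stopping argument.)

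The main obstacle is the upper-bound direction at non-expandable points. Away from $\NE$, contraction is uniform and the Frostman estimate is immediate; but $\NE\cap\Lambda$ can be nonempty (indeed this is the whole reason the theorem is nontrivial and $\delta$ need not a priori be $\dim_{\mr H}\Lambda$), and there the local dynamics is parabolic-like, so a single group element cannot expand a small ball to definite size with bounded distortion — one must iterate, and the number of iterations grows as $x$ approaches $\NE$, producing competing sub-exponential error terms that must be shown not to affect the exponent. Controlling this requires the fine description of the dynamics near parabolic fixed points (the expansion of $g_\pm^N$ near $x$ as $N\to\infty$, governed by the multiplicity $k(x)$) together with the fact, from \cite{DKN09, DKN18}, that $\nu$-almost every point spends a controlled proportion of time away from any fixed neighborhood of $\NE$; this is the technical heart and is where property $(\Lambda\star)$ and the $C^2$ distortion estimates are used most heavily.
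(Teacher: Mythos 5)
Your argument for $\dimH\Lambda\leq\delta$ is essentially sound, though it is not the paper's route. The paper never passes through an IFS for this inequality: it applies conformality and bounded distortion directly to the expandable intervals of Proposition~\ref{prop: DKN expandable} to get $\nu(I)\gg_G |I|^{\delta}$, and then a Vitali covering of $\Lambda'=\Lambda\sm G(\NE)$ by disjoint expandable intervals gives $H^{\delta}(\Lambda')<\infty$, hence $\dimH\Lambda=\dimH\Lambda'\leq\delta$ (Lemma~\ref{lem: delta geq dim}; note this half needs neither atomlessness nor any variational principle). Your Moran-type argument also closes: on the nested images of a uniformly contracting IFS the compositions have uniformly bounded distortion, conformality gives $\nu(g_{\mathbf i}(I))\asymp \diam(g_{\mathbf i}(I))^{\delta}\,\nu(I)/|I|^{\delta}$, and disjointness of the generation-$n$ pieces yields $\sum_{\mathbf i}\diam(g_{\mathbf i}(I))^{\delta}\ll 1$, so $\dimH\Lambda'\leq\delta$ and $\dimH\Lambda\leq\delta$ after $\ve\to 0$. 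One caveat: Theorem~\ref{thm: IFS approximation} is stated for real-analytic groups, so in the $C^2$, $(\Lambda\star)$ setting of the theorem you must instead invoke the construction inside the proof of Theorem~\ref{thm: C2 dimension variation} (the sets $\cF_n$ satisfying \eqref{eqn: property Fn 1}--\eqref{eqn: property Fn 3}), using Lemma~\ref{lem: invariant measure and finite orbit} to rule out invariant probability measures. This route is heavier than the paper's but correct.

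The genuine gap is in the other inequality, $\delta\leq\dimH\Lambda$. You propose to prove $\dimH\nu\geq\delta$ by a Frostman estimate $\nu(B(x,\rho))\ll\rho^{\delta}$ for $\nu$-a.e.\ $x$ and \emph{all} small $\rho$, based on finding at every scale some $g\in G$ that expands $B(x,\rho)$ to definite size with bounded distortion. This is precisely what fails near $G(\NE)$: Proposition~\ref{prop: DKN expandable} produces expandable intervals containing $x$ only along a sequence of scales, and these intervals are not centered at $x$, so a ball $B(x,\rho)$ at an intermediate scale, or one whose expansion must pass close to a parabolic-like fixed point, admits no such $g$. Controlling $\nu(B(x,\rho))$ there amounts to a global-measure-formula/shadow-lemma analysis with parabolic corrections (the analogue of Stratmann--Velani for Fuchsian groups with cusps), which is supplied neither by \cite{DKN09,DKN18} nor by anything in this paper; likewise the ``fact'' you invoke, that $\nu$-a.e.\ point spends a controlled proportion of time away from a neighborhood of $\NE$, is a recurrence statement for the conformal density that no cited result provides. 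The paper deliberately avoids estimating the local dimension of $\nu$: in Lemma~\ref{lem: delta leq dim} it uses atomlessness (to get $\nu(\Lambda')=1$) and Borel--Cantelli, together with the upper bound $\nu(5I')\leq\nu(I)\ll 2^{-\delta n}$, to extract roughly $2^{\delta n}$ pairwise disjoint shrunk expandable intervals, and then feeds these into the Section~\ref{sec: dimension variation} machinery to build an auxiliary uniformly contracting IFS whose stationary measure is supported on $\Lambda$ and has dimension arbitrarily close to $\delta$, contradicting $\delta>\dimH\Lambda$. Without either that detour or a genuine all-scales local-dimension estimate for $\nu$, your proof of this direction is incomplete; note also that atomlessness, which the theorem assumes exactly for this half, plays no explicit role in your sketch.
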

This result extends Sullivan's result \cite{Sul83} to the case where $\NE(G) \ne\vn$, and also directly implies the uniqueness of $\delta$ which was shown in \cite{DKN09}.

Moving forward, we are able to construct atomless conformal measures on exceptional minimal sets in certain cases, thanks to the notion of pointwise dynamical critical exponents. 
\begin{theorem}\label{thm: existence of conformal measures}
Let $G\sbs\Diff_+^\omega(\SS^1)$ be a finitely generated subgroup with an exceptional minimal set $\Lambda.$ 
Assume that the stabilizer of any point $x\in\SS^1\sm\Lambda$ in $G$ is trivial. 
Then there exists an atomless $\delta$-conformal measure supported on $\Lambda$ with $\delta=\dim_{\mr H}\Lambda.$
\end{theorem}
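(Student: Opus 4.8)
The plan is to run a Patterson--Sullivan type construction adapted to the dynamical critical exponent, then use the already-established identification $\delta(G,\SS^1) = \max\{\dim_{\mathrm H}\Lambda, \sup_x k(x)/(k(x)+1)\}$ (Theorem~\ref{thm: critical exponent and exceptional}) together with Theorem~\ref{thm: conformal dimension} to pin down the exponent. Fix a finite symmetric generating set and a base point $y_0 \in \SS^1 \setminus \Lambda$, whose stabilizer is trivial by hypothesis. For $s > \delta := \dim_{\mathrm H}\Lambda$ I would consider the Poincaré-type series $P(s) = \sum_{g \in G} \|g'\|_{B(y_0,\varepsilon)}^{s}$ (or a variant weighted by the local contraction $g'(y_0)$), which converges for $s$ large and whose abscissa of convergence is, by the interpretation of $\delta(G)$ as an exponent of convergence alluded to in the remark after Definition~\ref{def: generalized dynamical critical exponents}, controlled by $\delta(G,\SS^1)$. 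Building the measures $\nu_s = P(s)^{-1}\sum_g \|g'\|^{s}\,\delta_{g(y_0)}$ and, in the divergent case, taking a weak-$*$ limit as $s \downarrow s_0$ (Patterson's trick of inserting a slowly varying function if $P(s_0) < \infty$), one obtains a probability measure $\nu$ on $\SS^1$. The standard shadow/distortion estimates — available here because on a neighbourhood of $\Lambda$ the relevant group elements have bounded distortion, a consequence of the structure theory and the control in Theorems~\ref{thm: real analytic variational principle} and \ref{thm: IFS approximation} — show that $\nu$ is $s_0$-conformal for the $G$-action.

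Next I would argue the measure is supported on $\Lambda$ and atomless. Support: because $G$ has an exceptional minimal set, the orbit $G\cdot y_0$ accumulates exactly on $\Lambda$; the mass of $\nu_s$ escaping any fixed neighbourhood of $\Lambda$ is controlled by finitely many group elements and goes to $0$ in the limit, so $\mathrm{supp}\,\nu \subseteq \Lambda$ (and it is $G$-invariant, hence equals $\Lambda$ by minimality). Atomlessness: a point mass at $x \in \Lambda$ together with $s_0$-conformality would force $\sum_{g \in \mathrm{Stab}(x)} g'(x)^{s_0} < \infty$ and, more importantly, would let us compare $\nu(\{x\})$ with $\nu$ of shrinking neighbourhoods along an orbit, contradicting either the divergence used in the construction or property~$(\Lambda\star)$-type control near non-expandable points; alternatively one invokes that an atom would make $\delta$ realize the exponent $k(x)/(k(x)+1)$ at a point of $\Lambda$, which by the triviality-of-stabilizer hypothesis off $\Lambda$ and the analysis in Section~\ref{se:11} is incompatible with $x \in \Lambda$. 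Once $\nu$ is an atomless $\delta_0$-conformal measure on $\Lambda$, Theorem~\ref{thm: conformal dimension} (whose hypothesis $(\Lambda\star)$ holds for finitely generated $G \subset \Diff_+^\omega(\SS^1)$ by \cite{DKN18}) gives $\delta_0 = \dim_{\mathrm H}\Lambda$, completing the proof.

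The main obstacle I anticipate is the atomlessness and the verification that the abscissa of convergence equals $\dim_{\mathrm H}\Lambda$ rather than the possibly larger $\sup_x k(x)/(k(x)+1)$: a priori the Poincaré series based at $y_0 \notin \Lambda$ could be dominated by the contribution of a parabolic-like stabilizer of a point off $\Lambda$ with high multiplicity, whose exponent is $k(x)/(k(x)+1)$. The triviality of $\mathrm{Stab}_G(x)$ for $x \notin \Lambda$ is exactly what rules this out — it kills the ``cusp at $y_0$'' contribution — so the series sees only the dynamics near $\Lambda$ and its abscissa is $\delta(G) = \dim_{\mathrm H}\Lambda$ by the second item of Theorem~\ref{thm: C1 dynamical critical exponent} (applicable since a finitely generated subgroup of $\Diff_+^\omega(\SS^1)$ with an exceptional minimal set is locally discrete and virtually free by Sullivan's/Ghys's results). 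A secondary technical point is making the weak limit genuinely conformal and not merely quasi-conformal: this requires the shadow lemma with multiplicative (not just up-to-constant) error, which is where the real-analyticity and the bounded-distortion estimates near $\Lambda$ are essential, and where I would spend most of the care.
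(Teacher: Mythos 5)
Your overall strategy is the one the paper uses: run a Patterson--Sullivan construction based at a point $y_0\in\SS^1\sm\Lambda$ (a wandering point, thanks to the trivial-stabilizer hypothesis), take a weak-$*$ limit of the orbital measures $\nu_s$, show the limit is conformal, atomless and supported on $\Lambda$, and then invoke Theorem~\ref{thm: conformal dimension} to force the exponent to be $\dim_{\mr H}\Lambda$. Two remarks on emphasis: conformality of the limit does not require any ``shadow lemma with multiplicative error'' --- it follows from the exact change-of-variables identity $f^{-1}_*\nu_s=(f')^s\nu_s$ (with Patterson's auxiliary function handled as in Lemma~\ref{lem: conformal 2}), so the place where you expect to spend most care is actually routine; and the distortion control at $y_0$ comes simply from the disjointness of the intervals $gB(y_0,\ve)$, $g\in G$, and \eqref{eqn:vksumI}, not from Theorems~\ref{thm: real analytic variational principle} or \ref{thm: IFS approximation}.

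The genuine gap is in atomlessness, and precisely at the points where it is hardest: parabolic fixed points $y\in\Lambda$. Your argument tries to exclude atoms there using the triviality-of-stabilizer hypothesis, but that hypothesis only concerns points \emph{off} $\Lambda$; points of $\Lambda$ may well be parabolic fixed points of elements of $G$ (indeed by property $(\Lambda\star)$ the non-expandable points of $\Lambda$ are fixed by nontrivial elements), and nothing in your sketch rules out that the limit measure charges such a point. What is actually needed is twofold: (i) the strict inequality $\delta(G)>k(y)/(k(y)+1)$ for every parabolic point $y\in\Lambda$ of multiplicity $k(y)+1$, which is Proposition~\ref{prop: delta k k+1} (proved by a boosting argument pairing the parabolic element with a hyperbolic one to form a free semigroup) and which guarantees the construction exponent $s_0=\delta(G,y_0)\geq\delta(G)$ strictly exceeds $k(y)/(k(y)+1)$ also at points of $\Lambda$ (off $\Lambda$ the hypothesis gives $k(y)=0$); and (ii) a quantitative estimate showing $\sup_{s>s_0}\nu_s(B(y,\rho))\to 0$ as $\rho\to0^+$, which uses the derivative growth $(g^n)'\gg n^{-(k+1)/k}$ near a parabolic fixed point (Proposition~\ref{prop: derivative around parabolic}) to sum the masses of the fundamental intervals $I_n$ and exploit $s_0-\tfrac{k}{k+1}>0$ (this is Lemma~\ref{lem: conformal 4}). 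Your alternative suggestions --- that an atom ``contradicts the divergence used in the construction'' or is ``incompatible with $x\in\Lambda$ by the triviality hypothesis'' --- do not supply this mechanism, so as written the atomlessness step, and hence the applicability of Theorem~\ref{thm: conformal dimension}, is not justified.
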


\begin{remark}
Notice that the triviality condition of the stabilizers in Theorem \ref{thm: existence of conformal measures} is rather mild, see Remark \ref{rem: fix points on Lambda}. For example it holds for any non elementary Fuchsian group.
\end{remark}

A general version of Theorem~\ref{thm: existence of conformal measures} combined with \cite[Theorem F]{DKN09} allows us to obtain a precise estimate of the Hausdorff dimension of exceptional minimal sets, which implies the first item in the main theorem. 

\subsection{Applications of random walk structures and approximation results}\label{sec: 2.3}

As we have mentioned in the introduction, we establish a structure theorem for smooth random walks. We list two statements of its applications.
\begin{theorem}\label{thm: numb min sets}
	Let $T\subset \Diff^2_+(\SS^1)$ be a semigroup without finite orbits. Then $T^{-1}\defeq\lb{f^{-1}:f\in T}$  has the same number of minimal sets with $T$.
\end{theorem}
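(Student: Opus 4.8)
The plan is to use the structure theorem for $C^2$ random walks (Theorems \ref{thm: structure of random walk 1}, \ref{thm: d,r top inv}, \ref{thm: structure of random walk 2}) to produce a number that is simultaneously read off from $T$ and from $T^{-1}$, namely the integer $d$ attached to a random walk supported on a generating set. First I would reduce the statement about the semigroup $T$ to a statement about a finitely supported measure: choose a finite generating set $\cS$ of $T$ (if $T$ is not finitely generated, one has to be slightly careful, but the number of minimal sets is governed by any sufficiently rich finite subsemigroup, so we may pass to finitely generated subsemigroups exhausting $T$ and argue that the count stabilizes — alternatively the statement is really about finitely generated $T$ and that is the case of interest). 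Let $\mu$ be a probability measure with $\supp\mu=\cS$ and let $\mu^-=\mu^{*(-1)}$ be the reflected measure, which generates $T^{-1}$. Since $T$ has no finite orbit, neither does $T^{-1}$, and the hypothesis of the structure theorem (no invariant probability measure) is met on both sides, because an invariant probability measure for a group would be invariant for the inverse semigroup; here one should check that ``no finite orbit for the semigroup $T$'' indeed rules out an invariant measure in the relevant sense, using that the semigroup generated is the same as on the reverse side at the level of the ambient group.

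The core of the argument is the following dictionary. By the structure theorem, the forward random walk driven by $\mu$ has a well-defined integer $d=d(\mu)$ such that there are exactly $d$ ergodic $\mu$-stationary measures $\nu_0^+,\dots,\nu_{d-1}^+$ on $\SS^1$, cyclically permuted by a rotation symmetry, and the topological structure (the attracting points of the non-stationary North–South dynamics) is organized into $d$ blocks of $r$ points. The key point I would isolate is that the \emph{supports} of the ergodic stationary measures of $\mu$ are exactly the minimal sets of the semigroup $T=T_\mu$: each minimal set carries at least one ergodic stationary measure, and conversely the support of an ergodic stationary measure is a minimal set (this is standard for semigroup actions on compact spaces together with the fact that stationary measures are supported on the union of minimal sets — see \S\ref{se:3}). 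Hence the number of minimal sets of $T$ equals the number of ergodic $\mu$-stationary measures, which by the structure theorem equals $d(\mu)$. Running the same reasoning for $\mu^-$ gives: number of minimal sets of $T^{-1}$ equals $d(\mu^-)$.

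It therefore remains to prove $d(\mu)=d(\mu^-)$. This is exactly the kind of time-reversal symmetry that the structure theorem is built to expose: the $u$-states for the forward cocycle correspond to the $s$-states for the backward cocycle on $\Sigma\times\SS^1$, and the integer $d$ is characterized by an intrinsic, reversal-invariant feature — for instance $d$ is the number of connected components of the ``stable lamination'' / the size of the orbit of the rotation symmetry, or equivalently (Theorem \ref{thm: d,r top inv}) $d$ and $r$ are topological invariants of the random walk that do not see the direction of time. Concretely, I would invoke Theorem \ref{thm: d,r top inv}: it should assert precisely that the pair $(d,r)$ attached to $\mu$ coincides with the pair attached to $\mu^-$ (the $u$/$s$ duality swaps the roles of $\Pi$ and $\Xi$ but preserves their common target $\SS_{dr}$ and the cyclic symmetry). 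Granting that, $d(\mu)=d(\mu^-)$ and the theorem follows. The main obstacle I anticipate is the bookkeeping in this last step: making sure that the integer counting ergodic stationary measures on the forward side is literally the same integer that Theorem \ref{thm: d,r top inv} proves invariant under reversal, rather than some a priori different normalization, and handling the (minor) subtlety that one must choose the driving measures $\mu$ and $\mu^-$ with full support on the generators so that $T_\mu=T$ and $T_{\mu^-}=T^{-1}$ exactly. Everything else — the correspondence between minimal sets and supports of ergodic stationary measures, and the reduction to finitely generated semigroups — is soft.
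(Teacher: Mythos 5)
Your treatment of the core case (finitely generated $T$ with no common invariant probability measure) is essentially the paper's own argument: by Theorem \ref{thm: supports of stationary measures} the minimal sets of $T_\mu$ are exactly the supports of the ergodic $\mu$-stationary measures, and Theorem \ref{thm: structure of random walk 1} exhibits exactly $d$ ergodic $\mu^+$-stationary measures and exactly $d$ ergodic $\mu^-$-stationary measures, so the two counts agree. (A small misattribution: the equality $d(\mu)=d(\mu^-)$ is not the content of Theorem \ref{thm: d,r top inv}, which is about invariance under topological conjugacy; what you need is already in Theorem \ref{thm: structure of random walk 1}, i.e.\ Lemma \ref{lem: R ergodic is ergodic}, the equality of the numbers of ergodic $u$-states and $s$-states.)

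However, there are two genuine gaps. First, ``no finite orbits'' does \emph{not} rule out a common invariant probability measure: a semigroup of rotations containing an irrational rotation has no finite orbit yet preserves Lebesgue measure, and then the hypothesis of the structure theorem fails, so your dictionary through stationary measures breaks down. The paper treats this case separately: the invariant measure $\nu$ is atomless (no finite orbits), the map $f\mapsto\rho(f)$ given by $\nu([x,f(x)])$ is an additive rotation-number homomorphism with dense image in $\RR/\ZZ$, and from this one deduces that $\supp\nu$ is the unique minimal set of both $T$ and $T^{-1}$, so both counts equal $1$. Your proposal cannot simply ``check'' this case away; it needs its own argument. Second, the reduction from a general semigroup to a finitely generated one is not soft. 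Monotonicity gives only that a finitely generated $T_0\sbs T$ has \emph{at least} as many minimal sets as $T$; to find a finitely generated subsemigroup with \emph{exactly} as many, the paper proves a non-trivial lemma (using the construction of hyperbolic elements/pingpong pairs, Proposition \ref{prop: generate pingpong pair}) showing that if $T_0$ has strictly more minimal sets than $T$, one can adjoin a single element $f\in T$ so that $\pair{T_0,f}$ has strictly fewer minimal sets than $T_0$; iterating (the count is finite by Malicet) yields a finitely generated $T_2$ matching $T$ on one side and $T^{-1}$ on the other. Your ``the count stabilizes along an exhaustion'' is exactly the point that requires this lemma, and the fallback ``the statement is really about finitely generated $T$'' does not prove the theorem as stated.
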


\begin{theorem}\label{thm: pingpong pair}
	\footnote{Note that the $C^2$ assumption of this theorem can be relaxed to $C^1$. The $C^1$ case is more complicated and  will be proved in a forthcoming paper of the second and third author.} Let $T\sbs \Diff^2(\SS^1)$ be a semigroup with no invariant probability measure on $\SS^1.$ Then there exists a perfect pingpong pair $(h_1,h_2)\sbs T.$
\end{theorem}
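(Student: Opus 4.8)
The plan is to deduce the statement from the structure theorem for $C^2$ random walks (Theorems \ref{thm: structure of random walk 1}, \ref{thm: d,r top inv}, \ref{thm: structure of random walk 2}) together with the classical ping-pong lemma. First I would reduce to the case where $T$ has no invariant probability measure but is ``irreducible'' in the appropriate sense: since $T\sbs\Diff^2(\SS^1)$ has no invariant probability measure, I can pass to a finitely supported probability measure $\mu$ whose support is a finite subset of $T$ that still has no invariant probability measure (any finite subset witnessing non-invariance will do, possibly after enlarging it). Applying the structure theorem to $\mu$ yields the integers $d,r\geq 1$, the finite families of ergodic $u$-states and $s$-states, and — crucially — a $T_\mu$-equivariant structure: a finite $T_\mu$-invariant collection of $dr$-point configurations on $\SS^1$ governed by the maps $\Pi,\Xi\colon\Sigma\to\SS_{dr}$, exhibiting the dynamics of a typical random word as finitely many copies of North--South dynamics intertwined by a rotation symmetry of order $d$ (or $2d$, allowing orientation reversal).

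Next, I would use this structure to locate two elements $g,h\in T_\mu$ that are ``uniformly contracting toward'' prescribed attracting configurations and ``uniformly expanding away from'' the corresponding repelling configurations: concretely, for $\P$-a.e.\ $\omega^+$ the forward cocycle $f^n_{\omega^+}$ contracts the complement of a small neighborhood of the repelling configuration $\Xi(\omega)$ into a small neighborhood of the attracting configuration $\Pi(\omega)$, with the contraction rate governed by the (negative) Lyapunov exponents $\lambda_i^-$. Picking $n$ large and then a concrete finite word $g=f^n_{\omega^+}$ realizing this behavior, and doing the same with a second independent word $h$ whose attracting/repelling configurations are in ``general position'' relative to those of $g$ — which is possible precisely because the absence of an invariant measure forces enough elements of $T_\mu$ with distinct North--South data, and one can conjugate by a further element of $T_\mu$ to separate the two configurations — gives disjoint ping-pong domains $U_g^\pm,U_h^\pm$ on each of the $d$ copies. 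One then has to be slightly careful that $g$ and $h$ are genuine elements of the original semigroup $T$, not just $T_\mu$; but $T_\mu\sbs T$ by construction, so this is automatic.

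With $g,h$ and their disjoint attracting/repelling neighborhoods in hand, the ping-pong lemma gives that sufficiently high powers $h_1=g^N$, $h_2=h^N$ freely generate a free group, and the dynamics of $\langle h_1,h_2\rangle$ on $\SS^1$ is, by construction, a finite ($d$-fold, or $2d$-fold) union of the standard two-generator ping-pong dynamics — i.e.\ a \emph{perfect} ping-pong pair in the sense required. Some bookkeeping is needed to check that the rotation symmetry of order $d$ from the structure theorem is compatible with choosing the ping-pong domains symmetrically, so that the quotient picture really is a single copy of ping-pong repeated $d$ times rather than something more degenerate; this uses that $\Pi$ and $\Xi$ are equivariant for the $\ZZ/d\ZZ$-action.

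I expect the main obstacle to be the \emph{separation / general position} step: extracting from the structure theorem two elements whose North--South configurations are transverse enough to run ping-pong, while simultaneously keeping track of the order-$d$ symmetry so the resulting pair is \emph{perfect} and not merely free. In the linear ($\SL(2,\RR)$ on $\RR\PP^1$) case this is classical and uses proximality plus irreducibility directly; here the non-linearity and the non-uniform hyperbolicity mean one must work with the $u$/$s$-states and good-word sets ($\Sigma_\ve$, Proposition \ref{prop: good words}) to obtain uniform contraction estimates, and the configurations live in $\SS_{dr}$ rather than being single points, so ``general position'' has to be formulated for finite configurations. The rest — the ping-pong lemma and the verification that high powers work — is routine once the domains are in place.
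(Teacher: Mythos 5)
Your high-level route is the same as the paper's: reduce to a finitely supported measure $\mu$ on $T$, invoke the structure theorem, and extract from typical long words two elements contracting toward prescribed attracting configurations (the paper does exactly this via Lemma~\ref{lem: A R and f} and Proposition~\ref{prop: generate pingpong pair}, and handles non--finitely generated $T$ by the weak* compactness argument from the proof of Theorem~\ref{thm: numb min sets} --- a step you assert but do not justify; ``any finite subset witnessing non-invariance'' is precisely what compactness is needed to produce). The genuine gap is the step you yourself flag as the main obstacle and then leave open: producing two elements whose attracting/repelling data are separated \emph{and} arranged so that condition (2) of Definition~\ref{def: perfect pingpong pair} holds (the cyclic pattern attractor, attractor, repellor, repellor, \dots). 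This is the whole content of the theorem beyond a free pair: the ping-pong lemma and high powers give freeness, but freeness is not what ``perfect'' means, and nothing in your sketch forces the $4dr$ fixed points of $h_1,h_2$ into the required interlacing pattern. The paper resolves this by choosing two \emph{distinct} configurations $A_1\ne A_2\in\supp\ul\nu^+$ both meeting a short interval around a point of the minimal set (possible because stationary measures are atomless, Lemma~\ref{lem: not atomic}) and $R_1,R_2\in\supp\ul\nu^-$ avoiding that interval, and then uses the alternation of $\Pi$- and $\Xi$-points from Theorem~\ref{thm: structure of random walk 1}(5) together with a short induction to verify the arrangement; Lemma~\ref{lem: A R and f} then locks the attractors and repellors into these prescribed cones.

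Moreover, the one concrete device you propose for the separation step --- ``conjugate by a further element of $T_\mu$ to separate the two configurations'' --- is not available here: $T$ is only a semigroup, so $kgk^{-1}$ with $k\in T_\mu$ need not belong to $T$, and composing instead of conjugating destroys control of the fixed-point set. The paper never conjugates; it prescribes the cones directly. A secondary imprecision: a typical word $f^n_{\omega^+}$ maps $\Pi(\omega)$ near $\Pi(\sigma^n\omega)$, which is a different configuration, so to get hyperbolic fixed points in each attracting component one needs the return-time (Birkhoff) argument of Lemma~\ref{lem: A R and f}, which also forces the induced permutation of components to be the identity; your passage to powers $g^N$ would incidentally repair the component permutation, but you introduce powers only to invoke the ping-pong lemma, not for this purpose, and in any case this does not touch the arrangement condition, which is where the proof actually lives.
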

Here, the perfect pingpong pair is defined in Definition \ref{def: perfect pingpong pair}, which can be viewed as a subsystem with the strongest hyperbolicity. A perfect pingpong pair has clear dynamics, which allow us to predict the behavior for every point. However, the subsystem generated by this pair of elements may not be \emph{large} enough comparing to the whole system.

Another problem worth considered is how large a uniformly hyperbolic subsystem could be. A famous theorem by A. Katok \cite{Ka} asserts that any ergodic hyperbolic measure of a $C^{1+}$ diffeomorphism can be approximated by a horseshoe with approximately the same entropy. In \cite[Theorem 3.3]{ACW}, Avila, Crovisier and Wilkinson further showed that it is possible to let the horseshoe have a dominated splitting, with approximately the same Lyapunov exponents. 
In the case of $2$D matrix valued cocycle over full shift of finite type, Morris and Shmerkin \cite{MS} showed that any cocycle with distinct Lyapunov exponents can be approximated by a subsystem with approximately the same entropy and Lyapunov exponents and having additionally a dominated splitting. Notice that dominated splitting can be translated into the cone field conditions for dynamics on projective spaces (cf. \cite{ABY}).

We show an analogue of these results in the setting of random walks by circle diffeomorphisms. There are three fundamental quantities characterizing a random walk: the entropy, the Lyapunov exponent, and the dimension of the stationary measure. In this paper, we aim to approximate all three quantities simultaneously using a uniformly hyperbolic subsystem. We present two versions here. 

For a positive integer $n$ and a subset $\cS$ of a (semi)group, we write $\cS^{* n}=\lb{g_1\cdots g_n:g_i\in\cS}.$
\begin{theorem}\label{thm: ACW approximation}
	Let $\cS\sbs \Diff_+^2(\SS^1)$ be a finite set without common invariant probability measures, then there exists positive integers $d,r>0$ such that the following holds. For every nondegenerate probability measure $\mu$ supported on $\cS$ and $\ve>0$, there exists arbitrarily large $N$ and a subset $\Gamma\subset\cS^{*N}$ such that:
\begin{enumerate}
\item There are exactly $d$ ergodic $\mu$-stationary measures $\nu_i$. The Lyapunov exponent $\lambda_i$ of each $\nu_i$ is negative.
\item  $\# \{\, (f_1,\cdots,f_N) \in \cS^N : f_N\cdots f_1 \in \Gamma\,\}\geq 2^{N(H(\mu)-\ve)}$.
\item There exists $dr$ disjoint open intervals $\lb{U_{i,j}}_{1\leq i\leq d,\\ 1\leq j\leq r}$ such that for every $(i,j)$, for all $f\in\Gamma$, we have $f\overline{U_{i,j}}\subset U_{i,j}$ and $f'(x) \in [2^{N(\lambda_i-\ve)},2^{N(\lambda_i+\ve)}]$ for all $x \in U_{i,j}$, 
\item The semigroup generated by $\Gamma$ has a unique minimal set $K_{i,j}$ in each $U_{i,j}$ with  $\dimH K_{i,j}\geq \dim\nu_i-\ve.$
\end{enumerate}
\end{theorem}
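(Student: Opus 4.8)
The plan is to deduce this from the structure theorem for $C^2$ random walks (Theorems \ref{thm: structure of random walk 1}, \ref{thm: d,r top inv}, \ref{thm: structure of random walk 2}) together with the dimension formula for stationary measures and a covering/pigeonholing argument in the spirit of Avila--Crovisier--Wilkinson \cite{ACW} and Morris--Shmerkin \cite{MS}. First I would invoke the structure theorem applied to the finite set $\cS$ (the constants $d$ and $r$ depend only on $\cS$, not on the weights $\mu$): this produces the $d$ ergodic $\mu$-stationary measures $\nu_i$, their negative Lyapunov exponents $\lambda_i$ (negativity because $\supp\mu$ has no common invariant measure, so after passing to an ergodic component the exponent is nonzero and, being a stationary measure on $\SS^1$, it is contracting on average), and the $u$-states $m_i^-$ on $\Sigma\times\SS^1$ describing the non-stationary North--South dynamics together with the rotation symmetry that groups the contracting regions into $dr$ pieces. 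This settles item (1).

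Next I would run the standard Katok-type construction inside the symbolic model. Using the Shannon--McMillan--Breiman theorem for the Bernoulli shift $(\Sigma^+,\P^+,\sigma)$ one finds, for large $N$, a set of words $w$ of length $N$ with $\P^+$-measure close to $1$, each of near-uniform probability $\approx 2^{-NH(\mu)}$, so that there are at least $2^{N(H(\mu)-\ve)}$ of them; simultaneously, by the ergodic theorem applied to the cocycle and to the Birkhoff sums computing the Lyapunov exponent, one may require that along each such word the derivative cocycle contracts at rate $2^{N(\lambda_i\pm\ve)}$ uniformly on a fixed small interval around the appropriate attracting region, and that the distortion $\wt\vk$ stays bounded (this is where the $C^2$ hypothesis enters, via the distortion estimates of Section \ref{sec: distortion}). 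Shrinking the intervals slightly to get the strict inclusions $f\overline{U_{i,j}}\subset U_{i,j}$ and using the rotation symmetry to produce the $dr$ pairwise disjoint intervals $U_{i,j}$ gives items (2) and (3); that the $f\in\Gamma$ all contract each $U_{i,j}$ into itself makes $\langle\Gamma\rangle$ a contracting IFS on each $U_{i,j}$, hence with a well-defined unique minimal (attractor) set $K_{i,j}\subset U_{i,j}$.

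For item (4), the dimension lower bound, I would transfer the problem to the attractor $K_{i,j}$ of the IFS $\{f|_{U_{i,j}}:f\in\Gamma\}$: the uniform product $\mu_\Gamma$ on $\Gamma$ is a finitely supported measure whose stationary measure on $U_{i,j}$, call it $\nu_i^\Gamma$, is close to the conditional of $\nu_i$ in the weak-$*$ sense as $N\to\infty$ and $\ve\to 0$. Because the IFS is uniformly contracting with bounded distortion and (after possibly thinning $\Gamma$ while keeping exponentially many words) satisfies the open set condition on $U_{i,j}$, the dimension formula of Theorem \ref{thm: dim formula on interval} (or directly Theorem \ref{thm: exact dimensionality}) applies and gives $\dim\nu_i^\Gamma = h_{\rm F}(\mu_\Gamma,\nu_i^\Gamma)/|\lambda(\mu_\Gamma,\nu_i^\Gamma)| \geq N(H(\mu)-O(\ve))/(N(|\lambda_i|+O(\ve)))$, which tends to $\dim\nu_i$; since $\nu_i^\Gamma$ is supported on $K_{i,j}$, Young's theorem $\dimH K_{i,j}\geq \dimH\nu_i^\Gamma=\dim\nu_i^\Gamma\geq \dim\nu_i-\ve$ finishes the proof.

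The main obstacle I anticipate is item (4): one must simultaneously keep exponentially many words (for the entropy lower bound in item (2)), keep the contraction rates and distortion controlled (item (3)), \emph{and} arrange enough separation among the chosen products so that the resulting IFS genuinely realizes the full entropy of $\mu$ in its stationary measure rather than collapsing it — this is exactly the subtlety that forces the weak separation alternative in Theorem \ref{thm: dim formula on interval}, and here it must be ruled out by the explicit pingpong/disjointness built into the construction. Controlling the interplay between the combinatorics of the free semigroup generated by $\Gamma$ and the non-uniform hyperbolicity coming from the structure theorem is the technical heart of the argument.
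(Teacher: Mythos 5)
Your items (1)--(3) are in the same spirit as the paper's construction (structure theorem for (1); Shannon--McMillan--Breiman plus a set of words with uniform contraction and bounded distortion on a fixed family of cones for (2)--(3) --- note that this ``good'' set of words only has $\P$-measure bounded below by a constant, not close to $1$, since it requires both $\Pi(\omega)$ and $\Pi(\sigma^N\omega)$ to return near a prescribed configuration; this still yields $\geq c\,2^{N(H(\mu)-o(1))}$ words, so it is only a matter of phrasing). The genuine gap is in item (4). You propose to thin $\Gamma$ to a family satisfying the open set condition ``while keeping exponentially many words'' at the rate $H(\mu)$, and then to read off $\dim\nu_i^\Gamma\geq N(H(\mu)-O(\ve))/\bigl(N(|\lambda_i|+O(\ve))\bigr)\to\dim\nu_i$ from the dimension formula. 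Two things break. First, $H(\mu)/|\lambda_i|$ does not tend to $\dim\nu_i$: by Theorem \ref{thm: exact dimensionality}, $\dim\nu_i=\hFur(\mu,\nu_i)/|\lambda_i|$, and $\hFur\leq\hRW\leq H(\mu)$ can be strict; Theorem \ref{thm: ACW approximation} assumes no local discreteness, so Theorem \ref{thm: dim formula C2} is unavailable, and even $\hRW(\mu_\Gamma)=\log\#\Gamma$ would require $\Gamma$ to freely generate a free semigroup, which is not known. Second, and relatedly, you cannot arrange disjointness of the images $f(U_{i,j})$ for $2^{N(H(\mu)-\ve)}$ of the maps: those images distribute according to $\nu_i^+$, and a covering/pigeonhole bound only allows one to extract roughly $2^{N\,\dim\nu_i\,(|\lambda_i|-\ve)}=2^{N(\hFur(\mu,\nu_i)-O(\ve))}$ pairwise disjoint ones. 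So the separation you invoke to rule out the second alternative of Theorem \ref{thm: dim formula on interval} is exactly what is not available at the rate $H(\mu)$; whenever $\hFur(\mu,\nu_i)<H(\mu)$ your chain of inequalities for item (4) fails.

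The repair --- and the paper's actual route --- is to decouple (2) from (4). The entropy count (2) concerns the full family $\Gamma=\cA_{N,\ve}$ and needs no separation at all; the dimension bound (4) only uses a separated sub-family of $\Gamma$. That sub-family is produced by Lemma \ref{lem: cover and dim} applied to the collection of intervals $f(U_{i,j})$, $f\in\cA_{n,\ve}$, after verifying that $\nu_i^+$ gives positive mass to $\limsup_n\bigcup_f f(U_{i,j})$ (a return-time argument via the structure theorem), which yields about $2^{n(|\lambda_i|-\ve)\dim\nu_i}$ maps with pairwise disjoint images; a power trick then makes this compatible with a common $N$ for all $(i,j)$. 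For this separated sub-family no appeal to Theorems \ref{thm: exact dimensionality} or \ref{thm: dim formula on interval} is needed: since all derivatives lie in $[2^{N(\lambda_i-\ve)},2^{N(\lambda_i+\ve)}]$ on $U_{i,j}$ and the first-level images are disjoint, any ball of radius $2^{(\lambda_i-\ve)nN}$ meets at most two $n$-th level images, and the resulting mass-distribution estimate gives $\dimH K_{i,j}\geq \log\#(\text{sub-family})/\bigl((|\lambda_i|+\ve)N\bigr)\geq\dim\nu_i-\ve$ after shrinking $\ve$. In short, the quantity that controls the achievable separation (and hence item (4)) is $\hFur(\mu,\nu_i)=\dim\nu_i\,|\lambda_i|$, not $H(\mu)$ or $\hRW(\mu)$, and the dimension bound should come from a direct Frostman-type estimate rather than from the dimension formula for the full $\Gamma$.
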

It is worth noting that the numbers $d$ and $r$ depend only on the set $\cS$. More precisely, $d$ is equal to the number of minimal set of the semigroup generated by $\cS$. The number $r$ has also a dynamical meaning, see Theorems \ref{thm: structure of random walk 1}, \ref{thm: structure of random walk 2} and \ref{thm: d,r top inv} for more information.

\begin{theorem}\label{thm: separating approximation}
	Let $\mu$ be finitely supported probability measure on $\Diff_+^2(\SS^1)$ without invariant probability measures on $\SS^1.$ Let $\cS=\supp\mu$ and assume that the group generated by $\cS$ is locally discrete. Let $\nu$ be an ergodic $\mu$-stationary measure and $\lambda=\lambda(\mu,\nu)<0.$ Then for every $\ve>0$ sufficiently small, there exists an arbitrarily large integer $N$ and a subset $\Gamma\sbs\cS^{*N}$ such that:
	\begin{enumerate}
		\item The cardinality of $\Gamma$ is at least $2^{N(h_{\mr{RW}}(\mu)-\ve)}.$
		\item There exists an open interval $U\sbs\SS^1$ which is strictly preserved by every $f\in\Gamma.$
		\item The closures of $f(U)$ are pairwise disjoint for $f\in\Gamma.$
		\item For every $f\in\Gamma,$ $x\in U,$ $f'(x)\in [2^{N(\lambda-\ve)},2^{N(\lambda+\ve)}].$
		\item The semigroup generated by $\Gamma$ has a unique minimal set $K\sbs U$ with a Hausdorff dimension at least $(\dim\nu-\ve).$
	\end{enumerate}
\end{theorem}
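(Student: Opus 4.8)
\textbf{Proof proposal for Theorem~\ref{thm: separating approximation}.}

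\smallskip

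The plan is to derive this statement as a special case of the general approximation mechanism behind Theorem~\ref{thm: ACW approximation}, using the discreteness hypothesis to upgrade the entropy $H(\mu)$ appearing there to the random walk entropy $h_{\mr{RW}}(\mu)$ and to reduce the $dr$ intervals to a single one. First I would invoke the structure theorem for $C^2$ random walks (Theorems~\ref{thm: structure of random walk 1}, \ref{thm: structure of random walk 2}) to obtain the constants $d, r$, the ergodic stationary measures, and the North--South-type contraction picture; since we have fixed one ergodic $\mu$-stationary measure $\nu$ with $\lambda = \lambda(\mu,\nu) < 0$, we restrict attention to the corresponding family of $u$-states and the associated $r$ contracting intervals near $\supp\nu$. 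Applying the approximation construction of Theorem~\ref{thm: ACW approximation} (or rather the covering/large-deviation argument that proves it) at scale $\ve$ produces, for arbitrarily large $N$, a set $\Gamma_0 \sbs \cS^{*N}$ of words whose products are uniformly contracting on one of these intervals $U$ with derivative in $[2^{N(\lambda-\ve)}, 2^{N(\lambda+\ve)}]$, satisfying item (2) and item (4), and with cardinality reflecting the relevant entropy; items (3) and (5) then follow from the pingpong/open-set structure as in Theorems~\ref{thm: IFS approximation} and~\ref{thm: C2 dimension variation}, with $\dimH K \geq \dim\nu - \ve$ coming from the dimension formula for $\nu$ (Theorem~\ref{thm: dim formula C2}) applied to the sub-random-walk on $\Gamma_0$.

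\smallskip

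The new content relative to Theorem~\ref{thm: ACW approximation} is replacing the crude count $2^{N(H(\mu)-\ve)}$ by $2^{N(h_{\mr{RW}}(\mu)-\ve)}$ and, simultaneously, arranging that the words in $\Gamma$ have \emph{distinct} images $f(U)$ with disjoint closures — i.e.\ that $\Gamma$ freely generates, as a semigroup, an IFS satisfying the open set condition. Here the local discreteness of the group $\langle\cS\rangle$ is essential: it rules out the second alternative in Theorem~\ref{thm: dim formula on interval}, so that $h_{\mr F}(\mu,\nu) = h_{\mr{RW}}(\mu)$ and the Furstenberg entropy is genuinely realized by the random walk. Concretely, I would fix $N$ large, take the measure $\mu^{*N}$ on $\cS^{*N}$, and use the fact that $H(\mu^{*N}) \geq N(h_{\mr{RW}}(\mu) - \ve/2)$ together with a pigeonhole/typicality argument over the shift space $(\Sigma^+, \P^+)$: the set of length-$N$ words that (a) are "$\ve$-good" in the sense of Proposition~\ref{prop: good words} (uniform contraction and bounded distortion on the relevant interval) and (b) lie in a single fiber of the word-to-diffeomorphism map has exponentially small complement, so one can extract $\Gamma \sbs \cS^{*N}$ of size $\geq 2^{N(h_{\mr{RW}}(\mu)-\ve)}$ consisting of distinct diffeomorphisms, all uniformly contracting $U$ into itself with the prescribed derivative bounds.

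\smallskip

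To get the disjointness of closures (item (3)) from mere distinctness of the diffeomorphisms, I would shrink $N$-dependence as follows: after passing to $\Gamma$, the maps $\{f|_U\}_{f\in\Gamma}$ are distinct and contract $U$ by a definite factor $\leq 2^{N(\lambda+\ve)} \to 0$; by local discreteness applied to the interval $U$ (no sequence $g_n^{-1}f_n \to \Id$), any two distinct elements $f, g \in \Gamma$ satisfy $\|f^{-1}g|_{U'} - \Id\|_{C^1} \geq c$ for some $c>0$ and a slightly smaller interval $U' \Subset U$ depending only on $\cS$; combined with bounded distortion this forces $f(U')$ and $g(U')$ to be separated, after which replacing $U$ by $U'$ and further passing to a sub-collection whose images are pairwise disjoint (a Vitali-type selection, which loses at most a constant factor in cardinality, hence nothing in the exponential rate) yields item (3). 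Finally item (5): the semigroup generated by $\Gamma$, being a contracting IFS on $U$ with the open set condition, has a well-defined attractor $K \sbs U$; pushing forward the normalized restriction of $\P^+$ to $\Gamma$-words gives a self-similar-type measure on $K$ whose dimension equals (entropy)/(Lyapunov exponent) of the sub-walk, which by construction is $\geq (h_{\mr{RW}}(\mu) - O(\ve))/(N|\lambda| \cdot N^{-1} + O(\ve)) \geq \dim\nu - \ve$ after using $\dim\nu = h_{\mr{RW}}(\mu)/|\lambda|$ from Theorem~\ref{thm: dim formula C2} and adjusting $\ve$. The main obstacle I anticipate is the bookkeeping in the second step: ensuring that the pigeonhole extraction simultaneously preserves the exponential count $2^{N(h_{\mr{RW}}(\mu)-\ve)}$, the uniform hyperbolicity/distortion bounds on a \emph{fixed} interval independent of $N$, and enough room to later enforce the open set condition — all three compete, and threading them requires carefully ordering the limits ($\ve \to 0$ after $N \to \infty$, and the distortion constant $C \to \infty$ in between, exactly as in the definition of $\delta_2(G)$).
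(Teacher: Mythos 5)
Your overall architecture (good words, dimension formula via local discreteness, IFS dimension count for item (5)) is sound, but the step that produces items (1) and (3) simultaneously — turning ``many \emph{distinct} good elements'' into ``many good elements with \emph{pairwise disjoint} images'' — contains a genuine gap. You claim that local discreteness gives a uniform $c>0$ with $\|f^{-1}g-\Id\|_{C^1(U')}\geq c$ for distinct $f,g\in\Gamma$, and that this, with bounded distortion, forces $f(U')$ and $g(U')$ to be separated. It does not: both images have length $\asymp 2^{N\lambda}|U'|$, and if they overlap completely then $f^{-1}g$ merely moves points of $U'$ by at most $O(2^{2N\ve}|U'|)$ with $(f^{-1}g)'\in[C^{-1}2^{-2N\ve},C2^{2N\ve}]$ — perfectly compatible with $\|f^{-1}g-\Id\|_{C^1(U')}\geq c$. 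So pairwise separation from the identity says nothing about disjointness of images. The subsequent ``Vitali-type selection loses at most a constant factor'' is also unjustified: there is no bounded-overlap estimate here, and a priori exponentially many of your $2^{N(h_{\mr{RW}}(\mu)-\ve)}$ distinct elements could send $U'$ into essentially the same spot, in which case a maximal disjoint subfamily is tiny. Ruling out exactly this clustering is where discreteness must do real work, and it cannot be done pair by pair; it needs a counting argument, e.g.\ pigeonholing the images into cells of size $\approx 2^{N(\lambda+\ve)}$ and feeding a cell with $\geq 2^{cN}$ elements into the approximate-identity construction of Lemma~\ref{lem: approximate identity} (the mechanism behind Lemma~\ref{lem: generate similar elements}) to contradict local discreteness. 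With that replacement your plan can be repaired, but as written the disjointness step fails.

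The paper avoids this difficulty by reversing the logic: it never counts distinct elements first. Proposition~\ref{prop: approximate the dimension} produces, by the covering argument of Lemma~\ref{lem: cover and dim} applied to the dynamically defined intervals $f(U_{i,j})$, a family $\cB_N\sbs\cA_{N,\ve}$ whose images are pairwise disjoint \emph{by construction} and whose cardinality is at least $2^{N(|\lambda|-\ve)\dimH\nu}$, because the limsup set of these intervals carries positive $\nu$-measure. Local discreteness then enters only once, through Theorem~\ref{thm: dim formula C2}, to rewrite $(|\lambda|)\dim\nu=h_{\mr{RW}}(\mu)$ and hence convert the exponent into the one in item (1); items (2)--(4) are built into $\cA_{N,\ve}$ and $\cB_N$, and item (5) is the same IFS estimate you propose. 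So the disjoint count comes directly from the dimension of the stationary measure, not from distinctness plus a selection argument — that is the idea your proposal is missing.
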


The discreteness condition is necessary because conditions (1), (3), and (4) imply that $-\lambda\leq h_{\mr{RW}}(\mu)$. Furthermore, $h_{\mr{RW}}(\mu)$ cannot be replaced by $H(\mu)$ because it is possible for $\#\cS^{*N}$ to be much less than $2^{N H(\mu)}$.

\section{Preliminaries}
\label{se:3}
\subsection{Group actions on the circle}\label{subsec: group actions on circle}
We list some useful results about groups acting on the circle by diffeomorphisms in this subsection.
The following lemma is well-known. For a proof, see for example~\cite[Lemma 7.12]{pingpong19}.
\begin{lemma} 
\label{prop: finite index minimal set}
Let $G\sbs\Homeo(\SS^1)$ be a subgroup without finite orbits on $\SS^1$.
If $G_1$ is a subgroup of finite index in $G$, then the unique minimal set of $G$ is also the unique minimal set of $G_1.$
\end{lemma}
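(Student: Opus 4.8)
The plan is to prove the statement by a standard orbit-closure argument combined with the classification of minimal sets recalled just before the lemma. First I would recall the abstract setup: since $G \subset \Homeo(\SS^1)$ has no finite orbits, exactly one of the two alternatives holds — $G$ acts minimally, or $G$ admits a unique exceptional minimal set $\Lambda$ homeomorphic to a Cantor set (the finite-orbit case being excluded by hypothesis). Denote this unique minimal set by $\Lambda$ (in the minimal case $\Lambda = \SS^1$). The goal is to show that $\Lambda$ is also the unique minimal set for the finite-index subgroup $G_1 \leq G$.

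The key observation is that any $G_1$-minimal set must be contained in $\Lambda$. Indeed, let $K \subseteq \SS^1$ be a nonempty closed $G_1$-invariant set that is minimal for the $G_1$-action. Writing $G = \bigsqcup_{i=1}^{m} g_i G_1$ as a finite union of cosets, the set $K' \defeq \bigcup_{i=1}^{m} g_i K$ is nonempty, closed (a finite union of closed sets), and $G$-invariant: for $g \in G$ and each $i$, we have $g g_i = g_{j(i)} h_i$ for some $j(i)$ and $h_i \in G_1$, so $g g_i K = g_{j(i)} h_i K = g_{j(i)} K$. Hence $K'$ contains a $G$-minimal set, which must be $\Lambda$ by uniqueness; so $\Lambda \subseteq K' = \bigcup_i g_i K$. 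Next I would argue that $\Lambda \subseteq K$. Since $\Lambda$ is $G$-invariant and hence $g_i^{-1}$-invariant, $\Lambda \cap K = g_i^{-1}(\Lambda \cap g_i K) \supseteq g_i^{-1}(\Lambda \cap g_i K)$; more cleanly, $\Lambda = \Lambda \cap K' = \bigcup_i (\Lambda \cap g_i K) = \bigcup_i g_i(\Lambda \cap K)$ using $g_i^{-1}\Lambda = \Lambda$. Thus $\Lambda \cap K$ is a nonempty (it is closed and its $G$-translates cover $\Lambda$, which is nonempty) closed $G_1$-invariant subset — wait, $\Lambda \cap K$ is $G_1$-invariant since both factors are — contained in the $G_1$-minimal set $K$, so by minimality $\Lambda \cap K = K$, i.e. $K \subseteq \Lambda$. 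Conversely $\Lambda \cap K$ is also... here I should instead directly invoke that $\Lambda \cap K$ is a nonempty closed $G_1$-invariant subset of $K$, forcing $\Lambda \cap K = K$, hence $K \subseteq \Lambda$.

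To finish, I would show $K$ is unique and equals $\Lambda$. By the classification applied to the $G_1$-action (noting $G_1$ also has no finite orbit: a finite $G_1$-orbit would have finite $G$-orbit, namely its union of $G$-translates), $G_1$ has a unique minimal set, call it $\Lambda_1$, and by the previous paragraph $\Lambda_1 \subseteq \Lambda$. It remains to see $\Lambda_1 = \Lambda$. Since $\Lambda_1$ is closed and $G_1$-invariant, the finite union $\bigcup_i g_i \Lambda_1$ is a nonempty closed $G$-invariant set, hence contains the unique $G$-minimal set $\Lambda$. But each $g_i \Lambda_1$ is a minimal set for the conjugate subgroup $g_i G_1 g_i^{-1}$; this does not immediately give $g_i\Lambda_1 \subseteq \Lambda$. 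Instead, I use $G$-invariance of $\Lambda$: $\Lambda = \bigcup_i (g_i\Lambda_1 \cap \Lambda) = \bigcup_i g_i(\Lambda_1 \cap g_i^{-1}\Lambda) = \bigcup_i g_i(\Lambda_1 \cap \Lambda) = \bigcup_i g_i \Lambda_1$ since $\Lambda_1 \subseteq \Lambda$. So $\Lambda = \bigcup_i g_i\Lambda_1$. Now $\Lambda_1 \subseteq \Lambda$ and we want the reverse. Consider $\overline{G \cdot x}$ for $x \in \Lambda_1$: it equals $\Lambda$ by minimality of the $G$-action on $\Lambda$. But $G \cdot x = \bigcup_i g_i G_1 x \subseteq \bigcup_i g_i \Lambda_1$, and taking closures, $\Lambda = \overline{G\cdot x} \subseteq \bigcup_i g_i\Lambda_1$; this we already have. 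The clean way: since $\Lambda_1 \subseteq \Lambda$ and $\Lambda$ is $G$-minimal, the closure of any $G$-orbit inside $\Lambda$ is all of $\Lambda$, so $\Lambda = \overline{\bigcup_i g_i G_1 x}$; but I want to compare with $\Lambda_1$.

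The honest streamlined argument, which I expect to be the cleanest and which I would write up: let $\Lambda_1$ be any $G_1$-minimal set. We showed $\Lambda_1 \subseteq \Lambda$ and $\Lambda = \bigcup_i g_i\Lambda_1$. Pick $x \in \Lambda_1$; then $x \in \Lambda$, and since the $G$-action on $\Lambda$ is minimal (true whether $\Lambda = \SS^1$ or $\Lambda$ is the exceptional minimal set), $\overline{G x} = \Lambda$. On the other hand $G_1 x \subseteq \Lambda_1$ and $\Lambda_1$ is closed, so $\overline{G_1 x} = \Lambda_1$. Now $Gx = \bigcup_i g_i(G_1 x)$, so $\Lambda = \overline{Gx} = \bigcup_i g_i \overline{G_1 x} = \bigcup_i g_i\Lambda_1$ — consistent. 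This still does not pin down $\Lambda_1 = \Lambda$ directly. The resolution: $\Lambda_1$ has nonempty interior in $\Lambda$? No. Better: the $G$-translates $g_i\Lambda_1$ of the $G_1$-minimal set are permuted among themselves by $G$ up to the $G_1$-action — precisely, $g (g_i\Lambda_1)$ is again some $g_j\Lambda_1$ for each $g \in G$ (since $g g_i = g_j h$, $h \in G_1$, and $h\Lambda_1 = \Lambda_1$). So $G$ permutes the finite collection $\{g_1\Lambda_1, \dots, g_m\Lambda_1\}$; the union $\Lambda$ is a disjoint-or-overlapping union. If these sets are pairwise disjoint, then each $g_i\Lambda_1$ is clopen in $\Lambda$ and $G$-invariantly permuted with trivial... but a single $G$-orbit on $\Lambda$ is dense, contradicting $g_i\Lambda_1$ being a proper clopen $G$-invariantly-permuted piece unless $m' = 1$; if they overlap, then $\Lambda_1 \cap g_j\Lambda_1 \ne \vn$ for some relevant $j$, and minimality of $\Lambda_1$ under $G_1$ together with $g_j\Lambda_1$ being $g_jG_1g_j^{-1}$-minimal forces, via intersecting, all of them to coincide, giving $\Lambda_1 = \Lambda$. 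This dichotomy is the one real subtlety; I would handle it by noting $\Lambda_1 = \Lambda_1 \cap \Lambda = \Lambda_1 \cap \bigcup_i g_i\Lambda_1 = \bigcup_i (\Lambda_1 \cap g_i\Lambda_1)$, so some $\Lambda_1 \cap g_i\Lambda_1$ is nonempty and $G_1$-invariant wait $g_i\Lambda_1$ need not be $G_1$-invariant — it is $(g_iG_1g_i^{-1})$-invariant. Hmm; the index-$m$ subgroup $G_1 \cap g_iG_1g_i^{-1}$ still has finite index in $G$, and $\Lambda_1 \cap g_i\Lambda_1$ is invariant under it, nonempty, closed; so it contains a minimal set for this smaller finite-index subgroup, hence is nonempty in a robust way — and by an induction on the index, or by directly invoking that a finite-index subgroup has a unique minimal set which we are in the process of proving, I conclude all pieces agree.

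The main obstacle, then, is exactly this last uniqueness/coincidence step: showing that the finitely many $G$-translates of a $G_1$-minimal set actually coincide rather than forming a nontrivial finite partition of $\Lambda$. I would resolve it by the following clean route: $\Lambda$ supports a $G$-minimal action; the collection $\mathcal F = \{g_i\Lambda_1\}$ is finite and permuted by $G$; hence $N \defeq \bigcap$ over the $G$-orbit... no — rather, the relation ``$x \sim y$ iff $x,y$ lie in a common $g_i\Lambda_1$'' generates a closed $G$-invariant equivalence-type structure whose classes are closed; because the $G$-action on $\Lambda$ has a dense orbit, the ``connected components'' of $\bigcup$-overlap graph are permuted transitively, and each is $G$-invariant, forcing a single component; that single component, being a finite union of the $g_i\Lambda_1$ pinned together by nonempty intersections, is exactly $\Lambda$, and running minimality of each $g_i\Lambda_1$ along the chain of intersections shows they are all equal to $\Lambda$, whence $\Lambda_1 = \Lambda$. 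This establishes both that every $G_1$-minimal set equals $\Lambda$ and, in particular, that $\Lambda$ is the unique one. I would keep the write-up short by citing the classification of minimal sets and phrasing the translate argument crisply, as the paper clearly intends this to be a quick lemma.
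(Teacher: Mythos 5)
Your reduction steps are fine: showing $G_1$ has no finite orbits, that any $G_1$-minimal set $K$ satisfies $K\sbs\Lambda$ via the translate $K'=\bigcup_i g_iK$, and that $\Lambda=\bigcup_i g_i\Lambda_1$. (Note the paper does not prove this lemma itself; it cites \cite[Lemma 7.12]{pingpong19}, so I am comparing against the standard argument.) The genuine gap is the last step, where you must show the finitely many translates $g_i\Lambda_1$ coincide, i.e. $\Lambda\sbs\Lambda_1$. None of the routes you sketch closes it: the claim that a dense $G$-orbit in $\Lambda$ contradicts a proper clopen piece being ``$G$-invariantly permuted'' is false as a general principle (a minimal action can perfectly well permute a nontrivial finite clopen partition -- think of an odometer-type factor; a dense orbit simply visits all pieces), the assertion that the components of your overlap graph are ``permuted transitively \emph{and} each is $G$-invariant'' is unsupported (and self-contradictory unless there is one component, which is what you are trying to prove), and the fallback of ``directly invoking that a finite-index subgroup has a unique minimal set which we are in the process of proving'' is circular. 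Also, your proposed induction on the index does not obviously terminate, since $G_1\cap g_iG_1g_i^{-1}$ has index no smaller than that of $G_1$.

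The standard way to finish is to pass to the normal core $N\defeq\bigcap_{g\in G}gG_1g^{-1}=\bigcap_i g_iG_1g_i^{-1}$, a finite-index \emph{normal} subgroup of $G$ contained in $G_1$, which again has no finite orbits and hence a unique minimal set $\Lambda_N$. By normality, $g\Lambda_N$ is a minimal set of $N$ for every $g\in G$, so $g\Lambda_N=\Lambda_N$ by uniqueness; thus $\Lambda_N$ is closed and $G$-invariant, giving $\Lambda\sbs\Lambda_N$, while $\Lambda$ is closed and $N$-invariant, giving $\Lambda_N\sbs\Lambda$; hence $\Lambda_N=\Lambda$. Finally $\Lambda_1$ is closed and $N$-invariant, so $\Lambda=\Lambda_N\sbs\Lambda_1\sbs\Lambda$, i.e. $\Lambda_1=\Lambda$, and uniqueness for $G_1$ follows from the classification as you noted. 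With this replacement for your final paragraph the proof is complete; without it, the key coincidence of the translates is not established.
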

The following is also a well-known lemma. We include a proof for the reader's convenience.
\begin{lemma}
\label{lem: invariant measure and finite orbit}
Let $G \sbs \Diff^2_+(\SS^1)$ be a finitely generated subgroup. If $G$ preserves a probability measure $\nu$ then either it acts minimally or it has a finite orbit on $\SS^1$.
\end{lemma}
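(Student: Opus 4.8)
The statement to prove is Lemma~\ref{lem: invariant measure and finite orbit}: if a finitely generated $G\sbs\Diff^2_+(\SS^1)$ preserves a probability measure $\nu$, then $G$ either acts minimally or has a finite orbit. The plan is to analyze the topological support $K=\supp\nu$, which is a nonempty closed $G$-invariant set, and to exploit the classification of minimal sets recalled in the introduction together with the structure of the $G$-action on $K$.

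\textbf{Main steps.} First I would observe that $K=\supp\nu$ is $G$-invariant and closed, hence contains a minimal set $\Lambda$. If $G$ has a finite orbit we are done, so assume not; then by the classification (\cite[Theorem 2.1.1]{Na06}) $\Lambda$ is unique and is either all of $\SS^1$ or a Cantor set. In the first case, minimality of $G$ on $\SS^1$ is exactly the conclusion, so it remains to rule out the exceptional (Cantor) case under the hypothesis that $\nu$ is $G$-invariant. The key is that an invariant measure must be supported on $\Lambda$: indeed $\supp\nu$ is a closed invariant set, so it contains $\Lambda$, and conversely if $\supp\nu$ strictly contained $\Lambda$ then $\nu$ would assign positive mass to one of the ``gap'' intervals of $\SS^1\sm\Lambda$, i.e. to a connected component $I$ of the complement; but the $G$-orbit of such a gap consists of pairwise disjoint intervals (gaps of the Cantor set $\Lambda$ are permuted by $G$), all of which carry the same $\nu$-mass $\nu(I)>0$ by invariance, and since there are infinitely many of them this contradicts $\nu(\SS^1)=1$. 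Hence $\supp\nu=\Lambda$ and $\nu$ is a $G$-invariant probability measure on the Cantor set $\Lambda$.

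Next I would derive a contradiction from the existence of such an invariant measure on an exceptional minimal set, using the $C^2$ (or even $C^{1+\mathrm{bv}}$) hypothesis and a Denjoy/Schwartz-type distortion argument. The standard mechanism here is the following: a finitely generated group acting on $\SS^1$ with an exceptional minimal set cannot preserve a probability measure, because preserving a measure would force (after passing to the action on the suspension or via the measure-theoretic rotation-number homomorphism) the group to be, up to semiconjugacy, a group of rotations — but a group of rotations acts either with all orbits finite or minimally, never with an exceptional minimal set; and the semiconjugacy collapsing the gaps of $\Lambda$ is in fact impossible to promote to a conjugacy, yet the measure-theoretic structure already gives the contradiction. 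Concretely: the map $h(x)=\nu([x_0,x])$ (for a fixed basepoint $x_0$) is a continuous monotone degree-one map $\SS^1\to\SS^1$ semiconjugating the $G$-action to an action by rotations $\rho\colon G\to\SO(2)$; since $G$ has no finite orbit, $\rho(G)$ is infinite, hence its closure is a nontrivial subtorus and $\rho(G)$ acts minimally on $\SS^1$. But $h$ maps $\Lambda$ onto all of $\SS^1$ and collapses exactly the (countably many) gaps; pulling back, minimality of the rotation action together with the fact that $h$ is injective on $\Lambda$ up to the countable gap set forces $\Lambda$-orbits to be dense, which is consistent — the actual contradiction comes from noting that $h$ being a homeomorphism off a countable set, combined with $\Lambda\ne\SS^1$, means $h$ has a nondegenerate plateau, i.e. $\nu$ has an atom or $h$ is constant on a gap; the latter is fine, so one instead invokes that $G$ finitely generated $+$ $C^2$ $+$ exceptional minimal set is incompatible with being topologically conjugate...

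Let me restructure that last paragraph more cleanly. \textbf{Cleaner conclusion.} Having reduced to $\supp\nu=\Lambda$ exceptional, consider the semiconjugacy $h$ defined above, which conjugates $G$ to a subgroup $\bar G\sbs\Homeo_+(\SS^1)$ preserving the Lebesgue measure $h_*\nu$, hence $\bar G\sbs\SO(2)$ is a group of rotations. Since $G$ has no finite orbit, neither does $\bar G$ (orbits correspond under $h$), so $\bar G$ is an infinite group of rotations and acts minimally on $\SS^1$. Now $h$ is a monotone surjection that is locally constant precisely on the gaps $\SS^1\sm\Lambda$ and injective on $\Lambda$ modulo identifying the two endpoints of each gap. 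Minimality of $\bar G$ means every $\bar G$-orbit is dense; lifting through $h$, every $G$-orbit in $\Lambda$ is dense in $\Lambda$ — consistent with $\Lambda$ minimal. The contradiction is obtained by a different route: since $\bar G$ acts minimally by rotations and $G$ is \emph{finitely generated}, one shows $h$ must actually be a homeomorphism. Indeed if $h$ were not injective, it would collapse a gap $I$ to a point $p$, and then $\nu(I)=0$; but $\supp\nu=\Lambda$ and $I$ is disjoint from $\Lambda$ is automatically consistent — so this still does not bite directly.

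Given the subtlety, I will be honest about where the real work is: the crux is precisely the assertion ``a finitely generated $C^2$ group with an exceptional minimal set admits no invariant probability measure.'' I would prove this by the \emph{Schwartz distortion lemma} / \emph{Denjoy inequality}: if $\nu$ were invariant, pick a gap $I$, and since $\Lambda$ is exceptional there is, for every $n$, an element $g_n\in G$ (a product of at most $n$ generators) with $g_n(I)$ shrinking — more precisely, by minimality of $\Lambda$ and compactness, the gaps of $\Lambda$ have lengths tending to $0$ and each gap is $\sum_{g}|g(I)|<\infty$ over its orbit, forcing $|g(I)|\to0$; invariance gives $\nu(g(I))=\nu(I)$ for all $g$, and summing over the (infinitely many disjoint) translates $g(I)$ in the orbit yields $\infty\cdot\nu(I)\leq 1$, whence $\nu(I)=0$; as this holds for every gap and the gaps cover $\SS^1\sm\Lambda$ which is a countable union, $\nu(\SS^1\sm\Lambda)=0$, i.e. $\nu$ is supported on $\Lambda$ — which we already knew. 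The $C^2$ hypothesis enters to guarantee $\Lambda$ is not the whole circle is \emph{not} vacuous, but actually the measure-zero-of-gaps argument shows $\nu(\Lambda)=1$ with no atoms needed, and then $\nu$ restricted to $\Lambda$ being $G$-invariant and $G|_\Lambda$ being (by Denjoy--Schwartz, using $C^2$) minimal and proximal-like forces $\nu$ to be non-atomic and fully supported on $\Lambda$, which is again consistent. \textbf{I therefore expect the author's proof to be shorter than my meanderings:} most likely it simply says: $\supp\nu$ is closed and invariant, so contains $\Lambda$; if $G$ has no finite orbit and does not act minimally, $\Lambda$ is a Cantor set; but an invariant measure cannot charge the complement (infinitely many equal-mass disjoint gap-translates), so $\supp\nu=\Lambda$; now the measure $\nu$ gives a semiconjugacy to a minimal rotation group, and one invokes that $C^2$ finitely generated groups with exceptional minimal sets are not semiconjugate to rotation groups by a direct argument — or, more likely, the author cites that a $C^2$ action semiconjugate to an \emph{irrational rotation} on its minimal set contradicts Denjoy's theorem applied to an appropriate element, giving the contradiction. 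The main obstacle is pinning down this last step cleanly; everything else is soft topology.
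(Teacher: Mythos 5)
Your reduction to the case $\supp\nu=\Lambda$ exceptional is fine (the orbit of a gap is infinite once there is no finite orbit, so equal-mass disjoint translates force $\nu(\SS^1\sm\Lambda)=0$), but the crux of the lemma --- ruling out an invariant measure in the exceptional case --- is exactly the step you leave open, and you say so yourself. None of your attempts closes it: the semiconjugacy to a rotation group, the Schwartz/Denjoy distortion detour, and the gap-mass count all end in statements you acknowledge are ``consistent'' rather than contradictory. So as written the proposal has a genuine gap, and the missing idea is the one the paper is built on: the invariant measure turns the rotation number into a homomorphism, via $\rho(f)=\nu([x,f(x)[)$ for every $x$, and one then splits into two cases. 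If some $f\in G$ has irrational $\rho(f)$, Denjoy's theorem (this is where $C^2$ enters) makes $f$ act minimally, hence $G$ acts minimally --- in particular there is no exceptional minimal set, with no semiconjugacy machinery needed. If every element has rational rotation number, finite generation gives a common denominator $N$ for a generating set, and then the explicit set $\lb{\, y\in\supp\nu : \nu([x_0,y[)\in\frac1N\ZZ\,}$ is finite and preserved by $G$, producing a finite orbit. This is the whole proof in the paper; it never even needs the classification of minimal sets or the support analysis you start with.

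Your sketch could in principle be completed along your own lines, but only by importing precisely these ingredients: in the exceptional case the rotation-number image $\rho(G)\sbs\RR/\ZZ$ is a finitely generated group, so it is either finite (then the $1/N$-level-set construction yields a finite invariant set, contradicting ``no finite orbit'') or contains an element of infinite order, i.e.\ some $g\in G$ with irrational rotation number, and Denjoy applied to that single $g$ already contradicts $\Lambda\neq\SS^1$. You gesture at the Denjoy endgame in your last sentence but never produce the element it should be applied to, and you never use finite generation --- which is essential, since the statement fails for infinitely generated groups of rational rotation numbers. Concretely: identify the rotation-number homomorphism coming from $\nu$, invoke finite generation to get the dichotomy ``irrational element vs.\ uniform denominator,'' and both branches of your argument close immediately.
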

\begin{proof}
Note that for all $x \in \SS^1$, we have $\nu([x,f(x)[) = \rho(f),$ where $\rho(f)$ is the rotation number.
By Denjoy's theorem, if there exists $f\in G$ with irrational $\rho(f)$ then $G$ acts minimally. Hence we may assume that every element in $G$ has a rational rotation number. 
Choose a point $x_0 \in \supp \nu$.
Note that if $N \rho(f) \in \ZZ$ for some integer $N > 0$  then $f$ preserves the set $\lb{\, y \in \supp\nu :  \nu([x_0,y[) \in \frac{1}{N}\ZZ\,}$, which is finite. 
Taking $N$ to be a common multiple of the denominators of the rotation numbers of a finite generating set of $G$, we obtain a finite set which is preserved by $G$.
\end{proof}

We recall the concepts of non-expandable points, properties $(\star)$ and $(\Lambda\star)$ mentioned in Section \ref{sec: 2.2}. We present some related results shown by Deroin-Navas-Kleptsyn \cite{DKN09,DKN18}.

\begin{theorem}[{\cite[Theorems A, D]{DKN09}}]\label{thm: DKN09}
	Let $G\sbs\Diff_+^2(\SS^1)$ be a finitely generated subgroup with the minimal set $\Lambda$. Assume that $G$ satisfies property $(\star)$ or $(\Lambda\star).$ Then
	\begin{enumerate}
		\item The set $\Lambda\cap \NE$ is finite.
		\item For each $x\in\Lambda\sm G(\NE),$ the set of derivatives $\lb{g'(x) : g\in G}$ is unbounded.
	\end{enumerate}
\end{theorem}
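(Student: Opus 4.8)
The statement to prove is Theorem~\ref{thm: DKN09}, which is attributed to Deroin--Kleptsyn--Navas \cite[Theorems A, D]{DKN09}. Since this is a cited result rather than a new contribution, the natural "proof" here is a guided exposition recalling the key mechanisms from \cite{DKN09}, adapted to the notation fixed in this excerpt. I would organize it around the two assertions separately, as they rely on somewhat different ideas, both ultimately driven by the control one gets on $\NE$ from properties $(\star)$ and $(\Lambda\star)$.

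For item (1), that $\Lambda \cap \NE$ is finite: the plan is to argue by contradiction, assuming there are infinitely many non-expandable points in $\Lambda$, hence (by compactness) an accumulation point $x_\infty \in \Lambda \cap \NE$. Using property $(\star)$ (or $(\Lambda\star)$), for each $x \in \NE \cap \Lambda$ one has $g_+, g_-$ fixing $x$ with $x$ isolated from the right (resp. left) in $\Fix(g_+)$ (resp. $\Fix(g_-)$); near such a fixed point the diffeomorphism is a hyperbolic-type or parabolic-type contraction toward $x$ on one side. The accumulation of such points would force, via a distortion/compactness argument on the finitely many generators and their controlled products, either a contradiction with $\sum$-type summability of interval lengths in the complement of $\Lambda$, or the existence of a limiting vector field / nontrivial centralizer phenomenon incompatible with property $(\star)$. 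Concretely I would follow the DKN strategy: use the non-expandability $g'(x)\le 1$ for all $g\in G$ together with the fixed-point structure to show each $x\in\NE\cap\Lambda$ "carries" a definite amount of a Borel measure or a definite gap, and infinitely many of these cannot coexist.

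For item (2), that $\{g'(x) : g \in G\}$ is unbounded for each $x \in \Lambda \setminus G(\NE)$: the plan is again by contradiction. If the derivatives at $x$ were bounded, say by $C$, then since $x \notin G(\NE)$ there is for every $y$ in the $G$-orbit of $x$ some element expanding at $y$, but boundedness caps how much expansion can accumulate; combined with the minimality of the action on $\Lambda$ and a Schwarzian/distortion control (this is where $C^2$ is essential, via the Denjoy--Koksma / distortion estimates recalled later in the paper, e.g.\ the distortion coefficient $\vk$ of Section~\ref{sec: distortion}), one derives that $x$ would have to be non-expandable or in the orbit of a non-expandable point, contradicting $x \in \Lambda \setminus G(\NE)$. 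The key input is that for $x\notin\NE$ there exists $g\in G$ with $g'(x)>1$, and one bootstraps this: applying suitable elements one builds a sequence $g_n$ with $g_n'(x)\to\infty$ unless some obstruction (a fixed point structure placing $x$ effectively in $G(\NE)$) occurs.

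\textbf{Main obstacle.} The delicate point throughout is the interplay between the two sides of each non-expandable fixed point and the global combinatorics of which generators expand where; property $(\star)$ is exactly what rules out the pathological "one-sided" or "infinitely-flat accumulation" configurations, and making the contradiction quantitative requires the $C^2$ distortion estimates. Since the full argument is carried out in \cite{DKN09}, I would keep the exposition at the level of recalling these mechanisms and refer to \emph{loc.\ cit.}\ for the detailed estimates, noting only the points where the hypotheses $(\star)$/$(\Lambda\star)$ and $C^2$-regularity enter.
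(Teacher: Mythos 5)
The paper offers no proof of this statement at all---it is quoted directly from Deroin--Kleptsyn--Navas \cite{DKN09} (Theorems A and D)---so your choice to recall the mechanisms and defer to \emph{loc.\ cit.} is exactly the paper's own treatment. Your heuristic sketch is too vague to stand as an independent proof, but since the result is an external citation rather than something proved here, that is acceptable and there is nothing further to compare.
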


\begin{theorem}[{\cite[Main Theorem]{DKN18}}]\label{thm: DKN18}
	Let $G$ be a finitely generated subgroup of $\Diff_+^\omega(\SS^1).$
	\begin{enumerate}
		\item If $G$ is free of rank $\geq 2$ and acts minimally on $\SS^1,$ then it satisfies property $(\star).$
		\item If $G$ acts on the circle with an exceptional minimal set $\Lambda,$ then it satisfies property $(\Lambda\star).$
	\end{enumerate}
\end{theorem}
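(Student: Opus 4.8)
\emph{Sketch.} Since Theorem~\ref{thm: DKN18} is quoted from \cite{DKN18}, I can only indicate how I would organize its proof. The plan is to first reduce both statements to a single assertion about \emph{stabilizers}. If $g\in G$ is nontrivial then $x\mapsto g(x)-x$ is a nonzero real-analytic function on $\SS^1$, so $\Fix(g)$ is finite and each of its points is isolated from both sides; hence, granting that $G$ acts minimally on $\SS^1$ (case~(1)) or admits an exceptional minimal set $\Lambda$ (case~(2)), property $(\star)$ (resp.\ property $(\Lambda\star)$) is equivalent to the assertion that every $x\in\NE(G)$ (resp.\ every $x\in\NE(G)\cap\Lambda$) is fixed by some nontrivial element of $G$ --- one then takes $g_+=g_-$ to be that element. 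I would also note at the outset that in both cases $G$ preserves no probability measure on $\SS^1$: in case~(1) because a nonabelian free group is non-amenable, and in case~(2) by Lemma~\ref{lem: invariant measure and finite orbit}, because an invariant measure would force $G$ to act minimally or to have a finite orbit, contradicting the existence of an exceptional minimal set.

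The substance of the proof is then the analysis of the dynamics near a non-expandable point $x$ (if there are none, there is nothing to prove). First I would use that a finitely generated group acting minimally --- or with an exceptional minimal set --- and preserving no probability measure is strongly contracting: for every point $y$ and every $\ve>0$ there exist $g\in G$ and a proper interval $J$ with $|J|<\ve$ such that $g$ maps the complement of $J$ into $B(y,\ve)$ (restricting to $\Lambda$ in case~(2)). This is the non-stationary North--South picture, which follows from the Margulis--Ghys alternative \cite{Mar00} together with uniqueness of the minimal set, and is precisely the kind of structure later generalized in this paper. Applying it near $x$ produces, as $\ve\to 0$, group elements whose topologically attracting fixed points converge to $x$; since $g'(x)\le 1$ for every $g\in G$, I would then try to pass to a limit to obtain an element honestly fixing $x$, splitting according to whether $G$ is $C^1$-locally discrete near $x$. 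If it is, the distortion estimates of \cite{DKN09} give uniform control of distortion on a small fixed interval around $x$, so one can extract a $C^1$-convergent subsequence; local discreteness rules out the degenerate limit equal to the identity, and a Kopell-type argument in the analytic category then produces a nontrivial $g\in G$ fixing $x$. If $G$ is not $C^1$-locally discrete near $x$, I would instead invoke the results of Rebelo and Loray \cite{Reb99,Reb01} (see also \cite{Mat09}): the $C^1$-closure of the local pseudogroup of $G$ near $x$ then contains a nontrivial local flow, whose time-$t$ maps can be used, again via analyticity, to locate a nontrivial element of $G$ fixing $x$. Property $(\Lambda\star)$ is treated identically, with every dynamical statement restricted to $\Lambda$ and using that $\SS^1\sm\Lambda$ is a countable union of wandering intervals.

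The hard part will be exactly this last step --- upgrading limiting or germ-level information to a genuine element of $\Stab_G(x)$. The elements produced by the contraction argument typically escape every $C^1$-compact subset of $\Diff_+^\omega(\SS^1)$, so the delicate issues are (a)~obtaining enough distortion control near the non-expandable point to get a usable $C^1$-limit and (b)~excluding that limit being the identity germ; it is here that both the $C^1$-local-discreteness dichotomy and real-analyticity (finiteness of $\Fix(g)$, and finiteness of the order of tangency to the identity) are indispensable. By contrast the remaining ingredients --- the reduction to stabilizers, the contraction structure, and the reduction of $(\Lambda\star)$ to reasoning on $\Lambda$ --- are comparatively soft.
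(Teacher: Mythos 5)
This statement is not proved in the paper at all: it is quoted verbatim as the Main Theorem of \cite{DKN18}, so there is no internal argument to compare yours against. Judged on its own, your sketch gets the soft reductions right: by analyticity every fixed point of a nontrivial element is isolated from both sides, so $(\star)$ (resp.\ $(\Lambda\star)$) does reduce to showing that every $x\in\NE(G)$ (resp.\ $x\in\NE(G)\cap\Lambda$) has nontrivial stabilizer, with $g_+=g_-$; and the absence of an invariant probability measure is indeed available in both cases (though your justification in case~(1) via non-amenability is the wrong reason --- non-amenability by itself does not preclude an invariant measure; what works is that an invariant measure together with minimality would conjugate the action to a group of rotations, forcing a faithfully acting free group of rank $\geq 2$ to be abelian).

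The genuine gap is that the step you yourself flag as ``the hard part'' is the entire content of the theorem, and the mechanism you propose for it does not work as described. In the locally discrete case, ``extract a $C^1$-convergent subsequence and let local discreteness rule out the identity limit'' is self-defeating: if distinct elements $g_n$ converge in $C^1$ on an interval, then $g_{n+1}^{-1}g_n\to\Id$ there, which is exactly what local discreteness forbids, so no such limit exists and no element of $\Stab_G(x)$ is produced this way; one cannot simply ``pass to a limit'' of contracting elements whose attracting fixed points approach $x$, since those elements escape every $C^1$-bounded set and nothing forces any of them (or any limit object) to fix $x$. In the non-discrete case, the Rebelo--Loray local flows live only in the closure of the pseudogroup, and upgrading a germ-level flow to a genuine nontrivial element of $G$ fixing the given non-expandable point is again a substantial argument, not a formality. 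The actual proof in \cite{DKN18} hinges on machinery you do not reconstruct --- their expansion procedure and control of distortion at all scales, the Markov-type structure of the action, the classification of locally discrete analytic actions, and (in the exceptional case) Hector's theorem and the structure of $\SS^1\sm\Lambda$. As it stands, your text is a plausible list of ingredients with the decisive construction of a nontrivial stabilizer element missing, so it does not constitute a proof of the cited theorem.
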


We will also make use of the following theorem due to Hector~\cite{Hector} (see also \cite{Na06}).
\begin{theorem}[\cite{Hector}]\label{thm: Hector}
	If $G$ is a subgroup of $\Diff_+^\omega(\SS^1)$ having an exceptional minimal set, then the stabilizer of any point in $\SS^1$ is either trivial or infinite cyclic.
\end{theorem}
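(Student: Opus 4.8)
Fix $x\in\SS^1$ and set $H=\Stab_G(x)$; we may assume $H\neq\lb{\Id}$ and must show $H\cong\ZZ$. The first move is to reduce to the case $x\in\Lambda$. Since $\SS^1$ is the disjoint union of $\Lambda$ with its complementary ``gaps'' $(a,b)$ (whose endpoints lie in $\Lambda$), either $x\in\Lambda$ or $x$ lies in a gap $I=(a,b)$; in the latter case any $g\in H$ sends $I$ onto the gap with left endpoint $g(a)$, and since $g$ fixes $x\in I$ this gap must be $I$ itself, so $g(a)=a$. Thus $H\sbs\Stab_G(a)$ with $a\in\Lambda$, and since a subgroup of a group that is trivial or infinite cyclic is again trivial or infinite cyclic, it suffices to treat points of $\Lambda$. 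So assume $x\in\Lambda$; as $\Lambda$ is a Cantor set it accumulates onto $x$ from at least one side, and after reversing orientation if needed I may assume $\Lambda\cap(x,x+\ve)\neq\vn$ for every $\ve>0$.

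Next I would pass to germs. If two elements of $G$ have the same germ at $x$, their quotient is the identity near $x$, hence identically the identity by real analyticity; so $H$ injects into the group of germs at $x$ of orientation preserving analytic diffeomorphisms fixing $x$, and for each nontrivial $g\in H$ the set $\Fix(g)$ is the finite zero set of the analytic map $y\mapsto g(y)-y$, so $x$ is an isolated fixed point of $g$. Now split by the linear part $L\colon H\to(\RR_{>0},\cdot)$, $L(g)=g'(x)$, with kernel $H_1$. \emph{Case A: $L$ is nontrivial.} Pick $h\in H$ with $\rho\defeq h'(x)<1$ (replace $h$ by $h^{-1}$ if needed); by Kœnigs' theorem there is an analytic chart $t$ centered at $x$ with $h(t)=\rho t$. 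In this chart $\Lambda$ becomes a Cantor set $\wt\Lambda\ni0$ which is exactly self-similar near $0$ under multiplication by $\rho$, so the ``blow-up'' $\wh\Lambda\defeq\bigcup_{n\geq0}\rho^{-n}\wt\Lambda$ agrees with $\wt\Lambda$ near $0$ (hence is a nonempty, nowhere dense, proper closed subset of $[0,\infty)$ accumulating onto $0$) and satisfies $\rho\wh\Lambda=\wh\Lambda$. For any $g\in H$ with $\mu\defeq g'(x)$, the maps $t\mapsto\rho^{-n}g(\rho^n t)=\mu t+O(\rho^n)$ converge uniformly on compacts to $t\mapsto\mu t$ and each preserves $\rho^{-n}\wt\Lambda$, which coincides with $\wh\Lambda$ on any fixed compact once $n$ is large; passing to the limit, $\wh\Lambda$ is invariant under $t\mapsto\mu t$. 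Hence $L(H)$ acts on $\wh\Lambda\cap(0,\infty)$ by dilations, and since a nonempty proper nowhere dense closed subset of $(0,\infty)$ cannot be invariant under a dense subgroup of $\RR_{>0}$, the group $L(H)$ is discrete, i.e.\ infinite cyclic (it contains $\rho$).

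It remains, in Case A, to check $H_1=\lb{\Id}$: if $g\in H_1\sm\lb{\Id}$ then in the chart $g(t)=t+ct^k+\cdots$ with $k\geq2$, $c\neq0$, and the elements $h^{-n}gh^n(t)=t+c\rho^{(k-1)n}t^k+\cdots$ are pairwise distinct in $G$ yet converge to $\Id$ in the $C^1$ topology on a fixed interval meeting $\Lambda$, contradicting the local discreteness of $G$ along $\Lambda$ (which holds for finitely generated analytic groups with an exceptional minimal set; cf.\ the remarks after Definition~\ref{def: discreteness}). Thus $H\cong L(H)\cong\ZZ$. \emph{Case B: $L$ is trivial}, i.e.\ every element of $H$ is parabolic at $x$. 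Here I would compare elements through the order on germs at $x^+$: each nontrivial $g\in H$ is fixed-point-free on some $(x,x+\delta_g]$, the exceptional minimal set forbids any $H$-orbit from being dense in a subinterval of $(x,x+\ve)$ (else $\Lambda$ would contain an interval), and Kopell's lemma together with Hölder's theorem then force the germs at $x^+$ to form an Archimedean ordered group, which therefore embeds into $(\RR,+)$; by the classical rigidity of analytic parabolic germs (the analytic centralizer of a nontrivial parabolic germ is infinite cyclic, so no analytic copy of $\ZZ^2$ — or of a denser tangent-to-identity group acting with a nowhere dense invariant set — survives) this image is infinite cyclic. Alternatively this case can be closed through the Deroin--Kleptsyn--Navas machinery: $G$ satisfies property $(\Lambda\star)$ by Theorem~\ref{thm: DKN18}, a point carrying a nontrivial parabolic stabilizer must lie in the finite set $\Lambda\cap\NE$ (Theorem~\ref{thm: DKN09}), and the translation-number/residue homomorphism then identifies $H$ with a cyclic group.

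The renormalization argument in Case A and the reduction to $x\in\Lambda$ are straightforward; the genuine obstacle is Case B, i.e.\ ruling out rich groups of analytic germs tangent to the identity, which is exactly where the three hypotheses — finitely generated, real analytic, and exceptional minimal set — are jointly used, via a no-dense-local-orbit argument combined with classical germ rigidity (or property $(\Lambda\star)$). Making the step ``$H_1$ is trivial'' in Case A unconditional likewise rests on the standard but nontrivial fact that such a group is locally discrete along its minimal set.
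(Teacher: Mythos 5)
First, a structural point: the paper does not prove this theorem at all — it is quoted from Hector's work (with \cite{Hector}, see also \cite{Na06}), so there is no internal argument to compare yours against; your proposal has to stand on its own as a proof of Hector's theorem, and it does not. The reduction to $x\in\Lambda$ and the Koenigs/blow-up argument showing that $L(H)=\lb{g'(x):g\in\Stab_G(x)}$ is a non-dense, hence cyclic, subgroup of $\RR_{>0}$ are fine. The proof breaks exactly where the substance of Hector's theorem lies: germs tangent to the identity. In Case A you kill $H_1=\ker L$ by producing $h^{-n}gh^{n}\to\Id$ in $C^1$ and invoking ``local discreteness of $G$ along $\Lambda$''. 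That is not a hypothesis (the statement does not even assume $G$ finitely generated), and within this paper the implication ``exceptional minimal set $\Rightarrow$ locally discrete'' is precisely Corollary~\ref{cor: nondiscrete minimal}, which is explicitly presented as a consequence of Hector's theorem (via Matsuda); invoking it here is circular, and the informal remarks after Definition~\ref{def: discreteness} (``usually'', ``often'') are not a citable substitute.

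Case B, which you yourself identify as the genuine obstacle, is not proved but asserted. The parenthetical ``else $\Lambda$ would contain an interval'' is false as stated: a locally dense orbit of the stabilizer lies mostly in the gaps, and $\Lambda$ is the minimal set, not an orbit closure; converting local orbit density into a contradiction with exceptionality is a real step (essentially Ghys--Matsuda, again downstream of Hector in this paper). The claimed ``classical rigidity: the analytic centralizer of a nontrivial parabolic germ is infinite cyclic'' is also false: the time-$s$ maps of the analytic vector field $t^{k+1}\partial_t$ form a one-parameter analytic centralizer. Likewise ``Kopell plus H\"older force the germs at $x^+$ to be Archimedean'' fails: for $f(t)=t+t^2$ and $g(t)=t+t^3$ one has $g^n<f$ for all $n$ in the germ order, so Archimedeanness is exactly what must be fought for, via Nakai's density theorem for non-solvable groups of analytic germs and the classification of solvable germ groups — none of which appears in your argument. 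The fallback through Deroin--Kleptsyn--Navas does not close it either: Theorem~\ref{thm: DKN18} requires finite generation, and a point with nontrivial parabolic stabilizer need not lie in $\NE$, since $\NE$ demands $g'(x)\leq 1$ for \emph{all} $g\in G$, not merely for the stabilizer. So both decisive steps — triviality of the tangent-to-identity kernel and the purely parabolic case — are missing, circular, or rest on incorrect claims.
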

The following is a consequence of Hector's theorem. A proof can be found in \cite[Proposition 3.2]{Mat09}.
\begin{corollary}\label{cor: nondiscrete minimal}
	Let $G\sbs\Diff_+^\omega(\SS^1)$ be a finitely generated, locally nondiscrete group without finite orbits, then $G$ acts minimally on $\SS^1.$
\end{corollary}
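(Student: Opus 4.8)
Corollary \ref{cor: nondiscrete minimal} states: a finitely generated, locally nondiscrete subgroup $G \subset \Diff_+^\omega(\SS^1)$ without finite orbits acts minimally on $\SS^1$.

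The plan is to argue by contradiction and to use Hector's theorem (Theorem~\ref{thm: Hector}) to constrain the relevant point stabilizers. Suppose $G$ does not act minimally. Since $G$ has no finite orbit, the trichotomy for actions on the circle forces $G$ to admit a unique exceptional minimal set $\Lambda$, which is a Cantor set; in particular $\Lambda$ is closed, perfect and nowhere dense, and, being $G$-invariant, $G$ permutes the connected components (\emph{gaps}) of $\SS^1\sm\Lambda$.

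First I would use local nondiscreteness to extract the key sequence: there are a nondegenerate interval $I\sbs\SS^1$ and distinct $g_n\in G$ with $g_n|_I\to\Id$ in the $C^1$ topology, hence in particular uniformly on $I$. Since $\Lambda$ is nowhere dense, $I\sm\Lambda$ is nonempty and open, so it contains a nondegenerate closed interval $J$ whose closure $\ol J$ is contained in a single gap $U$. Because $g_n\to\Id$ uniformly on $I\supseteq J$ and $\ol J$ lies at positive distance from $\partial U$, we have $g_n(\ol J)\sbs U$ for all large $n$; as $g_n$ sends gaps to gaps and $g_n(U)$ then meets $U$, it follows that $g_n(U)=U$, i.e. $g_n\in\Stab_G(U)$ for all large $n$.

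Next I would analyze $\Stab_G(U)$. As $G$ is orientation preserving, any element fixing the arc $U$ fixes both of its endpoints; picking an endpoint $p$, which lies in $\Lambda$, we get $\Stab_G(U)\sbs\Stab_G(p)$, and Hector's theorem says the latter is trivial or infinite cyclic. Since $\Stab_G(U)$ contains infinitely many distinct $g_n$, it must be infinite cyclic, say $\Stab_G(U)=\pair{h}$ with $h\ne\Id$. Writing $g_n=h^{k_n}$ with the integers $k_n$ pairwise distinct, after passing to a subsequence and possibly replacing $h$ by $h^{-1}$ we may assume $k_n\to+\infty$. Now real analyticity enters: since $h\ne\Id$, the analytic function $h(x)-x$ has only finitely many zeros, so $\Fix(h)\cap\ol U$ is finite; moreover $h$ is not the identity on $J$ (otherwise $h=\Id$ on all of $\SS^1$ by analyticity), so there is $x_0\in J$ with $h(x_0)\ne x_0$. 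The point $x_0$ lies in a component $(a,b)$ of $U\sm\Fix(h)$; since $h$ is an increasing homeomorphism fixing both $a$ and $b$ and having no fixed point in $(a,b)$, the sequence $h^k(x_0)$ is monotone and converges, as $k\to+\infty$, to one of $a,b$, hence to a limit different from $x_0$. Therefore $g_n(x_0)=h^{k_n}(x_0)\not\to x_0$, contradicting $g_n|_I\to\Id$ uniformly on $I\ni x_0$. This contradiction shows that $G$ acts minimally.

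The only step that requires a little care is the extraction of the gap $U$ together with the subinterval $J\sbs I$ satisfying $\ol J\sbs U$, on which the $g_n$ can be controlled; after that the argument is a short combination of Hector's theorem with elementary one-dimensional dynamics. (Incidentally, finite generation of $G$ is not actually used anywhere in this proof.)
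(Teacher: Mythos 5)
Your proof is correct, and it follows exactly the route the paper indicates: the paper does not prove this corollary itself but refers to Hector's theorem together with the cited proposition of Matsuda, and your argument is a valid self-contained derivation from Hector's theorem (passage to the exceptional minimal set, a gap $U$ whose stabilizer must contain infinitely many of the $g_n$ and is therefore infinite cyclic by Hector, and the monotone dynamics of its generator on a fixed-point-free subinterval contradicting $g_n|_I\to\Id$). Your closing remark is also accurate: finite generation is not used, since neither the minimal-set trichotomy nor Hector's theorem as stated requires it.
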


\subsection{Dimension and entropy}\label{subsec: dim entropy}

Let $E$ be a subset of a metric space $X$, for $\alpha >0,$ the \textit{$\alpha$-Hausdorff outer measure} is defined as
\[H^\alpha(E)\defeq\lim_{\rho \to 0^+} H^\alpha_\rho(E),\]
where 
\[H^\alpha_\rho(E) \defeq \inf \lb{\sum_{n=1}^{\infty}(\diam U_n)^\alpha:(U_n)\text{ is a countable cover of }E,\ \diam U_n < \rho}.\]
Then there exists a unique constant $\alpha_0\geq 0$ such that $H^\alpha(E)=\infty$ for every $\alpha<\alpha_0$ and $H^\alpha(E)=0$ for every $\alpha >\alpha_0.$ The constant $\alpha_0$ is called the \textit{Hausdorff dimension} of $E,$ denoted by $\dim_{\mr H}E.$ 

The following lemma is a crucial technical tool used throughout this paper. It will be applied to several different dynamically defined covers. 
\begin{lemma}
\label{lem: cover and dim}
Let $C > 0$ and $\lambda > 0$ be parameters. 
For $n \in \NN$, let $\cE_n$ be a collection of intervals in $\SS^1$ of length at most $C2^{-\lambda n}$.
Let $E = \limsup_{n \to +\infty} \bigcup_{J \in \cE_n} J$.
Let $\wt \cE_n \sbs \cE_n$ be a maximal subcollection consisting of pairwise disjoint intervals.
Then
\[\limsup_{n \to +\infty} \frac{1}{n} \log \#\wt\cE_n \geq \lambda \dimH E.\]
\end{lemma}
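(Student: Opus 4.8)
\textbf{Proof strategy for Lemma~\ref{lem: cover and dim}.} The plan is to show that the collections $\wt\cE_n$, after slightly enlarging each interval, still cover the limsup set $E$, and then run the standard Hausdorff-dimension counting argument. First I would observe that by maximality of $\wt\cE_n$, every interval $J \in \cE_n$ intersects some interval $\wt J \in \wt\cE_n$; since both have length at most $C 2^{-\lambda n}$, we have $J \subseteq 3\wt J$. Hence $\bigcup_{J \in \cE_n} J \subseteq \bigcup_{\wt J \in \wt\cE_n} 3\wt J$, and therefore
\[
E = \limsup_{n\to\infty} \bigcup_{J\in\cE_n} J \subseteq \limsup_{n\to\infty}\bigcup_{\wt J \in \wt\cE_n} 3\wt J.
\]
In particular, for every $m$, the family $\{\, 3\wt J : \wt J \in \wt\cE_n,\ n \geq m\,\}$ is a cover of $E$ by intervals of diameter at most $3C 2^{-\lambda n} \le 3C2^{-\lambda m}$, which can be made arbitrarily small.

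Next I would set $s \defeq \limsup_{n\to\infty} \frac{1}{n}\log\#\wt\cE_n$ (the quantity we want to bound below by $\lambda\dimH E$), and suppose for contradiction that $\lambda \dimH E > s$; pick $\alpha$ with $s/\lambda < \alpha < \dimH E$, so that $\lambda\alpha > s$. By definition of $s$ there is an integer $n_0$ such that $\#\wt\cE_n \leq 2^{\lambda\alpha n}$ for all $n \geq n_0$ (indeed $\#\wt\cE_n \le 2^{(s+\eta)n}$ for some $\eta>0$ with $s+\eta < \lambda\alpha$, eventually). Then for any $m \geq n_0$, using the cover above,
\[
H^\alpha_{3C2^{-\lambda m}}(E) \;\leq\; \sum_{n \geq m} \sum_{\wt J \in \wt\cE_n} (\diam 3\wt J)^\alpha \;\leq\; \sum_{n\geq m} 2^{\lambda\alpha n} \cdot (3C)^\alpha 2^{-\lambda\alpha n} \;=\; (3C)^\alpha \sum_{n\geq m} 1,
\]
which diverges — so this crude bound is not quite enough, and I need to be slightly more careful: choosing $\eta>0$ small enough that $\lambda\alpha - (s+\eta) =: \kappa > 0$, we get $\#\wt\cE_n \le 2^{(s+\eta)n}$ for $n \ge n_0$, hence
\[
H^\alpha_{3C2^{-\lambda m}}(E) \;\leq\; \sum_{n\geq m} 2^{(s+\eta)n}(3C)^\alpha 2^{-\lambda\alpha n} \;=\; (3C)^\alpha \sum_{n \geq m} 2^{-\kappa n} \;\xrightarrow[m\to\infty]{}\; 0.
\]
Therefore $H^\alpha(E) = 0$, forcing $\dimH E \leq \alpha$, which contradicts $\alpha < \dimH E$. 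Hence $\lambda\dimH E \leq s$, as claimed.

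\textbf{Main obstacle.} The only real subtlety is the bookkeeping in the tail-sum estimate: one must extract from the $\limsup$ defining $s$ an honest exponential bound $\#\wt\cE_n \le 2^{(s+\eta)n}$ valid for all large $n$ with $s+\eta$ strictly below $\lambda\alpha$, so that the geometric series $\sum 2^{-\kappa n}$ converges and its tail tends to $0$; this is what lets the Hausdorff outer measure at scale $\to 0$ vanish. Everything else — the covering step via $J \subseteq 3\wt J$ and the fact that $3$-times dilation only costs a constant factor $(3C)^\alpha$ independent of $n$ — is routine, and the ambient space being $\SS^1$ rather than $\RR$ causes no difficulty since all intervals involved are short.
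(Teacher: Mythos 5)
Your proof follows the same route as the paper's: use maximality of $\wt\cE_n$ to enlarge the disjoint subfamily into a cover of $E$ at arbitrarily small scales, then kill the $\alpha$-Hausdorff measure with a geometric tail sum (your contradiction framing versus the paper's direct "$\dimH E \leq \beta/\lambda$" is purely cosmetic). One step in your write-up is not correct as stated, though it is easily repaired: the containment $J \subseteq 3\wt J$ does not follow from $J \cap \wt J \neq \varnothing$ together with both lengths being at most $C2^{-\lambda n}$. The intervals in $\cE_n$ are only bounded \emph{above} in length, so $\wt J$ may be much shorter than $J$, in which case the threefold dilation of $\wt J$ need not contain $J$ (e.g.\ $\wt J$ of length $\delta \ll C2^{-\lambda n}$ meeting a $J$ of full length $C2^{-\lambda n}$). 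The fix, which is exactly what the paper does, is to enlarge $\wt J$ to the interval with the same center and the \emph{absolute} length $C2^{-\lambda n+2}$: since $J$ meets $\wt J$ and $|J| \leq C2^{-\lambda n}$, this enlarged interval does contain $J$, and its diameter is still $\ll 2^{-\lambda n}$, so your tail estimate goes through verbatim with $(3C)^\alpha$ replaced by $(4C)^\alpha$. With that correction the argument is complete.
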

\begin{proof}

For every $J \in \cE_n \sm \wt\cE_n$, by the maximality of $\wt\cE_n$, there is $I \in \wt\cE_n$ such that $J\cap I \ne \vn$. Then $J \sbs \wt I$ where $\wt I$ denotes the interval of the same center as $I$ and of length $C 2^{-\lambda n + 2}$.
It follows that
\[\bigcup_{J\in\cE_n } J \sbs \bigcup_{I\in\wt\cE_n} \wt I.\]

Given $\rho > 0$, for any $N \geq \lambda^{-1}( |\log \rho| + \log C + 2)$, we have
\[E \sbs \bigcup_{n \geq N} \bigcup_{I\in\wt\cE_n} \wt I,\]
which is a cover of $E$ by intervals of length at most $\rho$.
Thus for any $s > 0$ and $\rho > 0$, 
\begin{equation*}
H_{\rho}^s(E)
\leq \sum_{n \geq N} \sum_{I\in\wt\cE_n} |\wt I|^s\\
\ll_{C,s} \sum_{n \geq N} 2^{-s \lambda n} \#\wt\cE_n.
\end{equation*}
The right-hand side is the tail of a convergent series whenever $- s \lambda + \beta < 0$ where $\beta = \limsup_{n\to +\infty} \frac{1}{n} \log \#\wt \cE_n$.
In this case, $H^s(E) = 0$. 
Hence $\dimH E \leq \frac{\beta}{\lambda}$.
\end{proof}

Let $\nu$ be a Borel probability measure on $X$ and $\cA$ a finite measurable partition of $X.$ 
The \textit{Shannon entropy} of $\nu$ with respect to $\cA$ is 
\[H(\nu,\cA)\defeq -\sum_{A\in\cA}\nu(A)\log\nu(A).\]
Note that $H(\nu,\cA)\leq\log\#\cA.$ Let $\cB$ be another finite measurable partition on $X,$ the \textit{conditional entropy} is 
\[H(\nu,\cA|\cB)\defeq-\sum_{B\in\cB}\sum_{A\in\cA}\nu(A\cap B)\log\frac{\nu(A\cap B)}{\nu(B)}.\]
Similarly, an upper bound for the conditional entropy is given by
\begin{equation}
\label{eqn: bound for cond entropy}
H(\nu,\cA|\cB)\leq \max_{B\in\cB}\log\#\lb{A\in\cA:\nu(A\cap B)>0}.
\end{equation}

If $\mu$ is a probability measure with finite support, we use $H(\mu)$ and $H(\mu|\cB)$ to denote $H(\mu,\cA)$ and $H(\mu,\cA|\cB)$, respectively, where $\cA$ is the discrete partition. This notation is used only for finitely supported probability measures on $\Diff_+^1(\SS^1)$ in this paper.

Fix a finite measurable partition $\cA$, the function $\nu \mapsto H(\nu,\cA)$ is concave and almost convex: 
for any probability vector $\bm{p}=(p_1,\cdots,p_k)$ and any Borel probability measures $\nu_1,\dotsc,\nu_k$, we have
\begin{equation}
\label{eqn: entropy concave}
\sum_{i=1}^k p_i H(\nu_i,\cA) \leq H\sb{\sum_{i=1}^k p_i\nu_i,\cA}\leq \sum_{i=1}^k p_i H(\nu_i,\cA)+H(\bm{p}).
\end{equation}
where $H(\bm{p})=-\sum p_i\log p_i$.

We often consider the entropy with respect to dyadic partitions on $\SS^1$.
We identify $\SS^1$ with $[0,1[$. For a positive integer $n$, let
\[\cD_n\defeq\lb{{\left[\frac{k}{2^n},\frac{k+1}{2^n}\right[}: 0\leq k\leq 2^n-1}.\]
Moreover, for a positive number $t,$ let $\cD_t= \cD_{\lfloor t\rfloor}.$
If the sequence $(\frac{1}{n}H(\nu,\cD_n))_n$ converges and
\[\lim_{n\to+\infty}\frac{1}{n}H(\nu,\cD_n)=\alpha,\]
we say that $\nu$ has \textit{entropy dimension} $\alpha$.
It is shown in \cite[Theorem 4.4]{Y82} (see also~\cite[Theorem 1.3]{FLR}) that if $\nu$ is exact dimensional of dimension $\alpha$ then it has entropy dimension $\alpha$.

\subsection{Distortion estimates}\label{sec: distortion}
This subsection is a collection of useful tools of distortion controls that enable controlling the iterations of $C^2$ maps on a small piece using only the information at one point. These techniques date back to Denjoy \cite{Den32}, Schwartz \cite{Sch63}, and Sacksteder \cite{Sa65}, and later a more concise view was proposed by Sullivan \cite{Sul83}. Further discussions on this topic can be found in \cite{DKN07, DKN09, DKN18}.

Let $f\in\Diff^2_+(\SS^1)$ and $I\sbs\SS^1$ be an interval. Denote
\[\vk(f,I)\defeq\sup_{x,y\in I}|\log f'(x)-\log f'(y)|.\]
We also consider the Lipschitz norm of $\log f'$ on $I$, denoted by
\[\wt\vk(f,I)=\nm{\log f'}_{\mr{Lip}(I)} \defeq\sup_{x\ne y\in I}\frac{|\log f'(x)-\log f'(y)|}{d(x,y)}.\]
In~\cite{DKN09}, the quantity $\vk(f,I)$ is called \textit{distortion coefficient}  and $\wt\vk(f^{-1},fI)$ is called \textit{distortion norm}.
Obviously, $\vk(f,I)\leq\wt\vk(f,I)|I|.$
Besides, there are some basic estimate of the distortion of compositions, that is
\[\vk(fg,I)\leq\vk(g,I)+\vk(f,g(I)),\]
\begin{equation}
\label{eq: wtvk of fg}
\wt\vk(fg,I)\leq\wt\vk(g,I)+\|g'\|_{C^0}\cdot \wt\vk(f,g(I)).
\end{equation}

Now we fix a finite subset $\cS\sbs\Diff_+^2(\SS^1),$ let
\[M=M(\cS)\defeq\max\lb{\max_{g\in\cS}\|g'\|_{C^0},\max_{g\in\cS}\|\log g'\|_{\mr{Lip}}}.\]
Take $f\in\cS^{*n}$ and write $f$ as $g_n\cdots g_2g_1$ where $g_i\in\cS.$ Let
\[f_0=\Id,\quad f_k=g_k\cdots g_2g_1,\ \forall 1\leq k\leq n.\]
Then for every interval $I\sbs\SS^1,$ we have
\begin{equation}
\label{eqn:vksumI}
\vk(f,I)\leq\sum_{k=0}^{n-1}\vk(g_{k+1},f_k(I))\leq M\sum_{k=0}^{n-1}|f_k(I)|,
\end{equation}
and
\begin{equation}\label{eqn: distortion subadditivity 2}
\wt\vk(f,I)\leq\sum_{k=0}^{n-1}\|f_k'\|_{C^0}\cdot \wt\vk(g_{k+1},f_k(I))\leq M^{n-1}\sum_{k=0}^{n-1}\wt\vk(g_{k+1},f_k(I))\leq M^{n}\sum_{k=0}^{n-1}|f_k(I)|.
\end{equation}
Let $x_0\in I$ and let $I_0=I$ and $I_k=f_k(I_0)$ for $k = 1,\dotsc,n$. 
We have the following inequality \cite[Corollary 2.3]{DKN18} 
\begin{equation}
\label{eqn:vkfkI0}
\forall k=1,\dotsc,n,\quad \abs{\log\frac{f_k'(x_0)|I_0|}{|I_k|}}\leq\vk(f_k,I_0)\leq M\sum_{i=0}^{k-1}|I_i|,
\end{equation}
and summing over the exponentiation of \eqref{eqn:vkfkI0},
 \[\log\sb{\sum_{k=0}^{n-1}|I_k|}\leq\log|I|+\log\sb{\sum_{k=0}^{n-1}f_k'(x_0)}+M\sum_{k=0}^{n-2}|I_k|.\]
\begin{proposition}\label{prop: distortion estimate}
	Fix $x_0\in\SS^1$ and $f\in\cS^{*n}.$ Define $M= M(\cS)$ and $(f_k)_{1\leq k \leq n}$ as above and let $S=\sum_{k=0}^{n-1}f_k'(x_0).$ Then for every $\delta\leq(2MS)^{-1},$ we have
\begin{equation}
\label{eqn:2MSdelta}
\vk(f,B(x_0,\delta) )\leq 2MS\delta,
\end{equation}
\begin{equation}
\label{eqn:2MS}
\wt\vk(f^{-1},fB(x_0,\delta))\leq 4MS/f'(x_0).
\end{equation}
\end{proposition}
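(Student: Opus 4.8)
The statement is Proposition~\ref{prop: distortion estimate}, which packages the distortion machinery into two clean bounds. The plan is to deduce both inequalities from the summation estimates \eqref{eqn:vkfkI0} and \eqref{eqn:vksumI}--\eqref{eqn: distortion subadditivity 2} already established, once we control the sum $\sum_{k=0}^{n-1}|I_k|$ of the lengths of the images $I_k = f_k(B(x_0,\delta))$. The key observation is that \eqref{eqn:vkfkI0} says $|I_k|$ is comparable to $f_k'(x_0)|I_0| = 2\delta f_k'(x_0)$ up to the multiplicative error $e^{\vk(f_k,I_0)}$, so if we can a priori bound these distortion coefficients by a small absolute constant, then $\sum_k |I_k| \leq 2\delta S \cdot e^{O(1)}$, and feeding this back into \eqref{eqn:vksumI} closes the loop.

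\textbf{Step 1: a bootstrap / continuity argument to bound $\sum_k |I_k|$.} First I would fix $\delta \leq (2MS)^{-1}$ and argue, by induction on $k$ (or equivalently by a connectedness argument in $\delta$), that $\sum_{i=0}^{k-1}|I_i| \leq 2\delta S$ for all $k \leq n$. The base case is $|I_0| = 2\delta \leq 2\delta S$ since $S \geq f_0'(x_0) = 1$. For the inductive step, \eqref{eqn:vkfkI0} gives $|I_k| \leq f_k'(x_0)|I_0| \exp\!\big(M\sum_{i=0}^{k-1}|I_i|\big) \leq 2\delta f_k'(x_0) \exp(2M\delta S) \leq 2\delta f_k'(x_0)\, e^{1}$ using the inductive hypothesis and $\delta \leq (2MS)^{-1}$. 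Hmm — this gives the factor $e$, not $1$, so to get the clean constant in \eqref{eqn:2MSdelta} I would instead run the induction with the sharper hypothesis or simply absorb the constant: summing yields $\sum_{i=0}^{k}|I_i| \leq 2\delta e S \leq$ something, and one checks the numbers work out with the stated constants $2MS$ and $4MS$ (the paper's constants are generous enough to absorb the $e$). The cleanest route is: show $\sum_{i=0}^{k-1}|I_i| \le 2\delta S$ implies $|I_k| \le 2\delta f_k'(x_0) e \le \ldots$; a small adjustment of the bookkeeping, or replacing $2MS$ by the actual constant that makes the induction self-consistent, handles this. I expect this bootstrap to be the main (only real) obstacle — everything else is substitution.

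\textbf{Step 2: conclude \eqref{eqn:2MSdelta}.} With $\sum_{k=0}^{n-1}|I_k| \leq 2\delta S$ (or $\leq 2\delta S \cdot e$, adjusting constants) in hand, plug directly into \eqref{eqn:vksumI}: $\vk(f, B(x_0,\delta)) \leq M\sum_{k=0}^{n-1}|I_k| \leq 2MS\delta$.

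\textbf{Step 3: conclude \eqref{eqn:2MS}.} For the distortion norm of the inverse, apply \eqref{eqn: distortion subadditivity 2} to $f^{-1} = g_1^{-1}\cdots g_n^{-1}$ acting on the interval $fB(x_0,\delta) = I_n$, whose successive images under $g_n^{-1}, g_{n-1}^{-1}, \ldots$ are precisely $I_{n-1}, I_{n-2}, \ldots, I_0$. The bound \eqref{eqn: distortion subadditivity 2} (or rather the analogue with $\|(f^{-1})_k'\|_{C^0}$ replaced by $\sup$ over the relevant chain, controlled by the mean value theorem on each $I_k$) gives $\wt\vk(f^{-1}, I_n) \leq \sum_{k} \|(\,\cdot\,)'\|_{C^0} \wt\vk(g_{\,\cdot\,}^{-1}, I_k)$; estimating each factor and using that the "derivative of the chain from $I_n$ back to $I_k$" at the relevant point is $\asymp f_k'(x_0)/f'(x_0)$ (again by \eqref{eqn:vkfkI0}, which controls distortion, so derivatives on $I_k$ are within a bounded factor of $f_k'(x_0)$), one gets $\wt\vk(f^{-1}, I_n) \lesssim \frac{1}{f'(x_0)}\, M \sum_k f_k'(x_0) \cdot (\text{length factors}) \leq 4MS/f'(x_0)$ after collecting constants. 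The precise constant $4$ (versus $2$) is exactly the slack needed to absorb the $e$'s from the distortion estimates, so I would track constants carefully only at the very end.

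Throughout, the only genuine subtlety is making the self-referential bound on $\sum |I_k|$ rigorous — the rest is assembling \eqref{eqn:vksumI}, \eqref{eqn: distortion subadditivity 2}, and \eqref{eqn:vkfkI0} with the hypothesis $\delta \le (2MS)^{-1}$.
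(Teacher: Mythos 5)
Your overall reduction (control $\sum_k|I_k|$, then substitute into the subadditivity formulas) is reasonable, but the step you yourself flag as the only real obstacle is genuinely not closed, and it cannot be closed the way you set it up. The termwise induction ``$\sum_{i<k}|I_i|\le 2\delta S \Rightarrow |I_k|\le 2^{\vk(f_k,I_0)}f_k'(x_0)\,|I_0|$'' always loses the distortion factor (your ``$e$''), so it only yields $\vk(f,B(x_0,\delta))\le 4MS\delta$ (or $2eMS\delta$), not the stated $2MS\delta$; and your fallback of ``replacing $2MS$ by whatever makes the induction self-consistent'' changes the statement rather than proving it. Note that the paper does not reprove \eqref{eqn:2MSdelta} at all: it quotes it from [DKN18, Proposition 2.4]. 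If you want a self-contained proof with the stated constant, the right tool is the inequality displayed just before the proposition, $\log\bigl(\sum_{k}|I_k|\bigr)\le \log|I|+\log\bigl(\sum_k f_k'(x_0)\bigr)+M\sum_{k}|I_k|$, combined with a continuity-in-$\delta$ (smallest-root) argument: the admissible values of $A=\sum_k|I_k|$ form two intervals separated by the roots of $A=|I|S\,2^{MA}$, and since $A\to 0$ as the interval shrinks, $A$ stays below the smaller root, which lies below $2S|I|$ exactly when $2MS|I|\le 1$. A per-term induction does not see this mechanism.

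For \eqref{eqn:2MS}, your route (iterating \eqref{eq: wtvk of fg} along $f^{-1}=g_1^{-1}\cdots g_n^{-1}$) is workable in spirit, but each of the $n$ terms requires bounding both a chain derivative on $I_n$ and $\wt\vk(g_k^{-1},I_k)$, and converting each to derivatives at $x_0$ costs distortion factors $2^{\vk}$; these compound, and the final constant comes out around $8$--$16$ times $MS/f'(x_0)$, so your claim that the $4$ in the statement ``is exactly the slack needed'' is not correct for this route. The paper's proof of \eqref{eqn:2MS} is different and avoids all composition bookkeeping: first upgrade \eqref{eqn:2MSdelta} to every subinterval $I\subset B(x_0,\delta)$ — since $\vk(f_k,B(x_0,\delta))\le 1$ one has $|f_kI|\le 2f_k'(x_0)|I|$, hence by \eqref{eqn:vksumI} $\vk(f,I)\le 2MS|I|\le 1$ — and then use the exact identities $\wt\vk\bigl(f^{-1},fB(x_0,\delta)\bigr)=\sup_{I\subset B(x_0,\delta)}\vk(f^{-1},fI)/|fI|$ and $\vk(f^{-1},fI)=\vk(f,I)$ together with $|fI|\ge \tfrac12 f'(x_0)|I|$, which gives $4MS/f'(x_0)$ in one line. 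So as written your proposal proves weaker constants than the proposition asserts; to get the statement as stated you need either the citation (or the continuity argument) for \eqref{eqn:2MSdelta} and the subinterval/supremum argument for \eqref{eqn:2MS}.
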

\begin{proof}
The first inequality~\eqref{eqn:2MSdelta} is \cite[Proposition 2.4]{DKN18}. 
It remains to prove~\eqref{eqn:2MS}. 

We first upgrade~\eqref{eqn:2MSdelta} so that we can replace  $B(x_0,\delta)$ in \eqref{eqn:2MSdelta} by any subinterval. 
Let $I\sbs B(x_0,\delta)$ be an interval. 
Since $\vk(f_k,B(x_0,\delta))\leq 1$ by~\eqref{eqn:2MSdelta}, we have $|f_kI|\leq 2|I|f_k'(x_0).$ In view of~\eqref{eqn:vksumI},
\[\vk(f,I)\leq M\sum_{k=0}^{n-1}|f_k I|\leq 2MS|I|\leq 1.\]
Combining with $\vk(f,B(x_0,\delta))\leqslant 2MS\delta\leqslant 1 ,$ we have
\[\wt\vk(f^{-1},fB(x_0,\delta))=\sup_{I\sbs B(x_0,\delta)}\frac{\vk(f^{-1},fI)}{|fI|}\leqslant\sup_{I\sbs B(x_0,\delta)}\frac{2\vk(f,I)}{f'(x_0)|I|}\leqslant\frac{4MS}{f'(x_0)}.\qedhere \]
\end{proof}

\section{Random walks on \texorpdfstring{$\Diff_+^2(\SS^1)$}{Diff\^{}2(S\^{}1)}}\label{sec: random walks}
\label{se:4}

\subsection{Preliminaries on random walks}\label{sec: preliminaries on random walks}
\paragraph{Cocycles.} Let $\cS\sbs\Diff_+^1(\SS^1)$ be a finite set. We equip $\cS$ with the discrete topology and let $\Sigma=\cS^\ZZ$ be the product space which is compact. We also consider the spaces $\Sigma^+=\cS^{\ZZ_{\geqslant 0}}$ and $\Sigma^-=\cS^{\ZZ_{<0}}.$ Elements in $\Sigma,\Sigma^+,\Sigma^-$ are usually denoted by $\omega,\omega^+,\omega^-,$ respectively. The maps $\pi^+:\Sigma\to\Sigma^+$ and $\pi^-:\Sigma\to\Sigma^-$ refer to natural projections. We use $\sigma$ to denote the left shift map on $\Sigma$ or $\Sigma^+.$

For an element $\omega\in\Sigma,$ we write $\omega=(\cdots,f_{-2},f_{-1},f_0,f_1,\cdots)$ where $f_n\in\cS,n\in\ZZ.$ Denote
\[f_\omega^n\defeq \begin{cases}
	f_{n-1}\cdots f_1f_0& \text{if } n\geq 0;\\
	f_{n}^{-1}\cdots f_{-2}^{-1}f_{-1}^{-1}& \text{if }n<0.
\end{cases} \]
We abbreviate $f_\omega^1$ to $f_\omega.$ It induces an invertible cocycle over $\sigma:\Sigma\to\Sigma$ defined as
\[F:\Sigma\times\SS^1\to \Sigma\times\SS^1,\quad (\omega,x)\mapsto(\sigma\omega,f_\omega x).\]
Then the map $F$ satisfies $F^n(\omega,x)=(\sigma^n\omega,f_\omega^n x)$ for every $n\in\ZZ.$

We also use the notation $f_{\omega^+}$ and $f_{\omega^+}^n$ defined similarly for $\omega^+\in\Sigma^+$ and $n\geqslant 0,$ 
which induces a forward cocycle as
\[F^+:\Sigma^+\times\SS^1\to \Sigma^+\times\SS^1,\quad (\omega^+,x)\mapsto (\sigma\omega^+,f_{\omega^+}x).\]
Then $F$ is semi-conjugate to $F^+$ via the natural projection, $(\pi^+,\Id)\circ F=F^+\circ(\pi^+,\Id).$ 

\paragraph{Random walks.}
Let $\mu$ be a finitely supported probability measure on $\Diff_+^1(\SS^1)$. 
We take $\cS=\supp\mu$ which induces a cocycle $F:\Sigma\times\SS^1\to\Sigma\times\SS^1$ as above.
Equip $\Sigma$ with its Borel $\sigma$-algebra.
Let $\P=\mu^\ZZ$ be the product probability measure on $\Sigma.$ 
Similarly, the spaces $\Sigma^+$ and $\Sigma^-$ are equipped with probability measures $\P^+=\mu^{\ZZ_{\geqslant 0}}$ and $\P^-=\mu^{\ZZ_{<0}}$ respectively.
In this sense, the cocycle $F:\Sigma\times\SS^1\to\Sigma\times\SS^1$ equipped with a probability measure $\P$ on $\Sigma$ is called the \textit{random walk} induced by $\mu.$

Now we recall the definition of stationary measures mentioned in Section \ref{sec: 2.1}. 
The stationary measures correspond to the invariant measures of the forward cocycle $F^+$ in the following way.
\begin{proposition}[{\cite[Propositions 5.5 and 5.13]{Vi14}}]
Let $\nu$ be a probability measure on $\SS^1,$ then
    \begin{enumerate}
        \item $\nu$ is $\mu$-stationary if and only if $\P^+\times\nu$ is $F^+$-invariant.
        \item $\nu$ is an ergodic $\mu$-stationary measure if and only if $\P^+\times\nu$ is ergodic $F^+$-invariant.
    \end{enumerate} 
\end{proposition}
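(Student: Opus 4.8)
The statement to be proved is the standard correspondence between $\mu$-stationary measures on $\SS^1$ and $F^+$-invariant measures of the product form $\P^+ \times \nu$, together with the matching of ergodicity. Since the excerpt cites \cite[Propositions 5.5 and 5.13]{Vi14}, the cleanest route is simply to deduce it from those references; but to give a self-contained argument I would proceed as follows.

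First, for item (1), I would unwind the definition of $F^+$-invariance. A probability measure $m$ on $\Sigma^+ \times \SS^1$ is $F^+$-invariant iff $\int \phi \circ F^+ \, dm = \int \phi \, dm$ for all bounded measurable $\phi$. Specializing to $m = \P^+ \times \nu$ and using the skew-product structure $F^+(\omega^+, x) = (\sigma \omega^+, f_{\omega^+} x)$, the left side becomes $\int_{\Sigma^+} \int_{\SS^1} \phi(\sigma\omega^+, f_{\omega^+} x) \, d\nu(x) \, d\P^+(\omega^+)$. Now $\P^+ = \mu^{\ZZ_{\geq 0}}$ is the product measure, so disintegrating over the zeroth coordinate $f_0 = f_{\omega^+}$, which has law $\mu$ and is independent of $\sigma\omega^+$ (whose law is again $\P^+$), this integral equals $\int_{\Diff} \int_{\Sigma^+} \int_{\SS^1} \phi(\omega^+, f x) \, d\nu(x) \, d\P^+(\omega^+) \, d\mu(f) = \int_{\Sigma^+} \int_{\SS^1} \phi(\omega^+, y) \, d(\mu * \nu)(y) \, d\P^+(\omega^+)$, i.e. $\int \phi \, d(\P^+ \times (\mu*\nu))$. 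Comparing with the right side $\int \phi \, d(\P^+ \times \nu)$ and letting $\phi$ range over a separating class (e.g. products $\psi(\omega^+)\chi(x)$), one gets that $\P^+\times\nu$ is $F^+$-invariant iff $\mu*\nu = \nu$, which is exactly $\mu$-stationarity of $\nu$. Conversely, any $F^+$-invariant measure need not be checked here since we only claim the equivalence for measures of the product form.

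For item (2), the forward implication is immediate: if $\nu$ is not ergodic, say $\nu = t\nu_1 + (1-t)\nu_2$ with $\nu_1, \nu_2$ distinct $\mu$-stationary measures and $t \in (0,1)$, then $\P^+\times\nu = t(\P^+\times\nu_1) + (1-t)(\P^+\times\nu_2)$ is a nontrivial convex combination of distinct $F^+$-invariant measures, hence not ergodic. For the reverse direction — $\nu$ ergodic $\Rightarrow$ $\P^+\times\nu$ ergodic — I would argue that if $A \subset \Sigma^+\times\SS^1$ is $F^+$-invariant with $0 < (\P^+\times\nu)(A) < 1$, then by the martingale/Markov-operator argument (the $\sigma$-algebra of $F^+$-invariant sets pulls back, via the skew structure, to functions measurable with respect to the ``future'' that are harmonic for the Markov operator $P\phi(x) = \int \phi(fx)\,d\mu(f)$), the indicator $\one_A$ descends to a nonconstant bounded $P$-harmonic function, equivalently the conditional measures give a nontrivial decomposition of $\nu$ into stationary measures, contradicting ergodicity of $\nu$. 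This is the content of \cite[Proposition 5.13]{Vi14} and I would cite it rather than reprove the ergodic-decomposition machinery in full.

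\textbf{Main obstacle.} The only genuinely non-formal point is the reverse implication in (2): extracting from an $F^+$-invariant set a nontrivial decomposition of $\nu$ itself. The subtlety is that $F^+$-invariant sets live on $\Sigma^+\times\SS^1$ and involve the whole forward trajectory, so one must pass through the theory of the Furstenberg boundary / stationary $\sigma$-algebra to see that such a set forces $\nu$ to be non-extremal in the simplex of stationary measures. Since this is precisely what \cite{Vi14} establishes, the honest plan is to invoke it; a from-scratch proof would require reproducing the correspondence between the invariant $\sigma$-algebra of the one-sided skew product and bounded harmonic functions, which is routine but not short.
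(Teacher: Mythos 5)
Your proposal is correct, and it takes essentially the same route as the paper: the paper gives no proof of this statement at all, simply importing it from \cite[Propositions 5.5 and 5.13]{Vi14}, which is exactly the reference you invoke for the only non-routine step (ergodicity of $\nu$ implies ergodicity of $\P^+\times\nu$). Your computation of $(F^+)_*(\P^+\times\nu)=\P^+\times(\mu*\nu)$ for item (1) and the convex-combination argument for the easy direction of item (2) are the standard verifications and are accurate.
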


\paragraph{Lyapunov Exponents.}
For an element $(\omega^+,x)\in\Sigma^+\times\SS^1,$ recall the \textit{Lyapunov exponent} at $(\omega^+,x)$ is defined as (if exists)
\[\lambda(\omega^+, x)\defeq\lim_{n\to+\infty}\frac{1}{n}\log (f_{\omega^+}^n)'(x).\]
If $\nu$ is an ergodic $\mu$-stationary measure then $\P^+\times\nu$ is an ergodic $F^+$-invariant measure. 
By Birkhoff's ergodic theorem,
\[\lambda(\omega^+, x)=\iint\log f_{\omega^+}'(y)\dd\P^+(\omega^+)\dd\nu(y)=\iint\log f'(y)\dd\mu(f)\dd\nu(y)\]
for $\P^+\times\nu$ almost every $(\omega^+,x).$ 
This corresponds to the Lyapunov exponent $\lambda(\mu,\nu)$ given in \eqref{eqn: Lyapunov exponent}.

In general, the Lyapunov exponent can be defined similarly for every ergodic $F^+$-invariant probability measure on $\Sigma^+\times \SS^1$, or every ergodic $F$-invariant probability measure on $\Sigma\times\SS^1.$
Specifically, let $m$ be an $F$-invariant probability measure on $\Sigma\times\SS^1,$ the Lyapunov exponent of $(\omega,x)$ is given by
\[\lambda(\omega,x)\defeq\lim_{n\to+\infty}\frac{1}{n}\log (f_{\omega}^n)'(x).\]
Then for $m$-almost every $(\omega,x),$ the limit exists and coincides with the backward limit
\[\lim_{n\to-\infty}\frac{1}{n}\log (f_{\omega}^n)'(x).\]
Moreover, if $m$ is ergodic, then $\lambda(\omega,x)$ is constant almost everywhere, denoted by $\lambda(m)$. We also remark that if $m$ is an ergodic $F$-invariant probability measure and $m^+=(\pi^+,\Id)_*m$ which is an ergodic $F^+$-invariant probability measure on $\Sigma\times\SS^1,$ then $\lambda(m)=\lambda(m^+).$

\subsection{Generalities on random transformations}
\label{sec: random walks 4.1}
We denote by $P$ and $Q$ to be the natural projections of $\Sigma\times \SS^1$ to $\Sigma$ and $\SS^1,$ respectively. For a Borel probability measure $m$ on $\Sigma\times\SS^1$ with $P_*m = \P$, write
\[\dr m(\omega,x)=\dr\P(\omega)\,\dr m_\omega(x)\]
for its disintegration along $P:\Sigma \times \SS^1 \to \Sigma$ in the sense of Rokhlin.
The measure $m$ being $F$-invariant translates to the following equivariant property: for $\P$-almost every $\omega \in \Sigma$,
\begin{equation}
\label{eq:moemga equiv}
m_{\sigma(\omega)} = (f_\omega)_*m_\omega.
\end{equation}

Let $\cP = \cP_{F,\P}$ denote the set of  $F$-invariant probability measures $m$ on $\Sigma \times \SS^1$ such that $P_*m=\P$.
Let $\cP^u \sbs \cP$ (resp. $\cP^s$) denote the subset of all $u$-states (resp. $s$-states). 
Recall that $m \in \cP$ is a \emph{$u$-state} (resp. \emph{$s$-state}) if its disintegration $\omega \mapsto m_\omega$  factors through $\pi^- : \Sigma \to \Sigma^-$ (resp. through $\pi^+ : \Sigma \to \Sigma^+$).
For convenience later on, we denote $\mu^+$ to be $\mu$ and $\mu^-$ to be the pushforward of $\mu$ under the map $f\mapsto f^{-1}$. 

\begin{proposition}[{\cite[Proposition 5.17]{Vi14}}]
\label{prop: stationary and state}
The map $m \mapsto Q_*m$ is a bijection between the convex set $\cP^u$ (resp. $\cP^s$) and the convex set of $\mu^+$-stationary (resp. $\mu^{-}$-stationary) measures.
\end{proposition}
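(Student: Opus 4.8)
\emph{Proof plan.} This is a standard fact from the theory of random products (indeed it is quoted from Viana's book), and the plan is to reproduce the classical bounded-martingale argument, writing down the inverse of $m\mapsto Q_*m$ explicitly. I would treat the $u$-state/$\mu^+$-stationary case in detail and deduce the $s$-state/$\mu^-$-stationary case by time reversal, i.e.\ by applying the same result to the inverse cocycle $F^{-1}$ (a cocycle of the same form over $\sigma^{-1}$ whose maps are the $f^{-1}$'s, under which $u$- and $s$-states are interchanged and $\mu$ is replaced by $\mu^-$).

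First I would check the map is well-defined and affine. Given $m\in\cP^u$, set $\nu=Q_*m$. Since $Q\circ F(\omega,x)=f_{\omega_0}(x)$ depends only on $x$ and the coordinate $\omega_0$, while in the disintegration $\dd m(\omega,x)=\dd\P(\omega)\dd m_\omega(x)$ the conditional $m_\omega$ factors through $\pi^-$ and $\omega_0$ is $\P$-independent of $\pi^-(\omega)$, Fubini gives $\nu=Q_*m=Q_*F_*m=\int_\Sigma (f_{\omega_0})_*m_\omega\dd\P(\omega)=\int_{\supp\mu}f_*\bigl(\int_\Sigma m_\omega\dd\P(\omega)\bigr)\dd\mu(f)=\mu*\nu$, so $\nu$ is $\mu^+$-stationary. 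Affineness of $m\mapsto Q_*m$ is immediate, so it carries the convex set $\cP^u$ into the convex set of $\mu^+$-stationary measures.

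For injectivity, iterate the equivariance \eqref{eq:moemga equiv} backwards to get $m_\omega=(g_n(\omega))_*m_{\sigma^{-n}\omega}$ where $g_n(\omega)\defeq f_{\omega_{-1}}\circ\cdots\circ f_{\omega_{-n}}$. Conditioning on $\cF_n\defeq\sigma(\omega_{-1},\dots,\omega_{-n})$: the factor $g_n(\omega)$ is $\cF_n$-measurable, whereas $\omega\mapsto m_{\sigma^{-n}\omega}$ is measurable with respect to $(\omega_{-n-1},\omega_{-n-2},\dots)$, hence $\P$-independent of $\cF_n$ with mean $\int m_\omega\dd\P(\omega)=\nu$. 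Therefore $\EE[\,\pair{\phi,m_\cdot}\mid\cF_n\,]=\pair{\phi,(g_n(\cdot))_*\nu}$ for every $\phi\in C(\SS^1)$; since $\omega\mapsto m_\omega$ is $\sigma(\pi^-)=\bigvee_n\cF_n$-measurable, L\'evy's upward theorem yields $\pair{\phi,m_\omega}=\lim_{n\to\infty}\pair{\phi,(g_n(\omega))_*\nu}$ for $\P$-a.e.\ $\omega$. Thus $m_\omega$, and hence $m$, is determined by $\nu=Q_*m$. For surjectivity, this same computation shows how to build a preimage: for fixed $\phi$, the process $n\mapsto\pair{\phi,(g_n(\omega))_*\nu}$ is an $(\cF_n)$-martingale bounded by $\nm{\phi}_{C^0}$, the martingale identity being exactly $\int\!\int\phi(g_n(\omega)fx)\dd\nu(x)\dd\mu(f)=\int\phi(g_n(\omega)y)\dd(\mu*\nu)(y)=\pair{\phi,(g_n(\omega))_*\nu}$ by $\mu*\nu=\nu$. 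Running this over a countable dense family of $\phi$ and using compactness of $\SS^1$, for $\P$-a.e.\ $\omega$ the measures $(g_n(\omega))_*\nu$ converge weak-$*$ to a probability measure $m_\omega$ depending only on $\pi^-(\omega)$, with $\omega\mapsto m_\omega$ Borel; let $m$ be the measure with this disintegration over $\P$. Then $m\in\cP^u$ by construction; the relation $g_n(\sigma\omega)=f_{\omega_0}\circ g_{n-1}(\omega)$ passes to the limit to give \eqref{eq:moemga equiv}, so $m$ is $F$-invariant; and $Q_*m=\lim_n\int(g_n(\omega))_*\nu\dd\P(\omega)=\lim_n\mu^{*n}*\nu=\nu$ by stationarity.

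The routine parts are the Fubini manipulation and the martingale identities; the one point demanding genuine care is the measure-theoretic assembly of the $\P$-a.s.\ defined family $\omega\mapsto m_\omega$ into a bona fide measure $m$ on $\Sigma\times\SS^1$ — i.e.\ upgrading weak-$*$ convergence along a countable dense set of test functions to a measurable (in the sense of Rokhlin disintegration) family of probability measures, and then verifying that the limiting family still satisfies the equivariance \eqref{eq:moemga equiv} pointwise a.e. This is where compactness and metrizability of $\SS^1$ are used, and it is the main (mild) obstacle; everything else is bookkeeping of shift indices.
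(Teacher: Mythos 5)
Your argument is correct and is essentially the standard proof that the paper delegates to Viana's book: the paper gives no proof of its own but cites [Vi14, Proposition 5.17], whose argument is exactly this bounded-martingale construction, with your $g_n(\omega)=f_{\omega_{-1}}\circ\cdots\circ f_{\omega_{-n}}=f^n_{\sigma^{-n}\omega}$ matching the backward pushforwards $(f^n_{\sigma^{-n}\omega})_*\nu$ that the paper itself invokes later in \eqref{eqn: F boundary}. The Fubini/independence step, the L\'evy upward argument for injectivity, the martingale identity via $\mu*\nu=\nu$, and the time-reversal reduction for $s$-states are all as in the cited source, so nothing further is needed.
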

Note that this bijection is clearly linear, and hence preserves ergodicity.
 
The following fact is a special case of Avila-Viana invariance principle~\cite{AV10}, which is a generalization of an earlier result in the linear case by Ledrappier~\cite{Led86}. 
\begin{theorem}[{\cite[Theorem B]{AV10}}]
\label{thm: invariance principle}
Let $m$ be an $F$-invariant probability measure on $\Sigma \times \SS^1$ with $P_*m = \P$.
If $\lambda(\omega,x) \leq 0$ holds for $m$-almost every $(\omega,x)$ then $m \in \cP^u$. 
Dually, if $\lambda(\omega,x) \geq 0$ holds for $m$-almost every $(\omega,x)$  then $m \in \cP^s$.
\end{theorem}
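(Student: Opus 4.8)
The statement is the Avila--Viana invariance principle \cite[Theorem B]{AV10} specialised to the diffeomorphism cocycle $F$, so in the paper nothing beyond the citation is needed; the plan below is how I would reconstruct the argument in the present setting, by a martingale analysis of the disintegration $\omega\mapsto m_\omega$. First I would reduce to one implication: the second follows from the first applied to the inverse cocycle $F^{-1}$, which interchanges $\Sigma^+$ with $\Sigma^-$ (hence $\cP^u$ with $\cP^s$) and replaces $\lambda(\omega,x)$ by $-\lambda(\omega,x)$. So assume $\lambda(\omega,x)\leq0$ for $m$-a.e.\ $(\omega,x)$; the task becomes to show that $\omega\mapsto m_\omega$ is, up to a $\P$-null set, measurable for the $\sigma$-algebra $\cF^-$ generated by the coordinates of negative index, which is exactly the assertion $m\in\cP^u$.

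For $k\in\ZZ$ let $\cF_k$ be the $\sigma$-algebra on $\Sigma$ generated by the coordinates of index $<k$, so $\cF_0=\cF^-$, $\bigvee_{k}\cF_k=\cB(\Sigma)$, and $\sigma^{-1}\cF_k=\cF_{k+1}$. Since the space of Borel probability measures on $\SS^1$ is compact metrizable, the measure-valued conditional expectations
\[m^{(k)}_\omega\defeq\EE\big[\,\omega'\mapsto m_{\omega'}\ \big|\ \cF_k\,\big](\omega)\]
are well defined (test against a countable dense family of continuous functions). I would then use two facts. First, Lévy's upward theorem gives $m^{(k)}_\omega\to m_\omega$ weakly for $\P$-a.e.\ $\omega$ as $k\to+\infty$, whence $\int W_1(m^{(n)}_\eta,m_\eta)\,\dd\P(\eta)\to0$, where $W_1$ is a Wasserstein distance metrizing the weak topology on measures on $\SS^1$. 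Second, since $f_\omega$ reads only the coordinate of index $0$, which is $\cF_{k+1}$-measurable for $k\geq0$, the pushforward commutes with conditioning, giving the covariance rule $m^{(k)}_{\sigma\omega}=(f_\omega)_*m^{(k+1)}_\omega$ for $k\geq0$; iterating it backward yields $m^{(0)}_\omega=(f^n_{\sigma^{-n}\omega})_*\,m^{(n)}_{\sigma^{-n}\omega}$, while iterating the equivariance \eqref{eq:moemga equiv} yields $m_\omega=(f^n_{\sigma^{-n}\omega})_*\,m_{\sigma^{-n}\omega}$. Combining the two facts with the $\sigma$-invariance of $\P$, the source measures $m^{(n)}_{\sigma^{-n}\omega}$ and $m_{\sigma^{-n}\omega}$ become arbitrarily $W_1$-close along a subsequence of $n$, for $\P$-a.e.\ $\omega$; hence proving $m^{(0)}_\omega=m_\omega$ reduces to showing that pushing these two nearby measures forward by the common map $f^n_{\sigma^{-n}\omega}$ does not separate them.

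The hard part will be exactly this last step, and it is where the hypothesis $\lambda\leq0$ enters. A crude bound on $\|(f^n_{\sigma^{-n}\omega})'\|_{C^0}$ is useless, since the map may expand violently off the support of $m$; what one exploits is that $\lambda\leq0$ (which also holds as a backward limit) prevents expansion precisely in the directions carrying the mass of $m$. Concretely I would fix $\ve>0$, pass to a Pesin-type block $K\subset\Sigma\times\SS^1$ with $m(K)>1-\ve$ on which $\frac1n\log(f^n_{\sigma^{-n}\omega})'(x)<\ve$ uniformly for $n$ large, use the distortion estimates of Section~\ref{sec: distortion} (in particular Proposition~\ref{prop: distortion estimate}) to control $f^n_{\sigma^{-n}\omega}$ on the scales that matter, and combine this with the $W_1$-closeness above to bring $W_1(m^{(0)}_\omega,m_\omega)$ below any prescribed threshold for $\P$-a.e.\ $\omega$, hence $m^{(0)}=m$ and $m\in\cP^u$. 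Making this rigorous --- reconciling the merely asymptotic, $m$-almost-everywhere contraction with the need to transport measures forward in a controlled way --- is the genuine obstacle, and is the technical heart of \cite{AV10}, which generalises Ledrappier's invariance principle \cite{Led86} in the linear case; in the $C^2$ circle setting the distortion bounds of Section~\ref{sec: distortion} are what make it work.
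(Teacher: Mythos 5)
The paper gives no argument for this statement at all: it is quoted verbatim from Avila--Viana \cite[Theorem B]{AV10}, so your opening remark that a citation suffices matches the paper. Judged as a reconstruction, your skeleton is the standard one going back to Ledrappier: the reduction to one implication via the inverse cocycle, the filtration $\cF_k$ with $\sigma^{-1}\cF_k=\cF_{k+1}$, the martingale convergence $m^{(k)}_\omega\to m_\omega$, and the covariance identity $m^{(0)}_\omega=(f^n_{\sigma^{-n}\omega})_*m^{(n)}_{\sigma^{-n}\omega}$ (which you verify correctly against \eqref{eq:moemga equiv}) are all right and are indeed how the problem is set up in \cite{AV10}.

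The genuine gap is the final step, which you honestly flag, but the mechanism you propose to fill it would not close. L\'evy's theorem gives $\int W_1(m^{(n)}_\eta,m_\eta)\,\dd\P(\eta)\to 0$ with no rate, while on a Pesin block the best uniform bound on the common map is $(f^n_{\sigma^{-n}\omega})'\leq C2^{\ve n}$; the quantity you need to control is essentially the product of these two, and nothing forces $2^{\ve n}\cdot W_1(m^{(n)}_{\sigma^{-n}\omega},m_{\sigma^{-n}\omega})$ to tend to zero along any subsequence, since the two smallness requirements pull in opposite directions and the martingale convergence cannot be made quantitative. Worse, the hypothesis is only $\lambda\leq 0$: in the degenerate case $\lambda\equiv 0$ (precisely the case exploited in Corollary~\ref{cor:u state and lambda}, where a common invariant measure exists) there is no contraction whatsoever to feed into the distortion estimates of Proposition~\ref{prop: distortion estimate}, yet the theorem still holds. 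This is why Ledrappier \cite{Led86} and Avila--Viana do not argue by transporting Wasserstein estimates forward: the actual mechanism is an entropy one --- non-positive fiber exponents force the relative (fibered) entropy of $m$ over the past $\sigma$-algebra to vanish, via a Jacobian estimate bounding that entropy by the sum of positive fiber exponents, and vanishing relative entropy by itself implies that $\omega\mapsto m_\omega$ is $\cF^-$-measurable. That argument needs no rates and is insensitive to $\lambda=0$. So your sketch locates correctly where the hypothesis must enter, but the missing idea is the relative-entropy principle, not distortion-controlled transport of conditional measures.
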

Consequently, every ergodic $m \in \cP$ is either a $u$-state or an $s$-state. It follows that $\cP$ is the convex hull of $\cP^u \cup \cP^s$.
We also have the following.
\begin{corollary}
\label{cor:u state and lambda}
Assume that $\supp\mu$ does not preserve any probability measure on $\SS^1.$ Then for every ergodic $u$-state $m$, $\lambda(m)<0.$ For every ergodic $s$-state $m,$ $\lambda(m)>0.$
\end{corollary}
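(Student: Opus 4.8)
The plan is to deduce this from the Avila--Viana invariance principle (Theorem~\ref{thm: invariance principle}) by ruling out the zero-exponent case. Let $m$ be an ergodic $u$-state; by ergodicity $\lambda(\omega,x)$ equals a constant $\lambda(m)$ for $m$-almost every $(\omega,x)$. Suppose for contradiction that $\lambda(m) \geq 0$. Then, trivially, $\lambda(\omega,x) \geq 0$ holds $m$-almost everywhere, so the dual half of Theorem~\ref{thm: invariance principle} applies and gives $m \in \cP^s$ as well. Thus $m$ is simultaneously a $u$-state and an $s$-state: its disintegration $\omega \mapsto m_\omega$ factors through both $\pi^-$ and $\pi^+$. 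Since the $\sigma$-algebras generated by $\pi^-$ and $\pi^+$ are independent under $\P = \mu^\ZZ$, a measure-theoretic argument (the map $\omega \mapsto m_\omega$ is measurable with respect to both the past and the future, hence, by Kolmogorov's zero--one law applied coordinatewise to a countable generating family of the space of measures, it is $\P$-almost surely constant) shows that $m_\omega = \nu$ for $\P$-almost every $\omega$, with $\nu = Q_*m$ independent of $\omega$.

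Now the equivariance relation \eqref{eq:moemga equiv}, $m_{\sigma\omega} = (f_\omega)_* m_\omega$, becomes $\nu = (f_\omega)_*\nu$ for $\P$-almost every $\omega$, which forces $f_*\nu = \nu$ for every $f \in \supp\mu$. Hence $\nu$ is a probability measure on $\SS^1$ preserved by $\supp\mu$, contradicting the standing assumption. This contradiction shows $\lambda(m) < 0$ for every ergodic $u$-state $m$.

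For an ergodic $s$-state $m$, one argues dually: if $\lambda(m) \leq 0$ then Theorem~\ref{thm: invariance principle} gives $m \in \cP^u$, so again $m$ is both a $u$-state and an $s$-state, and the same independence-of-past-and-future argument produces a $\supp\mu$-invariant measure $\nu$ on $\SS^1$, a contradiction. Hence $\lambda(m) > 0$.

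The only real content beyond invoking the invariance principle is the step showing that a disintegration measurable with respect to both $\pi^-$ and $\pi^+$ is $\P$-almost surely constant and that the constant is $\supp\mu$-invariant; this is where the hypothesis that $\supp\mu$ preserves no probability measure is consumed. This step is routine given the product structure of $\P$, so I do not anticipate a genuine obstacle; care is only needed to phrase the "measurable with respect to both tails hence constant" argument correctly in the space of probability measures on $\SS^1$ (e.g. by testing against a countable family of continuous functions and applying the zero--one law to each resulting real-valued random variable).
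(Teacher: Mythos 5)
Your argument is essentially the paper's own: the invariance principle forces an ergodic $u$-state with $\lambda(m)\geq 0$ to also be an $s$-state, whence the disintegration $\omega\mapsto m_\omega$, being measurable with respect to both the past and the future, is $\P$-a.s.\ constant, and equivariance then makes this constant measure invariant under every $f\in\supp\mu$, contradicting the hypothesis. The extra detail you supply for the constancy step is correct (though plain independence of the two coordinate $\sigma$-algebras already suffices, without invoking Kolmogorov's zero--one law), and your treatment of the case $\lambda(m)>0$ alongside $\lambda(m)=0$ is if anything slightly more complete than the paper's terse wording.
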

\begin{proof}
The equality $\lambda(m)=0$ holds if and only if $m$ is both a $u$-state and an $s$-state. Then the condition measure $m_\omega$ is thus constant, which is a Borel probability measure on $\SS^1$ invariant for every $f\in\supp\mu.$
\end{proof}

We will also need a result of Ruelle-Wilkinson~\cite{RW01} about invertible cocycles with negative Lyapunov exponents.
\begin{theorem}[{\cite[Theorem II]{RW01}}]
Let $m$ be an ergodic $F$-invariant probability measure on $\Sigma \times \SS^1$.
Assume that $\lambda(m)<0.$ 
Then there exists a subset $X\sbs\Sigma\times\SS^1$ and a positive integer $k$ such that
	\begin{enumerate}
		\item $m(X)=1$ and
		\item for every $(\omega,x)\in X,$ $\#(X\cap \lb{\omega}\times\SS^1)=k.$
	\end{enumerate}
\end{theorem}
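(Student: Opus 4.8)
This is Theorem~II of Ruelle--Wilkinson~\cite{RW01}, invoked here as a black box, so no proof is given. Were one to reprove it in the situation at hand --- a skew product over the Bernoulli shift $(\Sigma,\P,\sigma)$ with $C^2$ circle diffeomorphisms acting on the fibres --- I would proceed in three steps, with the atomicity of the conditional measures (Step~2) as the main obstacle. \emph{Step~1 (local stable intervals).} From $\lambda(m)<0$, Birkhoff's theorem applied to the $F$-ergodic system $(\Sigma\times\SS^1,m,F)$ and the bounded observable $(\omega,x)\mapsto\log f_\omega'(x)$ gives $\frac1n\log(f_\omega^n)'(x)\to\lambda(m)<0$ for $m$-a.e.\ $(\omega,x)$; hence $(f_\omega^n)'(x)\to 0$ and $S(\omega,x)\defeq\sum_{n\geq 0}(f_\omega^n)'(x)<\infty$. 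Feeding $S(\omega,x)$ into the distortion bound \eqref{eqn:2MSdelta} (available because $\cS\sbs\Diff_+^2(\SS^1)$) shows that with $\rho\defeq(2M(\cS)S(\omega,x))^{-1}$ one has $\vk(f_\omega^n,B(x,\rho))\leq 1$ for all $n$, whence $|f_\omega^n(B(x,\rho))|\ll\rho\,(f_\omega^n)'(x)\to 0$. Thus there is a measurable $(\omega,x)\mapsto\rho(\omega,x)>0$, defined $m$-a.e., with $|f_\omega^n(B(x,\rho(\omega,x)))|\to 0$.

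\emph{Step~2 (atomicity).} Set $h(\omega,x)\defeq m_\omega(\{x\})$. By the equivariance $m_{\sigma\omega}=(f_\omega)_*m_\omega$ of the disintegration (cf.\ \eqref{eq:moemga equiv}) and injectivity of $f_\omega$, one has $h\circ F=h$; hence $\{h>0\}$ is $F$-invariant, and by ergodicity of $m$ either $h>0$ $m$-a.e.\ or $m_\omega$ is non-atomic for a.e.\ $\omega$. I would exclude the second case by contradiction. Since $x\in\supp m_\omega$ for $m$-a.e.\ $(\omega,x)$, fix $\rho_0>0$, $\delta_0>0$ and a set $G$ of positive $m$-measure on which both $\rho(\omega,x)\geq\rho_0$ and $m_\omega(B(x,\rho_0))\geq\delta_0$ hold; fix $\ve\in(0,\delta_0)$. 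Under the non-atomicity assumption, $r\mapsto m_\omega(B(x,r))$ decreases to $0$, so there are $r_0>0$ and a set $H$ with $m(H)>1-\ve$ and $m_\omega(B(x,r_0))<\ve$ on $H$. By Birkhoff, $m$-a.e.\ point of $G$ has its $F$-orbit entering $H$ at arbitrarily large times, so we may pick $(\omega,x)\in G$ and $n$ with $F^n(\omega,x)\in H$ and $|f_\omega^n(B(x,\rho_0))|<r_0$ (the latter length tends to $0$ since $(\omega,x)\in G$). Then $f_\omega^n(B(x,\rho_0))$ is an interval containing $f_\omega^n x$ of length $<r_0$, hence lies in $B(f_\omega^n x,r_0)$, and \eqref{eq:moemga equiv} gives
\[\delta_0\leq m_\omega(B(x,\rho_0))=m_{\sigma^n\omega}\bigl(f_\omega^n(B(x,\rho_0))\bigr)\leq m_{\sigma^n\omega}(B(f_\omega^n x,r_0))<\ve,\]
contradicting $\ve<\delta_0$. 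Therefore $h>0$ $m$-a.e.

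\emph{Step~3 (finiteness and the constant $k$).} Since $h\circ F=h$ and $m$ is ergodic, $h\equiv p_0$ $m$-a.e.\ for some constant $p_0\in(0,1]$. Hence, for a.e.\ $\omega$, the measure $m_\omega$ is carried by atoms all of mass $p_0$, so it has exactly $k\defeq 1/p_0$ of them (in particular $1/p_0$ is a positive integer). Taking $X\defeq\{(\omega,x):m_\omega(\{x\})>0\}$ intersected with the full-measure set of those $\omega$ for which $m_\omega$ has exactly $k$ atoms gives $m(X)=1$ and $\#\bigl(X\cap(\{\omega\}\times\SS^1)\bigr)=k$ for every $(\omega,x)\in X$, as required. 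All the difficulty is concentrated in Step~2; Steps~1 and~3 are standard once the contraction mechanism and the $F$-invariance of $h$ are available.
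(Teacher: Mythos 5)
The paper gives no argument for this statement at all: it is imported verbatim as Theorem~II of Ruelle--Wilkinson \cite{RW01} and used as a black box, so there is no internal proof to compare yours with. Your sketch is correct as a self-contained proof in this particular setting, and it is a genuinely different (and more elementary) route than the general result of \cite{RW01}: you exploit the one-dimensional fibres and the $C^2$ bounded-distortion estimate (Proposition~\ref{prop: distortion estimate}), which is exactly the mechanism the paper itself uses around \eqref{eq:expdecay f'} to build the local stable intervals, whereas Ruelle--Wilkinson handle $C^1$ fibre maps on compact manifolds of any dimension and cannot lean on distortion control; what your argument buys is a short proof tailored to circles, what the citation buys is generality. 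Your three steps hold up: $h(\omega,x)=m_\omega(\{x\})$ is $F$-invariant by \eqref{eq:moemga equiv} and injectivity of $f_\omega$, ergodicity reduces the problem to excluding the atomless alternative, and the recurrence-plus-contraction contradiction (a ball of definite conditional mass $\delta_0$ is pushed by $f_\omega^n$ inside a ball of conditional mass $<\ve<\delta_0$) is the right pressure point; the equal-mass/finiteness bookkeeping in Step~3 is then automatic. If you write this up, make two routine points explicit: (i) measurability of $h$ and of the radius function $\rho(\omega,x)$ (e.g.\ $h(\omega,x)=\lim_{r\to0^+}m_\omega(B(x,r))$ is a monotone limit of jointly measurable functions, and $\rho$ can be taken as an explicit function of $M(\cS)$ and $S(\omega,x)$); and (ii) the dichotomy ``$h>0$ $m$-a.e.\ or $m_\omega$ atomless for a.e.\ $\omega$'' rests on the observation that $m(\{h>0\})=0$ forces $m_\omega$ to assign zero total mass to its (at most countably many) atoms, hence to have none.
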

By considering the inverse cocycle, we see that the same holds if we assume $\lambda(m) > 0$.
Using ergodicity and equivariance~\eqref{eq:moemga equiv}, we have the following fact about the disintegration of $m$.
\begin{corollary}
\label{cor: atomic conditional measure}
If $m \in \cP$ is ergodic and $\lambda(m) \neq 0$,
then there exists a positive integer $k$ such that for $\P$-almost every $\omega \in \Sigma$, $m_\omega$ is a uniform probability measure on a set of $k$ elements.
\end{corollary}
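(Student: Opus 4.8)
The plan is to derive this from the Ruelle--Wilkinson theorem quoted above---which, as remarked, applies whenever $\lambda(m)\neq 0$---combined with an ergodicity argument exploiting the equivariance relation~\eqref{eq:moemga equiv}. So let $X$ and the integer $k_0$ be as in that theorem, with $m(X)=1$ and $\#(X\cap(\lb{\omega}\times\SS^1))=k_0$ on $X$. Disintegrating $m$ along $P$, the identity $m(X)=1$ shows that for $\P$-a.e.\ $\omega$ the fibre $X_\omega=\lb{x:(\omega,x)\in X}$ satisfies $m_\omega(X_\omega)=1$; in particular $X_\omega\neq\vn$, so $\#X_\omega=k_0$. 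Hence $m_\omega$ is a purely atomic probability measure carried by a finite set for $\P$-a.e.\ $\omega$, and it remains only to show that its atoms all carry the same mass, the common value being independent of $\omega$.

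For $t\in(0,1]$ put $A_t\defeq\lb{(\omega,x):m_\omega(\lb{x})\geq t}$. The map $(\omega,x)\mapsto m_\omega(\lb{x})$ is measurable, being the decreasing limit of the jointly measurable maps $(\omega,x)\mapsto m_\omega(B(x,1/n))$, so $A_t$ is measurable. Since $f_\omega$ is a homeomorphism, \eqref{eq:moemga equiv} gives $m_{\sigma\omega}(\lb{f_\omega x})=m_\omega(\lb{x})$, so $A_t$ is $F$-invariant; by ergodicity of $m$ we get $m(A_t)\in\lb{0,1}$, and $t\mapsto m(A_t)$ is non-increasing. Because $m_\omega$ is purely atomic for a.e.\ $\omega$, dominated convergence gives $m(A_t)\to 1$ as $t\to0^+$, so $t^*\defeq\sup\lb{t>0:m(A_t)=1}$ lies in $(0,1]$. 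Taking $t_n\uparrow t^*$ and $s_n\downarrow t^*$ and using continuity of $m$ together with $\bigcap_n A_{t_n}=A_{t^*}$ and $\bigcup_n A_{s_n}=\lb{(\omega,x):m_\omega(\lb{x})>t^*}$, one obtains $m(A_{t^*})=1$ and $m(\lb{(\omega,x):m_\omega(\lb{x})>t^*})=0$. Read on a.e.\ fibre, these say that for $\P$-a.e.\ $\omega$ every atom of $m_\omega$ has mass $\geq t^*$ and also mass $\leq t^*$, hence exactly $t^*$. Since the finitely many atom masses sum to $1$, the number of atoms equals $1/t^*$ for $\P$-a.e.\ $\omega$, which forces $k\defeq 1/t^*$ to be a positive integer; and $m_\omega$ is then the uniform probability measure on a set of $k$ elements.

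I do not anticipate a serious difficulty here: the only place the hypothesis $\lambda(m)\neq 0$ really enters (through Ruelle--Wilkinson) is to guarantee that $m_\omega$ is purely atomic, which is precisely what makes $t^*>0$ and forces the number of atoms to be finite and constant; the remaining points---joint measurability of $(\omega,x)\mapsto m_\omega(\lb{x})$ and the bookkeeping translating the $A_t$-identities into statements about atoms---are routine. As a variant one can avoid $t^*$: let $M$ be the $\P$-a.e.\ constant value of $\max_x m_\omega(\lb{x})$ (constant because this function of $\omega$ is $F$-invariant and $m$ is ergodic), note that $A_M$ is $F$-invariant with $m(A_M)=M\cdot N$, where $N$ is the a.e.\ constant number of atoms of maximal mass, so $MN\in\lb{0,1}$; since $M>0$ this gives $MN=1$, whence every atom is maximal and $m_\omega$ is uniform on an $N$-element set.
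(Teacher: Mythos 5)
Your proof is correct and follows essentially the same route as the paper: the paper likewise deduces the corollary from the Ruelle--Wilkinson theorem (giving purely atomic $m_\omega$ carried on fibres of constant finite cardinality) together with ergodicity and the equivariance relation~\eqref{eq:moemga equiv}, merely leaving implicit the bookkeeping that the atom-mass function is $F$-invariant and hence a.e.\ constant, which your $A_t$ (or maximal-mass) argument spells out.
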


The next result is due to Malicet \cite{Mal17}.
Recall $T_\mu$ is the semigroup generated by $\supp\mu.$
\begin{theorem}[{\cite[Theorem B]{Mal17}}]
\label{thm: supports of stationary measures}
Let $\mu$ be a finitely supported probability measure on $\Homeo(\SS^1)$ such that $\supp\mu$ does not preserve any Borel probability measure on $\SS^1.$
Then there are only finitely many ergodic $\mu$-stationary measures on $\SS^1.$
Their topological supports are pairwise disjoint and are exactly the $T_\mu$-minimal sets.
\end{theorem}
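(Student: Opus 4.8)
The argument is due to Malicet~\cite{Mal17}; I outline the strategy. The first point to record is that the smooth machinery assembled above does \emph{not} apply here: Theorem~\ref{thm: invariance principle} and Corollaries~\ref{cor:u state and lambda}, \ref{cor: atomic conditional measure} all rely on derivatives, whereas the elements of $\supp\mu$ are only homeomorphisms, so the statement must be proved by a direct argument rather than deduced from what precedes. The one purely measure-theoretic ingredient that survives is Proposition~\ref{prop: stationary and state} (with $\mu^+=\mu$): the ergodic $\mu$-stationary measures $\nu$ on $\SS^1$ correspond bijectively, via $\nu=Q_*m$, to the ergodic $u$-states $m$ on $\Sigma\times\SS^1$, whose conditional measures satisfy the equivariance $m_{\sigma\omega}=(f_\omega)_*m_\omega$ of~\eqref{eq:moemga equiv} and depend only on $\omega^-$. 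The plan is to replace the missing hyperbolicity by a direct \emph{contraction theorem}, and then to read off the three assertions from it.

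\textbf{Core step: contraction with uniform structure.} The heart of the proof is to show that the hypothesis that $\supp\mu$ preserves no probability measure forces the following: there is an integer $N=N(\mu)$ such that for every ergodic $\mu$-stationary measure $\nu$ and $\P^+$-a.e.\ $\omega^+$, the limit measure $\nu_{\omega^+}\defeq\lim_{n\to\infty}(f_{\omega^+}^n)_*\nu$ exists, is \emph{atomic}, is supported on a set $D_\nu(\omega^+)\sbs\supp\nu$ with $\#D_\nu(\omega^+)\leq N$, and $\nu=\int\nu_{\omega^+}\,\dd\P^+(\omega^+)$. Existence of the limit is the standard measure-valued martingale argument; the substance is atomicity together with the \emph{uniform} bound on the number of atoms. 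I would get atomicity by analysing the two-point motion $(x,y)\mapsto(f_{\omega^+}x,f_{\omega^+}y)$: the lengths of the arc cut out by the two moving points form, after the usual conditioning, a bounded nonnegative supermartingale, hence converge almost surely, and a positive-probability event on which this arc fails to shrink would, via a Ces\`aro limit of the conditional laws, manufacture a $\supp\mu$-invariant probability measure on $\SS^1$ (as an endpoint marginal), contrary to hypothesis. The uniform bound $N$ — valid for \emph{all} stationary $\nu$ simultaneously, not merely along the iterates of a fixed one — is the delicate part and the main obstacle: it amounts to excluding an ``infinitely many poles'' phenomenon, and I would obtain it by tracking, for the random composition, the number of complementary arcs whose length stays bounded below; since each generator is an orientation-preserving homeomorphism it carries arcs to arcs respecting the cyclic order, so this count is nonincreasing under further composition and stabilizes almost surely to a value controlled in terms of $\#\supp\mu$. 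This is essentially the contraction dichotomy of~\cite{Mal17}, refining Antonov's theorem.

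\textbf{Deductions.} Granting the core step, the three assertions follow. \emph{Supports are minimal:} $\Lambda_\nu\defeq\supp\nu$ is nonempty, closed and $T_\mu$-forward-invariant, since $\nu=\mu*\nu$ gives $\nu\geq\mu(f)f_*\nu$ and hence $f(\Lambda_\nu)=\supp(f_*\nu)\sbs\Lambda_\nu$ for every $f\in\supp\mu$; that $\Lambda_\nu$ has no proper nonempty closed $T_\mu$-invariant subset follows from the contraction structure, which spreads the attracting sets $D_\nu(\sigma^n\omega^+)$ densely over $\Lambda_\nu$, so that $\overline{T_\mu x}=\Lambda_\nu$ for every $x\in\Lambda_\nu$. \emph{Disjointness:} if $\nu_1\ne\nu_2$ are ergodic and $\Lambda_{\nu_1}\cap\Lambda_{\nu_2}\ne\vn$, then by minimality $\Lambda_{\nu_1}=\Lambda_{\nu_2}=:\Lambda$; but on $\Lambda$ the two-point motion collapses for $(\nu_1\otimes\nu_2)$-almost every pair, forcing $\nu_{1,\omega^+}=\nu_{2,\omega^+}$ for $\P^+$-a.e.\ $\omega^+$, hence $\nu_1=\int\nu_{1,\omega^+}\dd\P^+=\int\nu_{2,\omega^+}\dd\P^+=\nu_2$, a contradiction; so distinct ergodic $\mu$-stationary measures have disjoint supports. \emph{Finiteness:} the sets $D_\nu(\omega^+)$ are nonempty, contained in the pairwise-disjoint $\Lambda_\nu$, and their union has at most $N$ elements, so there are at most $N$ ergodic $\mu$-stationary measures. \emph{Exhaustion:} conversely, given a $T_\mu$-minimal set $M$ and $x\in M$, any weak-$*$ accumulation point of $\frac1n\sum_{k=0}^{n-1}\mu^{*k}*\delta_x$ is $\mu$-stationary with support contained in $\overline{T_\mu x}=M$, and any ergodic component of it has support equal to $M$ by minimality; hence $M=\Lambda_\nu$ for some ergodic $\nu$. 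This establishes the claimed bijection between the ergodic $\mu$-stationary measures and the $T_\mu$-minimal sets, and completes the proof.
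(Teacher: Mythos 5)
The paper itself gives no proof of this statement: it is imported as Theorem~B of Malicet \cite{Mal17}, so your outline can only be measured against Malicet's argument, and as a reconstruction of it it has two genuine gaps at exactly the delicate points. The central one is your claimed dichotomy ``a positive-probability set of pairs whose arc fails to shrink would manufacture a $\supp\mu$-invariant measure.'' This is false, and the obstruction is precisely the finite rotational symmetry encoded by the integer $r$ in Theorem~\ref{thm: structure of random walk 1}: lift a proximal pingpong pair through the double cover so that both generators commute with the rotation $R_{1/2}$. No common invariant probability measure exists (it would descend to the quotient), yet every pair of the form $(x,x+\tfrac12)$ keeps distance exactly $\tfrac12$ forever, and a set of pairs of $\nu\otimes\nu$-measure roughly $\tfrac12$ has its two coordinates converging to the two distinct, uniformly separated atoms of the limit measure, so the arc never collapses. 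Thus the correct statement is Antonov's trichotomy, and handling this symmetric case is the main content of \cite{Mal17}; your core step assumes it away. The same issue invalidates the disjointness/uniqueness deduction: on a single minimal set with $r\geq 2$ the two-point motion does \emph{not} collapse for almost every pair, so uniqueness of the stationary measure on a given minimal set cannot be obtained from pairwise collapse. (A smaller slip in the same step: the measure-valued martingale is formed by the \emph{backward} compositions $(f_{\omega_0}\circ\cdots\circ f_{\omega_{n-1}})_*\nu$; the forward compositions $(f^n_{\omega^+})_*\nu$ you wrote need not converge almost surely.)

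The second gap is the uniform bound $N$ ``controlled in terms of $\#\supp\mu$,'' together with the monotonicity claim used to get it. No such bound exists: with two generators one can run a contracting pingpong inside each of $k$ pairwise disjoint closed intervals preserved by both maps, with the two fixed-point sets disjoint so that no common invariant measure survives; this produces at least $k$ disjoint minimal sets, hence at least $k$ ergodic stationary measures, for arbitrary $k$, and the commuting-with-$R_{1/r}$ construction makes the number of atoms per fiber arbitrarily large as well. (Indeed the paper proves $d$ and $r$ are topological invariants of $T_\mu$, not functions of $\#\supp\mu$.) The monotonicity you invoke — that the number of complementary arcs of length bounded below cannot increase under further composition — is also unjustified, since a homeomorphism may expand short arcs. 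In Malicet's proof finiteness comes from a different mechanism: for a fixed typical $\omega$ the attracting set of $f^n_\omega$ is a finite set (compactness plus the local contraction theorem), and pairwise disjoint minimal sets must each capture at least one of those finitely many points. Your ``supports are minimal'' and ``exhaustion'' deductions are fine, but they too lean on the contraction structure whose proof is exactly what is being skipped.
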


\subsection{The structure of random walks on \texorpdfstring{$\Diff^2_+(\SS^1)$}{Diff\^{}2(S\^{}1)}}\label{sec: random walks 4.3}
Let $\mu$ be a finitely supported probability measure on $\Diff_+^2(\SS^1)$ without common invariant probability measures on $\SS^1.$ 
In order to study the structure of the random walk induced by $\mu$, we first establish some relations among all the ergodic $\mu^\pm$-stationary measures. 
One basic question is whether the number $k$ in Corollary \ref{cor: atomic conditional measure} is the same among different $\mu^\pm$-stationary measures.

The questions of this type can be answered by constructing a dynamically defined transitive permutation among all the ergodic $\mu^\pm$-stationary measure, see Lemma \ref{lem: R is transitive}. The construction is partially inspired by Hertz-Hertz-Tahzibi-Ures \cite{HHTU}, where they associate each invariant measure of positive center exponent to one of negative center exponent by considering the extremal points of the Pesin center manifold. Using this permutation, we can show that these stationary measures share similar properties. Thus they form a structural dynamic on $\SS^1.$

Let $\Theta^s \sbs \Sigma \times \SS^1$ denote
\[\Theta^s = \lb{\,(\omega,x) \in \Sigma \times \SS^1: \limsup_{n \to +\infty}\frac{1}{n} \log (f_\omega^n)'(x) < 0\,}.\]
It is clear that $\Theta^s$ is $F$-invariant. 
For any $(\omega,x) \in \Theta^s$, $(f_\omega^n)'(x) \to 0$ exponentially fast. 
Hence $\sum_{n\geq 0} (f_\omega^n)'(x) < + \infty$. In view of the distortion estimate (Proposition \ref{prop: distortion estimate}), there are constants $\delta,c,C > 0$ such that 
\begin{equation}
\label{eq:expdecay f'}
\forall y\in B(x,\delta),\,\forall n\geq 0,\quad (f_\omega^n)'(y) \leq C2^{-cn}.
\end{equation}
Therefore, for any $\omega \in \Sigma$, the slice $W^s(\omega) = \lb{x \in \SS^1 : (\omega,x) \in \Theta^s}$ is open in $\SS^1$.
Moreover $W^s(\omega) \neq \SS^1$ since otherwise, using compactness, we could cover $\SS^1$ by finitely many open balls with the property of~\eqref{eq:expdecay f'} leading to $(f_\omega^n)'(y) \leq C2^{-cn}$ uniformly in $y \in \SS^1$, which is absurd.

For $(\omega,x) \in \Theta^s$, let $W^s(\omega,x)$ denote the connected component of $W^s(\omega)$ containing $x$.
It is an open interval of $\SS^1$.
Using \eqref{eq:expdecay f'}, we see that
\[W^s(\omega,x) = \lb{\,y\in\SS^1: \limsup_{n \to +\infty}\frac{1}{n}\log d(f_\omega^n(y),f_\omega^n(x)) < 0\,}.\]

Let $R^s(\omega,x) = (\omega,y)$ where $y$ is the right end-point of $W^s(\omega,x)$. 
Clearly $R^s(\omega,x) \not\in \Theta^s$.
Thus, we get a map $R^s \colon \Theta^s \to \Sigma \times \SS^1  \setminus \Theta^s$.
From the $F$-invariance of $\Theta^s$, we find that $R^s$ commutes with $F$.

Let $m \in \cP^u$. 
By Corollary~\ref{cor:u state and lambda}, we have $m(\Theta^s) = 1$.
Thus, $R^s_*m$ is a well defined Borel probability measure on $\Sigma \times \SS^1$. 
Moreover, $R^s_*m \in \cP$ since $R^s \circ F = F \circ R^s$ and $P \circ R^s = P$.
Note that, $R^s_*m(\Sigma \times \SS^1 \setminus \Theta^s) = 1$, implying that for $R^s_*m$-almost every $(\omega,x)$, $\lambda(\omega,x) \geq 0$.
By Theorem~\ref{thm: invariance principle}, $R^s_*m \in \cP^s$.
To summarize, $m \mapsto R^s_*m$ is a map $R^s_* \colon \cP^u \to \cP^s$.
Similarly, we define the left-end point map $L^s \colon \Theta^s \to \Sigma \times \SS^1 \setminus \Theta^s$.

In a dual manner (considering the inverse cocyle $F^{-1}$), we  define $\Theta^u$, $W^u(\omega,x)$ for $(\omega,x)\in \Theta^u$. We consider  $R^u,L^u\colon \Theta^u \to \Sigma \times \SS^1 \setminus \Theta^u$ and their induced maps $R^u_*,L^u_* \colon \cP^s \to \cP^u.$

\begin{lemma}\label{lem: R injective}
Let $m \in \cP^u$. 
For $\P$-almost every $\omega \in \Sigma$, the map $x \mapsto Q\circ R^s(\omega,x)$ is injective on $\supp m_\omega$. 
\end{lemma}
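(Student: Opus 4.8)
The plan is to argue by contradiction using ergodicity and the equivariance relation~\eqref{eq:moemga equiv}. Suppose the set of $\omega$ for which $x \mapsto Q \circ R^s(\omega,x)$ fails to be injective on $\supp m_\omega$ has positive $\P$-measure; I want to upgrade this to a $\sigma$-invariant event and derive an absurd conclusion. The key observation is that $R^s$ commutes with $F$ and $P \circ R^s = P$, so the geometric data attached to $\omega$ (namely the stable intervals $W^s(\omega,x)$ and their endpoints) transforms equivariantly: $f_\omega\bigl(W^s(\omega,x)\bigr) = W^s(\sigma\omega, f_\omega x)$, hence $f_\omega$ maps the right-endpoint $Q\circ R^s(\omega,x)$ to $Q\circ R^s(\sigma\omega, f_\omega x)$. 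Since $m$ is a $u$-state, its disintegration $\omega \mapsto m_\omega$ factors through $\pi^-$, and by Corollary~\ref{cor: atomic conditional measure}, for $\P$-a.e.\ $\omega$ the measure $m_\omega$ is uniform on a $k$-element set, say $\supp m_\omega = \{x_1(\omega), \dots, x_k(\omega)\}$.

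First I would reformulate non-injectivity: $x \mapsto Q\circ R^s(\omega,x)$ is \emph{not} injective on $\supp m_\omega$ iff there exist $i \neq j$ with $x_i(\omega) \notin W^s(\omega, x_j(\omega))$ (so they lie in distinct stable components) yet the right endpoints of $W^s(\omega,x_i(\omega))$ and $W^s(\omega,x_j(\omega))$ coincide. Let $E \subset \Sigma$ be the set of such $\omega$; by the equivariance above and the fact that $f_\omega$ is an orientation-preserving diffeomorphism permuting $\supp m_\omega$ (via $(f_\omega)_*m_\omega = m_{\sigma\omega}$), the event $E$ is $\sigma$-invariant, so $\P(E) \in \{0,1\}$; assume $\P(E)=1$ for contradiction. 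Now count: on $E$, the map $x \mapsto Q\circ R^s(\omega,x)$ takes at most $k-1$ distinct values on the $k$-point set $\supp m_\omega$. But here is the crucial point — the stable components $W^s(\omega, x_i(\omega))$ for distinct $x_i$ in distinct components are \emph{disjoint open intervals}, so their right endpoints are distinct whenever the intervals are arranged around the circle without one containing another's endpoint in its interior; two disjoint open intervals on $\SS^1$ cannot share a right endpoint unless... I need to rule this out. Two disjoint open arcs $I_1, I_2$ with the same right endpoint $y$ would force one to approach $y$ from the left and the other — impossible, both approach their common right endpoint from the left and hence overlap near $y$. So in fact $Q \circ R^s(\omega, \cdot)$ \emph{is} automatically injective across distinct stable components, and the only way injectivity fails is if two points $x_i(\omega), x_j(\omega)$ lie in the \emph{same} stable component $W^s(\omega,x_i(\omega))$.

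So the real content is: for $\P$-a.e.\ $\omega$, no two points of $\supp m_\omega$ lie in the same stable interval $W^s(\omega,x)$. I expect this to be the main obstacle, and I would handle it as follows. Suppose not; then (again using $\sigma$-invariance and ergodicity) for $\P$-a.e.\ $\omega$ there exist $i\neq j$ with $x_j(\omega) \in W^s(\omega, x_i(\omega))$, i.e.\ $d(f_\omega^n x_i(\omega), f_\omega^n x_j(\omega)) \to 0$ exponentially. Iterating backward along the cocycle and using that $f_\omega^{-n}$ permutes $\supp m_{\sigma^{-n}\omega}$, the distance $d(f_\omega^{-n}x_i(\omega), f_\omega^{-n}x_j(\omega))$ is bounded below (the $k$ points are distinct and the bound is uniform by a compactness/ergodic-average argument, e.g.\ $\int \min_{a\ne b} d(x_a(\omega),x_b(\omega))\,d\P > 0$). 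This contradicts the exponential forward contraction combined with the fact that, along a $u$-state, the conditional structure is $\pi^-$-measurable so the two points remain distinct points of the fiber for all negative times — more precisely, exponential forward contraction of $d(f_\omega^n x_i, f_\omega^n x_j)$ together with the bounded distortion estimate~\eqref{eq:expdecay f'} forces $d(x_i(\omega),x_j(\omega))$ itself to be controlled by $d(f_\omega^n x_i, f_\omega^n x_j)$ pulled back, and running the Poincaré recurrence / ergodic theorem on the function $\omega \mapsto \log d(x_i(\omega),x_j(\omega))$ yields the contradiction. I would carry out the steps in the order: (1) reduce to the atomic-conditional description via Corollary~\ref{cor: atomic conditional measure}; (2) show injectivity is automatic across distinct stable components (disjoint-arcs argument); (3) set up the $\sigma$-invariant bad event and invoke ergodicity; (4) derive the contradiction from exponential stable contraction versus the uniform lower bound on inter-point distances in the fiber. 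Step (4), reconciling the forward contraction with the permutation action on the finite fiber, is where the argument is most delicate.
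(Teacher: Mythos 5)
Your proposal is correct and follows essentially the same route as the paper: reduce to finite atomic fibers via Corollary~\ref{cor: atomic conditional measure}, pick a positive-measure set on which the atoms of $m_\omega$ are $c$-separated, and use ergodicity/recurrence to see that $d(f_\omega^n x, f_\omega^n x')$ stays $\geq c$ along infinitely many forward times, which is incompatible with $x,x'$ lying in the same stable component (your step (2), that disjoint stable components cannot share a right endpoint, is the same trivial observation the paper leaves implicit). The only blemish is the backward-iteration detour and the appeal to the distortion estimate in your step (4), which are unnecessary — the clean argument is the forward recurrence you state at the end.
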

\begin{proof}
By Corollary~\ref{cor: atomic conditional measure}, for $\P$-almost every $\omega \in \Sigma$, $\supp m_\omega$ is finite. It follows that there exists $c>0$ such that the set
\[\Sigma'=\lb{\omega: d(x,x') \geq c ,\; \forall x \ne x' \in \supp m_\omega}\]
has a positive $\P$ measure.
By the ergodicity of $\sigma$, for $\P$-almost every $\omega \in \Sigma,$ there are infinitely many $n\in\NN$ such that $\sigma^n\omega \in \Sigma'$.
For such $\omega$, for every $x \ne x' \in \supp m_\omega$, we have $d\bigl(f_\omega^n (x),f_\omega^n (x')\bigr)\not\to 0$.
In particular, $W^s(\omega,x) \ne W^s(\omega,x')$.
\end{proof}

For $\omega \in \Sigma$, let
\[\Pi(\omega) = \bigcup_{m \in \cP^u} \supp m_\omega\quad
\text{and} \quad \Xi(\omega) = \bigcup_{m \in \cP^s} \supp m_\omega.\]
Clearly, the union can be taken over ergodic $u$-states (resp. $s$-states) and it still defines the same set. Then for $\P$-almost every $\omega \in \Sigma$, $\Pi(\omega)$ and $\Xi(\omega)$ are finite sets by Theorem \ref{thm: supports of stationary measures}.

\begin{lemma}
\label{lem: Pi and Wu}
For $\P$-almost every $\omega \in \Sigma$, we have $\#\Pi(\omega) = \#\Xi(\omega)$ and 
\[\Pi(\omega) = \SS^1 \setminus W^u(\omega) \quad \text{and} \quad \Xi(\omega) = \SS^1 \setminus W^s(\omega).\]
\end{lemma}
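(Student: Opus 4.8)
\textbf{Plan.}
The statement claims two things: a count equality $\#\Pi(\omega) = \#\Xi(\omega)$, and an identification of $\Pi(\omega)$ with the complement of the unstable set $W^u(\omega)$ (and dually for $\Xi$). I would first establish the identification $\Pi(\omega) = \SS^1 \setminus W^u(\omega)$, and then deduce the count equality by a counting argument via the end-point maps $R^s, L^s, R^u, L^u$, which set up a bijection-like correspondence between the finite sets $\Pi(\omega)$ and $\Xi(\omega)$.

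\textbf{Step 1: $\Pi(\omega) \sbs \SS^1 \sm W^u(\omega)$.} Let $m \in \cP^u$ be ergodic and $x \in \supp m_\omega$. By Corollary~\ref{cor:u state and lambda}, $\lambda(m) < 0$, so $(f_\omega^n)'(x) \to 0$ exponentially for $\P\times m$-a.e.\ $(\omega, x)$; hence by the distortion estimate $x \in W^s(\omega)$ and in fact $(\omega,x) \in \Theta^s$. Thus $\Pi(\omega) \sbs W^s(\omega) = \SS^1 \sm (\SS^1 \sm W^s(\omega))$; more to the point, I want to show $(\omega,x) \notin \Theta^u$, i.e.\ $x \notin W^u(\omega)$. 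If $x$ were in $\Theta^u$, then along the backward cocycle $(f_\omega^n)'(x) \to \infty$ as $n \to -\infty$; but for a $u$-state $m$, the conditional measures $m_\omega$ factor through $\Sigma^-$, and the backward contraction/expansion along $\Theta^u$ is incompatible with $x$ lying in the support of $m_\omega$ for $\P$-a.e.\ $\omega$ — concretely, using equivariance \eqref{eq:moemga equiv} backward in time and Corollary~\ref{cor: atomic conditional measure} ($m_\omega$ is uniform on $k$ points), the points of $\supp m_\omega$ cannot be interior to an expanding interval $W^u(\omega,x)$, because iterating $F^{-1}$ would force two points of the $k$-point support to collide or escape. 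So $\Pi(\omega) \cap W^u(\omega) = \vn$.

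\textbf{Step 2: $\SS^1 \sm W^u(\omega) \sbs \Pi(\omega)$.} This is the substantive direction. Take $x \notin W^u(\omega)$, so $\limsup_{n\to-\infty} \frac1n \log (f_\omega^n)'(x) \geq 0$ is \emph{not} what we want — rather $x \notin \Theta^u$ means $\liminf_{n\to+\infty}(\dots)$ along the inverse, i.e.\ $(f_\omega^{-n})'(x)$ does not decay exponentially. I would argue that the forward orbit $f_\omega^n(x)$ accumulates, in a controlled way (using that $W^u(\omega) \ne \SS^1$, proved just as $W^s(\omega) \ne \SS^1$ was, by compactness), on the union of supports of $u$-states: pushing forward the Dirac-like mass at $(\omega,x)$ under $F^n$ and taking a weak-$*$ limit along $n \to +\infty$ produces an $F$-invariant measure whose conditional-on-$\Sigma$ structure, by Theorem~\ref{thm: invariance principle} (the limiting Lyapunov exponent is $\leq 0$ off $\Theta^u$) forces it into $\cP^u$, and whose support on the fiber over $\omega$ contains (a translate of) $x$. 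The technical heart is making this "$x$ attracted to $\Pi$" statement precise: one uses that $\SS^1 \sm W^u(\omega)$ is a finite union of points plus closed intervals, the end-point maps $L^u, R^u$ carry $\Theta^u$-boundary data to $\cP^u$-supported points, and the map $R^u_* : \cP^s \to \cP^u$ (and $L^u_*$) built before the lemma lands the boundary of each complementary interval of $W^u(\omega)$ inside $\Pi(\omega)$. Combined with the fact that $T_\mu$-minimality (Theorem~\ref{thm: supports of stationary measures}) forces $\Pi(\omega)$ to meet every such complementary interval, one gets that $\SS^1 \sm W^u(\omega)$ cannot strictly contain $\Pi(\omega)$, giving equality.

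\textbf{Step 3: the count.} Dually to Steps 1--2, $\Xi(\omega) = \SS^1 \sm W^s(\omega)$. Now use the four end-point maps. By Lemma~\ref{lem: R injective}, $x \mapsto Q R^s(\omega,x)$ is injective on $\supp m_\omega$ for each ergodic $u$-state $m$, and one checks the images lie in $\Xi(\omega)$ (since $R^s_* m \in \cP^s$). Running this over all ergodic $u$-states and combining $R^s$ with $L^s$ — noting that distinct components $W^s(\omega, x)$ for distinct $x \in \Pi(\omega)$ have disjoint interiors and their closures tile $\SS^1$ together with the finite set $\SS^1 \sm W^s(\omega) = \Xi(\omega)$ — shows $\#\Pi(\omega) \leq \#\Xi(\omega)$. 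The reverse inequality follows symmetrically using $R^u, L^u$ and the dual of Lemma~\ref{lem: R injective}. Hence $\#\Pi(\omega) = \#\Xi(\omega)$.

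\textbf{Main obstacle.} The delicate point is Step 2, i.e.\ proving $\SS^1 \sm W^u(\omega) \sbs \Pi(\omega)$: a priori the complement of the unstable set could be larger than the union of $u$-state supports. The resolution is to combine the weak-$*$ limit/invariance-principle argument with the structural input from Theorem~\ref{thm: supports of stationary measures} (finiteness and $T_\mu$-minimality of stationary supports) and the already-constructed end-point maps $R^u_*, L^u_*$, which pin the boundary of each complementary interval of $W^u(\omega)$ into $\Pi(\omega)$; ruling out extra isolated points of the complement uses that such a point would be a fixed/periodic end-point producing, via $R^s$ or $L^s$, an $s$-state support point that back-propagates to a $u$-state support point in the same fiber.
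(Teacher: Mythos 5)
Your Step 3 (the cardinality count via the four injective end-point maps $R^s,L^s,R^u,L^u$) is exactly the paper's argument, and Step 1 is fine in substance (the cleaner justification is simply that points of $\Pi(\omega)$ are Birkhoff regular with the same negative exponent in both time directions, so the backward derivative grows and the point cannot lie in $\Theta^u$). The genuine gap is in Step 2, which you correctly flag as the substantive direction but do not actually prove. The weak-$*$ limit argument does not work as stated: the invariance principle (Theorem~\ref{thm: invariance principle}) applies only to $F$-invariant measures projecting to $\P$, whereas a limit of pushforwards of a Dirac mass at $(\omega,x)$ projects to a limit of $\delta_{\sigma^n\omega}$, not to $\P$; moreover, even granting an invariant $u$-state in the limit, weak-$*$ limits give no control of the conditional support over the single fiber $\{\omega\}\times\SS^1$ (a $\P$-null set), so the conclusion that $x$ (or ``a translate of $x$'') lies in $\Pi(\omega)$ does not follow. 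The auxiliary claim that $T_\mu$-minimality forces $\Pi(\omega)$ to meet every complementary component of $W^u(\omega)$ is also unsubstantiated and is essentially a restatement of what must be proved; a priori $\SS^1\sm W^u(\omega)$ could even contain intervals, and nothing in your sketch rules this out.

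The paper closes this gap with a short counting argument that you do not have: the injectivity of $x\mapsto Q\circ R^s(\omega,x)$ on $\Pi(\omega)$ shows the intervals $W^s(\omega,x)$, $x\in\Pi(\omega)$, are pairwise distinct (hence disjoint) open arcs whose left and right endpoints all lie in $\Xi(\omega)$ (because $R^s_*m, L^s_*m\in\cP^s$ for every $u$-state $m$). Since $\#\Xi(\omega)=\#\Pi(\omega)$ equals the number of these arcs, each point of $\Xi(\omega)$ must serve as the right endpoint of exactly one arc and the left endpoint of exactly one arc, so the arcs and the points of $\Xi(\omega)$ alternate and partition the circle. As $\Xi(\omega)$ is disjoint from $W^s(\omega)$, this forces $W^s(\omega)=\bigcup_{x\in\Pi(\omega)}W^s(\omega,x)$ and $\Xi(\omega)=\SS^1\sm W^s(\omega)$, with the dual identity for $\Pi(\omega)$. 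In other words, the identification is a pigeonhole consequence of the cardinality equality plus the circle topology, not a separate measure-theoretic statement; without this (or an equivalent substitute), your proof of the displayed identities is incomplete.
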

\begin{proof}
Applying Lemma~\ref{lem: R injective} to a suitable convex combination of ergodic $u$-states and remembering $R^s_* m \in \cP^s$, we see that for $\P$-almost every $\omega \in \Sigma$, the map $x \mapsto Q\circ R^s(\omega,x)$ is injective from $\Pi(\omega)$ to $\Xi(\omega)$.
Similarly,  $x \mapsto Q\circ L^s(\omega,x)$ is injective from $\Pi(\omega)$ to $\Xi(\omega)$ and $x \mapsto Q\circ R^u(\omega,x)$ and $x \mapsto Q\circ L^u(\omega,x)$ are injective maps from $\Xi(\omega)$ to $\Pi(\omega)$, proving the claim about cardinality.

Moreover, this shows that, $W^s(\omega,x)$, $x \in \Pi(\omega)$ are disjoint open intervals with endpoints in $\Xi(\omega)$.
The total number of endpoints being equal to the number of intervals, forces these intervals and these point to partition the circle. Hence $\Xi(\omega) = \SS^1 \setminus W^s(\omega)$.
\end{proof}

\begin{lemma}
\label{lem: R ergodic is ergodic}
The numbers of ergodic $u$-states and $s$-states are equal.
If $m \in \cP^u$ is ergodic then so is $R^s_*m \in \cP^s$.
\end{lemma}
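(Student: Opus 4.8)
The plan is to establish the two assertions — equality of the number of ergodic $u$-states and ergodic $s$-states, and that $R^s_*$ sends ergodic $u$-states to ergodic $s$-states — together, using the injectivity already proved in Lemma \ref{lem: R injective} and the combinatorics of Lemma \ref{lem: Pi and Wu}. The key observation is that $R^s_* \colon \cP^u \to \cP^s$ and $R^u_* \colon \cP^s \to \cP^u$ are both affine maps between finite-dimensional simplices (finitely many ergodic stationary measures by Theorem \ref{thm: supports of stationary measures}), so it suffices to understand how they act on vertices. First I would show that $R^s_*$ maps the set of ergodic $u$-states \emph{injectively} into the set of ergodic $s$-states. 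Injectivity on $\cP^u$ follows because $Q_*$ is a bijection onto stationary measures (Proposition \ref{prop: stationary and state}) and $m$ can be recovered from $R^s_*m$: the left endpoint map $L^s$ composed appropriately, or more directly, since for almost every $\omega$ the map $x \mapsto Q\circ R^s(\omega,x)$ is injective on $\supp m_\omega$ with the fibers of $R^s$ determined by the $s$-manifold structure, one can reconstruct $\supp m_\omega$ from $\supp (R^s_*m)_\omega$ by taking right endpoints of the complementary intervals $W^s$.

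The heart of the matter is showing that $R^s_* m$ is \emph{ergodic} when $m$ is. Suppose $m$ is an ergodic $u$-state but $R^s_* m = \frac{1}{2}(m_1^s + m_2^s)$ is a nontrivial convex combination of $s$-states. Pulling back by $R^u_*$ (or by the reconstruction map described above), and using that $R^u_* \circ R^s_*$ acts on $\cP^u$ in a controlled way — it permutes the $W^s$-intervals back to points and should be the identity on $u$-states, or at worst a bijection — one gets that $m$ itself decomposes nontrivially, a contradiction. More carefully: I would argue that for $\P$-a.e. $\omega$, the finite set $\supp(R^s_*m)_\omega \subset \Xi(\omega)$ is exactly the set of right endpoints of the intervals $\{W^s(\omega,x) : x \in \supp m_\omega\}$, and that the assignment $x \mapsto$ (right endpoint of $W^s(\omega,x)$) together with the dynamics $F$ intertwines the two systems measurably and finite-to-one; an ergodic decomposition of $R^s_* m$ would then pull back through this intertwining to an ergodic decomposition of $m$. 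Combining injectivity of $R^s_*$ on vertices in both directions ($R^s_* \colon$ ergodic $u$-states $\to$ ergodic $s$-states and $R^u_* \colon$ ergodic $s$-states $\to$ ergodic $u$-states), a counting argument forces these to be bijections between finite sets of equal cardinality.

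The main obstacle I anticipate is the ergodicity step: transferring ergodicity across $R^s_*$ requires that the map $R^s$ (after restricting to the full-measure $F$-invariant set where the $s$-manifold picture is valid and $\supp m_\omega$ is the correct finite set) be a measurable isomorphism, or at least a finite extension, between $(\Sigma \times \SS^1, m)$ and $(\Sigma \times \SS^1, R^s_* m)$ as measure-preserving systems over $(\Sigma, \sigma, \P)$. The subtlety is that a priori $R^s$ is only defined on $\Theta^s$ and is not obviously invertible; one recovers the domain point from its image via the interval structure, but one must check this inverse is measurable and that no mass is lost. Once $R^s$ is seen to be an isomorphism of the relevant systems, ergodicity transfers automatically and the rest is bookkeeping with the affine structure. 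I would handle this by working on the $\sigma$-invariant set $\Sigma' \subset \Sigma$ of full measure from Lemma \ref{lem: R injective} where spacing is uniform, combined with Lemma \ref{lem: Pi and Wu} which already identifies $\Xi(\omega)$ with $\SS^1 \setminus W^s(\omega)$, so that $R^s$ is a bijection from $\supp m_\omega$ onto a subset of $\Xi(\omega)$ with measurable inverse given by taking the appropriate interval endpoint.
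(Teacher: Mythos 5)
Your proposal is correct in substance but reaches the ergodicity claim by a different mechanism than the paper. The paper never transfers ergodicity through $R^s$ directly: it decomposes each image $R^s_*m_i$ of the ergodic $u$-states into ergodic $s$-states, uses Lemma~\ref{lem: R injective} (applied to the average of the $m_i$) to see that the fiberwise supports of the $R^s_*m_i$ are pairwise disjoint, so the number $d'$ of ergodic $s$-states satisfies $d'\geq d$ with equality if and only if every $R^s_*m_i$ is ergodic, and then the symmetric count with $R^u_*$ gives $d\geq d'$; both assertions drop out of $d=d'$ simultaneously. You instead prove ergodicity of $R^s_*m$ directly by viewing $R^s$ as an intertwining of $(\Sigma\times\SS^1,m,F)$ with $(\Sigma\times\SS^1,R^s_*m,F)$ and pulling an invariant set of intermediate measure back to one for $m$, and only then count using injectivity of $R^s_*$ and $R^u_*$ on ergodic states. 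That route works, and is in fact simpler than you make it: since $R^s\circ F=F\circ R^s$ on the $F$-invariant full-measure set $\Theta^s$, the pushforward of an ergodic measure under this equivariant map is automatically ergodic (preimages of invariant sets are invariant), so the measurable inverse, the isomorphism claim, and the ``no mass lost'' check you worry about are unnecessary --- the factor-map direction of your ``more careful'' argument already suffices, and the invertibility on $\supp m_\omega$ is true but superfluous. Two cautions: your first attempt at the contradiction, resting on ``$R^u_*\circ R^s_*$ should be the identity on $u$-states'', would fail as stated, since by the mechanism of Lemma~\ref{lem: R is transitive} this composition is a nontrivial cyclic permutation of the ergodic $u$-states; your hedge and the subsequent intertwining argument rescue it. Also, reconstructing $\supp m_\omega$ from the right endpoints implicitly uses that each connected component of $W^s(\omega)$ contains exactly one point of $\Pi(\omega)$, which is what the proof of Lemma~\ref{lem: Pi and Wu} supplies. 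In comparison, the paper's counting trick avoids any discussion of $R^s$ as a map of measure-preserving systems, while your argument isolates the standard fact that ergodicity passes to equivariant images and makes the correspondence between ergodic $u$- and $s$-states more transparent.
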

\begin{proof}
Let $\{m_1,\cdots,m_d\} \in \cP^u$ be the set of ergodic $u$-states. 
Since they are singular to each other, for $\P$-almost every $\omega \in \Sigma$, $(\supp\, (m_i)_\omega)_{1 \leq i \leq d}$ form a partition of $\Pi(\omega)$.
Set $m_i'=R^s_* m_i.$ 
Then, by Lemma~\ref{lem: R injective}, $(\supp\, (m'_i)_\omega)_{1 \leq i \leq d}$ are pairwise disjoint.
It follows that the number of ergodic $s$-states, denoted as $d'$, is no less than $d$.
The equality $d'= d$ holds if and only if every $m_i'$ is ergodic. 
But, using $R^u$, we see that indeed $d \geq d'.$ 
Hence, $d=d'$ and the lemma is proved.
\end{proof}

\begin{lemma}
\label{lem: R is transitive}
The maps $R^s_*$ and $R^u_*$ together induces a transitive permutation of ergodic measures in $\cP$. 
In particular, there is an integer $r \geq 1$ such that for every ergodic $m \in \cP$, $\#\supp m_\omega = r$ for $\P$-almost every $\omega \in \Sigma$.
\end{lemma}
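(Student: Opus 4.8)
The plan is to show that the maps $R^s_*$ and $R^u_*$ (and likewise the left-endpoint maps) are in fact bijections between the finite sets of ergodic $u$-states and ergodic $s$-states, and then to exhibit a single permutation of the full (finite) collection of ergodic invariant measures that acts transitively, forcing a common cardinality $r$ of the conditional supports. First I would record the bijectivity: by Lemma~\ref{lem: R ergodic is ergodic}, $R^s_*$ sends the $d$ ergodic $u$-states to ergodic $s$-states, and since the number of ergodic $s$-states is also $d$, together with injectivity of $R^s_*$ (which follows from Lemma~\ref{lem: R injective}, as the supports $\supp (R^s_* m_i)_\omega$ are pairwise disjoint), the map $R^s_*$ is a bijection from ergodic $u$-states onto ergodic $s$-states. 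Dually $R^u_*$ is a bijection from ergodic $s$-states onto ergodic $u$-states.

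Next I would assemble these into a single permutation. Consider the finite set $\cE$ of all ergodic measures in $\cP$, which by the invariance principle (Theorem~\ref{thm: invariance principle}) is the disjoint union $\cE^u \sqcup \cE^s$ of ergodic $u$-states and ergodic $s$-states. Define $\Phi \colon \cE \to \cE$ by $\Phi = R^s_*$ on $\cE^u$ and $\Phi = R^u_*$ on $\cE^s$; by the previous paragraph $\Phi$ is a bijection of $\cE$, hence a permutation. The content of the lemma is that $\Phi$ is a \emph{transitive} permutation, i.e.\ a single cycle of length $2d$.

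The main obstacle — and the key step — is proving transitivity. For this I would argue geometrically on the circle. Fix a $\P$-typical $\omega$. By Lemma~\ref{lem: Pi and Wu}, the points of $\Pi(\omega)$ and the intervals $W^u(\omega,x)$, $x \in \Pi(\omega)$, partition $\SS^1$ (dually for $\Xi(\omega)$ and $W^s$). The maps $Q\circ R^u(\omega,\cdot)$ and $Q\circ L^u(\omega,\cdot)$ send $\Xi(\omega)$ to $\Pi(\omega)$: the point $y\in\Xi(\omega)$ is the common endpoint of two adjacent intervals $W^s(\omega,x)$ and $W^s(\omega,x')$ and also an endpoint of some $W^u(\omega,z)$; one checks that $z$ is one of the two neighbors of $y$ among $\Pi(\omega)$. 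Cyclically ordering $\Pi(\omega)\cup\Xi(\omega)$ around $\SS^1$, the composition $Q\circ R^u(\omega,\cdot)\circ Q\circ R^s(\omega,\cdot)$ realizes a ``shift by one'' in this cyclic order (going to the next point of $\Pi(\omega)$ past the intervening point of $\Xi(\omega)$). Since the cyclic order is $\sigma$-equivariantly defined and, crucially, since the partition of $\SS^1$ into these points and intervals is \emph{non-stationary} but the combinatorics (the cyclic order type) is constant in $\omega$, the induced permutation on $\cE$ obtained by tracking which ergodic measure's support a given point belongs to must be a full cycle: if it split into two invariant sub-orbits, the corresponding sub-families of intervals would each form a $\sigma$-invariant proper partition-refinement, contradicting that consecutive intervals in the cyclic order alternate between being bounded by $\Pi$-points and $\Xi$-points in a way that links all measures together. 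More carefully, I expect the right argument is: apply ergodicity of $\sigma$ on $\Sigma$ to the set of $\omega$ for which the $\Phi$-orbit of a fixed ergodic measure $m_1$ fails to be all of $\cE$; the union over that orbit of the supports $\supp(\cdot)_\omega$ would then be a $\P$-measurably-varying, $F$-invariant, proper subfamily of $\Pi(\omega)\cup\Xi(\omega)$ whose associated intervals are $F$-invariant and proper, whence by the same endpoint-partitioning argument as in Lemma~\ref{lem: Pi and Wu} it would have to be empty or everything — a contradiction.

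Once transitivity is established, the final clause is immediate: by Corollary~\ref{cor: atomic conditional measure}, for each ergodic $m\in\cP$ there is an integer $k_m$ with $\#\supp m_\omega = k_m$ for $\P$-a.e.\ $\omega$; since $R^s$ restricted to $\supp m_\omega$ is injective with image $\supp(R^s_*m)_\omega$ (Lemma~\ref{lem: R injective}) and, running the same argument for $R^u$, also surjective, we get $k_m = k_{\Phi(m)}$; transitivity of $\Phi$ then forces all $k_m$ to equal a common value $r\geq 1$. I expect the geometric transitivity step to be the only real difficulty; the bijectivity and the final counting are bookkeeping built on the already-established lemmas.
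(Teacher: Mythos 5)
Your overall strategy is the paper's: bijectivity of $R^s_*$ and $R^u_*$ between ergodic $u$- and $s$-states via Lemma~\ref{lem: R ergodic is ergodic}, the geometric fact that the point-level composition of $R^s$ and $R^u$ is a shift by one in the cyclic order of $\Pi(\omega)\cup\Xi(\omega)$, and preservation of $\#\supp m_\omega$ via Lemma~\ref{lem: R injective}. The shift-by-one fact is exactly what the paper extracts from the proof of Lemma~\ref{lem: Pi and Wu}: writing $\Pi(\omega)=\{x_1,\dotsc,x_k\}$ in cyclic order and $y_j=Q\circ R^s(\omega,x_j)$, the points $x_1,y_1,\dotsc,x_k,y_k$ alternate and $R^u(\omega,y_j)=(\omega,x_{j+1})$. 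One slip in your justification of it: a point $y\in\Xi(\omega)$ is not an endpoint of a $W^u$-component (those endpoints lie in $\Pi(\omega)=\SS^1\sm W^u(\omega)$); rather $y_j$ lies in the interior of the component $W^u(\omega,y_j)=\left]x_j,x_{j+1}\right[$, and $R^u(\omega,y_j)$ is its right endpoint. This is harmless but should be said correctly.

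The genuine problem is the step you single out as the difficulty: descending from point-level transitivity to transitivity of the permutation $\Phi$ of ergodic measures. The ``more careful'' argument you endorse --- applying ergodicity of $\sigma$ to the set of $\omega$ for which the $\Phi$-orbit of $m_1$ fails to be all of your $\cE$ --- has nothing to act on: $\Phi$ is a permutation of a fixed finite set of measures that does not depend on $\omega$, so that set is either empty or all of $\Sigma$, and no ergodicity or extra $F$-invariance argument enters. The correct deduction is one line and is what the paper does. Since $R^s$ fixes the fiber over $\omega$, the conditional of $R^s_*m$ is $(Q\circ R^s(\omega,\cdot))_*m_\omega$, so the point maps carry $\supp m_\omega$ onto $\supp(\Phi m)_\omega$; two ergodic measures in $\cP$ coincide if and only if their conditionals share atoms on a set of $\omega$ of positive $\P$-measure, so distinct ergodic measures have $\P$-a.s.\ disjoint (finite) conditional supports. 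Hence, for typical $\omega$, the union of $\supp(\cdot)_\omega$ over any $\Phi$-orbit is a nonempty subset of the $2k$ points of $\Pi(\omega)\cup\Xi(\omega)$ invariant under the shift-by-one, hence equals all of $\Pi(\omega)\cup\Xi(\omega)$, and therefore the orbit meets the support of --- and thus contains --- every ergodic measure. With that replacement your argument closes; your final counting step ($k_m=k_{\Phi(m)}$ from Lemma~\ref{lem: R injective} and Corollary~\ref{cor: atomic conditional measure}, then transitivity) agrees with the paper.
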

\begin{proof}
The proof of Lemma~\ref{lem: Pi and Wu} shows also the following for $\P$-almost every $\omega \in \Sigma$. 
Write $\Pi(\omega) = \{x_1,\dotsc,x_k\}$ with $x_1,\dotsc,x_k$ arranged in cyclic order and set $y_j =Q\circ R^s(\omega, x_j)$ so that $y_j \in \Xi(\omega)$ for each $j$. 
Then $x_1,y_1, \dotsc, x_k, y_k$ are arranged in cyclic order.
Moreover $R^u(\omega,y_j) = (\omega,x_{j + 1 \mod k})$. In particular, $R^s$ and $R^u$ together induce a transitive permutation of points in $\Pi(\omega)\cup\Xi(\omega).$

Then the claim of the lemma follows from Lemma~\ref{lem: R ergodic is ergodic} together with the observation that two ergodic measures $m,m'$ in $\cP$ are equal if and only if $m_\omega$ and $m'_\omega$ have common atoms for a set of $\omega$ with positive $\P$ measure. The ``in particular'' part follows from the fact that $R^s_*$ and $R^u_*$ preserve $\#\supp m_\omega$, a direct consequence of Lemma \ref{lem: R injective}.
\end{proof}

Let $d$ denote the number of ergodic $u$-states.
We write $\Zmodd = \{0, \dotsc, d-1\}$ and the addition in $\Zmodd$ is to be understood reduction modulo $d$. 
Fix an arbitrary ergodic $u$-state $m^+_0 \in \cP^u$.
By Lemma~\ref{lem: R is transitive}, applying alternatively $R^u_*$ and $R^s_*$, we find in the sequence
\[m_0^+\mapsto m_0^-\mapsto m_1^+\mapsto\cdots m_{d-1}^+\mapsto m_{d-1}^-\mapsto m_0^+\]
all ergodic $u$-states $m_i^+$ and all ergodic $s$-states $m_i^-$, $i \in \Zmodd$.
For $i \in \Zmodd$ and $\omega \in \Sigma$, define 
\[\Pi(\omega,i) = \supp (m^+_i)_\omega \quad \text{and} \quad \Xi(\omega,i) = \supp (m^-_i)_\omega.\]

Then for each $i\in\Zmodd$, we have that $Q\circ (R^u \circ R^s)^d$ preserves $\Pi(\omega,i)$ and maps every element of $\Pi(\omega,i)$ to the element on its right.

\paragraph{The space $\SS_k$.}
For a positive integer $k,$ let $\SS_k$ denote the family of $k$-element subsets of $\SS^1.$
For later convenience, we equip $\SS_k$ with a natural metric, making it a metric space. This metric is defined as
\[\ul d(\ul x,\ul y)= \inf_{\tau\in \mathfrak S_k}\sum_{i=1}^k d(x_i,y_{\tau(i)})\]
where $\ul x=\lb{x_1,\cdots,x_k},\ul y=\lb{y_1,\cdots,y_k}\in\SS_k$ and $\mathfrak S_k$ is the symmetric group of $[k].$ Therefore, every element in $\Homeo(\SS^1)$ can be naturally regarded as an element in $\Homeo(\SS_k).$

For every $\ul x\in\SS_k,$ we define the probability measure $u_{\ul x}$ to be the uniform measure on $\ul x.$ Let $\cM(\SS^1)$ be the space of Radon measures on $\SS^1$ equipped with the weak* topology. Then $u_{\bullet}:\ul x\mapsto u_{\ul x}$ is a topological embedding. In fact, it is a homothety onto its image if we equip $\cM(\SS^1)$ with the Wasserstein metric
\[\mr{dist}(\eta,\zeta)=\sup_\phi \abs{\int\phi\dd\eta-\int\phi\dd\zeta}, \]
where the supreme is taken over all $1$-Lipschitz functions, where $\eta,\zeta\in\cM(\SS^1).$

\paragraph{Structure of random walks on $\Diff_+^2(\SS^1)$.}

The following is immediate from our discussion.
\begin{theorem}[Structure of random walks I: construction]\label{thm: structure of random walk 1}
Assume that $\supp\mu$ does not preserve any probability measure on $\SS^1.$ 
Then there exists two positive integers $d,r$ and two measurable maps $\Pi:\Sigma\times \Zmodd \to \SS_r$ and $\Xi:\Sigma\times \Zmodd \to\SS_r$ such that
\begin{enumerate}
\item Let $m_i^\pm$ be probability measures on $\Sigma\times\SS^1$ defined by
\[\dr m_i^+=\dr\P(\omega)\dr \unif_{\Pi(\omega,i)},\quad \dr m_i^-=\dr\P(\omega)\dr \unif_{\Xi(\omega,i)},\]
then $m_i^+$'s (resp. $m_i^-$'s) are exactly the ergodic $u$-states (resp. $s$-states) that projects to $\P.$
\item Let $\nu_i^\pm$ be probability measures on $\SS^1$ defined by $\nu_i^+=Q_* m_i^+$ and $\nu_i^-=Q_*m_i^-$. They can also be expressed as
\[\nu_i^+=\int \unif_{\Pi(\omega,i)}\dd\P(\omega),\quad \nu_i^-=\int \unif_{\Xi(\omega,i)}\dd\P(\omega).\]
Then $\nu_i^+$'s (resp. $\nu_i^-$'s) are exactly the ergodic $\mu^+$-stationary (resp. $\mu^-$-stationary) measures.
\end{enumerate}
Moreover, for $\P$ almost every $\omega,$ the following holds
\begin{enumerate}
\item[(3)] $\Pi(\omega,i)$ only depends on $(\pi^-\omega,i)$ and $\Xi(\omega,i)$ only depends on $(\pi^+\omega,i).$ 
\item[(4)] Cocycle invariance: $\forall n\in\ZZ,$ we have $f_\omega^n\Pi(\omega,i)=\Pi(\sigma^n\omega,i),$ $f_\omega^n\Xi(\omega,i)=\Xi(\sigma^n\omega,i).$
\item[(5)] Define the sets on $\SS^1$ as
\[\Pi(\omega)=\bigcup_{i\in[d]}\Pi(\omega,i),\quad \Xi(\omega)=\bigcup_{i\in[d]}\Xi(\omega,i).\]
Then $\Pi(\omega)\cup\Xi(\omega)$ is made up of $2dr$ different points. Denote $x_0,x_1,\cdots,x_{2dr-1}$ to be these points arranged in cyclic order on $\SS^1$ such that $x_0\in \Pi(\omega,0).$ Then 
\[x_{2j}\in\Pi(\omega,j\mod d),\ x_{2j+1}\in\Xi(\omega,j\mod d),\quad\forall 0\leq j\leq dr-1.\]
\end{enumerate}
\end{theorem}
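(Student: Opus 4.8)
The plan is to read the theorem off from Lemmas~\ref{lem: R injective}--\ref{lem: R is transitive} together with the construction immediately preceding the statement, so that the proof is essentially an exercise in organising what has already been established. First I would set $d$ to be the number of ergodic $u$-states and $r$ the common cardinality of $\supp m_\omega$ for ergodic $m \in \cP$ (for $\P$-a.e.\ $\omega$) furnished by Lemma~\ref{lem: R is transitive}; by Lemma~\ref{lem: R ergodic is ergodic}, $d$ is also the number of ergodic $s$-states. Applying $R^s_*$ and $R^u_*$ alternately to a fixed ergodic $u$-state $m_0^+$ produces, by the transitivity in Lemma~\ref{lem: R is transitive}, a full cycle $m_0^+ \to m_0^- \to m_1^+ \to \cdots \to m_{d-1}^- \to m_0^+$ (the odd arrows being $R^s_*$, the even ones $R^u_*$) that runs once through all ergodic $u$-states $m_i^+$ and all ergodic $s$-states $m_i^-$, $i \in [d]$. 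I then simply \emph{define} $\Pi(\omega,i) \defeq \supp (m_i^+)_\omega$ and $\Xi(\omega,i) \defeq \supp (m_i^-)_\omega$. Measurability of $\Pi, \Xi$ as maps into $\SS_r$ is routine: $\omega \mapsto (m_i^\pm)_\omega$ is measurable by Rokhlin's disintegration theorem, and on the $\P$-conull set where these conditional measures are uniform on $r$-point sets (Corollary~\ref{cor: atomic conditional measure}), the assignment $\omega \mapsto \supp(m_i^\pm)_\omega$ is continuous for $\ul d$ since $u_\bullet$ is a homothetic embedding.

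Items (1)--(4) then fall out quickly. Since $\supp\mu$ preserves no probability measure, Corollary~\ref{cor:u state and lambda} gives $\lambda(m_i^+)<0$, so Corollary~\ref{cor: atomic conditional measure} yields $(m_i^+)_\omega = u_{\Pi(\omega,i)}$ for $\P$-a.e.\ $\omega$ — this is the asserted formula for $m_i^+$ — while by construction the $m_i^+$ exhaust the ergodic $u$-states; disintegrating, $\nu_i^+ = Q_* m_i^+ = \int u_{\Pi(\omega,i)}\dd\P(\omega)$, and since $Q_*$ is an ergodicity-preserving bijection from $\cP^u$ onto the $\mu^+$-stationary measures (Proposition~\ref{prop: stationary and state}), the $\nu_i^+$ are precisely the ergodic $\mu^+$-stationary measures; the $m_i^-$, $\nu_i^-$ statements follow by the same argument for the inverse cocycle. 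Item (3) holds because $m_i^+ \in \cP^u$ means its disintegration factors through $\pi^-$, so $\Pi(\omega,i)$ depends only on $(\pi^-\omega,i)$, and dually for $\Xi$. Item (4) is just the equivariance \eqref{eq:moemga equiv} applied to $m_i^+$: $(f_\omega)_*(m_i^+)_\omega = (m_i^+)_{\sigma\omega}$ forces $f_\omega \Pi(\omega,i) = \Pi(\sigma\omega,i)$, and iterating forward and backward gives it for every $n \in \ZZ$.

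The one step that requires genuine care — and the main obstacle — is item (5), the precise cyclic labelling. From the proof of Lemma~\ref{lem: Pi and Wu}, the intervals $W^s(\omega,x)$, $x \in \Pi(\omega)$, are exactly the $dr$ connected components of $\SS^1 \setminus \Xi(\omega)$ and each contains a single point of $\Pi(\omega)$; hence $\Pi(\omega) \cup \Xi(\omega)$ consists of $2dr$ points alternating between $\Pi(\omega)$ and $\Xi(\omega)$ around the circle. To follow the labels I would use two facts: first, $Q\circ R^s(\omega,\cdot)$ is injective on $\Pi(\omega)$ (Lemma~\ref{lem: R injective}) and, because $R^s_* m_i^+ = m_i^-$, maps $\Pi(\omega,i)$ bijectively onto $\Xi(\omega,i)$, while dually $Q\circ R^u(\omega,\cdot)$ maps $\Xi(\omega,i)$ bijectively onto $\Pi(\omega,i+1)$ because $R^u_* m_i^- = m_{i+1}^+$; second, the geometric identities from the proof of Lemma~\ref{lem: R is transitive}, namely that $Q\circ R^s(\omega,x)$ is the first point of $\Pi(\omega)\cup\Xi(\omega)$ lying to the right of $x$, and that $R^u$ carries a point of $\Xi(\omega)$ to the next point of $\Pi(\omega)$ to its right. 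Putting these together, starting from any $x_0 \in \Pi(\omega,0)$ and moving cyclically, each $\Xi$-point inherits the label of the $\Pi$-point immediately preceding it, and the $\Pi$-label advances by one modulo $d$ after each such pair of consecutive points — which is exactly $x_{2j}\in\Pi(\omega,j\mod d)$ and $x_{2j+1}\in\Xi(\omega,j\mod d)$. This completes the verification.
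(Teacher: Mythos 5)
Your proposal is correct and follows the paper's own route: the paper declares Theorem \ref{thm: structure of random walk 1} ``immediate from our discussion,'' and your write-up is exactly that discussion made explicit — taking $d$, $r$ and the cycle $m_0^+\mapsto m_0^-\mapsto\cdots\mapsto m_0^+$ from Lemmas \ref{lem: R ergodic is ergodic}--\ref{lem: R is transitive}, defining $\Pi(\omega,i)=\supp(m_i^+)_\omega$, $\Xi(\omega,i)=\supp(m_i^-)_\omega$, and deducing (1)--(4) from Corollaries \ref{cor:u state and lambda}, \ref{cor: atomic conditional measure}, Proposition \ref{prop: stationary and state} and the equivariance \eqref{eq:moemga equiv}. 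Your verification of (5) via the ordering facts in the proofs of Lemmas \ref{lem: Pi and Wu} and \ref{lem: R is transitive} (that $R^s$ sends each $\Pi$-point to the next point on its right, which lies in the $\Xi$-set of the same index, and $R^u$ then advances the index by one) is precisely the intended argument.
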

\begin{remark}
	Since $\Pi,\Xi$ only depends on $\pi^-\omega,\pi^+\omega,$ we will sometimes use the notation $\Pi(\omega^-)$ and $\Xi(\omega^+)$ for $\omega^-\in\Sigma^-$ and $\omega^+\in\Sigma^+,$ respectively.
\end{remark}

\begin{remark}
The map $\omega \mapsto \Pi(\omega,i)$ plays the role of the Furstenberg boundary map.
It can also be defined as follows (see \cite[Lemma 5.22]{Vi14}). 
For $\P$-almost every $\omega \in \Sigma$, we have
\begin{equation}
\label{eqn: F boundary}
(f_{\sigma^{-n}\omega}^n)_*\nu_i^+\lto{\text{weak-*}} \unif_{\Pi(\omega,i)},
\end{equation}
as $n \to +\infty$. An effective version of \eqref{eqn: F boundary} will be shown in Section \ref{sec: effective convergence}.
\end{remark}

Moreover, we can show the constants $d,r$ given in this theorem are topological invariants. But the proof will be left to Section \ref{sec: properties of stationary measures}, where we will show that $dr$ is indeed the least number of pairs of topologically hyperbolic fixed points in the semigroup $T_\mu.$ In which we need to use the construction of hyperbolic elements in Section \ref{subsec: hyperbolic elements}.
\begin{theorem}\label{thm: d,r top inv}
Let $\mu$ be as in the previous theorem and $d,r$ be the constants given there.
Then $d,r$ are topological invariants of the semigroup $T_\mu$.
Specifically, if $\mu,\mu'$ are two probability measures on $\Diff^2_+(\SS^1)$ satisfying the assumption of the previous theorem and that there exists $h\in\Homeo(\SS^1)$ with $T_\mu=hT_{\mu'}h^{-1},$ then the constants $d,r$ are the same for $\mu$ and $\mu'.$
\end{theorem}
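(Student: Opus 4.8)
The plan is to pin down both $d$ and $r$ as invariants of the semigroup $T_\mu$ under topological conjugacy; the statement then follows at once, since $h$ conjugates $T_{\mu'}$ onto $T_\mu$, so $T_{\mu'}$ and $T_\mu$ have the same such invariants, forcing $d=d'$ and $r=r'$.

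The value $d$ is the easy one. By Proposition~\ref{prop: stationary and state}, $d$ is the number of ergodic $\mu^+$-stationary measures, and by Theorem~\ref{thm: supports of stationary measures} these are in bijection with the $T_\mu$-minimal sets via their topological supports. Hence $d$ equals the number of minimal sets of $T_\mu$, a quantity that depends only on $T_\mu$ as a subset of $\Homeo(\SS^1)$ (not on the choice of $\mu$) and is plainly unchanged when $T_\mu$ is replaced by $h^{-1}T_\mu h$, since a homeomorphism carries minimal sets bijectively to minimal sets. Therefore $d=d'$.

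The heart of the proof is an intrinsic description of $dr$, and this is what Section~\ref{sec: properties of stationary measures} will supply: $dr$ equals the least number of pairs of topologically hyperbolic fixed points carried by an element of $T_\mu$. Concretely, call $g\in T_\mu$ \emph{topologically hyperbolic} if $\Fix(g)$ is finite and every point of it is topologically attracting or topologically repelling (these then alternate along $\SS^1$, so the two kinds occur in equal numbers), write $\ell(g)$ for that common number, and set $\ell=\min\{\ell(g):g\in T_\mu\text{ topologically hyperbolic}\}$; the claim is $\ell=dr$. For ``$\ell\le dr$'' one must exhibit a single topologically hyperbolic $g\in T_\mu$ with $\ell(g)=dr$: Theorems~\ref{thm: structure of random walk 1} and~\ref{thm: structure of random walk 2} say a typical long random word acts like $dr$ disjoint copies of North--South dynamics, and the $C^2$ distortion estimates of Section~\ref{sec: distortion} together with the hyperbolic-element construction of Section~\ref{subsec: hyperbolic elements} upgrade this to a genuine topologically hyperbolic element with exactly $dr$ attracting and $dr$ repelling fixed points. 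For ``$\ell\ge dr$'' one takes an arbitrary topologically hyperbolic $g\in T_\mu$, writes it as a finite product of elements of $\supp\mu$ and hence as $f_\omega^n$ for a $\sigma$-periodic itinerary $\omega$ of period $n$, so that $g^m=f_\omega^{mn}$ for all $m$; by the cocycle invariance of Theorem~\ref{thm: structure of random walk 1}(4) and periodicity, $g$ preserves each finite set $\Pi(\omega,i)$ and $\Xi(\omega,i)$, hence fixes the orbit of each of the $2dr$ points of $\Pi(\omega)\cup\Xi(\omega)$ setwise, so all these points are periodic for $g$; since a topologically hyperbolic circle homeomorphism has no periodic points outside $\Fix(g)$, we get $\#\Fix(g)\ge 2dr$, i.e.\ $\ell(g)\ge dr$.

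Granting this characterization, $dr$ is manifestly determined by $T_\mu$ alone and is preserved by conjugation with a homeomorphism, because topological attraction, topological repulsion, and the counting of fixed points are topological notions; thus $dr=d'r'$, and combined with $d=d'$ this gives $r=r'$. The step I expect to be the main obstacle is the lower bound $\ell(g)\ge dr$: the maps $\Pi,\Xi$ are a priori defined only $\P$-almost everywhere, so applying Theorem~\ref{thm: structure of random walk 1} to the non-generic periodic itinerary attached to a given hyperbolic $g$ requires either extending its cocycle-invariance and its ``$2dr$ points in cyclic order'' description to deterministic periodic orbits by a continuity/limiting argument, or replacing that step by a direct argument that each of the $d$ minimal sets $\supp\nu_i^+$ meets all $r$ cyclically-arranged blocks dictated by the rotation symmetry and hence contains at least $r$ attracting fixed points of $g$. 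The existence half is comparatively routine once the distortion machinery of Section~\ref{sec: distortion} is available.
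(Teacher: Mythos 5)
Your identification of the invariants is the same as the paper's: $d$ is the number of $T_\mu$-minimal sets (via Theorem~\ref{thm: supports of stationary measures}), and $2dr$ is the least number of fixed points of an element of $T_\mu$ all of whose fixed points are topologically hyperbolic; the upper bound ($\ell\leq dr$) is indeed obtained in the paper exactly as you indicate, from the structure theorems plus the hyperbolic-element construction (this is Lemma~\ref{lem: 2dr fixed points}). So the skeleton is right.

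The genuine gap is the lower bound $\ell(g)\geq dr$, and it is precisely the point you flag yourself but do not close. Writing a topologically hyperbolic $g\in T_\mu$ as $f_\omega^n$ for a $\sigma$-periodic itinerary $\omega$ and then invoking the cocycle invariance $g\,\Pi(\omega,i)=\Pi(\omega,i)$ is not legitimate: the maps $\Pi,\Xi$, their equivariance, the ``$2dr$ points in cyclic order'' description, and the uniform separation are all only defined and valid for $\P$-almost every $\omega$, while the single periodic sequence attached to $g$ is a $\P$-null point of $\Sigma$. Neither of your two suggested repairs (a limiting extension to deterministic periodic itineraries, or showing directly that each $\supp\nu_i^+$ contains $r$ attractors of $g$) is carried out, and each requires real work — e.g.\ even an extension by limits would still need the extended set to consist of $2dr$ distinct, $c$-separated points for that one itinerary. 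The paper avoids the issue entirely in Corollary~\ref{cor: lower bound of fixed points} by a probabilistic pigeonhole on positive-measure events: assuming $g=f^m$-type elements have only $q<dr$ attractors $a_j$ and repellors $r_j$, one chooses $\ve'$ so small that each $\nu_i^+$ gives little mass to $\bigcup_j B(r_j,\ve')$, so that with positive $\P^-$-probability $\Pi(\omega)$ avoids the repelling neighborhoods; intersecting with the positive-probability cylinder event $\{\,f_\omega^{nm}=f^m\,\}$, where $m$ is chosen so that $f^m$ maps $\SS^1\setminus\bigcup_j B(r_j,\ve')$ into $\bigcup_j B(a_j,c/2)$, the a.e.-valid equivariance gives $\Pi(\sigma^{nm}\omega)=f^m\Pi(\omega)$ contained in $q<dr$ balls of radius $c/2$, contradicting the uniform $c$-separation of $\Pi(\omega)$ from Lemma~\ref{lem: uniform bound of Pi}. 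Replacing your periodic-orbit step by this argument (or by citing Corollary~\ref{cor: lower bound of fixed points}) completes the proof; as written, the proposal's key inequality is unproved.
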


The next statement supplements Theorem~\ref{thm: structure of random walk 1} with more dynamical information, that is, how $f^n_\omega$ behaves when $n$ is large.
\begin{theorem}[Structure of random walks II: dynamics]\label{thm: structure of random walk 2}
Assume that $\supp\mu$ does not preserve any probability measure on $\SS^1.$ Let $\nu_i^+$ and $\nu_i^-$, $i \in [d]$ be the ergodic stationary measures defined in the previous theorem corresponding to $\Pi(\cd,i)$ and $\Xi(\cd,i).$ Then for $\P$-almost every $\omega$ and $i\in[d],$ there exists subsets $W^s(\omega,i)\sbs\SS^1,W^u(\omega,i)\sbs\SS^1$ satisfying
\begin{enumerate}
	\item Each $W^s(\omega,i),W^u(\omega,i)$ is a disjoint union of $r$ open intervals.
	\item For every $i\in[d],$ $\Pi(\omega,i)\sbs W^s(\omega,i)\sbs\SS^1\sm\Xi(\omega)$ and $\Xi(\omega,i)\sbs W^u(\omega,i)\sbs\SS^1\sm\Pi(\omega).$
	\item the sets $W^s(\omega,i)$, $i \in [d]$ are pairwise disjoint and we have $\bigcup_{i\in[d]}W^s(\omega,i) =\SS^1\sm\Xi(\omega).$ The same holds for $\bigcup_{i\in[d]}W^u(\omega,i)=\SS^1\sm\Pi(\omega).$
	\item Cocycle invariance: $\forall n\in\ZZ,$ we have $f_\omega^nW^s(\omega,i)=W^s(\sigma^n\omega,i)$ and $f_\omega^nW^u(\omega,i)=W^u(\sigma^n\omega,i).$
	\item For every connected component $I$ of $W^s(\omega,i)$ and $J$ of $W^u(\omega,i)$, we have $\nu_i^+(I)=1/r$ and $\nu_i^{-}(J)=1/r.$ In particular, 
	\[\nu_i^+(W^s(\omega,i))=1,\quad \nu_i^-(W^u(\omega,i))=1,\quad\forall i\in[d]. \]
	\item For every $x\in\Pi(\omega,i)$ and $y\in\Xi(\omega,i),$
	\[\lim_{n\to\pm\infty}\frac{1}{n}\log(f^n_\omega)'(x)=\lambda_i^+,\quad \lim_{n\to\pm\infty}\frac{1}{n}\log(f^n_\omega)'(y)=\lambda_i^-.\]
	Where $\lambda_i^+<0$ and $\lambda_i^->0$ are the Lyapunov exponents corresponding to $\nu_i^+$ and $\nu_i^-.$
	\item For every closed subintervals $I\sbs W^s(\omega), J\sbs W^u(\omega),$ as $n\to+\infty,$ we have
	\[|f_\omega^nI|\to 0,\quad |f^{-n}_\omega J|\to 0,\quad\text{exponentially fast.} \] 
\end{enumerate}
\end{theorem}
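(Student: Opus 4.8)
The plan is to define the stable and unstable sets from the stable and unstable intervals already constructed before Theorem~\ref{thm: structure of random walk 1}: set
\[
W^s(\omega,i)\defeq\bigcup_{x\in\Pi(\omega,i)}W^s(\omega,x),\qquad
W^u(\omega,i)\defeq\bigcup_{y\in\Xi(\omega,i)}W^u(\omega,y),
\]
and then read off properties (1)--(7) from the structure in hand. The topological items are immediate. By Lemma~\ref{lem: R is transitive} each $\Pi(\omega,i)$ has exactly $r$ points, and the proof of Lemma~\ref{lem: Pi and Wu} (via the injectivity in Lemma~\ref{lem: R injective}) shows that the intervals $W^s(\omega,x)$, $x\in\Pi(\omega)$, are pairwise distinct, hence pairwise disjoint, open intervals which together with the $dr$ points of $\Xi(\omega)$ partition $\SS^1$; moreover Theorem~\ref{thm: structure of random walk 1}(5) records exactly which of these intervals carries which label. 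This gives (1), the inclusions in (2) (using also $\Pi(\omega)=\SS^1\sm W^u(\omega)$ and $\Xi(\omega)=\SS^1\sm W^s(\omega)$ from Lemma~\ref{lem: Pi and Wu}), and (3) together with $\bigcup_iW^s(\omega,i)=\SS^1\sm\Xi(\omega)$. For (4): since $\Theta^s$ is $F^{\pm1}$-invariant, $f_\omega^n$ maps $W^s(\omega)$ homeomorphically onto $W^s(\sigma^n\omega)$, and being orientation preserving it carries the component through $x$ onto the component through $f_\omega^nx$; combined with $f_\omega^n\Pi(\omega,i)=\Pi(\sigma^n\omega,i)$ from Theorem~\ref{thm: structure of random walk 1}(4) this yields $f_\omega^nW^s(\omega,i)=W^s(\sigma^n\omega,i)$, and dually for $W^u$.

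Properties (6) and (7) are soft. For (6), $m_i^+$ and $m_i^-$ are ergodic $F$-invariant measures with $\lambda(m_i^+)=:\lambda_i^+<0$ and $\lambda(m_i^-)=:\lambda_i^->0$ by Corollary~\ref{cor:u state and lambda}; Birkhoff's theorem applied to $(\omega,x)\mapsto\log f_\omega'(x)$, together with the two-sided version recalled in Section~\ref{sec: preliminaries on random walks}, gives $\frac1n\log(f_\omega^n)'(x)\to\lambda_i^+$ as $n\to\pm\infty$ for $m_i^+$-a.e.\ $(\omega,x)$. Since $(m_i^+)_\omega=\unif_{\Pi(\omega,i)}$ charges each of the $r$ points of $\Pi(\omega,i)$ with mass $1/r$, an $m_i^+$-null set must, for $\P$-a.e.\ $\omega$, be disjoint from all of $\Pi(\omega,i)$; hence for $\P$-a.e.\ $\omega$ the convergence holds at every $x\in\Pi(\omega,i)$, and symmetrically for $\Xi(\omega,i)$ with $m_i^-$. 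For (7), let $I\sbs W^s(\omega)$ be a compact interval; every $x\in I$ lies in a ball on which $(f_\omega^n)'\leq C2^{-cn}$ for some constants depending on $x$, by \eqref{eq:expdecay f'}; extracting a finite subcover of $I$ and taking the worst constants gives $\|(f_\omega^n)'\|_{C^0(I)}\ll 2^{-cn}$, hence $|f_\omega^nI|\ll 2^{-cn}$, and the bound for $J\sbs W^u(\omega)$ is the dual statement for $F^{-1}$.

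The heart of the matter is property (5). The key input is the boundary map convergence~\eqref{eqn: F boundary}: $(f_{\sigma^{-n}\omega}^n)_*\nu_i^+\to\unif_{\Pi(\omega,i)}$ in the weak-$*$ topology for $\P$-a.e.\ $\omega$. Fix such an $\omega$. Each connected component $C$ of $W^s(\omega,i)$ equals $W^s(\omega,x)$ for a unique $x\in\Pi(\omega,i)$, so $C$ is open, contains exactly one point of $\Pi(\omega,i)$, and has $\partial C\sbs\Xi(\omega)$, which is disjoint from $\Pi(\omega,i)$; the portmanteau theorem then gives $(f_{\sigma^{-n}\omega}^n)_*\nu_i^+(C)\to\unif_{\Pi(\omega,i)}(C)=1/r$. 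On the other hand, by (4) applied to $\sigma^{-n}\omega$, the homeomorphism $f_{\sigma^{-n}\omega}^n$ maps $W^s(\sigma^{-n}\omega,i)$ onto $W^s(\omega,i)$ and hence maps its $r$ components bijectively onto those of $W^s(\omega,i)$; therefore $(f_{\sigma^{-n}\omega}^n)_*\nu_i^+(C)=\nu_i^+\bigl((f_{\sigma^{-n}\omega}^n)^{-1}C\bigr)$ is the $\nu_i^+$-mass of one component of $W^s(\sigma^{-n}\omega,i)$, and letting $C$ range over all $r$ components of $W^s(\omega,i)$ we see that the $r$ numbers $\bigl\{\nu_i^+(C'):C'\text{ a component of }W^s(\sigma^{-n}\omega,i)\bigr\}$ all converge to $1/r$ as $n\to\infty$. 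Consequently the nonnegative bounded measurable function $\Phi(\omega)=\sum_{C'}\bigl|\nu_i^+(C')-1/r\bigr|$, the sum over the $r$ components of $W^s(\omega,i)$, satisfies $\Phi(\sigma^{-n}\omega)\to0$ for $\P$-a.e.\ $\omega$; since $\int\Phi\,\dr\P=\int\Phi\circ\sigma^{-n}\,\dr\P$ for all $n$ by shift-invariance of $\P$, dominated convergence forces $\int\Phi\,\dr\P=0$, so $\Phi\equiv0$ $\P$-a.e. This yields $\nu_i^+(C)=1/r$ for every component $C$ of $W^s(\omega,i)$, and summing, $\nu_i^+(W^s(\omega,i))=1$; the statement for $\nu_i^-$ and $W^u$ is dual. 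The main obstacle is exactly this step: one must combine the boundary-map description of $\nu_i^\pm$ with the cocycle invariance of the $W$'s to turn the asymptotic equidistribution of $\unif_{\Pi(\sigma^{-n}\omega,i)}$ into an exact statement about the fixed measures $\nu_i^\pm$, and the ergodic averaging argument is what removes the dependence on the particular orbit $\sigma^{-n}\omega$.
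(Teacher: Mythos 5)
Your proposal is correct, and for everything except item (5) it coincides with the paper's proof: the paper defines $W^s(\omega,i)$ and $W^u(\omega,i)$ by exactly the formula \eqref{eqn: su-manifolds} and observes that (1)--(4), (6), (7) follow from the preceding lemmas (with (3) being Lemma~\ref{lem: Pi and Wu} and (7) coming from \eqref{eq:expdecay f'} plus compactness), just as you do. Where you genuinely diverge is the key item (5). The paper's argument is a one-line independence computation: the components of $W^s(\omega,i)$ have endpoints in $\Xi(\omega)$ and hence depend only on $\pi^+\omega$, while $\Pi(\omega,i)$ depends only on $\pi^-\omega$; since $\P=\P^-\times\P^+$ and each component $I$ meets $\Pi(\omega,i)$ in exactly one point, the disintegration formula $\nu_i^+=\int u_{\Pi(\omega,i)}\,\dr\P$ gives $\nu_i^+(I)=\frac1r\int_{\Sigma^-}\#\bigl(I\cap\Pi(\omega^-,i)\bigr)\dr\P^-=\frac1r$ directly. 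Your route instead combines the boundary-map convergence \eqref{eqn: F boundary} (portmanteau is applicable since $\partial C\sbs\Xi(\omega)$ is null for $u_{\Pi(\omega,i)}$), the cocycle invariance (4), and a shift-invariance/dominated-convergence averaging of $\Phi(\omega)=\sum_{C'}\bigl|\nu_i^+(C')-1/r\bigr|$ to force $\Phi\equiv0$ a.e. This is valid and the averaging trick is a nice way to convert asymptotic equidistribution along $\sigma^{-n}\omega$ into an exact identity, but it is heavier than needed: it leans on the martingale convergence \eqref{eqn: F boundary}, which the paper only quotes from Viana, on the measurability of $\omega\mapsto\nu_i^+(C')$ (worth a sentence), and on the dual of \eqref{eqn: F boundary} for $\nu_i^-$ via the inverse cocycle, which you invoke implicitly; the paper's past/future independence argument buys the same conclusion with none of these inputs. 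Conversely, your argument has the small advantage of not needing to untangle the "fix $\omega^+$, vary $\omega^-$" conditioning explicitly.
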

\begin{proof}
For $\omega\in\Sigma$ and $i\in[d],$ define 
\begin{equation}\label{eqn: su-manifolds}
W^s(\omega,i)=\bigcup_{x \in \Pi(\omega,i)} W^s(\omega,x)\quad \text{and}\quad W^u(\omega,i)=\bigcup_{x \in \Xi(\omega,i)} W^u(\omega,x),
\end{equation}
so that $W^s(\omega) = \bigcup_{i\in[d]}W^s(\omega,i)$ and $W^u(\omega) = \bigcup_{i\in[d]}W^u(\omega,i)$.
All items except (5) follow immediately from the discussion above.
In particular, (3) is Lemma~\ref{lem: Pi and Wu} and (7) follows from \eqref{eq:expdecay f'} and compactness.

It remains to prove (5). Without loss of generality, we show the argument for $\nu_i^+.$ Then the boundary points of $ W^s(\omega,i)$ only depends on $\pi^+(\omega)$ and hence $ W^s(\omega,i)$ only depends on $\pi^+(\omega).$ Fix an $\omega^+\in\Sigma^+$ and take a connected component of $I$ of $ W^s(\omega^+,i).$ Then for every $\omega$ with $\pi^+\omega=\omega^+,$ we have $\#(I\cap \Pi(\omega,i))=1.$ Since $\Pi(\omega,i)$ only depends on $\pi^-\omega,$ it follows that
	\[\nu_i^+(I)=\frac{1}{r}\int_{\Sigma^-} \#\lb{I\cap \Pi(\omega^-,i)}\dr\P^-(\omega^-)=\frac{1}{r}.\qedhere \]
\end{proof}

\begin{example}
	Figure \ref{fig:417} illustrates the dynamics of a random walk in the case $d=r=2$. Here $n$ is a large positive integer and $\omega'=\sigma^n\omega$. The red points denote the points in $\Pi(\cd)$, and the blue points denote the points in $\Xi(\cd)$. The black arcs map to the black arcs with some contraction.

\begin{figure}[!ht]
    \begin{tikzpicture}
        \def\radius{2.5cm}
		\def\radone{2.9cm}
		\def\radtwo{3.3cm}
		\def\radthree{3.1cm}
		
		\draw[gray] (0,0) circle (\radius);
		\draw[-,very thick] (-35:\radius) arc[radius=\radius, start angle=-35, end angle=35];
		\draw[-,very thick] (55:\radius) arc[radius=\radius, start angle=55, end angle=125];
		\draw[-,very thick] (145:\radius) arc[radius=\radius, start angle=145, end angle=215];
		\draw[-,very thick] (235:\radius) arc[radius=\radius, start angle=235, end angle=305];
		
		\fill[red] (0,0) ++(0:\radius) circle[radius=2pt];
		\fill[red] (0,0) ++(90:\radius) circle[radius=2pt];
		\fill[red] (0,0) ++(180:\radius) circle[radius=2pt];
		\fill[red] (0,0) ++(270:\radius) circle[radius=2pt];
		
		\fill[blue] (0,0) ++(45:\radius) circle[radius=2pt];
		\fill[blue] (0,0) ++(135:\radius) circle[radius=2pt];
		\fill[blue] (0,0) ++(225:\radius) circle[radius=2pt];
		\fill[blue] (0,0) ++(315:\radius) circle[radius=2pt];
		
		\fill (0,0) ++(-35:\radius) circle[radius=1.5pt];
		\fill (0,0) ++(35:\radius) circle[radius=1.5pt];
		\fill (0,0) ++(55:\radius) circle[radius=1.5pt];
		\fill (0,0) ++(125:\radius) circle[radius=1.5pt];
		\fill (0,0) ++(145:\radius) circle[radius=1.5pt];
		\fill (0,0) ++(215:\radius) circle[radius=1.5pt];
		\fill (0,0) ++(235:\radius) circle[radius=1.5pt];
		\fill (0,0) ++(305:\radius) circle[radius=1.5pt];

		\node at (0:\radtwo) {$\Pi(\omega,0)$};
		\node at (90:\radone) {$\Pi(\omega,1)$};
		\node at (180:\radtwo) {$\Pi(\omega,0)$};
		\node at (270:\radone) {$\Pi(\omega,1)$};
		
		\node at (45:\radthree) {$\Xi(\omega,0)$};
		\node at (135:\radthree) {$\Xi(\omega,1)$};
		\node at (225:\radthree) {$\Xi(\omega,0)$};
		\node at (315:\radthree) {$\Xi(\omega,1)$};
		
		\draw[gray] (8,0) circle (\radius);
		\draw[-,very thick] (8,0)++(12.5:\radius) arc[radius=\radius, start angle=12.5, end angle=32.5];
		\draw[-,very thick] (8,0)++(102.5:\radius) arc[radius=\radius, start angle=102.5, end angle=122.5];
		\draw[-,very thick] (8,0)++(192.5:\radius) arc[radius=\radius, start angle=192.5, end angle=212.5];
		\draw[-,very thick] (8,0)++(282.5:\radius) arc[radius=\radius, start angle=282.5, end angle=302.5];
		
		\fill[red] (8,0) ++(22.5:\radius) circle[radius=2pt];
		\fill[red] (8,0) ++(112.5:\radius) circle[radius=2pt];
		\fill[red] (8,0) ++(202.5:\radius) circle[radius=2pt];
		\fill[red] (8,0) ++(292.5:\radius) circle[radius=2pt];
		
		\fill[blue] (8,0) ++(67.5:\radius) circle[radius=2pt];
		\fill[blue] (8,0) ++(157.5:\radius) circle[radius=2pt];
		\fill[blue] (8,0) ++(247.5:\radius) circle[radius=2pt];
		\fill[blue] (8,0) ++(337.5:\radius) circle[radius=2pt];
		
		\fill (8,0)++(12.5:\radius) circle[radius=1.5pt];
		\fill (8,0)++(32.5:\radius) circle[radius=1.5pt];
		\fill (8,0)++(102.5:\radius) circle[radius=1.5pt];
		\fill (8,0)++(122.5:\radius) circle[radius=1.5pt];
		\fill (8,0)++(192.5:\radius) circle[radius=1.5pt];
		\fill (8,0)++(212.5:\radius) circle[radius=1.5pt];
		\fill (8,0)++(282.5:\radius) circle[radius=1.5pt];
		\fill (8,0)++(302.5:\radius) circle[radius=1.5pt];

		\node at ($(8,0)+(22.5:\radtwo)$) {$\Pi(\omega',0)$};
		\node at ($(8,0)+(112.5:\radthree)$) {$\Pi(\omega',1)$};
		\node at ($(8,0)+(202.5:\radtwo)$) {$\Pi(\omega',0)$};
		\node at ($(8,0)+(292.5:\radthree)$) {$\Pi(\omega',1)$};
		
		\node at ($(8,0)+(67.5:\radthree)$) {$\Xi(\omega',0)$};
		\node at ($(8,0)+(157.5:\radtwo)$) {$\Xi(\omega',1)$};
		\node at ($(8,0)+(247.5:\radthree)$) {$\Xi(\omega',0)$};
		\node at ($(8,0)+(337.5:\radtwo)$) {$\Xi(\omega',1)$};
		
		\node at ($(5,2)+(260:4.5)$) {$f_\omega^n$};
		\draw[->,>=latex,very thick] (5,2)++(250:5) arc[radius=\radius, start angle=250, end angle=295];

   \end{tikzpicture}
   \stepcounter{theorem}
   \caption{A typical map $f^n_\omega : \SS^1 \to \SS^1$.}
   \label{fig:417}
\end{figure}
\end{example}

For the convenience in later discussions, we define ``typical'' words in this system.

\begin{definition}\label{def: regular for random walk}
	We say $\omega\in\Sigma$ is \textit{regular for the random walk} if it
	\begin{enumerate}
		\item satisfies all the conditions in the previous two theorems, and
		\item $(\omega,x)$ is Birkhoff regular for every $m_i^+$ for every $x\in \Pi(\omega,i)$ and $(\omega,y)$ is Birkhoff regular for every $m_i^-$ for every $y\in \Xi(\omega,i).$
	\end{enumerate}
\end{definition}
The previous two theorems show that the regular words $\omega$ for the random walk form a full $\P$-measure subset of $\Sigma.$

\subsection{Basic properties of stationary measures}\label{sec: properties of stationary measures}
In the rest of this whole section, we focus on the random walk induced by a finitely supported probability measure $\mu$ on $\Diff_+^2(\SS^1)$ without common invariant probability measures on $\SS^1.$ 
We follow the notation in last subsection.
Recall that $T_\mu$ denotes the semigroup generated by $\supp\mu.$ 
There are some basic properties about $\mu$-stationary measures.

\begin{lemma}
\label{lem: not atomic}
Stationary measures are atomless.
\end{lemma}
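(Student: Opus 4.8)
The plan is to argue by contradiction, exploiting the structure of the random walk established in Section~\ref{sec: random walks 4.3}. Suppose some ergodic $\mu^+$-stationary measure $\nu_i^+$ has an atom. Since $\nu_i^+ = \int \unif_{\Pi(\omega,i)}\dd\P(\omega)$ by Theorem~\ref{thm: structure of random walk 1}, the atoms of $\nu_i^+$ form a nonempty finite $\mu$-invariant-in-law set; more precisely, the set $A$ of points of maximal mass $c = \max_{x}\nu_i^+(\{x\}) > 0$ is finite and nonempty. First I would show $A$ is invariant under the semigroup action in the appropriate stationary sense: from $\nu_i^+ = \int f_*\nu_i^+\dd\mu(f)$ and the fact that $f$ is a diffeomorphism (hence injective, so $f_*\nu_i^+(\{f(x)\}) = \nu_i^+(\{x\})$), one deduces that for $\mu$-a.e.\ $f$ and every $x \in A$ one has $f(x) \in A$, and conversely $f^{-1}(A) \cap (\text{atoms}) \subseteq A$. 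Thus $\supp\mu$ permutes the finite set $A$, so the semigroup $T_\mu$ preserves $A$, and hence $\supp\mu$ preserves the uniform probability measure on $A$ --- contradicting the standing hypothesis that $\supp\mu$ does not preserve any probability measure on $\SS^1$.

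More carefully, the key step is to control how atoms can merge or split. Since each $f \in \supp\mu$ is a homeomorphism of $\SS^1$, pushforward is a bijection on atoms preserving mass: $\nu(\{y\}) = \nu(f^{-1}\{y\})$ for any measure $\nu$. Applying this to the stationarity identity $\nu_i^+(\{y\}) = \sum_{f \in \supp\mu}\mu(f)\,\nu_i^+(f^{-1}(y))$, and taking $y$ to realize the maximum mass $c$, forces $\nu_i^+(f^{-1}(y)) = c$ for every $f$ with $\mu(f) > 0$. Hence $f^{-1}$ maps the (finite) set $A$ of maximal-mass atoms into itself for every $f \in \supp\mu$; by finiteness this is a bijection of $A$. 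Therefore $\supp\mu \subseteq \mathrm{Homeo}(\SS^1)$ stabilizes the finite set $A$, so the normalized counting measure $\frac{1}{\#A}\sum_{x\in A}\delta_x$ is $\supp\mu$-invariant, the desired contradiction.

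To conclude for a general (not necessarily ergodic) stationary measure $\nu$: by Theorem~\ref{thm: supports of stationary measures} any $\mu$-stationary measure decomposes as a convex combination of the finitely many ergodic ones, so it suffices to treat the ergodic case, which is what the argument above does (the same reasoning applies verbatim to $\mu^-$-stationary measures, or one simply notes the statement of the lemma is symmetric in $\mu$ and $\mu^{-1}$ is irrelevant here). I expect the only genuine subtlety --- the ``main obstacle'' --- to be the bookkeeping that guarantees the maximal-mass atom set $A$ is nonempty and finite and is genuinely \emph{stabilized} (not merely mapped into a larger atom set) by each generator; this is handled cleanly by the mass-preservation of homeomorphism pushforwards combined with the maximality of $c$. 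No distortion estimates or $C^2$ regularity are needed --- the lemma is purely topological/measure-theoretic --- so the proof is short, and the role of the $\Diff_+^2$ hypothesis is only to ensure (via the earlier structural results) that the ``no invariant measure'' assumption is the right hypothesis to land on for a contradiction.
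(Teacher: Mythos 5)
Your proposal is correct and follows essentially the same route as the paper: the paper's (one-line) proof also observes that the atoms of maximal weight form a finite set permuted by $T_\mu$, so the uniform measure on it would be a common invariant measure for $\supp\mu$, contradicting the standing hypothesis. Your extra steps (the ergodic decomposition and the appeal to Theorem~\ref{thm: structure of random walk 1}) are harmless but unnecessary, since the maximal-mass argument applies directly to any stationary measure.
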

\begin{proof}
Otherwise, for some $c>0,$ the atoms of $\nu$ with weights at least $c$ form a finite orbit of $T_\mu.$ Then, the uniform measure on this finite orbit is an invariant probability measure for $\supp\mu.$
\end{proof}

For $c > 0$, a subset of $\SS^1$ is said to be \textit{$c$-separated} if its elements are of distance strictly larger than $c$ one from each other.
\begin{lemma}
\label{lem: uniform bound of Pi}
There exists a constant $c>0$ depending only on $\mu$, such that for every $\omega$ regular for the random walk, $\Pi(\omega)$ and $\Xi(\omega)$ are $c$-separated.
\end{lemma}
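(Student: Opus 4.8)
The plan is to reduce the desired separation to a lower bound on the lengths of the stable and unstable intervals attached to the points of $\Pi(\omega)$ and $\Xi(\omega)$, to obtain that bound from the distortion estimate of Proposition~\ref{prop: distortion estimate}, and to make it uniform in $\omega$ by exploiting the independence of $\Pi$ and $\Xi$ coming from Theorem~\ref{thm: structure of random walk 1}(3).

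\emph{Reduction.} By Lemma~\ref{lem: Pi and Wu} the connected components of $W^u(\omega)=\SS^1\sm\Pi(\omega)$ are exactly the arcs between cyclically consecutive points of $\Pi(\omega)$, and likewise the components of $W^s(\omega)=\SS^1\sm\Xi(\omega)$ are the arcs between consecutive points of $\Xi(\omega)$. For a finite subset of $\SS^1$, being $c$-separated is equivalent to all of these gaps having length exceeding $c$ (an elementary fact about the circle metric). Hence it suffices to produce $c>0$, depending only on $\mu$, so that for every regular $\omega$ one has $B(x,c)\sbs W^s(\omega,x)$ for each $x\in\Pi(\omega)$ — which forces every component of $W^s(\omega)$, i.e. every gap of $\Xi(\omega)$, to have length at least $2c$ — together with the symmetric statement $B(y,c)\sbs W^u(\omega,y)$ for each $y\in\Xi(\omega)$, which gives the separation of $\Pi(\omega)$. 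The two statements are interchanged by passing to the inverse cocycle, so we concentrate on the first.

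\emph{Distortion input.} Fix a regular $\omega$ and $x\in\Pi(\omega,i)$. Regularity gives $\tfrac1n\log(f_\omega^n)'(x)\to\lambda_i^+<0$, so $S\defeq\sum_{n\geq0}(f_\omega^n)'(x)<\infty$; note that $S$ depends on $\omega$ only through $\pi^+\omega$. Writing $f_\omega^n$ as a word of length $n$ in $\cS=\supp\mu$ and applying Proposition~\ref{prop: distortion estimate} with base point $x$ (all partial sums $\sum_{k=0}^{n-1}(f_\omega^k)'(x)$ are $\leq S$), we get, for every $n$ and every $\delta\leq(2MS)^{-1}$ with $M=M(\cS)$, that $\vk(f_\omega^n,B(x,\delta))\leq1$, hence $|f_\omega^nB(x,\delta)|\ll\delta\,(f_\omega^n)'(x)\to0$. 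Therefore $B(x,\delta)\sbs W^s(\omega,x)$, which already yields $\mathrm{dist}\bigl(x,\SS^1\sm W^s(\omega,x)\bigr)\geq(2MS)^{-1}$ — the conclusion we want, but with a constant depending a priori on $S$, hence on $\pi^+\omega$.

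\emph{Uniformity, and the main obstacle.} The delicate point is to replace the $\omega$-dependent constant $(2MS_{\pi^+\omega}(x))^{-1}$ by one depending only on $\mu$. Here the product structure is essential: $\Pi(\omega,i)$ depends only on $\omega^-=\pi^-\omega$, while the derivative sum $S_{\pi^+\omega}(x)$ depends only on $\omega^+=\pi^+\omega$, and $\omega^-,\omega^+$ are $\P$-independent. Since the stationary measures $\nu_j^\pm$ are atomless (Lemma~\ref{lem: not atomic}), a compactness argument on $\SS^1$ yields, for any $\eta>0$, a uniform $\rho_0>0$ with $\sup_{z\in\SS^1}\sum_j\nu_j^-\bigl(\overline{B(z,\rho_0)}\bigr)<\eta$; a first-moment computation using independence then gives
\[
\P\bigl(\exists\,x\in\Pi(\omega),\,y\in\Xi(\omega):d(x,y)\leq\rho_0\bigr)\leq\int_{\Sigma^-}\sum_{x\in\Pi(\omega^-)}r\sum_j\nu_j^-\bigl(\overline{B(x,\rho_0)}\bigr)\dd\P^-(\omega^-)\leq dr^2\eta .
\]
Combined with the equivariance $f_\omega\Pi(\omega)=\Pi(\sigma\omega)$, $f_\omega\Xi(\omega)=\Xi(\sigma\omega)$, the ergodicity of $(\Sigma,\P,\sigma)$, and the almost sure bound of the previous step (which prevents the separation constant from decaying too fast along orbits), this is what should upgrade the estimate to one valid, with a uniform $c$, for every regular $\omega$; the cross-terms between points lying in distinct minimal sets $\supp\nu_i^\pm$ are handled trivially, since these finitely many compact sets are pairwise disjoint by Theorem~\ref{thm: supports of stationary measures}. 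Converting the almost sure estimate into the uniform statement of the lemma is precisely the step I expect to be the crux.
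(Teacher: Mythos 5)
Your reduction and the distortion step are fine as far as they go: for a regular $\omega$ and $x\in\Pi(\omega,i)$, Proposition~\ref{prop: distortion estimate} indeed gives $B\bigl(x,(2MS)^{-1}\bigr)\sbs W^s(\omega,x)$, so every gap of $\Xi(\omega)$ has length bounded below in terms of $S=S(\omega,x)$. But the constant you obtain depends on $S$, and the part of your argument that is supposed to remove this dependence does not work, as you yourself suspect. The first-moment computation only says that the event $\lb{\exists\,x\in\Pi(\omega),\,y\in\Xi(\omega):d(x,y)\leq\rho_0}$ has probability at most $dr^2\eta(\rho_0)$; this is small but not zero, so it yields no statement valid for \emph{every} regular $\omega$, and even letting $\rho_0\to 0$ it only gives $\P$-almost sure disjointness of $\Pi(\omega)$ and $\Xi(\omega)$ (equivalently an $\omega$-dependent separation), not a constant $c$ depending only on $\mu$. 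Appealing to equivariance and ergodicity ``to prevent the separation from decaying along orbits'' is not an argument: ergodic theorems control time averages, not infima, and nothing here forces the $\omega$-dependent quantity $(2MS)^{-1}$ to be bounded below on the full-measure set of regular words. So the crux you flagged is a genuine gap, not a routine upgrade.

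The missing idea is much simpler and purely measure-theoretic, and it is how the paper argues. By Theorem~\ref{thm: structure of random walk 2}(5), every connected component $I$ of $W^s(\omega,i)$ --- that is, every gap between consecutive points of $\Xi(\omega)$ --- satisfies $\nu_i^+(I)=1/r$, for every regular $\omega$. Since the finitely many measures $\nu_i^+$ are atomless (Lemma~\ref{lem: not atomic}), compactness of $\SS^1$ produces a single $c>0$, depending only on $\mu$, such that $\nu_i^+(I)<1/r$ for every interval $I$ with $|I|\leq c$ and every $i\in\Zmodd$. Hence every gap of $\Xi(\omega)$ has length $>c$ for every regular $\omega$, i.e.\ $\Xi(\omega)$ is $c$-separated; the symmetric argument with $\nu_i^-$ and the components of $W^u(\omega,i)$ handles $\Pi(\omega)$. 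No distortion estimate and no probabilistic upgrading are needed: the uniformity is automatic because the $\nu_i^+$-mass of each gap is pinned at exactly $1/r$ independently of $\omega$, which is precisely the deterministic input your sketch lacks.
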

\begin{proof}
By Lemma~\ref{lem: not atomic} and the compactness of $\SS^1,$ we can take $c>0$ such that for every interval $I$ on $\SS^1$ with length $|I| \leq c,$ it holds that
\[ \forall i\in\Zmodd,\quad \nu_i^+(I) < \frac{1}{r}.\]
Two consecutive points in $\Xi(\omega)$ bound an open interval $I$, which is a connected component of $W^s(\omega,i)$ for some $i \in \Zmodd$.
By Theorem~\ref{thm: structure of random walk 2}(5), $\nu_i^+(I) = 1/r$.
Hence $\abs{I} > c$.
We deduce that $\Xi(\omega)$ is $c$-separated. 
The proof for $\Pi(\omega)$ is similar.
\end{proof}

As a corollary of this lemma, the number of fixed points of a hyperbolic element in the semigroup $T_\mu$ is bounded from below. 
This will be helpful in showing that $d,r$ are topological invariants.
\begin{definition}\label{def: attractors and repellors}
Let $f\in \Homeo(\SS^1)$.
A fixed point $x$ of $f$ is called a \textit{topologically hyperbolic} if there exists $\ve>0$ such that 
\begin{enumerate}
\item either for every $y\in B(x,\ve),$ $f^n(y)\to x(n\to+\infty),$
\item or for every $y\in B(x,\ve),$ $f^{-n}(y)\to x(n\to+\infty).$
\end{enumerate}
We call $x$ an \textit{attractor} of $f$ in the first case and \textit{repellor} otherwise.
\end{definition}
Observe that if every fixed point of $f\in\Homeo(\SS^1)$ is topologically hyperbolic, then necessarily, $f$ has finitely many fixed points and its attractors and repellors are arranged alternatively. 

\begin{corollary}\label{cor: lower bound of fixed points}
	Let $f\in T_\mu$ be a diffeomorphism such that every fixed point is topologically hyperbolic, then it has at least $dr$ attractors and $dr$ repellors.
\end{corollary}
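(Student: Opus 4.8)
The plan is to tie the fixed-point dynamics of $f$ to the $d$ minimal sets of $T_\mu$ and to the $r$-point sets $\Pi(\omega,i)$ of Theorem~\ref{thm: structure of random walk 1}, via cocycle invariance along a word representing $f$. We may assume $\Fix(f)\ne\vn$ (this holds in all applications), so that by the observation after Definition~\ref{def: attractors and repellors} the map $f$ has finitely many fixed points, $p$ attractors $a_1,\dots,a_p$ and $p$ repellors $b_1,\dots,b_p$ arranged alternately around the circle; it then suffices to prove $p\geq dr$. First I would record two elementary facts: since the open arcs between consecutive fixed points are fixed-point free, each lies in the basin of its attracting endpoint, so $\SS^1\sm\lb{b_1,\dots,b_p}=\bigsqcup_{j}W^s(a_j)$ is a disjoint union of open intervals with $f^n\to a_j$ uniformly on compact subsets of $W^s(a_j)$; and, writing $f=g_N\circ\cdots\circ g_1$ with $g_i\in\cS=\supp\mu$, any $\omega\in\Sigma$ whose first $KN$ coordinates spell out $K$ copies of $(g_1,\dots,g_N)$ satisfies $f_\omega^{KN}=f^K$.

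The core of the proof is the claim that each of the $d$ minimal sets $M_i=\supp\nu_i^+$ of $T_\mu$ — which are pairwise disjoint and $T_\mu$-forward invariant by Theorem~\ref{thm: supports of stationary measures} — contains at least $r$ attractors of $f$; granting this, disjointness gives $p\geq dr$ attractors, hence also $\geq dr$ repellors since the two kinds alternate. To prove the claim I would select one $\omega\in\Sigma$ enjoying three properties: (a) $\omega$ and $\sigma^{KN}\omega$ are regular for the random walk and $\Pi(\cdot,i)\sbs M_i$ at both (a full-$\P$-measure condition); (b) the first $KN$ coordinates of $\omega$ spell $(g_1,\dots,g_N)^K$ (a cylinder of positive $\P$-probability depending only on $\pi^+\omega$); (c) $\Pi(\omega,i)$ is disjoint from a fixed neighbourhood $\bigcup_l B(b_l,\varepsilon_*)$ of the repellors, for every $i$. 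Condition (c) depends only on $\pi^-\omega$ (Theorem~\ref{thm: structure of random walk 1}(3)), and since $\nu_i^+$ is atomless (Lemma~\ref{lem: not atomic}), using $\mathbb E_\P\,\#\bigl(\Pi(\omega,i)\cap\bigcup_l B(b_l,\varepsilon_*)\bigr)=r\,\nu_i^+\bigl(\bigcup_l B(b_l,\varepsilon_*)\bigr)$ and Markov's inequality, (c) has $\P$-probability close to $1$ once $\varepsilon_*$ is small. Shrinking $\varepsilon_*$ and then taking $K=K(f,\mu)$ large enough that $d\bigl(f^K(y),a(y)\bigr)<c/3$ for all $y$ in the compact set $C_*=\SS^1\sm\bigcup_l B(b_l,\varepsilon_*)$ (where $c=c(\mu)$ is the constant of Lemma~\ref{lem: uniform bound of Pi} and $a(y)$ is the attractor whose basin contains $y$), we can find $\omega$ satisfying (a), (b), (c) simultaneously because $\P=\P^-\times\P^+$.

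For such an $\omega$, cocycle invariance (Theorem~\ref{thm: structure of random walk 1}(4)) gives $f^K(\Pi(\omega,i))=f_\omega^{KN}(\Pi(\omega,i))=\Pi(\sigma^{KN}\omega,i)$, which by Lemma~\ref{lem: uniform bound of Pi} is once more a $c$-separated set of $r$ points, lying in $M_i$. Each $y\in\Pi(\omega,i)\sbs C_*$ sits in some basin $W^s(a(y))$, so $f^n(y)\to a(y)$; by forward invariance and closedness $a(y)\in M_i$, and $d\bigl(f^K(y),a(y)\bigr)<c/3$. If $a(y)=a(y')$ for distinct $y,y'\in\Pi(\omega,i)$ then $d\bigl(f^K(y),f^K(y')\bigr)<2c/3<c$, contradicting the $c$-separation of $f^K(\Pi(\omega,i))$; hence $y\mapsto a(y)$ is injective, producing $r$ distinct attractors of $f$ inside $M_i$, which proves the claim.

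The hard part, and the only point needing real care, is the clash between letting $n\to\infty$ in $f^n$ and keeping $\omega$ in a positive-$\P$-measure set: a word spelling $f$ for all time is non-generic, so one must use a long but \emph{finite} prefix $(g_1,\dots,g_N)^K$ whose length $K$ is chosen independently of $\omega$. This is what forces the two supporting technicalities — upgrading the pointwise convergence $f^n\to a(y)$ to convergence uniform on a compact set $C_*$ avoiding a neighbourhood of the repellors, and using atomlessness of $\nu_i^+$ to keep $\Pi(\omega,i)$ inside $C_*$ with positive probability. Once these are arranged, the remainder is bookkeeping with cocycle invariance of $\Pi(\cdot,i)$, the separation estimate of Lemma~\ref{lem: uniform bound of Pi}, and the disjointness of the minimal sets.
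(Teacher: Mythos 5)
Your proof is correct and runs on exactly the same mechanism as the paper's: a positive-$\P$-probability choice of $\omega$ whose $\Pi(\omega)$ avoids a neighbourhood of the repellors (via atomlessness of the $\nu_i^+$), whose forward coordinates spell a high power of $f$, followed by the cocycle invariance $f^K\Pi(\omega,i)=\Pi(\sigma^{KN}\omega,i)$, uniform convergence to the attractors off the repellor neighbourhoods, and the $c$-separation of Lemma~\ref{lem: uniform bound of Pi}. The only difference is organizational — you count $r$ attractors inside each of the $d$ disjoint minimal sets by an injectivity argument, while the paper runs a single pigeonhole contradiction on all $dr$ points of $\Pi(\omega)$ — and your explicit assumption $\Fix(f)\ne\vn$ is equally implicit in the paper's own proof, so it is not a gap relative to it.
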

\begin{proof}
Assume for a contradiction that there is $f\in T_\mu$ has $q$ attractors and $q$ repellors with $q<dr.$ 
Denote these fixed points by $a_1,r_1,a_2,r_2,\cdots,a_q,r_q$ ordered in the cyclic order and where $a_i$'s are the attractors and $r_i$'s are the repellors.
Take $\ve'>0$ such that for every interval $I \subset \SS^1$ of length $|I| \leq 2\ve',$ $\nu_i^+(I) < 1/qdr$ for every ergodic $\mu^+$-stationary measure. 
Consider the set $\Sigma_1=\lb{\omega^- \in \Sigma^-: \Pi(\omega^-)\cap(\bigcup_j B(r_j,\ve'))\ne\vn}.$ Then from Theorem~\ref{thm: structure of random walk 1}(2),
\[\P^-(\Sigma_1)\leq r\sum_{i \in \Zmodd} \sum_{j=1}^q \nu_i^+(B(r_j,\ve'))<1.\]
Let $\Sigma_2=\Sigma^-\sm\Sigma_1$ with positive $\P^-$ measure.
	
Assume that $f\in\cS^{*n}$ where $\cS=\supp\mu.$ Let $m$ be a positive integer large enough so that \[f^m\bigl(\SS^1\sm \bigcup_{j=1}^q B(r_j,\ve')\bigr)\sbs\bigcup_{j=1}^q B(a_j,c/2),\] where $c$ is the constant in Lemma~\ref{lem: uniform bound of Pi}. 
Consider the set
\[\Sigma'=\lb{\omega\in\Sigma:\pi^-\omega\in\Sigma_2,f_\omega^{nm}=f^m},\]
then $\P(\Sigma')=\P^-(\Sigma_2)  \mu^{*nm}(f^m)>0.$ But by the definition of $\Sigma_2$ and the cocycle invariance of $\Pi(\cd),$ for $\P$-almost every $\omega\in\Sigma',$ we have
\[\Pi(\sigma^{nm}\omega) = f^{m} \Pi(\omega) \sbs \bigcup_{j=1}^q B(a_j,c/2).\]
Since $q<dr$, $\Pi(\sigma^{nm}\omega)$ has two points within a distance of at most $c$. 
This contradicts Lemma~\ref{lem: uniform bound of Pi}.
\end{proof}

Now we prove Theorem \ref{thm: d,r top inv} assuming Lemma \ref{lem: 2dr fixed points}.

\begin{proof}[Proof of Theorem \ref{thm: d,r top inv}]
By Theorem \ref{thm: supports of stationary measures}, the constant $d$ is precisely  the number of $T_\mu$ minimal sets, which is obviously a topological invariant.
By Lemma~\ref{lem: 2dr fixed points} and Corollary~\ref{cor: lower bound of fixed points}, $2dr$ is the least number of fixed points of elements in $T_\mu$ having only topologically hyperbolic fixed points. 
Thus, $dr$ is also invariant under $C^0$ conjugates.
Hence, so is $r.$
\end{proof}

\subsection{Uniform good words}\label{sec: good words}

In this subsection, we will introduce a powerful tool: the set of uniform good words. Roughly speaking, it is a set of words with large probability and uniform control, and it has global contraction properties on each connected component of the $s$-manifolds $W^s(\omega)$. It enables us to handle smooth actions in a manner similar to linear actions, except that in the smooth case there are $dr$ cones instead of one cone in the linear case.

\begin{proposition}\label{prop: good words}
	For any $\ve>0,$ there exists a subset $\Sigma_\ve\sbs\Sigma$ satisfying the following.
	\begin{enumerate}
		\item $\P(\Sigma_\ve)>1-\ve$.
		\item For every $(\omega,i) \in \Sigma_\ve\times [d]$ and every $x\in\Pi(\omega,i)$, we have  $\lim_{n\to+\infty}\frac{1}{n}\log (f_\omega^n)'(x)=\lambda_i^+$. The convergence is uniform in $\omega$,$i$ and $x$.
		\item there exists $C=C(\ve)>0$ and an open set $U(\omega)=U(\omega,\ve)>0$ for every $\omega\in\Sigma_\ve,$ such that
		\begin{itemize}
			\item $\Pi(\omega)\sbs U(\omega)\sbs W^s(\omega),$
			\item $\SS^1\sm U(\omega)\sbs \Xi(\omega)^{(\ve)},$ where $\Xi(\omega)^{(\ve)}=\bigcup_{x\in \Xi(\omega)}B(x,\ve).$
			\item for all $n\geq 0$, $\wt\vk(f_\omega^n,I)\leq C(\ve)$ for every connected component $I$ of $U(\ve,\omega).$
		\end{itemize}
	\end{enumerate}
\end{proposition}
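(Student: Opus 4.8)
\medskip

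\noindent\textbf{Proof proposal.} The plan is to take $\Sigma_\ve$ to be a finite intersection of sets of $\P$-measure close to $1$: an Egorov-type set carrying the uniform convergence of (2), a level set of an auxiliary ``summed-derivative'' functional that will drive the distortion bound of (3), a crude separation condition between $\Pi(\omega)$ and $\Xi(\omega)$, and the $\P$-conull set of $\omega$ regular for the random walk (Definition~\ref{def: regular for random walk}). For (2), note that for each $i\in[d]$ the function $(\omega,x)\mapsto\log f_\omega'(x)$ is bounded, hence $m_i^+$-integrable, and $(F,m_i^+)$ is ergodic, so by Birkhoff's theorem $\tfrac1n\log (f_\omega^n)'(x)\to\lambda_i^+$ for $m_i^+$-a.e.\ $(\omega,x)$; Egorov's theorem on the probability space $(\Sigma\times\SS^1,m_i^+)$ gives a set on which this convergence is uniform. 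Using the disintegration $\dr m_i^+=\dr\P(\omega)\,\dr u_{\Pi(\omega,i)}$ and a first-moment/counting argument — if even one of the $r$ atoms of $u_{\Pi(\omega,i)}$ escapes the Egorov set then $\omega$ lies in a set of controlled small measure — I would descend to $\Sigma_\ve^{(2)}\sbs\Sigma$ with $\P(\Sigma_\ve^{(2)})>1-\ve/4$ on which $\tfrac1n\log(f_\omega^n)'(x)\to\lambda_i^+$ uniformly over $i\in[d]$ and $x\in\Pi(\omega,i)$; this is exactly (2), and it is not otherwise used.

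For (3): since $\Pi(\omega)\cap\Xi(\omega)=\vn$ for $\P$-a.e.\ $\omega$, continuity of measure furnishes $\eta_0=\eta_0(\ve)>0$ with $\P\bigl(d(\Pi(\cd),\Xi(\cd))\geq\eta_0\bigr)>1-\ve/4$. Put $\ve':=\min(\ve/2,\eta_0/2)$ and, for $\omega$ regular, set $U(\omega):=W^s(\omega)\sm\overline{\Xi(\omega)^{(\ve')}}$. Then $\SS^1\sm U(\omega)=\overline{\Xi(\omega)^{(\ve')}}\sbs\Xi(\omega)^{(\ve)}$, $\overline{U(\omega)}$ is a compact subset of the open set $W^s(\omega)$, and — since the $dr$ intervals $W^s(\omega,x)$, $x\in\Pi(\omega)$, are precisely the $dr$ complementary arcs of the $dr$-point set $\Xi(\omega)$, each carrying exactly one point of $\Pi(\omega)$ — the separation $d(\Pi(\omega),\Xi(\omega))\geq\eta_0>\ve'$ forces $\Pi(\omega)\sbs U(\omega)$; this gives the first two bullets of (3). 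Next I would introduce $\Psi(\omega):=\sup_{z\in\overline{U(\omega)}}\sum_{k\geq0}(f_\omega^k)'(z)$, which is measurable and, by the local exponential decay~\eqref{eq:expdecay f'} at every $z\in W^s(\omega)$ together with compactness of $\overline{U(\omega)}$, finite for $\P$-a.e.\ $\omega$; so there is $S_0=S_0(\ve)$ with $\P(\Psi\leq S_0)>1-\ve/4$. Taking $\Sigma_\ve:=\Sigma_\ve^{(2)}\cap\{d(\Pi(\cd),\Xi(\cd))\geq\eta_0\}\cap\{\Psi\leq S_0\}\cap\{\omega\text{ regular}\}$ then gives $\P(\Sigma_\ve)>1-\ve$, which is (1).

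For the distortion bound on $\Sigma_\ve$, set $\delta_1:=(2MS_0)^{-1}$. For $\omega\in\Sigma_\ve$, $z\in U(\omega)$ and $n\geq0$, Proposition~\ref{prop: distortion estimate} applied with center $z$ is legitimate because $\sum_{k<n}(f_\omega^k)'(z)\leq\Psi(\omega)\leq S_0$, and it gives $\vk(f_\omega^n,B(z,\delta_1))\leq1$ and $\wt\vk\bigl((f_\omega^n)^{-1},f_\omega^nB(z,\delta_1)\bigr)\leq 4MS_0/(f_\omega^n)'(z)$. The first of these yields $\|(f_\omega^n)'\|_{C^0(B(z,\delta_1))}\leq e\,(f_\omega^n)'(z)$, and combined with the change-of-variables inequality $\wt\vk(g,J)\leq\|g'\|_{C^0(J)}\cdot\wt\vk(g^{-1},g(J))$ (a one-line computation) it yields $\wt\vk(f_\omega^n,B(z,\delta_1))\leq 4eMS_0$, a bound free of $\omega,z,n$. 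A connected component $I$ of $U(\omega)$ is an arc of $\SS^1$, hence covered by an overlapping chain of balls $B(z_1,\delta_1),\dotsc,B(z_K,\delta_1)$ with all $z_j\in I$; decomposing the increment of $\log(f_\omega^n)'$ between two points of $I$ along the chain then gives $\wt\vk(f_\omega^n,I)\leq 4eMS_0=:C(\ve)$, the last bullet of (3).

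The main obstacle is this last bullet. The contraction available at the distinguished point $x_0\in\Pi(\omega)\cap I$ only controls distortion on a ball whose radius depends on the \emph{a priori} $\omega$-dependent quantity $\sum_k(f_\omega^k)'(x_0)$, while $I$ can be almost all of $\SS^1$ (e.g.\ when $d=r=1$), so no bootstrap from $x_0$ alone succeeds. The fix is twofold: first, isolate the genuinely uniform quantity $\Psi(\omega)$ — whose $\P$-a.e.\ finiteness is exactly where the structure theorem enters, via the exponential contraction of closed subintervals of $W^s(\omega)$ (Theorem~\ref{thm: structure of random walk 2}(7)) and the distortion estimate — and absorb all non-uniformity into the large-measure condition $\Psi\leq S_0$; second, realize the possibly long arc $I$ as an overlapping chain of finitely many $\delta_1$-balls, each distortion-controlled by Proposition~\ref{prop: distortion estimate}, and sum. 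A lesser technical point is the passage in (2) from an $m_i^+$-a.e.\ statement on $\Sigma\times\SS^1$ to a uniform statement over all atoms of $u_{\Pi(\omega,i)}$ for $\P$-most $\omega$, handled by the counting argument against the disintegration.
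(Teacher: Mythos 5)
Your proof is correct in substance, but it takes a genuinely different route from the paper on the key third bullet. The paper constructs $U(\omega)$ dynamically: it first gets, on a large set $\Sigma'$, a uniform exponential bound $(f_\omega^n)'(x)\leq C2^{-cn}$ at the points $x\in\Pi(\omega)$, converts this via Proposition~\ref{prop: distortion estimate} into a uniform bound $\wt\vk(f_\omega^n,B(x,\rho))\leq C'$ on balls of a fixed radius $\rho$ around $\Pi(\omega)$ (your Lemma of "Pesin type" at the distinguished points), and then defines $U(\omega)=f_{\sigma^N\omega}^{-N}\bigl(\bigcup_{x\in\Pi(\sigma^N\omega)}B(x,\rho)\bigr)$ for $\omega\in\sigma^{-N}\Sigma''$, choosing $N$ so that the complementary gaps contract below $\ve$; the price is the factor $M^N$ in $C(\ve)$ coming from the subadditivity \eqref{eqn: distortion subadditivity 2}. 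You instead define $U(\omega)$ statically as the complement of the $\ve'$-neighborhood of $\Xi(\omega)$ (which gives the first two bullets immediately, using only the a.e.\ disjointness of $\Pi(\omega)$ and $\Xi(\omega)$ and continuity of measure), and you obtain the distortion bound by thresholding the functional $\Psi(\omega)=\sup_{z\in\overline{U(\omega)}}\sum_k(f_\omega^k)'(z)$, whose a.e.\ finiteness follows from \eqref{eq:expdecay f'} plus compactness, and then chaining $\delta_1$-balls. Both are valid; the paper's version leans only on the contraction at the $dr$ points of $\Pi$ and automatically produces exactly $dr$ components, while yours trades that for a cleaner description of $U(\omega)$ and no $\sigma^{-N}$ shift (which also spares the small argument the paper needs to see that item (2) survives the shift).

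Two caveats, neither a genuine gap. First, your final constant is not literally $4eMS_0$: the chaining bounds the increment of $\log(f_\omega^n)'$ by a constant times the arc-length \emph{inside} $I$ between the two points, whereas $\wt\vk$ is defined with the circle metric $d$; when $d=r=1$ the component $I$ can be longer than half the circle, and for $x,y$ near the two ends of $I$ one has $\ell_I(x,y)/d(x,y)\leq |I|/(1-|I|)\leq (2\ve')^{-1}$, so the correct bound from your argument is of the order $MS_0/\ve'$. Since $C(\ve)$ is allowed to depend on $\ve$ this is harmless, but state the factor (or treat the case $\ell_I(x,y)>1/2$ separately using $d(x,y)\geq 1-|I|\geq 2\ve'$). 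Second, with the paper's convention $\log=\log_2$, the bound $\vk\leq 1$ gives a factor $2$, not $e$; immaterial.
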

We call the set $\Sigma_\ve\sbs\Sigma$ constructed above is a set of \textit{uniform good words}.

\begin{lemma}\label{lem: Pesin good distortion estimate}
	Let $\Sigma'\sbs\Sigma$ be a subset and $x\in\SS^1.$ Assume there exists $C>0,c>0$ such that for every $\omega\in\Sigma',$ $(f_\omega^n)'(x)\leq C 2^{{-c} n}.$ Then there exists $C',\rho>0$ only depends on $C$ and $c$ such that
	\[\wt\vk(f_\omega^n,B(x,\rho))\leq C' ,\quad \forall \omega\in\Sigma',\ n\in\NN. \]
\end{lemma}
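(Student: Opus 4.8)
This lemma is an instance of the classical bounded-distortion mechanism (Schwartz--Sacksteder--Sullivan) recalled in Section~\ref{sec: distortion}: the hypothesis controls $(f_\omega^n)'$ at the \emph{single} point $x$, and we want to promote this to a uniform control of the \emph{distortion norm} of $f_\omega^n$ on a whole ball $B(x,\rho)$, with $\rho$ and the bound independent of $\omega\in\Sigma'$ and of $n$. The whole point is that everything reduces to Proposition~\ref{prop: distortion estimate}, once one records the right uniform quantity. Set $M=M(\supp\mu)$ and, for $\omega\in\Sigma'$ and $k\geq 0$, let $S_k(\omega)\defeq\sum_{j=0}^{k-1}(f_\omega^j)'(x)$. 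Since $(f_\omega^0)'(x)=1$ and $(f_\omega^j)'(x)\leq C2^{-cj}$ for $j\geq1$ by hypothesis, we get the uniform bound
\[
S_k(\omega)\leq 1+\sum_{j\geq 1}C2^{-cj}=1+\frac{C}{2^c-1}\eqdef S_0,
\]
where $S_0$ depends only on $C$ and $c$. (Here I use \verb|\eqdef| only informally; in the write-up this will be ``$=:S_0$''.)

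\textbf{Step 1: bounded distortion on a fixed ball.} Put $\rho\defeq\min\{(2MS_0)^{-1},\tfrac14\}$. Fix $\omega\in\Sigma'$, $n\in\NN$ and $0\leq k\leq n$. Since $f_\omega^k\in(\supp\mu)^{*k}$ and $\rho\leq(2MS_0)^{-1}\leq(2MS_k(\omega))^{-1}$, inequality~\eqref{eqn:2MSdelta} of Proposition~\ref{prop: distortion estimate}, applied to the prefix $f_\omega^k$ at the base point $x$ with $\delta=\rho$, gives
\[
\vk(f_\omega^k,B(x,\rho))\leq 2MS_k(\omega)\,\rho\leq 2MS_0\rho\leq 1 .
\]
Consequently, for every $y\in B(x,\rho)$ we have $(f_\omega^k)'(y)\leq 2\,(f_\omega^k)'(x)$ (recall $\log$ is base $2$), so
\[
\|(f_\omega^k)'\|_{C^0(B(x,\rho))}\leq 2\,(f_\omega^k)'(x)\qquad(k\geq 1),\qquad \|(f_\omega^0)'\|_{C^0}=1 .
\]

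\textbf{Step 2: telescoping the distortion norm.} Writing $f_\omega^{k+1}=f_k\circ f_\omega^k$ and iterating the sub-multiplicativity estimate~\eqref{eq: wtvk of fg} in its sharp form (the $C^0$-norm taken over the relevant interval, as it comes out of the chain rule, rather than over all of $\SS^1$) down to $f_\omega^0=\Id$, and using $\wt\vk(f_k,\cdot)\leq\|\log f_k'\|_{\mr{Lip}}\leq M$ for every letter $f_k\in\supp\mu$,
\[
\wt\vk(f_\omega^n,B(x,\rho))\leq\sum_{k=0}^{n-1}\|(f_\omega^k)'\|_{C^0(B(x,\rho))}\cdot\wt\vk\bigl(f_k,f_\omega^k B(x,\rho)\bigr)\leq M\sum_{k=0}^{n-1}\|(f_\omega^k)'\|_{C^0(B(x,\rho))} .
\]
Plugging in Step~1 and summing the geometric series,
\[
\wt\vk(f_\omega^n,B(x,\rho))\leq M\Bigl(1+2\sum_{k\geq 1}C2^{-ck}\Bigr)= M\Bigl(1+\frac{2C}{2^c-1}\Bigr)\leq 2MS_0\eqdef C',
\]
uniformly in $\omega\in\Sigma'$ and $n$, with $C'$ depending only on $C$, $c$ (and the fixed datum $M=M(\supp\mu)$). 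This proves the lemma with these $\rho$ and $C'$.

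\textbf{Main difficulty.} There is no genuine obstacle, only bookkeeping: the two points that require care are (i) applying~\eqref{eqn:2MSdelta} \emph{simultaneously to all prefixes} $f_\omega^k$, which is why the radius must be chosen as $(2MS_0)^{-1}$ with the \emph{uniform} bound $S_0$ rather than with the $n$-dependent $S_n(\omega)$; and (ii) using the sharp local form of~\eqref{eq: wtvk of fg}, so that the crude factor $M^{n}$ of~\eqref{eqn: distortion subadditivity 2} is replaced by the summable quantity $\sum_k(f_\omega^k)'(x)$, which is exactly what the hypothesis controls.
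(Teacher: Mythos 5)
Your proof is correct and takes essentially the same route as the paper: bound $\sum_{k\geq 0}(f_\omega^k)'(x)$ uniformly by a geometric series, choose $\rho$ of order $(2M S_0)^{-1}$, and feed this into the distortion machinery of Proposition~\ref{prop: distortion estimate}. The only cosmetic difference is that the paper directly invokes Proposition~\ref{prop: distortion estimate} (whose proof already yields $\wt\vk(f_\omega^n,B(x,\rho))\leq 2MS$ via $\vk$ on subintervals), whereas you reassemble the same bound by applying \eqref{eqn:2MSdelta} to all prefixes and telescoping the local form of \eqref{eq: wtvk of fg}; the substance and the constants are the same.
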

\begin{proof}
	Let 
	\[M=\max\lb{\max_{f\in\cS}\|f'\|_{C^0},\max_{f\in\cS}\|\log f'\|_{\mr{Lip}}}.\]
	Since $(f_\omega^n)'(x)\leq C2^{-cn},$ then
	\[\sum_{n=0}^{+\infty}(f_\omega^n)'(x)\leq \frac{C}{1-2^{-c}}\]
	is a uniform bound. Let 
	\[\rho=\frac{1-2^{-c}}{2MC},\quad C'=2M\frac{C}{1-2^{-c}}.\]
	By Proposition \ref{prop: distortion estimate}, $\wt\vk(f_\omega^n,B(x,\rho))\leq C'$ for every $\omega\in\Sigma',n\in\NN.$
\end{proof}

\begin{proof}[Proof of Proposition \ref{prop: good words}]
First, we can take $\Sigma'\sbs\Sigma$ with $\P(\Sigma')>1-\ve/2$ such that 
\[\forall \omega\in\Sigma',\, \forall i\in\Zmodd,\, \forall x\in\Pi(\omega,i),\quad \lim_{n\to+\infty}\frac{1}{n}\log (f_\omega^n)'(x)=\lambda_i^+,\] 
where the convergence is uniform. Let 
\[c=\frac{1}{2}\inf_{i\in\Zmodd}-\lambda_i^+>0,\]
	then there exists $C>0$ such that $\forall\omega\in\Sigma',$
	\[(f_\omega^n)'(x)\leq C2^{-c n},\quad\forall x\in\Pi(\omega),n\in\NN.\]
	By Lemma \ref{lem: Pesin good distortion estimate}, there exists $C',\rho>0$ such that for every $\omega\in\Sigma',$
	\[\wt\vk(f_\omega^n,B(x,\rho))\leq C',\quad \forall x\in\Pi(\omega),n\in\NN.\]
	In particular, for every $y\in B(x,\rho),$ $\frac{1}{n}\log(f_\omega^n)'(y)\to\lambda_i^+<0,$ it follows that $y\ne \Xi(\omega).$
	
	For every $\omega\in\Sigma',$ let
	\[K(\omega)=\SS^1\sm\sb{\bigcup_{x\in \Pi(\omega)}B(x,\rho)}.\]
	Then $\Xi(\omega)\sbs K(\omega)\sbs W^u(\omega)$ and $K(\omega)$ is a finite union of closed intervals. By (7) of Theorem \ref{thm: structure of random walk 2}, for every connected component $J$ of $K(\omega),$ $|f_\omega^{-n}J|\to 0.$ Now we can take $\Sigma''\sbs\Sigma'$ be a subset with $\P(\Sigma'')>1-\ve$ and $N$ sufficiently large satisfying
	\[|f_\omega^{-N}J|<\ve\]
	for every $\omega\in\Sigma''$ and every connected component $J$ of $K(\omega).$
	
	Let $\Sigma_\ve=\sigma^{-N}\Sigma'',$ then $\P(\Sigma_\ve)=\P(\Sigma'')>1-\ve$ and the condition (2) holds. For every $\omega\in \Sigma_\ve,$ set
	\[U(\omega)=f_{\sigma^N\omega}^{-N}\sb{\bigcup_{x\in\Pi(\sigma^N\omega)}B(x,\rho)},\]
	so that $\Pi(\omega)\sbs U(\omega)\sbs W^s(\omega)=\SS^1\sm\Xi(\omega).$ The complement of $U(\omega)$ is exactly
	\[f_{\sigma^N\omega}^{-N}(K(\sigma^N\omega)),\]
whose connected components have length less than $\ve$ by the choice of $N.$ It remains to estimate the distortion on $f_{\sigma^N\omega}^{-N} B(x,\rho).$ Let
	\[M=\max\lb{\sup_{f\in\cS} \|f'\|,\sup_{f\in\cS}\|\log f'\|_{\mr{Lip}},2},\]
	and take $C=C(\ve)=M^N C'.$ Since $\sigma^N\omega\in\Sigma'',$ combining \eqref{eqn: distortion subadditivity 2}, we have
	\[\wt\vk(f_\omega^n,f_{\sigma^N\omega}^{-N} B(x,\rho))\leq C(\ve),\quad \forall x\in \Pi(\sigma^N\omega),\forall n\geqslant 0.\]
	This finishes the proof of the proposition.
\end{proof}

\begin{corollary}\label{cor: contraction of intervals}
	For every $\ve>0,$ let $\Sigma_\ve\sbs\Sigma$ be the set defined in Proposition \ref{prop: good words} with the corresponding $U(\omega)=U(\omega,\ve).$ Then there exists positive numbers $\ve_n\to 0$ such that
	\[|f_\omega^nI|<2^{(\lambda_i^++\ve_n)n}\]
	for every $\omega\in\Sigma_\ve,n\in\NN$ and a connected component $I$ of $U(\omega)\cap W^s(\omega,i).$
\end{corollary}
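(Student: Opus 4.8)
The plan is to reduce the estimate of $|f_\omega^n I|$ to the control of the derivative $(f_\omega^n)'$ at a single well-chosen point of $I$, using the uniform distortion bound furnished by Proposition~\ref{prop: good words}(3) together with the uniform derivative growth of Proposition~\ref{prop: good words}(2).

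First I would observe that each connected component $I$ of $U(\omega)\cap W^s(\omega,i)$ is an open interval containing exactly one point $x_I\in\Pi(\omega,i)$. Indeed, $\Pi(\omega)\sbs U(\omega)\sbs W^s(\omega)$, and the connected components of $W^s(\omega)$ are the $dr$ intervals $W^s(\omega,x)$, $x\in\Pi(\omega)$, each containing the single point $x$ (Theorem~\ref{thm: structure of random walk 2}); hence a connected component of $U(\omega)\cap W^s(\omega,i)$ is contained in some $W^s(\omega,x)$ with $x\in\Pi(\omega,i)$, and being a nonempty open piece of $U(\omega)$ with $x\in U(\omega)$, it is exactly the component of $U(\omega)$ that contains $x$; so $x_I\defeq x$ is the unique point of $\Pi(\omega,i)$ in $I$.

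Next, for such an $I$ and $x=x_I$, I would invoke the distortion estimate. Normalizing so that $|\SS^1|=1$ (as in Section~\ref{subsec: dim entropy}), we have $|I|\leq 1$, so by Proposition~\ref{prop: good words}(3) and the inequality $\vk(f,I)\leq\wt\vk(f,I)\,|I|$,
\[
\vk(f_\omega^n,I)\leq\wt\vk(f_\omega^n,I)\,|I|\leq C(\ve)\qquad\text{for all }n\geq 0.
\]
Consequently $(f_\omega^n)'(y)\leq 2^{C(\ve)}(f_\omega^n)'(x)$ for every $y\in I$, whence
\[
|f_\omega^n I|=\int_I (f_\omega^n)'(y)\,\dr y\leq 2^{C(\ve)}\,(f_\omega^n)'(x).
\]
By Proposition~\ref{prop: good words}(2), $\tfrac1n\log (f_\omega^n)'(x)\to\lambda_i^+$ uniformly over $\omega\in\Sigma_\ve$, $i\in[d]$ and $x\in\Pi(\omega,i)$, so there are numbers $\ve_n'\to 0^+$ with $(f_\omega^n)'(x)\leq 2^{(\lambda_i^++\ve_n')n}$ for all such $\omega,i,x$ and all $n\in\NN$. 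Combining the two displays gives $|f_\omega^n I|\leq 2^{(\lambda_i^++\ve_n')n+C(\ve)}$, and setting $\ve_n\defeq\ve_n'+\tfrac{C(\ve)+1}{n}$, which tends to $0$, yields the strict inequality $|f_\omega^n I|<2^{(\lambda_i^++\ve_n)n}$ claimed.

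There is no genuine obstacle in this argument; the only points requiring a little care are the normalization $|I|\leq 1$, which turns the Lipschitz bound $\wt\vk\leq C(\ve)$ into a bound on the distortion coefficient $\vk$, and the identification in each component of the distinguished point $x_I\in\Pi(\omega,i)$ at which Proposition~\ref{prop: good words}(2) applies directly.
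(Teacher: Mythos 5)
Your proposal is correct and is essentially the argument the paper leaves implicit: each connected component of $U(\omega)\cap W^s(\omega,i)$ carries a unique point of $\Pi(\omega,i)$, the uniform distortion bound of Proposition~\ref{prop: good words}(3) (via $\vk\leq\wt\vk\,|I|$ and $|I|\leq 1$) transfers the derivative at that point to the whole component, and the uniform rate of Proposition~\ref{prop: good words}(2) gives the exponent $\lambda_i^+$ with an error $\ve_n\to 0$. The only point worth flagging is that the identification of a point of $\Pi(\omega,i)$ inside each such component is most cleanly justified from the explicit construction of $U(\omega)$ in the proof of Proposition~\ref{prop: good words} (a disjoint union of the $dr$ preimages $f_{\sigma^N\omega}^{-N}B(x,\rho)$, $x\in\Pi(\sigma^N\omega)$, each containing exactly one point of $\Pi(\omega)$), rather than from the three stated properties alone, which by themselves do not formally exclude spurious components of $U(\omega)$ missing $\Pi(\omega)$.
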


Combined with Theorem~\ref{thm: structure of random walk 2}(5) and Lemma~\ref{lem: not atomic}, the following is immediate.
\begin{corollary}
\label{cor: contraction of intervals 2}
There exists $\Sigma' \sbs \Sigma$, such that $\P(\Sigma')>0$ and the following holds for every $\omega \in \Sigma'$. 
For every $i \in \Zmodd$ and every $x \in \Pi(\omega,i)$ there is an open interval $I$ containing $x$ satisfying $\nu_i(I) \geq \frac{1}{2r}$ and such that
\[\forall n \in \NN,\quad  \lambda^+_i - \ve_n \leq \frac{1}{n} \log |f_\omega^n I| \leq  \lambda^+_i + \ve_n\]
where $\ve_n \to 0^+$ is a sequence independent of $(\omega,x)$.
\end{corollary}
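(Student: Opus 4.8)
The plan is to obtain everything from Proposition~\ref{prop: good words}, Theorem~\ref{thm: structure of random walk 2}(5), Lemma~\ref{lem: not atomic} and the distortion estimates, with Corollary~\ref{cor: contraction of intervals} available as a shortcut for the upper bound. Throughout write $\nu_i$ for $\nu_i^+$. Fix a small $\ve>0$ (to be pinned down at the end) and let $\Sigma_\ve$, $U(\omega)=U(\omega,\ve)$ and $C(\ve)$ be as in Proposition~\ref{prop: good words}; set $\Sigma'=\Sigma_\ve$, so that $\P(\Sigma')>1-\ve>0$. For $\omega\in\Sigma'$, $i\in\Zmodd$ and $x\in\Pi(\omega,i)$, take $I=I(\omega,x)$ to be the connected component of $U(\omega)$ containing $x$; since $U(\omega)\sbs W^s(\omega)$ and each $W^s(\omega,i)$ is a union of components of $W^s(\omega)$, the interval $I$ is contained in the component $J$ of $W^s(\omega,i)$ through $x$, and it is also a connected component of $U(\omega)\cap W^s(\omega,i)$.

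The bounds on $\frac1n\log|f_\omega^n I|$ come from bounded distortion. By Proposition~\ref{prop: good words}(3), $\wt\vk(f_\omega^n,I)\leq C(\ve)$, so $\vk(f_\omega^n,I)\leq C(\ve)|I|\leq C(\ve)$; hence all values of $(f_\omega^n)'$ on $I$ differ from $(f_\omega^n)'(x)$ by a factor within $[2^{-C(\ve)},2^{C(\ve)}]$, which gives $2^{-C(\ve)}|I|(f_\omega^n)'(x)\leq|f_\omega^n I|\leq 2^{C(\ve)}|I|(f_\omega^n)'(x)$. By Proposition~\ref{prop: good words}(2), $\frac1n\log(f_\omega^n)'(x)\to\lambda_i^+$ uniformly in $(\omega,i,x)$. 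Thus, as soon as $|I|\geq\eta$ for some constant $\eta>0$ independent of $(\omega,i,x)$, both inequalities $\lambda_i^+-\ve_n\leq\frac1n\log|f_\omega^n I|\leq\lambda_i^++\ve_n$ hold with $\ve_n\to0^+$ uniform in $(\omega,i,x)$ (the upper bound is also exactly Corollary~\ref{cor: contraction of intervals}).

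It remains to prove $\nu_i(I)\geq\frac1{2r}$: this is the measure estimate demanded by the statement, and combined with Lemma~\ref{lem: not atomic} and compactness of $\SS^1$ (as in the proof of Lemma~\ref{lem: uniform bound of Pi}) it also yields the uniform lower bound $|I|\geq\eta$. By Theorem~\ref{thm: structure of random walk 2}(5), $\nu_i(J)=1/r$; since $J\sm I\sbs\SS^1\sm U(\omega)\sbs\Xi(\omega)^{(\ve)}$, it is enough to guarantee $\nu_i(\Xi(\omega)^{(\ve)})<\frac1{2r}$ uniformly in $\omega\in\Sigma'$. This is where the choice of $\ve$ is used, and it is the only delicate point of the argument, since Corollary~\ref{cor: contraction of intervals} controls components of $U(\omega)\cap W^s(\omega,i)$, which a priori could carry much less $\nu_i$-mass than the full component $J$. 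Using Lemma~\ref{lem: not atomic} and compactness once more, for every $\tau>0$ there is $\ve_0>0$ such that $\nu_i(B(y,\ve))<\tau$ for all $y\in\SS^1$, all $i\in\Zmodd$ and all $\ve\leq\ve_0$ (if not, a convergent subsequence of the putative counterexamples would produce an atom of some $\nu_i$). Since $\#\Xi(\omega)=dr$ for $\omega$ regular for the random walk (Theorem~\ref{thm: structure of random walk 1}(5)), we get $\nu_i(\Xi(\omega)^{(\ve)})\leq dr\,\tau$; choosing $\tau<\frac1{2dr^2}$, then $\ve\leq\ve_0$ small enough that $\P(\Sigma_\ve)>0$, completes the proof. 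The main obstacle is this last uniformity bookkeeping; the rest is a routine assembly of the cited results.
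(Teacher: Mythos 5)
Your route is the paper's intended one: the paper declares this corollary immediate from Corollary~\ref{cor: contraction of intervals}, Theorem~\ref{thm: structure of random walk 2}(5) and Lemma~\ref{lem: not atomic}, and your write-up fleshes out exactly that argument. In particular, taking $I$ to be the component of $U(\omega)$ through $x$, getting the two-sided estimate on $\frac1n\log|f_\omega^n I|$ from the distortion bound $\wt\vk(f_\omega^n,I)\leq C(\ve)$ plus the uniform convergence of $\frac1n\log (f_\omega^n)'(x)$, and extracting the uniform length lower bound $|I|\geq\eta$ from the mass bound $\nu_i(I)\geq\frac1{2r}$ via atomlessness and compactness, are all correct and in the spirit of the ``immediate'' proof.

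The one step you assert without justification is the inclusion $J\sm I\sbs\SS^1\sm U(\omega)$, i.e.\ that $U(\omega)\cap J$ is connected and equals $I$. This does not follow from the statement of Proposition~\ref{prop: good words} alone: that statement only gives $\Pi(\omega)\sbs U(\omega)\sbs W^s(\omega)$ and $\SS^1\sm U(\omega)\sbs\Xi(\omega)^{(\ve)}$, and on $\Sigma_\ve$ nothing prevents $x$ from lying within $\ve$ of an endpoint of $J$ (a point of $\Xi(\omega)$); in that case $U(\omega)\cap J$ could a priori have a small component inside that $\ve$-ball containing $x$, separated by a gap of $\SS^1\sm U(\omega)$ from the big piece $J\sm\Xi(\omega)^{(\ve)}$, and then both your inclusion and the bound $\nu_i(I)\geq\frac1{2r}$ would fail. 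Two easy repairs: (i) invoke the explicit construction of $U(\omega)$ in the proof of Proposition~\ref{prop: good words}: it is a disjoint union of $dr$ intervals $f_{\sigma^N\omega}^{-N}B(x',\rho)$, $x'\in\Pi(\sigma^N\omega)$, each containing exactly one point of $\Pi(\omega)$ and each contained in a single connected component of $W^s(\omega)$; since each component of $W^s(\omega)$ contains exactly one point of $\Pi(\omega)$, this forces $U(\omega)\cap J=I$; or (ii) shrink $\Sigma'$ to $\Sigma_\ve\cap\lb{\omega:\Pi(\omega)\cap\Xi(\omega)^{(\ve)}=\vn}$, which still has positive $\P$-measure for small $\ve$ (by the independence of $\Pi(\omega)$ and $\Xi(\omega)$ and an estimate as in the proof of Proposition~\ref{prop: convergence in probability}); then, using that $\Xi(\omega)$ is $c$-separated (Lemma~\ref{lem: uniform bound of Pi}) and $\ve\leq c/2$, the set $J\sm\Xi(\omega)^{(\ve)}$ is a single interval contained in $U(\omega)$ and containing $x$, so $I$ contains it and your mass estimate goes through verbatim. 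With either fix the proof is complete.
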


\subsection{Effective convergence to the Furstenberg boundary}\label{sec: effective convergence}
We may consider the distribution of $\Pi(\omega,i)$ as a probability measure on $\SS_r.$ For every $i\in\Zmodd,$ consider the measurable maps
\[\Pi(\cd,i):\Sigma\to\SS_r,\quad \Xi(\cd,i):\Sigma\to\SS_r\]
Let $\ul\nu_i^+,\ul\nu_i^-$ be the probability measures on $\SS_r$ which are the push forward of $\P$ by $\Pi(\cd,i),\Xi(\cd,i),$ respectively. For every $\ul x\in\SS_r,$ recall $u_{\ul x}$ denotes the uniform probability measure on $\ul x\sbs\SS^1.$ Combining Theorem \ref{thm: structure of random walk 1}(2) and the definition of $\ul\nu_i^\pm,$ we have
\[\nu_i^+=\int_{\SS_r} u_{\ul x}\dr \ul\nu_i^+(\ul x), \quad \nu_i^-=\int_{\SS_r} u_{\ul x}\dr \ul\nu_i^-(\ul x).\]

\begin{proposition}
\label{prop: convergence in probability}
Given $\ve>0$, there exists $N>0$ such that for every $i\in\Zmodd$, for all $\ul x \in \supp\ul\nu_i^+$ and all $n 
 \geq N$, 
\[\P \lb{\omega:\underline d(f_{\sigma^{-n}\omega}^n \ul x,\Pi(\omega,i))<2^{(\lambda_i^+ +\ve)n} }\geq 1-\ve.\]
\end{proposition}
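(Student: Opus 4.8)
The plan is to deduce the statement from the almost-everywhere convergence to the Furstenberg boundary, formula~\eqref{eqn: F boundary}, upgraded to a statement about the measures $\ul\nu_i^+$ on $\SS_r$ via the uniform good words of Proposition~\ref{prop: good words}, and then to run a standard martingale/maximal-inequality argument to turn almost-sure convergence into a convergence-in-probability statement with the prescribed exponential rate. First I would fix $\ve>0$ and choose a set of uniform good words $\Sigma_{\ve'}\sbs\Sigma$ as in Proposition~\ref{prop: good words} with $\ve'$ small depending on $\ve$; on this set the derivatives $(f_\omega^n)'(x)$ for $x\in\Pi(\omega,i)$ decay like $2^{(\lambda_i^+\pm o(1))n}$ uniformly, and by Corollary~\ref{cor: contraction of intervals} the connected components $I$ of $U(\omega)\cap W^s(\omega,i)$ containing a point of $\Pi(\omega,i)$ satisfy $|f_\omega^n I|\leq 2^{(\lambda_i^++\ve_n)n}$ with $\ve_n\to 0$ independent of everything. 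The key point is that $f^n_{\sigma^{-n}\omega}$ maps the fixed interval $U(\sigma^{-n}\omega)$ onto a small neighborhood of $\Pi(\omega)$, so any probability measure supported in (the right component of) $U(\sigma^{-n}\omega)$ is pushed by $f^n_{\sigma^{-n}\omega}$ into an exponentially small neighborhood of the corresponding point of $\Pi(\omega,i)$.

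Next I would use the cocycle invariance $f_\omega^n\Pi(\omega,i)=\Pi(\sigma^n\omega,i)$ from Theorem~\ref{thm: structure of random walk 1}(4), rewritten as $f^n_{\sigma^{-n}\omega}\Pi(\sigma^{-n}\omega,i)=\Pi(\omega,i)$. Since $\ul x\in\supp\ul\nu_i^+$ and $\ul\nu_i^+$ is the law of $\Pi(\cd,i)$, for a positive-probability set of $\eta\in\Sigma^-$ the point $\ul x$ is close to $\Pi(\eta,i)$; more precisely, given $\rho>0$, the event $\{\underline d(\ul x,\Pi(\sigma^{-n}\omega,i))<\rho\}$ has probability bounded below uniformly (this uses that $\ul x$ is in the support, together with the fact that $\Pi(\sigma^{-n}\omega,i)$ depends only on $\pi^-(\sigma^{-n}\omega)$, whose law under $\P$ is $\P^-$). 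Combining this with the contraction estimate on $U(\sigma^{-n}\omega)$: on the intersection of the good-word event $\{\sigma^{-n}\omega\in\Sigma_{\ve'}\}$ with $\{\Pi(\sigma^{-n}\omega,i)\sbs U(\sigma^{-n}\omega)\}$ (which always holds by construction) and with a Birkhoff-regularity event ensuring the derivative rate, we get $\underline d(f^n_{\sigma^{-n}\omega}\ul x,\Pi(\omega,i))\leq 2^{(\lambda_i^++\ve)n}$ once $n$ is large, since $\ul x$ lies in the union of components $I$ of $U(\sigma^{-n}\omega)$ whose $f^n$-images have length $\leq 2^{(\lambda_i^++\ve_n)n}$ and each such image contains the corresponding point of $\Pi(\omega,i)$. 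Finally one chooses $N$ so that for $n\geq N$ the error exponent $\ve_n<\ve$ and $\P(\Sigma_{\ve'})$ together with the Birkhoff-regularity probabilities exceed $1-\ve$; a uniform-in-$\ul x$ choice of $N$ is possible because $\supp\ul\nu_i^+$ is compact and the relevant estimates (derivative decay rate, distortion bound $C(\ve')$, lengths of components of $U$) are uniform over $\Sigma_{\ve'}$, hence over the supports.

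The main obstacle I anticipate is the uniformity in $\ul x\in\supp\ul\nu_i^+$: the naive argument gives, for each fixed $\ul x$, almost-sure convergence of $f^n_{\sigma^{-n}\omega}\ul x$ to $\Pi(\omega,i)$ (this is essentially~\eqref{eqn: F boundary} applied to the deterministic measure $u_{\ul x}$ in place of $\nu_i^+$), and one must make the rate and the probability threshold independent of $\ul x$. I would handle this by never tracking an individual $\ul x$: instead I observe that $\ul x\sbs U(\sigma^{-n}\omega)$ whenever $\ul x$ is within distance $<\rho$ of $\Pi(\sigma^{-n}\omega,i)$ and $\rho$ is smaller than the minimal length of the components of $U(\cd)$ minus the separation constant of Lemma~\ref{lem: uniform bound of Pi} — and then the contraction bound is purely a statement about $U(\sigma^{-n}\omega)$ and $f^n_{\sigma^{-n}\omega}$, with no further reference to $\ul x$. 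Thus the only place $\ul x$ enters is in lower-bounding $\P\{\underline d(\ul x,\Pi(\sigma^{-n}\omega,i))<\rho\}$, and there compactness of the support plus lower semicontinuity of $\eta\mapsto\ul\nu_i^+(B(\eta,\rho))$ gives a uniform lower bound; combined with $\P(\Sigma_{\ve'})>1-\ve'$ and the uniform Birkhoff convergence from Proposition~\ref{prop: good words}(2), a single $N$ works for all $\ul x$ simultaneously. The remaining bookkeeping — translating the $\underline d$-metric on $\SS_r$ into sums of arc-lengths and absorbing constants into the exponent — is routine.
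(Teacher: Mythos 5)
Your overall skeleton (uniform good words, contraction of the components of $U(\cdot)$, the cocycle relation $f^n_{\sigma^{-n}\omega}\Pi(\sigma^{-n}\omega,i)=\Pi(\omega,i)$, and pairing each element of $\ul x$ with the point of $\Pi(\sigma^{-n}\omega,i)$ in the same component) is the right one, but the way you guarantee $\ul x\sbs U(\sigma^{-n}\omega)$ creates a genuine gap. You ensure this inclusion only on the event $\lb{\underline d(\ul x,\Pi(\sigma^{-n}\omega,i))<\rho}$, whose probability is $\ul\nu_i^+\bigl(B(\ul x,\rho)\bigr)$ (since $\Pi(\cdot,i)$ depends only on $\pi^-\omega$ and has law $\ul\nu_i^+$). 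Because $\ul x\in\supp\ul\nu_i^+$, this is positive, but it is typically tiny for small $\rho$ and is certainly not $\geq 1-\ve$; a uniform \emph{lower} bound on it is useless here, since the proposition demands the distance estimate on a set of measure at least $1-\ve$. Intersecting with the good-word event $\lb{\sigma^{-n}\omega\in\Sigma_{\ve'}}$ only makes the probability smaller, so your final step ``$\P(\Sigma_{\ve'})$ together with the Birkhoff-regularity probabilities exceed $1-\ve$'' silently drops the one event that actually depends on $\ul x$, and the argument as written proves the estimate only on a set of small positive probability. (The martingale/maximal-inequality upgrade announced in your first paragraph is never carried out and would not repair this.)

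The correct mechanism is to control the \emph{complement} of $U$ rather than proximity to $\Pi$: by Proposition~\ref{prop: good words}, $\SS^1\sm U(\omega)\sbs\Xi(\omega)^{(\ve')}$, so $\ul x\not\sbs U(\omega)$ forces some $x\in\ul x$ to have $B(x,\ve')\cap\Xi(\omega)\neq\vn$, and by Theorem~\ref{thm: structure of random walk 1} the probability of that is at most $r\sum_{j\in\Zmodd}\sum_{x\in\ul x}\nu_j^-(B(x,\ve'))$, which is $\leq \ve/2$ uniformly in $\ul x$ once $\ve'$ is chosen small using the atomlessness of the $\nu_j^-$ (Lemma~\ref{lem: not atomic}). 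This is where the hypothesis $\ul x\in\supp\ul\nu_i^+$ is actually needed in a different role: it guarantees $\ul x\in\ul W^s(\omega,i)$ for $\P$-almost every $\omega$, so that the elements of $\ul x$ sit in distinct components of $U(\sigma^{-n}\omega)\cap W^s(\sigma^{-n}\omega,i)$ and the pairing with $\Pi(\sigma^{-n}\omega,i)$ is well defined; in your scheme the pairing came for free from $\rho$-closeness to $\Pi$, which is exactly the low-probability requirement you must discard. With the event replaced by $\lb{\sigma^{-n}\omega\in\Sigma_{\ve'} \text{ and } \ul x\sbs U(\sigma^{-n}\omega)}$, whose probability is $\geq 1-\ve$ uniformly in $\ul x$, the rest of your contraction argument goes through.
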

This Proposition can be interpreted as an effective version of the the convergence~\eqref{eqn: F boundary}.

For $\omega \in \Sigma$ and $i \in \Zmodd$, define 
\[\ul W^s(\omega,i) = \Bigl\{\, \ul x \in \SS_r: \bigcup_{x\in \ul x} W^s(\omega,x) = W^s(\omega,i) \,\Bigr\}.\]
In other words, $\ul W^s(\omega,i)$ is the set of $\ul x \in \SS_r$ whose elements fall in different connected components of $W^s(\omega,i)$. 
Note that $\ul W^s(\omega,i)$ depends only on $(\omega^+,i)$ and that $\Pi(\omega,i) \in W^s(\omega,i)$ for $\P$-almost every $\omega \in \Sigma$ and hence $\supp\ul\nu_i^+ \sbs \ul W^s(\omega,i)$ for $\P$-almost every $\omega \in \Sigma$.

We will show Proposition~\ref{prop: convergence in probability} for all $\ul x \in \SS_r$ such that $\ul x \in \ul W^s(\omega,i)$ for $\P$-almost every $\omega \in \Sigma$. Such $\ul x$ form a larger set than $\supp \ul\nu_i^+$ in some situations.
For instance, if $d = r =1$, then the proposition holds for all $x \in \SS^1$. 

\begin{proof}
In view of Lemma~\ref{lem: not atomic}, let $\ve'\in (0,\ve/2)$ be such that 
\[\forall j \in \Zmodd,\, \forall x \in \SS^1,\quad \nu_j^-(B(x,\ve')) \leq \frac{\ve}{2dr^2}.\]
Let $\Sigma_{\ve'} \sbs\Sigma$ be the set of uniform good words with the corresponding constant $C=C(\ve')$ and open sets $U(\omega)=U(\omega,\ve')$ given by Proposition~\ref{prop: good words}. 
Consider 
\[\Sigma' = \lb{\,\omega \in \Sigma_{\ve'} : \ul x \sbs U(\omega)\,}.\] 
We claim that $\P(\Sigma') \geq 1-\ve.$
Indeed, if $\omega \in \Sigma_{\ve'}$ and $\omega \not\in \Sigma'$, then there exists $x \in \ul x$ such that $x \in \sbs \SS^1 \sm U(\omega) \sbs \bigcup_{y \in \Xi(\omega)} B(y,\ve')$.
It follows that $B(x,\ve') \cap \Pi(\omega) \ne \vn.$ 
Recalling (1) and (2) in Theorem~\ref{thm: structure of random walk 1}, we have
\[\P(\Sigma_{\ve'} \sm \Sigma') \leq 
\int r\sum_{j \in \Zmodd} \sum_{x \in \ul x} \unif_{\Xi(\omega,i)} ( B(x,\ve')) \dd \P(\omega) = r\sum_{j \in \Zmodd} \sum_{x \in \ul x} \nu_j^-(B(x,\ve')) \leq r^2d\cdot\frac{\ve}{2dr^2}= \frac{\ve}{2},\]
Showing $\P(\Sigma') \geq 1-\ve.$

Moreover, for $\P$-almost every $\omega\in\Sigma'$, since $x \in \ul W^s(\omega,i)$, each element of $x$ falls exactly in one the $r$ connected components of $U(\omega)$.
By Corollary \ref{cor: contraction of intervals}, there exists $N$ depending only on $\mu$ and $\ve$ such that for all $n \geq N$, all $\omega\in\Sigma'$ and every connected component $I$ of $U(\omega),$ $|f_\omega^nI|<\frac{1}{r}2^{(\lambda_i^+ +\ve)n}.$ 

Now for every $n \geq N$, if $\sigma^{-n} \omega\in\Sigma',$ then elements of $\ul x$ can be paired with those of $\Pi(\sigma^{-n}\omega,i)$ such that each pair belong to the same connected component of $U(\sigma^{-n}\omega)$. Hence
\[\underline d(f_{\sigma^{-n}\omega}^n \ul x,\Pi(\omega,i))<r\cdot \frac{1}{r}2^{(\lambda_i^+ +\ve)n}=2^{(\lambda_i^+ +\ve)n}.\]
Since $\P(\sigma^{-n}\Sigma')=\P(\Sigma')\geq 1-\ve,$ the conclusion follows.
\end{proof}

\section{Exact dimensionality of stationary measures}\label{se:5}
In this section, we will prove Theorem \ref{thm: exact dimensionality}, which gives the exact dimensionality of the ergodic stationary measures. Our strategy is similar to that used in \cite[Theorem 3.4]{HS17} for $\PSL(2,\RR)$ actions, but general circle diffeomorphisms are not as rigid as M\"obius transformations. Therefore, we will make use of Pesin theory, distortion controls, and a hyperbolic time argument (the set given by \eqref{eqn: hyperbolic times} corresponds to hyperbolic times) to obtain our result.

\subsection{Preparation}
Here, we list some useful results taken from~\cite{HS17}.

Let $\mu$ be a finitely supported probability measure on $\Diff_+^2(\SS^1)$ without common invariant probability measures. Let $\nu$ be a $\mu$-stationary measure on $\SS^1$ and let $m$ be the corresponding ergodic $u$-state (recall Proposition~\ref{prop: stationary and state}).
For every $f \in \cS$ and interval $I\sbs\SS^1$, define
\begin{equation}\label{eqn: definition of phi_f}
	\vp(f,I)\defeq\frac{f_*\nu(I)}{\nu(I)}.
\end{equation}
Besicovitch's derivation theorem states that
\[\frac{\dr f_*\nu}{\dr \nu}(x)=\lim_{I\ni x,\ |I|\to 0}\vp(f,I),\quad\ae[\nu\text{-}]x.\]

\begin{lemma}[{\cite[Lemma 3.5]{HS17}}]
For every $f\in\cS$ and $t>0,$
\[\nu\lb{x:\sup\nolimits_{I\ni x}\vp(f,I)>t}\leq 2t^{-1},\quad f_*\nu\lb{x:\inf\nolimits_{I\ni x}\vp(f,I)<t^{-1}}\leq 2t^{-1}.\]
\end{lemma}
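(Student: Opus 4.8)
The plan is to prove both displayed inequalities as a Hardy--Littlewood-type maximal estimate for the pair of Borel probability measures $\nu$ and $f_*\nu$; stationarity plays no role here, so it suffices to establish the first inequality for an \emph{arbitrary} pair of Borel probability measures on $\SS^1$, after which the second follows by duality.

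First I would set $E=\lb{x:\sup_{I\ni x}\vp(f,I)>t}$ and dispose of the range $t<1$, where $\nu(E)\leq 1<2/t$ trivially; note that since $\vp(f,\SS^1)=1$, once $t\geq 1$ the whole circle never witnesses membership in $E$, so every $x\in E$ lies in some \emph{proper} arc $I$ with $\vp(f,I)>t$. For $t\geq 1$, by inner regularity of $\nu$ it is enough to bound $\nu(K)$ for an arbitrary compact $K\sbs E\cap\supp\nu$. For each $x\in K$ I would choose an open proper arc $I_x\ni x$ with $\nu(I_x)>0$ and $f_*\nu(I_x)>t\,\nu(I_x)$: this is possible because, for $x\in\supp\nu$, the supremum defining $E$ is approached by intervals of positive $\nu$-measure, which can be enlarged slightly to open proper arcs without destroying the strict inequality. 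By compactness, finitely many of these arcs, say $J_1,\dots,J_M$, already cover $K$.

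The combinatorial core is to refine $\lb{J_i}$ to a subfamily that still covers $K$ but has covering multiplicity at most $2$. If $\bigcup_i J_i\neq\SS^1$, I would cut the circle at an uncovered point, view the $J_i$ as intervals in $\RR$, and invoke the standard one-dimensional greedy selection lemma for interval covers. If $\bigcup_i J_i=\SS^1$, I would instead pass to a minimal subcover of $\SS^1$ and use the elementary fact that a minimal cover of $\SS^1$ by proper open arcs has multiplicity $\leq 2$: if a point lay in three arcs of the cover, then — using that all three straddle that point — one of them is contained in the union of the other two (or two of them already cover $\SS^1$), contradicting minimality. In either case I obtain a finite subfamily $\lb{J_{i_k}}$ covering $K$ with $\sum_k\one_{J_{i_k}}\leq 2$, and then
\[
\nu(K)\leq\sum_k\nu(J_{i_k})<\frac1t\sum_k f_*\nu(J_{i_k})=\frac1t\int\Big(\sum_k\one_{J_{i_k}}\Big)\dd f_*\nu\leq\frac2t .
\]
Taking the supremum over $K$ yields $\nu(E)\leq 2/t$.

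For the second inequality I would apply the first one to the probability measure $f_*\nu$ in place of $\nu$ and to the diffeomorphism $f^{-1}$ in place of $f$: since $(f^{-1})_*(f_*\nu)=\nu$, the corresponding ratio equals $\vp(f,I)^{-1}$, and $\lb{x:\inf_{I\ni x}\vp(f,I)<t^{-1}}=\lb{x:\sup_{I\ni x}\vp(f,I)^{-1}>t}$, whose $f_*\nu$-measure is therefore $\leq 2/t$. I expect the only genuinely delicate point to be the multiplicity-$2$ covering step on $\SS^1$ (the circle, rather than the line, is where one must be slightly careful), together with the minor measure-theoretic bookkeeping ensuring the arcs $I_x$ can be chosen open with positive $\nu$-measure — handled by first discarding the $\nu$-null open set of points possessing a $\nu$-null neighbourhood.
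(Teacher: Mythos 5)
Your argument is correct and follows essentially the same route as the source the paper cites for this lemma: it is the standard weak-type maximal inequality for a pair of Borel probability measures, obtained from a multiplicity-$2$ covering of the circle by witnessing arcs, with the second bound deduced by swapping the roles of $\nu$ and $f_*\nu$ (the paper itself does not reprove the statement but refers to \cite[Lemma 3.5]{HS17}). The one step to make explicit is the passage from an arbitrary witnessing interval $I$ to an open arc: enlarge $I$ only at the endpoints it contains, keeping the open endpoints fixed, so that $\nu(I'_\delta)\downarrow\nu(I)$ as $\delta\to 0^+$ and the strict inequality $f_*\nu(I')>t\,\nu(I')$ survives even when an excluded endpoint of $I$ is an atom of $\nu$; a naive open neighbourhood of all of $I$ could pick up such an atom and fail.
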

Using the relation $Q_*m = \nu$ and the fact that $m$ is a $u$-state, we obtain the following consequence, which is essentially \cite[Corollary 3.6]{HS17}. 
\begin{lemma}
\label{lem: m phi too big}
For every $t>0,$
\[m\lb{(\omega,x) \in \Sigma \times \SS^1 :\sup\nolimits_{I\ni x} \vp(f_{\sigma^{-1}\omega},I) \geq t} \ll_r t^{-1},\]
and
\[m\lb{(\omega,x) \in \Sigma \times \SS^1 :\inf\nolimits_{I\ni x} \vp(f_{\sigma^{-1}\omega},I) \leq t^{-1}} \ll_r t^{-1}.\]
\end{lemma}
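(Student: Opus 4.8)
The plan is to deduce this from the previous lemma (\cite[Lemma 3.5]{HS17}), using $Q_*m=\nu$ and the $u$-state property in the precise form: the disintegration $\omega\mapsto m_\omega$ factors through $\pi^-$, so $m_\omega$ is independent under $\P$ of the future coordinates $\omega_0,\omega_1,\dotsc$. The obstacle to applying this directly is that the event in the statement involves $f_{\sigma^{-1}\omega}=\omega_{-1}$, a \emph{past} coordinate, which is entangled with $m_\omega$. To fix this I would first use the $F$-invariance of $m$: writing $A_t$ for the first set in the statement, $m(A_t)=m(F^{-1}A_t)$, and since $f_{\sigma^{-1}(\sigma\omega)}=f_\omega=\omega_0$,
\[F^{-1}A_t=\lb{(\omega,x)\in\Sigma\times\SS^1:\sup\nolimits_{I\ni f_\omega(x)}\vp(f_\omega,I)\geq t}.\]
Now the relevant generator is $\omega_0$, which \emph{is} independent of $m_\omega$.

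The second step is the reduction itself. Disintegrating $m=\int m_\omega\,\dr\P(\omega)$ and integrating out the coordinate $\omega_0$ first (legitimate since the integrand $m_\omega(f_\omega^{-1}B_{f_\omega,t})$ depends on $\omega$ only through $\pi^-\omega$ and $\omega_0$, which are $\P$-independent, and $\int m_\omega\,\dr\P(\omega)=Q_*m=\nu$), one obtains
\[m(A_t)=\sum_{g\in\supp\mu}\mu(g)\,\nu\bigl(g^{-1}B_{g,t}\bigr)=\sum_{g\in\supp\mu}\mu(g)\,(g_*\nu)(B_{g,t}),\]
where $B_{g,t}\defeq\lb{y:\sup\nolimits_{I\ni y}\vp(g,I)\geq t}$; and likewise $m\lb{(\omega,x):\inf_{I\ni x}\vp(f_{\sigma^{-1}\omega},I)\leq t^{-1}}=\sum_g\mu(g)\,(g_*\nu)(B'_{g,t})$ with $B'_{g,t}\defeq\lb{y:\inf_{I\ni y}\vp(g,I)\leq t^{-1}}$. (Both sets are Borel, being monotone limits over $s<t$ of the open sets $\lb{\sup_{I\ni y}\vp(g,I)>s}$ resp. $\lb{\inf_{I\ni y}\vp(g,I)<s^{-1}}$; this is the only place one must watch $\geq$ versus $>$.)

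It remains to bound $(g_*\nu)(B_{g,t})$ and $(g_*\nu)(B'_{g,t})$ by $O(t^{-1})$ and sum. For the $\inf$-case this is immediate from the second assertion of \cite[Lemma 3.5]{HS17} applied to the measure $g_*\nu$: $(g_*\nu)(B'_{g,t})\leq(g_*\nu)\lb{y:\inf_{I\ni y}\vp(g,I)<2t^{-1}}\leq 4t^{-1}$, so $m\lb{(\omega,x):\inf_{I\ni x}\vp(f_{\sigma^{-1}\omega},I)\leq t^{-1}}\leq 4t^{-1}$, an absolute constant. For the $\sup$-case the first assertion of that lemma only gives $\nu(B_{g,t})\ll t^{-1}$, not $(g_*\nu)(B_{g,t})$; here I would additionally use stationarity, $\nu=\sum_h\mu(h)\,h_*\nu\geq\mu(g)\,g_*\nu$, to pass to $(g_*\nu)(B_{g,t})\leq\mu(g)^{-1}\nu(B_{g,t})\ll\mu(g)^{-1}t^{-1}$; summing, $m(A_t)\leq\sum_g\mu(g)\cdot O(\mu(g)^{-1}t^{-1})=O\bigl(\#(\supp\mu)\,t^{-1}\bigr)$.

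I expect the main obstacle to be exactly this asymmetry: the $\inf$-side matches \cite[Lemma 3.5]{HS17} verbatim (absolute constant), whereas the $\sup$-side is forced to compare $g_*\nu$ with $\nu$, so stationarity must be invoked and the implied constant then depends on $\#\supp\mu$ as well. (By Corollaries~\ref{cor:u state and lambda} and \ref{cor: atomic conditional measure} the conditional measures $m_\omega$ are in any case uniform on $r$ atoms, which is the ``$r$'' appearing in the statement.) The remaining points — measurability, the Fubini exchange, and the $\geq$-versus-$>$ bookkeeping — are routine.
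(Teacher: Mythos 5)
Your proof is correct and follows essentially the route the paper intends (the paper only invokes $Q_*m=\nu$, the $u$-state property and cites \cite[Corollary 3.6]{HS17}): after conditioning on the relevant coordinate the point is distributed as $g_*\nu$, so the inf-bound of \cite[Lemma 3.5]{HS17} applies verbatim, and your extra use of stationarity ($g_*\nu\leq\mu(g)^{-1}\nu$) is precisely the ingredient needed to transfer the sup-bound from $\nu$ to $g_*\nu$. The only deviation is that your implied constant depends on $\#\supp\mu$ (equivalently $\min_g\mu(g)$) rather than only on $r$ as the statement nominally asserts; since the lemma is used solely to get $\sup_{\rho>0}\cO\in L^1(m)$, any $t$-independent constant suffices, so this is harmless.
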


Define $\cO:\Sigma \times \SS^1 \times\RR_+\to \RR_+$ as
\[\cO(\omega,x,\rho)\defeq \sup_{I\ni x,|I|\leq\rho} \log\vp(f_{\sigma^{-1}\omega},I)-\inf_{I\ni x,|I|\leq\rho} \log\vp(f_{\sigma^{-1}\omega},I).\]
Besicovitch's derivation theorem can be restated as $\lim_{\rho\to 0^+}\cO(\omega, x, \rho)=0$ for $m$-almost every $(\omega,x) \in \Sigma \times \SS^1$.
By Lemma~\ref{lem: m phi too big} and the layer cake representation, the following is immediate.
\begin{lemma}[see {\cite[Corollary 3.7]{HS17}}]
The function $(\omega,x) \mapsto\sup_{\rho>0}\cO(\omega,x,\rho)$ is in $L^1(\Sigma \times \SS^1,m).$
\end{lemma}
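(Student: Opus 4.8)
The plan is to reduce the statement, via the layer-cake (Cavalieri) formula, to the two tail estimates of Lemma~\ref{lem: m phi too big}.

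First I would record that, for fixed $(\omega,x)$, the function $\rho\mapsto\cO(\omega,x,\rho)$ is non-decreasing in $\rho$ and, since $\SS^1$ has finite length, it becomes constant once $\rho$ exceeds the length of $\SS^1$. Hence
\[\sup_{\rho>0}\cO(\omega,x,\rho)=\sup_{I\ni x}\log\vp(f_{\sigma^{-1}\omega},I)-\inf_{I\ni x}\log\vp(f_{\sigma^{-1}\omega},I)=S^+(\omega,x)+S^-(\omega,x),\]
where $S^+(\omega,x)=\sup_{I\ni x}\log\vp(f_{\sigma^{-1}\omega},I)$, $S^-(\omega,x)=-\inf_{I\ni x}\log\vp(f_{\sigma^{-1}\omega},I)=\sup_{I\ni x}\log\bigl(\vp(f_{\sigma^{-1}\omega},I)^{-1}\bigr)$, and the sup and inf run over all arcs $I\ni x$. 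Since $\vp(f,I)\to 1$ as $I$ exhausts $\SS^1$, both $S^+\geq 0$ and $S^-\geq 0$ (with the convention $\log 0=-\infty$). Moreover, as $Q_*m=\nu$, for $m$-a.e. $(\omega,x)$ one has $x\in\supp\nu$, so $\nu(I)>0$ for every arc $I\ni x$ and $\vp(f_{\sigma^{-1}\omega},I)\in[0,\infty)$ is well defined. It therefore suffices to prove $S^+,S^-\in L^1(\Sigma\times\SS^1,m)$; this automatically yields $S^+,S^-<\infty$ $m$-a.e. as a byproduct. Measurability of $S^+$ and $S^-$ is routine: $\omega\mapsto f_{\sigma^{-1}\omega}$ depends on a single coordinate of $\omega$, and the supremum/infimum over arcs $I\ni x$ may be realized through a countable family, so $S^+$ and $S^-$ are countable lattice combinations of measurable functions.

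Then I would estimate both integrals by layer cake. Since $S^+\geq 0$, $\int S^+\dd m=\int_0^\infty m\lb{S^+>s}\dd s$. Because $\log$ is base $2$, $\lb{S^+>s}=\lb{(\omega,x):\sup_{I\ni x}\vp(f_{\sigma^{-1}\omega},I)>2^{s}}$, which is contained in the exceptional set of the first inequality of Lemma~\ref{lem: m phi too big} with $t=2^{s}$; hence $m\lb{S^+>s}\ll_r 2^{-s}$. Combining this with the trivial bound $m\lb{S^+>s}\leq 1$ and splitting the $s$-integral into $s\leq s_0$ and $s>s_0$ for a suitable constant $s_0$ (depending on the implied constant), we obtain $\int S^+\dd m\ll_r 1$. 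The estimate for $S^-$ is identical: $\lb{S^->s}=\lb{(\omega,x):\inf_{I\ni x}\vp(f_{\sigma^{-1}\omega},I)<2^{-s}}$ is contained in the exceptional set of the second inequality of Lemma~\ref{lem: m phi too big} with $t=2^{s}$, so $m\lb{S^->s}\ll_r 2^{-s}$ and $\int S^-\dd m\ll_r 1$. Adding, $\int\sup_{\rho>0}\cO\dd m=\int(S^++S^-)\dd m<\infty$, which is the assertion.

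I do not anticipate a genuine obstacle: the argument is essentially the layer-cake computation fed by Lemma~\ref{lem: m phi too big}. The only two points meriting a line of care are (i) the identification of $\sup_{\rho>0}\cO$ with the total oscillation $S^++S^-$ (monotonicity in $\rho$ together with the compactness of $\SS^1$), and (ii) the change of variable $t=2^{s}$ matching the super-level sets of $S^\pm$ with the exceptional sets of Lemma~\ref{lem: m phi too big}; the possibility that $\vp$ vanishes (which would force $S^-=+\infty$) is harmless, since by the same tail bound it occurs only on an $m$-null set.
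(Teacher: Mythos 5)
Your proposal is correct and is exactly the argument the paper intends: the paper dismisses the lemma as immediate from Lemma~\ref{lem: m phi too big} together with the layer-cake representation, and your write-up simply makes that explicit by splitting $\sup_{\rho>0}\cO$ into the two one-sided oscillations and integrating the tail bounds $m\lb{S^\pm>s}\ll_r 2^{-s}$. Nothing is missing; the two points you flag (monotonicity in $\rho$ and the matching $t=2^s$) are indeed the only details worth a line.
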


The following is a variant of Maker's Theorem, as stated in \cite{HS17}.
\begin{theorem}[{\cite[Theorem 3.8]{HS17}}]\label{thm: Maker's Theorem}
	Let $(X,\cF,\theta,T)$ be an ergodic probability measure preserving system. Let $G_t:X\to\RR$ be a measurable $1$-parameter family of measurable functions such that $\sup_t|G_t|\in L^1.$ Suppose that 
	\[G=\lim_{t\to 0}G_t\]
	exists almost everywhere. Let $t_{N,n}:X\to\RR$ be functions with the property that for $\theta$-a.e. $x$ and every $\ve>0,$ for large enough $N,$
	\[|t_{N,n}|<\ve,\quad \text{for }1\leq n\leq (1-\ve)N. \]
	Then
	\[\lim_{N \to +\infty}\frac{1}{N}\sum_{n=1}^N G_{t_{N,n}(x)}(T^nx)=\int G \dd\theta,\quad \ae[\theta\text{-}] x.\]
\end{theorem}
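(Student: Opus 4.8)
The plan is to reduce the statement to Birkhoff's pointwise ergodic theorem combined with a dominated convergence argument, the extra ingredient being a splitting of the index set that isolates the few indices $n$ for which $t_{N,n}(x)$ need not yet be small. First I would put $\Phi \defeq \sup_t |G_t| \in L^1(X,\theta)$ and introduce, for $\delta > 0$, the modulus
\[\Psi_\delta(x) \defeq \sup_{|t| \leq \delta} \bigl|G_t(x) - G(x)\bigr|.\]
Since $G_t \to G$ pointwise $\theta$-a.e.\ as $t \to 0$ and $0 \leq \Psi_\delta \leq 2\Phi \in L^1$, dominated convergence gives $\int \Psi_\delta \dd\theta \to 0$ as $\delta \to 0^+$; this is essentially the only place the integrability hypothesis enters.

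Next I would write
\[\frac1N \sum_{n=1}^N G_{t_{N,n}(x)}(T^n x) = \frac1N \sum_{n=1}^N G(T^n x) + \frac1N \sum_{n=1}^N \bigl( G_{t_{N,n}(x)}(T^n x) - G(T^n x) \bigr),\]
so that, by Birkhoff applied to $G$, everything comes down to showing the error term is negligible for $\theta$-a.e.\ $x$. The subtlety here is that $t_{N,n}$ depends on $x$, so one cannot apply an ergodic theorem directly to $G_{t_{N,n}(x)}(T^n x)$. Instead, for fixed $\ve, \delta > 0$ and for $N$ large (depending on $x$, $\ve$, $\delta$), the hypothesis on $t_{N,n}$ splits $\{1, \dots, N\}$ into the ``good'' range $1 \leq n \leq (1-\ve)N$, where $|t_{N,n}(x)| < \delta$ and hence $|G_{t_{N,n}(x)} - G|(T^n x) \leq \Psi_\delta(T^n x)$, and a ``bad'' range of at most $\ve N$ indices on which I would use only the crude bound $2\Phi(T^n x)$.

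Then I would apply Birkhoff separately to $\Psi_\delta$ and to $\Phi$: the good part contributes at most $(1-\ve)\int \Psi_\delta \dd\theta \leq \int \Psi_\delta \dd\theta$ in the limit, while the bad part, being a difference of Birkhoff averages of $\Phi$ over ranges of asymptotic proportion $1$ and $1-\ve$, contributes at most $2\ve \int \Phi \dd\theta$. Hence
\[\limsup_{N \to +\infty} \Bigl| \frac1N \sum_{n=1}^N \bigl( G_{t_{N,n}(x)}(T^n x) - G(T^n x) \bigr) \Bigr| \leq \int \Psi_\delta \dd\theta + 2\ve \int \Phi \dd\theta\]
for $\theta$-a.e.\ $x$, and letting $\ve \to 0^+$ and then $\delta \to 0^+$ finishes the proof.

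Overall this is a standard Maker-type argument and none of the steps is genuinely hard; the point requiring care --- the ``main obstacle'', such as it is --- is the measurability of $\Psi_\delta$, needed so that dominated convergence and Birkhoff's theorem apply to it. This is guaranteed by mild regularity of the family $t \mapsto G_t$ (for instance monotonicity in the parameter, as holds in the intended application where $G_t$ is an oscillation $\cO(\omega,x,t)$, or separability in $t$), which is implicit in the assumption that $\sup_t |G_t|$ is a well-defined element of $L^1$; granting this, the remainder is routine bookkeeping with the index split described above.
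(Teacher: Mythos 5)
Your argument is correct, and it is essentially the proof of the cited result itself: the paper does not prove this statement (it is imported verbatim from Hochman--Solomyak as a variant of Maker's ergodic theorem), and your scheme --- split off the Birkhoff average of $G$, bound the good indices $n\leq(1-\ve)N$ by $\Psi_\delta(T^nx)$ and the at most $\ve N$ remaining indices by $2\Phi(T^nx)$, then apply Birkhoff to $\Psi_\delta$ and $\Phi$ and let $\ve\to 0$, $\delta\to 0$ along countable sequences --- is the standard argument underlying that theorem. The one point needing care, measurability of $\Psi_\delta=\sup_{|t|\leq\delta}|G_t-G|$, you flag correctly; in the paper's application it is automatic, since $G_t=\cO(\omega,x,t)$ is monotone non-decreasing in $t$ with $G=0$, so $\Psi_\delta=G_\delta$ is measurable.
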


\subsection{Exact dimensionality (Proof Theorem \ref{thm: exact dimensionality})}
\label{subsec: proof of exact dimensionality}

Let $\nu$ be an ergodic $\mu$-stationary measure and let 
$m \in \cP^u$ be the corresponding ergodic $u$-state. 
Let $\lambda = \lambda(\mu,\nu) = \lambda(m)$ denote the Lyapunov exponent, which is negative under our assumption.
Let $h=\hFur(\mu,\nu)$ be the Furstenberg entropy.
Note that using the relation between $\nu$ and $m$ and a change of variable, the Furstenberg entropy can be expressed as
\begin{equation*}
h = \int \log \frac{\dr (f_{\sigma^{-1}\omega})_* \nu}{\dr \nu}(x) \dd m(\omega,x).
\end{equation*}

\begin{proof}[Proof of Theorem \ref{thm: exact dimensionality}]
We aim to show that for $\nu$-almost every $x\in \SS^1$, or, in other words, for $m$-almost every $(\omega,x) \in \Sigma \times \SS^1$, 
\begin{equation}
\label{eqn: local dim at x}
\lim_{\rho \to 0^+}\frac{\log\nu(B(x,\rho))}{\log\rho}=-\frac{h}{\lambda}. 
\end{equation}

Fix an arbitrary element $\omega\in\Sigma$ regular for the random walk (Definition \ref{def: regular for random walk}) and a point $x\in\supp m_\omega.$ Let $I_0 = B(x,\rho)$ and for integers $n \geq 0$, set 
\[x_n = f_\omega^{-n} x \quad\text{and}\quad I_n = f_\omega^{-n}I_0.\]
For any integer $N \geq 1$, we can write
\[\nu(I_0)=\nu(I_N)\prod_{n=0}^{N-1}\frac{\nu(I_{n})}{\nu(I_{n+1})}=\nu(I_N)\prod_{n=0}^{N-1}\frac{\nu(I_n)}{(f_{\sigma^{-(n+1)}\omega})_*\nu(I_n)}.\]
Recall the definition of $\vp(f,I)$ in \eqref{eqn: definition of phi_f}, 
\[-\log \nu(I_0)=-\log\nu(I_N)+\sum_{n=0}^{N-1}\log\vp(f_{\sigma^{-(n+1)}\omega},I_n).\]
Since
\[\abs{\log\frac{\dr (f_{\sigma^{-(n+1)}\omega})_*\nu}{\dr \nu}(x_n)-\log\vp(f_{\sigma^{-(n+1)}\omega},I_n) }\leq\cO(\sigma^{-n}\omega,|I_n|),\]
we have
\begin{equation}
\label{eqn: nuI0 and sum}
\abs{-\log\nu(I_0)- \sum_{n=0}^{N-1}\log\frac{\dr (f_{\sigma^{-(n+1)}\omega})_*\nu}{\dr \nu}(x_n)}\leq -\log\nu(I_N)+\sum_{n=0}^{N-1}\cO(\sigma^{-n}\omega,|I_n|).
\end{equation}
Note that the sum in the left-hand side is independent of $\rho$ and it is a Birkhoff sum of the function $(\omega,x) \mapsto \log\frac{\dr(f_{\sigma^{-1}\omega})_*\nu }{\dr\nu}(x)$ for the transformation $F^{-1}$.
Hence, by Birkhoff's ergodic theorem, outside a null set, as $N \to +\infty$,
\begin{equation}
\label{eqn: average to hFur}
\frac{1}{N}\sum_{n=0}^{N-1}\log\frac{\dr (f_{\sigma^{-(n+1)}\omega})_*\nu}{\dr \nu}(x_n)\to h.
\end{equation}

Recall that our objective is to estimate $\lim_{\rho \to 0^+}\frac{\log \nu(I_0)}{\log \rho}$. 
To make use of \eqref{eqn: nuI0 and sum}, we still have the freedom to choose $N = N(\omega,\rho)$ according to $\rho > 0$.
This choice will guarantee $N(\omega,\rho) \sim\frac{\log\rho}{\lambda}$ and that in the same time, the right-hand side of \eqref{eqn: nuI0 and sum} is relatively small compared to $N(\omega,\rho)$ as $\rho \to 0^+$.

By Corollary~\ref{cor: contraction of intervals 2}, we can fix a subset $\Sigma' \sbs \Sigma$ and a sequence $\ve_n \to 0^+$ at first, such that $\P(\Sigma') > 0$ and that for $m$-almost every $(\omega',y) \in \Sigma' \times \SS^1$, there is an interval $I = I(\omega',y)$ containing $y$ and satisfying $\nu(I) \geq \frac{1}{2r}$ and 
for any $n \in \NN$, $\log |f_{\omega'}^n y| \leq (\lambda + \ve_n) n$.

For every $\omega\in\Sigma,$ we define
\begin{equation}
\label{eqn: hyperbolic times}
	A_\omega\defeq \lb{N : \sigma^{-N}\omega \in \Sigma'},
\end{equation}
which is the family of \textit{hyperbolic times} that $f_{\sigma^{-N}\omega}^N$ possesses a significant hyperbolicity for $N\in A_\omega$. For $\P$-almost every $\omega \in \Sigma$, the set $A_\omega$ is infinite and thus,
\[N(\omega,\rho) \defeq \min\lb{ N \in A_\omega : 2^{(\lambda + \ve_N)N} \leq \rho}\]
 is well defined for all $\rho > 0$.
We claim that
\begin{equation}
\label{eqn: rho and N}
\frac{\log \rho}{N(\omega,\rho)} \to \lambda \quad \text{as } \rho \to 0^+.
\end{equation}
Indeed, if we list elements of $A_\omega = \{N_1, N_2, \dotsc \}$ in increasing order, then by Birkhoff's ergodic theorem, for $\P$-almost every $\omega$, $N_k/k \to \P(\Sigma')^{-1}$ and hence $N_{k-1}/N_k \to 1$ as $k\to +\infty$. 
By definition, $N(\omega,\rho)$ is some $N_k$ such that $(\lambda + \ve_{N_k})N_k \leq \log \rho < (\lambda + \ve_{N_{k-1}})N_{k-1}$.
Dividing by $N_k$ and taking limit shows \eqref{eqn: rho and N}.

We write $N = N(\omega,\rho)$ as a short hand.
From $N \in A_\omega$ and the point $x \in \supp m_\omega$, we know that there is an interval $I = I(F^{-N}(\omega,x))$ containing $f_\omega^{-N} x$ and satisfying $\nu(I) \geq \frac{1}{2r}$ and 
\[  |f_{\sigma^{-N}\omega}^N I| \leq 2^{(\lambda + \ve_N)N} \leq \rho.\]
Hence $f_{\sigma^{-N}\omega}^N I \subset B(x,\rho) = I_0$ and $I_N = f_\omega^{-N} I_0 \sps I$ has measure
\begin{equation}
\label{eqn: nu of IN}
\nu(I_N) \geq \frac{1}{2r}.
\end{equation}

We claim that for $\P$-almost every $\omega \in \Sigma$, there is $\wt\ve_\rho \to 0$ as $\rho \to 0^+$ (depending on $\omega$) such that for every $1\leq n\leq N$ we have
\[|I_n| \leq 2^{\lambda(N-n)+\wt\ve_\rho N}. \]	
Thus, applying Theorem \ref{thm: Maker's Theorem} (let $(G_t, t_{N,n}, T, X)$ be $(\cO(\cdot, t), |I_n|, F, \Sigma \times \SS^1)$) 
we obtain that
\[\frac{1}{N}\sum_{n=0}^{N-1}\cO(\sigma^{-n}\omega,|I_n|)\to 0\]
for $m$-almost every $(\omega,x) \in \Sigma \times \SS^1$.
Combining~\eqref{eqn: nuI0 and sum}, \eqref{eqn: average to hFur}, \eqref{eqn: nu of IN} and \eqref{eqn: rho and N}
we obtain the desired convergence \eqref{eqn: local dim at x}.

To show the claim, recall that $\omega$ is regular for the random walk.
Then there exists positive numbers $\ve_n'\to 0$ such that
\[\forall n\in\NN,\quad (f_\omega^{-n})'(x)\leq 2^{(-\lambda+\ve_n')n}\quad \text{and}\quad \sum_{k=0}^{n-1} (f_\omega^{-k})'(x)\leq 2^{(-\lambda+\ve_n')n}.\]
Writing $M=\max\lb{\sup_{f\in\cS^{-1}} \|f'\|,\sup_{f\in\cS^{-1}}\|\log f'\|_{\mr{Lip}},2}$,
by Proposition \ref{prop: distortion estimate}, we have $\vk(f_\omega^{-n},I_0)\leq 1$ whenever
\[\rho\leq \frac{1}{2M}2^{(\lambda-\ve_n')n}.\]
For such $n,$ we have
\[|I_n|\leq 2(f_\omega^n)'(x)|I_0|\leq 2^{(-\lambda+\ve_n')n+2}\rho. \]
Recall~\eqref{eqn: rho and N}. So we can write
\begin{equation*}
\rho = 2^{(\lambda + \ve_\rho)N}
\end{equation*}
with some $\ve_\rho \to 0$ as $\rho \to 0^+$.
The claim can be verified easily, with the choice 
\[\wt\ve_\rho = \ve_\rho +\sup_{1\leq n\leq N}\ve_n'\frac{n}{N}+\frac{2+\log M}{N}.\qedhere\]
\end{proof}

\section{Dimension Formulas}\label{se:6}
In the previous section, we established the exact dimensionality of ergodic stationary measures and the dimension is expressed as the ratio between the Furstenberg entropy and the Lyapunov exponent.
However, computing the Furstenberg entropy can be challenging in practice.
In this section, we will demonstrate that the random walk entropy can be used as a substitute for the Furstenberg entropy under a suitable separation condition, namely the local discreteness of the group generated by the support of the measure.

Let us explain how the local discreteness helps in this setting. Firstly, we use the language of entropy to  interpret the dimension. 
Let $\alpha$ be the exact dimension of an ergodic $\mu$-stationary measure $\nu.$ Then $\alpha$ also equals to the entropy dimension of $\nu,$ i.e.,
\[\alpha=\lim_{n\to\infty}\frac{1}{n}H(\nu,\cD_n).\]
Recall that $\Pi(\omega)$ plays the role of the Furstenberg boundary.
Proposition \ref{prop: convergence in probability} gives rise to some $\ul x \in \SS_r$ with which we can interpret $\nu$ as the weak-* limit 
\[\nu=\lim_{n\to\infty}\mu^{*n}* u_{\ul x}.\]
Combining the convergences above, we can expect that
\[\frac{1}{n}H(\mu^{*n}*u_{\ul x},\cD_{-\lambda n})\to -\lambda\alpha,\]
where $\lambda<0$ is the corresponding Lyapunov exponent. It relates the action of $n$-step iterations of $\mu$ and the dimension $\alpha.$

If there were a gap between $h_{\mr{RW}}(\mu)$ and $-\lambda \alpha,$ then there would be exponentially many elements in $\supp\mu^{*n}$ mapping a point in $\ul x$ into the same atom of $\cD_{-\lambda n}.$ 
Combining the derivative and distortion controls for good words in $\supp\mu^{*n},$ the restrictions of these elements to a fixed interval would be close to each other. 
This allows us to construct elements of the form $g^{-1} f$ that are arbitrarily ($C^1$-)close to the identity on an interval. 
This contradicts the local discreteness assumption.

\subsection{Dimension formula on the circle (Proof of Theorem \ref{thm: dim formula C2})}\label{subsec: dimension formula}
We will prove Theorem \ref{thm: dim formula C2} in this subsection. 
Let $\mu$ be a probability measure on $\Diff_+^2(\SS^1)$ whose support is finite and does not preserve any Borel probability measure on $\SS^1$.
Let $\nu$ to be an ergodic $\mu$-stationary measure.
We know that $\nu$ is exact dimensional, and denote its dimension by $\alpha$.

We use the notation of Theorem~\ref{thm: structure of random walk 1} and without loss of generality, assume that $\nu=\nu_0^+$.
Recall $\ul\nu_0^+$ is the probability measure on $\SS_r$ which is the image measure  of $\P$ by $\Pi(\cd,0)$. 
We abbreviate $\ul\nu_0^+$ to $\ul\nu$ in this section. 
\begin{lemma}
\label{lem: entropy of RW on Sr}
For every $\ul x\in\supp\ul\nu,$ we have
\[\lim_{n\to + \infty}\frac{1}{n}H(\mu^{*n}*\unif_{\ul x},\cD_{-\lambda n})=-\alpha\lambda. \]
\end{lemma}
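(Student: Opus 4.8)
The plan is to deduce the equality from the fact, just established, that $\nu=\nu_0^+$ is exact dimensional of dimension $\alpha$, which by Section~\ref{subsec: dim entropy} means that $\nu$ has entropy dimension $\alpha$. First I would reduce everything to an entropy comparison: writing $m_n=\lfloor-\lambda n\rfloor$ (so $m_n\to+\infty$ since $\lambda<0$), one has $\frac1nH(\nu,\cD_{-\lambda n})=\frac{m_n}{n}\cdot\frac1{m_n}H(\nu,\cD_{m_n})\to(-\lambda)\alpha=-\alpha\lambda$, so it suffices to prove that $\frac1n\bigl|H(\mu^{*n}*\unif_{\ul x},\cD_{-\lambda n})-H(\nu,\cD_{-\lambda n})\bigr|\to0$. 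Next I would realise both measures as averages over $\Sigma$: by $\sigma$-invariance of $\P$ one has $\mu^{*n}*\unif_{\ul x}=\int_\Sigma(f_{\sigma^{-n}\omega}^n)_*\unif_{\ul x}\dd\P(\omega)$, while $\nu=\int_\Sigma\unif_{\Pi(\omega,0)}\dd\P(\omega)$ by Theorem~\ref{thm: structure of random walk 1}(2). The two integrands are tied together by effective convergence to the Furstenberg boundary: fixing $\ve\in(0,|\lambda|)$ and applying Proposition~\ref{prop: convergence in probability} with $i=0$ (so $\lambda_0^+=\lambda$ and $\ul\nu_0^+=\ul\nu$), for all large $n$ the set $G_n=\lb{\omega\in\Sigma:\ul d(f_{\sigma^{-n}\omega}^n\ul x,\Pi(\omega,0))<2^{(\lambda+\ve)n}}$ satisfies $\P(G_n)\geq1-\ve$, and for $\omega\in G_n$ the two uniform measures $(f_{\sigma^{-n}\omega}^n)_*\unif_{\ul x}$ and $\unif_{\Pi(\omega,0)}$ are carried by $r$-point sets that can be matched atom-to-atom, measurably in $\omega$ (say via the lexicographically first permutation realising the infimum defining $\ul d$), with matched points at distance $<2^{(\lambda+\ve)n}$.

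With that in hand I would carry out the entropic bookkeeping. Normalise: put $\hat\theta_n:=\P(G_n)^{-1}\int_{G_n}(f_{\sigma^{-n}\omega}^n)_*\unif_{\ul x}\dd\P(\omega)$ and $\hat\theta_n':=\P(G_n)^{-1}\int_{G_n}\unif_{\Pi(\omega,0)}\dd\P(\omega)$, and let $\Xi_n$ be the probability measure on $\SS^1\times\SS^1$ obtained by integrating the atom-to-atom matchings over $G_n$ and renormalising; then $\Xi_n$ has marginals $\hat\theta_n$ and $\hat\theta_n'$ and is supported within distance $2^{(\lambda+\ve)n}$ of the diagonal. A cell of $\cD_{-\lambda n}$ has length in $[2^{\lambda n},2^{\lambda n+1}]$ and $2^{(\lambda+\ve)n}=2^{\ve n}\cdot2^{\lambda n}$, so any point within distance $2^{(\lambda+\ve)n}$ of a fixed cell lies in at most $2^{\ve n+c_0}$ cells, with $c_0$ an absolute constant (accounting for the wrap-around of $\SS^1\cong[0,1[$). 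Combining the identity $H(\Xi_n,\cD_{-\lambda n}\times\cD_{-\lambda n})=H(\hat\theta_n',\cD_{-\lambda n})+H(\Xi_n,\pi_1^{-1}\cD_{-\lambda n}|\pi_2^{-1}\cD_{-\lambda n})$ with the conditional-entropy bound \eqref{eqn: bound for cond entropy} and the trivial inequality $H(\hat\theta_n,\cD_{-\lambda n})\leq H(\Xi_n,\cD_{-\lambda n}\times\cD_{-\lambda n})$ (together with the versions with $\hat\theta_n,\hat\theta_n'$ exchanged) yields $|H(\hat\theta_n,\cD_{-\lambda n})-H(\hat\theta_n',\cD_{-\lambda n})|\leq\ve n+c_0$. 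Finally, writing $\mu^{*n}*\unif_{\ul x}=\P(G_n)\hat\theta_n+(1-\P(G_n))\hat\rho_n$ and $\nu=\P(G_n)\hat\theta_n'+(1-\P(G_n))\hat\rho_n'$ for probability measures $\hat\rho_n,\hat\rho_n'$, and using $H(\cd,\cD_{-\lambda n})\leq\log\#\cD_{-\lambda n}\leq|\lambda|n$ and $1-\P(G_n)\leq\ve$, the concavity and almost-convexity of $\eta\mapsto H(\eta,\cD_{-\lambda n})$ in \eqref{eqn: entropy concave} give $|H(\mu^{*n}*\unif_{\ul x},\cD_{-\lambda n})-H(\hat\theta_n,\cD_{-\lambda n})|\leq|\lambda|\ve n+1$ and the analogous bound for $\nu$ and $\hat\theta_n'$. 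Summing the three estimates gives $|H(\mu^{*n}*\unif_{\ul x},\cD_{-\lambda n})-H(\nu,\cD_{-\lambda n})|\leq(2|\lambda|+1)\ve n+O(1)$; dividing by $n$, letting $n\to\infty$ and then $\ve\to0$ completes the argument.

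I expect the only genuinely delicate point to be this matching of scales: Proposition~\ref{prop: convergence in probability} supplies convergence at rate $2^{(\lambda+\ve)n}$, which coincides with the scale $2^{\lambda n}$ of the dyadic partition $\cD_{-\lambda n}$ up to the subexponential factor $2^{\ve n}$, and it is exactly this that makes the coupling above lose only $o(n)$ in entropy. Plain weak-$*$ convergence $\mu^{*n}*\unif_{\ul x}\to\nu$ would be useless at this scale. Everything else — the measurable choice of the optimal permutation, the normalising factors $\P(G_n)^{-1}\in[1,(1-\ve)^{-1}]$, and the tracking of additive constants — is routine.
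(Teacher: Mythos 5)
Your proposal is correct and follows essentially the same route as the paper: you invoke Proposition~\ref{prop: convergence in probability} to couple $\mu^{*n}*\unif_{\ul x}$ and $\nu=\int\unif_{\Pi(\omega,0)}\dd\P(\omega)$ within distance $2^{(\lambda+\ve)n}$ of the diagonal on a set of measure $\geq 1-\ve$, and then convert this into the entropy estimate $|H(\mu^{*n}*\unif_{\ul x},\cD_{-\lambda n})-H(\nu,\cD_{-\lambda n})|\ll \ve|\lambda| n+1$, which is exactly the paper's argument via Lemma~\ref{lem: distance and entropy}. The only cosmetic differences are that you re-derive that entropy-comparison lemma inline (using \eqref{eqn: entropy concave} and \eqref{eqn: bound for cond entropy}) instead of quoting it, and you match atoms through the optimal permutation defining $\ul d$ rather than through the $c$-separation of $\Pi(\omega,0)$ used in the paper, both of which are fine.
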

Recall that $\unif_{\ul x}$ denotes the uniform probability measure on $\ul x \in \SS_r$.
\begin{proof}
By Proposition \ref{prop: convergence in probability}, for every $\ve>0$, for every $n$ large enough, we have 
	\[\P(\Sigma') \geq 1-\ve\]
where 
\[\Sigma'=\lb{\omega \in \Sigma :\underline d(f_{\sigma^{-n}\omega}^n \ul x,\Pi(\omega,0)) < 2^{(\lambda+\ve)n}}.\]

From this we can construct a Borel probability measure $\theta_n$ on $\SS^1 \times \SS^1$ whose marginals distributions are $\mu^{*n}*\unif_{\ul x}$ and $\nu$ and moreover
\begin{equation}
\label{eqn: property of theta}
\theta_n \lb{(x,y):d(x,y)\leq 2^{( \lambda+ \ve)n}} \geq 1 - \ve.
\end{equation}
Indeed, write $\ul x = \{x_1,\dotsc,x_r\}$ and for each $\omega \in \Sigma$, label elements of $\Pi(\omega,0)$ as $y_1(\omega), \dotsc, y_r(\omega)$ (in a measurable way). 
Define
\[\theta_n \defeq \int_\Sigma \frac{1}{r} \sum_{i=1}^r\delta_{(f_{\sigma^{-n}\omega}^nx_i,y_i(\omega))} \dd \P(\omega),\]
so that $\theta$ is a probability measure on $\SS^1 \times \SS^1$.
Its projection to the first coordinate is $\mu^{*n}*\unif_{\ul x}$ and to the second coordinate is $\nu$.
At this stage, we still have the freedom to choose the labelling $\Pi(\omega,0) = \{y_1(\omega), \dotsc, y_r(\omega) \}$.
For $\omega \in \Sigma'$, there is unique labelling such that for every $i = 1,\dotsc,r$,  $d(f_{\sigma^{-n}\omega}^nx_i,y_i(\omega)) \leq 2^{(\lambda+\ve)n}$, because $\Pi(\omega,0)$ is $c$-separated for some $c = c(\mu) > 0$ by Lemma~\ref{lem: uniform bound of Pi} and $d(f_{\sigma^{-n}\omega}^n \ul x,\Pi(\omega,0)) < 2^{(\lambda+\ve)n} < c/2$.
This shows \eqref{eqn: property of theta}.

We have the following elementary fact on entropies. 

\begin{lemma}
\label{lem: distance and entropy}
Let $\theta$ be a Borel probability measure on $\SS^1\times\SS^1$ such that 
\[\theta \lb{(x,y):d(x,y)\leq 2^{-(1-\ve)n}} \geq 1 - \ve\]
for some $n \in \NN$ and $\ve>0.$ Let $\eta$ and $\zeta$ be the marginal distributions of $\theta$ on the two coordinates. Then
\[|H(\eta,\cD_n)-H(\zeta,\cD_n)|\ll  \ve n+1. \]
\end{lemma}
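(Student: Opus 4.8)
The plan is to estimate both entropies against a common reference, namely the joint entropy of $\theta$ for the product partition $\cD_n\times\cD_n$ of $\SS^1\times\SS^1$. Write $\cA$ (resp. $\cB$) for the partition of $\SS^1\times\SS^1$ obtained by pulling back $\cD_n$ along the projection to the first (resp. second) coordinate. Since $\eta$ and $\zeta$ are the two marginals of $\theta$, one has $H(\theta,\cA)=H(\eta,\cD_n)$ and $H(\theta,\cB)=H(\zeta,\cD_n)$, and the chain rule for entropy gives
\[H(\eta,\cD_n)+H(\theta,\cB\mid\cA)=H(\theta,\cA\vee\cB)=H(\zeta,\cD_n)+H(\theta,\cA\mid\cB).\]
Hence $\bigl|H(\eta,\cD_n)-H(\zeta,\cD_n)\bigr|\leq\max\{H(\theta,\cB\mid\cA),\,H(\theta,\cA\mid\cB)\}$, and by the symmetry of the hypothesis in the two coordinates it is enough to bound $H(\theta,\cB\mid\cA)$.

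To do that I would introduce the two-set partition $\cE=\{E,E^{c}\}$ with $E=\{(x,y):d(x,y)\leq 2^{-(1-\ve)n}\}$, so that $\theta(E)\geq 1-\ve$. Conditioning can only decrease entropy, so
\[H(\theta,\cB\mid\cA)\leq H(\theta,\cE\mid\cA)+H(\theta,\cB\mid\cA\vee\cE)\leq H(\theta,\cE)+H(\theta,\cB\mid\cA\vee\cE),\]
and $H(\theta,\cE)\leq 1$ because $\cE$ has two atoms. For the last term I would split along $\cE$:
\[H(\theta,\cB\mid\cA\vee\cE)=\theta(E)\,H\bigl(\theta(\cdot\mid E),\cB\mid\cA\bigr)+\theta(E^{c})\,H\bigl(\theta(\cdot\mid E^{c}),\cB\mid\cA\bigr).\]
The $E^{c}$-term is at most $\theta(E^{c})\log\#\cD_n\leq\ve n$. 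For the $E$-term I would apply the bound \eqref{eqn: bound for cond entropy}: given an atom $A\in\cD_n$ of the first coordinate, an atom $B\in\cD_n$ of the second coordinate can satisfy $\theta\bigl((A\times B)\cap E\bigr)>0$ only if $B$ meets the $2^{-(1-\ve)n}$-neighbourhood of $A$; since $\diam A=2^{-n}\leq 2^{-(1-\ve)n}$, that neighbourhood is an interval of length $O(2^{\ve n-n})$, which meets only $O(2^{\ve n})$ atoms of $\cD_n$. Therefore $H\bigl(\theta(\cdot\mid E),\cB\mid\cA\bigr)\leq\ve n+O(1)$. Collecting the three contributions gives $H(\theta,\cB\mid\cA)\leq 2\ve n+O(1)$, and likewise for $H(\theta,\cA\mid\cB)$, whence $\bigl|H(\eta,\cD_n)-H(\zeta,\cD_n)\bigr|\leq 2\ve n+O(1)\ll\ve n+1$.

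The argument is elementary and there is no real obstacle; the only point requiring a little care is the counting of dyadic intervals at scale $2^{-n}$ inside a neighbourhood of size $2^{-(1-\ve)n}=2^{\ve n}\cdot 2^{-n}$, and the bookkeeping that ensures the exceptional event $E^{c}$ costs only $\theta(E^{c})\log\#\cD_n=O(\ve n)$ rather than the trivial bound $\log\#\cD_n=n$. Choosing the auxiliary partitions $\cA,\cB,\cE$ as above is precisely what converts the near-diagonal concentration of $\theta$ into the desired comparison of marginal entropies.
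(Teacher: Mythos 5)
Your proof is correct and follows essentially the same route as the paper's: both compare each marginal entropy to the joint entropy $H(\theta,\cD_n\times\cD_n)$ (equivalently, via the chain rule, bound the conditional entropies $H(\theta,\cB\mid\cA)$ and $H(\theta,\cA\mid\cB)$) and then use the same counting — a point within $2^{-(1-\ve)n}$ of a given dyadic atom can occupy only $O(2^{\ve n})$ atoms of $\cD_n$ — together with \eqref{eqn: bound for cond entropy}. The only cosmetic difference is in handling the exceptional mass: the paper first reduces to the case $\theta(E)=1$ using the almost-convexity of entropy \eqref{eqn: entropy concave}, whereas you keep it and condition on the two-set partition $\{E,E^{c}\}$ inside the computation; both devices cost the same $O(\ve n+1)$.
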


Applying this lemma, we obtain
\[|H(\mu^{*n}*\unif_{\ul x},\cD_{-\lambda n})-H(\nu,\cD_{-\lambda n})| \ll \ve (-\lambda) n+ 1.\]
Since $\ve > 0$ is arbitrarily small and $\lim_{n \to +\infty}\frac{1}{n} H(\nu,\cD_n)\to \alpha$, the desired convergence follows.
\end{proof}

\begin{lemma}\label{lem: generate similar elements}
Assume that $-\lambda\alpha< \hRW(\mu).$ 
Then there are constants $c,C>0$, a closed interval $J \subset \SS^1$  centered at a point in $\supp\nu$ and infinitely many integers $n \in \NN$ with a sequence of positive numbers $\ve_n\to 0$ and subsets $T_n\sbs T_\mu,$ satisfying 
\begin{enumerate}
\item $\# T_n\geq 2^{cn},$
\item for every $f\in T_n,$ $f'|_{J}\in[2^{(\lambda-\ve_n)n},2^{(\lambda+\ve_n)n}]$ and $\wt\vk(f,J)\leq C,$ 
\item $f(J)$, $f\in T_n$, falls in a common interval of length at most $2^{(\lambda+\ve_n)n}$.
\end{enumerate} 
\end{lemma}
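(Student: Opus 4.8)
\textbf{Proof proposal for Lemma~\ref{lem: generate similar elements}.}

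The plan is to run a pigeonhole argument on the collection of good words in $\supp \mu^{*n}$, using the entropy identity from Lemma~\ref{lem: entropy of RW on Sr} to force exponential multiplicity, and then to upgrade ``many words sending a point of $\ul x$ into one small dyadic interval'' into ``many words with controlled derivative and distortion on a fixed interval.'' First I would fix $\ve>0$ small and let $\Sigma_\ve\sbs\Sigma$ be the set of uniform good words from Proposition~\ref{prop: good words}, with its constant $C=C(\ve)$ and open sets $U(\omega)$; I would also fix $\ul x\in\supp\ul\nu$ and, for $\omega\in\Sigma_\ve$, the element $y_0(\omega)\in\Pi(\omega,0)$ lying in the same connected component $I_0(\omega)$ of $U(\omega)$ as the first point $x_1$ of $\ul x$. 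Shrinking $\ul x$ to a single point $x_1$, the content of Lemma~\ref{lem: entropy of RW on Sr} (via Proposition~\ref{prop: convergence in probability}) is that $\tfrac1n H(f^n_{\sigma^{-n}\bullet}(x_1)_*\P,\cD_{-\lambda n})\to -\alpha\lambda$, i.e. the push-forward of $\P^-$ under $\omega^-\mapsto f^{n}_{\omega^-}(x_1)$ has dyadic entropy $\approx -\alpha\lambda n$ at scale $2^{\lambda n}$.

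Next, since the word-distribution on $\cS^{*n}$ (the push-forward of $\P^+$ to $\supp\mu^{*n}$ via $\omega^+\mapsto f^n_{\omega^+}$) carries entropy $\tfrac1n H(\mu^{*n})\to \hRW(\mu)$, and the map $f\mapsto f(x_1)$ composed with the $\cD_{-\lambda n}$-partition has image-entropy only $\approx -\alpha\lambda n < \hRW(\mu)n$, the fibers of this map must carry entropy at least $(\hRW(\mu)+\alpha\lambda-2\ve)n$ on a set of words of measure $\geq \tfrac12$ (standard: $H(\cA)\leq H(\cB)+\max_B H(\cA\mid B=b)$ plus Markov/Fano-type pigeonhole, using that restricting to $\Sigma_\ve$ costs only $O(\ve n)$ in entropy by concavity/near-convexity \eqref{eqn: entropy concave}). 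Hence there is a single dyadic interval $D\in\cD_{-\lambda n}$ and a subset $\cG_n\sbs\cS^{*n}\cap\{f_\omega: \omega\in\Sigma_\ve\}$ with $\#\cG_n\geq 2^{(\hRW(\mu)+\alpha\lambda-3\ve)n}=2^{(\hRW(\mu)+\alpha\lambda-3\ve)n}$ such that every $f\in\cG_n$ satisfies $f(x_1)\in D$, $|D|=2^{\lambda n}$, which is condition (3) with the common interval $\subset 3D$ of length $\leq 2^{(\lambda+\ve)n}$ after relabelling $\ve_n$. Setting $c = \hRW(\mu)+\alpha\lambda-3\ve>0$ (possible since $-\lambda\alpha<\hRW(\mu)$ and $\ve$ small) gives (1).

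For (2): every $f\in\cG_n$ is $f=f_\omega^n$ for some $\omega\in\Sigma_\ve$ whose good-word structure applies to the connected component $I_0(\sigma^{-n}\bullet)$... — more precisely I would instead run the whole argument for the \emph{backward} cocycle, i.e. work with $f^n_{\sigma^{-n}\omega}$ and the good-word set for $\mu^-$, so that for $\omega$ in the good set, $x_1$ lies in a component $I$ of $U$ on which $\wt\vk(f^n_{\sigma^{-n}\omega},I)\leq C(\ve)$ uniformly by Proposition~\ref{prop: good words}(3), and $\tfrac1n\log (f^n_{\sigma^{-n}\omega})'(x_1)\to\lambda$ uniformly by Proposition~\ref{prop: good words}(2) — this gives $f'(x_1)\in[2^{(\lambda-\ve_n)n},2^{(\lambda+\ve_n)n}]$ and, via the bounded distortion $\wt\vk\leq C$ and $|I|\geq$ some uniform constant (components of $U$ have length bounded below since $\SS^1\sm U\sbs\Xi(\omega)^{(\ve)}$ and $\Xi(\omega)$ is $c$-separated, Lemma~\ref{lem: uniform bound of Pi}), the same two-sided derivative bound on a fixed small sub-interval $J$ centered at $x_1\in\supp\nu$; bounded distortion on $J$ is then inherited from $\wt\vk(f,I)\leq C$ since $J\subset I$. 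Choosing $J$ once and for all (its radius depending only on $\mu$ and the lower bound for $|I|$) and absorbing constants into $\ve_n$ yields (2).

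The main obstacle is the entropy-gap/pigeonhole step: one must be careful that restricting from all of $\Sigma$ to the good set $\Sigma_\ve$, and from $\ul x$ to a single point, only perturbs the relevant entropies by $O(\ve n)+O(1)$, so that the strict inequality $-\lambda\alpha<\hRW(\mu)$ survives with a definite gap $c>0$; this uses Lemma~\ref{lem: entropy of RW on Sr}, Lemma~\ref{lem: distance and entropy}, the near-additivity \eqref{eqn: entropy concave}, and the uniform $c$-separation of $\Pi(\omega)$ (Lemma~\ref{lem: uniform bound of Pi}) to pass between $\SS_r$-entropy and $\SS^1$-entropy at the single marked point. Everything else — the derivative and distortion bounds on the fixed interval $J$ — is a direct application of Proposition~\ref{prop: good words} and the distortion estimates of Section~\ref{sec: distortion}.
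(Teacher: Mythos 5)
Your overall strategy is the same as the paper's: use Lemma~\ref{lem: entropy of RW on Sr} (plus concavity~\eqref{eqn: entropy concave} to pass from $\unif_{\ul x}$ to a single point $x_1$, which only yields a $\liminf$ upper bound $\leq -\lambda\alpha$, not a limit — but that is all one needs and is why the conclusion holds only for infinitely many $n$), compare $H(\mu^{*n})\approx \hRW(\mu)n$ with the image entropy $\approx -\lambda\alpha n$ under $f\mapsto f(x_1)$, restrict to the uniform good words of Proposition~\ref{prop: good words} at an $O(\ve n)$ entropy cost, and pigeonhole on a single cell of $\cD_{-\lambda n}$ to produce $2^{cn}$ words with a common target interval; conditions (2) and (3) then come from the good-word derivative and distortion bounds. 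This is exactly the paper's conditional-entropy argument with $e_x^{-1}\cD_{-\lambda n}$, up to bookkeeping.

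The one step that does not hold as you justify it is the construction of the \emph{fixed} interval $J$. You argue that since the connected components of $U(\omega)$ have length bounded below (complement inside $\Xi(\omega)^{(\ve)}$, $\Xi(\omega)$ uniformly separated), a ball $J=B(x_1,\rho)$ of uniform radius satisfies $J\subset I(\omega)$, so the bounds $\wt\vk(f_\omega^n,I)\leq C$ and the two-sided derivative estimate transfer to $J$. But a lower bound on $|I(\omega)|$ does not prevent $x_1$ from lying arbitrarily close to $\partial U(\omega)$ (indeed $x_1\in U(\omega)$ is itself not guaranteed for every good $\omega$, only $x_1\in W^s(\omega,0)$ almost surely), so for a non-negligible set of good words your fixed $J$ need not be contained in $U(\omega)\cap W^s(\omega,0)$ and the distortion/derivative control on $J$ fails. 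The paper repairs exactly this point: it chooses the radius $\ve'$ of $J$ using non-atomicity of the backward stationary measures (Lemma~\ref{lem: not atomic}), so that $\P\lb{\Xi(\omega)\cap B(x_1,3\ve')\ne\vn}\leq c$, restricts further to $\Sigma'=\lb{\omega\in\Sigma_\ve: J\sbs U(\omega)\cap W^s(\omega,0)}$ with $\P(\Sigma')\geq 1-2c$, and absorbs this additional restriction's $O(c n)$ entropy cost into the gap $-\lambda\alpha+10\ell c<\hRW(\mu)$. You already invoke the restriction-cost mechanism for $\Sigma_\ve$, so the fix is within your toolbox, but as written the containment $J\subset I$ is unjustified and the corresponding extra entropy loss is missing from your accounting. (A minor point: phrasing the construction via $f^n_{\sigma^{-n}\omega}$ and a ``good-word set for $\mu^-$'' is harmless by shift-invariance, but the contraction and distortion bounds you need are those for forward words along $W^s$, i.e. Proposition~\ref{prop: good words} for the forward cocycle, not for the inverse random walk.)
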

\begin{proof}
By the assumption, we can take $c>0$ such that $-\lambda\alpha+10\ell c<h_{\mr{RW}}(\mu),$ where $\ell=\max\lb{\#\supp\mu,-\lambda}.$ 

By concavity~\eqref{eqn: entropy concave}, for all $n \in \NN$,
\[ \frac{1}{r}\sum_{x \in \ul x} H(\mu^{*n}*\delta_{x},\cD_{-\lambda n}) \leq H(\mu^{*n}*u_{\ul x},\cD_{-\lambda n}).\]
Thus, by Lemma~\ref{lem: entropy of RW on Sr}, there is $x \in \ul x$ such that
\begin{equation}
\label{eqn: entropy of RW on S}
\liminf_{n\to + \infty}\frac{1}{n}H(\mu^{*n}*\delta_x,\cD_{-\lambda n}) \leq -\lambda \alpha.
\end{equation}
Recall Lemma~\ref{lem: not atomic}. 
Let $\ve'>0$ be small so that 
\[\forall i\in\Zmodd,\quad\nu_i^-(B(x,3\ve'))\leq \frac{c}{dr}. \]
Let
\[J = [x - \ve', x + \ve'].\]

Let $\Sigma_\ve\sbs\Sigma$, $C=C(\ve)$ and $U(\omega)=U(\omega,\ve)$ be given by Proposition \ref{prop: good words} applied to $\ve=\min\lb{c,\ve'}$.
By the uniform convergence and a uniform distortion bound in $\Sigma_\ve,$ there exists positive constants $\ve_n\to 0$ such that for every $y\in U(\omega)\cap W^s(\omega,0),$
\[|\frac{1}{n}\log (f_\omega^n)'(y)-\lambda|\leq\ve_n.\] 
Since $\ul x \in \supp \ul \nu$, for $\P$-almost every $\omega \in \Sigma$, we have $\ul x \sbs W^s(\omega,0)$.
In particular, $J \cap W^s(\omega,0) \ne \vn$. 
Thus, for $\omega\in \Sigma_\ve,$ if $J \not\sbs U(\omega),$ then the $2\ve'$-neighborhood of $J$ must contain a point in $\Xi(\omega).$ 
Thus,
\[\P\lb{\omega \in \Sigma_\ve: J \not\sbs U(\omega)} \leq \P\lb{\omega: \Xi(\omega) \cap B(x,3\ve') \ne\vn } \leq r\sum_{j \in \Zmodd} \nu_j^-( B(x,3\ve'))\leq c.\]
If follow that the set
\[\Sigma'=\lb{\omega\in\Sigma_\ve :  J  \sbs U(\omega) \cap W^s(\omega,0)},\]
satisfies $\P(\Sigma')\geq 1-2c.$ 

For every $n\in\NN,$ define $\Sigma_n'\defeq\lb{f_\omega^n:\omega\in\Sigma'},$ then $\mu^{*n}(\Sigma_n')\geq 1-2c.$ 
Denote $\tau_n=1-\mu^{*n}(\Sigma_n')\leq 2c$ and let
\[\mu_n=\frac{1}{1-\tau_n}\mu^{*n}|_{\Sigma_n'}.\]
By the concavity and almost convexity of the entropy~\eqref{eqn: entropy concave}, we have
\[|H(\mu_n)-H(\mu^{*n})|\leq 1+\tau_n(\#\supp\mu)n\leq 1+2\ell cn\]
and
\[|H(\mu_n*\delta_x,\cD_{-\lambda n})-H(\mu^{*n}*\delta_x,\cD_{-\lambda n})|\leq 1+ \tau_n(-\lambda)n\leq 1+2\ell cn.\]
Remembering $\frac{1}{n}H(\mu^{*n})\to h_{\mr{RW}}(\mu)$, the choice of $c$ and \eqref{eqn: entropy of RW on S}, we have 
\[H(\mu_n)\geq H(\mu_n*\delta_x,\cD_{-\lambda n})+cn.\]
for infinitely many $n$.

Define $e_x:\Diff^2_+(\SS^1)\to \SS^1$ to be the map $f\mapsto f(x),$ then $e_x^{-1}\cD_{-\lambda n}$ is a finite partition of $\Diff^2_+(\SS^1).$ We have
\[H(\mu_n|e_x^{-1}\cD_{-\lambda n})=H(\mu_n)-H(\mu_n,e_x^{-1}\cD_{-\lambda n})=H(\mu_n)-H(\mu_n*\delta_x,\cD_{-\lambda n})\geq cn.\]
This implies the existence of $I \in \cD_{-\lambda n}$ such that
the set $T_n = \lb{f \in \Sigma_n' : f(x) \in I}$ has cardinality $\# T_n\geq 2^{cn}.$ 
By the definition of $\Sigma_n'$, for any $f \in T_n$,
\begin{enumerate}
	\item $\wt\vk(f,J)\leq C,$ and
	\item $\forall y\in J$, $\log f'(y)\in [(\lambda-\ve_n)n,(\lambda+\ve_n)n]$.
\end{enumerate}
Moreover, since $f(x) \in I$, we conclude that $f(J)$ is contained in the  $2^{(\lambda+\ve_n)n}$-neighborhood of $I$. After replacing $\ve_n$ by $\ve_n+2/n,$ we obtain that $f(J)$ falls in a common interval of length at most $2^{(\lambda+\ve_n)n}$ for $f\in T_n.$
\end{proof}

\begin{lemma}\label{lem: approximate identity}
Let $c,C,J,\ve_n, T_n$ be as in Lemma~\ref{lem: generate similar elements}.
Let $J'=\frac{1}{2}J$ be the closed interval of the same center as $J$ and but of half the length.
Then there exists $g_n\ne f_n\in T_n$ such that $f_n(J')\sbs g_n(J)$ and the sequence $g_n^{-1}f_n$ converges to the identity on $J'$ in the $C^1$-topology.
\end{lemma}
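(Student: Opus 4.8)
The plan is to produce $f_n,g_n$ by a pigeonhole argument inside $T_n$, but one must be careful: a direct pigeonhole on the restrictions $f|_J$ is doomed, because the derivatives $f'|_J$ range over a multiplicative window of size $2^{2\ve_n n}$, and since we only know $\ve_n\to0$ this window need not be of subexponential size in $n$; thus pinning $f|_J$ down to the precision needed to make $g_n^{-1}f_n$ close to $\Id$ could require far more cells than the $2^{cn}$ elements available. I would get around this by \emph{renormalising}: first pass to a large sub-collection of $T_n$ all of whose images $f(J)$ have comparable size and position, then rescale that common interval to unit length.

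Fix a large $n$ among those provided by Lemma~\ref{lem: generate similar elements}, write $J=[x_0-\ell,x_0+\ell]$ and $\ve=\ve_n$. Since $|f(J)|=\int_J f'\in[2\ell\,2^{(\lambda-\ve)n},\,2\ell\,2^{(\lambda+\ve)n}]$, the number $\log|f(J)|$ lies in an interval of length $2\ve n$; pigeonholing over unit subintervals yields $T^{(1)}\sbs T_n$ with $\#T^{(1)}\geq\#T_n/(2\ve n+2)$ and a fixed integer $j$ such that $|f(J)|\in[2^{-j},2^{-j+1}]$ and $2^{-j}\geq\ell\,2^{(\lambda-\ve)n}$ for all $f\in T^{(1)}$. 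Covering the common interval $K$ of Lemma~\ref{lem: generate similar elements}(3) (which has $|K|\leq 2^{(\lambda+\ve)n}$) by at most $|K|2^{j}+1\leq 2^{2\ve n}/\ell+1$ intervals of length $2^{-j}$ and pigeonholing on which one contains $f(x_0)$, I obtain $T^{(2)}\sbs T^{(1)}$ and an interval $I$ of length $2^{-j}$ with $f(x_0)\in I$, hence $f(J)\sbs K':=I^{(2\cdot 2^{-j})}$ (so $2^{-j}\leq|K'|=5\cdot 2^{-j}$), for every $f\in T^{(2)}$. Since $\ve_n\to0$, both pigeonhole losses are of subexponential size, so $\#T^{(2)}\geq 2^{cn/2}$ once $n$ is large.

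Let $A\colon K'\to[0,1]$ be the increasing affine bijection and set $\tilde f=A\circ(f|_J)$ for $f\in T^{(2)}$. From $\tilde f'=f'/|K'|$, the bounds $2^{-j}\leq|K'|=5\cdot2^{-j}$, $|f(J)|\in[2^{-j},2^{-j+1}]$, and the distortion bound $\vk(f,J)\leq\wt\vk(f,J)\,|J|\leq 2C\ell$, one gets $\tilde f'\in[a,b]$ for constants $0<a<b$ depending only on $C$ and $\ell$; moreover $\nm{\log\tilde f'}_{\Lip(J)}\leq C$ and $\tilde f(J)\sbs[0,1]$. Now the family of increasing maps $\phi\colon J\to[0,1]$ with $\phi'\in[a,b]$ and $\nm{\log\phi'}_{\Lip(J)}\leq C$ admits, for each $\theta>0$, a $\theta$-net for the $C^1(J)$-metric of cardinality $2^{O(1/\theta)}$, the implied constant depending only on $a,b,C,\ell$ (specify $\phi(x_0)$ to precision $\theta$, then $\phi'$, which is $bC$-Lipschitz and $[a,b]$-valued, on a mesh of $J$ of spacing $\asymp\theta$). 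Taking $\theta=\theta_n:=K_2/n$ with $K_2=K_2(C,\ell,c)$ large enough, this cardinality is $<2^{cn/2}\leq\#T^{(2)}$ for $n$ large, so there exist distinct $f_n,g_n\in T^{(2)}\sbs T_n$ with $\nm{\tilde f_n-\tilde g_n}_{C^1(J)}\leq 2\theta_n$. (Should $f|_J=g|_J$ occur for two distinct $f,g\in T^{(2)}$, then $g^{-1}f\equiv\Id$ on $J\sps J'$ and the conclusion is immediate; so we may assume $f\mapsto f|_J$ injective on $T^{(2)}$, making the $\tilde f$ pairwise distinct.)

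It remains to read off the two conclusions. Since $\tilde f_n'\geq a$ on $J$ and $2\theta_n$ is eventually smaller than $a\ell/4$, comparing endpoints gives $\tilde f_n(J')\sbs\tilde g_n(J)$, whence $f_n(J')=A^{-1}\tilde f_n(J')\sbs g_n(J)$; thus $h_n:=g_n^{-1}f_n$ is well-defined on $J'$ with $h_n(J')\sbs J$. For $y\in J'$ the mean value theorem gives $h_n(y)-y=(f_n(y)-g_n(y))/g_n'(\xi)$ with $\xi\in J$; using $f_n-g_n=|K'|(\tilde f_n-\tilde g_n)$ and $g_n'(\xi)=|K'|\,\tilde g_n'(\xi)\geq a|K'|$, the factor $|K'|$ cancels and $|h_n(y)-y|\leq 2\theta_n/a\to0$ uniformly in $y$ --- this cancellation is exactly what the renormalisation was for, and it is why the $2^{\ve_n n}$ spread no longer hurts. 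Finally $h_n'(y)=\frac{f_n'(y)}{g_n'(h_n(y))}=\frac{\tilde f_n'(y)}{\tilde g_n'(y)}\cdot\frac{g_n'(y)}{g_n'(h_n(y))}$, where the first factor is within $2\theta_n/a$ of $1$ and the second equals $2^{O(C\,|h_n(y)-y|)}\to1$ because $\wt\vk(g_n,J)\leq C$. Hence $h_n\to\Id$ on $J'$ in the $C^1$-topology, as required. The delicate part is the two-stage pigeonhole together with the renormalisation in the middle two paragraphs; everything after it is bookkeeping.
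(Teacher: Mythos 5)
Your argument is correct, and it reaches the conclusion by a genuinely different implementation of the pigeonhole than the paper's. The paper pigeonholes directly on the raw vectors $(f(y_i))_{0\le i\le 4k}$, $(\log f'(y_i))_{0\le i\le 4k}$ at $4k(n)+1$ evenly spaced points of $J$, with position-precision $2^{(\lambda-\ve_n)n-\sqrt n}$ and derivative-precision $1/n$; the crux there is the choice of $k(n)$, taken as large as the budget $2^{cn}$ permits, which still gives $k(n)\to\infty$ because $\ve_n\to0$, and the endgame ($C^0$ bound from the $2^{(-\lambda+\ve_n)n}$-Lipschitz bound for $g^{-1}$ on $f(J')$, $C^1$ bound from $\wt\vk(g,J)\leq C$) is essentially identical to your final paragraph. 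You instead pay two coarse pigeonholes (on $\log|f(J)|$ and on the $2^{-j}$-cell containing $f(x_0)$), each costing only a subexponential factor, to renormalise the surviving maps into a fixed compact family ($\phi'\in[a,b]$, $\nm{\log\phi'}_{\Lip(J)}\leq C$ with $a,b$ depending only on $C,\ell$), and then finish with a covering-number bound $2^{O(1/\theta)}$ at the fixed scale $\theta_n\asymp 1/n$; the renormalisation makes the per-sample information cost $O(1)$ and lets the scale factor $|K'|$ cancel in $g_n^{-1}f_n$, so you even get an explicit $C^1$ rate $O(1/n)$, which the paper does not need and does not state. Both routes use exactly the two inputs of Lemma~\ref{lem: generate similar elements} (the derivative window and the distortion bound $\wt\vk\leq C$); yours trades the paper's delicate choice of mesh size $k(n)$ for an extra renormalisation layer plus a metric-entropy count, which is arguably more transparent. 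One caveat on your motivation only: the claim that the window $2^{2\ve_n n}$ ``need not be of subexponential size in $n$'' is false (since $\ve_n\to 0$ it is $2^{o(n)}$, a fact you yourself use when bounding the loss in passing to $T^{(2)}$), and the direct pigeonhole is not ``doomed'' --- the paper carries it out by letting the number of sample points grow slowly instead of like $n$; this slip does not affect the validity of your proof.
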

\begin{proof}
Define $k=k(n)$ for large $n$ to be the greatest integer such that
\[4k+1<\frac{cn}{\lceil2\ve_nn+\sqrt n\rceil+\log(\lceil2\ve_n n^2\rceil+1)}. \]
Since $\ve_n \to 0$, $k(n)\to +\infty$ as $n \to +\infty$. 
Let $y_0,y_1,\cdots,y_{4k} \in J$ be evenly spaced points on the circle such that $J = [y_0, y_{4k}]$.
Then $d(y_j,y_{j+1})\leq 1/(4k).$ 
For every $f\in T_n,$ each $f(y_i)$ takes value in a fixed interval of length $2^{(\lambda+\ve_n)n}$ and $\log f'(x_i)$ takes value in $[(\lambda-\ve_n)n,(\lambda+\ve_n)n].$ 
Arrange vectors 
\[(f(y_0),\cdots,f(y_{4k}),\log f'(y_0),\cdots,\log f'(y_{4k}) ), f\in T_n\]
into boxes of dimension $2^{(\lambda-\ve_n)n-\sqrt{n}} \times \dotsm \times 2^{(\lambda-\ve_n)n-\sqrt{n}}\times \frac{1}{n} \times \dotsm \times \frac{1}{n}$.
The choice of $k$ guarantees that
\[\sb{2^{\lceil 2\ve_n n+\sqrt n\rceil}}^{4k+1} \sb{\lceil 2\ve_n n^2\rceil +1}^{4k+1}<2^{cn}.\]
Thus, by the pigeonhole principle there exist $f \ne g\in T_n$ such that
\[\forall i = 0, \dotsc, 4k,\quad d(f(y_i),g(y_i)) \leq 2^{(\lambda-\ve_n)n-\sqrt{n}} \quad \text{and}\quad |\log f'(y_i)-\log g'(y_i)| \leq \frac{1}{n}.\]
	
The endpoints of $J'$ are exactly $y_k$ and $y_{3k}.$ 
Note that, for $n$ large enough,
\[f(y_k) \geq g(y_k) - 2^{(\lambda-\ve_n)n-\sqrt{n}} \geq g(y_0) + \frac{|J|}{4}\min_{y \in J} g'(y) - 2^{(\lambda-\ve_n)n-\sqrt{n}}  \geq g(y_0).\]
Similarly $f(y_{3k}) \leq g(y_{4k})$.
It follows that \[f(J')\sbs g(J).\]

In what follows, we will estimate the $C^1$ distance between $g^{-1}f$ and $\Id$ on $J'$.
First, since $g^{-1}:f(J')\to J$, $g^{-1}$ is $2^{(-\lambda+\ve_n)n}$-Lipschitz on $f(J').$ 
Hence for every $i = 0, \dotsc, 4k$, 
\[d(g^{-1}f(y_i),y_i)\leq2^{(-\lambda+\ve_n)n}d(f(y_i),g(y_i))\leq 2^{-\sqrt{n}}.\]
Then, using $\wt\vk(g,J)\leq C$,
\[|\log (g^{-1}f)'(y_i)|\leq\abs{\log\frac{f'(y_i)}{g'(y_i)}}+\abs{\log\frac{g'(y_i)}{g'(g^{-1}fy_i)}}\leq \frac{1}{n}+C d(g^{-1}f(y_i),y_i)\leq \frac{1}{n}+C2^{-\sqrt n}.\]
	
More generally, let $y\in J'$ be arbitrary. 
There is $i \in \{k,\dotsc, 3k\}$ such that $y\in [y_{i},y_{i+1}].$
Then $g^{-1}f(y)\in [g^{-1}f(y_i),g^{-1}f(y_{i+1})],$ both points being contained in the $2^{-\sqrt{n}}$ neighborhood of $[y_i,y_{i+1}]$.
Hence
\[d(y,g^{-1}f(y))\leq \frac{1}{4k}+2^{-\sqrt{n}+1}.\]
The logarithm of derivatives at $y$ can be bounded by comparing with $\log (g^{-1}f)'(y_i)$:
	\[\abs{\log\frac{(g^{-1}f)'(y)}{(g^{-1}f)'(y_i)} }\leq\abs{\log\frac{f'(y)}{f'(y_i)}}+\abs{\log\frac{g'(g^{-1}f y_i)}{g'(g^{-1}fy)}}\leq C\frac{1}{4k}+C(\frac{1}{2k}+2^{-\sqrt n+1}).\]
	
Now, for each $n$, construct $(f_n,g_n)$ in this way. 
Since $k\to +\infty$ as $n\to +\infty$, we conclude that $g_n^{-1}f_n$ tends to identity on $J'$ in the $C^1$-topology.
\end{proof}

\begin{proof}[Proof of Theorem \ref{thm: dim formula C2}]	
By the property of Furstenberg entropy and Theorem \ref{thm: exact dimensionality}, we have
	\[\alpha=\dim\nu=-\frac{h_{\mr F}(\mu,\nu)}{\lambda}= -\frac{h_{\mr F}(\mu^{*n},\nu)}{n\lambda}\leq -\frac{H(\mu^{*n})}{n\lambda}\]
	for every positive integer $n.$ 
Letting $n \to +\infty$, we obtain that $\alpha\leq -h_{\mr{RW}}/\lambda.$ 

Assume for a contradiction that $-\lambda\alpha<h_{\mr{RW}}(\mu)$.
Then Lemma~\ref{lem: generate similar elements} and Lemma~\ref{lem: approximate identity} lead to a  contradiction with the assumption of local discreteness. Hence $\alpha=-h_{\mr{RW}}(\mu)/\lambda$ and $h_{\mr{RW}}(\mu)=h_{\mr F}(\mu,\nu).$
\end{proof}

\subsection{Dimension formula on the interval (Proof of Theorem \ref{thm: dim formula on interval})}\label{subsec: dim formula on intervals}
 The purpose of this subsection is to prove Theorem \ref{thm: dim formula on interval}.
 The case of random walks on the circle was already established in the last subsection. 
 In order to prove the result for intervals, we view the interval as a part of a circle and use the previously established proof technique to obtain an intermediate version of the theorem.
 
 Let $\mu$ be a finitely supported probability measure on $\Diff_+^2(\SS^1).$ Let $I\sbs\SS^1$ be a closed interval or the whole circle which is preserved by every element of $\supp\mu,$ that is $f(I)\sbs I$ for every $f\in\supp\mu.$ We define the random walk entropy restricted to $I$ as
\[h_{\mr{RW},I}(\mu)=\lim_{n\to\infty}\frac{1}{n}H(\mu^{*n}|_I),\]
where $\mu^{*n}|_I$ denotes the probability measure obtained by restricting $\mu^{*n}$ to $C_+^2(I,I).$
\begin{proposition}\label{prop: dim formula circle interval}
	Let $\mu$ be a finitely supported probability measure on $\Diff_+^2(\SS^1)$ such that $\supp\mu$ does not preserve any common probability measure on $\SS^1$. Let $I\sbs\SS^1$ be a closed interval or the whole circle which is preserved by every element of $\supp\mu,$ that is, $f(I)\sbs I$ for every $f\in\supp\mu.$ Let $\nu$ be an ergodic $\mu$-stationary measure with $\supp\nu\sbs I.$ Then $\nu$ is exact dimensional and
	\begin{enumerate}
		\item either $|\lambda(\mu,\nu)|\geq h_{\mr{RW},I}(\mu)$ and $\dim\nu=\frac{h_{\mr{RW},I}(\mu)}{|\lambda(\mu,\nu)|},$
		\item or there exists a closed interval $J\sbs I$ and two sequence of diffeomorphisms $\{g_n\},\{f_n\}\sbs T_\mu$ with $g_n|_I\ne f_n|_{I}$ and $f_n(J)\sbs g_n(I),$ such that $g_n^{-1}f_n$ tends to $\Id$ on $J$ in the $C^1$-topology.
	\end{enumerate}
\end{proposition}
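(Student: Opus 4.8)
The plan is to adapt, essentially verbatim, the proof of Theorem~\ref{thm: dim formula C2} from Section~\ref{subsec: dimension formula}, carrying along the restriction map $\mathrm{res}\colon\Diff_+^2(\SS^1)\to C_+^2(I,I)$, $f\mapsto f|_I$, and replacing $H(\mu^{*n})$ and $h_{\mathrm{RW}}(\mu)$ everywhere by their $I$-restricted versions $H(\mu^{*n}|_I)=H(\mathrm{res}_*\mu^{*n})$ and $h_{\mathrm{RW},I}(\mu)$.

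First the soft part. Since $\supp\mu$ preserves no probability measure on $\SS^1$, the ergodic stationary measure $\nu$ corresponds to an ergodic $u$-state, so $\lambda:=\lambda(\mu,\nu)<0$ by Corollary~\ref{cor:u state and lambda}, and Theorem~\ref{thm: exact dimensionality} gives that $\nu$ is exact dimensional with $\alpha:=\dim\nu=h_{\mathrm F}(\mu,\nu)/|\lambda|$. For the upper bound I would note that, because $\supp\nu\subset I$ and $T_\mu$ maps $I$ into $I$, the push-forward $f_*\nu$ depends only on $f|_I$; hence $h_{\mathrm F}(\mu^{*n},\nu)=h_{\mathrm F}(\mathrm{res}_*\mu^{*n},\nu)\le H(\mu^{*n}|_I)$, where the last step is the standard domination of Furstenberg entropy by Shannon entropy on the semigroup $C_+^2(I,I)$. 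Mirroring the chain in the proof of Theorem~\ref{thm: dim formula C2}, $\alpha=-h_{\mathrm F}(\mu,\nu)/\lambda=-h_{\mathrm F}(\mathrm{res}_*\mu^{*n},\nu)/(n\lambda)\le -H(\mu^{*n}|_I)/(n\lambda)$, and letting $n\to\infty$ gives $\alpha|\lambda|\le h_{\mathrm{RW},I}(\mu)$. If equality holds then $\dim\nu=h_{\mathrm{RW},I}(\mu)/|\lambda|$ and, since $\alpha=\dimH\nu\le 1$, also $|\lambda|\ge h_{\mathrm{RW},I}(\mu)$, which is conclusion~(1). So from now on assume $\alpha|\lambda|<h_{\mathrm{RW},I}(\mu)$ and produce $J$, $f_n$, $g_n$ as in~(2).

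For this I would rerun Lemmas~\ref{lem: entropy of RW on Sr}, \ref{lem: generate similar elements} and \ref{lem: approximate identity}. Lemma~\ref{lem: entropy of RW on Sr} and Proposition~\ref{prop: convergence in probability} are statements about the action on $\SS^1$ and apply unchanged; combined with the concavity bound~\eqref{eqn: entropy concave} they again single out a point $x$ in some $\ul x\in\supp\ul\nu_0^+\subset\supp\nu\subset I$ with $\liminf_n \tfrac1n H(\mu^{*n}*\delta_x,\cD_{-\lambda n})\le -\lambda\alpha$, together with a small closed interval $J$ with $x\in J\subset I$ (taken one-sided if $x\in\partial I$) such that $J\subset U(\omega)\cap W^s(\omega,0)$ for $\P$-almost every good word $\omega$ (using $\ul x\subset W^s(\omega,0)$). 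In the entropy comparison of Lemma~\ref{lem: generate similar elements} I replace $\mu^{*n}$ by $\mathrm{res}_*\mu^{*n}$: pushing the decomposition $\mu^{*n}=(1-\tau_n)\mu_n+\tau_n\mu_n''$ (good words and their complement) forward by $\mathrm{res}$ and invoking~\eqref{eqn: entropy concave} gives $|H(\mu^{*n}|_I)-H(\mathrm{res}_*\mu_n)|\ll \ell c n+1$ exactly as before, while $H(\mathrm{res}_*\mu_n,e_x^{-1}\cD_{-\lambda n})=H(\mu_n*\delta_x,\cD_{-\lambda n})$ since $x\in I$. Hence for infinitely many $n$ there is an atom $I_{\mathrm{atom}}\in\cD_{-\lambda n}$ for which the set $T_n\subset C_+^2(I,I)$ of restrictions $f|_I$ of good words $f\in\Sigma_n'$ with $f(x)\in I_{\mathrm{atom}}$ has $\#T_n\ge 2^{cn}$ — crucially, these are $2^{cn}$ \emph{distinct maps of $I$} — and, since $J\subset I$, every element of $T_n$ inherits the good-word bounds $\wt\vk(\cdot,J)\le C$, $(\cdot)'|_J\in[2^{(\lambda-\ve_n)n},2^{(\lambda+\ve_n)n}]$, and sends $J$ into a common interval of length $\le 2^{(\lambda+\ve_n)n}$. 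Applying the pigeonhole argument of Lemma~\ref{lem: approximate identity} to $T_n$ produces distinct $g_n\ne f_n$ in $T_n$ with $f_n(J')\subset g_n(J)\subset g_n(I)$ and $g_n^{-1}f_n\to\Id$ on $J':=\tfrac12 J$ in the $C^1$-topology. Finally, picking circle diffeomorphisms $\tilde f_n,\tilde g_n\in T_\mu$ with $\tilde f_n|_I=f_n$ and $\tilde g_n|_I=g_n$, one has $\tilde f_n|_I\ne\tilde g_n|_I$, $\tilde f_n(J')\subset\tilde g_n(I)$, and $\tilde g_n^{-1}\tilde f_n=g_n^{-1}f_n$ on $J'$ (the circle inverse of $\tilde g_n$ restricts on $g_n(I)$ to the inverse of $g_n\colon I\to g_n(I)$), so $\tilde g_n^{-1}\tilde f_n\to\Id$ on $J'$ in $C^1$; this is conclusion~(2), with the interval renamed $J'$.

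The substance of the argument is already contained in Section~\ref{subsec: dimension formula}, so the work here is mainly bookkeeping; the one point requiring genuine care is exactly the transport to the semigroup $C_+^2(I,I)$: one must check that the conditional-entropy argument still yields exponentially many \emph{distinct restrictions} $f|_I$ enjoying a uniform derivative and distortion bound on a \emph{fixed} subinterval $J\subset I$, and that the pigeonholed pair $(f_n,g_n)$ is distinct \emph{as maps of $I$}, so that conclusion~(2) holds as stated — together with the minor nuisances that $x$ may lie on $\partial I$ (forcing $J$ to be one-sided) and that one must lift $C_+^2(I,I)$-elements back to $T_\mu$.
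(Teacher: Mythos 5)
Your proposal is correct and follows essentially the same route as the paper: the paper's proof is precisely the adaptation of Lemma~\ref{lem: generate similar elements} with $\mu^{*n}$, $\mu_n$ replaced by their restrictions to $I$ and the threshold $h_{\mathrm{RW}}(\mu)$ replaced by $h_{\mathrm{RW},I}(\mu)$, followed by Lemma~\ref{lem: approximate identity} on $\frac{1}{2}J$. Your additional bookkeeping (Furstenberg entropy bounded by the Shannon entropy of the restricted measure, counting distinct restrictions $f|_I$ in the conditional-entropy step, and lifting back to $T_\mu$) matches what the paper leaves implicit.
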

\begin{remark}
	If $h_{\mr{RW}}(\mu)>h_{\mr{RW},I}(\mu)$ happens then there exist $g\ne f\in T_\mu$ such that $g|_I=f|_I$. In this case, $(g^{-1}f)^n|_I$ is the identity for every positive integer $n$, which implies that the group generated by $\supp\mu$ is locally non-discrete in $\Diff_+^2(\SS^1)$. Therefore, this proposition indeed covers Theorem \ref{thm: dim formula C2}.
\end{remark}
\begin{remark}
    In general, $I$ can be replaced by a finite union of closed intervals which are preserved by $\supp\mu$. The proof remains the same. This version seems more useful since there may not exist a subinterval preserved by $\supp\nu$ in the case when $r>1$.
\end{remark}

\begin{proof}
	The proof is similar to that of Lemma~\ref{lem: generate similar elements}. Only a slight adaptation is needed. Specifically, we replace the assumption $-\lambda\alpha<h_{\mr{RW}}(\mu)$ with $-\lambda\alpha<h_{\mr{RW},I}(\mu)$. In the proof, we replace $\mu_n$ and $\mu^{*n}$ with $\mu_n|_I$ and $\mu^{*n}|_I$, respectively. The choice of $\ul x=\lb{x_1,\cdots,x_r}\in\supp\ul\nu$ implies that $x_1,\cdots,x_r\in\supp\nu\sbs I$. Thus, each $\mu^{n} * u_{\ul x}$ is still supported on $I$, and we can deduce the adapted version of Lemma \ref{lem: generate similar elements}. Specifically, there exists an interval $J\sbs I$ centered at a point in $\supp\nu,$ positive numbers $\ve_n\to 0$ and subsets $T_n\sbs T_\mu$ for infinitely many $n$ satisfying conditions (1)(2)(3) in Lemma \ref{lem: generate similar elements}. Next, we apply Lemma \ref{lem: approximate identity}. Replacing $J$ by $\frac{1}{2}J,$ we obtain the desired conclusion. 
\end{proof}

\begin{lemma}\label{lem: interval embedding}
	Let $I$ be a closed subinterval of $\SS^1$ and $\mu$ be a finitely supported probability measure on $C_+^2(I,I)$ such that $T_\mu$ does not preserve any probability measure on $I.$ Then there exists a finitely supported probability measure $\wt\mu$ supports on $\Diff_+^2(\SS^1)$ such that
	\begin{enumerate}
		\item $H_{\wt\mu}$ does not preserve any probability measure on $\SS^1,$ 
		\item $I$ is preserved by every element in $\supp\mu,$ and
		\item $\wt\mu|_I=\mu.$
	\end{enumerate}
\end{lemma}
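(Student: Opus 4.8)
The plan is to extend each generator of $\mu$ to the whole circle while keeping $I$ invariant. Write $I=[a,b]$ as a proper closed arc (if $I=\SS^1$ simply take $\wt\mu=\mu$), let $J=\ol{\SS^1\sm I}$ be the complementary closed arc and fix an interior point $p\in\mathrm{int}(J)$. Every $f\in C_+^2(I,I)$ is an increasing $C^2$ map with $f(I)=[f(a),f(b)]\sbse I$, hence $a\leq f(a)<f(b)\leq b$. I would extend each $f\in\supp\mu$ to some $\tilde f\in\Diff_+^2(\SS^1)$ with $\tilde f|_I=f$ and $\tilde f(I)=f(I)\ (\sbse I)$, and set $\wt\mu\defeq\sum_{f\in\supp\mu}\mu(f)\,\delta_{\tilde f}$. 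Conditions (2) and (3) are then immediate, so the whole difficulty is to arrange (1). First record two easy consequences of the hypothesis that $T_\mu$ preserves no probability measure on $I$: (a) $\#\supp\mu\geq 2$, since if $\supp\mu=\{f_1\}$ then Krylov--Bogolyubov applied to the continuous self-map $f_1$ of the compact set $I$ yields a $T_\mu$-invariant probability measure on $I$; and (b) there is $f_\alpha\in\supp\mu$ with $f_\alpha(a)>a$, for otherwise every $f\in\supp\mu$ fixes $a$ and $\delta_a$ is $T_\mu$-invariant.

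Producing a $C^2$ extension of $f$ amounts to choosing an increasing $C^2$ diffeomorphism of $J$ onto the arc $\ol{\SS^1\sm f(I)}$ whose $2$-jets at $a$ and $b$ agree with those of $f$; such maps exist by a routine $C^2$ interpolation on an interval, the prescribed boundary jets constraining the map only near $a,b$ and leaving ample freedom in the interior (for instance to prescribe the value at $p$, or the sign of $\tilde f-\Id$ on $J\sm\{p\}$). I would use this freedom to impose two things. (i) Choose $\tilde f_\alpha$ so that $\Fix(\tilde f_\alpha)\cap\mathrm{int}(J)\sbse\{p\}$: on $J$ a lift of $\tilde f_\alpha$ runs from $f_\alpha(b)\leq b$ up to a lift of $f_\alpha(a)>a$, so $\tilde f_\alpha-\Id$ is $\leq 0$ at the $b$-end of $J$ and $>0$ at the $a$-end, and one can make it vanish on $\mathrm{int}(J)$ at most once, placing that zero at $p$ (when $f_\alpha$ fixes $b$ the sign of $\tilde f_\alpha-\Id$ just past $b$ is forced by $f_\alpha'(b)$, but one still gets at most one interior fixed point). (ii) For one index $j$ with $f_j\neq f_\alpha$ (which exists since $\#\supp\mu\geq 2$), choose $\tilde f_j$ with $\tilde f_j(p)\neq p$; this only asks the value at $p$ to miss a single point of a nondegenerate target arc. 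Any remaining generators are extended arbitrarily.

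To establish (1), I would suppose for contradiction that the finitely generated group $H_{\wt\mu}\sbs\Diff_+^2(\SS^1)$ preserves a probability measure. By Lemma~\ref{lem: invariant measure and finite orbit}, $H_{\wt\mu}$ either acts minimally on $\SS^1$ or has a finite orbit. In the minimal case it preserves an atomless probability measure of full support, hence (a classical fact) is topologically conjugate to a group of rotations; but it contains $\tilde f_\alpha\neq\Id$ with $\tilde f_\alpha(I)\sbse I$ for the proper arc $I$, and no nontrivial rotation maps a proper arc into itself --- a contradiction. So $H_{\wt\mu}$ has a finite orbit $F$. For each $f\in\supp\mu$ one has $\tilde f(F)=F$ and $\tilde f(I)\sbse I$, whence $f(F\cap I)\sbse F\cap I$; since $f$ is injective and $F\cap I$ is finite, $f(F\cap I)=F\cap I$, so the uniform measure on $F\cap I$ would be $T_\mu$-invariant, forcing $F\cap I=\vn$, i.e.\ $F\sbs\mathrm{int}(J)$. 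As $F$ is a finite $\tilde f_\alpha$-invariant set, every point of $F$ is $\tilde f_\alpha$-periodic; but a periodic orbit of an orientation-preserving circle homeomorphism contained in the proper arc $\mathrm{int}(J)$ must consist of fixed points (on such a finite set the circular order is a linear order, which an orientation-preserving bijection preserves). Hence $F\sbse\Fix(\tilde f_\alpha)\cap\mathrm{int}(J)\sbse\{p\}$, so $F=\{p\}$; then $\tilde f_j(F)=F$ would force $\tilde f_j(p)=p$, contradicting (ii). Therefore $H_{\wt\mu}$ preserves no probability measure on $\SS^1$, which is (1).

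The only genuinely delicate point is step (i): building a $C^2$ extension of $f_\alpha$ that simultaneously realizes the prescribed $2$-jets at $a,b$ and controls $\Fix(\tilde f_\alpha)$ inside $J$. The mild obstruction is the possible interplay between the forced boundary jet of $f_\alpha$ at $b$ and the desired sign of $\tilde f_\alpha-\Id$ near $b$; this is absorbed by allowing either exactly one interior fixed point (at $p$) or none, and in either case the finite-orbit argument above goes through.
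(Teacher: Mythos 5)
Your construction and the reduction via Lemma~\ref{lem: invariant measure and finite orbit} are sound in outline, and conditions (2), (3) are indeed immediate, but one step is justified by a false statement. You claim that a periodic orbit of an orientation-preserving circle homeomorphism contained in a proper arc must consist of fixed points, arguing that the circular order restricts to a linear order which the map preserves. An orientation-preserving homeomorphism only preserves the \emph{cyclic} order of a finite invariant set, and that does not force preservation of the induced linear order: the rotation by $1/2$ has the period-two orbit $\{0,1/2\}$, contained in the proper closed arc $[0,1/2]$, with no fixed point. So the auxiliary claim fails as stated. Fortunately the conclusion you need, $F\sbse\Fix(\tilde f_\alpha)$, is still true in your situation, for a reason already built into your construction: $\tilde f_\alpha(I)\sbse I$ with $I$ a closed arc, so by the intermediate value theorem $\tilde f_\alpha$ has a fixed point $q\in I$; a circle homeomorphism with a fixed point has rotation number $0$, and then all its periodic points are fixed (equivalently, adjoin $q$ to $F$: a cyclic-order-preserving permutation of the finite set $F\cup\{q\}$ having a fixed point is the identity). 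With this repair the rest of your argument (minimal versus finite orbit, $F\cap I=\vn$ via the uniform measure, $F=\{p\}$, then $\tilde f_j(p)\ne p$) goes through.

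You should also compare with the paper's proof, which is much shorter and makes all of this case analysis unnecessary. The paper chooses $f_1,f_2\in\supp\mu$ not fixing the left, respectively right, endpoint of $I$ (they exist, since otherwise a Dirac mass at an endpoint would be $T_\mu$-invariant) and extends them so that $\wt f_1$ maps $U_1\cup J$ into $I$ and $\wt f_2$ maps $J\cup U_2$ into $I$, where $\SS^1=I\sqcup U_1\sqcup J\sqcup U_2$; the remaining generators are extended arbitrarily. Any probability measure invariant under $\wt f_1$ and $\wt f_2$ then gives $\nu(U_1)=\nu(U_2)=\nu(J)=0$, so it is supported on $I$ and restricts to a $T_\mu$-invariant measure there, a contradiction. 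This forces the support into $I$ directly, with no appeal to the minimal/finite-orbit dichotomy, no rotation-number or fixed-point bookkeeping, and no need to place or exclude fixed points at a chosen point $p$. Your route works after the repair above, but most of its effort goes into handling cases that the paper's choice of extensions simply eliminates.
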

\begin{proof}
	Take a closed interval $J$ on $\SS^1$ disjoint with $I.$ Take $f_1,f_2\in\supp\mu$ such that $f_1$ does not preserve the left endpoint $x_-$ of $I $ and $f_2$ does not preserve the right endpoint $x_+$ of $I$ (it is possible that $f_1=f_2$). Write $\SS^1=I\sqcup U_1\sqcup J\sqcup U_2,$ where $x_-$ is an endpoint of $U_1$ and $x_+$ is an endpoint of $U_2.$ Take $\wt f_1\in\Diff_+^2(\SS^1)$ such that $\wt f_1(U_1\cup J)\sbs I$ and $\wt f_1|_I=f_1,$ then take $\wt f_2\in\Diff_+^2(\SS^1)$ such that $\wt f_2(J\cup U_2)\sbs I$ and $\wt f_2|_I=f_2.$ Then the common invariant probability measure of $\wt f_1$ and $\wt f_2$ must support on $I.$ For other element $f\in \supp\mu,$ extend $f$ to $\wt f\in\Diff_+^2(\SS^1)$ arbitrarily. Let $\mu=\sum_{i=1}^k{p_i}\delta_{f_i},$ taking $\wt\mu=\sum_{i=1}^k{p_i}\delta_{\wt f_i}$ is enough.
\end{proof}

\begin{proof}[Proof of Theorem \ref{thm: dim formula on interval}]
	For the case of $\SS^1,$ it follows by taking $I=\SS^1$ in Proposition \ref{prop: dim formula circle interval}. For the case of an interval, we regard $I$ as a subinterval of $\SS^1.$ The statement follows by combining Lemma \ref{lem: interval embedding} and Proposition \ref{prop: dim formula circle interval}.
\end{proof}

\section{Approximation with a uniformly hyperbolic subsystem}\label{se:7}
\subsection{Hyperbolic elements}\label{subsec: hyperbolic elements}\label{se:7.1}
The construction of hyperbolic fixed points and hyperbolic elements in sub-(semi)groups of $\Diff(\SS^1)$ dates back to Sacksteder's Theorem~\cite{Sa65} for $C^2$ pseudo-groups.
Deroin-Kleptsyn-Navas~\cite[Théorème F]{DKN07} showed the existence of hyperbolic elements in subgroups of $\Diff_+^1(\SS^1)$ that do not preserve any probability measure on $\SS^1$.

Here, we prove similar results but in the context of subsemigroups and of $C^2$ regularity.
Similar to the proof in~\cite{DKN07}, we use random walks to find hyperbolic elements.
With the description of the random walks given by Theorem~\ref{thm: structure of random walk 1}, the proof is rather straightforward.

Recall that a fixed point $x \in \SS^1$ of a diffeomorphism $f \in \Diff_+(\SS^1)$ is hyperbolic if $f'(x) \neq 1$.
\begin{lemma}
\label{lem: 2dr fixed points}
Let $\mu$, $d$ and $r$ be as in Theorem~\ref{thm: structure of random walk 1}.
Then there is an element in $T_\mu$ having exactly $2dr$ fixed points in $\SS^1$, all of which are hyperbolic.  
\end{lemma}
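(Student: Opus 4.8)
The plan is to produce the required diffeomorphism as a power $g=(f_\omega^n)^m\in T_\mu$ of the random‑walk cocycle over a single generic word $\omega$ and a single large recurrence time $n$, and to extract its fixed‑point structure from Theorems~\ref{thm: structure of random walk 1} and \ref{thm: structure of random walk 2}. The topological skeleton is the elementary remark that if $g\in\Diff_+^2(\SS^1)$, $Z\sbs\SS^1$ is finite, and for some $\tau>0$ the map $g$ has no fixed point in $\SS^1\sm Z^{(\tau)}$ while in each connected component of $Z^{(\tau)}$ it has exactly one fixed point, at which $g'\ne 1$, then $g$ has exactly $\#Z$ fixed points, all hyperbolic. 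I will realize this with $Z=\Pi(\omega)\cup\Xi(\omega)$, which has $2dr$ elements by Theorem~\ref{thm: structure of random walk 1}(5).

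First I would choose $\omega$. A preliminary observation, needed to control the geometry, is that $\Pi(\omega)$ and $\Xi(\omega)$ are typically well separated from \emph{each other}: since all ergodic stationary measures are atomless (Lemma~\ref{lem: not atomic}) and $\Pi(\omega)$, $\Xi(\omega)$ depend on the independent coordinate blocks $\pi^-\omega$, $\pi^+\omega$, a union bound produces $t_0>0$ (which we may take $\le c$, the constant of Lemma~\ref{lem: uniform bound of Pi}) with $\P\lb{\,\Pi(\omega)\cup\Xi(\omega)\ \text{is}\ t_0\text{-separated}\,}>1/2$. I would then fix a small $\ve<t_0/10$, let $\Sigma_\ve$ (with open sets $U(\omega)\sps\Pi(\omega)$) be the uniform good words of Proposition~\ref{prop: good words} for $\mu$, and let $\Sigma_\ve^*$ (with open sets $V(\omega)\sps\Xi(\omega)$) be its counterpart for the inverse random walk $\mu^-$, so that, by symmetry, $f_{\sigma^n\omega}^{-n}$ contracts each component of $V(\sigma^n\omega)$ inside a component of $W^u(\sigma^n\omega)$ with a distortion bound uniform in $n$. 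By Lusin's theorem the maps $\omega\mapsto\Pi(\omega,i),\Xi(\omega,i)$ are continuous on a compact set of measure close to $1$; after intersecting and shrinking, I fix a positive‑measure set $A$ of diameter $<\eta$ (with $\eta\ll t_0$ to be calibrated) on which these maps are continuous and such that every $\omega\in A$ is regular for the random walk, lies in $\Sigma_\ve\cap\Sigma_\ve^*$, and has $\Pi(\omega)\cup\Xi(\omega)$ being $t_0$-separated, and I fix $\omega\in A$ returning to $A$ infinitely often (Poincaré recurrence). Writing $\Pi(\omega)\cup\Xi(\omega)=\{p_1,q_1,\dots,p_{dr},q_{dr}\}$ in cyclic order ($p_j\in\Pi(\omega)$, $q_j\in\Xi(\omega)$) and letting $I_j\sbs U(\omega)$, $J_j\sbs V(\omega)$ be the components around $p_j$, $q_j$, the bound $\ve<t_0/10$ forces $B(p_j,t_0/2)\sbs I_j$ and $B(q_j,t_0/2)\sbs J_j$.

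Now fix a large return time $n$, so $\sigma^n\omega\in A$ and $d(\sigma^n\omega,\omega)<\eta$, and set $h=f_\omega^n\in T_\mu$. By cocycle invariance $h$ maps $\Pi(\omega)$ onto $\Pi(\sigma^n\omega)$ preserving cyclic order and index modulo $d$; by continuity on $A$, $\Pi(\sigma^n\omega)$ lies within $\eta$ of $\Pi(\omega)$, so $h$ sends $p_j$ to a point within $\eta$ of $p_{j+s}$ for a fixed shift $s$. Let $m$ be the order of this rotation and $g:=h^m=f_\omega^{nm}\in T_\mu$. Corollary~\ref{cor: contraction of intervals} furnishes $\delta_n\to 0$ with $|h(I_{j'})|<\delta_n$ for every $j'$, whence $h(I_{j'})\sbs B(h(p_{j'}),\delta_n)\sbs B(p_{j'+s},\delta_n+\eta)\sbs I_{j'+s}$ once $\delta_n+\eta<t_0/2$; moreover Proposition~\ref{prop: good words}(2)--(3) gives $h'<1$ on each $I_{j'}$ for $n$ large. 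Iterating this inclusion $m$ times (the index returns from $j$ to $j+ms\equiv j$), $g$ maps all of $I_j$ into $B(p_j,\delta_n+\eta)$ with $g'<1$, so $g$ has a unique attracting fixed point near each $p_j$. Running the symmetric argument for $g^{-1}=(f_{\sigma^n\omega}^{-n})^m$ with the $\mu^-$-structure at $\sigma^n\omega$ shows $g^{-1}$ likewise contracts a neighbourhood of each $q_j$ into itself, giving a unique repelling fixed point of $g$ near $q_j$. Finally, if $x\notin(\Pi(\omega)\cup\Xi(\omega))^{(\tau)}$ with $\tau:=t_0/10$, then $x$ lies in some $W^s(\omega)$-component at distance $>\ve$ from its endpoints, hence in some $I_j$; tracking $h^m(x)$ through the same chain gives $g(x)\in B(p_j,\delta_n+\eta)$ while $d(x,p_j)\ge\tau$, so $g(x)\ne x$. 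Thus the hypotheses of the topological skeleton hold with $Z=\Pi(\omega)\cup\Xi(\omega)$, and $g$ is an element of $T_\mu$ with exactly $2dr$ fixed points, all hyperbolic.

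The step I expect to be the main obstacle is the third paragraph: reconciling the almost‑everywhere structure theorems with a single explicit word, i.e.\ keeping the interplay of the parameters $\ve$ (good words), $t_0$ (separation), $\eta$ (recurrence) and $\delta_n$ (contraction) consistent, and verifying that the $m$-fold iteration of $h$ genuinely ``closes up'' near each $p_j$ and each $q_j$ inside the respective $U$- and $V$-cores — this is where the $\Pi$–$\Xi$ separation and the uniform good‑word estimates do the real work. A preliminary technical point is to record the $\mu^-$-counterparts of Proposition~\ref{prop: good words} and Corollary~\ref{cor: contraction of intervals}, which follow by applying those statements verbatim to the inverse random walk.
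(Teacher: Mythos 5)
Your argument is correct and rests on the same analytic core as the paper's proof: the structure theorem, uniform good words for both the forward and the inverse walk, and a recurrence argument producing an element of $T_\mu$ that contracts neighbourhoods of the $dr$ points of $\Pi(\omega)$ while its inverse contracts neighbourhoods of the $dr$ points of $\Xi(\omega)$; the fixed-point count is then read off exactly as in the paper. The packaging differs in one place: the paper funnels the construction through Lemma~\ref{lem: A R and f}, where the candidate $f^n_\omega$ is forced to send the distinguished point $\Pi(\omega,0,0)$ back into the same interval $U_{0,0}$, and a cyclic-order argument shows the induced permutation of the components of $A^{(\rho)}$ is already the identity --- so no power is needed, and the resulting element has its attractors and repellers locked near a prescribed pair $A\in\supp\ul\nu^+$, $R\in\supp\ul\nu^-$, a stronger statement reused later (pingpong pairs, Theorem~\ref{thm: ACW approximation}); you instead allow a rotation by a shift $s$ and kill it by passing to $g=h^m$, and you replace the paper's conditioning on proximity to a fixed $(A,R)$ by a Lusin-plus-Poincar\'e-recurrence selection, which works equally well for this lemma but yields a less reusable intermediate statement. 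Two small points to clean up: the identity $h^m=f_\omega^{nm}$ is false unless $\sigma^n\omega=\omega$ (harmless, since $h^m\in T_\mu$ in any case and you correctly use $g^{-1}=(f_{\sigma^n\omega}^{-n})^m$), and you should say explicitly why the same power $m$ closes up the $\Xi$-side: the $\eta$-proximity pairing of the full alternating $2dr$-point configuration $\Pi(\omega)\cup\Xi(\omega)$ with $\Pi(\sigma^n\omega)\cup\Xi(\sigma^n\omega)$ preserves cyclic order and type, hence is a single rotation, so its restrictions to the $\Pi$-points and to the $\Xi$-points have the same order (likewise, ``diameter $<\eta$'' of the Lusin set should read ``oscillation of $\Pi,\Xi$ on it less than $\eta$'').
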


In fact in the proof of Lemma \ref{lem: 2dr fixed points},  we can ``lock" the positions of the attracting and repelling fixed points, which is given by the next lemma. We follow the notation in Theorem~\ref{thm: structure of random walk 1}. 
Let $\ul\nu^+ = \Pi_* \P$ and $\ul \nu^- = \Xi_*\P$.
These are probability measures on $\SS_{dr}$, the space of all subsets of $\SS^1$ of $dr$ elements.
For a subset $A \sbs \SS^1$ and $\rho > 0$ write 
\[A^{(\rho)} = \bigcup_{a \in A} B(a,\rho)\]
for the $\rho$-neighborhood of $A$ in $\SS^1$.

\begin{lemma}
\label{lem: A R and f}
Let $A \in \supp \ul\nu^+$ and $R \in \supp \ul\nu^-$.
Assume that $A \cap R = \vn$ and that $A \cup R$ is $3\rho$-separated for some $\rho > 0$.
Then there is $f \in T_\mu$ such that
\begin{equation}
\label{eqn: f sm R to A}
f(\SS^1 \sm R^{(\rho)}) \sbs A^{(\rho)},
\end{equation}
it preserves each connected component of $A^{(\rho)}$, that is,
\begin{equation}
\label{eqn: f A to A id}
\forall a \in A,\quad f(B(a,\rho)) \sbs B(a,\rho),
\end{equation}
and
\begin{equation}
\label{eqn: f contracts on sm R}
f'|_{\SS^1\sm R^{(\rho)}}<1 \quad \text{and} \quad (f^{-1})'|_{\SS^1\sm A^{(\rho)}}<1.
\end{equation}
\end{lemma}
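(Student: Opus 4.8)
\textbf{Proof plan for Lemma~\ref{lem: A R and f}.}

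The plan is to produce $f$ as a value $f_{\sigma^{-n}\omega}^n$ of the cocycle for a suitable $\omega$ regular for the random walk and a large $n$, exploiting the structure theorems. First I would choose $\omega$ regular for the random walk with the additional property that $\Pi(\omega) = A$ and $\Xi(\omega) = R$; such $\omega$ exist on a positive $\P$-measure set because $A \in \supp\ul\nu^+$, $R \in \supp\ul\nu^-$, and the maps $\omega \mapsto \Pi(\omega)$, $\omega \mapsto \Xi(\omega)$ factor through $\pi^-$ and $\pi^+$ respectively (Theorem~\ref{thm: structure of random walk 1}(3)), so the two events are independent and each has positive probability near the chosen points; one then refines to regular $\omega$. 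By Theorem~\ref{thm: structure of random walk 2}, for such $\omega$ the complement $\SS^1 \sm R = \SS^1 \sm \Xi(\omega) = W^s(\omega)$ is a union of $dr$ open intervals, each containing exactly one point of $\Pi(\omega) = A$, and $|f_\omega^n I| \to 0$ exponentially for every closed $I \sbs W^s(\omega)$. Dually, $\SS^1 \sm A = W^u(\omega)$ and $|f_\omega^{-n} J| \to 0$ for closed $J \sbs W^u(\omega)$.

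Next I would fix the closed sets $K_A = \SS^1 \sm R^{(\rho)}$ and $K_R = \SS^1 \sm A^{(\rho)}$. Since $A \cup R$ is $3\rho$-separated and $A \cap R = \vn$, the set $K_A$ is a disjoint union of $dr$ closed arcs, each containing exactly one point $a \in A$ together with the ball $\overline{B(a,\rho)}$, and $K_A$ is a compact subset of $W^s(\omega)$. Likewise $K_R$ is a compact subset of $W^u(\omega)$. By the cocycle invariance $f_\omega^n \Pi(\omega,i) = \Pi(\sigma^n\omega,i)$ and the exponential contraction on $W^s$, there is $n_0$ so that for all $n \geq n_0$ one has $f_{\sigma^{-n}\omega}^n(K_A) \sbs A^{(\rho)}$ with each arc of $K_A$ mapped inside the ball $B(a,\rho)$ around its own point of $A$ (because $f_{\sigma^{-n}\omega}^n$ maps $\Pi(\sigma^{-n}\omega)$ to $\Pi(\omega) = A$ preserving cyclic order, and the image arc has length $<\rho$ while it contains the corresponding point of $A$); this gives \eqref{eqn: f sm R to A} and \eqref{eqn: f A to A id}. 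For the derivative bounds \eqref{eqn: f contracts on sm R}: on the compact set $K_A \sbs W^s(\omega)$, the uniform control from Proposition~\ref{prop: good words} / Corollary~\ref{cor: contraction of intervals} gives $(f_{\sigma^{-n}\omega}^n)'(x) \to 0$ uniformly, hence $<1$ for $n$ large; the dual statement on $K_R \sbs W^u(\omega)$ gives $((f_{\sigma^{-n}\omega}^n)^{-1})'|_{K_R} < 1$ by the same argument applied to the inverse cocycle. Taking $f = f_{\sigma^{-n}\omega}^n \in T_\mu$ for $n$ large enough finishes the construction.

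A technical point to handle carefully is that the raw conclusion of Theorem~\ref{thm: structure of random walk 2} gives contraction of \emph{lengths} of subintervals and convergence of derivatives along orbits of points in $\Pi(\omega)$, whereas here I need a \emph{uniform} derivative bound on the whole compact set $K_A$ and a \emph{uniform} length contraction. This is exactly what the uniform-good-words machinery (Proposition~\ref{prop: good words}, Corollaries~\ref{cor: contraction of intervals} and \ref{cor: contraction of intervals 2}) supplies: for $\ve < \rho$ small, the open set $U(\omega) \sps \Pi(\omega)$ with $\SS^1 \sm U(\omega) \sbs \Xi(\omega)^{(\ve)} \sbs R^{(\rho)}$ contains $K_A$, each connected component of $U(\omega)$ carries uniform distortion $\leq C(\ve)$ and uniform contraction rate, and symmetrically for $W^u$. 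So I would actually select $\omega$ in the intersection $\Sigma_\ve \cap \sigma^{?}(\text{dual set})$ with $\Pi(\omega)=A$, $\Xi(\omega)=R$, of positive measure, and read off all three displays from the uniform estimates.

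The main obstacle, and the only nontrivial point, is ensuring simultaneously that (i) the prescribed sets $A$ and $R$ are realized as $\Pi(\omega)$ and $\Xi(\omega)$ for an $\omega$ lying in the good-word set, and (ii) the image of each component of $\SS^1 \sm R^{(\rho)}$ lands in the \emph{correct} component of $A^{(\rho)}$ (the one around its limit point), which amounts to tracking the cyclic bijection between $\Pi(\sigma^{-n}\omega)$ and $\Pi(\omega)$ induced by $f_{\sigma^{-n}\omega}^n$ — this is guaranteed by the cyclic-order preservation recorded in Lemma~\ref{lem: R is transitive} and Theorem~\ref{thm: structure of random walk 1}(5), combined with the length bound $|f_{\sigma^{-n}\omega}^n I| < \rho$, which forces the image arc (containing exactly one point of $A$) to stay within $B(a,\rho)$ of that point. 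Once these bookkeeping issues are settled, everything else is a routine application of the already-established structure and distortion estimates.
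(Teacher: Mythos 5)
Your plan has the right ingredients (cocycle, uniform good words, cyclic order), but it contains genuine gaps that the paper's proof is specifically designed to avoid. First, the very first step fails: the event $\lb{\omega : \Pi(\omega)=A,\ \Xi(\omega)=R}$ has $\P$-measure zero for any fixed pair $(A,R)$. Indeed, if $\P\lb{\Pi(\omega,i)=A_i}>0$ then $\nu_i^+=\int u_{\Pi(\omega,i)}\dd\P(\omega)$ would have an atom at each point of $A_i$, contradicting Lemma~\ref{lem: not atomic}; membership in $\supp\ul\nu^\pm$ only gives positive probability for \emph{approximate} equality. This is why the paper works with the set $\Sigma_0$ of $\omega$ with $\ul d(\Pi(\omega),A)<\rho/2$ and $\ul d(\Xi(\omega),R)<\rho/2$, and the whole argument is then about $\rho$-neighborhoods rather than the exact sets.

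Second, even granting such an $\omega$, your time indices do not match. The map $f^n_{\sigma^{-n}\omega}$ maps $\Pi(\sigma^{-n}\omega)$ to $A$, but the region it contracts (via Proposition~\ref{prop: good words}) is $\SS^1\sm\Xi(\sigma^{-n}\omega)^{(\cdot)}$, and $\Xi(\sigma^{-n}\omega)=(f^n_{\sigma^{-n}\omega})^{-1}(R)$ is in general nowhere near $R$; conversely, $f^n_\omega$ does contract $\SS^1\sm R^{(\rho)}\sbs W^s(\omega)$, but its image clusters around $\Pi(\sigma^n\omega)$, which is not close to $A$. The condition ``$\Xi\approx R$'' is needed at the \emph{starting} time of the composition and ``$\Pi\approx A$'' at the \emph{ending} time (together with the good-word bounds at both ends), and producing a single $n$ for which both hold is exactly the Birkhoff recurrence step in the paper ($\omega\in\Sigma_1=\Sigma_0\cap\Sigma'$, $\sigma^n\omega\in\Sigma_2=\Sigma_0\cap\Sigma''$); your proposal has no such synchronization. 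Third, your justification of \eqref{eqn: f A to A id} is unfounded: there is no reason each component of $\SS^1\sm R^{(\rho)}$ contains exactly one point of $\Pi(\sigma^{-n}\omega)$, nor that the image of the component around $a$ contains $a$ itself; cyclic-order preservation alone only gives that the induced map on the $dr$ components is a cyclic permutation, possibly nontrivial. The paper kills this permutation by additionally forcing the marked point to return, $f^n_\omega\Pi(\omega,0,0)\in U_{0,0}$ (via Birkhoff applied to $m_0^+$), and only then does cyclic order force the component map to be the identity. Repairing your outline on these three points essentially reproduces the paper's argument.
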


\begin{proof}
Consider
\begin{equation}
\label{eqn: Sigma0 A R}
\Sigma_0 = \lb{\, \omega\in \Sigma : \ul d(\Pi(\omega),A) < \rho/2 \text{ and } \ul d(\Xi(\omega),R) < \rho/2 \,}.
\end{equation}
The two conditions defining $\Sigma_0$ are independent because $\Pi(\omega)$ depends only on $\omega^-$ and $\Xi(\omega)$ only $\omega^+$.
Thus, since $A \in \supp \ul \nu^+$ and $R \in \supp \ul \nu^-$, we have $\P(\Sigma_0) > 0$. By Theorem \ref{thm: structure of random walk 1} (5), elements of $A$ and elements of $R$ arrange alternatively on $\SS^1$.
By Theorem~\ref{thm: supports of stationary measures}, the closed sets $\supp \nu_i$, $i \in [d]$ are disjoint.
Thus we can partition $A = \bigsqcup_{i \in [d]} A_i$ with $A_i = A \cap \supp \nu_i$.
Moreover, if $\rho > 0$ is small enough, then the condition $\ul d(\Pi(\omega),A) < \rho/2$ implies that for all $i \in [d]$, $\ul d(\Pi(\omega,i),A_i)) < \rho / 2$.
Thus,
\begin{equation}
\label{eqn: d Pi omega i A i}
\forall \omega \in \Sigma_0,\, \forall i \in [d],\quad \ul d(\Pi(\omega,i),A_i)) < \rho / 2.
\end{equation}

By Proposition~\ref{prop: good words} there is a subset $\Sigma' \subset \Sigma$ and constants $c,C > 0$ such that $\P(\Sigma') \geq 1 - \P(\Sigma_0)/2$ and that for all $\omega \in \Sigma'$,
\begin{equation}
\label{eqn: sm Xi contracts}
\forall n \in \NN,\, \forall x \in \SS^1 \sm \Xi(\omega)^{(\rho/2)},\quad  (f^n_\omega)'(x) \leq C 2^{-cn} .
\end{equation}
By Proposition~\ref{prop: good words} applied to $F^{-1}$, similarly, there is a subset $\Sigma'' \subset \Sigma$ such that $\P(\Sigma'') \geq 1 - \P(\Sigma_0)/2$ and that for all $\omega \in \Sigma''$,
\begin{equation}
\label{eqn: sm Pi contracts}
\forall n \in \NN,\, \forall x \in \SS^1 \sm \Pi(\omega)^{(\rho/2)},\quad (f^{-n}_\omega)'(x) \leq  C 2^{-cn}.
\end{equation}
In particular, writing $\Sigma_1 = \Sigma_0 \cap \Sigma'$ and $\Sigma_2 = \Sigma_0 \cap \Sigma''$, we have $\P(\Sigma_1) > 0$ and  $\P(\Sigma_2) > 0$.

Fixed an $a_0\in A_0$ for the rest of the proof of Lemma \ref{lem: A R and f}.
Let $U_{0,0}$ denote $B(a_0,\rho)$.
By~\eqref{eqn: d Pi omega i A i}, for every $\omega \in \Sigma_0$ there is a unique element in $\Pi(\omega,0) \cap U_{0,0}$, which we will denote by $\Pi(\omega,0,0)$.
Then (recall Theorem~\ref{thm: structure of random walk 1})
\[m^+_0(\Sigma_1 \times U_{0,0}) = \int_{\Sigma_1} u_{\Pi(\omega,0)}(U_{0,0}) \dd \P(\omega) = \frac{1}{r}\P(\Sigma_1) > 0.\]
Similarly, $m^+_0(\Sigma_2 \times U_{0,0}) > 0$. 
By Birkhoff's ergodic theorem, for $m^+_0$-almost every $(\omega,x) \in \Sigma_1 \times U_{0,0}$, there are infinitely many $n \in \NN$ such that $F^n(\omega,x) \in \Sigma_2 \times U_{0,0}$.
In other words, for $\P$-almost every $\omega \in \Sigma_1$, there are infinitely many $n \in \NN$ such that 
\begin{equation}
\label{eqn: f Pi omega 0 0}
\sigma^n\omega \in \Sigma_2\quad \text{and} \quad f^n_\omega \Pi(\omega,0,0) \in U_{0,0}.
\end{equation}
Let $n$ be an integer large enough so that $n$ satisfying \eqref{eqn: f Pi omega 0 0} and $C2^{-cn} < \rho/2$.
We claim that $f^n_\omega$ satisfies the desired properties.

Indeed since $\omega \in \Sigma_0$, we have $\Xi(\omega) \sbs R^{(\rho/2)}$ and hence $\SS^1\sm R^{(\rho)} \sbs \SS^1 \sm \Xi(\omega)^{(\rho/2)}$.
Also, $\ul d(\Pi(\omega), A) < \rho/2$. 
By the separation of $A \cup R$, $\Pi(\omega)\sbs A^{(\rho)} \sbs \SS^1 \sm R^{(\rho)}$.
For a subset $X \sbs \SS^1$, we denote by $\CCs(X)$ the set of its connected components. A subset $\ul x \sbs \SS^1$ is called a  
\textit{representative of $\CCs(X)$} if $\ul x$ has exactly one element in each of the connected components of $X$.
Then $\Pi(\omega)$ is a representative of $\CCs(\SS^1 \sm R^{(\rho)})$.
Thus, by $\omega \in \Sigma'$ and~\eqref{eqn: sm Xi contracts}, for any $\ul x \in \SS_{dr}$ which is a representative of $\CCs(\SS^1 \sm R^{(\rho)})$,
\[\ul d(f^n_\omega \ul x, \Pi(\sigma^n \omega)) = \ul d(f^n_\omega \ul x, f^n_\omega \Pi(\omega)) \leq C 2^{-cn} < \rho/2.\]
But $\sigma^n \omega \in \Sigma_0$, hence $\ul d(\Pi(\sigma^n \omega), A) < \rho/2$.
By the triangle inequality, 
\[\ul d(f^n_\omega \ul x, A) < \rho,\]
showing $f^n_\omega(\SS^1 \sm R^{(\rho)}) \sbs A^{(\rho)}$ and that $(f^n_\omega)_* \colon \CCs( \SS^1 \sm R^{(\rho)} ) \to \CCs(A^{(\rho)})$ is bijective.
It follows that $(f^n_\omega)_* \colon \CCs(A^{(\rho)}) \to \CCs(A^{(\rho)})$ is bijective.
Moreover, it preserves the cyclic order and maps $U_{0,0} \in \CCs(A^{(\rho)})$ to itself.
Hence $(f^n_\omega)_* \colon \CCs(A^{(\rho)}) \to \CCs(A^{(\rho)})$ is the identity map.
Also, from $\omega \in \Sigma'$ and~\eqref{eqn: sm Xi contracts}, we obtain
\[\forall x \in \SS^1 \sm R^{(\rho)},\quad  (f^n_\omega)'(x) \leq C 2^{-cn} < 1.\]
Similarly, from $\sigma^n \omega \in \Sigma_2$ and ~\eqref{eqn: sm Pi contracts}, we obtain $\SS^1 \sm A^{(\rho)} \subset \SS^1 \sm \Pi(\sigma^n \omega)^{(\rho/2)}$ and then
\[\forall x \in \SS^1 \sm A^{(\rho)},\quad (f^{-n}_{\sigma^n \omega})'(x) \leq C 2^{-cn}  < 1.\]
This completes the proof because $(f^n_\omega)^{-1} = f^{-n}_{\sigma^n \omega}$.
\end{proof}

\begin{proof}[Proof of Lemma~\ref{lem: 2dr fixed points}]
Let $h \in T_\mu$ be the map given by Lemma~\ref{lem: A R and f} to arbitrary $A \in \supp \ul\nu^+$ and $R \in \supp \ul\nu^-$ with $A \cap R = \vn$ and to sufficiently small $\rho > 0$. The existence of such a pair of $A$ and $R$ follows from the continuity of stationary measures, Lemma \ref{lem: not atomic}.

For every $a \in A$, by~\eqref{eqn: f A to A id} and~\eqref{eqn: f contracts on sm R}, $h|_{B(a,\ve)}$ is a contraction on $B(a,\ve)$, hence has a unique attracting fixed point in $\ol{B(a,\ve)}$.
Note that the condition \eqref{eqn: f A to A id} implies that, $h^{-1}$ preserves each of the connected components of $R^{(\rho)}$.
Thus similarly, $h$ has a unique repelling fixed point on each of the $dr$ connected components of $R^{(\rho)}$.
Finally, $h$ does not have fixed point elsewhere because $h(\SS^1 \sm R^{(\rho)}) \sbs A^{(\rho)}$.
\end{proof}

\subsection{Perfect pingpong pairs}\label{sec: perfect pingpong pairs}\label{se:7.2}
In this subsection, we will define and construct the perfect pingpong pairs in any finitely generated semigroups of $\Diff_+^2(\SS^1)$ preserving no probability measures on $\SS^1$. 
Roughly speaking, a perfect pingpong pair is a pair of diffeomorphisms generating several independent pingpong dynamics on intervals of the circle.

Recall the notion of attractors and repellors in Definition \ref{def: attractors and repellors}. A hyperbolic fixed point $x$ of a diffeomorphism $f$ is an attractor (resp. repellor) if and only if $f'(x)<1$ (resp. $f'(x)>1$).

\begin{definition}\label{def: perfect pingpong pair}
Let $q$ be a positive integer.
A pair $(h_1,h_2)\sbs\Diff_+^1(\SS^1)$ is called a \textit{($q$-)perfect pingpong pair} if there are subsets $U_1^+,U_2^+,U_1^-,U_2^-\sbs\SS^1$ satisfying the following conditions.
	\begin{enumerate}
		\item Each $h_i$ has exactly $2q$ fixed points, all of which are hyperbolic.
		\item For every attractor $a_1 \in \SS^1$ of $h_1$ there is an attractor $a_2 \in \SS^1$ of $h_2$ such that the segment $[a_1,a_2]$ or $[a_2,a_1]$ contains no other fixed points. The same holds for repellors. 
 	\item $U_1^+$ (resp. $U_2^+,U_1^-,U_2^-$) is an open neighborhood of attractors of $h_1$ (resp. $h_2,h_1^{-1},h_2^{-1}$) made up of $q$ disjoint open intervals.
		\item The closures of $U_1^+,U_2^+,U_1^-,U_2^-$ are pairwise disjoint.
		\item $h_1(\SS^1\sm U_1^-)\sbs U_1^+,$ $h_2(\SS^1\sm U_2^-)\sbs U_2^+,$ $h_1^{-1}(\SS^1\sm U_1^+)\sbs U_1^-,$ $h_2^{-1}(\SS^1\sm U_2^+)\sbs U_2^-.$
		\item $h_1'|_{\SS^1\sm U_1^-}<1,$ $h_2'|_{\SS^1\sm U_2^-}<1,$ $(h_1^{-1})'|_{\SS^1\sm U_1^+}<1,$ $(h_2^{-1})'|_{\SS^1\sm U_2^+}<1.$
	\end{enumerate}
The sets $U_{i}^\pm$ are called pingpong cones of $(h_1,h_2).$
\end{definition}
\begin{remark}
The condition (2) is crucial. 
It can be equivalently formulated as the following:
all $2q$ fixed points of $h_1$ are distinct from those of $h_2$; the total $4q$ fixed points of $h_1$ and $h_2$ appear in cyclic order as follows:
\[\text{attractor, attractor, repellor, repellor, }\cdots \text{, attractor, attractor, repellor, repellor.}\]
Combined with the other conditions, we see that every two adjacent attractors are contained in an interval preserved by $h_1$ and $h_2$ and on which $(h_1,h_2)$ have pingpong dynamics.
Similarly, every two adjacent repellors are contained in an interval preserved by $h_1^{-1}$ and $h_2^{-1}$ and on which $(h_1^{-1},h_2^{-1})$ have pingpong dynamics.
\end{remark}

\begin{figure}[!ht]
	\begin{minipage}[c]{7cm}
      \begin{figureexample}\label{eg: perfect pingpong pair}
	Here is an example of a $2$-perfect pingpong pair. The red points represent attractors, and the blue points represent repellors. The black arcs denote the pingpong cones $U_i^\pm$. The arrows inside the circle show the action of $h_1$, and arrows outside the circle represent the action of $h_2$.

    Note that the fixed points of $h_1$ and $h_2$ are not necessarily arranged alternately. Therefore, there is some flexibility in the orders of each pair of attractors or repellors.
	\end{figureexample}
   \end{minipage}%
   \hspace{1cm}
   \begin{minipage}[c]{\textwidth}
       \begin{tikzpicture}
          \def\radius{2.6cm}
		\def\radone{2.2cm}
		\def\radtwo{3cm}
		\draw[gray] (0,0) circle (\radius);
		\draw[-,very thick] (10:\radius) arc[radius=\radius, start angle=10, end angle=30];
		\draw[-,very thick] (60:\radius) arc[radius=\radius, start angle=60, end angle=80];
		\draw[-,very thick] (100:\radius) arc[radius=\radius, start angle=100, end angle=120];
		\draw[-,very thick] (150:\radius) arc[radius=\radius, start angle=150, end angle=170];
		\draw[-,very thick] (190:\radius) arc[radius=\radius, start angle=190, end angle=210];
		\draw[-,very thick] (240:\radius) arc[radius=\radius, start angle=240, end angle=260];
		\draw[-,very thick] (280:\radius) arc[radius=\radius, start angle=280, end angle=300];
		\draw[-,very thick] (330:\radius) arc[radius=\radius, start angle=330, end angle=350];
		
		\fill[red] (0,0) ++(70:\radius) circle[radius=2pt];
		\fill[red] (0,0) ++(110:\radius) circle[radius=2pt];
		\fill[red] (0,0) ++(250:\radius) circle[radius=2pt];
		\fill[red] (0,0) ++(290:\radius) circle[radius=2pt];
		
		\fill[blue] (0,0) ++(20:\radius) circle[radius=2pt];
		\fill[blue] (0,0) ++(-20:\radius) circle[radius=2pt];
		\fill[blue] (0,0) ++(160:\radius) circle[radius=2pt];
		\fill[blue] (0,0) ++(200:\radius) circle[radius=2pt];
		
		\fill (0,0) ++(10:\radius) circle[radius=1.5pt];
		\fill (0,0) ++(30:\radius) circle[radius=1.5pt];
		\fill (0,0) ++(60:\radius) circle[radius=1.5pt];
		\fill (0,0) ++(80:\radius) circle[radius=1.5pt];
		\fill (0,0) ++(100:\radius) circle[radius=1.5pt];
		\fill (0,0) ++(120:\radius) circle[radius=1.5pt];
		\fill (0,0) ++(150:\radius) circle[radius=1.5pt];
		\fill (0,0) ++(170:\radius) circle[radius=1.5pt];
		\fill (0,0) ++(190:\radius) circle[radius=1.5pt];
		\fill (0,0) ++(210:\radius) circle[radius=1.5pt];
		\fill (0,0) ++(240:\radius) circle[radius=1.5pt];
		\fill (0,0) ++(260:\radius) circle[radius=1.5pt];
		\fill (0,0) ++(280:\radius) circle[radius=1.5pt];
		\fill (0,0) ++(300:\radius) circle[radius=1.5pt];
		\fill (0,0) ++(330:\radius) circle[radius=1.5pt];
		\fill (0,0) ++(350:\radius) circle[radius=1.5pt];

		\node at (70:\radone) {$a_{h_1}$};
		\node at (250:\radone) {$a_{h_1}$};
		\node at (200:\radone) {$r_{h_1}$};
		\node at (-20:\radone) {$r_{h_1}$};
		
		\draw[->,>=latex,semithick] (-10:\radone) arc[radius=\radone, start angle=-10, end angle=60];
		\draw[<-,>=latex,semithick] (80:\radone) arc[radius=\radone, start angle=80, end angle=190];
		\draw[->,>=latex,semithick] (210:\radone) arc[radius=\radone, start angle=210, end angle=240];
		\draw[<-,>=latex,semithick] (260:\radone) arc[radius=\radone, start angle=260, end angle=330];
		
		\node at (110:\radtwo) {$a_{h_2}$};
		\node at (290:\radtwo) {$a_{h_2}$};
		\node at (20:\radtwo) {$r_{h_2}$};
		\node at (160:\radtwo) {$r_{h_2}$};
		
		\draw[->,>=latex,semithick] (30:\radtwo) arc[radius=\radtwo, start angle=30, end angle=100];
		\draw[<-,>=latex,semithick] (120:\radtwo) arc[radius=\radtwo, start angle=120, end angle=150];
		\draw[->,>=latex,semithick] (170:\radtwo) arc[radius=\radtwo, start angle=170, end angle=280];
		\draw[<-,>=latex,semithick] (300:\radtwo) arc[radius=\radtwo, start angle=300, end angle=370];

       \end{tikzpicture}
   \end{minipage}
   
\end{figure}
\begin{figure}[!ht]
\begin{minipage}[c]{8cm}
       \begin{tikzpicture}
          \def\radius{2.6cm}
        \def\radthree{2.9cm}
		\def\radone{2.2cm}
		\def\radtwo{1.8cm}
		\draw[gray] (0,0) circle (\radius);
		\draw[-,very thick] (10:\radius) arc[radius=\radius, start angle=10, end angle=30];
		\draw[-,very thick] (60:\radius) arc[radius=\radius, start angle=60, end angle=80];
		\draw[-,very thick] (100:\radius) arc[radius=\radius, start angle=100, end angle=120];
		\draw[-,very thick] (150:\radius) arc[radius=\radius, start angle=150, end angle=170];
		\draw[-,very thick] (190:\radius) arc[radius=\radius, start angle=190, end angle=210];
		\draw[-,very thick] (240:\radius) arc[radius=\radius, start angle=240, end angle=260];
		\draw[-,very thick] (280:\radius) arc[radius=\radius, start angle=280, end angle=300];
		\draw[-,very thick] (330:\radius) arc[radius=\radius, start angle=330, end angle=350];
		
		\fill[blue] (0,0) ++(70:\radius) circle[radius=2pt];
		\fill[red] (0,0) ++(110:\radius) circle[radius=2pt];
		\fill[blue] (0,0) ++(250:\radius) circle[radius=2pt];
		\fill[red] (0,0) ++(290:\radius) circle[radius=2pt];
		
		\fill[blue] (0,0) ++(20:\radius) circle[radius=2pt];
		\fill[red] (0,0) ++(-20:\radius) circle[radius=2pt];
		\fill[blue] (0,0) ++(160:\radius) circle[radius=2pt];
		\fill[red] (0,0) ++(200:\radius) circle[radius=2pt];
		
		\fill (0,0) ++(10:\radius) circle[radius=1.5pt];
		\fill (0,0) ++(30:\radius) circle[radius=1.5pt];
		\fill (0,0) ++(60:\radius) circle[radius=1.5pt];
		\fill (0,0) ++(80:\radius) circle[radius=1.5pt];
		\fill (0,0) ++(100:\radius) circle[radius=1.5pt];
		\fill (0,0) ++(120:\radius) circle[radius=1.5pt];
		\fill (0,0) ++(150:\radius) circle[radius=1.5pt];
		\fill (0,0) ++(170:\radius) circle[radius=1.5pt];
		\fill (0,0) ++(190:\radius) circle[radius=1.5pt];
		\fill (0,0) ++(210:\radius) circle[radius=1.5pt];
		\fill (0,0) ++(240:\radius) circle[radius=1.5pt];
		\fill (0,0) ++(260:\radius) circle[radius=1.5pt];
		\fill (0,0) ++(280:\radius) circle[radius=1.5pt];
		\fill (0,0) ++(300:\radius) circle[radius=1.5pt];
		\fill (0,0) ++(330:\radius) circle[radius=1.5pt];
		\fill (0,0) ++(350:\radius) circle[radius=1.5pt];

		\node at (70:\radone) {$r_{h_1}$};
		\node at (250:\radone) {$r_{h_1}$};
		\node at (200:\radone) {$a_{h_1}$};
		\node at (-20:\radone) {$a_{h_1}$};
		
		\draw[<-,>=latex,semithick] (-10:\radone) arc[radius=\radone, start angle=-10, end angle=60];
		\draw[->,>=latex,semithick] (80:\radone) arc[radius=\radone, start angle=80, end angle=190];
		\draw[<-,>=latex,semithick] (210:\radone) arc[radius=\radone, start angle=210, end angle=240];
		\draw[->,>=latex,semithick] (260:\radone) arc[radius=\radone, start angle=260, end angle=330];
		
		\node at (110:\radtwo) {$a_{h_2}$};
		\node at (290:\radtwo) {$a_{h_2}$};
		\node at (20:\radtwo) {$r_{h_2}$};
		\node at (160:\radtwo) {$r_{h_2}$};

		\draw[->,>=latex,semithick] (30:\radtwo) arc[radius=\radtwo, start angle=30, end angle=100];
		\draw[<-,>=latex,semithick] (120:\radtwo) arc[radius=\radtwo, start angle=120, end angle=150];
		\draw[->,>=latex,semithick] (170:\radtwo) arc[radius=\radtwo, start angle=170, end angle=280];
		\draw[<-,>=latex,semithick] (300:\radtwo) arc[radius=\radtwo, start angle=300, end angle=370];
		
		\draw[->,>=latex,semithick] (80:\radthree) arc[radius=\radthree, start angle=80, end angle=285];
		
		\draw[<-,>=latex,semithick] (295:\radthree) arc[radius=\radthree, start angle=295, end angle=420];

       \end{tikzpicture}
   \end{minipage}%
	\begin{minipage}[c]{\textwidth-9cm}
      \begin{figureexample}
	Here is an example of a pair of uniformly hyperbolic elements $(h_1,h_2)$ with a ``wrong" order on circle. We can obtain this example by replacing $h_1$ by $h_1^{-1}$ in previous example.
	
	Let $h_3=h_2h_1$ (the arrows outside the circle), then $h_3$ only has one repellor and one attractor.
	
	This example shows that for a wrong arrangement, it is possible to find a further subsystem with fewer fixed points.
	
	\end{figureexample}
   \end{minipage}
\end{figure}

\vspace{-0.2cm}
\begin{figure}[!ht]
	\begin{minipage}[c]{7cm}
      \begin{figureexample}
	Here is another example of a pair of uniformly hyperbolic elements $(h_1,h_2)$ with a ``wrong" order. 
	
	Let $h_3=h_2h_1$ (the arrows outside circle). Then, $h_3$ does not have any fixed points, which means that the semigroup generated by $(h_1,h_2)$ is not uniformly hyperbolic in some sense.

    These examples show that for a wrong order of fixed points, the dynamics of the semigroup generated by $(h_1,h_2)$ may not be as clear.
	
	\end{figureexample}
   \end{minipage}%
   \hspace{1cm}
   \begin{minipage}[c]{\textwidth}
       \begin{tikzpicture}
          \def\radius{2.6cm}
        \def\radthree{2.9cm}
        \def\radfour{3cm}
		\def\radone{2.2cm}
		\def\radtwo{1.8cm}
		\draw[gray] (0,0) circle (\radius);
		\draw[-,very thick] (10:\radius) arc[radius=\radius, start angle=10, end angle=30];
		\draw[-,very thick] (60:\radius) arc[radius=\radius, start angle=60, end angle=80];
		\draw[-,very thick] (100:\radius) arc[radius=\radius, start angle=100, end angle=120];
		\draw[-,very thick] (150:\radius) arc[radius=\radius, start angle=150, end angle=170];
		\draw[-,very thick] (190:\radius) arc[radius=\radius, start angle=190, end angle=210];
		\draw[-,very thick] (240:\radius) arc[radius=\radius, start angle=240, end angle=260];
		\draw[-,very thick] (280:\radius) arc[radius=\radius, start angle=280, end angle=300];
		\draw[-,very thick] (330:\radius) arc[radius=\radius, start angle=330, end angle=350];
		
		\fill[blue] (0,0) ++(70:\radius) circle[radius=2pt];
		\fill[red] (0,0) ++(110:\radius) circle[radius=2pt];
		\fill[blue] (0,0) ++(250:\radius) circle[radius=2pt];
		\fill[red] (0,0) ++(290:\radius) circle[radius=2pt];
		
		\fill[red] (0,0) ++(20:\radius) circle[radius=2pt];
		\fill[blue] (0,0) ++(-20:\radius) circle[radius=2pt];
		\fill[blue] (0,0) ++(160:\radius) circle[radius=2pt];
		\fill[red] (0,0) ++(200:\radius) circle[radius=2pt];
		
		\fill (0,0) ++(10:\radius) circle[radius=1.5pt];
		\fill (0,0) ++(30:\radius) circle[radius=1.5pt];
		\fill (0,0) ++(60:\radius) circle[radius=1.5pt];
		\fill (0,0) ++(80:\radius) circle[radius=1.5pt];
		\fill (0,0) ++(100:\radius) circle[radius=1.5pt];
		\fill (0,0) ++(120:\radius) circle[radius=1.5pt];
		\fill (0,0) ++(150:\radius) circle[radius=1.5pt];
		\fill (0,0) ++(170:\radius) circle[radius=1.5pt];
		\fill (0,0) ++(190:\radius) circle[radius=1.5pt];
		\fill (0,0) ++(210:\radius) circle[radius=1.5pt];
		\fill (0,0) ++(240:\radius) circle[radius=1.5pt];
		\fill (0,0) ++(260:\radius) circle[radius=1.5pt];
		\fill (0,0) ++(280:\radius) circle[radius=1.5pt];
		\fill (0,0) ++(300:\radius) circle[radius=1.5pt];
		\fill (0,0) ++(330:\radius) circle[radius=1.5pt];
		\fill (0,0) ++(350:\radius) circle[radius=1.5pt];

		\node at (70:\radone) {$r_{h_1}$};
		\node at (250:\radone) {$r_{h_1}$};
		\node at (200:\radone) {$a_{h_1}$};
		\node at (20:\radone) {$a_{h_1}$};
		
		\draw[<-,>=latex,semithick] (30:\radone) arc[radius=\radone, start angle=30, end angle=60];
		\draw[->,>=latex,semithick] (80:\radone) arc[radius=\radone, start angle=80, end angle=190];
		\draw[<-,>=latex,semithick] (210:\radone) arc[radius=\radone, start angle=210, end angle=240];
		\draw[->,>=latex,semithick] (260:\radone) arc[radius=\radone, start angle=260, end angle=370];
		
		\node at (110:\radtwo) {$a_{h_2}$};
		\node at (290:\radtwo) {$a_{h_2}$};
		\node at (-20:\radtwo) {$r_{h_2}$};
		\node at (160:\radtwo) {$r_{h_2}$};

		\draw[->,>=latex,semithick] (-10:\radtwo) arc[radius=\radtwo, start angle=-10, end angle=100];
		\draw[<-,>=latex,semithick] (120:\radtwo) arc[radius=\radtwo, start angle=120, end angle=150];
		\draw[->,>=latex,semithick] (170:\radtwo) arc[radius=\radtwo, start angle=170, end angle=280];
		\draw[<-,>=latex,semithick] (300:\radtwo) arc[radius=\radtwo, start angle=300, end angle=330];

		\draw[->,>=latex,semithick] (100:\radthree) arc[radius=\radthree, start angle=100, end angle=285];
		\draw[->,>=latex,semithick] (280:\radfour) arc[radius=\radfour, start angle=280, end angle=465];

       \end{tikzpicture}
   \end{minipage}
\end{figure}

\newpage
The main goal of this subsection is to prove the following result on the existence of perfect pingpong pairs.
\begin{proposition}\label{prop: generate pingpong pair}
Let $T\sbs \Diff_+^2(\SS^1)$ be a finitely generated semigroup without invariant probability measures on $\SS^1.$ 
Let $\Delta$ be a minimal set of $T.$ 
Given $x\in\Delta$ and an open interval $I$ containing $x$ with sufficiently small length, there exists a perfect pingpong pair $(h_1,h_2)\sbs T$ with pingpong cones $(U^\pm_i)$ such that both $U_1^+$ and $U_2^+$ have a unique connected component contained in $I$, and these connected components have non-empty intersections with $\Delta$.
\end{proposition}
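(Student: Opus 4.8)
The strategy is to run the construction of Lemma~\ref{lem: A R and f} twice, once for a random walk whose semigroup is $T$ and once for the inverse random walk, combine the two resulting near-hyperbolic elements into a genuine pingpong pair, and finally lock the positions of the cones so that they meet $\Delta$ and have a prescribed connected component inside $I$. Concretely, first fix a finitely supported probability measure $\mu$ on $T$ with $\supp\mu$ a generating set, so that $T = T_\mu$; by hypothesis $\supp\mu$ preserves no probability measure on $\SS^1$, and $\Delta$ is one of the $T_\mu$-minimal sets, hence $\Delta = \supp\nu_i^+$ for some $i$ by Theorem~\ref{thm: supports of stationary measures}. Apply Theorem~\ref{thm: structure of random walk 1} to get the integers $d,r$ and the boundary maps $\Pi,\Xi$. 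Using Lemma~\ref{lem: not atomic} (atomlessness of stationary measures) together with the fact that $\supp\ul\nu^+$ and $\supp\ul\nu^-$ are $C^0$-closed, choose $A\in\supp\ul\nu^+$, $R\in\supp\ul\nu^-$ with $A\cap R=\vn$ and $A\cup R$ $3\rho$-separated for some small $\rho>0$; then Lemma~\ref{lem: A R and f} produces $f\in T_\mu$ with $f(\SS^1\sm R^{(\rho)})\sbs A^{(\rho)}$, $f$ fixing each component of $A^{(\rho)}$, $f'|_{\SS^1\sm R^{(\rho)}}<1$ and $(f^{-1})'|_{\SS^1\sm A^{(\rho)}}<1$, so $f$ has $dr$ attractors (one in each component of $A^{(\rho)}$) and $dr$ repellors (one in each component of $R^{(\rho)}$); by the last clause of Lemma~\ref{lem: 2dr fixed points}'s proof these are all its fixed points. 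This gives $h_1 := f$ with pingpong cones $U_1^+ = A^{(\rho)}$, $U_1^- = R^{(\rho)}$.

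For the second element I would choose a \emph{different} pair $A', R'$ — this is where the freedom in $\Delta$ and $I$ gets exploited. We need $A'$ (the attractors of $h_2$) and $R'$ (the repellors of $h_2$) to interleave $A, R$ so that the cyclic order of the $4dr$ fixed points reads ``attractor, attractor, repellor, repellor, $\dots$'' as required by Definition~\ref{def: perfect pingpong pair}(2), and so that the closures of $A^{(\rho)}, A'^{(\rho)}, R^{(\rho)}, R'^{(\rho)}$ are pairwise disjoint (shrinking $\rho$ if necessary). To place a component of $A'^{(\rho)}$ inside $I$ meeting $\Delta$: since $\Delta=\supp\nu_i^+$ and $\nu_i^+=\int u_{\ul x}\,\dd\ul\nu_i^+(\ul x)$, the set $\bigcup_{\ul x\in\supp\ul\nu_i^+}\ul x$ is dense in $\Delta$, so we may pick $A'$ with one point $a'\in I\cap\Delta$ and $\rho$ small enough that $B(a',\rho)\sbs I$; the remaining points of $A'$ are chosen in the ``gaps'' dictated by the interleaving requirement and near $\supp\ul\nu^+$, using that $\supp\ul\nu^+$ is a closed set whose ``coordinate projections'' are dense in $\bigcup_i\supp\nu_i^+$. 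Likewise pick $R'$ near $\supp\ul\nu^-$ in the complementary gaps. Then Lemma~\ref{lem: A R and f} applied to $(A',R')$ yields $h_2\in T_\mu$ with cones $U_2^+=A'^{(\rho)}$, $U_2^-=R'^{(\rho)}$, and by construction one component of $U_2^+$ lies in $I$ and meets $\Delta$. Since $A \in \supp\ul\nu_j^+$ can also be chosen with a point in $I\cap\Delta$, the same arrangement gives a component of $U_1^+$ in $I$ meeting $\Delta$.

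Finally one checks the six axioms of Definition~\ref{def: perfect pingpong pair}. Items (1), (3), (5), (6) are exactly the conclusions of Lemma~\ref{lem: A R and f} for $h_1$ and $h_2$ separately (the contraction on $\SS^1\sm U_i^-$ and the image condition $h_i(\SS^1\sm U_i^-)\sbs U_i^+$, plus the dual statements for $h_i^{-1}$ obtained by reading Lemma~\ref{lem: A R and f}'s $(f^{-1})'|_{\SS^1\sm A^{(\rho)}}<1$ and $f^{-1}(\SS^1\sm A^{(\rho)})\sbs R^{(\rho)}$, the latter being a formal consequence of \eqref{eqn: f sm R to A}); item (4) is the pairwise-disjointness arranged above; and item (2) is precisely the interleaving of fixed points built into the choice of $A, R, A', R'$. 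The main obstacle I expect is item (2): one must show that $\supp\ul\nu^+$ (resp.\ $\supp\ul\nu^-$) contains elements $A, A'$ (resp.\ $R,R'$) whose points can be placed to realize the alternating cyclic pattern while keeping all four $\rho$-neighborhoods disjoint and a chosen component of each $U_i^+$ inside $I$. This requires a genericity/density argument for the supports of these $\SS_{dr}$-valued stationary measures — essentially that perturbing any admissible configuration within $\supp\ul\nu^\pm$ is possible, which follows from minimality of $\Delta$ under $T_\mu$ (so $T_\mu$-orbits, hence the relevant supports, are spread throughout $\Delta$) combined with atomlessness; care is needed because $\supp\ul\nu^+$ need not be all of $\SS_{dr}$, so one argues componentwise using $A_i = A\cap\supp\nu_i$ as in the proof of Lemma~\ref{lem: A R and f}.
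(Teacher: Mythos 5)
Your overall skeleton matches the paper's proof: fix $\mu$ with $T=T_\mu$, apply Lemma~\ref{lem: A R and f} twice to configurations taken from $\supp\ul\nu^+$ and $\supp\ul\nu^-$, read off conditions (1), (3)--(6) of Definition~\ref{def: perfect pingpong pair} from the conclusions of that lemma, and place one attracting component of each map inside $I$ so that it meets $\Delta$ (your observation that the point of $A\cap I$ already lies in $\Delta$ for $|I|$ small is a legitimate shortcut; the paper instead notes that the attracting fixed point in $U_1^+\cap I$ is the limit of $h_1^nx$ with $x\in\Delta$). However, at the step you yourself single out as the main obstacle --- condition (2), the cyclic pattern ``attractor, attractor, repellor, repellor, $\dotsc$'' --- your proposal has a genuine gap. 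You propose to \emph{arrange} the interleaving by choosing the remaining points of $A'$ and $R'$ ``in the gaps dictated by the interleaving requirement'', justified by a genericity/perturbation argument inside $\supp\ul\nu^{\pm}$ based on minimality and atomlessness. That argument is not available as stated: $\supp\ul\nu^{+}$ is a specific closed subset of $\SS_{dr}$ determined by the joint law of $\Pi(\omega)$, and density of the coordinate projections in the minimal sets does not allow you to prescribe the approximate locations of all $dr$ points of a single configuration independently; nothing in the paper's machinery supports such a selection, and for $r>1$ it is genuinely constrained.

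The paper resolves this point without any selection freedom, and this is the idea missing from your proposal: the interleaving is \emph{automatic}. For any $A_i\in\supp\ul\nu^+$ and any $R_j\in\supp\ul\nu^-$, the points of $A_i$ and of $R_j$ alternate in cyclic order --- this follows from Theorem~\ref{thm: structure of random walk 1}(5) together with the set $\Sigma_0$ of \eqref{eqn: Sigma0 A R} (independence of the conditions on $\Pi(\omega)$ and $\Xi(\omega)$), exactly as used inside Lemma~\ref{lem: A R and f}. One then chooses \emph{both} $A_1,A_2\in\supp\ul\nu^+$ with a point in $I$ and \emph{both} $R_1,R_2\in\supp\ul\nu^-$ disjoint from $I$ (possible since $\nu_i^-(I)$ is small for $|I|$ small), so $I$ contains exactly one point of $A_1$, one point of $A_2$, and no point of $R_1\cup R_2$; a short induction using the pairwise alternation of $A_i$ with $R_j$ then forces the pattern: after the two attractor points in $I$, the next two points of $A_1\cup A_2\cup R_1\cup R_2$ must be one point of $R_1$ and one of $R_2$, and so on around the circle. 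Your write-up also omits the requirement $R\cap I=R'\cap I=\vn$, which is needed both for this induction and for the uniqueness of the connected component of each $U_i^+$ inside $I$. So the construction you outline is the right one, but the proof of condition (2) needs to be replaced by the alternation-plus-induction argument rather than a density argument in $\supp\ul\nu^{\pm}$.
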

\begin{proof}
Let $\mu$ be the uniform measure on a finite generating set of $T$ and consider the random walk it induces on $\SS^1$.
Then $T=T_\mu$ and $\Delta$ supports an ergodic $\mu$-stationary measure $\nu.$
Without loss of generality, we may assume that $|I|$ is small enough such that for every $i\in [d],$ $\nu_i^-(I)<\frac{1}{2d}.$ Since $I$ contains $x\in\Delta=\supp\nu,$ there exists $A_1,A_2\in\supp\ul\nu^+$ with $A_1\cap A_2=\vn$ and both $A_1\cap I,A_2\cap I$ are nonempty. Then we can choose $R_1,R_2\in \supp\ul\nu^-$ such that $A_1,A_2,R_1,R_2$ are pairwise disjoint and $R_1\cap I=R_2\cap I=\vn. $

    Take $\ve>0$ sufficiently small so that
\begin{enumerate}
    \item $A_1\cup A_2\cup R_1\cup R_2$ is $3\ve$-separated and
    \item both $A_1^{(\ve)}$ and $A_2^{(\ve)}$ have a connected component contained in $I.$
\end{enumerate}

Let $U_i^+=A_i^{(\ve)}$ and $U_i^-=R_i^{(\ve)}$ for $i=1,2.$ 
Lemma~\ref{lem: A R and f} applied twice gives rise to $h_1,h_2\in T_\mu$ such that for every $i=1,2,$
\begin{enumerate}
    \item $h_i(\SS^1\sm U_i^-)\sbs U_i^+,$
    \item $h_i'|_{\SS^1\sm U_i^-}<1$ and $(h_i^{-1})'|_{\SS^1\sm U_i^+}<1,$
    \item $h_i$ preserves each connected component of $ \SS^1\sm U_i^-.$
\end{enumerate}

It remains to verify Condition~(2) of Definition~\ref{def: perfect pingpong pair}.
Note that the attractors and repellors of $h_1$ and $h_2$ have the same arrangement pattern as $\CCs(U_1^+)$, $\CCs(U_2^+)$, $\CCs(U_1^-)$ and $\CCs(U_2^-)$, which have the same arrangement pattern as  $A_1$, $A_2$, $R_1$ and $R_2$.
Thus it suffices to show that for every $a_1 \in A_1$ there is $a_2 \in A_2$ such that between $a_1$ and $a_2$ there is no other element of $A_1\cup A_2 \cup R_1 \cup R_2$.
\begin{figure}[!ht]
\centering
	\begin{tikzpicture}
		\draw[-,gray] (0,0) to (12.5,0);
		\draw[-,dashed] (4.5,0) to (4.5,0.7);
		\draw[-,dashed] (8,0) to (8,0.7);
		\draw[-,very thick] (4.8,0) to (5.7,0);
		\draw[-,very thick] (6.8,0) to (7.7,0);
		\draw[-,very thick] (4.5,0.7) to (8,0.7);
		
		\draw[-,very thick] (0.2,0) to (0.8,0);
		\draw[-,very thick] (1.2,0) to (1.8,0);
		\draw[-,very thick] (10.7,0) to (11.3,0);
		\draw[-,very thick] (11.7,0) to (12.3,0);
		
		\draw[-,very thick] (2.2,0) to (2.8,0);
		\draw[-,very thick] (3.2,0) to (3.8,0);
		\draw[-,very thick] (8.7,0) to (9.3,0);
		\draw[-,very thick] (9.7,0) to (10.3,0);
		
		\fill (4.5,0) circle[radius=1.5pt];
		\fill (8,0) circle[radius=1.5pt];
		\fill (4.5,0.7) circle[radius=1.5pt];
		\fill (8,0.7) circle[radius=1.5pt];
		\fill[red] (5.25,0) circle[radius=2pt];
		\fill[red] (7.25,0) circle[radius=2pt];
		
		\fill[red] (0.5,0) circle[radius=2pt];
		\fill[red] (1.5,0) circle[radius=2pt];
		\fill[red] (11,0) circle[radius=2pt];
		\fill[red] (12,0) circle[radius=2pt];
		
		\fill[blue] (2.5,0) circle[radius=2pt];
		\fill[blue] (3.5,0) circle[radius=2pt];
		\fill[blue] (9,0) circle[radius=2pt];
		\fill[blue] (10,0) circle[radius=2pt];

		\node at (6.25,1) {$I$};
		\node at (5.25,0.3) {\footnotesize $A_1$};
		\node at (7.25,0.3) {\footnotesize $A_2$};
		\node at (5.25,-0.4) {$U_1^+$};
		\node at (7.25,-0.4) {$U_2^+$};
		
		\node at (2.5,0.3) {\footnotesize $R$};
		\node at (3.5,0.3) {\footnotesize $R$};
		\node at (2.5,-0.4) {$U^-$};
		\node at (3.5,-0.4) {$U^-$};
		\node at (9,0.3) {\footnotesize $R$};
		\node at (10,0.3) {\footnotesize $R$};
		\node at (9,-0.4) {$U^-$};
		\node at (10,-0.4) {$U^-$};
		
		\node at (0.5,0.3) {\footnotesize $A$};
		\node at (1.5,0.3) {\footnotesize $A$};
		\node at (0.5,-0.4) {$U^+$};
		\node at (1.5,-0.4) {$U^+$};
		\node at (11,0.3) {\footnotesize $A$};
		\node at (12,0.3) {\footnotesize $A$};
		\node at (11,-0.4) {$U^+$};
		\node at (12,-0.4) {$U^+$};
	\end{tikzpicture}
\stepcounter{theorem}
\caption{Arrangement of attractors and repellors}
\end{figure}
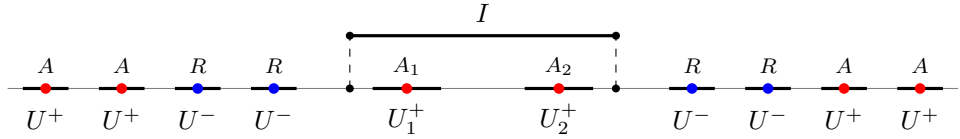

Indeed, by Theorem \ref{thm: structure of random walk 1}(5) and the construction \eqref{eqn: Sigma0 A R}, for every $i,j\in\lb{1,2},$ the elements of $A_i$ and those of $R_j$ arrange alternatively in cyclic order on $\SS^1.$ 
By construction, the interval $I$ contains a point $a_1 \in A_1$ and a point $a_2 \in A_2$ and no other elements of $A_1\cup A_2 \cup R_1 \cup R_2$.
Let $a$ be the next element of $A_1 \cup A_2$ right to $I$. Let $i \in \{1,2\}$ such that $a \in A_i$, then for each $j = 1,2$, $R_j \cap [a_i,a]$ contains exactly one point.
This shows that necessarily, the next two elements of $A_1\cup A_2 \cup R_1 \cup R_2$ right to $a_1$ and $a_2$ must be one from $R_1$ and one from $R_2$.
Thus, by a simple induction, the claim is verified.

Recalling that $R_1\cap I=R_2\cap I=\vn,$ the arrangement of points in $A_1\cup A_2\cup R_1\cup R_2$ implies that each $U_i^+$ has a unique connected component that intersects $I$ and is contained in $I.$
 Therefore the last claim of Proposition \ref{prop: generate pingpong pair} can be shown as follows. 
Since $x\in I$, the iterates $h_1^nx$ of $x$ converge to the unique attracting fixed point of $h_1$ in $U_1^+\cap I$ as $n\to+\infty.$ Since $\Delta$ is a $T$-invariant closed set and $x\in\Delta,$ this fixed point is contained in $\Delta.$ Hence this connected component intersects $\Delta.$
\end{proof}

\subsection{Proof of Theorems \ref{thm: ACW approximation} and \ref{thm: separating approximation}}\label{sec: ACW approximation}\label{se:7.3}
We first prove Theorem \ref{thm: ACW approximation}. 
Theorem \ref{thm: separating approximation} will follow as a byproduct of the proof.
Our proof relies on the fact that the approximation of the entropy (condition (2)) and the approximation of the dimension (condition (4)) can be obtained independently, and the two approximations can be ``merged" together. 

The first step is to find sufficiently many uniformly hyperbolic elements with fixed cones $U_{i,j}$, cf. Proposition  \ref{prop: choose uniformly hyperbolic elements}. This part is similar to the technique of finding perfect pingpong pairs. 

The second step is to approximate the dimension, cf. Proposition \ref{prop: approximate the dimension}. We apply the covering argument (an application of Lemma \ref{lem: cover and dim}) to find sufficiently many elements with separated images and desired contracting rates.  Then these elements preserve minimal sets with dimensions approximating the original one. 

The third step is to ``combine" the random walks corresponding to last two steps together, cf. Proposition \ref{prop: approximation of the whole space}.   
Finally, we complete the proof by estimating the Hausdorff dimension of the minimal sets given by an IFS with the open set condition.

Throughout this subsection, as the setting in Theorem \ref{thm: ACW approximation}, $\mu$ is a finitely supported probability measure on $\Diff_+^2(\SS^1)$ without a common invariant probability measure. We follow the notation of Theorem~\ref{thm: structure of random walk 1} and Theorem~\ref{thm: structure of random walk 2}.

\subsubsection*{Step I. Finding uniformly hyperbolic elements}
Recall that $\ul\nu^+=\Pi_*\P$ and $\ul\nu^-=\Xi_*\P.$ 
Fix $A\in\supp\ul\nu^+$ and $R\in\supp\ul\nu^-$ with $A\cap R=\vn$. 
Take $\rho>0$ small enough such that $A\cup R$ is $3\rho$-separated. 

As in the proof of Lemma~\ref{lem: A R and f} we partition $A$ into
\[A=A_1\sqcup A_2 \sqcup \dotsb \sqcup A_d\]
where $A_i=A\cap \supp\nu_i^+.$ 
Denote by $U^+:=A^{(\rho)}, U^-:=R^{(\rho)}$ and for each $i \in [d]$, $U_i:= A_i^{(\rho)}$.
Each $U_i$ has exactly $r$ connected components. We name them as $U_{i,j}$, $j\in[r]$, in cyclic order. 
Without loss of generality we can let $U_{0,0}$ be the same as that in the proof of Lemma~\ref{lem: A R and f}. For every $\omega\in\Sigma$, if $\Pi(\omega,0)\cap U_{0,0}$ is not empty, then it contains at most one point which we denoted by $\Pi(\omega,0,0)$. This notation is consistent with the one used in the proof of Lemma \ref{lem: A R and f}.

For $n \in \NN$ and $\ve >0$, let $\cA_{n,\ve}$ be the set of $f \in \Diff^2_+(\SS^1)$ such that
\begin{equation}
\label{eqn: def of A 1}
	\forall i \in [d],\,\forall j \in [r],\quad \ol{f(U_{i,j})} \sbs U_{i,j}
\end{equation}
and
\begin{equation}
\label{eqn: def of A 2}
	\forall i \in [d],\, \forall x \in U_i,\quad  \frac{1}{n} \log f'(x)  \in [\lambda_i^+ -\ve, \lambda_i^+ +\ve].
\end{equation}

\begin{lemma}\label{lem: effective approximation}
There exist $\Sigma_1, \Sigma_2 \sbs \Sigma$ with $\P(\Sigma_1)>0$ and $\P(\Sigma_2) > 0$, such that $\Pi(\omega)\sbs U^+$ for every $\omega\in\Sigma_1\cup\Sigma_2.$
Moreover, there exists a positive integer $n_1$ such that for all $n \geq n_1$ and all $\omega \in \Sigma_1$, if $F^n(\omega, \Pi(\omega,0,0)) \in \Sigma_2 \times U_{0,0}$ then $f^n_\omega \in \cA_{n,\ve}$. 
\end{lemma}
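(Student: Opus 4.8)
The plan is to combine the effective convergence to the Furstenberg boundary (Proposition~\ref{prop: convergence in probability}) with the uniform distortion control coming from the set of uniform good words (Proposition~\ref{prop: good words}), exactly in the spirit of the proof of Lemma~\ref{lem: A R and f}. First I would choose the two sets $\Sigma_1,\Sigma_2$. Apply Proposition~\ref{prop: good words} with a parameter $\ve' \ll \ve,\rho$ to get the uniform good word set $\Sigma_{\ve'}$, the constant $C = C(\ve')$, and the open sets $U(\omega)$, and apply it again to the inverse cocycle $F^{-1}$. Intersecting these with the event
\[\Sigma_0 = \lb{\, \omega\in \Sigma : \ul d(\Pi(\omega),A) < \rho/2 \text{ and } \ul d(\Xi(\omega),R) < \rho/2 \,}\]
(which has positive $\P$-measure since $A\in\supp\ul\nu^+$, $R\in\supp\ul\nu^-$, and the two defining conditions depend on $\omega^-$ and $\omega^+$ respectively, hence are independent), I set $\Sigma_1 = \Sigma_0\cap\Sigma_{\ve'}$ and $\Sigma_2 = \Sigma_0\cap(\text{the }F^{-1}\text{-good set})$, shrinking $\ve'$ if necessary so that both have positive measure. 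For $\omega\in\Sigma_0$ we have $\Xi(\omega)\sbs R^{(\rho/2)}$ and $\Pi(\omega)\sbs A^{(\rho/2)}\sbs U^+$, giving the first claim; and the uniform-good-word property forces $\Pi(\omega)\sbs U(\omega)$ up to adjusting constants.

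Next I would fix $n \geq n_1$ (with $n_1$ to be determined) and $\omega\in\Sigma_1$ such that $F^n(\omega,\Pi(\omega,0,0))\in\Sigma_2\times U_{0,0}$, and verify $f^n_\omega\in\cA_{n,\ve}$. For the inclusion \eqref{eqn: def of A 1}: since $\omega\in\Sigma_1$ and $\sigma^n\omega\in\Sigma_2$, each $\Pi(\omega)$ and $\Pi(\sigma^n\omega)$ is a representative of $\CCs(\SS^1\sm R^{(\rho)})$ arranged in cyclic order, and by the exponential contraction \eqref{eqn: sm Xi contracts}-type estimate on $\SS^1\sm\Xi(\omega)^{(\rho/2)}\sps \SS^1\sm R^{(\rho)}$ we get $\ul d(f^n_\omega \ul x,\Pi(\sigma^n\omega)) \leq C2^{-cn} < \rho/2$ for any representative $\ul x$, hence $f^n_\omega(\SS^1\sm R^{(\rho)})\sbs A^{(\rho)} = U^+$; the map $(f^n_\omega)_*$ on connected components is then a cyclic-order-preserving bijection $\CCs(U^+)\to\CCs(U^+)$ which fixes $U_{0,0}$ (because $f^n_\omega\Pi(\omega,0,0)\in U_{0,0}$), hence is the identity, and since it respects the partition $A=\bigsqcup A_i$ it fixes every $U_{i,j}$. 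Closedness of the inclusion is automatic from the uniform contraction. For the derivative bounds \eqref{eqn: def of A 2}: on each connected component of $U_i\sbs U(\omega)$ the distortion $\wt\vk(f^n_\omega,\cdot)\leq C$ and, by the uniform convergence in Proposition~\ref{prop: good words}(2) together with Corollary~\ref{cor: contraction of intervals}, $\frac1n\log(f^n_\omega)'(x) \in [\lambda_i^+-\ve,\lambda_i^+ +\ve]$ for all $x$ in that component, once $n$ is large enough — this determines $n_1$ uniformly in $\omega$ and in $i,j$.

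The main obstacle is making the derivative estimate genuinely uniform over the whole interval $U_i$ rather than just at the boundary point $\Pi(\omega,i)$: one only controls $(f^n_\omega)'$ at points of $\Pi(\omega)$ directly from the Lyapunov/Oseledets convergence, and must transfer this to all of $U_i$ via the distortion bound $\wt\vk(f^n_\omega,U(\omega))\leq C$. This is exactly what the uniform good words machinery of Proposition~\ref{prop: good words} is built to provide, so the argument goes through, but one has to be careful that the constant $C$ (hence the additive error in $\log(f^n_\omega)'$) is independent of $n$, which forces the choice $\ve'\to 0$ to be made before $n\to\infty$ and absorbed into the $\ve_n\to 0$ of Corollary~\ref{cor: contraction of intervals}. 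Everything else is bookkeeping parallel to the proof of Lemma~\ref{lem: A R and f}.
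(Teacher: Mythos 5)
Your proposal is correct and follows essentially the same route as the paper: the paper also sets $\Sigma_1=\Sigma_0\cap\Sigma_{\ve'}$ (good words with parameter $\ve'\leq\min\{\rho/2,\P(\Sigma_0)/2\}$) and simply $\Sigma_2=\Sigma_0$, then reruns the argument of Lemma~\ref{lem: A R and f} with the sharper two-sided exponent estimate from Proposition~\ref{prop: good words} transferred from $\Pi(\omega,i)$ to all of $U_i\sbs U(\omega)$ by the $n$-independent distortion bound. Your extra intersection of $\Sigma_2$ with the backward good-word set is harmless but unnecessary, since $\cA_{n,\ve}$ imposes no condition on $(f^n_\omega)^{-1}$.
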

\begin{proof}
Lemma \ref{lem: effective approximation} has been proven implicitly in Lemma~\ref{lem: A R and f}. We only need to pay more attentions to the exponents in~\eqref{eqn: sm Xi contracts}.
More precisely, let $\Sigma_0$ be the set given by \eqref{eqn: Sigma0 A R}. We choose $\ve = \min\lb{\rho/2, \P(\Sigma_0)/2}$ and apply Proposition~\ref{prop: good words} to obtain the set $\Sigma_\ve$ of uniform good words. Then we define $\Sigma_1 = \Sigma_0 \cap \Sigma_\ve$ and $\Sigma_2 = \Sigma_0$. It can be shown that these sets satisfy the desired properties.
\end{proof}

\begin{proposition}\label{prop: choose uniformly hyperbolic elements}
For every $\ve>0,$ there is a constant $c>0$ such that there exists infinitely many $n>0$ satisfying $\mu^{*n}( \cA_{n,\ve}) \geq c$.
\end{proposition}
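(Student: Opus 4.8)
The plan is to deduce the proposition from Lemma~\ref{lem: effective approximation} together with the ergodicity of the $u$-state $m_0^+$. Recall from Theorem~\ref{thm: structure of random walk 1} that $m_0^+$ is an ergodic $F$-invariant probability measure with disintegration $\dr m_0^+(\omega,x) = \dr\P(\omega)\,\dr u_{\Pi(\omega,0)}(x)$. I would set $E_1 = \Sigma_1 \times U_{0,0}$ and $E_2 = \Sigma_2 \times U_{0,0}$, where $\Sigma_1,\Sigma_2$ are the sets produced by Lemma~\ref{lem: effective approximation} (in whose proof one takes $\Sigma_1,\Sigma_2 \sbs \Sigma_0$). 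For $\omega \in \Sigma_0$ one has $\ul d(\Pi(\omega),A) < \rho/2$ and $A\cup R$ is $3\rho$-separated, so $\Pi(\omega,0)\cap U_{0,0}$ reduces to the single point $\Pi(\omega,0,0)$; in particular $u_{\Pi(\omega,0)}(U_{0,0}) = 1/r$, whence $m_0^+(E_1) = \P(\Sigma_1)/r > 0$ and $m_0^+(E_2) = \P(\Sigma_2)/r > 0$.

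First I would record the identity, valid for every $n$,
\[
m_0^+\bigl(E_1 \cap F^{-n}E_2\bigr) = \frac{1}{r}\,\P\bigl\{\omega \in \Sigma_1 : F^n(\omega, \Pi(\omega,0,0)) \in \Sigma_2 \times U_{0,0}\bigr\},
\]
which follows by disintegrating $m_0^+$ and using once more that $\Pi(\omega,0)\cap U_{0,0} = \{\Pi(\omega,0,0)\}$ for $\omega \in \Sigma_1$. By Lemma~\ref{lem: effective approximation}, for all $n \geq n_1$ the event on the right-hand side is contained in $\{\omega : f^n_\omega \in \cA_{n,\ve}\}$, so
\[
\mu^{*n}(\cA_{n,\ve}) \;\geq\; r\,m_0^+\bigl(E_1 \cap F^{-n}E_2\bigr) \qquad \text{for all } n \geq n_1.
\]

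It then remains to exhibit infinitely many $n$ for which $m_0^+(E_1 \cap F^{-n}E_2)$ is bounded below by a fixed positive constant. Here I would invoke the ergodicity of $m_0^+$: applying Birkhoff's ergodic theorem to $\one_{E_2}$ and integrating over $E_1$ with bounded convergence gives $\frac{1}{N}\sum_{n=0}^{N-1} m_0^+(E_1 \cap F^{-n}E_2) \to m_0^+(E_1)\,m_0^+(E_2)$ as $N \to +\infty$. Since the summands are nonnegative and the limit is positive, a Ces\`aro argument forces $m_0^+(E_1 \cap F^{-n}E_2) \geq \tfrac{1}{2} m_0^+(E_1)\,m_0^+(E_2)$ for infinitely many $n$. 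Combining with the previous paragraph, for infinitely many $n$,
\[
\mu^{*n}(\cA_{n,\ve}) \;\geq\; \frac{r}{2}\,m_0^+(E_1)\,m_0^+(E_2) \;=\; \frac{\P(\Sigma_1)\,\P(\Sigma_2)}{2r},
\]
so the proposition holds with $c = \P(\Sigma_1)\P(\Sigma_2)/(2r)$.

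The argument is mostly bookkeeping once Lemma~\ref{lem: effective approximation} is available; the one point deserving care — and the only place I expect any friction — is the translation between the $\P$-measure of the words $\omega$ and the $m_0^+$-measure of pairs $(\omega,x)$, which hinges on the single-point fact $\#\bigl(\Pi(\omega,0)\cap U_{0,0}\bigr) = 1$ valid on $\Sigma_0$. Without it one would only obtain an inequality in the unhelpful direction, so I would state and use this observation explicitly.
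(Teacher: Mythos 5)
Your proof is correct and takes essentially the same route as the paper: both reduce the statement to Lemma~\ref{lem: effective approximation} together with the ergodicity of $m_0^+$, which forces returns from one positive-measure set to another. The only cosmetic differences are that the paper works with the graph sets $\wt\Sigma_i=\lb{(\omega,\Pi(\omega,0,0)):\omega\in\Sigma_i}$ rather than your product sets $\Sigma_i\times U_{0,0}$ (your single-point observation $\#\bigl(\Pi(\omega,0)\cap U_{0,0}\bigr)=1$ on $\Sigma_0$ correctly bridges the two), and your explicit Ces\`aro argument supplies the uniform constant $c$ that the paper leaves implicit in its appeal to ergodicity.
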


\begin{proof}
	Let $\Sigma_1,\Sigma_2$ be the sets given by the previous lemma. Let
	\[\wt\Sigma_i=\lb{(\omega,\Pi(\omega,0,0)):\omega\in\Sigma_i}\sbs \Sigma\times\SS^1,\quad\forall i=1,2.\]
	Then $m_0^+(\wt\Sigma_i)=\frac{1}{r}\P(\Sigma_i)>0$ for $i=1,2.$ Recall that $P:\Sigma\times\SS^1\to\Sigma$ is the natural projection. By the previous lemma, for every $n$ sufficiently large and an element
	\[\omega\in P(\wt\Sigma_1\cap F^{-n}\wt\Sigma_2)\sbs P(\wt\Sigma_1\cap F^{-n}(\Sigma_2\times U_{0,0})),\]
	we have $f_\omega^n\in\cA_{n,\ve}.$ Since $P_*m_0^+=\P,$ we have
	\[\mu^{*n}(\cA_{n,\ve})\geqslant \P(P(\wt\Sigma_1\cap F^{-n}\wt\Sigma_2 ))\geqslant m_0^+(\wt\Sigma_1\cap F^{-n}\wt\Sigma_2).\]
	Then the conclusion follows from the fact that $m_0^+(\widetilde{\Sigma}_1 \cap F^{-n}\widetilde{\Sigma}_2)>0$ for infinitely many $n>0$ since $m_0^+$ is an ergodic $F$-invariant measure.
\end{proof}

\subsubsection*{Step II. Approximating the dimension}

\begin{proposition}\label{prop: approximate the dimension}
For every $\ve > 0$ and every $(i,j) \in [d] \times [r]$, there is a sequence of subsets $\cB_{n} \sbs \cA_{n,\ve}$, $n \in \NN$ such that
\begin{equation*}
\limsup_{n \to +\infty} \frac{1}{n} \log \#\cB_{n} \geq (|\lambda_i^+|-\ve) \dimH \nu_i^+
\end{equation*}
and for each $n \in \NN$, the intervals  $f(U_{i,j})$, $f\in\cB_{n}$ are pairwise disjoint.
\end{proposition}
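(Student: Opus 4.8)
The plan is to apply the covering lemma (Lemma~\ref{lem: cover and dim}) to the images of the cone $U_{i,j}$ under the uniformly hyperbolic words furnished by Step~I, feeding it the information that the associated $\limsup$-set carries positive $\nu_i^+$-mass; the latter will come from the stationarity of $\nu_i^+$. Fix $(i,j)\in[d]\times[r]$ and $\ve>0$; we may assume $\ve<|\lambda_i^+|$, since otherwise $(|\lambda_i^+|-\ve)\dimH\nu_i^+\leq 0$ and one may take $\cB_n$ to be a singleton in $\cA_{n,\ve}\cap\cS^{*n}$ (when nonempty). For each $n$ put
\[\cE_n\defeq\lb{\,f(U_{i,j}):f\in\cA_{n,\ve}\cap\cS^{*n}\,},\]
a finite family of open intervals. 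By \eqref{eqn: def of A 2}, every member of $\cE_n$ has length at most $|U_{i,j}|\cdot 2^{(\lambda_i^++\ve)n}\leq|U_{i,j}|\cdot 2^{-(|\lambda_i^+|-\ve)n}$, so $(\cE_n)_n$ meets the hypotheses of Lemma~\ref{lem: cover and dim} with $\lambda=|\lambda_i^+|-\ve>0$ and $C=|U_{i,j}|$. Write $E\defeq\limsup_{n\to\infty}\bigcup_{J\in\cE_n}J$ and, for $N\geq 1$, $E^{(N)}\defeq\bigcup_{n\geq N}\bigcup_{J\in\cE_n}J$; these are open sets decreasing to $E$.

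The crux is to show $\nu_i^+(E)>0$. By Proposition~\ref{prop: choose uniformly hyperbolic elements} there is $c>0$ such that $\mu^{*n}(\cA_{n,\ve})\geq c$ for infinitely many $n$. Fix $N$, take such an $n\geq N$, and let $f\in\cA_{n,\ve}\cap\cS^{*n}$. Then $f(U_{i,j})\in\cE_n$, and since $n\geq N$ we get $f(U_{i,j})\sbs E^{(N)}$, i.e. $U_{i,j}\sbs f^{-1}(E^{(N)})$; hence $f_*\nu_i^+(E^{(N)})=\nu_i^+(f^{-1}E^{(N)})\geq\nu_i^+(U_{i,j})$. Using the stationarity relation $\nu_i^+=\mu^{*n}*\nu_i^+$ and that $\mu^{*n}$ is supported on $\cS^{*n}$,
\[\nu_i^+(E^{(N)})=\int f_*\nu_i^+(E^{(N)})\dd\mu^{*n}(f)\geq\nu_i^+(U_{i,j})\cdot\mu^{*n}(\cA_{n,\ve})\geq c\,\nu_i^+(U_{i,j}).\]
Since this holds for every $N$, letting $N\to\infty$ gives $\nu_i^+(E)\geq c\,\nu_i^+(U_{i,j})$, which is positive because $U_{i,j}$ is a ball of radius $\rho$ centered at a point of $A_i=A\cap\supp\nu_i^+$ (recall Step~I). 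By the definition of $\dimH\nu_i^+$ we conclude $\dimH E\geq\dimH\nu_i^+$.

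It remains to extract $\cB_n$. By Lemma~\ref{lem: cover and dim}, a maximal pairwise disjoint subcollection $\wt\cE_n\sbs\cE_n$ satisfies
\[\limsup_{n\to\infty}\frac1n\log\#\wt\cE_n\geq(|\lambda_i^+|-\ve)\dimH E\geq(|\lambda_i^+|-\ve)\dimH\nu_i^+.\]
For each $n$, pick for every $J\in\wt\cE_n$ one map $f\in\cA_{n,\ve}\cap\cS^{*n}$ with $f(U_{i,j})=J$, and let $\cB_n$ be the finite set of chosen maps, with $\cB_n=\vn$ when $\wt\cE_n=\vn$. Then $\cB_n\sbs\cA_{n,\ve}$, $\#\cB_n=\#\wt\cE_n$, and $\lb{f(U_{i,j}):f\in\cB_n}=\wt\cE_n$ is a pairwise disjoint family; this is the conclusion of the proposition.

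The only non-routine step is the positivity $\nu_i^+(E)>0$: there is no a~priori control over \emph{where} in $U_{i,j}$ the intervals $f(U_{i,j})$, $f\in\cA_{n,\ve}$, sit, so a direct argument that $\nu_i^+$-almost every point is covered infinitely often is cumbersome; the identity $\nu_i^+=\mu^{*n}*\nu_i^+$ combined with the fact that each such $f$ carries the \emph{whole} cone $U_{i,j}$ into $E^{(N)}$ bypasses this. The remaining ingredients — the length estimate from \eqref{eqn: def of A 2}, the observation that passing from $\wt\cE_n$ to $\cB_n$ only needs a lower bound on cardinalities (so non-injectivity of $f\mapsto f(U_{i,j})$ is harmless), and the harmless reduction to $\ve<|\lambda_i^+|$ — are bookkeeping.
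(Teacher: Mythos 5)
Your proof is correct, and its overall skeleton (define $\cE_n$ as the images $f(U_{i,j})$, $f\in\cA_{n,\ve}$, apply Lemma~\ref{lem: cover and dim}, and reduce everything to $\nu_i^+(E)>0$) matches the paper's. Where you genuinely diverge is the crucial positivity step. The paper exhibits explicit points of $E$ charged by $\nu_i^+$: using Lemma~\ref{lem: effective approximation} and the ergodicity of $m_0^+$ under $F^{-1}$, it shows that for a positive-$\P$-measure set of $\omega$ the boundary point $\Pi(\omega,i,j)$ is hit by infinitely many intervals $f^n_{\sigma^{-n}\omega}(U_{i,j})$ with $f^n_{\sigma^{-n}\omega}\in\cA_{n,\ve}$, and then uses the disintegration $\nu_i^+=\int u_{\Pi(\omega,i)}\dd\P(\omega)$. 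You instead bypass locating any points of $E$: since every $f\in\cA_{n,\ve}\cap\cS^{*n}$ maps the \emph{whole} cone $U_{i,j}$ into $E^{(N)}$ (for $n\geq N$), the stationarity $\nu_i^+=\mu^{*n}*\nu_i^+$ together with the mass bound $\mu^{*n}(\cA_{n,\ve})\geq c$ from Proposition~\ref{prop: choose uniformly hyperbolic elements} yields $\nu_i^+(E^{(N)})\geq c\,\nu_i^+(U_{i,j})$ uniformly in $N$, and continuity from above along $E^{(N)}\downarrow E$ finishes it; positivity of $\nu_i^+(U_{i,j})$ holds because $U_{i,j}$ is a $\rho$-ball centered at a point of $A_i\sbs\supp\nu_i^+$. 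This is legitimate and non-circular (Proposition~\ref{prop: choose uniformly hyperbolic elements} is established in Step~I, independently of this proposition), and it buys a shorter, softer argument that does not re-run the ergodic machinery; the cost is that the ergodicity is used only indirectly, packaged inside Step~I, and that your $E$ is built from $\cA_{n,\ve}\cap\cS^{*n}$ rather than all of $\cA_{n,\ve}$ — a harmless (indeed slightly stronger) restriction. Your remaining bookkeeping (the length bound from \eqref{eqn: def of A 2}, extracting $\cB_n$ from a maximal disjoint subfamily $\wt\cE_n$, the reduction to $\ve<|\lambda_i^+|$, Borel-ness of $E$ so that $\nu_i^+(E)>0$ gives $\dimH E\geq\dimH\nu_i^+$) is accurate and matches the paper's.
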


\begin{proof}
For $n \in \NN$, let $\cE_n = \lb{\, f(U_{i,j}) : f \in \cA_{n,\ve} \,}$.
By the definition of $\cA_{n,\ve}$, for all $I \in \cE_n$, $|I| \leq 2^{(\lambda_i^+ + \ve)n}$.
Let $\cB_{n}$ be a maximal subset of $\cA_{n,\ve}$ such that the intervals $f(U_{i,j})$, $f \in \cB_{n}$ are pairwise disjoint.
Applying Lemma~\ref{lem: cover and dim} to the subcollections $\wt\cE_n = \lb{\, f(U_{i,j}) : f \in \cB_{n} \,}$, we find 
\[\limsup_{n\to\infty}\frac{1}{n}\log\# \cB_{n} \geq (|\lambda_i^+| - \ve)\dimH E.\]
where $E = \limsup_{n \to +\infty} \bigcup_{I \in \cE_n} I$, that is, the set of points which belong to $\bigcup_{I \in \cE_n} I$ for infinitely many $n$.
Note that $E$ is a Borel set.
To conclude it remains to show that $\nu_i^+(E) > 0$. 

Let $\Sigma_0$ be as in~\eqref{eqn: Sigma0 A R}.
For $\omega \in \Sigma_0$, for each $(i,j) \in [d] \times [r]$, there is a unique element in 
$\Pi(\omega) \cap U_{i,j} = \Pi(\omega,i) \cap U_{i,j}$. We denote it by $\Pi(\omega,i,j)$.
Let $\Sigma_1, \Sigma_2 \sbs \Sigma_0$ be as in Lemma~\ref{lem: effective approximation}. 
Let 
\[\Sigma_2' = \lb{\, \omega\in\Sigma_2 : \text{ there exists infinitely many } n\in \NN,\, F^{-n}(\omega,\Pi(\omega,0,0)) \in \Sigma_1 \times U_{0,0} \,}.\]
By the ergodicity of $m_0^+$ of Theorem~\ref{thm: structure of random walk 1} with respect to $F^{-1}$, we have $\P(\Sigma_2') = \P(\Sigma_2)> 0$.

We claim that for all $\omega \in \Sigma_2'$, $\Pi(\omega,i,j) \in E$.
Indeed, for $\omega \in \Sigma_2'$ there are infinitely many $n \in \NN$ such that $F^{-n}(\omega,\Pi(\omega,0,0)) \in \Sigma_1 \times U_{0,0}$.
Then, $\sigma^{-n}\omega \in \Sigma_1$ and $f_\omega^{-n} \Pi(\omega,0,0)  \in \Pi(\sigma^{-n}\omega,0) \cap U_{0,0}$, hence $f_\omega^{-n} \Pi(\omega,0,0) = \Pi(\sigma^{-n}\omega,0,0)$
and 
$F^n\bigl(\sigma^{-n}\omega,\Pi(\sigma^{-n}\omega,0,0)\bigr) \in \Sigma_2 \times U_{0,0}$.
By Lemma~\ref{lem: effective approximation}, $f^n_{\sigma^{-n}\omega} \in \cA_{n,\ve}$.
Then, $\Pi(\omega,i,j) = f^n_{\sigma^{-n}\omega} \Pi(\sigma^{-n}\omega,i,j) \in f^n_{\sigma^{-n}\omega}(U_{i,j}) \in \cE_n$. This proves the claim.

Then by Theorem~\ref{thm: structure of random walk 1}(2),
	\[\nu_i^+(E)\geq\frac{1}{r} \P\lb{\,\omega: \Pi(\omega,i)\cap E\ne\vn \,} \geq\frac{1}{r}\P(\Sigma_2')>0.\]
Hence $\dimH E \geq \dimH \nu_i^+$, which complete the proof of the proposition.
\end{proof}

\subsubsection*{Step III. Completion of the proof}
Proposition \ref{prop: choose uniformly hyperbolic elements} allows us to approximate the original system by a uniformly hyperbolic system with a ``large set'' in the base space $\Sigma.$ Proposition \ref{prop: approximate the dimension} says that we can approximate the original system by a uniformly hyperbolic system with a large dimension along the fiber $\SS^1$. 
It remains to put together these approximations.

\begin{proposition}\label{prop: approximation of the whole space}
For every $\ve>0,$ there exists a constant $c'>0$ such that there exists infinitely many positive integers $N$ such that 
\begin{enumerate}
	\item $\mu^{*N}(\cA_{N,\ve})\geq c'.$
	\item For every $(i,j) \in [d] \times [r]$, there exists $2^{\dim\nu_i^+ (|\lambda_i^+| - 2\ve)  N}$ elements $f\in \cA_{N,\ve}$ such that $f(U_{i,j})$ are pairwise disjoint.
\end{enumerate}
\end{proposition}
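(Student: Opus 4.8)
The plan is to combine the two independent approximations from Step~I and Step~II into a single word-length $N$. The subtlety is that Proposition~\ref{prop: choose uniformly hyperbolic elements} gives a lower bound $\mu^{*n}(\cA_{n,\ve}) \geq c$ for \emph{infinitely many} $n$, while Proposition~\ref{prop: approximate the dimension} gives, for each fixed $(i,j)$, a subset $\cB_n \sbs \cA_{n,\ve}$ with $\limsup_n \frac1n\log\#\cB_n \geq (|\lambda_i^+|-\ve)\dimH\nu_i^+$, again only along a subsequence. These two subsequences need not coincide, and there are $dr$ pairs $(i,j)$ to handle at once. The fix is a pigeonhole/concatenation trick: from one ``good'' scale we extract a positive density of hyperbolic words, and from another ``good'' scale (for each $(i,j)$) we extract exponentially many words with disjoint images; then we glue a fixed-length prefix that realizes the measure bound to blocks of the other type. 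Concretely, I would first replace $\dimH\nu_i^+$ by $\dim\nu_i^+$ using Young's theorem (the stationary measures are exact dimensional by Theorem~\ref{thm: exact dimensionality}, since $\supp\mu$ preserves no probability measure), absorbing the loss into the $\ve$ budget so that $(|\lambda_i^+|-\ve)\dim\nu_i^+ \geq \dim\nu_i^+(|\lambda_i^+|-2\ve)$ holds for the relevant range.

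The key technical point is that $\cA_{n,\ve}$ is stable under composition in the following graded sense: if $f \in \cA_{n_1,\ve}$ and $g \in \cA_{n_2,\ve}$, then because each $f(U_{i,j}) \sbs U_{i,j}$ and the derivative bound \eqref{eqn: def of A 2} holds uniformly on all of $U_i$, the composition $g\circ f$ maps $U_{i,j}$ into itself and satisfies, for $x \in U_i$,
\[
\tfrac{1}{n_1+n_2}\log (g\circ f)'(x) = \tfrac{1}{n_1+n_2}\bigl(\log g'(f(x)) + \log f'(x)\bigr) \in [\lambda_i^+-\ve,\lambda_i^++\ve],
\]
since $f(x) \in U_i$ as well. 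Hence $g\circ f \in \cA_{n_1+n_2,\ve}$, and moreover the map $(g,f)\mapsto g\circ f$ sends distinct pairs to distinct elements provided we can recover $f$ from $g\circ f$ (which we can, up to bounded multiplicity, by tracking the image $g\circ f(U_{i,j}) \sbs g(U_{i,j})$ when the $g(U_{i,j})$ are disjoint). So I would proceed as follows: fix $\ve>0$; let $c>0$ and a scale $n_0$ be as in Proposition~\ref{prop: choose uniformly hyperbolic elements}, and fix one element $h_0 \in \cA_{n_0,\ve}$ together with $\mu^{*n_0}(\{h_0\})>0$ — actually better, keep the whole weight $\mu^{*n_0}(\cA_{n_0,\ve})\geq c$. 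For each $(i,j)$, pick from Proposition~\ref{prop: approximate the dimension} a large scale $m = m_{i,j}$ with $\#\cB_m \geq 2^{(|\lambda_i^+|-\tfrac32\ve)\dimH\nu_i^+ \cdot m}$ and the images $f(U_{i,j})$, $f\in\cB_m$, pairwise disjoint. Then for $N = n_0 + m$ (one must align the $dr$ values of $m_{i,j}$; see below), consider $\cA_{N,\ve} \sps \cB_m \cdot \cA_{n_0,\ve}$: this has $\mu^{*N}$-measure at least (a constant times) $c$ when $\mu^{*N}$ is pushed forward appropriately, and for the fixed $(i,j)$ contains at least $\#\cB_m \geq 2^{\dim\nu_i^+(|\lambda_i^+|-2\ve)N}$ elements with disjoint images on $U_{i,j}$, once $N$ is large enough that $m/N \to 1$ and $(|\lambda_i^+|-\tfrac32\ve)m \geq (|\lambda_i^+|-2\ve)N$.

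The main obstacle — and the step I would spend the most care on — is the simultaneous alignment over all $dr$ pairs $(i,j)$: each pair supplies its own admissible subsequence of scales $m_{i,j}$, and I need a single $N$ that works for all of them while also accommodating the fixed prefix scale $n_0$ realizing the measure bound. The clean way is to choose, for each $(i,j)$, the set $\cB_n^{(i,j)}$ along the \emph{full} subsequence where the $\limsup$ in Proposition~\ref{prop: approximate the dimension} is attained, then invoke that all $dr$ of these $\limsup$s, together with Proposition~\ref{prop: choose uniformly hyperbolic elements}, give infinitely many common integers by a diagonal argument only if the subsequences intersect — which in general they do not. So instead I would \emph{not} try to realize the disjointness for all $(i,j)$ inside a single $\cB$; rather, I would take $\Gamma \sbs \cA_{N,\ve}$ to be the union over $(i,j)$ is not what is asked — re-reading the statement, condition~(2) is ``for every $(i,j)$, there exist $2^{\dim\nu_i^+(|\lambda_i^+|-2\ve)N}$ elements $f \in \cA_{N,\ve}$ with $f(U_{i,j})$ disjoint,'' and these element-families are allowed to depend on $(i,j)$. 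Thus it suffices to find one $N$ (with $\mu^{*N}(\cA_{N,\ve})\geq c'$) that is simultaneously in the ``good set of scales'' for each of the $dr$ pairs; and since each such good set, being the scale-set of a $\limsup$, is infinite, and the measure bound holds along an infinite set of scales too, a finite intersection of cofinal... no — infinite sets need not intersect. The genuine resolution, which I would write out, is the concatenation: fix for each $(i,j)$ one scale $m_{i,j}$ realizing a near-optimal count, set $m = \operatorname{lcm}$ or just a large common \emph{multiple}-type scale, and note that $\cB^{(i,j)}_{m_{i,j}}$ can be ``padded'' by composing with powers of a single fixed hyperbolic element $h_1 \in \cA_{n_1,\ve}$ (from Step~I) to reach any target length $\geq m_{i,j} + n_1$ that is congruent to $m_{i,j}$ mod... simpler: composing $\cB^{(i,j)}_{m_{i,j}}$ on the left with the $(N - m_{i,j})$-fold... we need $\cA$ at length $N - m_{i,j}$ to be nonempty, which holds for all large lengths by Proposition~\ref{prop: choose uniformly hyperbolic elements}. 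Padding preserves disjointness of images on $U_{i,j}$ (the outer map is injective on $U_{i,j}$) and preserves membership in $\cA_{N,\ve}$ by the grading property above. Choosing $N$ large and in the good-scale set for the measure bound then settles everything, with $c' = c$ (or $c$ times the weight of the padding element). The dimension count survives because $m_{i,j}/N$ can be made arbitrarily close to $1$ by taking $N$ large relative to the fixed $m_{i,j}$'s, so $(|\lambda_i^+|-\tfrac32\ve)\dimH\nu_i^+ m_{i,j} \geq (|\lambda_i^+|-2\ve)\dim\nu_i^+ N$ once $m_{i,j}/N$ exceeds a threshold depending only on $\ve$ and $\lambda_i^+$. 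This is the heart of the argument; the rest is bookkeeping with the constants and the finitely many pairs $(i,j)$.
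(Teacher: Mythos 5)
You correctly isolate the key grading property (if $f\in\cA_{n_1,\ve}$ and $g\in\cA_{n_2,\ve}$ then $g\circ f\in\cA_{n_1+n_2,\ve}$, so in particular $\cA_{n,\ve}^{*k}\sbs\cA_{nk,\ve}$), and you correctly identify the real difficulty: aligning the infinitely many good scales of Proposition~\ref{prop: choose uniformly hyperbolic elements} with the good scales of Proposition~\ref{prop: approximate the dimension} for all $dr$ pairs simultaneously. But your final resolution has a genuine gap. Padding a fixed-scale family $\cB^{(i,j)}_{m_{i,j}}$ on the left by a single word of length $N-m_{i,j}$ preserves disjointness and membership in $\cA_{N,\ve}$, yet it does not increase the cardinality: you still have only about $2^{(|\lambda_i^+|-\frac32\ve)\dimH\nu_i^+\,m_{i,j}}$ elements, while the statement demands $2^{\dim\nu_i^+(|\lambda_i^+|-2\ve)N}$. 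Your closing claim that ``$m_{i,j}/N$ can be made arbitrarily close to $1$ by taking $N$ large relative to the fixed $m_{i,j}$'' is backwards: with $m_{i,j}$ fixed and $N\to+\infty$ the ratio tends to $0$, so the required inequality fails. (Your earlier sketch with $N=n_0+m$ and $m$ large has the mirror-image problem: the count is fine, but the measure bound $\mu^{*N}(\cA_{N,\ve})\geq c'$ would then need $m$ to lie in the good subsequence of Proposition~\ref{prop: choose uniformly hyperbolic elements}, which is exactly the alignment you cannot force. A secondary issue: Proposition~\ref{prop: choose uniformly hyperbolic elements} gives infinitely many lengths with positive measure, not all large lengths, so even the nonemptiness of $\cA_{N-m_{i,j},\ve}$ needs an argument.)

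The missing idea is self-concatenation of the counting families instead of padding by inert filler. Disjointness of the images $f(U_{i,j})$ is preserved under products (if the outermost factors differ, the images lie in the disjoint sets $f(U_{i,j})$, $f\in\cB_{i,j}$; if they agree, peel them off using injectivity), so $\cB_{i,j}^{*k}\sbs\cA_{kn_{i,j},\ve}$ has exactly $(\#\cB_{i,j})^{k}$ elements with pairwise disjoint images: the exponential rate \emph{per unit length} is preserved with no loss as the length grows. This is what the paper does: for each $(i,j)$ fix one scale $n_{i,j}$ and a family $\cB_{i,j}\sbs\cA_{n_{i,j},\ve}$ with $\#\cB_{i,j}\geq 2^{\dim\nu_i^+(|\lambda_i^+|-2\ve)n_{i,j}}$ (using exact dimensionality to replace $\dimH\nu_i^+$ by $\dim\nu_i^+$), let $N_0$ be a common multiple of the $n_{i,j}$, so that item (2) holds at every multiple of $N_0$ via the powers $\cB_{i,j}^{*N/n_{i,j}}$; then, for the infinitely many $n$ with $\mu^{*n}(\cA_{n,\ve})\geq c$, take $N=nN_0$ and use
\[\mu^{*nN_0}(\cA_{nN_0,\ve})\;\geq\;\mu^{*nN_0}\bigl(\cA_{n,\ve}^{*N_0}\bigr)\;\geq\;c^{N_0},\]
so that $c'=c^{N_0}$ (not $c$). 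This decouples the two subsequences completely: the measure bound rides on the scales of Proposition~\ref{prop: choose uniformly hyperbolic elements} multiplied by $N_0$, and the counting bound rides on powers of the fixed $\cB_{i,j}$, so no intersection of subsequences is ever needed.
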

\begin{proof}
Observe that for any $n, N \in \NN$, we have $\cA_{n,\ve}^{*N} \sbs \cA_{nN,\ve}$. Recall each $\nu_i^+$ is exact dimensional by Theorem \ref{thm: exact dimensionality} and hence $\dim\nu_i^+=\dimH\nu_i^+.$ For each $(i,j) \in [d] \times [r],$
by Proposition \ref{prop: approximate the dimension}, there is a positive integer $n_{i,j}$ and a set $\cB_{i,j} \sbs \cA_{n_{i,j},\ve}$ of cardinality $\# \cB_{i,j} \geq 2^{\dim \nu_i^+ (|\lambda_i^+| -2\ve) n_{i,j}}$ and such that $f(U_{i,j})$, $f \in \cB_{i,j}$ are pairwise disjoint subintervals of $U_{i,j}$.
Then item (2) holds
whenever $N$ is a common multiple of $n_{i,j}$, $(i,j) \in [d] \times [r]$. Indeed, the product set $\cB_{i,j}^{*N/n_{i,j}}$ is a subset of $\cA_N$. It has cardinality
\[\# \cB_{i,j}^{*N/n_{i,j}} = (\# \cB_{i,j})^{N/n_{i,j}} \geq 2^{\dim \nu_i^+ (|\lambda_i^+| -2\ve) N}\]
and $f(U_{i,j})$, $f \in \cB_{i,j}^{*N/n_{i,j}}$ are pairwise disjoint.

Let $N_0$ be the least common multiple of  $n_{i,j}$, $(i,j) \in [d] \times [r]$.
By Proposition \ref{prop: choose uniformly hyperbolic elements}, there is a constant $c > 0$ such that there are infinitely many $n \in \NN$ such that $\mu^{*n}(\cA_{n,\ve})\geq c.$
Then for such $n$, we have
\[\mu^{*n N_0}(\cA_{n N_0,\ve}) \geq \mu^{*n N_0}(\cA_{n,\ve}^{*N_0}) \geq c^{N_0}.\]
So the required properties hold for $N = n N_0$ with $c' = c^{N_0}$, finishing the proof.
\end{proof}
\begin{proof}[Proof of Theorem \ref{thm: ACW approximation}]
	The property (1) comes from the Theorems \ref{thm: structure of random walk 1}, \ref{thm: d,r top inv} and \ref{thm: structure of random walk 2}, where $\lambda_i=\lambda_i^+$ and $\nu_i=\nu_i^+.$
	We will show that there are infinitely many $N$ such that $\cA_{N,\ve}$ is a desired construction of $\Gamma.$
 
	
	We consider $\wt\Gamma=\lb{(f_1,\cdots,f_N)\in\cS^N: f_1\cdots f_N\in\cA_{N,\ve}}.$ By the first item in the previous proposition, we have $\mu(\wt\Gamma)=\mu^{*N}(\cA_{N,\ve})\geqslant c'$ for infinitely many positive integers $N.$
	By the Shannon-McMillan-Breiman theorem, there exists a subset of $\cS^N$ with at least $(1-o(1))$ $\mu^N$-measure such that each element in this subset has  $\mu^N$-measure of $2^{-N(H(\mu)+o(1))}.$ Therefore, for $N$ large enough, $\wt\Gamma$ contains at least $2^{N(H(\mu)-\ve)}$ elements. Hence, (2) holds for a sufficiently large $N$.
	
	The property (3) is indeed the definition of $\cA_{N,\ve},$ the conditions \eqref{eqn: def of A 1} and \eqref{eqn: def of A 2}. 
	
	It remains to show (4). Fix a pair of $(i,j)$. Let $T$ be the semigroup generated by $ \cA_{N,\ve}$ and let $T_{i,j}$ be the semigroup generated by $2^{\dim\nu_i(|\lambda_i|-2\ve)N}$ elements $f\in\cA_{N,\ve}$ such that $f(U_{i,j})$ are pairwise disjoint given by Proposition \ref{prop: approximation of the whole space}. By the uniform contraction of $T|_{U_{i,j}}$ and the fact that $T$ strictly preserving $U_{i,j}$, $T$ admits a unique minimal set $K_{i,j}\sbs U_{i,j}.$ 
	
	We consider a random walk on the interval $\ol{U_{i,j}}.$ Let $\mu'$ be the uniform measure supported on these $2^{\dim\nu_i(|\lambda_i|-2\ve)N}$ elements. Then $\mu'$ has a unique stationary $\nu$ on $\ol{U_{i,j}}.$ Moreover,
	\[\supp\nu=\bigcap_{n=1}^\infty\bigcup_{f\in(\supp\mu')^{*n}}f(\ol{U_{i,j}})= K_{i,j}.\]
	Note that for every $f\in \supp\mu',$ we have $f'(x)\geqslant 2^{(\lambda_i-\ve)N}$ for every $x\in \ol{U_{i,j}}.$ Hence for any interval of diameter $2^{(\lambda_i-\ve)nN},$ it intersects at most $2$ intervals of the form $f(\ol{U_{i,j}}),$ $f\in(\supp\mu')^{*n}.$ We obtain
	\[\forall\rho>0,\quad\sup_{x\in\SS^1}\log\nu(B(x,\rho))\leqslant \frac{\log\#\supp\mu'}{(|\lambda_i|+\ve)N}\log\rho+ \log\#\supp\mu'+10.\]
	It implies that
	\[\dim_{\mr H}K_{i,j}\geqslant\dim_{\mr H}\nu\geqslant \frac{\log\#\supp\mu'}{(|\lambda_i|+\ve)N}\geqslant \dim\nu_i~\frac{|\lambda_i|-2\ve}{|\lambda_i|+\ve}.\]
	Then the conclusion follows by shrinking $\ve>0$ if necessary.
\end{proof}

\begin{proof}[Proof of Theorem \ref{thm: separating approximation}]
	By Theorem \ref{thm: dim formula C2}, we have $(\dim \nu)|\lambda|=h_{\mr{RW}}(\mu).$ Then the statement directly follows from Proposition \ref{prop: approximate the dimension}, where the estimate of cardinality holds. For (5), the proof is analogous to the proof above.
\end{proof}

\section{Variational principle for dimensions}\label{se:8}

In this section, we will prove the variational principle for dimensions stated in Theorem \ref{thm: C2 dimension variation}.
Two consequences of this theorem, Theorems \ref{thm: real analytic variational principle} and \ref{thm: IFS approximation} will also be demonstrated at the end of this section.
The idea is to choose a large set of good elements in the group with appropriate compressing rates. 
More precisely, assume that $\dim_{\mr H}\Lambda=\alpha$. 
We expect to find about $2^{\alpha n}$ elements and an interval $I$ on which these elements act like affine contractions on $I$ with derivative close to $2^{-n}$.
The method to find these elements is similar to the technique used in the previous section.
Instead of the dimension of a measure, we will apply this technique to the Hausdorff dimension of minimal set.

\subsection{Elements with controlled contracting rates}\label{sec: choice of generators}
Let $G\sbs \Diff_+^2(\SS^1)$ be a finitely generated subgroup without finite orbits in $\SS^1$ and  $\Lambda \sbs \SS^1$ be its unique minimal set.
Assume that $G$ satisfies property $(\star)$ or $(\Lambda\star).$
We denote by  $\Lambda'=\Lambda\sm G(\NE)$, where $\NE$ is the set of non-expandable points of $G$-action. 
The following result from~\cite{DKN09} will be useful.

\begin{proposition}[{\cite[Proposition 6.4]{DKN09}}]\label{prop: DKN expandable}
In the above setting, there exists $\ve_0>0$ such that for every point $x\in \Lambda'$, there is an interval $I \sbs \SS^1$ of arbitrarily small length, together with an element $g \in G$ such that $g I = B(gx,\ve_0)$ and $\vk(g,I) \leq 1$.
\end{proposition}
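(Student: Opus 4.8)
The statement is of Sacksteder--Schwartz type, and the plan is to manufacture a controlled expanding element out of a strongly expanding one. Fix a symmetric finite generating set $\cS$ of $G$ and let $M=M(\cS)$ be the distortion constant of Section~\ref{sec: distortion}. For $x\in\Lambda'=\Lambda\sm G(\NE)$, Theorem~\ref{thm: DKN09}(2) provides elements of $G$ with arbitrarily large derivative at $x$. The geometric idea is to start from a tiny ball $B(x,\delta_0)$, apply generators one at a time, and stop the first time the image interval reaches a fixed small scale $\eta_0$; if $g\in G$ is the resulting product, one simply sets $I:=g^{-1}\bigl(B(gx,\ve_0)\bigr)$, so that $gI=B(gx,\ve_0)$ holds tautologically and $x\in I$, and one is left to check that $\vk(g,I)\le1$ and that $|I|$ is small.

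For the distortion bound I would use Schwartz's lemma in the form of Proposition~\ref{prop: distortion estimate}: with $g=g_n\cdots g_1$, $g_i\in\cS$, $f_k=g_k\cdots g_1$, and $S:=\sum_{k=0}^{n-1}f_k'(x)$, if $\delta_0\le(2MS)^{-1}$ then $\vk(g,B(x,\delta_0))\le 2MS\delta_0\le1$, whence also $\vk(g,I)\le1$ once $I\subset B(x,\delta_0)$. Since the stopping rule gives $g'(x)\asymp\eta_0/\delta_0$, we get $|I|\asymp\ve_0/g'(x)\asymp\ve_0\delta_0/\eta_0$, which is $\ll\delta_0$ as soon as $\ve_0\ll\eta_0$; thus $I\subset B(x,\delta_0)$ and $|I|\to0$ as $\delta_0\to0$, with $\ve_0$ and $\eta_0$ fixed constants independent of $x$. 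Using $|f_k(B(x,\delta_0))|\asymp f_k'(x)\,\delta_0$ (legitimate once distortion is controlled, which is why the argument must be run as a bootstrap over the prefixes via \eqref{eqn:vksumI}), the condition $S\delta_0\le(2M)^{-1}$ is equivalent to $\sum_{k}|f_k(B(x,\delta_0))|\le(2M)^{-1}$. So everything reduces to the following: \emph{construct the expanding word so that the total length of the orbit intervals $I_k=f_k(B(x,\delta_0))$ is at most $(2M)^{-1}$, uniformly in $\delta_0$ and in $x$.}

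This last point is the crux, and I expect it to be where the real work lies. Bounding $\sum_k|I_k|$ forces the orbit lengths to decay at least geometrically backward from scale $\eta_0$ down to scale $\delta_0$, which a naive greedy choice of generators does not guarantee. The remedy is a Sacksteder-type renormalization: run the expansion, but whenever $f_k'(x)$ fails to have increased by a fixed definite factor over a bounded number of steps, one uses the minimality of the $G$-action on $\Lambda$ --- for each $\eta>0$ there is $N(\eta)$ such that the $\cS^{*\le N(\eta)}$-orbit of every interval of length $\ge\eta$ meeting $\Lambda$ covers $\Lambda$, by a compactness argument --- to splice a bounded word onto the current one, returning the orbit interval (and the derivative at $x$) to a controlled intermediate scale, and then resume; after finitely many such renormalizations one obtains a word along which $f_k'(x)$ grows at least geometrically, so that $\sum_k|I_k|=O(\eta_0)$, which is $\le(2M)^{-1}$ once $\eta_0$ is small. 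All the constants produced ($\eta_0$, then $\ve_0$, the growth factor, and the covering numbers $N(\eta)$) depend only on $M$ and on $\Lambda$, not on $x$, which gives the asserted uniformity of $\ve_0$. The delicate bookkeeping --- phasing the renormalization so that the covering lemma is applied only at moderate scales, checking that the derivative at $x$ is genuinely restored after each splice, and bounding the number of splices before scale $\eta_0$ is reached --- is the main obstacle; alternatively, one may short-circuit it by invoking directly the expansion machinery for expandable points already developed in \cite{DKN09}.
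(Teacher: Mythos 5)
First, a point of comparison: the paper does not prove this statement at all. It is quoted verbatim as \cite[Proposition 6.4]{DKN09}, with the standing hypotheses of the section (finitely generated, no finite orbits, property $(\star)$ or $(\Lambda\star)$) being exactly the setting of that citation. So your closing fallback --- ``short-circuit by invoking directly the expansion machinery for expandable points already developed in \cite{DKN09}'' --- coincides with the paper's treatment, and within your write-up it is the only fully justified route.

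As a from-scratch argument, your proposal has a genuine gap precisely where you flag it, and the proposed fix does not close it. The reduction via Proposition~\ref{prop: distortion estimate} is fine: with your stopping rule at scale $\eta_0$, the requirement $S\delta_0\leq(2M)^{-1}$ amounts to $S=\sum_k f_k'(x)\ll g'(x)$, i.e.\ the intermediate derivatives along the word must decay geometrically backwards from the final time, uniformly in $x\in\Lambda'$. But unboundedness of $\{\,g'(x):g\in G\,\}$ (Theorem~\ref{thm: DKN09}(2), itself a result of \cite{DKN09} requiring $(\star)$/$(\Lambda\star)$) gives no control on $S$, and your renormalization mechanism --- splicing bounded-length words chosen by minimality whenever growth stalls --- is unsubstantiated. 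The stall happens exactly when the orbit interval approaches a point of $\NE$; there no element of bounded word length expands by a definite factor, and a bounded splice that merely relocates the interval (which minimality permits) can contract it drastically, with that contraction feeding back into $S$, so there is no reason the derivative at $x$ resumes geometric growth after finitely many splices. Tellingly, your sketch never uses property $(\star)$/$(\Lambda\star)$, whereas the actual expansion procedure of \cite{DKN09} uses it essentially: near a non-expandable point one iterates the specific elements $g_\pm$ fixing it, and it is their one-sided (parabolic-type) dynamics that keeps $\sum_k|I_k|$ summable during the escape from $\NE$ and yields the uniform constant $\ve_0$. Without that input the scheme, as described, does not yield the required uniform bound on $S$.
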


The radius $\ve_0$ will be fixed throughout this section. 
\begin{definition}
\label{def: expandable}
An interval $I$ is called \textit{$x$-expandable} if there exists $g\in G$ such that $gI=B(gx,\ve_0)$ and $\vk(g,I)\leq 1$. An interval $I$ is called \textit{expandable} if it is $x$-expandable for some $x \in \Lambda'$.
\end{definition}

For $n \in \NN$, consider the set
\[\cE_n=\lb{\,\ol I\sbs \SS^1: I \text{ is expandable and } |I|\in {[2^{-n-1},2^{-n}[}\,}.\]
Let $\wt\cE_n$ be a finite subset of $\cE_n$ with a maximum cardinality such that the intervals in $\wt\cE_n$ are pairwise disjoint. 
By Lemma~\ref{lem: cover and dim}, we have 
\[\limsup_{n\to+\infty}\frac{1}{n}\log\# \wt\cE_n\geq \dimH E. \]
where $E = \limsup_{n \to +\infty} \bigcup_{I \in \cE_n} I$.

By Proposition~\ref{thm: DKN09}, every point $x \in \Lambda'$ is contained in $\bigcup_{I \in \cE_n} I$ for infinitely many $n \in \NN$, that is, $x \in E$. 
Hence $\dimH E \geq \dimH \Lambda'$ and $\dimH \Lambda' = \dimH \Lambda$ by Theorem~\ref{thm: DKN09}. 
We conclude that
\begin{equation}
\label{eqn: growth of wtEn}
\limsup_{n\to+\infty}\frac{1}{n}\log\# \wt\cE_n\geq \dimH \Lambda. 
\end{equation}

By taking the inverses of the group elements involved in the expandable intervals, we find elements with prescribed contracting rates on an interval of constant length.
\begin{proposition}\label{prop: find generators 1}
Let $G\sbs \Diff_+^2(\SS^1)$ be a finitely generated subgroup without finite orbits in $\SS^1$ and  $\Lambda \sbs \SS^1$ be its unique minimal set.
Assume that $G$ satisfies property $(\star)$ or $(\Lambda\star).$
Let $\ve_0 > 0$ be the constant in Proposition~\ref{prop: DKN expandable}.
Then there exists $y_0 \in \Lambda$ and a sequence of subsets $\cG_n \sbs G$ 
such that for the closed interval $I_0:=\ol{ B(y_0,\ve_0/2)}$, we have
\begin{enumerate}
\item for any $n \in \NN$, the intervals $(g I_0)_{g \in \cG_n}$ are pairwise disjoint,
\item for any $n \in \NN$ and any $g \in \cG_n$, $|g I_0| \in {[2^{-n-3},2^{-n}[}$ and $\vk(g,I_0)\leq 1$,
\item $\limsup_{n \to +\infty}\frac{1}{n} \log \# \cG_n \geq \dimH \Lambda$.
\end{enumerate}

\end{proposition}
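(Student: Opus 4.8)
The plan is to invert the group elements witnessing the expandable intervals produced above. Concretely, for each $n$ and each $\ol I \in \wt\cE_n$, choose (by Definition~\ref{def: expandable}) an element $g_I \in G$ and a point $x_I \in \Lambda'$ with $g_I I = B(g_I x_I, \ve_0)$ and $\vk(g_I, I) \leq 1$, so that $g_I^{-1}$ sends the ball $B(g_I x_I, \ve_0)$ onto $I$ with small distortion. The difficulty is that the centers $g_I x_I$ vary with $I$, whereas we want a single fixed interval $I_0$. To fix this, first note that all the points $g_I x_I$ lie in $\Lambda$, which is compact; so there is a point $y_0 \in \Lambda$ and, for infinitely many $n$, a definite proportion of the intervals $\ol I \in \wt\cE_n$ whose associated centers $g_I x_I$ lie within $\ve_0/10$ (say) of $y_0$. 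Passing to that subcollection — call it $\wt\cE_n'$ — costs only a bounded factor in cardinality, hence does not affect the exponential growth rate \eqref{eqn: growth of wtEn}. Then set $I_0 := \ol{B(y_0, \ve_0/2)} \subset B(g_I x_I, \ve_0)$ for every $I$ in this subcollection, and let $\cG_n := \{\, g_I^{-1} : \ol I \in \wt\cE_n' \,\}$.

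Next I would verify the three claimed properties. For (1): $g_I^{-1}(I_0) \subset g_I^{-1}\bigl(B(g_I x_I,\ve_0)\bigr) = I$, and the intervals $I \in \wt\cE_n'$ are pairwise disjoint by construction of $\wt\cE_n$, so the $g_I^{-1}(I_0)$ are pairwise disjoint as well. (One should also check the $g_I$ are distinct, so that $\#\cG_n = \#\wt\cE_n'$; distinct disjoint images force distinct maps.) For (2): the distortion bound $\vk(g_I^{-1}, I_0) \leq \vk(g_I^{-1}, B(g_I x_I,\ve_0)) = \vk(g_I, I) \leq 1$ is immediate since $\vk$ is symmetric under $f \leftrightarrow f^{-1}$ on matched intervals. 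For the size estimate, the bound $\vk(g_I, I)\leq 1$ means $g_I'$ varies by a factor at most $e < 2^{3/2}$ on $I$; comparing the length $|g_I I| = 2\ve_0$ of the full image with $|g_I I_0|$, and using that $I_0$ is the concentric half-length subinterval of $B(g_Ix_I,\ve_0)$ together with $|I| \in [2^{-n-1}, 2^{-n}[$, one gets $|g_I^{-1} I_0| \in [2^{-n-3}, 2^{-n}[$ after absorbing the distortion constant (the exact constants $3$ and $2^{-n}$ may need a harmless adjustment, e.g. replacing $\ve_0/2$ by a smaller fixed fraction of $\ve_0$, but the shape of the statement is robust). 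For (3): since $\#\wt\cE_n' \gg \#\wt\cE_n$ with an absolute implied constant, $\limsup_n \frac1n \log \#\cG_n = \limsup_n \frac1n \log\#\wt\cE_n' = \limsup_n \frac1n\log\#\wt\cE_n \geq \dimH\Lambda$ by \eqref{eqn: growth of wtEn}.

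The main obstacle, and the only genuinely non-formal point, is the passage from a family of moving centers $g_I x_I$ to one fixed interval $I_0$: one must argue that choosing $y_0$ to be an accumulation point of the centers (over all $n$ and $I$) yields, for infinitely many $n$, a subcollection $\wt\cE_n'$ of uniformly positive density in $\wt\cE_n$. This requires a pigeonhole over a finite cover of $\Lambda$ by balls of radius $\ve_0/10$ together with the observation that $\limsup$ of a sum of finitely many sequences is attained (up to the finite number of terms) by one of them. Everything else — the distortion bookkeeping and the disjointness — is routine given the tools in Section~\ref{sec: distortion} and the definition of expandable intervals.
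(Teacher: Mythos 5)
Your proposal is correct and follows essentially the same route as the paper: the paper also inverts the maps attached to the expandable intervals and pigeonholes over a finite $\ve_0/2$-dense subset of $\Lambda$ to fix a single center $y_0$ while preserving the exponential growth rate \eqref{eqn: growth of wtEn}. One small remark: since the paper's $\log$ (hence $\vk$) is taken in base $2$, the distortion bound $\vk(g,I)\leq 1$ gives a factor $2$ (not $e$), so the constant $2^{-n-3}$ in item (2) comes out exactly and no adjustment of $\ve_0/2$ is needed.
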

\begin{proof}
Fix a finite $\ve_0/2$-dense subset $\cK$ of $\Lambda$.
Then, for any $x \in \Lambda'$ and any $x$-expandable interval $I$ with $g \in G$ being the corresponding diffeomorphism, the point $gx$ is $\ve_0/2$-close to some point $y$ in $\cK$ and hence $\ol{B(y,\ve_0/2)}\sbs \ol{B(gx,\ve_0)} = g\ol I$. 
For $n \in \NN$, let $\wt\cE_{n,y}$ denote the subset of $\wt\cE_n$ consisting of such interval $\ol I$'s, so that $\sum_{y \in \cK}\#\wt\cE_{n,y} \geq \#\wt\cE_n$.
Thus, by \eqref{eqn: growth of wtEn} and the pigeonhole principle, there is $y_0 \in \cK$ such that $\limsup_{n \to +\infty}\frac{1}{n} \log \# \wt\cE_{n,y_0}  \geq \alpha$.
We check easily that the inverses of the diffeomorphisms associated to the expandable intervals in $\wt\cE_{n,y_0}$ satisfies the desired properties.
\end{proof}

\subsection{Proof of the variational principle (Theorems \ref{thm: real analytic variational principle}, \ref{thm: IFS approximation} and \ref{thm: C2 dimension variation})}\label{sec: dimension variation}
The diffeomorphisms constructed in Proposition \ref{prop: find generators 1} might map the interval $I_0$ outside $I_0$.
To bring their images back to $I_0$, we apply the following lemma, which is a strengthening of the fact that every $G$-orbit in $\Lambda$ is dense.
Here we assume additionally that $G$ does not preserve any Borel probability measure.
\begin{lemma}\label{lem: contracting constant}
Let $G\sbs \Diff_+^2(\SS^1)$ be a finitely generated subgroup without invariant probability measures on $\SS^1$. 
Let $\Lambda \sbs \SS^1$ be the unique minimal set of $G.$ 
Then there exists $\ve_1>0$ such that for any $x,y\in\Lambda$ and any $\ve> 0$ there exists $f\in G$ such that $f(B(x,\ve_1))\sbs B(y,\ve).$
\end{lemma}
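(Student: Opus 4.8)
\textbf{Proof plan for Lemma~\ref{lem: contracting constant}.}
The plan is to first find, for each $x \in \Lambda$, a fixed uniform radius $\varepsilon_1(x)$ and an element that expands $B(x,\varepsilon_1(x))$ to macroscopic size, then to contract that macroscopic interval near an arbitrary target $y \in \Lambda$, and finally to make $\varepsilon_1$ independent of $x$ by a compactness argument. Concretely, since $G$ has no invariant probability measure, by Lemma~\ref{lem: invariant measure and finite orbit} it acts minimally or has a finite orbit; having no finite orbit (it preserves the infinite minimal set $\Lambda$, and a group with a finite orbit and no invariant measure is impossible since the uniform measure on a finite orbit is invariant), we are in the setting where $\Lambda$ is the unique minimal set. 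The key input is the existence of a hyperbolic (in fact topologically contracting) element: by Lemma~\ref{lem: 2dr fixed points} applied to the uniform measure $\mu$ on a finite symmetric generating set of $G$ — whose support does not preserve any probability measure — the semigroup $T_\mu = G$ contains an element $h$ all of whose fixed points are hyperbolic, in particular $h$ has an attracting fixed point $p$ with an open basin $B(p,\varepsilon_0')$ on which $h^n \to p$ uniformly and $(h^n)'|_{B(p,\varepsilon_0')} \to 0$ uniformly. Since $p \in \Lambda$ and $\Lambda$ is closed and $G$-invariant, and since the attracting fixed points constructed in Lemma~\ref{lem: 2dr fixed points} lie in the supports of the stationary measures which are exactly the minimal set, we have $p \in \Lambda$.

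Next I would use minimality to transport. Fix $y \in \Lambda$ and $\varepsilon > 0$. Since the $G$-orbit of $p$ is dense in $\Lambda$ (minimality), choose $g_1 \in G$ with $g_1(p) \in B(y,\varepsilon/2)$; then for $n$ large, $g_1 h^n$ maps the whole ball $B(p,\varepsilon_0')$ into $B(y,\varepsilon)$ by continuity of $g_1$ and uniform convergence $h^n \to p$ on $B(p,\varepsilon_0')$. On the source side, fix $x \in \Lambda$; again by minimality there is $g_0 \in G$ with $g_0(x) \in B(p,\varepsilon_0'/2)$, hence $g_0(B(x,\varepsilon_1(x))) \subset B(p,\varepsilon_0')$ for some $\varepsilon_1(x) > 0$ depending on $x$ (and on $g_0$, via its modulus of continuity). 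Composing, $f = g_1 h^n g_0 \in G$ satisfies $f(B(x,\varepsilon_1(x))) \subset B(y,\varepsilon)$.

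The remaining, and only slightly delicate, point is to make $\varepsilon_1$ uniform over $x \in \Lambda$. For each $x$ we have an open neighborhood $B(x,\varepsilon_1(x))$ and an element $g_0 = g_0(x)$ with $g_0(B(x,\varepsilon_1(x))) \subset B(p,\varepsilon_0')$. By openness, $g_0(x)$ has a neighborhood $V_x$ with $g_0(x) \in V_x$, $g_0^{-1}(V_x) \ni x$; shrinking, there is a smaller ball $B(x,\varepsilon_1'(x))$ whose image under $g_0$ is compactly contained in $B(p,\varepsilon_0')$. The balls $\{B(x,\varepsilon_1'(x)/2)\}_{x \in \Lambda}$ cover the compact set $\Lambda$; extract a finite subcover indexed by $x_1,\dots,x_m$ with associated elements $g_0^{(1)},\dots,g_0^{(m)}$ and radii $\varepsilon_1'(x_1),\dots,\varepsilon_1'(x_m)$. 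Set $\varepsilon_1 = \tfrac12 \min_k \varepsilon_1'(x_k)$. Then any $x \in \Lambda$ lies in some $B(x_k,\varepsilon_1'(x_k)/2)$, so $B(x,\varepsilon_1) \subset B(x_k,\varepsilon_1'(x_k))$ and $g_0^{(k)}(B(x,\varepsilon_1)) \subset B(p,\varepsilon_0')$; now run the argument of the previous paragraph with $g_0 = g_0^{(k)}$ to produce $f = g_1 h^n g_0^{(k)}$ with $f(B(x,\varepsilon_1)) \subset B(y,\varepsilon)$. The main obstacle is purely the uniformization of $\varepsilon_1$, which the compactness of $\Lambda$ handles cleanly; everything else is minimality plus the existence of a topologically hyperbolic element, both already available in the excerpt. (One should double-check that the hyperbolic element supplied by Lemma~\ref{lem: 2dr fixed points} has an attracting fixed point in $\Lambda$ — this follows because its attracting fixed points lie in the closures of the ergodic stationary measures, which by Theorem~\ref{thm: supports of stationary measures} are exactly the $T_\mu$-minimal sets, i.e.\ $\Lambda$; alternatively use Lemma~\ref{lem: A R and f} with $A, R \in \supp\ul\nu^\pm$, noting $\supp\nu_i^\pm$ is the minimal set.)
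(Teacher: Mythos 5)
Your proof is correct, but it takes a genuinely different route from the paper's. The paper fixes a symmetric generating set, considers the induced random walk, and chooses $\ve_1$ so that $\sup_{x\in\SS^1}\nu(B(x,\ve_1))<\tfrac{1}{2r}$ for the (atomless, unique) stationary measure $\nu$ with $\supp\nu=\Lambda$; then for each $x$ the ball $B(x,\ve_1)$ avoids $\Xi(\omega^+)$ with probability $>\tfrac12$, hence lies in $W^s(\omega^+)$ and contracts under $f^n_{\omega^+}$, and Birkhoff genericity of a point of $B(x,\ve_1)$ for $\P^+\times\nu$ produces a large $n$ with $f^n_{\omega^+}(B(x,\ve_1))\sbs B(y,\ve)$. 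So the uniformity of $\ve_1$ comes for free from the measure bound, and the proof only uses the Section~4 structure theory. You instead extract a single uniformly hyperbolic element $h$ via Lemma~\ref{lem: 2dr fixed points}, use minimality twice (to push $x$ into the basin of the attracting fixed point $p$ and to carry $p$ near $y$), and then uniformize $\ve_1$ over $x\in\Lambda$ by a finite subcover of $\Lambda$. This is a clean deterministic argument, at the cost of importing the hyperbolic-element construction of Section~7.1 (which itself rests on the same random-walk structure theorems, and which the paper's proof of this lemma deliberately avoids); there is no circularity, since Section~7.1 does not use this lemma. Two small points to tighten: your justification that $p\in\Lambda$ via "attracting fixed points lie in the supports of the stationary measures" is stated loosely — the quickest argument is that $p=\lim_{n\to\infty}h^n(a)$ for the nearby point $a\in A\sbs\Lambda$ (using $h(B(a,\rho))\sbs B(a,\rho)$ and $h'<1$ there), and $\Lambda$ is closed and $G$-invariant; and your appeal to Lemma~\ref{lem: invariant measure and finite orbit} is misquoted (that lemma assumes an invariant measure exists), though this is harmless because the uniqueness of the minimal set is already part of the hypothesis and the absence of finite orbits follows at once from the absence of invariant measures.
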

\begin{proof}
Let $\cG$ be a finite symmetric generating set of $G,$ and let $\mu$ be the uniform probability measure on $\cG$. 
It induces a random walk on $\SS^1.$ 
By Theorem~\ref{thm: supports of stationary measures}, as $\Lambda$ is the unique minimal set of the semigroup generated by $\supp\mu=\cG,$ there exists a unique $\mu$-stationary measure $\nu$ and $\supp\nu=\Lambda.$ 
Since $\mu$ is symmetric,  $\nu$ is also the unique $\mu^{-1}$-stationary. 
Recall the constant $r$ and $\Xi(\omega^+)$ in Theorem \ref{thm: structure of random walk 1}. 

In view of Lemma~\ref{lem: not atomic}, we take $\ve_1>0$ such that 
$\sup_{x \in \SS^1} \nu(B(x,\ve_1)) <1/2r.$
Then, by Theorem~\ref{thm: structure of random walk 1}(2),
\[\P^+\lb{\omega^+\in\Sigma^+: B(x,\ve_1) \cap \Xi(\omega^+)\ne\vn}\leq r  \nu(B(x,\ve_1))<\frac{1}{2}.\]
In view of Theorem~\ref{thm: structure of random walk 2}, the set of $\omega^+ \in \Sigma^+$ such that $B(x,\ve_1) \sbs W^s(\omega^+)$ has positive $\P^+$-measure.
For such $\omega^+$, we have $|f_{\omega^+}^n B(x,\ve_1)|\to 0$ as $n\to+\infty.$

Since $x \in \Lambda=\supp\nu,$ we have $\nu(B(x,\ve_1))>0$ so that there is $\omega^+$ as above and $x'\in B(x,\ve_1)$ such that $(\omega^+,x')$ is generic for the
measure preserving system $(\Sigma^+\times\SS^1,\P^+\times\nu,F^+).$ 
Moreover, $\nu(B(y,\ve/2))>0$ since $y \in \Lambda=\supp\nu$.
Thus, by Birkhoff's ergodic theorem, there exists $n$ arbitrarily large satisfying $f_{\omega^+}^n(x')\in B(y,\ve/2)$. 
For $n$ large enough, we also have $|f_{\omega^+}^n(B(x,\ve_1))|<\ve/2.$ 
Let $f=f_{\omega^+}^n\in G,$ then $f(B(x,\ve_1))\sbs B(y,\ve).$
\end{proof}

Once the images are brought back to $I_0$, we have an IFS with controlled contracting rate and satisfies the open set condition. This leads to a proof of Theorem~\ref{thm: C2 dimension variation}.
\begin{proof}[Proof of Theorem \ref{thm: C2 dimension variation}]
Let $\alpha=\dim_{\mr H}\Lambda,$ then $\dim_{\mr H}\nu\leq \dim_{\mr H}\supp\nu\leq \alpha$ for every $\nu$ supported on $\Lambda.$ 
It suffices to show that $\dim_{\mr H}\nu$ can be arbitrarily close to $\alpha$ for stationary measures $\nu$ stated in the theorem. 

Let $\ve_1> 0$ be as in Lemma~\ref{lem: contracting constant}.
We can cover $\Lambda$ by finitely many open intervals $B(x_1,\ve_1),\cdots, B(x_k,\ve_1)$ with $x_1,\cdots,x_k \in \Lambda$. 
Then every interval which is short enough and intersects $\Lambda$ is contained in $B(x_i,\ve_1)$ for some $1 \leq i\leq k$.

Let $I_0 =\ol{B(y_0,\ve_0/2)}$ and $(\cG_n)_{n \in \NN}$ be as in Proposition~\ref{prop: find generators 1}.
For $n \in \NN$ and $1 \leq i \leq k$, consider $\cG_{n,i} = \lb{ g \in \cG_n : gI_0 \sbs B(x_i,\ve_1)}$ so that $\cG_n \sbs \bigcup_{i =1}^k \cG_{n,k}$ for $n$ sufficiently large.
By the pigeonhole principle, there is $i$ such that $\limsup_{n \to +\infty}\frac{1}{n} \log \# \cG_{n,i} \geq \alpha$.
Fix this $i$.
Apply Lemma~\ref{lem: contracting constant} to $x_i$ and $y_0$ to obtain $f_0 \in G$ such that $f_0(B(x_i,\ve_1)) \sbs I_0$. 
Set $\cF_n = \lb{f_0 \circ g : g \in \cG_{n,i} }$ so that
\begin{equation}
\label{eqn: property Fn 1}
\forall f \in \cF_n,\quad f I_0 \sbs I_0
\end{equation}
From the properties of $\cG_n$, we obtain
\begin{equation}
\label{eqn: property Fn 2}
(f I_0)_{f \in \cF_n} \text{ is a family of pairwise disjoint closed intervals}
\end{equation}
and
\begin{equation}
\label{eqn: property Fn 3}
\forall f \in \cF_n,\, \forall x \in I_0,\quad \abs{\log f'(x) + n}\leq C \text{ for some $C$ which is independent of $n$}.
\end{equation}
Here $C$ can be chosen only depends on $f_0$.

For $n$ large enough, $\cF_n$ is contracting on $I_0$ so that the action of $\cF_n$ on $I_0$ forms an IFS with the open set condition.
Thus, letting $\mu$ be the uniform measure on $\cF_n \sbs G$, there is a unique $\mu$-stationary measure $\nu$ on $I_0$, which is obviously ergodic.
It is supported on the attractor of the IFS :
\[\supp\nu=\bigcap_{m=0}^\infty\bigcup_{f\in \cF_n^{*m}}f(I_0).\]
Remember $y_0 \in I_0 \cap \Lambda$.
So every point in the attractor is a limit of points in $G y_0 \subset \Lambda$ and hence $\supp\nu\sbs\Lambda.$
	
To estimate the dimension of $\nu$, note that by~\eqref{eqn: property Fn 2} and\eqref{eqn: property Fn 3}, for any $m \in \NN$, any ball of radius $2^{-(n+C)m}|I_0|$ intersects at most 2 intervals of the form $f I_0$, $f \in \cF_n^{*m}$.
Thus, picking $m= \lfloor \frac{-\log \rho + \log |I_0|}{n+C} \rfloor$, we obtain
\[\forall \rho > 0, \quad  \sup_{x \in \SS^1}  \log \nu(B(x,\rho)) \leq \frac{\log \# \cF_n}{n+C}\log\frac{\rho}{|I_0|} + \log\#\cF_n +10,\]
which implies 
\[ \dimH \nu \geq \frac{\log \# \cF_n}{n + C}.\]
This concludes the proof since $\limsup_{n \to +\infty}\frac{1}{n} \log \# \cF_n =\limsup_{n \to +\infty}\frac{1}{n} \log \# \cG_{n,i} \geq \alpha$.
\end{proof}

\begin{proof}[Proof of Theorems \ref{thm: real analytic variational principle} and \ref{thm: IFS approximation}.]
Since $G$ is contained in $\Diff_+^\omega(\SS^1)$ and $G$ preserves an exceptional minimal set, $G$ satisfying the conditions of Theorem \ref{thm: C2 dimension variation} by combining Theorem \ref{thm: DKN18} and Lemma \ref{lem: invariant measure and finite orbit}. Theorem \ref{thm: real analytic variational principle} is then a direct consequence of Theorem \ref{thm: C2 dimension variation}. 

To prove Theorem \ref{thm: IFS approximation}, we may recall the prove of Theorem \ref{thm: real analytic variational principle} above. Assuming $n$ large enough, the subset $\cF_n\sbs G$ and the interval $I_0$ form a contracting IFS with the open set condition by \eqref{eqn: property Fn 1}, \eqref{eqn: property Fn 2}, \eqref{eqn: property Fn 3}. The attractor of this IFS is contained in $\Lambda$ and supports a measure with Hausdorff dimension larger than $\dim_{\mr H}\Lambda-\ve,$ we obtain Theorem \ref{thm: IFS approximation}.
\end{proof}

\section{The dynamical critical exponents}\label{se:9}
In this section, we will establish the equality between the dimension of the minimal set and the dynamical critical exponents, both the $C^1$ one, $\delta(G)$, and the $C^2$ one, $\delta_2(G)$.
Namely, we prove Theorem~\ref{thm: C1 dynamical critical exponent} and Theorem~\ref{thm: C2 dynamical critical exponent}.

The strategies of both proofs share some similarities. 
To show that the critical exponents are bounded below by $\dimH(\Lambda)$, we only need to find many elements with bounded derivatives (and bounded distortion for the $\delta_2(G)$ case).
This is already done in Section~\ref{sec: choice of generators}.
Thus, the main task in this section is to show that the dynamical critical exponent does not exceed $\dimH \Lambda.$ 

The idea is to show a variational principle of the critical exponent. Specifically, we construct measures supported on $\Lambda$ with dimensions arbitrarily close to the dynamical critical exponent.
The definition of the dynamical critical exponent provide us with, for large $n$, about $2^{\delta(G)n}$ elements in $G$ whose derivatives can be bounded from below by $2^{-n}$ on some interval of constant length. 
After using a pigeonhole argument and appending words of bounded length, we can make these elements preserve a common interval. 
Denote by $\cA$ the set of these elements and consider the random walk on this interval induced by the uniform probability measure on $\cA$. 
The absolute value of the Lyapunov exponent will be at most $n$. 
If $\cA$ freely generate a free sub-semigroup, then the random walk entropy would be $\log\# \cA$, that is, about $\delta(G)n$ and then, by Theorem~\ref{thm: dim formula on interval}, the dimension of the stationary measure of this random walk would be about $\delta(G)$, allowing us to conclude. 
Finding a large free sub-semigroup is therefore the crucial point.
This is where the proofs of Theorems~\ref{thm: C1 dynamical critical exponent} and~\ref{thm: C2 dynamical critical exponent} differ. 

For the case of the $C^2$ dynamical critical exponent, we have a distortion control on the elements of $\cA$. 
Then we can find, with the help of Lemma~\ref{lem: approximate identity}, a large subset of $\cA$ which have pingpong dynamics on some interval and then conclude as in the proof of Theorem~\ref{thm: C2 dimension variation}.
Details will be given in Section~\ref{sec: C2 critical exponents}.

For the case of the $C^1$ dynamical critical exponent, we do not have distortion control anymore. 
Instead of looking for pingpong dynamics on the circle, we will make use of the combinatorics of the free group.
Details will be given in Section~\ref{sec: C1 critical exponents}.

\subsection{The \texorpdfstring{$C^1$}{C\^{}1} dynamical critical exponent (Proof of Theorem \ref{thm: C1 dynamical critical exponent})}
\label{sec: C1 critical exponents}
The goal of this subsection is to prove the following statement and then deduce Theorem~\ref{thm: C1 dynamical critical exponent} from it.
\begin{proposition}[Variational principle of the critical exponent]\label{prop: C1 dynamical critical exponent}
	Let $G\sbs\Diff^2_+(\SS^1)$ be a finitely generated, locally discrete, free subgroup. Assume that $G$ does not preserve invariant probability measures on $\SS^1$ and let $\Lambda$ be the unique minimal set of $G.$ Then for every $\ve>0,$ there exists a Borel probability measure $\nu$ on $\SS^1$ with $\supp\nu\sbs\Lambda$ and $\dim_{\mr H}\nu\geq \delta(G)-\ve.$
\end{proposition}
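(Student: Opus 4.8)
The plan is to realize $\delta(G)$ as a supremum of Hausdorff dimensions of stationary measures supported on $\Lambda$, using the machinery developed in Sections \ref{se:4}--\ref{se:8}. Fix $\ve>0$. By the definition of $\delta(G)$ (Definition \ref{def: C1 critical exponent}), there is a small $\ve'>0$ and arbitrarily large $n$ such that the set $\cA_n^{\circ}=\{g\in G:\exists x\in\Lambda,\,g'|_{B(x,\ve')}\geq 2^{-n}\}$ has cardinality at least $2^{(\delta(G)-\ve/4)n}$. Using a Besicovitch/pigeonhole covering of $\Lambda$ by finitely many balls of radius $\ve'/2$ centered on $\Lambda$, we may assume all the relevant points $x$ lie in a single ball $B(x_0,\ve'/2)$ with $x_0\in\Lambda$, at the cost of shrinking the exponent by a further arbitrarily small amount; then each such $g$ satisfies $g'\geq 2^{-n}$ on $I_0:=\overline{B(x_0,\ve'/2)}$. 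Note that these $g$ need not map $I_0$ into itself, and, since we are only in $C^1$, we have no distortion control — this is exactly why the argument must differ from the $C^2$ case. The first key step is therefore to ``close up'' the family: using Lemma \ref{lem: contracting constant} (applicable since $G$ is finitely generated, without invariant measure, with minimal set $\Lambda$) applied to bring $g(I_0)$ back inside $I_0$ via a fixed element $f_0$ of bounded word length, and a further pigeonhole to ensure $g(I_0)$ lands in a fixed small ball, we obtain a subset $\cA\sbs G$ with $\#\cA\geq 2^{(\delta(G)-\ve/2)n}$, an interval $I\sbs I_0$ intersecting $\Lambda$, such that every $f\in\cA$ satisfies $f(I)\sbs I$ and $f'|_I\geq 2^{-n-C}$ for a constant $C$ depending only on the generating set (the upper bound $f'|_I\leq 2^{Cn}$ comes for free from the generating set, or one discards a bounded-length tail).

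The second step is to estimate the Lyapunov exponent and the random walk entropy of the random walk driven by $\mu=$ uniform measure on $\cA$. Since $\cA\sbs G$ and $G$ is a \emph{free} group, the semigroup generated by $\cA$ need not be free as a semigroup — but $\cA$ is a finite subset of a free group, and by a standard combinatorial fact (this is the crucial point) one can pass to a large subset $\cA'\sbs\cA$ with $\#\cA'\geq \#\cA^{\,1-o(1)}$ that freely generates a free sub-semigroup; alternatively, one uses that $h_{\mathrm{RW}}(\mu)$ is controlled from below by the exponential growth rate of the semigroup, which for a subset of a free group is $\log\#\cA$ up to subexponential corrections via a ping-pong/normal-form argument on reduced words. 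Either way, we get $h_{\mathrm{RW}}(\mu)\geq (\delta(G)-\ve)n$ for the chosen $n$ (after relabeling $\ve$). On the other hand $|\lambda(\mu,\nu)|\leq n+C$ for any $\mu$-stationary $\nu$, since $f'|_I\geq 2^{-n-C}$ and $I$ is preserved. Because the whole setup lives inside the interval $I$ (with $f(I)\sbs I$ for all $f\in\cA$), Theorem \ref{thm: dim formula on interval} (interval version) applies: either $\dim\nu=h_{\mathrm{RW}}(\mu)/|\lambda(\mu,\nu)|\geq (\delta(G)-\ve)n/(n+C)$, or there are $g_m\neq f_m\in T_\mu\sbs G$ and a subinterval $J$ with $g_m^{-1}f_m\to\Id$ on $J$ in $C^1$ — which is impossible since $G$ is locally discrete. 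Hence the first alternative holds, and since $\supp\nu\sbs\overline{Gy}\sbs\Lambda$ for any $y\in I\cap\Lambda$ (using that $I$ is $\cA$-invariant and meets $\Lambda$, which is $G$-invariant and closed), taking $n$ large enough makes $\dim_{\mathrm H}\nu=\dim\nu\geq\delta(G)-\ve$, using Young's theorem $\dim_{\mathrm H}\nu=\dim\nu$ for exact-dimensional $\nu$.

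The main obstacle is the combinatorial step: extracting from an arbitrary finite subset $\cA$ of a free group a subset of nearly the same cardinality that freely generates a free semigroup, or equivalently showing $h_{\mathrm{RW}}$ of the uniform measure on $\cA$ is at least $(1-o(1))\log\#\cA$. The naive hope that $\cA$ itself is free fails (e.g.\ $\cA$ could contain $a$ and $a^2$). The fix is to write each element of $\cA$ in reduced form with respect to a free basis of $G$, and argue that a large proportion of them, after possibly truncating to their ``stable'' prefix/suffix of controlled length, form a ping-pong family in the free group; this requires a uniform bound on word lengths, which we arrange by noting that $f'|_{I}\geq 2^{-n-C}$ and $\|h'\|_{C^0}$ is bounded on the generating set, forcing each $f\in\cA$ to have word length $O(n)$, so that the reduced words live in a ball of radius $O(n)$ in the free group, where counting and ping-pong extraction are standard. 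A careful but routine execution of this combinatorial lemma (which I expect the authors to isolate as a separate statement) completes the proof; I would defer its full details and state it as a lemma here.
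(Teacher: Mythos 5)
Your overall architecture is the same as the paper's: reduce to a single ball $B(x,\ve_1)$ via Lemma~\ref{lem: equiv def of C1 CE}/Lemma~\ref{lem: contracting constant}, turn the $\approx 2^{(\delta(G)-\ve)n}$ elements with $g'\geq 2^{-n}$ into a random walk on a common invariant interval, bound $|\lambda|$ by $n+O_G(1)$, get $h_{\mathrm{RW}}=\log\#\cA$ from freeness, and conclude with Theorem~\ref{thm: dim formula on interval} plus local discreteness to kill the second alternative. But there are two genuine gaps, and they sit exactly where the paper's proof does its real work. First, your ``closing up'' step fails as stated: for $g\in\cA$ you only have a \emph{lower} bound $g'|_{B(x,\ve_1)}\geq 2^{-n}$ and no distortion control in this $C^1$-flavoured situation, so $g(I_0)$ can be a long interval; it need not lie in any small ball, a pigeonhole ``to ensure $g(I_0)$ lands in a fixed small ball'' has nothing to pigeonhole on, and a single fixed corrector $f_0$ cannot map all these spread-out, possibly long images back into $I_0$. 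The paper gets around this by a different mechanism: using a perfect pingpong pair (Proposition~\ref{prop: generate pingpong pair}) it produces, inside $I_0$, $2^s$ disjoint subintervals, so that for \emph{each} $g$ at least one of them, $I(g)$, has $g$-image of length $<\ve_2$; the correcting word (of bounded length, depending on $g$ only through finitely many data) is then \emph{appended} to $g$, and it is these appended words that also carry the combinatorial structure used later.

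Second, the step you defer as ``a standard combinatorial fact'' --- that any subset $\cA$ of a ball of radius $O(n)$ in a free group with $\#\cA\geq 2^{cn}$ contains a subset of size $\#\cA^{1-o(1)}$ freely generating a free sub-semigroup --- is not standard, and it is precisely the novelty of this part of the paper. The obvious attempt (pigeonhole on the first and last letters to kill cancellation, then extract a prefix antichain as in Fact~\ref{Fact: free subsemi}) breaks down when the popular pair satisfies $b=e^{-1}$: then products cancel, and pigeonholing on the conjugating prefix $p$ with $g=p\bar g p^{-1}$ costs an exponential factor since $p$ can be as long as $n/2$. The paper does \emph{not} extract a free subfamily from $\cA$ by pure combinatorics; it modifies each $g$ to $hg$ with $h\in\cH_g$ of bounded length and controlled cancellation $\cl(h,g)\leq\ell_0$, which makes the subsequent pigeonholes (on $\lcp(g,h^{-1})$, on beginning/ending letters) affordable and yields the cancellation-free, prefix-free family $\cA_8$ of Lemma~\ref{lem: A8} --- and this modification is the same one that fixes the invariant interval, so the combinatorics and the dynamics cannot be decoupled the way your two-step plan assumes. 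Your fallback (``$h_{\mathrm{RW}}(\mu)$ is controlled from below by the exponential growth rate of the semigroup'') is also backwards: growth of $\supp\mu^{*m}$ only bounds $H(\mu^{*m})$ from \emph{above}; a lower bound on entropy is exactly what near-unique factorization is needed for. Finally, a minor point: to invoke Theorem~\ref{thm: dim formula on interval} on $I$ you must also rule out a common invariant probability measure for the maps restricted to $I$ (the paper arranges disjointness of two images, Lemma~\ref{lem: A8}(3)); your construction does not address this.
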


First, let us give an equivalent definition of $\delta(G)$.
\begin{lemma}
\label{lem: equiv def of C1 CE}
Let $G \sbs \Diff^2_+(\SS^1)$ be a finitely generated subgroup preserving no  invariant probability measures on $\SS^1$. 
Let $\Lambda \sbs \SS^1$ be its unique minimal set.
Let $\ve_1 > 0$ be the constant of Lemma~\ref{lem: contracting constant}.
For any $x \in \Lambda$, we have
\[\delta(G) = \limsup_{n \to +\infty} \frac{1}{n} \# \lb{\, g\in G : g'|_{B(x,\ve_1)}\geq 2^{-n} \,}.\]
\end{lemma}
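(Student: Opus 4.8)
\textbf{Proof plan for Lemma~\ref{lem: equiv def of C1 CE}.}
The plan is to show that the two quantities
\[
\delta(G) = \lim_{\ve\to 0^+}\limsup_{n\to\infty}\frac1n\log\#\lb{g\in G:\exists x\in\Lambda,\ g'|_{B(x,\ve)}\geq 2^{-n}}
\]
and, for a fixed $x\in\Lambda$,
\[
\delta_x := \limsup_{n\to\infty}\frac1n\log\#\lb{g\in G:g'|_{B(x,\ve_1)}\geq 2^{-n}}
\]
coincide. The inequality $\delta_x \leq \delta(G)$ is essentially immediate: for each fixed $\ve<\ve_1$, every $g$ with $g'|_{B(x,\ve_1)}\geq 2^{-n}$ certainly satisfies $g'|_{B(x,\ve)}\geq 2^{-n}$ with the witness point $x\in\Lambda$, so the counting set for $\delta_x$ is contained in the counting set defining the inner $\limsup$ for that $\ve$; letting $\ve\to 0$ afterwards only can increase the limit, giving $\delta_x\leq\delta(G)$. (Here one uses that $B(x,\ve_1)$ is a fixed interval, so ``$g'|_{B(x,\ve_1)}\geq 2^{-n}$'' is a stronger condition than ``$g'|_{B(x,\ve)}\geq 2^{-n}$'' for the particular point $x$.)

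For the reverse inequality $\delta(G)\leq\delta_x$, fix $\ve\in(0,\ve_1)$ and $n$, and let $\cG_{n,\ve}=\lb{g\in G:\exists y\in\Lambda,\ g'|_{B(y,\ve)}\geq 2^{-n}}$. For $g\in\cG_{n,\ve}$ pick such a witness $y=y(g)\in\Lambda$. By Lemma~\ref{lem: contracting constant} applied to the pair $x$ (the fixed point) and $y(g)$, there is $f_g\in G$ with $f_g(B(x,\ve_1))\sbs B(y(g),\ve)$. Since $G$ is finitely generated and $\Lambda$ is compact, a standard argument shows we may take these $f_g$ from a \emph{finite} subset $\cF\sbs G$: cover $\Lambda$ by finitely many balls $B(y_1,\ve/2),\dotsc,B(y_N,\ve/2)$ with $y_j\in\Lambda$, choose once and for all $f_j\in G$ with $f_j(B(x,\ve_1))\sbs B(y_j,\ve/2)$ (Lemma~\ref{lem: contracting constant}), and for $g\in\cG_{n,\ve}$ with $y(g)\in B(y_j,\ve/2)$ set $f_g=f_j$, which then satisfies $f_g(B(x,\ve_1))\sbs B(y(g),\ve)$. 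Now the composition $h_g := g\circ f_g\in G$ satisfies, for every $z\in B(x,\ve_1)$,
\[
h_g'(z) = g'(f_g(z))\cdot f_g'(z) \geq 2^{-n}\cdot \min_{j}\ \inf_{B(x,\ve_1)} f_j' \geq 2^{-n-C},
\]
where $C=C(\cF,\ve_1)>0$ depends only on the finitely many $f_j$ and the fixed interval. Thus $h_g$ lies in the counting set defining $\delta_x$ at level $n+C$.

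The final step is to control the multiplicity of the map $g\mapsto h_g$. Since $g$ ranges over at most $\#\cG_{n,\ve}$ elements, $f_g$ ranges over the finite set $\lb{f_1,\dotsc,f_N}$, and $g = h_g\circ f_g^{-1}$ is determined by the pair $(h_g, f_g)$, the map $g\mapsto h_g$ is at most $N$-to-one. Hence
\[
\#\lb{h\in G:h'|_{B(x,\ve_1)}\geq 2^{-(n+C)}} \ \geq\ \frac1N\,\#\cG_{n,\ve}.
\]
Taking $\frac1n\log$ of both sides, letting $n\to\infty$ (the shift by $C$ and the factor $1/N$ are sub-exponential and disappear in the limit), and finally letting $\ve\to 0^+$, we obtain $\delta(G)\leq\delta_x$. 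Combined with the easy direction, this gives $\delta_x=\delta(G)$ for every $x\in\Lambda$. The only mildly delicate point is the uniformity in the choice of the ``connecting'' elements $f_g$, which is handled exactly by the compactness-of-$\Lambda$ plus finiteness argument above; everything else is bookkeeping with the chain rule and the definition of $\limsup$.
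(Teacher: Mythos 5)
Your proof is correct and follows essentially the same route as the paper: the easy containment for one direction, and for the other direction compactness of $\Lambda$ to reduce to finitely many reference balls plus Lemma~\ref{lem: contracting constant} to produce connecting elements whose composition (with subexponential loss in the derivative bound) transfers the count to $B(x,\ve_1)$. The only cosmetic difference is that the paper pigeonholes first to fix a single reference ball and then composes with one element $f$, whereas you keep all $g$'s, use finitely many connecting maps, and bound the multiplicity of $g\mapsto g\circ f_g$ by $N$ — the same argument with different bookkeeping.
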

This lemma is a particular case of general discussions about dynamical critical exponents on sets, see Lemma \ref{lem: union critical exp} and Corollary \ref{cor: CE in minimal}.

\begin{proof}
By the definition of $\delta(G)$, for any $\ve > 0$, there is $\ve' > 0$ such that 
\[ \limsup_{n \to +\infty} \frac{1}{n} \# \lb{\, g\in G:\exists y \in \Lambda,\, g'|_{B(y,\ve')}\geq 2^{-n}\,} \geq \delta(G) - \ve.\]
Let $\{x_1,\dotsc,x_k\}$ be a $\frac{\ve'}{2}$-dense subset of $\Lambda$,
so that for any $y \in \Lambda$, there is $i \in \{1,\dotsc,k\}$ such that $B(x_i,\ve'/2) \sbs B(y,\ve')$. 
By the pigeonhole principle, there is $i \in \{1,\dotsc,k\}$ such that
\[ \limsup_{n \to +\infty} \frac{1}{n} \# \lb{\, g\in G: g'|_{B(x_i,\ve'/2)} \geq 2^{-n}\,} \geq \delta(G) - \ve.\]
By Lemma~\ref{lem: contracting constant}, there is $f \in G$ such that $fB(x,\ve_1) \sbs B(x_i,\ve'/2)$.
Considering $g \mapsto gf$, we obtain
\[ \limsup_{n \to +\infty} \frac{1}{n} \# \lb{ g\in G: g'|_{B(x,\ve_1)} \geq 2^{-n}} \geq \delta(G) - \ve.\]
Since $\ve>0$ is arbitrary, this proves the lemma.
\end{proof}

\begin{proof}[Proof of Proposition~\ref{prop: C1 dynamical critical exponent}]
Let $\cS$ be a free generating set of $G$ of $k$ elements.
Let $\cG = \cS \cup \cS^{-1}$.
Each element $g\in G$ can be written uniquely in its \textit{reduced form} $g=\gamma_m \dotsm \gamma_2\gamma_1$ where $\gamma_i\in\cG$ and $\gamma_{i+1} \ne \gamma_i^{-1}$ for all $i$.
Let $\len g \defeq m$ denote the \textit{word norm} of $g.$ 
In the case where $m \geq 1$, that is, when $g \neq \Id$,
we will call $\gamma_1$ the \textit{beginning} of $g$ and $\gamma_m$ the \textit{ending} of $g$.
We denote them by $b(g)$ and $e(g)$ respectively. 
We say $h \in G$ is a \textit{prefix} (resp. \textit{suffix}) of $g$ if $h = \gamma_\ell \dotsm \gamma_2\gamma_1$ (resp.  $h= \gamma_m \dotsm \gamma_\ell$) for some $1\leq \ell\leq m$.
We write $h \preceq g$ if $h$ is a prefix of $g$ and $h \prec g$ if moreover $h \ne g$.
For $g,h\in G$, define $\cl(g,h) = \frac{\len{g} + \len{h} - \len{gh}}{2}$, which is the number of cancellations needed to bring the product of $g$ and $h$ to its reduced form.
Note that $\cl(g,h) = 0$ if and only if no cancellation happens in $gh$.
We say $g$ is \textit{cyclically reduced} if $\cl(g,g) = 0$, that is, if $b(g)e(g) \neq 1$. \textit{
The key to the proof of the Proposition is to construct a subset $\cA_8$ that satisfies Lemma \ref{lem: A8} based on the set $\cA$ as in \eqref{eqn: original A}}.

Let $T$ be the semigroup generated by $\cS$, then $T$ has no invariant probability measures on $\SS^1.$
We choose such a sub-semigroup because the multiplication of its elements never involves any cancellation.
Since $\Lambda$ is also $T$-invariant, then there exists a $T$-minimal set $\Delta \sbs \Lambda.$
We fix an arbitrary point $x \in \Delta$ until then end of this subsection.
By Lemma~\ref{lem: equiv def of C1 CE}, for any $\ve > 0$, for arbitrarily large $n$, the subset
\begin{equation}\label{eqn: original A}
    \cA =  \lb{\, g\in G: g'|_{B(x,\ve_1)}\geq 2^{-n}\,}
\end{equation} 
has cardinality $\#\cA \geq 2^{n(\delta(G) - \ve)}$.

Now we would like to modify $\cA$ to get a new set such that the elements of the new set preserve a common interval and freely generate a free sub-semigroup.
The latter property will be guaranteed by the following useful observation.
\begin{fact}\label{Fact: free subsemi}
Let $\cB \sbs G$ be an arbitrary finite subset such that there is no cancellation when concatenating words of $\cB$ and that no element of $\cB$ is a prefix of another element.
Then $\cB$ freely generates a free sub-semigroup.
\end{fact}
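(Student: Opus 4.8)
This is the standard combinatorial fact that a prefix-free subset of a free group with no internal cancellation freely generates a sub-semigroup, so I would run the usual argument. The goal is to show that the evaluation homomorphism from the abstract free semigroup on the alphabet $\cB$ into $G$ is injective: if $g_1\cdots g_m=h_1\cdots h_k$ with all $g_i,h_j\in\cB$ and $m,k\geq 1$, then $m=k$ and $g_i=h_i$ for every $i$. First I would note $\Id\notin\cB$ — immediate from the prefix hypothesis once $\#\cB\geq 2$ since $\Id$ is a prefix of everything, the one-element case being trivial. The crucial input is the no-cancellation hypothesis: by an immediate induction it gives that, for any finite sequence drawn from $\cB$, the reduced word in $\cG=\cS\cup\cS^{-1}$ representing the product is the literal juxtaposition of the reduced words of the factors. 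Since two elements of a free group coincide exactly when their reduced words do, the relation $g_1\cdots g_m=h_1\cdots h_k$ becomes an identity of juxtaposed reduced words, letter by letter.

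The key step is then a prefix comparison at the left end of this common reduced word: its initial segment of length $\len{g_1}$ is the reduced word of $g_1$ and its initial segment of length $\len{h_1}$ is that of $h_1$, so one of $g_1,h_1$ is a prefix of the other in the sense of $\preceq$. As $g_1,h_1\in\cB$ and no element of $\cB$ is a proper prefix of another, this forces $g_1=h_1$. Cancelling this common factor on the left — which merely deletes the common initial block and so stays in the no-cancellation regime — yields $g_2\cdots g_m=h_2\cdots h_k$, and an induction on $m$ finishes the proof; in the base case $m=1$, identifying $g_1=h_1$ leaves $\Id=h_2\cdots h_k$, which by no cancellation and $\Id\notin\cB$ forces $k=1$.

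I do not anticipate a genuine obstacle: the argument is short. The two points that each merit a careful sentence are (i) that ``no cancellation'' is inherited by all finite concatenations, so that the reduced form of a product really is the concatenation of the reduced forms of its factors, and (ii) disposing of the degenerate cases ($\cB$ empty, a singleton, or containing $\Id$). Everything else is routine bookkeeping with word lengths and prefixes in the free group.
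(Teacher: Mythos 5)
The paper states this Fact without proof, treating it as an immediate observation, so there is no argument in the text to compare against; your write-up supplies the standard one, and its skeleton is right: no cancellation at each junction implies the reduced word of any product $g_1\cdots g_m$ with $g_i\in\cB$ is the literal concatenation of the reduced words of the factors, then a block comparison at one end together with prefix-freeness identifies the extremal factors, and one cancels and inducts. The degenerate cases are disposed of as you say (in the application, all elements of $\cA_8$ are nontrivial, so $\Id\notin\cB$ is automatic).

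There is, however, one orientation slip you should repair, because as literally written the key deduction clashes with the paper's conventions. In the paper $g=\gamma_m\cdots\gamma_1$ is composed with $\gamma_1$ acting first, and a prefix $\gamma_\ell\cdots\gamma_1$ is the first-applied block, i.e.\ the \emph{right} end of the written word; the left end is the ending $e(g)$. Consequently, in the common reduced word of $g_1\cdots g_m=h_1\cdots h_k$, the blocks occupying the prefix end are those of $g_m$ and $h_k$, not of $g_1$ and $h_1$. Comparing initial segments at the left end, as you do, only shows that one of $g_1,h_1$ is a \emph{suffix} of the other, which the hypothesis does not exclude: for distinct free generators $a,b\in\cS$, the set $\{a,\,ab\}$ has no cancellation and is prefix-free (the prefixes of $ab$ are $b$ and $ab$), yet the word of $a$ is an initial segment of the written word of $ab$, so your step would wrongly conclude a relation ``in the sense of $\preceq$''. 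The fix is immediate and changes nothing structural: compare the final segments of lengths $\len{g_m}$ and $\len{h_k}$, conclude that $g_m\preceq h_k$ or $h_k\preceq g_m$, hence $g_m=h_k$ by prefix-freeness, cancel this factor on the right and induct (the base case is handled by the length identity exactly as you argue). Alternatively, keep your left-end comparison but state explicitly that you write reduced words in application order, in which case your ``initial segment'' is the paper's prefix and the argument is verbatim correct.
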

The idea to get new set $\cA_8$ that satifies Fact \ref{Fact: free subsemi}  is to append to each $g \in \cA$ some element $h \in G$ of bounded length. 
\textbf{}The set $\cH_g$ below is the set of candidate words $h$ that will be appended to $g$ to form $hg$, which is the key to construct $\cA_8$.
\begin{lemma}
There is a constant $c > 0$, an integer $\ell_0 \in \NN$ and a finite collection $\cI$ of closed intervals which intersect $\Lambda$, where $\#\cI=C$ is a constant depending only on $G$, such that the following holds for any $\ell \in \NN$ sufficiently large. 

For any $g \in G$, there is an interval $I = I(g) \in \cI$ and a subset $\cH_g \sbs G$ such that
\begin{enumerate}
\item $I \sbs B(x,\ve_1)$, where $\ve_1$ is the constant of Lemma~\ref{lem: equiv def of C1 CE},
\item $\# \cH_g \geq 2^{c\ell}$,
\item $\forall h \in \cH_g$, $\len{h}\leq \ell$ and $hg I \sbs I$,
\item $\forall h \in \cH_g$, $\cl(g,h) \leq \ell_0$,
\item the intervals $hgI$ with $h \in \cH_g$, are pairwise disjoint.
\end{enumerate}
\end{lemma}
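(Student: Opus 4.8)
The plan is to produce $\cH_g$ in the form $\cH_g=\{\,q\circ w\circ p:\ w\in\cW_m\,\}$, where $p=p(g)$ is a short ``resetting'' word that moves the (small) arc $gI$ inside a single fixed interval $I_*$, $\cW_m$ is the set of length-$m$ words of a fixed two-branch contracting sub-IFS of the semigroup $T=\langle\cS\rangle^+$ supported on $I_*$ (this supplies the exponential count and the disjointness), and $q=q(g)$ is a fixed bounded-length word carrying $I_*$ back into the chosen interval $I(g)$. Keeping $w$ and $q$ inside $T$ is what makes $\cl(g,h)$ controllable, since products of positive words never cancel and the only junction at which a cancellation with $g$ can occur is the one between $g$ and the beginning of $h$.

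Before this I would assemble the fixed ingredients. First, a \emph{uniform minimality} statement for the action of $G$ on $\Lambda$: since $G$ is finitely generated and acts minimally on the compact set $\Lambda$, the condition ``$\{fy:\len f\le L\}$ is $\eta$-dense in $\Lambda$'' is open in $y$, so a compactness argument gives, for each $\eta>0$, an integer $L=L(\eta)$ such that $\{fy:\len f\le L\}$ is $\eta$-dense in $\Lambda$ for \emph{every} $y\in\Lambda$. Second, applying Proposition~\ref{prop: generate pingpong pair} to $T$ (which, like $G$, has no invariant probability measure) and the $T$-minimal set $\Delta\sbs\Lambda$ produces a perfect pingpong pair in $T$ whose attracting cones lie in a small interval $I_*$ meeting $\Delta$; restricting it to the basin between its two flanking repellers yields two positive words $\psi_1,\psi_2\in T$, of $\cS$-length at most a constant $L_0=L_0(G)$, contracting $I_*$ into two disjoint subintervals of $I_*$, i.e.\ an IFS with the open set condition, so that $\cW_m=\{\psi_{i_1}\cdots\psi_{i_m}\}$ consists of $2^m$ pairwise disjoint contracted copies of $I_*$ inside $I_*$. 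Third, fix a finite cover $\cI=\{I^{(1)},\dots,I^{(M)}\}$ of $\Lambda\cap B(x,\ve_1)$ by small closed intervals, each contained in $B(x,\ve_1)$, meeting $\Lambda$, and of bounded multiplicity; for each $j$ fix $q_j\in G$ of length $\le L_1(G)$ with $q_j(I_*)\sbs I^{(j)}$ (this exists because the $G$-orbit of a point of $\Lambda$ in $\mathrm{int}\,I_*$ meets $\mathrm{int}\,I^{(j)}$). Finally choose $\eta$ small relative to the widths of $I_*$ and of the $I^{(j)}$, set $L=L(\eta)$, let $\rho>0$ be small relative to $\eta$ times a lower bound for the derivative of a length-$\le L$ word, and take $M$ large enough that $\sum_j|gI^{(j)}|\le(\text{multiplicity})\cdot|\SS^1|$ forces $\min_j|gI^{(j)}|<\rho$ for every $g\in G$; all these constants depend only on $G$, and fixing them in this order avoids circularity.

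Now fix $g\in G$. By the last pigeonhole there is $j_0$ with $|gI^{(j_0)}|<\rho$; set $I(g):=I^{(j_0)}\sbs B(x,\ve_1)$. The arc $gI^{(j_0)}$ is short and meets $\Lambda$, so by uniform minimality there is $p\in G$ with $\len p\le L$ and $p(gI^{(j_0)})$ in the central third of $I_*$ (the choice of $\rho$ makes the image short enough to fit). Put $m=\lfloor(\ell-L-L_0-L_1)/L_0\rfloor$ and
\[\cH_g:=\lb{\,q_{j_0}\circ w\circ p:\ w\in\cW_m\,}.\]
Each $h\in\cH_g$ has $\len h\le L_1+mL_0+L\le\ell$; since $p(gI^{(j_0)})\sbs I_*$ we get $w(p(gI^{(j_0)}))\sbs w(I_*)\sbs I_*$, hence $hgI(g)=q_{j_0}\bigl(w(p(gI^{(j_0)}))\bigr)\sbs q_{j_0}(I_*)\sbs I^{(j_0)}=I(g)$; and because the $w(I_*)$, hence the arcs $w(p(gI^{(j_0)}))$, are pairwise disjoint over $w\in\cW_m$ and $q_{j_0}$ is injective, the arcs $hgI(g)$ are pairwise disjoint. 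It remains to bound $\cl(g,h)$: among $q_{j_0}$, $w$, $p$ only $w$ has unbounded length and it is positive, so the cancellations internal to $q_{j_0}wp$ and between $p$ and $g$ are at most $L_1+L+L_0$; a deeper cancellation could occur only if an initial block of $w$ agreed with the relevant block of $g$, but that block of $g$ is determined and pins down the first one or two factors $\psi_{i_k}$, so discarding those (at most a quarter of the) $w$ leaves $\#\cH_g\ge\frac34\,2^m\ge2^{c\ell}$ with $\cl(g,h)\le\ell_0$, for constants $c=c(G)>0$ and $\ell_0=\ell_0(G)$. This yields the five conclusions, and together with Fact~\ref{Fact: free subsemi} it is exactly the input needed to build $\cA_8$.

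The main difficulty is the interlocking of the four competing demands: $gI(g)$ must be short (forcing a \emph{fine} finite cover and a quantitative pigeonhole), the resetting word $p$ must have length independent of $g$ (needing uniform minimality, with the sizes of $I_*$ and the $I^{(j)}$ fixed \emph{before} the reach $L(\eta)$), all images must return to the \emph{same} interval $I(g)$ (which is why one routes through the auxiliary $I_*$ and the fixed words $q_j$ rather than treating $gI(g)$ directly), and $\cl(g,h)$ must stay bounded (forcing positive words and the ``discard a bounded proportion'' device); this, plus the preliminary reduction to uniform minimality, is where the real work lies. Each separate step is routine given Proposition~\ref{prop: generate pingpong pair}, Lemma~\ref{lem: contracting constant}, Fact~\ref{Fact: free subsemi}, and the distortion estimates of Section~\ref{sec: distortion}; the care is in ordering the constants.
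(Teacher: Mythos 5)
Your construction is, at its core, the same as the paper's: both route through a perfect pingpong pair from Proposition~\ref{prop: generate pingpong pair} supported on a small interval near $x$, use a long positive pingpong word to produce the $2^{c\ell}$ elements with nested disjoint images, and use bounded-length transit words (via Lemma~\ref{lem: contracting constant}) to send the short arc $gI(g)$ into the hub and back. The one genuine difference is how the cancellation bound (item (4)) is obtained: the paper takes $\cI$ to consist of pingpong images $hI_0$, $h\in\{h_1,h_2\}^{*(s+1)}$, arranges $\len{h_1}=\len{h_2}=\ell_0$, and \emph{adapts the leftmost factor to $g$} by choosing $i$ with $h_i$ not a suffix of $g^{-1}$; since $h_i$ is the suffix of every element of $\cH_g$, this kills any cancellation beyond $\ell_0$ outright, with no loss of elements. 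You instead put a fixed return word $q_{j_0}$ at the left end and then discard the ``bad'' middle words $w$.

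Two steps of your version need tightening. First, the existence of $q_j$ with $q_j(I_*)\sbs I^{(j)}$ does not follow from the stated reason (``the $G$-orbit of a point of $\Lambda$ in $\mathrm{int}\,I_*$ meets $\mathrm{int}\,I^{(j)}$''): orbit density moves a point into $I^{(j)}$ but does not contract the whole interval $I_*$ into the smaller interval $I^{(j)}$; you need a strongly contracting element, i.e.\ exactly Lemma~\ref{lem: contracting constant} applied with $I_*\sbs B(x',\ve_1)$ and a target ball inside $I^{(j)}$ centered at a $\Lambda$-point. Second, the discarding count for item (4) is under-justified: in the product $g\cdot(q_{j_0}wp)$ the cancellation eats through $q_{j_0}$ with an offset $t\in\{0,\dots,L_1\}$ that itself depends on $w$, and you have not arranged $\len{\psi_1}=\len{\psi_2}$, so a prescribed letter window need not ``pin down the first one or two factors''; the honest bound is that a cancellation reaching $j$ full factors beyond $q_{j_0}$ confines $w$ to at most $(L_1+1)2^{m-j}$ words, so you must take $\ell_0$ of size roughly $L_1+L+\bigl(\log_2(L_1+1)+O(1)\bigr)L_0$ (and it is cleanest to first replace $\psi_1,\psi_2$ by powers of equal length, as the paper does with $h_1,h_2$) before the bad set drops below a fixed fraction of $2^m$. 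Both repairs cost only constants depending on $G$, so your argument goes through after these fixes; but note that the paper's device of choosing $h_i$ not a suffix of $g^{-1}$ avoids the whole discarding issue and is the cleaner route.
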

Here, (4) serves as a technical preparation for later use. It enables us to manage the number of letters that need to be removed to cyclically reduce the word $hg$.
\begin{proof}
By Proposition~\ref{prop: generate pingpong pair}, there is a perfect pingpong pair $(h_1,h_2) \sbs T$ which have pingpong dynamics on a closed subinterval $I_0 \sbs B(x,\ve_1)$, that is, $h_1$ and $h_2$ preserves $I_0$ and have disjoint images.
Moreover, the interval $I_0$ intersects $\Delta$ and hence $I_0\cap\Lambda\ne\vn.$
Replacing $(h_1,h_2)$ by $\bigl(h_1^{\len{h_2}}, h_2^{\len{h_1}}\bigr)$, we may assume that $\len{h_1} = \len{h_2}\eqqcolon \ell_0$.

By Lemma~\ref{lem: contracting constant}, there is $\ve_2 > 0$ and a finite subset $\cF \sbs G$ such that for any interval  $J$ of length $\leq \ve_2$ intersecting $\Lambda$, there exists $f \in \cF$ such that $f J \sbs I_0$.
Let $s \in \NN$ be large enough so that $2^s > \ve_2^{-1}$ and $p > \max_{f \in \cF} \len{f}$.
Define $\cI$ to be the collection of intervals of the form $h I_0$ with $h \in \{h_1,h_2\}^{*(s+1)}$.

Now let $g \in G$. Since $h_1 \ne h_2$ and $\len{h_1} = \len{h_2}$, there is $i \in \{1,2\}$ such that $h_i$ is not a suffix of $g^{-1}$. The $2^s$ intervals $g h_i hI_0$, $h \in \{h_1,h_2\}^{*s}$, are pairwise disjoint.
Hence there is some $h \in \{h_1,h_2\}^{*s}$ with $I = h_i h I_0 \in \cI$ such that $\abs{g I} < \ve_2$.
By the choice of $\ve_2$, there is $f \in \cF$ such that $f g I \sbs I_0$.

Finally let $c = \frac{1}{2 \ell_0}$. 
We claim that
\[\cH_g \defeq h_i h \{h_1,h_2 \}^{* c\ell } f\]
satisfies the desired property, whenever $\ell > 2(p +2) \ell_0 + 2 \max_{f \in \cF} \len{f}$.
Indeed, since there is no cancellation between elements of $T$, for any $\tilde{h} \in \{h_1,h_2\}^{* c\ell}$, we have $\len{\tilde h} \geq c\ell\ell_0 > \len{f}$.
Hence $h_i$ remains a suffix of $h_ih\tilde{h}f$ so $\cl(g,h_ih\tilde{h}f ) \leq \len{h_i} = \ell_0$.
The other properties can be checked in a straightforward manner.
\end{proof}

Let $c$, $\ell_0$, $\cI$ be given by the lemma. Fix some $\ell \in \NN$ large enough such that the inequalities~\eqref{eqn: l large 1} and \eqref{eqn: l large 2} hold.
Let $I(g)$ and $\cH_g$ be given by applying the lemma to this $\ell$. 
By a pigeonhole argument, we find $I \in \cI$ such that
\[ \cA_0 \defeq  \lb{\, g\in \cA: I(g) = I,~|g|\geqslant \ell+\ell_0+1\,} \]
has cardinality $\# \cA_0 \geq C^{-1}(\# \cA-(2k)^{\ell+\ell_0+1})$ where $C=\#\cI.$ Fix this $I \in \cI$.
Next, we replace $g \in \cA$ by a prefix to eliminate cancellations in the products $hg$, for $h \in \cH_g$.
\begin{lemma}
For any $g \in G$,
there is $\hat g \in G$, prefix of $g$ of length $\len{\hat g} \geq \len{g} - \ell$ and such that  
\[\wh\cH_{\hat{g}} \defeq \lb{\, \hat h \in G : \hat h \text{ is suffix of }  h=\hat h \hat{g} g^{-1} \in \cH_g \text{ and } \cl(\hat h,\hat g) = 0\,}\]
satisfies $\# \wh\cH_{\hat g} \geq (\ell+1)^{-1}2^{c\ell}$.
\end{lemma}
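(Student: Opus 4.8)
The plan is a pigeonhole argument on the number of cancellations, the point being to replace $g$ by a single prefix $\hat g$ that simultaneously "absorbs" the cancellations coming from exponentially many $h\in\cH_g$. First I would invoke item~(4) of the preceding lemma: $\cl(g,h)$ lies in $\lb{0,1,\dots,\ell_0}$ for every $h\in\cH_g$, so $\cH_g$ splits into at most $\ell_0+1$ classes $\cH_g^{(j)}\defeq\lb{h\in\cH_g:\cl(g,h)=j}$. Since $\#\cH_g\geq 2^{c\ell}$, the pigeonhole principle gives an integer $j^\ast\in\lb{0,\dots,\ell_0}$ with $\#\cH_g^{(j^\ast)}\geq 2^{c\ell}/(\ell_0+1)\geq 2^{c\ell}/(\ell+1)$, where the last inequality uses $\ell_0\leq\ell$, valid since $\ell$ is taken sufficiently large.

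Next I would set $\hat g$ to be the prefix of $g$ obtained by erasing its last $j^\ast$ letters: writing $g=\gamma_m\cdots\gamma_1$ in reduced form with $m=\len g$, this is $\hat g=\gamma_{m-j^\ast}\cdots\gamma_1$. This is well defined because $j^\ast=\cl(g,h)\leq\len g$ for any $h$ in the nonempty class $\cH_g^{(j^\ast)}$, and $\len{\hat g}=m-j^\ast\geq m-\ell_0\geq\len g-\ell$. The heart of the argument is the elementary observation that for $h=\eta_p\cdots\eta_1$ (reduced) in $\cH_g^{(j^\ast)}$, the definition of $\cl$ forces $\eta_i=\gamma_{m-i+1}^{-1}$ for $i\leq j^\ast$ and $\eta_{j^\ast+1}\neq\gamma_{m-j^\ast}^{-1}$, so that the concatenation
\[hg=\eta_p\cdots\eta_{j^\ast+1}\,\gamma_{m-j^\ast}\cdots\gamma_1\]
is already in reduced form. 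Putting $\hat h\defeq\eta_p\cdots\eta_{j^\ast+1}=hg\hat g^{-1}$, one then checks directly that $\hat h$ is a suffix of $h$, that $\hat h\hat g=hg$ is reduced (so $\cl(\hat h,\hat g)=0$), and that $h=\hat h\hat g g^{-1}\in\cH_g$; hence $\hat h\in\wh\cH_{\hat g}$. (The degenerate cases $j^\ast=\len h$ or $j^\ast=\len g$, giving $\hat h=\Id$ or $\hat g=\Id$, are handled in the same way and do not arise here, since $\len g$ and $\len h$ both exceed $\ell_0$ in our regime.)

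To finish, I would note that $h\mapsto\hat h=hg\hat g^{-1}$ is the restriction to $\cH_g^{(j^\ast)}$ of right translation by the fixed element $g\hat g^{-1}$, hence injective, so that $\#\wh\cH_{\hat g}\geq\#\cH_g^{(j^\ast)}\geq 2^{c\ell}/(\ell+1)$, as required.

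The delicate point — and the only place I expect any subtlety — is the middle step: one must keep precise track of which letters of $g$ (and of $h$) survive in the reduced expression for $hg$, and in particular notice that the surviving prefix $\hat g$ of $g$ depends on $h$ only through the single integer $\cl(g,h)$. It is exactly this dependence that makes the pigeonhole on cancellation-counts produce one prefix $\hat g$ working simultaneously for an exponentially large subfamily of $\cH_g$; once the free-group normal-form bookkeeping is set up carefully, there is no further obstacle.
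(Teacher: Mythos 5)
There is a genuine gap: you pigeonhole on the wrong cancellation quantity. With the paper's conventions ($g=\gamma_m\cdots\gamma_1$ reduced, prefix $=$ beginning letters $\gamma_\ell\cdots\gamma_1$), the number $\cl(g,h)$ bounded by $\ell_0$ in item (4) of the preceding lemma measures the cancellation in the product $gh$, i.e.\ at the junction between the ending of $h$ and the beginning of $g$ (equivalently $\len{\lcp(g,h^{-1})}$); that bound comes from ``$h_i$ is not a suffix of $g^{-1}$'' and controls nothing at the other junction. The decomposition relevant to the lemma is $hg=\hat h\hat g$ with $\hat h$ a suffix of $h$, $\hat g$ a prefix of $g$ and $\cl(\hat h,\hat g)=0$, and the number of letters trimmed from $g$ there is $\cl(h,g)$ — the cancellation between the beginning of $h$ and the ending of $g$. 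The identities you write, $\eta_i=\gamma_{m-i+1}^{-1}$ for $i\leq j^\ast$ and $\eta_{j^\ast+1}\neq\gamma_{m-j^\ast}^{-1}$, are precisely the statement $\cl(h,g)=j^\ast$, not $\cl(g,h)=j^\ast$; the two junctions are unrelated in general, and for $h\in\cH_g$ the quantity $\cl(h,g)$ is only bounded by $\len{h}\leq\ell$ (cancellation in $hg$ can eat all of $f$ and run into $\tilde h$), not by $\ell_0$. Consequently, within your class $\lb{h:\cl(g,h)=j^\ast}$ the amount cancelled from $g$ in $hg$ still varies with $h$, and the single prefix $\hat g$ of length $\len{g}-j^\ast$ fails: if $\cl(h,g)>j^\ast$ then $hg\hat g^{-1}$ is a suffix of $h$ but $\cl(\hat h,\hat g)\geq 1$, while if $\cl(h,g)<j^\ast$ then $hg\hat g^{-1}$ is not even a suffix of $h$.

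The fix is exactly the paper's proof: pigeonhole over $i=\cl(h,g)\in\lb{0,\dots,\ell}$ (the range being $\ell+1$ values because $\cl(h,g)\leq\len{h}\leq\ell$ by item (3)), take $\hat g$ the prefix of $g$ of length $\len{g}-i$, and proceed as in your second and third paragraphs, which are otherwise fine. This is also why the statement carries the factor $(\ell+1)^{-1}$ and the length loss $\len{\hat g}\geq\len{g}-\ell$ rather than $(\ell_0+1)^{-1}$ and $\len{g}-\ell_0$: if a bound $\cl(h,g)\leq\ell_0$ were available, the lemma would have been stated with the stronger constants. The bound $\cl(g,h)\leq\ell_0$ from item (4) is only used later (to control $\lcp(\hat g,\hat h^{-1})$ and cyclic reduction), not here.
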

For each pair of $(h,g),$ there exists a unique pair of $(\hat h,\hat g)$ where $\hat h$ is a suffix of $h$ and $\hat g$ is a prefix of $g$ such that $\hat h\hat g=hg$ and $\cl(\hat h,\hat g)=0.$ The element $\hat h$ in the lemma is obtained in this way.

\begin{proof}
Let $g\in G$.
By the pigeonhole principle, there is $i \in \{0,\cdots,\ell\}$ such that 
\[\#\lb{\, h \in \cH_g: \cl(h,g) = i\,} \geq \frac{\#\cH_g}{\ell+1}.\]
Then let $\hat g$ be the prefix of $g$ of length $\len{g} - i$.
It satisfies the desired property.
\end{proof}
Let $\cA_1$ be the image of $g \in \cA_0$ under the map $g \mapsto \hat g$. 
Then for every $g\in\cA_1,$ $|g|\geqslant \ell_0+1.$
Besides, $\# \cA_1 \geq (2k)^{-\ell} \#\cA_0.$

In the next step, we will shrink $\cA_1$ to $\cA_4$ and shrink $\wh\cH_g$ to make sure that all $h \in\wh \cH_g$ have the same length, that there is a word $p \in G$ independent of $g$ and $h$ such that $phgp^{-1}$ is cyclically reduced, and that there is no cancellation when multiplying between words of the form $phgp^{-1}$. 

By a pigeonhole argument on $\len{h}$, we find an integer $m \leq \ell$ 
\[ \cA_2 \defeq  \lb{\, g\in \cA_1 :  \# \wh\cH_{g,m} \geq \ell^{-1}\#\wh\cH_g\,}\]
where
\[\wh\cH_{g,m} = \lb{\, h \in \wh\cH_g : \len{h} = m \,}\]
has cardinality $\#\cA_2 \geq \ell^{-1}\#\cA_1$. 
From now on, fix this $m$ and replace  $\wh\cH_g$ by $\wh\cH_{g,m}$ for every $g \in \cA_2$.
Since $\# \wh\cH_{g,m} \geq (\ell+1)^{-2} 2^{c\ell}$ and $\# \wh\cH_{g,m} \leq (2k)^m$, we must have 
\begin{equation}
\label{eqn: l large 1}
m \geq \frac{c\ell - 2 \log (\ell+1)}{\log (2k)} > \ell_0.
\end{equation}
Also, for any $\hat g \in \cA_2$ and any $\hat h \in \wh\cH_{\hat g}$, $\hat h$ is the suffix of length $m$ of some element $h=\hat h\hat{g}g^{-1}$ in $\cH_g$ for some $g \in \cA_0$.  
We deduce that $\cl(\hat{g},\hat h)\leqslant \cl(g,h) \leq \ell_0.$

For $g, h \in G$, let $\lcp(g,h)$ denote the longest common prefix of $g$ and $h$.
Note that $\len{\lcp(g,h^{-1})} = \cl(g,h)$.
Recall $\cl(g,h)\leqslant \ell_0$ for every $g\in\cA_2$ and $h\in\wh\cH_g.$
Note also that, if $\cl(h,g)= 0$ and $p = \lcp(g,h^{-1})$ is shorter than both $h$ and $g$ then $phgp^{-1}$ is cyclically reduced.
By a pigeonhole argument with $(g,h) \mapsto \lcp(g,h^{-1}) \in \cG^{*\leq \ell_0}$, we find $p \in \cG^{*\leq \ell_0}$ such that
\[ \cA_3 \defeq  \lb{\, g\in \cA_1 :  \# \wh\cH_{g,p} \geq (2k)^{-\ell_0}\#\wh\cH_g\,}\]
with $\wh\cH_{g,p} = \lb{\, h \in \wh\cH_g : \lcp(g,h^{-1}) = p \,}$
has cardinality $\#\cA_3 \geq (2k)^{-\ell_0}\#\cA_2$. 
From now on, fix this $p$ and replace  $\wh\cH_g$ by $\wh\cH_{g,p}$ for every $g \in \cA_3$.

By yet another pigeonhole argument, we find $(b, e) \in \cG \times \cG$ with $b\ne e^{-1}$ such that
\[ \cA_4 \defeq  \lb{\, g\in \cA_1 :  \# \wh\cH_{g,b,e} \geq (2k)^{-2}\#\wh\cH_g\,}\]
with $\wh\cH_{g,b,e} = \lb{\, h \in \wh\cH_g : b(phgp^{-1}) =b,\, e(phgp^{-1}) = e \,}$
has cardinality $\#\cA_4 \geq (2k)^{-2}\#\cA_3$. 
We also remark that $b(phgp^{-1})=b(gp^{-1})$ and $e(phgp^{-1})=e(ph)$ since $|h|,|g|\geqslant \ell_0+1>|p|$ and $\cl(h,g)=0,$ for every $g\in\cA_3$ and $h\in\wh\cH_g.$
From now on, we will fix this $(b,e)$ and replace  $\wh\cH_g$ by $\wh\cH_{g,b,e}$ for every $g \in \cA_4$.

For later convenience, let
\[\cA_5\defeq\lb{\,gp^{-1}:g\in \cA_4\,}.\]
We replace the interval $I$ by $pI$ and $m$ by $m-|p|.$ Then $0< m\leqslant \ell.$ 
For every $\hat g=gp^{-1}\in\cA_5,$ we replace $\wh \cH_{\hat g}$ by the set $\lb{\,ph:h\in\wh\cH_g\text{ as given before}\,}.$ 

To summarize, we obtain the following lemma. 
\begin{lemma}
There exist constants $\ell_0, C \in \NN$ depending only on $G$ which satisfy the following properties.
Given $\ell$ large enough and $\cA \sbs G$ as in \eqref{eqn: original A},
there exists a closed interval $I$ that intersects $\Lambda$, $b,e\in\cG$ with $b\ne e^{-1}$, a positive integer $m\leqslant \ell$, a subset $\cA_5\sbs G$ and a family of subsets $\{\wh\cH_g\}_{g\in \cA_5}, \wh\cH_g\sbs G$ such that 

\begin{enumerate}
	\item $\#\cA_5\geqslant C^{-1}\ell^{-1}(2k)^{-\ell-\ell_0-2}(\#\cA -(2k)^{\ell+\ell_0+1}).$
	\item $\forall g\in \cA_5,$ $\#\cH_g\geqslant (\ell+1)^{-2}(2k)^{-\ell_0-2}2^{c\ell}.$
	\item $\forall g\in\cA_5,$ the elements in $\wh\cH_g$ share the same word norm $m.$
	\item $\forall g\in\cA_5,$ $b(g)=b$ and $\forall h\in\wh\cH_g,$ $\cl(h,g)=0$ and $e(h)=e.$ In particular, $\cl(h_1g_1,h_2g_2)=0$ for every $g_i\in\cA_5$ and $h_i\in\wh\cH_{g_i},$ $i=1,2.$
	\item For a fixed $ g\in\cA_5,$ $hgI\sbs I$ and $hgI$ are pairwise disjoint for $h\in\wh\cH_g.$
    \item For every $g\in \cA_5$ and $h\in\wh\cH_g,$ $(hg)'|_I \geq M^{-2\ell-2\ell_0}2^{-n}$ where $M = \max_{\gamma \in \cG} \nm{\gamma'}_{C^0}$,
\end{enumerate}
\end{lemma}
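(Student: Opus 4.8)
The proof is essentially a long sequence of pigeonhole reductions, each one extracting a definite proportion (up to polynomial-in-$\ell$ or constant factors) of the previous set while installing one more uniform feature. The plan is to carry out these reductions in the order dictated by the three lemmas already stated, keeping careful track of how the cardinality bound degrades, and then to read off each of the six listed properties from the corresponding stage of the construction.

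First I would fix the constants: let $\ell_0 = \len{h_1} = \len{h_2}$ from the first lemma (the perfect ping-pong pair), let $C = \#\cI$, and let $c = \frac{1}{2\ell_0}$ as in that lemma. Given $\ell$ large and $\cA$ as in \eqref{eqn: original A}, apply the first lemma to get for each $g\in\cA$ an interval $I(g)\in\cI$ and a set $\cH_g$ with $\#\cH_g\geq 2^{c\ell}$, $\len h\leq \ell$, $hgI(g)\sbs I(g)$, $\cl(g,h)\leq\ell_0$, and the $hgI(g)$ pairwise disjoint. Discard the (at most $(2k)^{\ell+\ell_0+1}$) elements of $\cA$ of word norm $\leq \ell+\ell_0$, then pigeonhole on the value $I(g)\in\cI$ to obtain $\cA_0$ with $\#\cA_0\geq C^{-1}(\#\cA-(2k)^{\ell+\ell_0+1})$ and a fixed $I\in\cI$; this $I$ intersects $\Lambda$, giving property (1) of the interval. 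Next apply the second lemma (passing from $g$ to a prefix $\hat g$ of length $\geq\len g-\ell$ with $\#\wh\cH_{\hat g}\geq (\ell+1)^{-1}2^{c\ell}$) to produce $\cA_1$ with $\#\cA_1\geq (2k)^{-\ell}\#\cA_0$, noting $\len g\geq \ell_0+1$ for $g\in\cA_1$. Then run the four successive pigeonhole steps exactly as written: on $\len h = m$ to get $\cA_2$ ($\#\cA_2\geq \ell^{-1}\#\cA_1$), which forces $m\geq \frac{c\ell-2\log(\ell+1)}{\log(2k)}>\ell_0$ via \eqref{eqn: l large 1}; on $p=\lcp(g,h^{-1})\in\cG^{*\leq\ell_0}$ to get $\cA_3$ ($\#\cA_3\geq (2k)^{-\ell_0}\#\cA_2$); on the pair $(b,e) = (b(phgp^{-1}),e(phgp^{-1}))$ with $b\ne e^{-1}$ to get $\cA_4$ ($\#\cA_4\geq (2k)^{-2}\#\cA_3$); and finally relabel, setting $\cA_5 = \{gp^{-1}:g\in\cA_4\}$, replacing $I$ by $pI$ (still intersecting $\Lambda$ since $p\in G$), $m$ by $m-\len p>0$ (still $\leq\ell$), and $\wh\cH_{gp^{-1}} = \{ph:h\in\wh\cH_g\}$.

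With this done, I would verify the six claimed properties one at a time. Property (1) is the product of the cardinality losses: $\#\cA_5\geq C^{-1}\ell^{-1}(2k)^{-\ell-\ell_0-2}(\#\cA-(2k)^{\ell+\ell_0+1})$, combining the factors $C^{-1}$ (step to $\cA_0$), $(2k)^{-\ell}$ ($\cA_1$), $\ell^{-1}$ ($\cA_2$), $(2k)^{-\ell_0}$ ($\cA_3$), $(2k)^{-2}$ ($\cA_4$), with the exponents collected into $\ell+\ell_0+2$. Property (2) accumulates the shrinkages of $\wh\cH_g$: from $2^{c\ell}$ to $(\ell+1)^{-1}2^{c\ell}$ (second lemma), then $\ell^{-1}$ ($\wh\cH_{g,m}$), then $(2k)^{-\ell_0}$ ($\wh\cH_{g,p}$), then $(2k)^{-2}$ ($\wh\cH_{g,b,e}$), giving $\#\wh\cH_g\geq (\ell+1)^{-2}(2k)^{-\ell_0-2}2^{c\ell}$ — relabelling by $p$ does not change cardinality. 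Property (3) holds by construction since after the $\cA_2$ step every $h\in\wh\cH_g$ has $\len h = m$, and the relabelling subtracts $\len p$ uniformly. For property (4): $b(g) = b$ because for $g\in\cA_5$ the beginning of $gp^{-1}$ equals $b(phgp^{-1})=b$ (using $\len h,\len g>\len p$ and $\cl(h,g)=0$ as already remarked in the text); $\cl(h,g)=0$ and $e(h) = e$ are immediate from the $\wh\cH_{g,b,e}$ and $p$-stages plus the identities $e(phgp^{-1}) = e(ph)$; and then for $g_i\in\cA_5$, $h_i\in\wh\cH_{g_i}$, the word $h_1g_1$ ends in $e$ while $h_2g_2$ begins in $b$ with $b\ne e^{-1}$, so $\cl(h_1g_1,h_2g_2)=0$. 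Property (5) is the disjointness/invariance carried over verbatim from the first lemma (conjugation by $p$ transports $I$ to $pI$ and the inclusions and disjointness along with it). Property (6) is the derivative estimate: $g\in\cA$ gives $g'|_{B(x,\ve_1)}\geq 2^{-n}$, passing to a prefix $\hat g$ and prepending $h$ with $\len h\leq\ell$ and then conjugating by $p\in\cG^{*\leq\ell_0}$ multiplies the derivative by at most $M^{2\ell+2\ell_0}$ in absolute value (chain rule and $\nm{\gamma'}_{C^0}, \nm{(\gamma^{-1})'}_{C^0}\leq M$ for $\gamma\in\cG$, after enlarging $M$ to be symmetric), so on the final interval $I$ one gets $(hg)'|_I\geq M^{-2\ell-2\ell_0}2^{-n}$.

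I do not expect a serious obstacle here; the statement is a bookkeeping consolidation of the three preceding lemmas. The only point requiring genuine care is consistency of the relabelling at the $\cA_5$ stage — one must check that replacing $I\mapsto pI$, $m\mapsto m-\len p$, $\wh\cH_g\mapsto p\wh\cH_g$, and $g\mapsto gp^{-1}$ preserves every invariant simultaneously, in particular that the ping-pong inclusions $hgI\sbs I$ become $(ph)(gp^{-1})(pI)\sbs pI$, that $m-\len p$ stays positive (which needs $m>\ell_0\geq\len p$, guaranteed by \eqref{eqn: l large 1}), and that the cancellation bookkeeping $b(gp^{-1})=b$, $\cl(ph, gp^{-1})=0$ survives — all of which follow from the length inequalities $\len g,\len h\geq \ell_0+1>\len p$ and $\cl(h,g)=0$ that were established before the relabelling. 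A secondary nuisance is simply making sure the hypotheses "$\ell$ large enough" encode every threshold invoked: the conditions $\ell > 2(p+2)\ell_0 + 2\max_{f\in\cF}\len f$ from the first lemma, $\ell+\ell_0+1 < \len g$ for the discarded tail, and \eqref{eqn: l large 1}, \eqref{eqn: l large 2}; I would state these collectively as the meaning of "$\ell$ large enough such that \eqref{eqn: l large 1} and \eqref{eqn: l large 2} hold" exactly as the surrounding text already does.
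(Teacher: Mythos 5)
Your proposal is correct and follows essentially the same route as the paper: the lemma there is stated as a summary of the preceding construction, namely the pigeonhole on $I(g)\in\cI$ and discarding short words to get $\cA_0$, the prefix-replacement lemma to get $\cA_1$, the successive pigeonholes on $m$, on $p=\lcp(g,h^{-1})$, and on $(b,e)$ to get $\cA_2,\cA_3,\cA_4$, and the final relabelling $g\mapsto gp^{-1}$, $I\mapsto pI$, $\wh\cH_g\mapsto p\wh\cH_g$, with exactly the cardinality and derivative bookkeeping you carry out. Your verification of the six properties (including the chain-rule loss of at most $M^{2\ell+2\ell_0}$ and the survival of the ping-pong inclusions and cancellation data under conjugation by $p$, using $\len g,\len h\geq\ell_0+1>\len p$ and $\cl(h,g)=0$) matches the paper's intended argument.
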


In the next step, we will construct a large set $\cA_8$ without prefix relations, by picking the elements of the form $hg$ with $g \in \cA_5$ and $h \in \wh\cH_g$.
Let $\sP$ denote the set of all prefixes of elements of $\cA_5$ and consider
\[\cB= \lb{\, (g,h) \in \cA_5 \times G: h \in \wh\cH_g \text{ and } hg \in \sP \,}.\]
For $(g,h) \in \cB$, let $\phi(g,h)$ be a shortest element in $\cA_5$ such that $h g$ is a prefix of $\phi(g,h)$. Choose one arbitrarily if multiple such elements exist.
Recall that $m\leq \ell$ is the common value of $\len{h}$ for $h \in \wh\cH_g$, $g \in \cA_5$.
\begin{lemma}
\label{lem: phi mto1}
 The map $\phi \colon  \cB \to \cA_5$ is at most $m$-to-1.
\end{lemma}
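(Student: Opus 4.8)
\textbf{Proof plan for Lemma~\ref{lem: phi mto1}.}

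The plan is to show that the preimage $\phi^{-1}(g_0)$ of any fixed $g_0 \in \cA_5$ has at most $m$ elements, by reconstructing a pair $(g,h) \in \cB$ with $\phi(g,h) = g_0$ purely from the data of $g_0$ together with one extra parameter that ranges over at most $m$ values. Suppose $(g,h) \in \phi^{-1}(g_0)$. By the definition of $\phi$, the product $hg$ is a prefix of $g_0$; since $\cl(h,g) = 0$ (item (4) of the summarizing lemma), $hg$ is the reduced word obtained by concatenating $h$ and $g$, so $\len{hg} = \len{h} + \len{g} = m + \len{g}$, where I have used that $\len{h} = m$ is the common word norm of all elements of the sets $\wh\cH_g$. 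Thus $g$ is determined by $g_0$ and $\len g$: namely $g$ is the suffix of $g_0$ obtained by deleting the first $m$ letters of the prefix $hg$ of $g_0$; equivalently, writing $g_0 = \gamma_L \dotsm \gamma_1$ in reduced form, we have $g = \gamma_{m + \len g} \dotsm \gamma_1$. Once $g$ is known, $h$ is forced too, since $hg$ is the prefix of $g_0$ of length $m + \len g$ and $h = (hg) g^{-1}$ with no cancellation.

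Hence the only freedom in recovering $(g,h)$ from $g_0$ is the value of $\len g$. I would then argue this value ranges over a set of size at most $m$. Indeed, I claim that for $(g,h) \in \phi^{-1}(g_0)$ the integer $\len g$ must lie in the interval $\{\len{g_0} - m, \len{g_0} - m + 1, \dotsc, \len{g_0} - 1\}$, which has exactly $m$ elements. The upper bound $\len g \leq \len{g_0} - 1$ holds because $hg$ is a prefix of $g_0$ with $\len{hg} = \len g + m \geq \len g + 1$ (here $m \geq 1$, which holds since $m > \ell_0 \geq 0$ after the reductions, or simply because $\wh\cH_g$ is nonempty with elements of positive length), so $\len g + 1 \leq \len{g_0}$. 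For the lower bound $\len g \geq \len{g_0} - m$: by minimality in the definition of $\phi(g,h)$, the element $g_0$ is a \emph{shortest} element of $\cA_5$ having $hg$ as a prefix. If we had $\len g < \len{g_0} - m$, i.e.\ $\len{hg} < \len{g_0}$, then $g$ itself would be an element of $\cA_5$ (it is, being a suffix of an element of the prefix-closed chain, actually an element of $\cA_5$ by construction of the chain $\cA_0 \supseteq \cdots$; more carefully, $g$ is the very element of $\cA_5$ we started with) with $hg$ a prefix of $g$ — wait, $hg$ is not generally a prefix of $g$. Let me instead use the cleaner route: $g$ is a prefix of itself and $g \in \cA_5$, but that does not bound things. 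The correct argument is that $g_0$ being a shortest element of $\cA_5$ with prefix $hg$ gives $\len{g_0} \leq \len{g'}$ for every $g' \in \cA_5$ with prefix $hg$; in particular $g$ has prefix $g$, not $hg$, so this does not directly apply.

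Therefore the main obstacle — and the step I would spend the most care on — is pinning down precisely why $\len g$ takes at most $m$ values; the honest mechanism is the following. Fix $g_0 \in \cA_5$ in the image and let $(g,h), (g',h') \in \phi^{-1}(g_0)$. Both $hg$ and $h'g'$ are prefixes of $g_0$, hence one is a prefix of the other; say $\len{hg} \leq \len{h'g'}$, so $hg \preceq h'g'$. I want to conclude $\len g = \len{g'}$ once we also know $\len{hg}, \len{h'g'}$ differ by less than $m$; and the point is that if they differed by $\geq m$ then, deleting the $m$-letter prefix $h'$ from $h'g'$, the word $g'$ would have $hg'' $-type structure making $g$ a prefix of $g'$ with $g \neq g'$ — contradicting item~(1)-style reasoning, or rather contradicting that $\cA_5$ arose (through the chain $\cA_0 = \{g \in \cA : I(g) = I,\ |g| \geq \ell+\ell_0+1\}$ and subsequent pigeonhole restrictions) from $\cA$, where we do \emph{not} a priori have prefix-freeness. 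So the genuinely needed input is: $g_0$ is a \emph{shortest} element of $\cA_5$ with $h'g'$ as prefix, which forbids $\len{h'g'}$ from exceeding $\len{hg}$ by $m$ or more, because then $h$ followed by the appropriate tail would already produce a shorter witness. I would write this out as: if $\len{h'g'} - \len{hg} \geq m$, then since $h'g' \succeq hg$ and $\len h = \len{h'} = m$, the word $g'$ (the suffix of $h'g'$ after removing $h'$) would itself contain $g$ as a prefix, and $g \in \cA_5$ with $\len g < \len{g'} \leq \len{g_0}$ and $h'g \preceq h'g'$... this still needs the observation that $h'g \in \sP$. Rather than belabor it here, the plan is: (i) reduce to recovering $(g,h)$ from $(g_0, \len{hg})$, which is immediate by the no-cancellation property; (ii) show the set of possible values of $\len{hg}$ for pairs mapping to a fixed $g_0$ is an interval of integers of length $\leq m$, using the no-cancellation property $\len{hg} = \len g + m$ together with minimality of $g_0 = \phi(g,h)$ to rule out two such lengths differing by $m$ or more; conclude $\#\phi^{-1}(g_0) \leq m$. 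The delicate bookkeeping in (ii) — carefully tracking prefixes and using the minimality clause in the definition of $\phi$ — is where the real work lies.
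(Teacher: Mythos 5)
Your reduction in step (i) is sound and is in fact the skeleton of the paper's own argument: since $\cl(h,g)=0$ and $\len{h}=m$ for every pair in $\cB$, a pair $(g,h)$ in a fiber $\phi^{-1}(g_0)$ is completely determined by the single integer $\len{hg}$ (the word $hg$ is the prefix of $g_0$ of that length, $g$ is its prefix of length $\len{hg}-m$, and $h$ is the remaining suffix), so bounding the fiber amounts to showing that at most $m$ values of $\len{hg}$ can occur. But step (ii), which you yourself flag as ``where the real work lies,'' is never carried out, and none of the attempts you sketch closes it. The missing mechanism — which is exactly the paper's punchline — is: if $(g,h)$ and $(g',h')$ lie in the same fiber and $\len{h'g'}-\len{hg}\geq m$, then $\len{hg}\leq\len{h'g'}-m=\len{g'}$; since $hg$ and $g'$ are both prefixes of $g_0$ (the latter because $g'\preceq h'g'\preceq g_0$, using $\cl(h',g')=0$), the shorter is a prefix of the longer, so $hg\preceq g'$. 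As $g'\in\cA_5$ and $\len{g'}<\len{h'g'}\leq\len{g_0}$, the element $g'$ is a strictly shorter member of $\cA_5$ admitting $hg$ as a prefix, contradicting that $g_0=\phi(g,h)$ is a shortest such element. Hence the distinct lengths occurring in a fiber span strictly less than $m$, so there are at most $m$ of them, and the fiber has at most $m$ elements.

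Your text circles this point without landing on it: the interval you first claim, $\{\len{g_0}-m,\dotsc,\len{g_0}-1\}$, is incompatible with $\len{hg}\leq\len{g_0}$, which forces $\len{g}\leq\len{g_0}-m$; the attempted contradiction via ``$g$ is a prefix of $g'$'' is, as you concede, vacuous because $\cA_5$ is not prefix-free; and the final repair you gesture at would need $h'g\in\sP$ and implicitly $h'\in\wh\cH_g$ in order to manufacture a new element of $\cB$, neither of which is available. You also call $h'$ a ``prefix'' of $h'g'$ — in the paper's convention (where $g'$ acts first) it is a suffix, and it is $g'$ that is the prefix of $h'g'$, which is precisely the fact the correct argument exploits. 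So the proposal has the right parametrization of the fiber but a genuine gap at the decisive step: the application of the minimality clause in the definition of $\phi$ to the element $g'$ itself.
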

\begin{proof}
Assume that $\phi(g_0,h_0)= \dots =\phi(g_m,h_m)$ for $m+1$ distinct elements of $\wt B$. 
Since $\cl(h_0,g_0) = \dots = \cl(h_m,g_m)= 0$ and $\len{h_0} = \dots = \len{h_m}$, 
we know $(h_ig_i)_{0\leq i \leq m}$ are all distinct and prefixes of the same word.
Upon ordering, we may assume 
\[h_0g_0 \prec \dotsb \prec h_mg_m \preceq \phi(g_0,h_0).\]
Then $\len{h_0g_0} \leq  \len{h_m g_m} - m = \len{g_m}$ and hence $h_0g_0 \preceq g_m \prec \phi(g_0,h_0)$.
This contradicts the shortest assumption of $\len{\phi(g_0,h_0)}$.
\end{proof}

Define 
\[\cA_6 = \lb{\, g \in \cA_5 : \# \wt\cH_g \geq 2 \,}\quad \text{where} \quad
\wt\cH_g \defeq \lb{\, h \in \wh\cH_g : hg \notin \sP \,}.\]
Then by Lemma~\ref{lem: phi mto1} and the estimate $\#\wh \cH_g \geq (\ell+1)^{-2}(2k)^{-\ell_0 - 2}2^{c\ell}$,
\[\# (\cA_5 \sm \cA_6)  \bigl( (\ell+1)^{-2}(2k)^{-\ell_0 - 2}2^{c\ell} - 2 \bigr) \leq \# \cB \leq m \#\cA_5 \leq \ell \# \cA_5.\]
As $\ell$ was chosen large enough, we have
\begin{equation}
\label{eqn: l large 2}
(\ell+1)^{-2}(2k)^{-\ell_0 - 2}2^{c\ell} -2 \geq 2\ell,
\end{equation}
which leads to $\# \cA_6 \geq \#\cA_5 /2$.

Let $\cA_7$ be a maximal subset of $\cA_6$ such that
the set \[\cA_8 \defeq \bigcup_{g \in \cA_7} \{\, hg : h \in \wt\cH_g \,\}\]
has no prefix relation (that is, for any $f_1,f_2 \in \cA_8$, $f_1 \not\prec f_2$). Then $\#\cA_8\geqslant 2\#\cA_7.$
\begin{lemma}
The subset $\cA_7$ is $m$-dense in $\cA_6$ with respect to the (right-invariant) word metric, that is, 
\[\forall g \in \cA_6, \, \exists \tilde{g} \in \cA_7,\quad \len{\tilde{g} g^{-1}} < m.\]
In particular $\# \cA_7 \geq (2k)^{-\ell} \# \cA_6$.
\end{lemma}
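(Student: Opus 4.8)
The plan is to argue by contradiction using the maximality of $\cA_7$. Suppose some $g_0\in\cA_6$ satisfies $\len{\tilde g g_0^{-1}}\geq m$ for every $\tilde g\in\cA_7$; in particular $g_0\notin\cA_7$. I will show that $\cA_7\cup\{g_0\}$ still satisfies the property defining $\cA_7$, namely that $\bigcup_{g\in\cA_7\cup\{g_0\}}\{\,hg:h\in\wt\cH_g\,\}$ has no prefix relation, which contradicts maximality. Taking $\tilde g=g_0$ then shows $m$-density.

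The first step is a combinatorial description of prefix relations inside any set of the form $\{\,hg:g\in\cA_5,\ h\in\wh\cH_g\,\}$. Fix $h_1g_1$ and $h_2g_2$ with $g_i\in\cA_5$ and $h_i\in\wh\cH_{g_i}$. Since $\cl(h_i,g_i)=0$ (item (4) of the preceding lemma), the reduced form of $h_ig_i$ is literally $h_i$ followed by $g_i$, and since all the $h_i$ have the same word norm $m$ (item (3)), any prefix relation $h_1g_1\prec h_2g_2$ forces the first $m$ letters to coincide, i.e. $h_1=h_2$, and then, cancelling this common prefix, $g_1\prec g_2$. Consequently, adjoining $g_0$ to $\cA_7$ can create a new prefix relation only if there is $g'\in\cA_7$ such that $g_0$ and $g'$ are comparable in the prefix order and $\wt\cH_{g_0}\cap\wt\cH_{g'}\neq\vn$.

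The second step, which is the heart of the matter, is to show that such a $g'$ is automatically $m$-close to $g_0$, i.e. $\len{g'g_0^{-1}}<m$; applied to the $g'$ produced in the first step this contradicts our assumption on $g_0$ and finishes the proof. Pick $h\in\wt\cH_{g_0}\cap\wt\cH_{g'}$ and suppose first $g_0\prec g'$, say $g'=g_0v$ with $v$ nonempty and reduced (the case $g'\prec g_0$ being symmetric). Then $hg'=(hg_0)v$, so $hg_0\prec hg'$, while $hg_0\notin\sP$ and $hg'\notin\sP$ because $h\in\wt\cH_{g_0}$ and $h\in\wt\cH_{g'}$ respectively. The claim to establish is $|v|<m$ (equivalently $\len{g'g_0^{-1}}<m$ after accounting for the conjugation), and this must be extracted from how $\sP$, $\wh\cH_g$ and $\wt\cH_g$ were constructed: $\sP$ is the set of prefixes of elements of $\cA_5$; each element of $\cA_5$ arises from an element of $\cA_0$ of word norm at least $\ell+\ell_0+1$ by passing to a prefix of norm at least $|g|-\ell$ and post-multiplying by a word of norm at most $\ell_0$; and the elements of $\wh\cH_g$ are words of norm $m\leq\ell$ prepended on the left with $\cl(h,g)=0$. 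Tracing these constraints simultaneously through the free-group prefix order and the membership conditions in $\sP$ should force $g'$ to differ from its prefix $g_0$ by a word of norm $<m$. I expect this bookkeeping to be the main obstacle, precisely because it requires keeping joint control of the combinatorics of the free group, the $\sP$-membership, and the bounded-length modifications performed in the earlier steps.

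Finally, the ``in particular'' statement is routine: by $m$-density, $\cA_6\subseteq\bigcup_{\tilde g\in\cA_7}\{\,g:\len{\tilde g g^{-1}}<m\,\}$, and in a free group of rank $k$ the set $\{\,w:\len w<m\,\}$ has fewer than $(2k)^m\leq(2k)^\ell$ elements (using $m\leq\ell$), so $\#\cA_6\leq(2k)^\ell\,\#\cA_7$, i.e. $\#\cA_7\geq(2k)^{-\ell}\#\cA_6$.
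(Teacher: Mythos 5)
Your proposal has a genuine gap, and it sits exactly at the step you defer as ``bookkeeping''. First, your Step 1 misreads the paper's prefix convention: for $g=\gamma_m\cdots\gamma_1$, a prefix is $\gamma_\ell\cdots\gamma_1$, i.e.\ the block containing the beginning $b(g)$ — the part of the reduced word of $hg$ contributed by $g$, not the block $h$ written on the left. With this convention, a prefix relation $h_1g_1\preceq h_2g_2$ does \emph{not} force $h_1=h_2$ (the letters of $h_1$ may be matched partly against letters of $g_2$ and partly against a prefix of $h_2$); what it forces, using $\cl(h_i,g_i)=0$ and $\len{h_1}=\len{h_2}=m$, is only $g_1\preceq g_2$, with no common-$h$ requirement. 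So your characterization ``new prefix relations come from comparable $g_0,g'$ with $\wt\cH_{g_0}\cap\wt\cH_{g'}\ne\vn$'' is not a correct description of when they occur. The same confusion resurfaces at the end: with the paper's convention, $g_0\preceq g'$ means $g'=vg_0$ in reduced form, so $\len{g'g_0^{-1}}=\len{v}$ exactly and there is no ``conjugation'' to account for; with your left-to-right reading $g'=g_0v$, the quantity $\len{g'g_0^{-1}}$ is not $\len{v}$, and the metric conclusion of the lemma would not follow from $\len{v}<m$.

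Second, and more importantly, the quantitative claim you leave unproven — that a cross prefix relation forces the two base elements to be $m$-close — is precisely the content of the lemma, and it is where the defining property of $\wt\cH_g$ (namely $hg\notin\sP$) must be used. The paper's argument is short: if $h_1g_1\preceq h_2g_2$ with $g_i\in\cA_6$ and $h_i\in\wt\cH_{g_i}$, then from $\len{h_1}=\len{h_2}=m$ and $\cl(h_1,g_1)=0$ one gets $g_1\preceq g_2$; moreover both $h_1g_1$ and $g_2$ are prefixes of $h_2g_2$ (the latter because $\cl(h_2,g_2)=0$), hence comparable, and $h_1g_1\preceq g_2$ is impossible since $h_1g_1\notin\sP$ while every prefix of $g_2\in\cA_5$ lies in $\sP$; therefore $m+\len{g_1}=\len{h_1g_1}>\len{g_2}$, which together with $g_1\preceq g_2$ gives $\len{g_2g_1^{-1}}<m$. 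Feeding this into the maximality of $\cA_7$ (any $g\in\cA_6\sm\cA_7$ creates a prefix relation, necessarily with some $h'g'$, $g'\in\cA_7$, whence $\len{g'g^{-1}}<m$) proves the density; your counting argument for the ``in particular'' statement is fine.
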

\begin{proof}
Clearly, for any $g \in \cA_6$, there is no prefix relation inside $\{\, hg: h \in \wt\cH_g\,\}$ since $|h|$ are same and $\cl(h,g)=0$ for every $h\in\wt\cH_g.$
We claim that for any $g_1,g_2 \in \cA_6$, if there exists $h_1g_1\in \{\, h_1g_1: h_1 \in \wt\cH_{g_1}\,\}, h_2g_2 \in \{\, h_2g_2: h_2 \in \wt\cH_{g_2}\,\}$ such that $h_1g_1 \preceq h_2g_2$, then $\len{g_2 g_1^{-1}} < m$.
Indeed, as $\len{h_1} = \len{h_2}$, we deduce from $h_1g_1 \preceq h_2g_2$ that $g_1 \preceq g_2$.
Moreover, $\cl(h_1,g_1) = 0$ and $h_1g_1 \not\preceq g_2$ by construction of $\wt\cH_g$.
Hence $m + \len{g_1} = \len{h_1g_1} > \len{g_2}$.
It follows that $\len{g_2 g_1^{-1}} < m$.
For any $g \in \cA_6 \sm \cA_7$, as we cannot put $g$ into $\cA_7$ without creating prefix relations, we know $g$ is within distance $m$ of some element in $\cA_7$.
\end{proof}

To summarize, we proved the following statement.
\begin{lemma}
\label{lem: A8}

There exist constants $\ell_0, \ell,C \in \NN$ depending only on $G$ satisfy the following properties. Given $\cA \sbs G$ as \eqref{eqn: original A}, there exists a closed interval $I$ that intersects $\Lambda$ and a subset $\cA_8 \subset G$ with 
\[\# \cA_8 \geq C^{-1}\ell^{-1}(2k)^{-2\ell-\ell_0-2}(\#\cA-(2k)^{\ell+\ell_0+1})\] 
such that for every $g \in \cA_8$, 

\begin{enumerate}
\item $gI \sbs I$,
\item $g'|_I \geq M^{-2\ell-2\ell_0}2^{-n}$ where $M = \max_{\gamma \in \cG} \nm{\gamma'}_{C^0}$,
\item there is $f \in \cA_8$  such that $gI \cap fI = \vn$,
\item for any $f \in \cA_8$, $\cl(f,g) = 0$,
\item for any $f \in \cA_8 \sm \{g\}$, $f \not\preceq g$.
\end{enumerate}
\end{lemma}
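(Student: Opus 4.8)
The statement is essentially a bookkeeping summary of the chain of pigeonhole reductions carried out in the preceding lemmas, so the plan is to verify that the constants and cardinality bounds accumulated along the chain $\cA \rightsquigarrow \cA_0 \rightsquigarrow \cA_1 \rightsquigarrow \dotsb \rightsquigarrow \cA_8$ combine as claimed, and that the five listed properties of $\cA_8$ are exactly those recorded in the last two auxiliary lemmas. First I would fix, once and for all, the constants $\ell_0$ (the common word length $\len{h_1}=\len{h_2}$ of the perfect pingpong pair produced by Proposition~\ref{prop: generate pingpong pair}) and $C=\#\cI$ (the size of the finite collection of intervals), both depending only on $G$ through its fixed free generating set $\cS$ with $k=\#\cS$. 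Then I would choose $\ell$ large enough so that the two needed inequalities \eqref{eqn: l large 1} and \eqref{eqn: l large 2} hold; note $\ell$ depends only on $G$, not on $\cA$ or $n$.

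Next I would assemble the cardinality estimate. Tracking the losses: $\# \cA_0 \geq C^{-1}\bigl(\# \cA-(2k)^{\ell+\ell_0+1}\bigr)$ from selecting a common $I \in \cI$ and discarding the short words; $\# \cA_1 \geq (2k)^{-\ell}\# \cA_0$ from passing to prefixes $\hat g$ of bounded co-length; $\# \cA_2 \geq \ell^{-1}\# \cA_1$ from fixing the common length $m$; $\# \cA_3 \geq (2k)^{-\ell_0}\# \cA_2$ from fixing the common longest-common-prefix word $p$; $\# \cA_4 \geq (2k)^{-2}\# \cA_3$ from fixing the pair $(b,e)$; $\# \cA_5 = \# \cA_4$; $\# \cA_6 \geq \tfrac12 \# \cA_5$ from Lemma~\ref{lem: phi mto1} and \eqref{eqn: l large 2}; $\# \cA_7 \geq (2k)^{-\ell}\# \cA_6$ from the $m$-density of $\cA_7$ in $\cA_6$; and finally $\# \cA_8 \geq 2\# \cA_7$ since each $g \in \cA_7$ contributes at least $\#\wt\cH_g \geq 2$ distinct elements $hg$. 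Multiplying these bounds gives $\# \cA_8 \geq C^{-1}\ell^{-1}(2k)^{-2\ell-\ell_0-2}\bigl(\# \cA - (2k)^{\ell+\ell_0+1}\bigr)$, absorbing the harmless factor $2$ into the constant. I would present this as a single display.

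Then I would verify the five structural properties directly from the construction, referencing the relevant intermediate lemma for each: property~(1) $gI \sbs I$ holds because each element of $\cA_8$ has the form $hg$ (in the $\cA_5$-renormalized notation, a word of the shape $ph\,\widehat gp^{-1}\cdots$) with $hgI \sbs I$, which is item~(5) of the summary lemma on $\cA_5$ transported through the $m$-density step (the density argument only appends a suffix of length $<m$, which still preserves $I$ by the same pingpong cone argument, or—more cleanly—one restricts $\cA_8$ to $\bigcup_{g\in\cA_7}\{hg: h\in\wt\cH_g\}$ directly, where the property is immediate); property~(2) the derivative bound $g'|_I \geq M^{-2\ell-2\ell_0}2^{-n}$ follows by the chain rule from $g'|_{B(x,\ve_1)} \geq 2^{-n}$ for the original element of $\cA$ (before passing to the prefix $\widehat g$, whose derivative only changes by a factor controlled by the removed letters) together with the at most $2\ell+2\ell_0$ extra generator factors introduced by $h$ and $p$ and the prefix truncation, each contributing a factor at least $M^{-1}$; property~(3) disjointness of $gI$ from some $fI$ is item~(5) of the $\cA_5$-summary lemma, namely $hgI$, $h \in \wh\cH_g$, are pairwise disjoint and $\#\wt\cH_g \geq 2$; property~(4) $\cl(f,g)=0$ for all $f,g \in \cA_8$ is item~(4) of the $\cA_5$-summary lemma, which ensures $b(\cdot)=b$, $e(\cdot)=e$, $b \ne e^{-1}$, hence no cancellation in any concatenation; property~(5) $f \not\preceq g$ for $f \ne g$ in $\cA_8$ is the defining maximality property of $\cA_7$ used to build $\cA_8$.

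The only mild obstacle is notational: keeping the two renumberings consistent—first the $\cA_0,\dotsc,\cA_5$ stage where $\wh\cH_g$ is repeatedly shrunk while $I$, $m$, and the base set are relabeled (the replacement of $I$ by $pI$ and $m$ by $m-|p|$), then the $\cA_6,\cA_7,\cA_8$ stage—so that the final constants match. Since every individual inequality has already been established in the preceding lemmas, the proof amounts to concatenating them; I would simply write: ``This follows by combining the preceding three lemmas, multiplying the cardinality estimates and reading off the five properties as indicated.'' and leave the one-line derivative computation for property~(2) explicit, as it is the only claim not verbatim in an earlier lemma.
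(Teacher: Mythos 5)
Your proposal is correct and follows essentially the same route as the paper: Lemma~\ref{lem: A8} is stated there as a summary of the chain $\cA \to \cA_0 \to \dotsb \to \cA_8$, and your multiplication of the cardinality losses ($C^{-1}$, $(2k)^{-\ell}$, $\ell^{-1}$, $(2k)^{-\ell_0}$, $(2k)^{-2}$, $\tfrac12$, $(2k)^{-\ell}$, $2$) reproduces exactly the stated bound, while properties (1)--(5) are read off from the intermediate lemmas just as in the paper (your derivative count $2\ell+2\ell_0$ matches item (6) of the $\cA_5$ summary). Your hedge about the $m$-density step is unnecessary, since $\cA_8$ is by definition $\bigcup_{g\in\cA_7}\{hg: h\in\wt\cH_g\}$ (the density is only used for the cardinality of $\cA_7$), which is the ``cleaner'' reading you already identify.
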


Now we are ready to conclude Proposition \ref{prop: C1 dynamical critical exponent} by Lemma \ref{lem: A8} and a random walk argument. 
Let $\mu$ be the uniform probability measure on $\cA_8$ and consider the random walk induced by $\mu$ on $I$. 
Assuming $n$ large enough, then $g$ is strictly contracting on $I$ for $g\in\cA_8.$
Let $\nu$ be the unique stationary measure of $\mu$ on $I$, then $\nu$ is supported on $\Lambda$ since $I\cap\Lambda$ is nonempty and invariant under $\cA_8.$ 
On the one hand, from item (2), we obtain an estimate of the Lyapunov exponent on $I$ as
\[|\lambda(\mu,\nu)| \leq n + O_G(1).\] 
On the other hand, by the last two items, $\cA_8$ freely generates a free semigroup in $G$.
Since the group $G$ is locally discrete, $(g|_I)_{g \in \cA_8}$ is a collection of distinct elements of $C_+^2(I,I)$.
The semigroup $T_\mu$ is also a free semigroup freely generated by $\supp\mu.$
Thus the random walk entropy of $\mu$ is
\[h_{\mr{RW}}(\mu)=\log\# \cA_8 \geq n(\delta(G) - \ve) - O_G(1).\]

Condition (3) of Lemma \ref{lem: A8} implies that there is no $T_\mu$-invariant measure on $I$. Then we can apply Theorem~\ref{thm: dim formula on interval} to the random walk on $I$ induced by $\mu$.
By the assumption that $G$ is locally discrete, the second alternative in Theorem~\ref{thm: dim formula on interval} does not hold. 
Hence $\nu$ is exact dimensional and
\[\dim_{\mr H}\nu=\dim\nu=\frac{h_{\mr{RW}}(\mu)}{|\lambda(\mu,\nu) |}\geq \frac{n(\delta(G) -\ve) - O_G(1)}{n + O_G(1)}.\]
Fixing $l$ sufficiently large and letting $n \to +\infty$, we obtain,
\[\dimH \nu \geq \delta(G) - 2 \ve.\]
As $\ve > 0$ is arbitrary, Proposition \ref{prop: C1 dynamical critical exponent} follows.
\end{proof}

\begin{proof}[Proof of Theorem \ref{thm: C1 dynamical critical exponent}]
The first statement follows directly from Proposition~\ref{prop: find generators 1}.

In order to prove the second statement, we begin by noting that $G$ has a free subgroup $G_1$ of finite index by assumption. Using Lemma~\ref{prop: finite index minimal set}, we see that $\Lambda$ is also the unique minimal set of $G_1$. It is not hard to show that $\delta(G_1) = \delta(G)$, so we may assume without loss of generality that $G$ is a free group.

By a well-known result of Herman (see \cite[Chapter VII]{Her}), the cyclic group generated by a $C^2$ diffeomorphism with an irrational rotation number is not discrete in the $C^1$ topology. Hence, our group $G$ contains only elements with rational rotation numbers. Then as in the proof of Lemma~\ref{lem: invariant measure and finite orbit}, we can show that $G$ does not have any invariant probability measure.
Thus, by Proposition~\ref{prop: C1 dynamical critical exponent}, $\dimH \Lambda \geq \delta(G)$.
\end{proof}

\subsection{The \texorpdfstring{$C^2$}{C\^{}2} dynamical critical exponent (Proof of Theorem \ref{thm: C2 dynamical critical exponent})}
\label{sec: C2 critical exponents}
This subsection is devoted to proving Theorem~\ref{thm: C2 dynamical critical exponent}. In order to show $\delta_2(G)\geq \dimH \Lambda$, we require a variant of Proposition~\ref{prop: find generators 1}. To this end, we need slightly improve Proposition~\ref{prop: DKN expandable}, i.e., \cite[Proposition 6.4]{DKN09}. Recall that $\wt\vk$ is the distortion norm defined in Section \ref{sec: distortion}.
Combining \eqref{eqn:2MS} from Proposition \ref{prop: distortion estimate} with \cite[Lemma 6.3]{DKN09}, we obtain the following.
\begin{proposition}\label{prop: C2 expandable}
Let $G\sbs \Diff_+^2(\SS^1)$ be a finitely generated subgroup without finite orbits in $\SS^1$. 
Let $\Lambda$ be its unique minimal set. 
Assume that $G$ satisfies property $(\star)$ or $(\Lambda\star).$ 
Then there exists constants $\ve_0>0,C>0$ such that for every point $x\in \Lambda\sm G(\NE),$ there is an interval $I \sbs \SS^1$ whose length is  arbitrarily small, together with an element $g \in G$ such that $g I = B(gx,\ve_0)$ and $\wt\vk(g^{-1},gI) \leq C$. 
\end{proposition}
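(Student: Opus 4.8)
The goal is to combine the geometric construction of \cite[Proposition 6.4]{DKN09} (here restated as Proposition~\ref{prop: DKN expandable}) with the distortion bound~\eqref{eqn:2MS} of Proposition~\ref{prop: distortion estimate}, upgrading the conclusion from $\vk(g,I)\leq 1$ to the stronger $\wt\vk(g^{-1},gI)\leq C$ for a uniform constant $C$. The point of \cite[Lemma 6.3]{DKN09} is that the expanding element $g$ produced by the Deroin--Kleptsyn--Navas argument can be taken of the form $g = h^{-1}\circ f^{-N}$ (or more precisely $g$ is an inverse of a long composition of generators) for which one controls the \emph{sum of derivatives} $S = \sum_{k} (f_k')(x_0)$ along the orbit: indeed, the construction of an expandable interval is precisely the statement that starting from a point $x$ where the derivatives of group elements are unbounded, one finds a diffeomorphism $\phi \in G$ (playing the role of $g^{-1}$ above) whose orbit contracts a fixed-size ball $B(gx,\ve_0)$ down to the small interval $I$, and the contraction is exponential once one is in the "Pesin region". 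The key quantitative input from \cite[Lemma 6.3]{DKN09} is that the sum $S$ of the derivatives of the partial compositions $\phi_k = g_k\cdots g_1$ evaluated at $gx$ is bounded by $C_0 / \phi'(gx) = C_0\, g'(x)$ for an absolute constant $C_0$ depending only on the generating set (this is exactly the kind of bound one gets from the geometric control on the nested intervals $\ve_0 \gg |I_1| \gg |I_2|\gg\cdots$, whose lengths decay geometrically).

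First I would set $\cS$ to be a fixed finite symmetric generating set of $G$ and let $M = M(\cS)$ be the distortion constant of Section~\ref{sec: distortion}. Applying Proposition~\ref{prop: DKN expandable} to a point $x \in \Lambda \setminus G(\NE)$, I obtain, for arbitrarily small $|I|$, an element $g\in G$ with $gI = B(gx,\ve_0)$ and $\vk(g,I)\leq 1$; write $g^{-1} = g_n\cdots g_1$ with $g_i\in\cS$ and set $y_0 = gx \in B(gx,\ve_0)$, $\phi_k = g_k\cdots g_1$. The content of \cite[Lemma 6.3]{DKN09} (together with the exponential decay of the interval lengths in the DKN construction) is that $S \defeq \sum_{k=0}^{n-1}\phi_k'(y_0) \leq C_0\, g'(x)$ for a constant $C_0$ depending only on $\cS$ and $\ve_0$. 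Then I apply Proposition~\ref{prop: distortion estimate} to the point $y_0$ and the map $g^{-1}\in\cS^{*n}$: provided $\ve_0 \leq (2MS)^{-1}$, which holds after possibly shrinking $\ve_0$ once and for all since $S$ can be made small by the geometric decay (alternatively, one works on the sub-ball $B(y_0,(2MS)^{-1})$, but here the whole ball $B(gx,\ve_0) = gI$ is what we want, so we just fix $\ve_0$ small enough relative to $C_0$ and absorb $g'(x)$, noting $g'(x)$ is the relevant normalizing factor), inequality~\eqref{eqn:2MS} gives
\[
\wt\vk(g,\, g^{-1}B(y_0,\ve_0)) = \wt\vk\bigl((g^{-1})^{-1},\, g^{-1}B(y_0,\ve_0)\bigr)\leq \frac{4MS}{(g^{-1})'(y_0)} = 4MS\cdot g'(x) \leq 4M C_0\, g'(x)^2.
\]
This is not yet uniform, so the correct move is to instead apply~\eqref{eqn:2MS} directly in the form that bounds $\wt\vk(g^{-1},gI)$: here $gI = B(gx,\ve_0)$ and the map in question is $g$, which we write as a word in $\cS$; running the distortion estimate along the \emph{backward} orbit of $gx$ under $g^{-1}$ — that is, along the forward orbit producing $I$ from $B(gx,\ve_0)$ — the sum of derivatives that appears is exactly $S' \defeq \sum_{k} (\phi_k')(gx)$ with $\phi_k$ the partial compositions of $g^{-1}$, and the bound becomes $\wt\vk(g^{-1}, gI) \leq 4 M S' \cdot g'(x)^{-1} \cdot (g^{-1})'(gx)^{-1}$...

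Let me state the clean version of the argument: the quantity to estimate is $\wt\vk(g^{-1}, gI)$, and~\eqref{eqn:2MS} with $f = g^{-1}$, $x_0 = gx$, $\delta = \ve_0$ reads $\wt\vk\bigl((g^{-1})^{-1}, g^{-1}B(gx,\ve_0)\bigr) = \wt\vk(g, I) \leq 4MS/(g^{-1})'(gx)$ where $S = \sum_k (g^{-1})_k'(gx)$ and $(g^{-1})_k$ denotes the $k$-th partial composition of $g^{-1} = g_n\cdots g_1$; but we want $\wt\vk(g^{-1},gI)$, so we instead apply~\eqref{eqn:2MS} with $f = g$, $x_0 = x$, $\delta = |I|$: this requires $|I| \leq (2M\tilde S)^{-1}$ where $\tilde S = \sum_k g_k'(x)$, $g_k$ the partial compositions of $g = g_n^{-1}\cdots g_1^{-1}$, and yields $\wt\vk(g^{-1}, gI) \leq 4M\tilde S/g'(x)$. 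The crux is therefore to show $\tilde S / g'(x) \leq C$ uniformly, and this is precisely what the geometric estimate in \cite[Lemma 6.3]{DKN09} provides: the intervals $I = I_0, I_1 = g_1 I_0, \dots, I_n = g I_0 = B(gx,\ve_0)$ have lengths increasing geometrically (up to the distortion bound $\vk(g,I)\leq 1$ which forces $|I_k| \asymp g_k'(x)|I|$), so $\sum_k |I_k| \ll |I_n| = 2\ve_0$, hence by $|I_k| \asymp g_k'(x)|I|$ and $|I_n| \asymp g'(x)|I|$ we get $\tilde S = \sum_k g_k'(x) \ll \sum_k |I_k|/|I| \ll \ve_0/|I| \asymp g'(x)$, giving $\tilde S/g'(x) \leq C$ with $C$ depending only on $\cS$ and $\ve_0$. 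Combining, $\wt\vk(g^{-1}, gI) \leq 4MC =: C'$, which is the claimed uniform bound. Finally, the hypothesis $|I| \leq (2M\tilde S)^{-1}$ needed to invoke~\eqref{eqn:2MS} is automatic for $|I|$ small since $\tilde S \ll \ve_0/|I|$ gives $2M\tilde S |I| \ll 2M\ve_0 < 1$ after fixing $\ve_0$ small enough at the outset; and $|I|$ being arbitrarily small is exactly the flexibility Proposition~\ref{prop: DKN expandable} affords.

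\textbf{Main obstacle.} The only real work is extracting from \cite[Lemma 6.3]{DKN09} the precise geometric fact that the nested intervals $I_k$ have lengths comparable to $g_k'(x)|I|$ with a uniformly summable total — i.e. that the distortion control $\vk(g,I)\leq 1$ of Proposition~\ref{prop: DKN expandable} propagates to all the partial compositions and yields $\sum_k g_k'(x) \ll g'(x)$. This is implicit in the DKN argument (it is how they get the $\vk(g,I)\leq 1$ bound in the first place, via~\eqref{eqn:vksumI}), so the proof is essentially a matter of recording that the same estimate, fed into~\eqref{eqn:2MS} rather than~\eqref{eqn:vksumI}, upgrades the distortion coefficient bound to a distortion norm bound. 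I would carry this out in two steps: (i) recall/restate the length-comparability of the $I_k$ from \cite[Lemma 6.3]{DKN09}; (ii) plug into Proposition~\ref{prop: distortion estimate}.
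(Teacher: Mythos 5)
Your proposal is correct and follows essentially the same route as the paper, whose entire proof is the one-line combination you arrive at: the controlled sum $\sum_k g_k'(x) \ll g'(x)$ over the partial compositions supplied by \cite[Lemma 6.3]{DKN09}, fed into inequality \eqref{eqn:2MS} of Proposition \ref{prop: distortion estimate}, with the monotonicity of $\wt\vk$ under restriction to the subinterval $gI \sbs gB(x,\delta)$. The only caution is the one you already flag yourself: the comparability $|I_k| \asymp g_k'(x)\,|I|$ and the summability of the $|I_k|$ are not consequences of the black-box conclusion $\vk(g,I)\leq 1$ of Proposition \ref{prop: DKN expandable}, but must be read off the DKN construction itself (i.e.\ Lemma 6.3), which is exactly where the paper takes them from.
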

Using this, we see easily that Proposition \ref{prop: find generators 1} holds with the condition $\vk(g,I_0)\leq 1$ replaced by $\wt\vk(g,I_0)\leq C.$ 
We deduce immediately the following.
\begin{proposition}
\label{prop: delta2 geq dim}
Let $G\sbs \Diff_+^2(\SS^1)$ be a finitely generated subgroup without finite orbits.
Assume that $G$ satisfies property $(\star)$ or $(\Lambda\star).$ 
Then
\[\delta_2(G)\geq \dimH \Lambda.\]
\end{proposition}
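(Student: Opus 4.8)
The plan is to derive Proposition~\ref{prop: delta2 geq dim} as an essentially immediate consequence of the machinery already set up, namely the improved expandability statement in Proposition~\ref{prop: C2 expandable} together with the covering argument of Lemma~\ref{lem: cover and dim}. First I would record the observation made just before the statement: Proposition~\ref{prop: C2 expandable} upgrades the conclusion of Proposition~\ref{prop: DKN expandable} by additionally providing a uniform distortion bound $\wt\vk(g^{-1},gI)\leq C$, and therefore the whole argument of Section~\ref{sec: choice of generators} goes through verbatim if one replaces the requirement $\vk(g,I)\leq 1$ in Definition~\ref{def: expandable} by $\wt\vk(g^{-1},gI)\leq C$ (equivalently $\wt\vk(g,I)\leq C'$ for a constant $C'$ depending only on $G$, using that $\|g'\|_{C^0}$ on the relevant interval is controlled). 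In particular one gets the analogue of Proposition~\ref{prop: find generators 1}: there is a point $y_0\in\Lambda$, a closed interval $I_0$, and subsets $\cG_n\sbs G$ such that the intervals $(gI_0)_{g\in\cG_n}$ are pairwise disjoint, $|gI_0|\in[2^{-n-3},2^{-n}[$, $\wt\vk(g,I_0)\leq C'$ for all $g\in\cG_n$, and $\limsup_{n\to+\infty}\frac1n\log\#\cG_n\geq\dimH\Lambda$. The derivative $g'|_{B(y_0,\ve_0/2)}$ is then bounded below by $c\,2^{-n}$ for a constant $c>0$ depending only on $G$, since $|gI_0|\geq 2^{-n-3}$, $|I_0|=\ve_0$, and the distortion of $g$ on $I_0$ is at most $C'$.

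Next I would translate this into a lower bound for $\delta_2(G)$ directly from its definition. Fix $\ve>0$; we must exhibit, for the choice of $\ve$ and then for all large $C$, infinitely many $n$ with exponentially many $g\in G$ satisfying $g'|_{B(x,\ve)}\geq 2^{-n}$, $x\in\Lambda$, and $\wt\vk(g,B(x,\ve))\leq C$. Shrinking $\ve_0$ if necessary we may assume $\ve_0/2<\ve$ and $B(y_0,\ve_0/2)\sbs B(y_0,\ve)$; take $x=y_0\in\Lambda$. For $g\in\cG_n$ we have $\wt\vk(g,B(y_0,\ve_0/2))\leq C'$, where $C'$ depends only on $G$; taking $C\geq C'$ and absorbing the constant $c$ into a shift $n\mapsto n+O_G(1)$ of the index (which does not affect the exponential growth rate), we conclude that for every $\ve>0$ and every $C\geq C'$,
\[
\limsup_{n\to+\infty}\frac1n\log\#\lb{\,g\in G:\exists x\in\Lambda,\ g'|_{B(x,\ve)}\geq 2^{-n},\ \wt\vk(g,B(x,\ve))\leq C\,}\geq\dimH\Lambda .
\]
Taking first $C\to+\infty$ and then $\ve\to0^+$ in the definition of $\delta_2(G)$ yields $\delta_2(G)\geq\dimH\Lambda$.

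The only genuinely substantive input here is Proposition~\ref{prop: C2 expandable}, whose proof is indicated in the excerpt (combine \eqref{eqn:2MS} with \cite[Lemma 6.3]{DKN09}); everything downstream is bookkeeping, so the ``main obstacle'' is really just making sure the distortion norm of $g$ on $I_0$ — as opposed to that of $g^{-1}$ on $gI_0$ — is bounded by a constant depending only on $G$. This follows from \eqref{eq: wtvk of fg}-type estimates or, more simply, from the elementary identity relating $\wt\vk(g,I_0)$ and $\wt\vk(g^{-1},gI_0)$ via $\|(g^{-1})'\|_{C^0(gI_0)}$, which is bounded since $g$ expands $I_0$ of length $\ve_0$ to $B(gy_0,\ve_0)$ with controlled distortion (so $g'\geq c\ve_0^{-1}\cdot 2^{-n}\cdot\ve_0$... more precisely $(g^{-1})'$ on $gI_0$ is comparable to $2^{n}$, but only the distortion \emph{norm} — i.e.\ the Lipschitz constant of $\log g'$ divided by length — is what we need, and $\wt\vk(g,I_0)\leq\|g'\|_{C^0(I_0)}\cdot\wt\vk(g^{-1},gI_0)\cdot\text{(stuff)}$ is not quite it; rather one uses $\wt\vk(g,I_0)=\sup_{x\neq y\in I_0}\frac{|\log g'(x)-\log g'(y)|}{d(x,y)}$ and $|\log g'(x)-\log g'(y)|\leq\vk(g,I_0)=\vk(g^{-1},gI_0)\leq|gI_0|\wt\vk(g^{-1},gI_0)\leq\ve_0 C$, while $d(x,y)$ can be small — so in fact one should keep the coefficient form $\vk(g,I_0)\leq 1$ exactly as in Definition~\ref{def: expandable}, which is already guaranteed by Proposition~\ref{prop: DKN expandable}, and separately carry the $\wt\vk(g^{-1},gI_0)\leq C$ bound, then note $\wt\vk(g^{-1},gI_0)$ is precisely the distortion norm appearing in the definition of $\delta_2$ applied to $g^{-1}$ on $g I_0=B(gy_0,\ve_0)$). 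I would therefore phrase the argument using $g^{-1}$ acting on the fixed-size ball $B(gy_0,\ve_0)$, where the contraction rate is $\asymp 2^{-n}$ and the distortion norm is $\leq C$ by Proposition~\ref{prop: C2 expandable}, and where $gy_0\in\Lambda$ since $\Lambda$ is $G$-invariant; this matches the definition of $\delta_2(G)$ on the nose and avoids any conversion between $\vk$ and $\wt\vk$.
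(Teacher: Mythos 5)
Your route is the paper's own: combine Proposition~\ref{prop: C2 expandable} with the covering and pigeonhole machinery of Section~\ref{sec: choice of generators} to obtain the variant of Proposition~\ref{prop: find generators 1} in which the condition $\vk(g,I_0)\leq 1$ is upgraded to $\wt\vk(g,I_0)\leq C$, and then read off the definition of $\delta_2(G)$. Your final reformulation — work with $g^{-1}$ acting on the fixed-size ball $gI=B(gx,\ve_0)$, whose center lies in $\Lambda$ and whose distortion norm $\wt\vk(g^{-1},gI)\leq C$ is exactly what Proposition~\ref{prop: C2 expandable} provides — is indeed the intended reading, and it makes the earlier detour unnecessary: the parenthetical claim in your first paragraph that $\wt\vk(g,I)\leq C'$ for the \emph{expanding} map with $C'$ depending only on $G$ is false (the conversion costs a factor $\|g'\|_{C^0(I)}\asymp 2^{n}$), but nothing of the sort is needed, since the elements of $\cG_n$ are the inverses and $I_0\sbs gI$, so $\wt\vk(g^{-1},I_0)\leq\wt\vk(g^{-1},gI)\leq C$ by plain restriction. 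Distinctness of the elements is inherited from the disjointness of the images of $I_0$, exactly as in Proposition~\ref{prop: find generators 1}.

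The one step that is wrong as written is your handling of the limit in $\ve$. You fix an arbitrary $\ve>0$ and ``shrink $\ve_0$ so that $\ve_0/2<\ve$'', taking $x=y_0$; but your construction only controls $g'$ and $\wt\vk(g,\cdot)$ on $I_0=\ol{B(y_0,\ve_0/2)}$, which is then a \emph{proper} subset of $B(y_0,\ve)$, and this does not verify the conditions $g'|_{B(y_0,\ve)}\geq 2^{-n}$ and $\wt\vk(g,B(y_0,\ve))\leq C$ required in the definition of $\delta_2$: on $B(y_0,\ve)\sm I_0$ you have no information, and the derivative there may be far smaller than $2^{-n}$. The repair is the opposite, and standard, observation: for fixed $n$ and $C$ the count in the definition of $\delta_2(G)$ is non-increasing in $\ve$ (if the conditions hold on $B(x,\ve')$ they hold on $B(x,\ve)$ for every $\ve\leq\ve'$, at the same center), so the outer limit $\ve\to0^+$ is a supremum and it suffices to exhibit the exponential count at the single radius $\ve_0/2$ with the single constant $C$ of Proposition~\ref{prop: C2 expandable} — which is precisely what the modified Proposition~\ref{prop: find generators 1} gives; no shrinking of $\ve_0$ is needed or helpful. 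With this quantifier issue fixed, your argument coincides with the paper's proof.
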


It suffices to show that under the condition of local discreteness, $\delta_2(G)$ is always bounded above by $\dim_{\mr H}\Lambda.$ Let $\delta=\delta_2(G),$ the key lemma in this case is stated below.

\begin{lemma}
\label{lem: equiv def of C2 CE}
Let $G \sbs \Diff^2_+(\SS^1)$ be a finitely generated subgroup preserving no Borel probability measures on $\SS^1$. 
Let $\Lambda \sbs \SS^1$ be its unique minimal set.
Let $\ve_1 > 0$ be the constant of Lemma~\ref{lem: contracting constant}.
For any $x \in \Lambda$, we have
\[\delta_2(G) = \lim_{C \to +\infty}\limsup_{n \to +\infty} \frac{1}{n} \# \lb{\, g\in G : g'|_{B(x,\ve_1)}\geq 2^{-n},\, \wt\vk(g,B(x,\ve_1)) \leq C\,}.\]
\end{lemma}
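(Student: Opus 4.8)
The plan is to show that the quantity appearing in Lemma~\ref{lem: equiv def of C2 CE}---call it $\delta_x$ for brevity---is independent of $x\in\Lambda$ and equals $\delta_2(G)$. The proof follows the same two-step scheme as Lemma~\ref{lem: equiv def of C1 CE}, with the extra bookkeeping of the distortion norm $\wt\vk$, which is harmless because $\wt\vk$ behaves subadditively under composition (see~\eqref{eq: wtvk of fg} and~\eqref{eqn: distortion subadditivity 2}).

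First I would prove $\delta_x\geq\delta_2(G)$ for every $x\in\Lambda$. Fix $\ve>0$ and a pair of parameters $\ve',C'$ realizing (up to $\ve$) the triple limit defining $\delta_2(G)$, i.e.\ $\limsup_n \frac1n\log\#\{g\in G:\exists y\in\Lambda,\ g'|_{B(y,\ve')}\geq 2^{-n},\ \wt\vk(g,B(y,\ve'))\leq C'\}\geq\delta_2(G)-\ve$. Cover $\Lambda$ by finitely many balls $B(x_i,\ve'/2)$, $i=1,\dots,k$, with $x_i\in\Lambda$; by the pigeonhole principle one index $i$ captures exponentially many of the $g$'s, now with $g'|_{B(x_i,\ve'/2)}\geq 2^{-n}$ and $\wt\vk(g,B(x_i,\ve'/2))\leq C'$. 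By Lemma~\ref{lem: contracting constant} applied to $x_i$ and $x$, there is a fixed $f\in G$ with $fB(x,\ve_1)\subset B(x_i,\ve'/2)$. Replacing $g$ by $g\circ f$ and using the chain rule together with~\eqref{eq: wtvk of fg} (so $\wt\vk(gf,B(x,\ve_1))\leq \wt\vk(f,B(x,\ve_1))+\|f'\|_{C^0}\wt\vk(g,fB(x,\ve_1))\leq C'+O_f(1)=:C''$, and $(gf)'|_{B(x,\ve_1)}\geq 2^{-n}\cdot\min f'>2^{-n-O_f(1)}$), the map $g\mapsto gf$ is injective and produces at least $2^{n(\delta_2(G)-\ve)}$ elements with $(gf)'|_{B(x,\ve_1)}\geq 2^{-n-O_f(1)}$ and $\wt\vk(gf,B(x,\ve_1))\leq C''$. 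Relabelling $n$ and letting $C\to+\infty$ after $n\to+\infty$ shows $\delta_x\geq\delta_2(G)-\ve$, hence $\delta_x\geq\delta_2(G)$.

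For the reverse inequality $\delta_x\leq\delta_2(G)$, note that the set counted in $\delta_x$ is, for each fixed $C$, a subset of $\{g\in G:\exists y\in\Lambda,\ g'|_{B(y,\ve_1)}\geq 2^{-n},\ \wt\vk(g,B(y,\ve_1))\leq C\}$ (just take $y=x$), whose exponential growth rate in $n$, after sending $\ve_1$ to $0$ and $C\to\infty$, is by definition bounded by $\delta_2(G)$. More precisely, for each fixed $C$ one has $\limsup_n\frac1n\log\#\{g:g'|_{B(x,\ve_1)}\geq 2^{-n},\ \wt\vk(g,B(x,\ve_1))\leq C\}\leq \limsup_n\frac1n\log\#\{g:\exists y\in\Lambda,\ g'|_{B(y,\ve_1)}\geq 2^{-n},\ \wt\vk(g,B(y,\ve_1))\leq C\}\leq\lim_{\ve\to0}\limsup_n\frac1n\log\#\{g:\exists y,\ g'|_{B(y,\ve)}\geq 2^{-n},\ \wt\vk(g,B(y,\ve))\leq C\}$, since shrinking the ball only shrinks the counted set and both the derivative condition and the distortion condition are monotone in the ball; taking $C\to+\infty$ gives $\delta_x\leq\delta_2(G)$. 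Combining the two inequalities yields the claimed formula for every $x\in\Lambda$, which is exactly Lemma~\ref{lem: equiv def of C2 CE}.

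The only genuinely delicate point is tracking the distortion norm through the substitution $g\mapsto g\circ f$ and through the shrinking of balls: unlike the plain derivative bound, $\wt\vk$ is not automatically monotone under arbitrary operations, so one must invoke the subadditivity estimate~\eqref{eq: wtvk of fg} and absorb the contribution of the fixed auxiliary diffeomorphism $f$ into a slightly larger constant $C''$; since $f$ is fixed once and for all (depending only on $G$, $\ve'$, $C'$ and $x$), this is a bounded perturbation and does not affect the exponential rate. Everything else is a routine pigeonhole-plus-transitivity argument identical to the $C^1$ case, and I do not expect any obstacle there.
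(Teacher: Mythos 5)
Your argument is correct and is exactly the proof the paper intends: the lower bound by covering $\Lambda$, pigeonholing, and composing with the transitivity element $f$ from Lemma~\ref{lem: contracting constant}, the upper bound by monotonicity of both conditions under shrinking the ball, with the distortion tracked via the subadditivity estimate~\eqref{eq: wtvk of fg} (the paper simply declares this identical to the $C^1$ case and leaves it to the reader). The only slip is cosmetic: the bound on $\wt\vk(gf,B(x,\ve_1))$ should read $\|f'\|_{C^0}C'+O_f(1)$ rather than $C'+O_f(1)$, which is harmless since, as you note, any constant $C''$ depending only on $f$ and $C'$ suffices once $C\to+\infty$.
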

The proof is identical to that of Lemma~\ref{lem: equiv def of C1 CE} and left to the reader.

\begin{proof}[Proof of Theorem~\ref{thm: C2 dynamical critical exponent}]
Fix a point $x \in \Lambda$.
By the previous lemma, for any $\ve > 0$, there is a constant $C > 0$ such that
$\limsup_{n\to +\infty} \frac{1}{n}\log \#\cA_n \geq \delta_2(G) - \ve$, where for $n \in \NN$,
\[ \cA_n = \lb{\, g\in G : g'|_{B(x,\ve_1)}\geq 2^{-n},\, \wt\vk(g,B(x,\ve_1)) \leq C\,}.\]
We claim that there are subsets $\wt\cA_n \sbs \cA_n$ with the additional property that the collection of intervals $\ol{gB(x,\ve_1)}$, $g \in \wt\cA_n$ are pairwise disjoint while $\limsup_{n\to +\infty} \frac{1}{n}\log \#\wt\cA_n \geq \delta_2(G) - \ve$ still holds.

Assuming this claim, the same argument as in Section~\ref{sec: dimension variation} leads to existence of a probability measure $\nu$ on $\Lambda$ with dimension $\dimH \nu \geq \delta_2(G)- 2 \ve.$ 
Combined with Proposition~\ref{prop: delta2 geq dim}, this concludes the proof of Theorem~\ref{thm: C2 dynamical critical exponent}.

Now we turn to the proof of the claim.
Thanks to the control of distortion, we can write
\[\cA_n = \bigcup_{k=0}^n \cB_k\]
where 
\[\cB_k = \lb{\, g\in G : 2^{-k} \leq g'|_{B(x,\ve_1)} \leq 2^{-k+C+1},\, \wt\vk(g,B(x,\ve_1)) \leq C\,}.\]
Distinguish two cases.
\paragraph{Case 1.}
There is some $n\in \NN$ such that $\cB_n$ is infinite. 
Then we will apply a similar pigeonhole argument as in the proof of Lemma \ref{lem: approximate identity} to construct elements tending to $\Id$ in the $C^1$-topology. For every positive integer $k,$ let $y_0,\cdots,y_{4k}$ be evenly spaced points on circle such that $J=\ol{B(x,\ve_1)}=[y_0,y_{4k}].$ Then there exists $f,g\in\cB_n$ such that $d(f(y_i),g(y_i))\leqslant 1/k$ and $|\log f'(y_i)-\log g'(y_i)|\leqslant 1/k$ for every $i=0,1,\cdots,4k.$ 
Let $J'=\ol{B(x,\ve_1/2)}=[y_k,y_{3k}]$. 
One can show that $f(J') \sbs g(J)$ if $k$ is sufficiently large. By an estimate similar to Lemma~\ref{lem: approximate identity}, the elements $g^{-1}f$ tend to $\Id$ on $J'$ in the $C^1$-topology as $k \to +\infty$. This contradicts the local discreteness assumption.
	
\paragraph{Case 2.}
Otherwise, every $\cB_n$ is a finite set. 
Then
\[ \limsup_{n\to +\infty} \frac{1}{n}\log \#\cB_n  = \limsup_{n \to +\infty} \frac{1}{n} \log \#\cA_n \geq \delta_2(G) - \ve .\]
Let $\wt\cA_n \sbs \cB_n$ be a maximal subset such that $\lb{\ol{gB(x,\ve_1)}:g \in \wt\cA_n}$ forms a family of disjoint closed intervals.

Assume that
\[\limsup_{n\to +\infty} \frac{1}{n}\log \#\wt\cA_n \leq \delta_2(G) - \ve-\ve'\]
for some $\ve'>0.$ Note that for every $g\in \cB_n,$ $\ol{gB(x,\ve_1)}$ intersects $\ol{\wt gB(x,\ve_1)}$ for some $\wt g\in \wt \cA_n$. By applying a pigeonhole argument, there exists infinitely many positive integers $n$ and at least $2^{\ve' n/2}$ elements $g_1,\cdots,g_m\in G$ satisfying
	\begin{enumerate}
		\item there exists an interval $I$ with $|I|\leq 2^{-n+C+2}$ such that $g_i(B(x,\ve_1))\sbs I,$ and
		\item for all $1\leq i\leq m,$ $g_i'|_{B(x,\ve_1)}\in [2^{-n},2^{-n+C+1}]$ and $\wt\vk(g_i,B(x,\ve_1))\leq C.$
	\end{enumerate}
	Now we apply Lemma \ref{lem: approximate identity}. This also contradicts the local discreteness.
\end{proof}

\section{Groups with parabolic elements}\label{se:10}
In this section, we show some lower bounds for the dynamical critical exponents of groups that having parabolic fixed points. Consequently, this will give the third item in the Main Theorem.


\subsection{Growth of derivatives around a parabolic fixed point}\label{sec: 10.1}

The first step is an estimate of the dynamics of an analytic diffeomorphism near a parabolic fixed point. Recall the multiplicity of a fixed point.

\begin{proposition}\label{prop: derivative around parabolic}
Let $g\in\Diff_+^\omega(\SS^1)$ be an analytic diffeomorphism. 
Let $x$ be a parabolic fixed point with multiplicity $k + 1$, $k \in \NN$.
Assume that $g$ is contracting on the interval ${]x,x'[}$ for some $x' \in \SS^1$. Then there exist constants $c>0$ such that
\[\forall n\in\NN,\, \forall z\in {[x,x'[},\quad (g^n)'(z)\geq c n^{-\frac{k+1}{k}}.\] 
\end{proposition}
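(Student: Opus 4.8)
The plan is to reduce to a one-dimensional ODE-type estimate via the standard normal-form analysis of a parabolic fixed point. First I would work in a local analytic coordinate $t$ on ${[x,x'[}$ with $t = 0$ corresponding to $x$ and $t$ increasing into ${]x,x'[}$. Since $x$ has multiplicity $k+1$ for $g$, the function $\varphi(t) = g(t) - t$ has a zero of order exactly $k+1$ at $t=0$, and because $g$ is contracting on ${]x,x'[}$ we have $\varphi(t) < 0$ there; thus
\[
g(t) = t - a t^{k+1} + O(t^{k+2}), \qquad a > 0,
\]
for $t$ in a small right-neighborhood of $0$. The key point is that $g'(t) = 1 - (k+1) a t^{k} + O(t^{k+1})$, so a lower bound on $(g^n)'(z) = \prod_{j=0}^{n-1} g'(g^j z)$ will follow from an upper bound on the orbit $t_j := g^j(z)$, specifically a bound of the form $t_j \ll (j+1)^{-1/k}$.

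The second step is therefore to establish $t_j \leq C (j+1)^{-1/k}$ uniformly for $z \in {[x,x'[}$ — after possibly shrinking $x'$ and then handling the (finitely many iterates needed for the) general case by the uniform contraction on the compact complement. This is the classical computation: setting $u_j = t_j^{-k}$, from $t_{j+1} = t_j - a t_j^{k+1} + O(t_j^{k+2})$ one gets $u_{j+1} = u_j(1 - a t_j^k + O(t_j^{k+1}))^{-k} = u_j + ka + O(t_j)$, so $u_j \geq u_0 + (ka/2) j$ for all $j$ once $x'$ is small enough that the error terms are dominated. Hence $t_j^{-k} \gg j$, i.e. $t_j \ll (j+1)^{-1/k}$, with implied constants depending only on $g$ (through $a$ and the size of the neighborhood). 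I would phrase this carefully as a lemma about real sequences to keep the main argument clean.

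The third step assembles the derivative bound. Using $\log g'(t) = -(k+1) a t^{k} + O(t^{k+1}) \geq - C' t^k$ for $t$ small, and the bound $t_j \leq C(j+1)^{-1/k}$, we get
\[
\log (g^n)'(z) = \sum_{j=0}^{n-1} \log g'(t_j) \geq - C' \sum_{j=0}^{n-1} t_j^{k} \geq - C' C^k \sum_{j=0}^{n-1} \frac{1}{j+1} \geq - C'' \log(n+1).
\]
Wait — this only gives $(g^n)'(z) \geq (n+1)^{-C''}$, which is polynomial but with the wrong exponent. To recover the sharp exponent $\frac{k+1}{k}$, I would instead sum more carefully: since $\sum_{j<n} t_j^k$ behaves like $\sum_{j<n} \frac{c}{j+1} \sim c\log n$ with $c = (ka)^{-1}\cdot a(k+1)$... actually the correct bookkeeping is to use the telescoping directly. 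From $(g^n)'(z) = t_n'/\text{(chain rule)}$ — more robustly, differentiate $t_n$ as a function of $z$: $(g^n)'(z) = \frac{d t_n}{d z}$, and since $u_n = t_n^{-k}$ satisfies $u_n = u_0 + (ka) n + (\text{lower order})$ with $\frac{du_n}{du_0} = 1 + o(1)$ bounded above and below, we get $\frac{dt_n}{dt_0} = \frac{t_n^{k+1}}{t_0^{k+1}} \cdot \frac{du_n}{du_0} \asymp \frac{t_n^{k+1}}{t_0^{k+1}}$. With $t_n \asymp n^{-1/k}$ this gives $(g^n)'(z) \gg t_0^{-(k+1)} n^{-(k+1)/k} \gg n^{-(k+1)/k}$, using $t_0 \leq |x - x'|$ bounded. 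This is exactly the claimed bound.

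\textbf{Main obstacle.} The delicate point is controlling the error terms uniformly in $z$ across the full interval ${[x,x'[}$ rather than just near $x$, and making the constant $c$ depend only on $g$ (and the choice of $x'$). For $z$ bounded away from $x$, finitely many iterates bring the orbit into a fixed small neighborhood of $x$ where the normal-form estimates apply, and on the compact part $g$ is a genuine uniform contraction with derivative bounded below by a positive constant; absorbing this into $c$ is routine but needs to be stated. The second subtlety is the $\frac{du_n}{du_0}$ comparison: one must check that the sensitivity of the orbit to its initial condition does not degrade the polynomial rate, which follows from differentiating the recursion $u_{j+1} = u_j + ka + O(u_j^{-1/k})$ and noting $\prod_j (1 + O(u_j^{-1-1/k}))$ converges since $\sum_j u_j^{-1-1/k} \asymp \sum_j (j+1)^{-1-1/k} < \infty$. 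I expect this convergence-of-the-product estimate to be the technical heart of the argument, with everything else being standard parabolic normal-form bookkeeping.
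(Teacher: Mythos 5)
Your proof is correct in substance, but it takes a genuinely different route from the paper. You work in the classical parabolic normal-form coordinate, pass to $u=t^{-k}$ so that the recursion becomes $u_{j+1}=u_j+ka+O(u_j^{-1/k})$, and obtain $(g^n)'(z)=\frac{dt_n}{dt_0}=\frac{t_n^{k+1}}{t_0^{k+1}}\cdot\frac{du_n}{du_0}$, controlling $\frac{du_n}{du_0}$ by the convergent product $\prod_j\bigl(1+O(u_j^{-1-1/k})\bigr)$ — you correctly identify this sensitivity estimate as the technical heart. The paper instead fixes the reference orbit $x_n=g^n(x+\ve)$, works with the fundamental intervals $I_n=[x_n,x_{n-1}]$, and uses two facts: Milnor's asymptotics $n^{1/k}(x_n-x)\to (ka)^{-1/k}$ (so $|I_n|\asymp n^{-(k+1)/k}$), and a uniform distortion bound $\vk(g^n,I_m)\leq M\ve$, which holds for free because the intervals $I_m, I_{m+1},\dotsc$ are pairwise disjoint inside $[x,x_0]$. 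The derivative bound is then read off as a ratio of interval lengths, $(g^n)'(z)\gg |I_{n+m}|/|I_m|\gg\bigl(\tfrac{n+m}{m}\bigr)^{-(k+1)/k}$, with no need to differentiate the recursion; your approach buys self-containedness at the cost of more error bookkeeping, while the paper's buys a cleaner uniformity argument from bounded distortion. Both handle $z$ away from $x$ identically, by pushing into the small neighborhood with finitely many iterates.

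One small slip to fix: the claim ``$t_n\asymp n^{-1/k}$'' is not uniform in $z$ — if $t_0$ is very small then $u_0=t_0^{-k}$ may dominate $n$ and $t_n$ is far smaller than $n^{-1/k}$ — so the line $(g^n)'(z)\gg t_0^{-(k+1)}n^{-(k+1)/k}$ is not literally justified. The conclusion survives because only the ratio matters: $\frac{t_n^{k+1}}{t_0^{k+1}}=\bigl(\frac{u_0}{u_n}\bigr)^{(k+1)/k}\geq\bigl(\frac{u_0}{u_0+C n}\bigr)^{(k+1)/k}$, and since $u_0=t_0^{-k}\geq \ve^{-k}$ is bounded below (after shrinking the neighborhood and treating the compact part separately), $\frac{u_0}{u_0+Cn}\gg \frac1n$ uniformly, giving exactly $(g^n)'(z)\gg n^{-(k+1)/k}$. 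With that adjustment your argument is complete. (Incidentally, your discarded first attempt also yields the sharp exponent if you keep the constants: $\log g'(t_j)\sim -(k+1)a\,t_j^k$ and $t_j^k\sim (u_0+kaj)^{-1}$, so the Birkhoff sum is $\sim-\tfrac{k+1}{k}\log\tfrac{u_n}{u_0}$; the loss came only from replacing $(k+1)a$ and $ka$ by unrelated constants $C',C^k$.)
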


As a consequence, for any point $z$ in the basin of attraction of a fixed point $x$ of multiplicity $k+1$ of a diffeomorphism $g$, we have
\begin{equation}\label{eqn: explain parabolic}
	\delta( \text{cyclic group generated by } g, z) \geq \frac{k}{k+1},
\end{equation}
in the sense of a dynamical critical exponent on a set, see Definition \ref{def: generalized dynamical critical exponents}.

\begin{proof}[Proof of Proposition \ref{prop: derivative around parabolic}]
By our assumption the Taylor series at $x$ is of the form
\begin{equation}
\label{eqn: Taylor at x}
g(z)=z - a(z-x)^{k+1}+\text{(higher order terms)},
\end{equation}
with $a > 0$.
Take $\ve>0$ sufficiently small such that~\eqref{eqn: Taylor at x} converges on $[x,x+\ve].$ Then $g(z)<z$ if $z\in[x,x+\ve]$ and $x_n=g^n(x+\ve)$ converges to $x$.
For positive integer $n $, we denote by $I_n:=[x_n, x_{n-1}]$. Then we have the following two lemmas. 
\begin{lemma}[{\cite[Lemma 10.1]{Mil06}}]
\label{lem: asymp of xn}
	$\lim_{n\to\infty} \sqrt[k]{n}(x_n-x)=1/\sqrt[k]{ka}>0.$
\end{lemma}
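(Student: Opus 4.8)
The plan is to reduce the statement to the classical asymptotics of iterates near a parabolic fixed point via the substitution $u_n=(x_n-x)^{-k}$ and the observation that the increments $u_n-u_{n-1}$ converge to $ka$.

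First I would normalize by a translation so that $x=0$. Since $g\in\Diff_+^\omega(\SS^1)$ fixes $0$ with multiplicity $k+1$, its Taylor expansion reads $g(z)=z-az^{k+1}+z^{k+2}h(z)$ for an analytic function $h$ and a constant $a>0$. After shrinking $\varepsilon$ we may assume that $g$ is increasing on $[0,\varepsilon]$, maps $[0,\varepsilon]$ into itself (because $g(0)=0$ and $g(z)<z$ there), and that there is a constant $M$ with
\[
\abs{g(z)-z+az^{k+1}}\leq M z^{k+2}\qquad\text{for all }z\in[0,\varepsilon].
\]
Recall from the set-up that $x_n=g^n(\varepsilon)$ is then a strictly decreasing positive sequence with $x_n\to 0$.

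Next I would set $u_n=x_n^{-k}$, so $u_n\to+\infty$. From $x_n=x_{n-1}\bigl(1-ax_{n-1}^{k}+r_n\bigr)$ with $\abs{r_n}\leq M x_{n-1}^{k+1}$, raising to the power $-k$ and expanding $(1-w)^{-k}=1+kw+O(w^2)$ with $w=ax_{n-1}^{k}+r_n$ gives
\[
u_n=u_{n-1}+ka+\rho_n,\qquad \rho_n=O(x_{n-1}).
\]
Keeping the error in the explicit form $\abs{\rho_n}\leq C x_{n-1}$ (rather than soft $O$-notation) makes this valid for every $n$, not merely asymptotically. Since $x_{n-1}\to 0$, the increments $u_n-u_{n-1}$ converge to $ka$, so by the Stolz--Cesàro theorem $u_n/n\to ka$. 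Hence $x_n^{-k}\sim kan$, i.e.\ $x_n\sim (kan)^{-1/k}$, which rearranges to $\sqrt[k]{n}\,(x_n-x)\to 1/\sqrt[k]{ka}>0$, as claimed.

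I do not anticipate a genuine obstacle: this is the computation underlying the Leau--Fatou flower picture, and the only care required is in controlling the higher-order terms. The one point worth stating carefully is that $a>0$ and $x_n>0$ are precisely what legitimize taking the $k$-th root and expanding $(1-w)^{-k}$; the positivity of $a$ is guaranteed by the assumption that $g$ is contracting on ${]x,x'[}$, which forces the leading coefficient in \eqref{eqn: Taylor at x} to be negative.
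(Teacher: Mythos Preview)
The paper does not prove this lemma itself; it is simply cited from Milnor \cite[Lemma 10.1]{Mil06} and used as a black box in the proof of Proposition~\ref{prop: derivative around parabolic}. Your argument is correct and is the standard one: the substitution $u_n=(x_n-x)^{-k}$ turns the recursion into $u_n-u_{n-1}=ka+o(1)$, and Ces\`aro averaging gives $u_n/n\to ka$. The only cosmetic slip is the sign in ``$w=ax_{n-1}^{k}+r_n$'' versus $x_n=x_{n-1}(1-ax_{n-1}^k+r_n)$, but since you bound $\abs{r_n}$ this is immaterial.
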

\begin{lemma}\label{lem: distortion along orbit}
	There exists $C>0$ such that $\vk(g^n,I_m)\leq C$ for every positive integers $m,n.$
\end{lemma}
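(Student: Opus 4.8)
\textbf{Plan for the proof of Lemma~\ref{lem: distortion along orbit}.}
The statement to prove is that $\vk(g^n, I_m) \leq C$ uniformly in $m, n \geq 1$, where $I_m = [x_m, x_{m-1}]$ and $g$ is contracting on $[x, x+\ve]$ with $x_j = g^j(x+\ve) \to x$. The plan is to apply the basic distortion estimate~\eqref{eqn:vksumI}: writing $g^n$ as an $n$-fold composition of $g$, we have $\vk(g^n, I_m) \leq M \sum_{j=0}^{n-1} |g^j(I_m)|$, where $M = M(\{g\})$ bounds $\|\log g'\|_{\mathrm{Lip}}$ on the relevant interval. Now $g^j(I_m) = g^j([x_m, x_{m-1}]) = [x_{m+j}, x_{m+j-1}] = I_{m+j}$, so the sum telescopes in the sense that the intervals $I_{m}, I_{m+1}, \dotsc, I_{m+n-1}$ are pairwise disjoint (they are consecutive fundamental domains of the contraction), and their union is contained in $[x, x_{m-1}] \subseteq [x, x+\ve]$. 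Hence $\sum_{j=0}^{n-1} |g^j(I_m)| = \sum_{j=0}^{n-1} |I_{m+j}| \leq |[x, x_{m-1}]| \leq \ve$, giving $\vk(g^n, I_m) \leq M\ve =: C$, a bound independent of $m$ and $n$.

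The one point requiring a little care is that the distortion constant $M$ must be taken with respect to the restriction of $g$ to a fixed compact interval, namely $[x, x+\ve]$, on which $g$ is a genuine $C^2$ (indeed $C^\omega$) diffeomorphism onto its image and hence $\log g'$ is Lipschitz with some finite constant $M$; since all the intervals $g^j(I_m)$ with $j \geq 0$, $m \geq 1$ stay inside $[x, x+\ve]$, the estimate~\eqref{eqn:vksumI} applies with this single constant $M$ throughout. I do not anticipate any real obstacle here: the lemma is essentially the Schwartz/Denjoy-type distortion argument, and the key structural input — that the forward iterates of a fundamental domain $I_m$ are disjoint fundamental domains whose total length is bounded by $\ve$ — is immediate from the fact that $g$ is a contraction on $[x, x+\ve]$ with unique fixed point $x$. (The case of the interval on the other side of $x$, if needed for the interval $]x, x'[$ in Proposition~\ref{prop: derivative around parabolic}, is identical after replacing $g$ by its restriction there.)

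Granting Lemmas~\ref{lem: asymp of xn} and~\ref{lem: distortion along orbit}, the proof of Proposition~\ref{prop: derivative around parabolic} is then completed as follows. For $z \in [x, x+\ve]$ lying in some fundamental domain $I_{m}$, the distortion bound gives $(g^n)'(z) \asymp_C (g^n)'(x_m) \asymp_C |g^n(I_m)|/|I_m| = |I_{m+n}|/|I_m|$; by Lemma~\ref{lem: asymp of xn}, $|I_j| = x_{j-1} - x_j \asymp j^{-(k+1)/k}$ (differencing the asymptotics $x_j - x \asymp j^{-1/k}$), so $|I_{m+n}|/|I_m| \asymp (m/(m+n))^{(k+1)/k} \geq (1/(n+1))^{(k+1)/k} \gg n^{-(k+1)/k}$ since $m \geq 1$. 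For a general $z \in [x, x'[$, after finitely many iterates (depending only on $x'$ and $\ve$) one lands in $[x, x+\ve]$, and absorbing this bounded number of iterates into the constant $c$ yields the claimed bound $(g^n)'(z) \geq c\, n^{-(k+1)/k}$ for all $n \in \NN$ and all $z \in [x, x'[$.
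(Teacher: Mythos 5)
Your argument is exactly the paper's proof: write $\vk(g^n,I_m)\leq\sum_{i=0}^{n-1}\vk(g,I_{m+i})\leq M\sum_{i=0}^{n-1}|I_{m+i}|\leq M\ve$, using that the consecutive fundamental domains $I_{m+i}$ fit into $[x,x_0]$ without overlapping, with $M$ a Lipschitz bound for $\log g'$ on that fixed interval. The additional remarks about deducing Proposition~\ref{prop: derivative around parabolic} go beyond the stated lemma but are consistent with the paper's subsequent argument.
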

\begin{proof}
Let $M=\wt\vk(g,[x,x_0])$. 
For every positive integers $m,n,$ by discussions in Section \ref{sec: distortion} we have
\[\vk(g^n,I_m)\leq \sum_{i=0}^{n-1}\vk(g,I_{m+i})\leq M\sum_{i=0}^{n-1}|I_{m+i}|\leq M\ve,\]
since the intervals $I_{m+i}$ fit into ${[x,x_0]}$ without overlapping.
\end{proof}
By~\eqref{eqn: Taylor at x}, there is a constant $\ve' > 0$ such that
\[ \forall z\in[x,x+\ve'],\quad z-g(z)\in \mb{\frac{a}{2}(z-x)^{k+1},2a(z-x)^{k+1}}.\]
Then for every $n$ large enough, we have
\[\frac{a}{2} \leq \frac{|I_n|}{|x_n-x|^{k+1}} \leq 2a.\]
Combined with Lemma~\ref{lem: asymp of xn}, we have
\begin{equation}
\label{eqn: n k length In}
n^{\frac{k+1}{k}}|I_n|\asymp_{g,I_1}1.
\end{equation}

Let $n$ be a positive integer and $z\in {]x,x+\ve]}$. 
Then $z \in I_m$ for some $m$.
As $g^n I_m = I_{n+m}$ and $\vk(g^n,I_m) \leq C$ by Lemma~\ref{lem: distortion along orbit}, we have
\[(g^n)'(z) \gg_{g,I_1}\frac{|I_{n+m}|}{|I_m|}\gg_{g,I_1}\sb{\frac{n+m}{m}}^{-\frac{k+1}{k}}\gg n^{-\frac{k+1}{k}}.\]
To show the same estimate for points $z$ in ${[x + \ve, x'[}$, observe that
there exists $\ell\geq 1$ such that $g^\ell\bigl( ]x + \ve,x'[\bigr)\sbs {]x,x+\ve]}$, hence the conclusion follows.
\end{proof}

\subsection{Boosting the dynamical critical exponent}\label{sec: 10.2}
As we remarked in \eqref{eqn: explain parabolic}, Proposition~\ref{prop: derivative around parabolic} showed a weaker estimate on $\delta(G),$ i.e. $\delta(G)\geqslant k/(k+1).$
In the rest of the section we will show the strict inequality holds as well. Our proof is partially inspired by the classical results of Beardon \cite{Beardon} and Patterson \cite{Pat} in the case of Fuchsian groups acting on $\SS^1$.

\begin{proposition}\label{prop: delta k k+1}
	Let $G$ be a finitely generated subgroup of $\Diff_+^\omega(G)$ with an exceptional minimal set $\Lambda.$ Assume that there exists a nontrivial element $g\in G$ with a parabolic fixed point $x\in\Lambda$ of multiplicity $k+1$ for some integer $k \geq 1$. Then $\delta(G)>k/(k+1).$
\end{proposition}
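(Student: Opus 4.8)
The plan is to combine the lower bound $\delta(G) \geq k/(k+1)$, which follows from Proposition~\ref{prop: derivative around parabolic} together with a contraction/reduction-to-$\Lambda$ argument using Lemma~\ref{lem: contracting constant}, with a counting argument that produces \emph{strictly more} group elements than the parabolic orbit alone can account for. The key observation — going back to Beardon and Patterson — is that the parabolic element $g$ generates a cyclic subgroup $\langle g \rangle$ whose ``contribution'' to the dynamical critical exponent is exactly $k/(k+1)$ (by Proposition~\ref{prop: derivative around parabolic} and Lemma~\ref{lem: asymp of xn}, the number of powers $g^n$ with $(g^n)'|_{B(y,\ve)} \geq 2^{-m}$ around a point $y$ in the basin grows like $2^{m k/(k+1)}$, i.e. polynomially in the ``size'' scale). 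Since $G$ properly contains $\langle g \rangle$ and has an exceptional minimal set, there are many cosets, and one wants to show the union over cosets strictly beats the single-coset count.

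First I would fix a point $x'\in\Lambda$, $x' \neq x$, in the basin of attraction of $x$ under $g$ (such a point exists since $\Lambda$ is perfect and $g$-invariant, and $x$ is isolated in $\mathrm{Fix}(g)\cap\Lambda$ up to passing to a power), and set up the localized Poincaré-type series
\[
\sum_{h \in G} \bigl(\sup_{z \in B(x',\ve)} h'(z)\bigr)^{s},
\]
whose abscissa of convergence is $\delta(G, B(x',\ve))$, which equals $\delta(G)$ after the reduction lemmas (Lemma~\ref{lem: equiv def of C1 CE} and the surrounding discussion on critical exponents on sets). The next step is to pick a second element $f \in G$ with $f(\overline{B(x',\ve)}) \subset {]x,x'[}$ and $f(\overline{B(x',\ve)})$ disjoint from a neighbourhood of $x$; then for every $n \geq 1$, the elements $g^n f$ are pairwise distinct, they do not lie in $\langle g \rangle$, and their derivatives on $B(x',\ve)$ satisfy $(g^n f)'|_{B(x',\ve)} \asymp (g^n)'|_{f B(x',\ve)} \gg n^{-(k+1)/k}$ by Proposition~\ref{prop: derivative around parabolic} applied on the interval $f\overline{B(x',\ve)} \subset {]x,x'[}$ (with distortion controlled as in Lemma~\ref{lem: distortion along orbit}). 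This already shows $\delta(G) \geq k/(k+1)$. To get the \emph{strict} inequality, I would use a standard ``doubling the orbit'' trick: consider the two families $\{g^n f\}$ and $\{f g^n\}$ (or $\{g^n f g^m\}$), which by a ping-pong / reduced-word argument as in Section~\ref{sec: C1 critical exponents} are genuinely disjoint and of comparable derivative size, so the counting function for $\{g \in G : g'|_{B(x',\ve)} \geq 2^{-m}\}$ is bounded below by a \emph{constant multiple strictly bigger than what $2^{m k/(k+1)}$ gives} — more precisely, one shows the series $\sum (\sup h')^{k/(k+1)}$ over just these families diverges, forcing $\delta(G) \geq k/(k+1)$ with the possibility of equality, and then one pushes past equality by exhibiting, for $s$ slightly below $1$ but the argument only needs $s = k/(k+1)$, infinitely many ``independent'' such nested families indexed by an infinite subset of $G/\langle g\rangle$, whose combined series diverges at $s = k/(k+1)$ and in fact at some $s > k/(k+1)$.

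The cleanest route to strictness, which I would actually carry out, is the following contradiction argument in the spirit of Beardon: suppose $\delta(G) = k/(k+1) =: \delta_0$. By the variational principle for the critical exponent (Proposition~\ref{prop: C1 dynamical critical exponent}, after the standard reductions: $G$ is locally discrete by Corollary~\ref{cor: nondiscrete minimal}, virtually free by Ghys, has no invariant measure, and satisfies $(\Lambda\star)$ by Theorem~\ref{thm: DKN18}), we get $\dim_{\mathrm H}\Lambda = \delta(G) = \delta_0 < 1$, and moreover there would be an atomless $\delta_0$-conformal measure $\nu$ supported on $\Lambda$ (here one invokes Theorem~\ref{thm: existence of conformal measures} or its general version cited just after, using that the stabilizers of points off $\Lambda$ are trivial or handling the cyclic-stabilizer case via Theorem~\ref{thm: Hector}). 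Then one derives a contradiction at the parabolic point $x$ exactly as in the Fuchsian case: using the conformality relation $\nu(g^n A) = \int_A ((g^n)')^{\delta_0}\,d\nu$ on small intervals $A = I_m \subset {]x,x'[}$ shrinking to $x$, together with the asymptotics $|I_n| \asymp n^{-(k+1)/k}$ from \eqref{eqn: n k length In} and the bounded distortion of $g^n$ on $I_m$ (Lemma~\ref{lem: distortion along orbit}), one computes $\nu\bigl(\bigcup_{n\geq 1} I_n\bigr) = \sum_n \nu(I_n) \asymp \sum_n n^{-\frac{k+1}{k}\delta_0} = \sum_n n^{-1}$, which diverges — contradicting $\nu$ being a finite measure. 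Hence $\delta(G) > k/(k+1)$.

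The main obstacle, and the step requiring the most care, is the \emph{conformal measure input}: getting an atomless $\delta$-conformal measure on $\Lambda$ with exponent exactly $\delta(G)$ when $\mathrm{NE}(G) \neq \varnothing$ is delicate and is precisely what Theorem~\ref{thm: existence of conformal measures} (and the ``general version'' alluded to before the statement) is for; one must check its hypotheses hold here — the cleanest sufficient condition is triviality of stabilizers of points outside $\Lambda$, but since $G$ has an exceptional minimal set, Hector's theorem (Theorem~\ref{thm: Hector}) guarantees stabilizers are trivial or infinite cyclic, and one must either arrange the former or adapt the conformal-measure construction to tolerate cyclic stabilizers (the parabolic $g$ itself may fix points, but those are in $\Lambda$, so they do not obstruct the argument on $\SS^1\setminus\Lambda$). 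The secondary technical point is making the divergence computation $\sum \nu(I_n) \asymp \sum n^{-1}$ rigorous: this needs the two-sided bound $|I_n| \asymp_g n^{-(k+1)/k}$ (Lemma~\ref{lem: asymp of xn} and \eqref{eqn: n k length In}), the uniform bounded distortion $\varkappa(g^n, I_m) \leq C$ (Lemma~\ref{lem: distortion along orbit}) so that $(g^n)'$ is comparable to $|I_{n+m}|/|I_m|$ throughout $I_m$, and the fact that $\nu(I_1) > 0$ because $x' \in \Lambda = \mathrm{supp}\,\nu$ lies in the basin, so the tail intervals $I_n$ carry positive mass and their $\nu$-measures decay only like $n^{-1}$.
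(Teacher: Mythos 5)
Your proposal splits into two routes, and the one you commit to carrying out has a genuine circularity/gap. The conformal-measure contradiction argument needs, under the hypothesis $\delta(G)=k/(k+1)$, a conformal measure on $\Lambda$ whose exponent is exactly $\dimH\Lambda=k/(k+1)$ (any exponent $\leq k/(k+1)$ would do for the divergence $\sum_n\nu(I_n)\asymp\sum_n n^{-1}$, and atoms are in fact harmless since $\supp\nu\supseteq\Lambda$ forces $\nu(I_m)>0$ for some $m$). But in this paper the existence of such a measure (Theorem~\ref{thm: existence of conformal measures}, via Theorem~\ref{thm: critical exponent and exceptional} and Proposition~\ref{prop: estimate of critical exponents}) is proved \emph{using} Proposition~\ref{prop: delta k k+1}: it is invoked both to drop the parabolic points of $\Lambda$ from the maximum in Proposition~\ref{prop: estimate of critical exponents} and to verify the no-atom hypothesis of Proposition~\ref{prop: nonatomic conformal measure}. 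Nor can you bypass this by running the Patterson--Sullivan construction directly from a wandering point $x'$: the exponent you get is $\delta(G,x')$, not $\delta(G)$, and $\delta(G,x')$ can be strictly larger than $\dimH\Lambda$ because of parabolic fixed points \emph{outside} $\Lambda$ (the paper's Example~\ref{eg: parabolic fixed points} exhibits exactly this), in which case $\frac{k+1}{k}\delta(G,x')>1$ and your series converges, so no contradiction arises. Flagging this as ``the main obstacle'' does not fill it, and the suggested fixes (trivial or cyclic stabilizers off $\Lambda$) do not address the real issue, which is pinning the exponent down to $k/(k+1)$ without already knowing the proposition.

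Your second paragraph is much closer to what the paper actually does, but the decisive step is asserted rather than proved: you claim the combined series over families like $\{g^{n_1}fg^{n_2}f\cdots\}$ diverges ``at some $s>k/(k+1)$'' without explaining why. The paper's mechanism is concrete: using Lemma~\ref{lem: 2dr fixed points} it produces a hyperbolic $h\in G$ (not fixing $x$, by Theorem~\ref{thm: Hector}) and, after passing to a power, arranges ping-pong of $g$ and $h$ on an interval $I=[x,y]$, so that $T=\langle g,h\rangle^+$ is a free semigroup; the co-norm $\gamma(f)=\inf_I f'$ is super-multiplicative, whence $\varphi(T,s)\geq\sum_{j\geq1}\bigl(\varphi(\pair g,s)\varphi(\pair h,s)\bigr)^j$, and since Proposition~\ref{prop: derivative around parabolic} gives $\varphi(\pair g,s)\gg k/((k+1)s-k)\to\infty$ as $s\downarrow k/(k+1)$ while $\varphi(\pair h,s)\geq\varphi(\pair h,1)>0$, the product exceeds $1$ at some $s_0>k/(k+1)$, forcing divergence and hence $\delta(G)\geq s_0$ (using $\Lambda\cap{]x,y[}\neq\varnothing$ to place the ball in the definition of $\delta(G)$). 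Your sketch chooses an auxiliary $f$ without securing freeness/distinctness of the concatenated words, and distinctness of $\{g^nf\}$ and $\{fg^n\}$ alone cannot yield strictness; it is precisely the geometric-series-of-products estimate, fed by the blow-up of the parabolic factor, that pushes the exponent strictly above $k/(k+1)$. So as written, neither route closes the argument: the first is circular in this paper's logical order, and the second omits the key quantitative step.
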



\begin{proof}
Without loss of generality we can assume that there exists a subsequence of $\Lambda$ accumulates to $x$ from the right.  
Replacing $g$ by $g^{-1}$ if necessary we may also assume that $g$ is contracting on a right neighborhood $]x,x'[$ of $x$.
By Lemma~\ref{lem: invariant measure and finite orbit}, $G$ does not preserve any probability measure on $\SS^1.$
Therefore, we can apply Lemma~\ref{lem: 2dr fixed points} to obtain an element $h \in G$ having only hyperbolic fixed points on $\SS^1$ and at least one fixed point in $]x,x'[$. 
Let $y$ be the leftmost element in ${]x,x'[} \cap \Fix(h)$.
Replacing $h$ by $h^{-1}$ if necessary we may assume that $y$ is an attracting fixed point.
By Theorem~\ref{thm: Hector}, all the non-trivial elements in $\Stab_G(x)$ share the same set of fixed points. 
Thus $h$ does not fix $x$. By our choice of $y$,
replacing $h$ by a large power if necessary we may assume $h$ contracts the interval $I=[x,y]$ so hard such that $hI \cap gI = \vn$.
Hence the semigroup $T$ generated by $g$ and $h$ is a (freely generated) free semigroup since $T$ has a pingpong dynamics on $I$. 

For any $f \in C^1_+(I,I),$ we define the co-norm of $f$ on $I$ by
\[\gamma(f) = \inf_{z\in I}f'(z).\]
Note that the co-norm is super-multiplicative:
$\gamma(f_1f_2)\geq \gamma(f_1)\gamma( f_2)$ for all $f_1,f_2 \in C_+^1(I,I)$.
For a semigroup $H$ and an exponent $s\geq 0$, define
\[\vp(H,s)=\sum_{f\in H\sm\lb{\Id}} \gamma(f)^s.\]

We denote by $\pair g$ and $\pair h$ the sub-semigroup generated by $g$ and $h$ respectively. By freeness of $T$ we have 
\begin{align*}
\vp(T,s)&\geq \sum_{j \geq 1} \sum_{n_1,\dotsc,n_j \geq 1} \sum_{m_1,\dotsc,m_j \geq 1} \gamma(g^{n_1}h^{m_1} \dotsm g^{n_j}h^{m_j})^s \\
&\geq \sum_{j \geq 1} \sum_{n_1,\dotsc,n_j \geq 1} \sum_{m_1,\dotsc,m_j \geq 1} \bigl(\gamma( g^{n_1}) \gamma(h^{m_1}) \cdots \gamma( g^{n_j}) \gamma(h^{m_j})\bigr)^s\\
&=\sum_{j\geq 1}\vp(\pair g,s)^j\vp(\pair h,s)^j.
\end{align*}
In particular, $\vp(T,s)$ diverges if $ \vp(\pair g,s)\vp(\pair h,s)\geq 1.$
	
On the one hand, by Proposition~\ref{prop: derivative around parabolic},
for $s>\frac{k}{k+1},$ we have
\[\vp(\pair g,s)\gg_{g,I} \sum_{n\geq 1} n^{-\frac{k+1}{k}s} \gg \frac{k}{(k+1)s-k}.\]
Hence $\vp(\pair g,s) \to +\infty$ as $s\downarrow\frac{k}{k+1}.$ 
On the other hand, since $h$ is contracting on $I$, we have 
\[\forall s \in ]0,1],\quad \vp(\pair h,s) \geq \vp(\pair h,1)>0 \] 
From these we deduce that there exists $s_0>\frac{k}{k+1}$ such that $\vp(T,s_0)$ diverges.
It follows that
\[\limsup_{n\to\infty}\frac{1}{n}\#\log\lb{f\in T:f'|_I\geq 2^{-n}}\geq s_0>\frac{k}{k+1}.\]

Note that $\Lambda \cap {]x,y[} \neq \vn$.
Let $B(z,\ve_0)$ be a ball centered at $z \in \Lambda$ and contained in $I=[x,y]$.
Then the definition of $\delta(G)$ implies that
\[\delta(G)\geq\limsup_{n\to\infty}\frac{1}{n}\#\log\lb{f\in G:f'|_{B(z,\ve_0)}\geq 2^{-n}}\geq s_0>\frac{k}{k+1}.\qedhere\]
\end{proof}
\begin{remark}\label{rem: fix points on Lambda}
Unlike fixed points in $\Lambda$, fixed points outside $\Lambda$ (if exists) share certain \textit{non-recurrence} property in the following sense. Assume that  $x\in\SS^1\sm\Lambda$ is a fixed point of some nontrivial element $f\in G\sbs\Diff_+^\omega(\SS^1).$ Then by Theorem \ref{thm: Hector}, without loss of generality we could assume that $\Stab_G(x)=\pair{f}.$ 
Let $J$ be the closure of the connected component of $\SS^1\sm\Lambda$ containing $x.$ 
	Then $\Stab_G(J)$ is exactly $\pair{f}$ and $f$ fixes endpoints of $J.$ 
	Therefore, for every $g\in G,$ either $g\in\Stab_G(x)$ or $g(x)\notin [x_1,x_2],$ where $x_1,x_2\in J$ are fixed points of $f$ next to $x$. In other words, a fixed point of $G$ is in $\Lambda$ iff it is \textit{recurrent} under the action by $G/\Stab_G(x)$. This also provides an a priori criterion to detect whether a fixed point is contained in $\Lambda$ without knowing $\Lambda$ (since $x_1,x_2$ can be determined without knowing $\Lambda$). 
\end{remark}

\section{The dynamical critical exponent and conformal measures}\label{se:11}

\subsection{Conformal measures (Proof of Theorem \ref{thm: conformal dimension})}
\label{sec: conformal measures}

Recall the notion of conformal measures in Definition \ref{def: conformal measures}. In our case, we assume that $G$ is a finitely generated subgroup of $\Diff_+^2(\SS^1)$ without finite orbits and let $\Lambda$ be the unique minimal set. We especially care about the case where $\Lambda$ is exceptional. A $\delta$-conformal measure always refers to a probability measure that is $\delta$-conformal with respect to the action of $G$ on $\SS^1$. 

In the remainder of this subsection, we will prove Theorem \ref{thm: conformal dimension} by establishing two lemmas, namely Lemma \ref{lem: delta geq dim} and Lemma \ref{lem: delta leq dim} below.

\begin{lemma}
\label{lem: delta geq dim}
Let $G\sbs\Diff_+^2(\SS^1)$ be a finitely generated subgroup with an exceptional minimal set $\Lambda \sbs \SS^1.$ 
Assume that $G$ satisfies property $(\Lambda\star).$ If there exists a $\delta$-conformal measure supported on $\Lambda$, then $\delta\geq\dim_{\mr H}\Lambda.$
\end{lemma}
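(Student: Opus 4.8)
The plan is to use the $\delta$-conformal measure $\nu$ as a ``transfer device'': a tiny expandable interval maps under a group element to a ball of the \emph{fixed} radius $\ve_0$, hence of definite $\nu$-mass, and $\delta$-conformality converts this into a lower bound $\nu(I)\gg|I|^{\delta}$; disjointness of a Vitali-type expandable cover together with $\nu(\SS^1)=1$ then bounds $\sum|I|^{\delta}$ and yields $\dimH\Lambda\leq\delta$.

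First I would record two preliminary facts about $\nu$. Its support is a nonempty closed $G$-invariant subset of $\Lambda$, and since $\Lambda$ is the unique minimal set of $G$, $\supp\nu=\Lambda$. Next, fix the constant $\ve_0>0$ of Proposition~\ref{prop: DKN expandable}. The map $z\mapsto\nu(B(z,\ve_0))$ is lower semicontinuous on the compact set $\Lambda$ (if $z_n\to z$ then $B(z,\ve_0-\eta)\sbs B(z_n,\ve_0)$ for large $n$; let $\eta\to0^+$) and is strictly positive at each $z\in\Lambda=\supp\nu$, so $c_0:=\inf_{z\in\Lambda}\nu(B(z,\ve_0))>0$. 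Finally, $\Lambda':=\Lambda\sm G(\NE)$ differs from $\Lambda$ only by $G(\NE\cap\Lambda)$, which is countable: $\NE\cap\Lambda$ is finite by Theorem~\ref{thm: DKN09}(1) (property $(\Lambda\star)$ holds) and $G$ is countable. Hence $\dimH\Lambda=\dimH\Lambda'$, and it suffices to bound $\dimH\Lambda'$.

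The core estimate is: for any $x\in\Lambda'$ and any $x$-expandable interval $I$ with associated $g\in G$ (so $gI=B(gx,\ve_0)$ and $\vk(g,I)\leq1$), one has $\nu(I)\geq c'|I|^{\delta}$, where $c'=c_0(2e\ve_0)^{-\delta}>0$. Indeed the mean value theorem provides a point of $I$ where $g'=|gI|/|I|=2\ve_0/|I|$, and $\vk(g,I)\leq1$ forces $\sup_I g'\leq 2e\ve_0/|I|$; using the conformality relation of Definition~\ref{def: conformal measures} and $gx\in\Lambda$,
\[c_0\;\leq\;\nu(B(gx,\ve_0))\;=\;\nu(gI)\;=\;\int_I (g')^{\delta}\dd\nu\;\leq\;\left(\frac{2e\ve_0}{|I|}\right)^{\!\delta}\nu(I).\]

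Then I would conclude by a covering argument. Fix $\rho>0$. By Proposition~\ref{prop: DKN expandable} every $x\in\Lambda'$ carries an $x$-expandable interval $I_x\ni x$ with $|I_x|<\rho$; the balls $B(x,|I_x|)\sps I_x$ cover $\Lambda'$, so the $5r$-covering theorem gives a countable subfamily $\{x_j\}$ with the $B(x_j,|I_{x_j}|)$ pairwise disjoint and $\Lambda'\sbs\bigcup_j B(x_j,5|I_{x_j}|)$. The $I_{x_j}$ are then pairwise disjoint, so by the core estimate $\sum_j|I_{x_j}|^{\delta}\leq (c')^{-1}\sum_j\nu(I_{x_j})\leq (c')^{-1}$. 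The cover $\{B(x_j,5|I_{x_j}|)\}$ has mesh $<10\rho$, whence $H^{\delta}_{10\rho}(\Lambda')\leq 10^{\delta}\sum_j|I_{x_j}|^{\delta}\leq 10^{\delta}/c'$; letting $\rho\to0^+$ gives $H^{\delta}(\Lambda')<\infty$, so $\dimH\Lambda=\dimH\Lambda'\leq\delta$. The argument is essentially routine given Proposition~\ref{prop: DKN expandable}; the only points requiring care are the identity $\supp\nu=\Lambda$ (so that $c_0>0$) and the negligibility of $\Lambda\sm\Lambda'$, and I do not expect a genuine obstacle.
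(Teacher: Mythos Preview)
Your proof is correct and follows essentially the same approach as the paper's: establish $\supp\nu=\Lambda$ so that $c_0>0$, use the distortion bound on expandable intervals together with $\delta$-conformality to get $\nu(I)\gg|I|^\delta$, and finish with a Vitali/$5r$-covering argument to show $H^\delta(\Lambda')<\infty$. The only cosmetic differences are that the paper applies the Vitali lemma directly to the expandable intervals rather than the enclosing balls $B(x,|I_x|)$, and cites Theorem~\ref{thm: DKN09} for $\dimH\Lambda'=\dimH\Lambda$ rather than arguing via countability.
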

The proof below also works for the situation where $G$ acts minimally on $\SS^1$ and satisfies property $(\star)$. 
The statement $\delta \geq 1$ in this case is due to Deroin-Kleptsyn-Navas~\cite[Theorem F(1)]{DKN09}, who showed moreover that the Lebesgue measure is the unique $1$-conformal measure.
\begin{proof}
Let $\nu$ be a $\delta$-conformal measure supported on $\Lambda$.
Its support $\supp\nu$ is $G$-invariant and hence $\supp\nu=\Lambda.$
Write $\Lambda'=\Lambda\sm G(\NE)$ so that  $\dim_{\mr H}\Lambda'=\dim_{\mr H}\Lambda$ by Theorem \ref{thm: DKN09}. 
Recall the constant $\ve_0 > 0$ from Proposition~\ref{prop: DKN expandable} and the expandable interval defined in Definition~\ref{def: expandable}.
Let $I$ be an expandable interval and let $g \in G$ be the associated diffeomorphism, that is, $\vk(g,I)\leq 1$ and $gI$ has length $2\ve_0$ and centered in $\Lambda$. 
By the distortion control $\vk(g,I)\leq 1$ and the definition of conformal measure,
\begin{equation}
\label{eqn: nuI asymp Idelta}
\sb{\frac{\ve_0}{2|I|}}^{\delta}\nu (I) \leq  \nu(gI)=\int_{I}|g'(x)|^\delta\dr\nu(x)\leq \sb{\frac{2\ve_0}{|I|}}^{\delta}\nu (I).
\end{equation}
By compactness,
\[ \nu(gI) \geq \inf_{x \in \Lambda} \nu(B(x,\ve_0)) > 0.\]
If follows that
\[\nu(I) \gg_G |I|^\delta.\]

By Proposition~\ref{prop: DKN expandable}, the set of all expandable intervals is a Vitali cover of $\Lambda'$, that is, every point in $\Lambda'$ is contained in an expandable interval of arbitrarily small length.
For any $\rho > 0$, by Vitali covering lemma, there exists a countable set $\cE$ of pairwise disjoint expandable intervals of length at most $\rho$ such that $\Lambda'\sbs\bigcup_{I\in\cE} 5I,$ where $5I$ denotes the interval with the same center as $I$ and $5$ times its length.
Then
\[ H^\delta_{5\rho} (\Lambda') \leq \sum_{I\in\cE} |5 I|^\delta \ll_G \sum_{I\in\cE}\nu(I) \leq 1\]
Letting $\rho \to 0^+$, we find $H^\delta(\Lambda')<\infty.$ Hence $\dim_{\mr H}\Lambda=\dim_{\mr H}\Lambda'\leq\delta.$
\end{proof}

\begin{lemma}
\label{lem: delta leq dim}
Let $G\sbs\Diff_+^2(\SS^1)$ be a finitely generated subgroup with an exceptional minimal set $\Lambda \sbs \SS^1.$ Assume that $G$ satisfies property $(\Lambda\star).$ If there exists an atomless $\delta$-conformal measure supported on $\Lambda$, then $\delta\leq\dim_{\mr H}\Lambda.$
\end{lemma}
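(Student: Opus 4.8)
We must show that an atomless $\delta$-conformal measure $\nu$ on $\Lambda$ forces $\delta \leq \dimH\Lambda$. Unlike the companion inequality $\delta \geq \dimH\Lambda$ (Lemma~\ref{lem: delta geq dim}), which uses the \emph{expanding} side of the dynamics (the expandable intervals of Proposition~\ref{prop: DKN expandable}) to produce a lower bound $\nu(I) \gg_G |I|^\delta$, here we need an \emph{upper} bound $\nu(B(x,\rho)) \ll_G \rho^\delta$ for $\nu$-a.e.\ $x$; then the mass distribution principle (or Billingsley's lemma, using that $\nu$ is a probability measure on $\Lambda$) gives $\dimH\Lambda \geq \dimH\nu \geq \delta$. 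The atomlessness of $\nu$ is exactly what makes such an upper bound plausible: without it the point masses would violate it. The crucial structural input is again Proposition~\ref{prop: DKN expandable} together with Theorem~\ref{thm: DKN09}, which control the geometry of $\Lambda$ near $\Lambda' = \Lambda \setminus G(\NE)$; the finitely many orbits of non-expandable points in $\Lambda$ (Theorem~\ref{thm: DKN09}(1)) must be handled separately, and this is where the parabolic/multiplicity phenomena would a priori enter.

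\textbf{Main steps.} First I would reduce to points of $\Lambda'$: since $\Lambda \cap \NE$ is finite, $G(\NE) \cap \Lambda$ is countable, and as $\nu$ is atomless, $\nu(G(\NE)) = 0$, so it suffices to establish the upper bound $\nu(B(x,\rho)) \ll_G \rho^\delta$ for $x \in \Lambda'$ (uniformly, or at least for $\nu$-a.e.\ $x$, with a constant valid for small $\rho$). Second, fix $x \in \Lambda'$ and a small radius $\rho$; by Proposition~\ref{prop: DKN expandable} applied at $x$ there is an interval $I \ni x$ of length comparable to $\rho$ (or: of arbitrarily small length, so choose one with $|I| \asymp \rho$) and $g \in G$ with $gI = B(gx, \ve_0)$ and $\vk(g,I) \leq 1$. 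By the distortion bound and $\delta$-conformality, exactly as in~\eqref{eqn: nuI asymp Idelta},
\[
\nu(I) \;\leq\; \Bigl(\tfrac{2|I|}{\ve_0}\Bigr)^{\delta} \nu(gI) \;\leq\; \Bigl(\tfrac{2|I|}{\ve_0}\Bigr)^{\delta} \;\ll_G\; |I|^\delta \;\asymp\; \rho^\delta,
\]
since $\nu$ is a probability measure so $\nu(gI) \leq 1$. The only gap is that Proposition~\ref{prop: DKN expandable} provides an expandable $I$ \emph{containing} $x$ of small length but not necessarily one with $B(x,\rho) \subseteq I$; so the real task is to cover $B(x,\rho) \cap \Lambda$ by boundedly many (or controllably many) expandable intervals each of length $\ll \rho$ and sum the resulting bounds. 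I would do this via a Vitali-type covering argument inside $B(x,\rho)$: by Proposition~\ref{prop: DKN expandable} the expandable intervals form a Vitali cover of $\Lambda'$, so there is a disjoint subfamily $\cE$ of expandable intervals of length $\leq \rho$ with $\Lambda' \cap B(x,\rho) \subseteq \bigcup_{J \in \cE} 5J$; the pieces $J$ meeting $B(x,\rho)$ all lie in $B(x, 2\rho)$, their lengths are $\leq \rho$, and they are disjoint, which bounds their total and lets one estimate $\nu(B(x,\rho))$ — but one needs an \emph{upper} bound on $\sum_{J} |J|^\delta$, not a lower one, which is not automatic. The clean way is instead: iterate the expandable-interval construction to produce, for each scale $\rho_n \to 0$, a partition-like covering of $\Lambda'$ by expandable intervals with comparable lengths and bounded multiplicity, reminiscent of a Moran construction, and track the $\nu$-mass multiplicatively; this is essentially the dual of the argument in Lemma~\ref{lem: delta geq dim} and should yield $\nu(B(x,\rho)) \ll_G \rho^\delta$.

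\textbf{The hard part.} The main obstacle is controlling the measure and the geometry near the orbit $G(\NE) \cap \Lambda$ — precisely the finitely many ``parabolic-like'' points where Proposition~\ref{prop: DKN expandable} does not apply and where, as Section~\ref{se:10} shows, the local structure of $\Lambda$ can be genuinely different (multiplicity $k+1$ fixed points, slow polynomial contraction). Near such a point $p$, the natural renormalization uses an element $g \in \Stab_G(p)$ with a parabolic fixed point; the images $g^n(I_0)$ of a fixed interval accumulate to $p$ with $|g^n I_0| \asymp n^{-(k+1)/k}$ (Proposition~\ref{prop: derivative around parabolic} and~\eqref{eqn: n k length In}), while $\delta$-conformality forces $\nu(g^n I_0) = \int_{I_0} |(g^n)'|^\delta \, d\nu \asymp n^{-\frac{k+1}{k}\delta} \nu(I_0)$ by the uniform distortion bound of Lemma~\ref{lem: distortion along orbit}. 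Summability of $\sum_n \nu(g^n I_0)$ (automatic since $\nu$ is finite) combined with $\sum_n |g^n I_0|^\delta \asymp \sum_n n^{-\frac{k+1}{k}\delta}$ then pins down the local Hölder behaviour of $\nu$ at $p$, giving $\nu(B(p,\rho)) \ll \rho^{\delta}$ there too (one checks the exponents match: a ball $B(p,\rho)$ is covered by the tail $\{g^n I_0 : n \gtrsim \rho^{-k/(k+1)}\}$). Assembling the estimate near the $G(\NE)$-orbits with the Vitali/Moran estimate on $\Lambda'$, and verifying the constants are uniform, then yields the pointwise upper bound $\liminf_{\rho \to 0} \frac{\log \nu(B(x,\rho))}{\log \rho} \geq \delta$ for $\nu$-a.e.\ $x$, whence $\dimH\Lambda \geq \dimH\nu \geq \delta$ by Billingsley's lemma. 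I expect the bookkeeping in the Moran-type covering of $\Lambda'$ — ensuring bounded overlap and comparable scales so the geometric sums can be summed from above — to be the most delicate technical point, but conceptually routine given Proposition~\ref{prop: DKN expandable}.
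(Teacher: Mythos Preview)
Your strategy---show $\nu(B(x,\rho))\ll\rho^\delta$ for $\nu$-a.e.\ $x$ and conclude $\dimH\nu\geq\delta$---is natural but different from the paper's, and it contains a concrete error as well as the gap you yourself flag.

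\textbf{The error.} At a parabolic point $p\in\Lambda$ of multiplicity $k+1$, your claim ``$\nu(B(p,\rho))\ll\rho^\delta$ there too'' is false. From Lemma~\ref{lem: asymp of xn} one has $|x_n-p|\asymp n^{-1/k}$ and $|I_n|\asymp n^{-(k+1)/k}$, so the tail covering $B(p,\rho)$ starts at $n\gtrsim\rho^{-k}$ (not $\rho^{-k/(k+1)}$ as you write). Conformality and bounded distortion give $\nu(I_n)\ll n^{-(k+1)\delta/k}$, hence
\[
\nu(B(p,\rho))\;\ll\;\sum_{n\gtrsim\rho^{-k}} n^{-(k+1)\delta/k}\;\asymp\;\rho^{(k+1)\delta-k},
\]
and for $\delta<1$ the exponent $(k+1)\delta-k$ is strictly smaller than $\delta$. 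So the local dimension of $\nu$ at $p$ is genuinely below $\delta$; your exponents do not match. This does not kill the a.e.\ statement (these points have $\nu$-measure zero), but it does show that no \emph{uniform} bound $\nu(B(x,\rho))\ll\rho^\delta$ can hold, and the ``Moran-type'' bookkeeping you allude to would have to carefully avoid neighbourhoods of $G(\NE)$ at all scales---this is more than routine.

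\textbf{The gap you acknowledge is real.} Proposition~\ref{prop: DKN expandable} furnishes expandable intervals of \emph{some} arbitrarily small lengths at each $x\in\Lambda'$, not at every scale; controlling $\liminf_{\rho\to0}$ requires bridging the gaps between consecutive scales, which needs extra input.

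\textbf{The paper's route sidesteps all of this.} It does not attempt to bound $\nu(B(x,\rho))$. Instead it uses the single upper bound $\nu(I)\ll_G|I|^\delta$ for expandable intervals (which you derived correctly) together with the fact that $\nu(\Lambda')=1$ (atomlessness) and Borel--Cantelli: since $\nu$-a.e.\ point lies in $\bigcup_{I'\in\cE_n}I'$ for infinitely many $n$, one gets $\sum_n\#\wt\cE_n\cdot 2^{-\delta n}=\infty$ for maximal disjoint subfamilies $\wt\cE_n$, whence $\limsup_n\frac{1}{n}\log\#\wt\cE_n\geq\delta$. This count of expandable intervals is then fed into the construction of Section~\ref{sec: dimension variation} (the IFS/variational principle argument behind Theorem~\ref{thm: C2 dimension variation}) to produce a stationary measure on $\Lambda$ of dimension arbitrarily close to $\delta$, forcing $\dimH\Lambda\geq\delta$. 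The parabolic points never enter: they are handled in one line by $\nu(G(\NE))=0$.
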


\begin{proof}
Let $\nu$ be an atomless $\delta$-conformal measure supported on $\Lambda.$ 
For any $x \in \Lambda'$ and any $x$-expandable interval $I$ with corresponding diffeomorphism $g \in G$, we claim that there is a subinterval $I'$ such that 
\begin{equation}\label{eqn: shorten interval}
    x \in I' \sbs 5I' \sbs I \quad\text{and}\quad gI' \sps B(gx,\ve_0/100),
\end{equation}
where $\ve_0$ is given by Proposition~\ref{prop: DKN expandable}.
Indeed, write $I = ]x-\rho,x+\rho'[$. 
Recall that $\vk(g,I)\leq 1$ and $gI = ]gx - \ve_0, gx+ \ve_0[$.
It follows that $1/2 \leq \rho'/\rho \leq 2$.
Define $I'=]x-\rho/50,x+\rho'/50[$ so that $5I' \sbs I.$ 
Again by distortion control, $B(gx,\ve_0/100) \sbs gI'$, proving the claim.

For $n\in \NN$, let $\cE_n$ be a the set of intervals $I'$ obtained this way and satisfying moreover $|I'| \in [2^{-n-1},2^{-n}[$.
Let $\wt\cE_n$ be a maximal subset of $\cE_n$ consisting of pairwise disjoint intervals. 
By maximality, we have $\bigcup_{I'\in \wt\cE_n} 5 I' \sps \bigcup_{I'\in\cE_n} I'.$
By Proposition~\ref{prop: DKN expandable} every point in $\Lambda'$ falls in $\bigcup_{I'\in\cE_n} I'$ for infinitely many $n \in \NN$. 
Note that $\nu$ is atomless and hence $\nu(\Lambda')=1$.
By the Borel-Cantelli lemma,
\[\sum_{n=1}^\infty\sum_{I'\in \wt\cE_n}\nu(5 I')\geq\sum_{n=1}^\infty \nu\sb{\bigcup_{I'\in\cE_n} I'}=\infty.\]
For every $I' \in \cE_n$, by \eqref{eqn: nuI asymp Idelta}, 
\[\nu(5I') \leq \nu(I) \ll_G |I|^\delta \ll 2^{-\delta n},\]
where the implied constant depends only on $G$.
Combining these, we have
\[\limsup_{n\to\infty}\frac{1}{n}\log\#\wt\cE_n\geq\delta.\]

By Lemma~\ref{lem: invariant measure and finite orbit}, $G$ does not preserve any invariant probability measure. 
Starting from the set $\wt\cE_n$, which consists of intervals satisfying \eqref{eqn: shorten interval}, the arguments in Section \ref{sec: dimension variation} allow us to construct stationary measures on $\Lambda$ of dimension arbitrarily close to $\delta$. 
We get a contradiction if $\delta > \dim_{\mr H}\Lambda$.
\end{proof}

\subsection{Basic properties of the dynamical critical exponent on sets}
The goal of the rest of the section is to prove Theorem \ref{thm: existence of conformal measures}. 
Recalling the definition of $\delta(G,\Delta)$ in Definition \ref{def: generalized dynamical critical exponents}, we first discuss some basic properties of $\delta(G,\Delta)$ in this subsection.
\begin{lemma}\label{lem: union critical exp}
Let $G$ be a subgroup of $\Diff^2_+(\SS^1)$ and $\vn \ne \Delta \sbs \SS^1$. 
\begin{enumerate}
\item If $\Delta\subset \Delta' \sbs \SS^1$, then $\delta(G,\Delta) \leq \delta(G,\Delta')$.
\item If $\Delta_i \sbs \SS^1, i\in\cI$ and $\Delta=\bigcup_{i\in\cI}\Delta_i$ then
\[\delta(G,\Delta)=\sup_{i\in\cI}\delta(G,\Delta_i).\]
\item Let $\ol\Delta$ be the closure of $\Delta$, then $\delta(G,\Delta)=\delta(G,\ol\Delta).$
\item  For any $g\in G$, $\delta(G,\Delta)=\delta(G,g\Delta)$.
\item Let $G\Delta:=\bigcup_{g\in G} g\Delta$, then
\[\delta(G,\Delta)=\delta(G,G\Delta)= \delta(G,\ol {G\Delta}).\]
\item If $H \subset G$ is a subgroup of finite index, then 
$\delta(H,\Delta) = \delta(G,\Delta).$
\end{enumerate}
\end{lemma}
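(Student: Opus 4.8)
The plan is to work throughout with the auxiliary quantities
$N(G,\Delta,\ve,n)=\#\{g\in G:\exists x\in\Delta,\ g'|_{B(x,\ve)}\geq 2^{-n}\}$ and $r(G,\Delta,\ve)=\limsup_{n\to\infty}\frac1n\log N(G,\Delta,\ve,n)\in[0,+\infty]$, so that $\delta(G,\Delta)=\lim_{\ve\to0^+}r(G,\Delta,\ve)$. First I would record three elementary facts: $N\geq 1$ since $\Id$ is always counted; $N(G,\Delta,\ve,n)$ is non-increasing in $\ve$ (because $B(x,\ve_1)\subseteq B(x,\ve_2)$ when $\ve_1<\ve_2$), so $r(G,\Delta,\ve)$ is non-increasing in $\ve$ and the defining limit exists and equals $\sup_{\ve>0}r(G,\Delta,\ve)$; and replacing $n$ by $n+O(1)$ or multiplying the count by a bounded factor leaves $r$ unchanged (immediate once one notes that a finite value of $r$ forces $\frac1n\log N$ to be eventually bounded). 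Item (1) is then immediate from monotonicity of $N$ in $\Delta$.

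For item (2), the inequality ``$\geq$'' is (1), and for ``$\leq$'' the key device is a finite covering of $\SS^1$. Fixing $\ve>0$, I would cover $\SS^1$ by finitely many balls $B(y_1,\ve/4),\dots,B(y_M,\ve/4)$, discard those $y_j$ whose $\ve/4$-ball misses $\Delta$, and for each remaining $j$ pick $z_j\in B(y_j,\ve/4)\cap\Delta$ and an index $i(j)\in\cI$ with $z_j\in\Delta_{i(j)}$. If $g$ is counted in $N(G,\Delta,\ve,n)$ via $x\in\Delta$, then $x$ lies in some $B(y_j,\ve/4)$, whence $B(z_j,\ve/2)\subseteq B(x,\ve)$ and $g$ is counted in $N(G,\Delta_{i(j)},\ve/2,n)$; thus $N(G,\Delta,\ve,n)\leq M\max_j N(G,\Delta_{i(j)},\ve/2,n)$. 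Since $\{i(j)\}_j$ is \emph{finite}, taking $\limsup_n$ and then $\ve\to0^+$ gives $\delta(G,\Delta)\leq\sup_{i\in\cI}\delta(G,\Delta_i)$. Item (3) follows from the same comparison run both ways: $\Delta\subseteq\overline\Delta$ gives one inequality, while for $x\in\overline\Delta$ choosing $x'\in\Delta$ with $d(x,x')<\ve/4$ yields $B(x',\ve/4)\subseteq B(x,\ve/2)$, hence $N(G,\overline\Delta,\ve/2,n)\leq N(G,\Delta,\ve/4,n)$ and $\delta(G,\overline\Delta)\leq\delta(G,\Delta)$.

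For item (4), fix $g\in G$ and set $L=\max\{\|g'\|_{C^0},\|(g^{-1})'\|_{C^0}\}$, finite by compactness of $\SS^1$. If $h\in G$ is counted in $N(G,\Delta,\ve,n)$ via $x\in\Delta$, the chain rule $(hg^{-1})'(w)=h'(g^{-1}w)\,(g^{-1})'(w)$ shows that for $w\in B(gx,\ve/L)$ (so $g^{-1}w\in B(x,\ve)$) one has $(hg^{-1})'(w)\geq 2^{-n}/L$; since $h\mapsto hg^{-1}$ is a bijection of $G$, this gives $N(G,\Delta,\ve,n)\leq N(G,g\Delta,\ve/L,n+\lceil\log L\rceil)$, and passing to $\limsup_n$ and then $\ve\to0^+$ yields $\delta(G,\Delta)\leq\delta(G,g\Delta)$; applying this to $g^{-1}$ in place of $g$ gives the reverse inequality. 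Item (5) is then formal: $\delta(G,G\Delta)=\sup_{g\in G}\delta(G,g\Delta)=\delta(G,\Delta)$ by (2) with index set $\cI=G$ together with (4), and $\delta(G,\overline{G\Delta})=\delta(G,G\Delta)$ by (3).

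The last item, (6), is where one must be careful which side to multiply, and it is the main (if mild) obstacle. The inclusion of counting sets gives $\delta(H,\Delta)\leq\delta(G,\Delta)$ for free. For the reverse, I would decompose $G=\bigsqcup_{i=1}^{k}t_iH$ into \emph{left} cosets ($k=[G:H]$), set $C=\max_i\|t_i'\|_{C^0}$, and note that every $g\in G$ is uniquely $g=t_ih$ with $h\in H$, while for fixed $i$ the map $g\mapsto t_i^{-1}g$ is a bijection $t_iH\to H$. If $g=t_ih$ is counted in $N(G,\Delta,\ve,n)$ via $x\in\Delta$, the chain rule $g'(y)=t_i'(h(y))\,h'(y)$ gives $h'(y)\geq 2^{-n}/C$ on $B(x,\ve)$ --- crucially with the \emph{same} point $x\in\Delta$ and radius $\ve$ --- so $h$ is counted in $N(H,\Delta,\ve,n+\lceil\log C\rceil)$. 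Summing over $i$ gives $N(G,\Delta,\ve,n)\leq k\,N(H,\Delta,\ve,n+\lceil\log C\rceil)$, and the usual passage to the limit yields $\delta(G,\Delta)\leq\delta(H,\Delta)$. In contrast, right-multiplying by a coset representative would displace the test ball $B(x,\ve)$ by a fixed bi-Lipschitz map --- which is exactly how (4) is handled --- whereas using left cosets in (6) keeps the test ball in place so that no change of the reference set $\Delta$ is required.
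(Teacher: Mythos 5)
Your proof is correct and, for item (2) — the only item the paper proves in detail — it uses essentially the same argument as the paper: a finite $\ve$-dense set (covering) of $\Delta$ together with the monotonicity of the $\limsup$ in $\ve$, reducing to a finite maximum over indices. The remaining items, which the paper dismisses as straightforward, are handled correctly by your chain-rule/bi-Lipschitz comparisons for (4) and the left-coset decomposition for (6), so no gaps remain.
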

\begin{proof}
We only prove (2). The proof of the rest of the lemma are straightforward.
It is obvious that $\delta(G,\Delta)\geq \sup_{i\in\cI}\delta(G,\Delta_i).$ On contrary, for every $\ve>0$ we can take a finite $\ve/2$-dense set of $\Delta,$ denotes by $\cK=\cK(\ve).$ For every $x\in\Delta,$ there exists $y\in\cK$ such that $B(x,\ve)\sps B(y,\ve/2).$ Hence
\[\limsup_{n\to\infty}\frac{1}{n}\log\# \lb{g\in G:\exists x\in\Delta,g'|_{B(x,\ve)}\geq 2^{-n}}\leq\max_{j\in\cJ}\delta(G,\Delta_j)\leq\sup_{i\in\cI}\delta(G,\Delta_i) ,\]
where $\cJ=\cJ(\ve)$ is a finite subset of $\cI$ such that $\cK\sbs\bigcup_{j\in\cJ}\Delta_j.$
As $\ve>0$ is arbitrary, the conclusion follows.
\end{proof}

\begin{corollary}
\label{cor: CE in minimal}
Assume moreover  that $G$ does not have any finite orbit.
For every nonempty subset $\Delta$ of the circle, we have
	\[\delta(G,\Delta)\geq \delta(G).\]
	In particular, $\delta(G,\Delta)=\delta(G)$ if $G$ is minimal.
\end{corollary}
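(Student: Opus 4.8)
The statement to prove is Corollary~\ref{cor: CE in minimal}: assuming $G\sbs\Diff^2_+(\SS^1)$ has no finite orbit, then $\delta(G,\Delta)\geq\delta(G)$ for every nonempty $\Delta\sbs\SS^1$, with equality when $G$ is minimal. The key observation is that since $G$ has no finite orbit, it admits a unique minimal set $\Lambda$, and by definition $\delta(G)=\delta(G,\Lambda)$. So the inequality $\delta(G,\Delta)\geq\delta(G)$ is exactly the claim $\delta(G,\Delta)\geq\delta(G,\Lambda)$, which I plan to deduce from the ``orbit'' and ``closure'' properties already recorded in Lemma~\ref{lem: union critical exp}.

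The plan is as follows. First I would note that $\ol{G\Delta}$ is a nonempty closed $G$-invariant subset of $\SS^1$, hence contains a minimal set; since $G$ has no finite orbit, its unique minimal set is $\Lambda$, so $\Lambda\sbs\ol{G\Delta}$. Then by the monotonicity property Lemma~\ref{lem: union critical exp}(1), $\delta(G,\Lambda)\leq\delta(G,\ol{G\Delta})$. Combining this with Lemma~\ref{lem: union critical exp}(5), which gives $\delta(G,\ol{G\Delta})=\delta(G,\Delta)$, we obtain $\delta(G,\Delta)\geq\delta(G,\Lambda)=\delta(G)$, proving the first assertion.

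For the ``in particular'' part: if $G$ acts minimally then $\Lambda=\SS^1$, so for any nonempty $\Delta$ we have $\delta(G,\Delta)\leq\delta(G,\SS^1)=\delta(G,\Lambda)=\delta(G)$ by Lemma~\ref{lem: union critical exp}(1), while the reverse inequality $\delta(G,\Delta)\geq\delta(G)$ was just established; hence $\delta(G,\Delta)=\delta(G)$.

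There is essentially no obstacle here: the corollary is a formal consequence of the already-proven functorial properties of $\delta(G,\cdot)$ together with the standard fact that a group of circle homeomorphisms without finite orbits has a unique minimal set contained in every nonempty closed invariant set. The only point requiring a word of care is the invocation of this classification of minimal sets (cited in the introduction via \cite[Theorem 2.1.1]{Na06}), used to justify $\Lambda\sbs\ol{G\Delta}$; everything else is a direct application of Lemma~\ref{lem: union critical exp}.
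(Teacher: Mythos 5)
Your proof is correct and follows essentially the same route as the paper: observe $\Lambda\sbs\ol{G\Delta}$ by uniqueness of the minimal set, then combine the monotonicity and orbit/closure invariance properties of Lemma~\ref{lem: union critical exp} to get $\delta(G,\Delta)=\delta(G,\ol{G\Delta})\geq\delta(G,\Lambda)=\delta(G)$, with the minimal case handled by the trivial upper bound $\delta(G,\Delta)\leq\delta(G,\SS^1)$. No gaps; your write-up just spells out the steps the paper leaves implicit.
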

\begin{proof}
Recall that under the assumption, $G$ has a unique minimal set $\Lambda \sbs \SS^1$ and $\delta(G) = \delta(G,\Lambda)$.
For every nonempty $\Delta\sbs\SS^1,$ we have $\ol{G\Delta}\sps\Lambda.$
The desired inequality follows then from the lemma.
\end{proof}

\subsection{The dynamical critical exponent for real analytic groups}
In view of Corollary~\ref{cor: CE in minimal}, in the minimal case, the consideration of critical exponents $\delta(G,\Delta)$ for different $\Delta \sbs G$ does not provide more information than $\delta(G)$ already did.
That is the reason we focus on the exceptional case.
In the remainder of this section, we always assume that $G$ is a finitely generated subgroup of $\Diff_+^\omega(\SS^1)$ with an exceptional minimal set $\Lambda$. 
It enjoys property $(\Lambda\star)$ by Theorem \ref{thm: DKN18} and is virtually free by \cite{Ghys87}. Hence $\dimH\Lambda=\delta(G)$ in this case by Theorem \ref{thm: C1 dynamical critical exponent}.

\begin{definition}\label{def: wandering}
A point $x$ is said to be \textit{wandering} if $x\notin\Lambda$ and its stabilizer in $G$ is trivial. 
\end{definition}
This notion of wandering point coincides with the usual one.
Indeed, for $x\in \SS^1$, $x$ is wandering if and only if 
there exists $\ve>0$ such that $gB(x,\ve)$ are pairwise disjoint for $g\in G.$
This is because, by Hector's result, Theorem~\ref{thm: Hector}, the stabilizer of any connected component $J$ of $\SS^1\sm\Lambda$ is either trivial or infinitely cyclic. 
This also shows that for all but finitely many $x\in J,$ $x$ is wandering.

Consider the series
\[P(x,s)=\sum_{g\in G} g'(x)^s.\]
The following proposition shows that $\delta(G,x)$ is indeed a ``critical exponent'', at least for this series.
\begin{proposition}
For every wandering point $x,$ the exponent of convergence for the series $P(x,s)$ is exactly $\delta(G,x)$. Moreover, $\delta(G,x)\leq 1.$
\end{proposition}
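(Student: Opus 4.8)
The plan is to insert, between the series $P(x,s)$ and the exponent $\delta(G,x)$, the counting function
\[
M_x(n)\defeq\#\lb{\,g\in G:g'(x)\geq 2^{-n}\,},
\]
and to show that both the exponent of convergence of $P(x,s)$ and $\delta(G,x)$ are equal to $\beta\defeq\limsup_{n\to\infty}\frac1n\log M_x(n)$, and that $\beta\leq 1$. Two inputs are soft. First, for any family $(a_g)_{g\in G}$ of positive reals with $\#\lb{g:a_g\geq 2^{-n}}<\infty$ for all $n$, the abscissa of convergence of $\sum_g a_g^s$ equals $\limsup_n\frac1n\log\#\lb{g:a_g\geq 2^{-n}}$: splitting the sum into dyadic blocks $2^{-n-1}\leq a_g<2^{-n}$ and using $a_g^s<2^{-ns}$ together with the growth bound on the blocks gives convergence above this value, while convergence at an exponent $s$ forces $\#\lb{g:a_g\geq 2^{-n}}\leq C\,2^{ns}$, giving the converse. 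Applied to $a_g=g'(x)$, this identifies the exponent of convergence of $P(x,s)$ with $\beta$ once $M_x(n)<\infty$ is known. Second, since $g'|_{B(x,\ve)}\geq 2^{-n}$ forces $g'(x)\geq 2^{-n}$, we have $\#\lb{g:g'|_{B(x,\ve)}\geq 2^{-n}}\leq M_x(n)$ for every $\ve>0$; as the inner $\limsup$ in Definition~\ref{def: generalized dynamical critical exponents} is nondecreasing as $\ve\downarrow 0$, letting $n\to\infty$ and then $\ve\to 0$ gives $\delta(G,x)\leq\beta$. It therefore remains to prove $\beta\leq\delta(G,x)$, the finiteness of $M_x(n)$, and $\beta\leq 1$; all three will follow from one distortion estimate.

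That distortion estimate requires a detour through a free subgroup. By~\cite{Ghys87}, $G$ has a free subgroup $H$ of finite index; $H$ is finitely generated and, by Lemma~\ref{prop: finite index minimal set}, has the same minimal set $\Lambda$. By Lemma~\ref{lem: union critical exp}(4),(6), $\delta(H,\lb y)=\delta(G,\lb y)$ for every $y$ and $\delta(G,\lb{cx})=\delta(G,\lb x)$ for $c\in G$. Writing $G=\bigsqcup_i Hc_i$ and using the chain rule, $P(x,s)=\sum_i c_i'(x)^s\,P_H(c_ix,s)$, so the exponent of convergence of $P(x,s)$ is the maximum over $i$ of those of $P_H(c_ix,s)$; since each $c_ix$ lies off $\Lambda$ and has trivial $G$-, hence $H$-, stabilizer, it is again wandering. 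Hence it suffices to prove the proposition for the free group $G=H$. Fix $\ve_1>0$ with $\lb{gB(x,\ve_1):g\in G}$ a pairwise disjoint family (possible because $x$ is wandering). For $g\in G$ written in reduced form $g=\gamma_m\cdots\gamma_1$ over a finite symmetric generating set $\cG$, the partial products $h_k=\gamma_k\cdots\gamma_1$ ($0\leq k\leq m-1$) are pairwise distinct, because a nonempty reduced word is nontrivial in a free group; thus the intervals $h_kB(x,\ve_1)$ are disjoint, and \eqref{eqn:vksumI} gives
\[
\vk\bigl(g,B(x,\ve_1)\bigr)\leq M\sum_{k=0}^{m-1}\bigl|h_kB(x,\ve_1)\bigr|\leq M,
\]
with $M=M(\cG)$, a bound uniform in $g$. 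In particular $g'|_{B(x,\ve_1)}\geq 2^{-M}g'(x)$ pointwise, for every $g\in G$.

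With this uniform distortion bound in hand the proof closes. It gives $M_x(n)\leq\#\lb{g:g'|_{B(x,\ve_1)}\geq 2^{-n-M}}$; for such $g$, disjointness forces $|gB(x,\ve_1)|\geq |B(x,\ve_1)|\,2^{-n-M}$, and $\sum_g|gB(x,\ve_1)|\leq 1$ then bounds this cardinality by $2^{n+M}/|B(x,\ve_1)|$. Hence $M_x(n)<\infty$, so the identification of the exponent of convergence of $P(x,s)$ with $\beta$ is legitimate, and $\beta\leq 1$, which yields $\delta(G,x)\leq 1$. Moreover the same inclusion gives $\beta\leq\limsup_n\frac1n\log\#\lb{g:g'|_{B(x,\ve_1)}\geq 2^{-n}}\leq\delta(G,x)$, the missing inequality, so that $\delta(G,x)=\beta$ equals the exponent of convergence of $P(x,s)$. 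Transferring back through the finite-index free subgroup completes the argument.

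The only genuine difficulty is the uniform distortion estimate on a \emph{fixed} ball for \emph{all} elements of the group, and it is precisely here that freeness is essential: in a group with relations, the partial products of a reduced word may revisit the same element, and then the disjointness that makes $\sum_k|h_kB(x,\ve_1)|\leq 1$ fails, so one must first pass to a free finite-index subgroup as above. Everything else is routine bookkeeping with Dirichlet series and with the monotonicity of the limit defining $\delta(G,x)$.
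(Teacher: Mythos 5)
Your proposal is correct, and its engine is the same as the paper's: the disjointness of the family $\lb{gB(x,\ve)}_{g\in G}$ (available because $x$ is wandering) feeds into the distortion estimate \eqref{eqn:vksumI} to give a distortion bound on a fixed ball that is uniform over the group, after which both the identification of the convergence exponent of $P(x,s)$ with $\delta(G,x)$ and the bound $\delta(G,x)\leq 1$ are the dyadic-counting bookkeeping you describe (the paper gets $\delta(G,x)\leq 1$ by simply noting $P(x,1)\ll\sum_g|gB(x,\ve)|\leq 1$, which is the same estimate as your bound $M_x(n)\ll 2^{n}$). Where you diverge is the detour through a finite-index free subgroup, the coset decomposition $P(x,s)=\sum_i c_i'(x)^s P_H(c_ix,s)$, and the appeal to Lemma~\ref{lem: union critical exp}(4),(6); this is legitimate in the standing setting of this section ($G$ is virtually free by Ghys), but it is not needed, and your closing claim that ``freeness is essential'' is mistaken. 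In any finitely generated group one may represent each $g$ by a word of minimal length in the generators; if two partial products of such a word coincided, deleting the intervening subword would produce a shorter representative, so the partial products are automatically pairwise distinct. Hence the translates of $B(x,\ve)$ along the partial products are pairwise disjoint and \eqref{eqn:vksumI} already yields $\vk(g,B(x,\ve))\leq C$ uniformly for all $g\in G$, which is how the paper runs the argument directly on $G$ without any reduction. So your proof is valid, just longer than necessary, and the extra machinery buys nothing beyond what geodesic representatives give for free.
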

\begin{proof}
By Theorem \ref{thm: Hector}, there exists $\ve>0$ such that $gB(x,\ve)$ are pairwise disjoint for $g\in G.$ 
By the distortion control \eqref{eqn:vksumI}, there exists $C>0$ such that $\vk(g,B(x,\ve))\leq C$ for every $g\in G.$ 
It follows that
\[\limsup_{n\to+\infty}\frac{1}{n}\log\#\lb{g\in G:g'|_{B(x,\ve')}\geq 2^{-n}}=\limsup_{n\to+\infty}\frac{1}{n}\log\#\lb{g\in G:g'(x)\geq 2^{-n}}\]
for every $\ve'\leq \ve.$ 
The right-hand side being the exponent of convergence of $P(x,s)$, the desired equality follows.
Note that $\delta(G,1)\leqslant C\sum_{g\in G}|gB(x,\ve)|\leqslant C.$ We have $\delta(G,x)\leqslant 1.$
\end{proof}

Combining with Corollary~\ref{cor: CE in minimal}, we obtain the following.
\begin{corollary}\label{cor: critical exponent at points}
For every wandering point $x,$ $\dim_{\mr H}\Lambda=\delta(G)\leq\delta(G,x)\leq 1.$
\end{corollary}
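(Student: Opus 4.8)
The corollary is an immediate combination of three facts already in place: (i) the proposition just proved, which gives $\delta(G,x)\leq 1$ and identifies $\delta(G,x)$ with the exponent of convergence of $P(x,s)$; (ii) Corollary~\ref{cor: CE in minimal}, which gives $\delta(G)\leq\delta(G,x)$ for any nonempty $\Delta=\{x\}$ (applicable since $G$ has no finite orbit, being a finitely generated subgroup of $\Diff_+^\omega(\SS^1)$ with an exceptional minimal set, so in particular $\delta(G,\{x\})=\delta(G,x)$ in the notation of Definition~\ref{def: generalized dynamical critical exponents}); and (iii) the equality $\dim_{\mr H}\Lambda=\delta(G)$ in the exceptional real-analytic case, which holds by Theorem~\ref{thm: C1 dynamical critical exponent}, because such a $G$ is locally discrete by Corollary~\ref{cor: nondiscrete minimal} (it has an exceptional minimal set, hence is not minimal, hence locally discrete by the contrapositive of that corollary), is virtually free by Ghys~\cite{Ghys87}, and satisfies property $(\Lambda\star)$ by Theorem~\ref{thm: DKN18}; thus both items of Theorem~\ref{thm: C1 dynamical critical exponent} apply and yield $\delta(G)=\dim_{\mr H}\Lambda$. (These verifications were already recorded in the paragraph opening this subsection.)

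So the proof is just a chain of inequalities. First I would invoke the standing assumptions of the subsection ($G\sbs\Diff_+^\omega(\SS^1)$ finitely generated with exceptional minimal set $\Lambda$) to note $\dim_{\mr H}\Lambda=\delta(G)$ via Theorem~\ref{thm: C1 dynamical critical exponent}. Next, for a wandering point $x$, Corollary~\ref{cor: CE in minimal} (with $\Delta=\{x\}$) gives $\delta(G)\leq\delta(G,x)$. Finally, the preceding proposition gives $\delta(G,x)\leq 1$. Concatenating, $\dim_{\mr H}\Lambda=\delta(G)\leq\delta(G,x)\leq 1$, which is the assertion.

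There is essentially no obstacle: every ingredient is a theorem stated above, and the only thing to be slightly careful about is matching the notational conventions — that $\delta(G,x)$ means $\delta(G,\{x\})$, and that the hypotheses of Theorem~\ref{thm: C1 dynamical critical exponent} (local discreteness, virtual freeness, property $(\star)$/$(\Lambda\star)$) are indeed all satisfied in the real-analytic exceptional setting, which has been justified. I would write this as a two-line proof citing Corollary~\ref{cor: CE in minimal}, the preceding proposition, and Theorem~\ref{thm: C1 dynamical critical exponent}.

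\begin{proof}
Since $G$ is a finitely generated subgroup of $\Diff_+^\omega(\SS^1)$ with an exceptional minimal set, it is locally discrete by Corollary~\ref{cor: nondiscrete minimal}, virtually free by \cite{Ghys87}, and satisfies property $(\Lambda\star)$ by Theorem~\ref{thm: DKN18}. Hence Theorem~\ref{thm: C1 dynamical critical exponent} applies and gives $\dim_{\mr H}\Lambda=\delta(G)$. As $G$ has no finite orbit, Corollary~\ref{cor: CE in minimal} applied to $\Delta=\{x\}$ yields $\delta(G)\leq\delta(G,x)$. Finally, the previous proposition gives $\delta(G,x)\leq 1$ for the wandering point $x$. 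Combining the three, $\dim_{\mr H}\Lambda=\delta(G)\leq\delta(G,x)\leq 1$.
\end{proof}
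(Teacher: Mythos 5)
Your proposal is correct and follows essentially the same route as the paper: the corollary is obtained by combining the preceding proposition (giving $\delta(G,x)\leq 1$) with Corollary~\ref{cor: CE in minimal} (giving $\delta(G)\leq\delta(G,x)$ since $G$ has no finite orbit), together with the standing fact $\dim_{\mr H}\Lambda=\delta(G)$ established at the start of the subsection from Theorem~\ref{thm: C1 dynamical critical exponent} via property $(\Lambda\star)$, virtual freeness, and local discreteness. Your explicit verification of those hypotheses matches the paper's justification, so there is nothing to add.
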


\subsection{Existence of atomless conformal measures}
Recall the multiplicity of fixed points of diffeomorphisms.
\begin{definition}\label{def: multiplicity at points}
Let $G\sbs\Diff_+^\omega(\SS^1)$ be a finitely generated subgroup with an exceptional minimal set. 
Let $y\in\SS^1$.
Define
\[k(y) = \begin{cases}
0 &\quad \text{if $\Stab_G(x)$ is trivial},\\
0 &\quad \text{if $x$ is a hyperbolic fixed point of some $g\in G$},\\
k &\quad \text{if $x$ is a parabolic fixed point of multiplicity $k+1$ of some $g\in G$}.
\end{cases}
\]
\end{definition}
\begin{remark}
	This is well-defined by Theorem \ref{thm: Hector}. Moreover, $k(\cdot)$ is constant along $G$-orbits.
\end{remark}

The following one is the main proposition in this subsection.

\begin{proposition}
\label{prop: nonatomic conformal measure}
Let $G\sbs\Diff_+^\omega(\SS^1)$ be a finitely generated subgroup with an exceptional minimal set $\Lambda$.
Let $x $ be a wandering point.
If
\[\delta(G,x) > \frac{k(y)}{k(y)+1} \]
for all points $y\in\omega(x)$, then there exists an atomless $\delta(G,x)$-conformal measure on $\Lambda$.
\end{proposition}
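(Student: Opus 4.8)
\textbf{Proof strategy for Proposition~\ref{prop: nonatomic conformal measure}.}
The plan is to adapt the classical Patterson construction of conformal measures to our dynamical setting, following the logic of \cite{Pat76, Sul83}. Fix the wandering point $x$ and write $\delta = \delta(G,x)$. Since $x$ is wandering, there is $\ve > 0$ such that the balls $gB(x,\ve)$, $g \in G$, are pairwise disjoint and, by the distortion control \eqref{eqn:vksumI}, $\vk(g,B(x,\ve)) \leq C$ uniformly in $g \in G$; hence the exponent of convergence of the Poincar\'e series $P(x,s) = \sum_{g \in G} g'(x)^s$ is exactly $\delta$. The first step is to build, for each $s > \delta$, the normalized measure
\[
\nu_s = \frac{1}{P_h(x,s)} \sum_{g \in G} h(\ell(g))\, g'(x)^s\, \delta_{g(x)},
\]
where $\ell(\cd)$ is the word length and $h$ is a slowly growing weight function chosen by Patterson's lemma so that $P_h(x,s) = \sum_g h(\ell(g)) g'(x)^s$ diverges exactly at $s = \delta$ while remaining comparable in growth to the original series; this is needed only in the case where $P(x,\delta)$ itself converges, and is a routine adaptation. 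Then one extracts a weak-$*$ limit $\nu = \lim_{s_j \downarrow \delta} \nu_{s_j}$ along a subsequence. The quasi-invariance computation, using that $\frac{\dr (g^{-1})_* \nu_s}{\dr \nu_s}$ at a point $g'(x)$ is essentially $\bigl(\frac{g'(x)}{(fg)'(x)}\bigr)^s = f'(g(x))^{-s}$ up to an error controlled by $h(\ell(fg))/h(\ell(g)) \to 1$, shows in the limit that $\nu$ is $\delta$-conformal for the $G$-action on $\SS^1$.

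The second step is to locate the support. Because $P_h(x,\delta) = \infty$, any weak-$*$ limit point cannot charge $x$ or any single point of the orbit $G x$: the mass assigned to $g(x)$ in $\nu_s$ is $h(\ell(g)) g'(x)^s / P_h(x,s) \to 0$. More care is required to rule out atoms elsewhere and to show $\supp \nu \subseteq \Lambda$. For the support: since $x \notin \Lambda$ and $\Lambda$ is the unique minimal set, the orbit $Gx$ accumulates on all of $\Lambda$, so $\ol{Gx} \supseteq \Lambda$; the mass of $\nu_s$ escaping to the complement of any neighborhood of $\Lambda$ must vanish because points $g(x)$ far (in the combinatorial sense) from $\Lambda$ correspond to bounded contributions — here one uses that $\SS^1 \sm \Lambda$ consists of wandering intervals, each $G$-orbit of intervals contributing a convergent piece to the series. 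Thus $\supp \nu \subseteq \Lambda$. The third step, and this is where the hypothesis $\delta(G,x) > k(y)/(k(y)+1)$ for all $y \in \omega(x)$ enters, is to show $\nu$ has no atoms. An atom of a $\delta$-conformal measure at a point $y$ forces, via conformality along the stabilizer $\Stab_G(y)$, a summability condition: if $y$ is a parabolic fixed point of multiplicity $k+1$ of some $g \in \Stab_G(y)$, then $\nu(\{y\}) > 0$ together with $\nu(g^n I) = \int_I (g^n)'(z)^\delta \dr\nu(z)$ and Proposition~\ref{prop: derivative around parabolic} (which gives $(g^n)'(z) \gg n^{-(k+1)/k}$) would require $\sum_n n^{-\delta(k+1)/k} < \infty$, i.e. $\delta > k/(k+1) = k(y)/(k(y)+1)$ — but one must show that such an atom would be \emph{forced} at some $y \in \omega(x)$ if $\nu$ had any atom at all. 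The mechanism: if $\nu(\{z\}) > 0$ for some $z$, then by $G$-quasi-invariance $\nu(\{g z\}) > 0$ for all $g$, and since $\nu$ is finite the orbit $Gz$ is finite; but $G$ has no finite orbit, unless $z$ has nontrivial (hence infinite cyclic, by Theorem~\ref{thm: Hector}) stabilizer, in which case $z$ is a hyperbolic or parabolic fixed point. The hyperbolic case is excluded since a hyperbolic fixed point repels orbits and the conformality relation at a hyperbolic $g$ with $g'(z) \ne 1$ gives $\nu(\{z\}) = g'(z)^\delta \nu(\{z\})$, forcing $\nu(\{z\}) = 0$. In the parabolic case one pushes the atom into $\omega(x)$: since $z \in \supp\nu \subseteq \Lambda$ and the atoms, being a $G$-invariant-up-to-quasi-invariance countable set inside $\Lambda$, must accumulate on $\Lambda = \ol{Gz}$; more directly, $\omega(x)$ is a $G$-invariant closed subset of $\Lambda$, hence equals $\Lambda$ by minimality, so $z \in \omega(x)$, and the multiplicity hypothesis at $y = z$ yields the contradiction.

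The main obstacle I expect is the atomlessness step — specifically, rigorously showing that any atom of the limit measure $\nu$ must sit at a point of $\omega(x)$ with nontrivial stabilizer, and controlling the interplay between the Patterson weight $h$, the escape of mass, and the parabolic growth estimate so that the divergence of $P_h(x,\delta)$ genuinely kills the atom. A secondary technical point is verifying that the weak-$*$ limit is genuinely $\delta$-conformal (not merely quasi-conformal with an exponent that could a priori differ), which requires the error terms $h(\ell(fg))/h(\ell(g)) - 1$ and the distortion corrections $\vk(f, \cd)$ along orbit segments to vanish in the limit uniformly on compact subsets of $\SS^1 \sm (\text{small neighborhoods of the relevant points})$; the uniform distortion bound from wandering-ness of $x$ and from \eqref{eqn:vksumI} should suffice, but this needs to be spelled out. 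Once these are in place, the conclusion that $\delta = \delta(G,x)$ is the conformal exponent and $\nu$ is an atomless $\delta$-conformal measure on $\Lambda$ follows, and combined with Theorem~\ref{thm: conformal dimension} (which gives $\delta = \dimH\Lambda$ under property $(\Lambda\star)$) this will feed into the proof of item (1) of the Main Theorem.
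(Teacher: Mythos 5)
There is a genuine gap, and it sits exactly at the two places where the hypothesis on $k(y)$ has to do real work. First, your claim that the weak-$*$ limit $\nu$ is automatically supported on $\Lambda$ (no escape of mass to $\SS^1\sm\Lambda$) is unjustified and in general false, and the auxiliary assertion that ``$\omega(x)$ is a $G$-invariant closed subset of $\Lambda$, hence equals $\Lambda$ by minimality'' reverses the true inclusion: minimality only gives $\Lambda\sbs\omega(x)$, and $\omega(x)$ can be strictly larger. Indeed, if the connected component $J$ of $\SS^1\sm\Lambda$ containing $x$ has nontrivial stabilizer (infinite cyclic by Theorem~\ref{thm: Hector}, generated by some $f$), then $\omega(x)=\Lambda\cup Gx_1\cup Gx_2$, where $x_1,x_2\in J$ are the fixed points of $f$ adjacent to $x$; the orbit $Gx$ accumulates on these points, and whether the limit measure charges them is precisely what the hypothesis $\delta(G,x)>k(y)/(k(y)+1)$ for \emph{all} $y\in\omega(x)$ --- including $y\notin\Lambda$ --- is there to decide (compare Example~\ref{eg: parabolic fixed points}, where a high-multiplicity parabolic point outside $\Lambda$ carries the critical exponent). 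The correct order of the argument is: show $\supp\nu\sbs\omega(x)$, rule out atoms at every $y\in\omega(x)$ using the hypothesis, and only then deduce $\supp\nu\sbs\Lambda$, because the points of $\omega(x)\sm\Lambda$ are isolated in $\omega(x)$ and an isolated point of the support would be an atom.

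Second, the atomlessness step --- which you yourself flag as the main obstacle --- is not carried out, and the mechanism you sketch runs in the wrong logical direction: deducing ``$\delta>k/(k+1)$'' from the existence of an atom at a parabolic point is not a contradiction, since that inequality is the standing hypothesis. What must be proved is the converse: assuming $\delta-\ve>k(y)/(k(y)+1)$, one shows $\sup_{s>\delta}\nu_s(B(y,\rho))\to 0$ as $\rho\to 0^+$, by decomposing a one-sided neighborhood of $y$ into the orbit intervals $I_{n+1}=f^nI_1$ of the parabolic generator and using the approximate conformality of the pre-limit measures $\nu_s$, the distortion control along the parabolic orbit, and Proposition~\ref{prop: derivative around parabolic} to get $\nu_s(I_{n+1})\ll n^{-1-\frac{k+1}{k}\ve}$ uniformly in $s$; the atom is excluded by summing this tail, not by the conformality of the limit measure alone (at $y$ it only gives $f'(y)^\delta\nu(\{y\})=\nu(\{y\})$, which is vacuous since $f'(y)=1$). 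Two further points fail as written: (i) ``$\nu$ finite, hence $Gz$ finite'' is wrong --- the masses $g'(z)^\delta\nu(\{z\})$ can be summable along an infinite orbit; what an atom really yields is that $g'(z)$ is bounded over $G$ and equals $1$ on $\Stab_G(z)$, and one then needs Theorem~\ref{thm: DKN09} and property $(\Lambda\star)$ to see the stabilizer is nontrivial when $z\in\Lambda$; (ii) Patterson's weight must be a function of $g'(x)$ (the quantity in the exponent of $P(x,s)$), not of word length: with $h(\ell(g))$ you control neither the abscissa of convergence of the weighted series nor the error in the conformality computation, because $\ell(g)$ and $-\log g'(x)$ are not comparable --- along powers of a parabolic element $\ell(f^m)=m$ while $-\log (f^m)'(x)\asymp\log m$ --- so the weighted series can diverge for some $s$ strictly above $\delta$ and the measures $\nu_s$ need not even be defined near the critical exponent.
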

Recall that the \textit{$\omega$-limit set} of $x$ is defined as 
\[\omega(x)=\lb{y\in\SS^1:\exists \lb{g_n}\sbs G\text{ a sequence of different elements such that }g_nx\to y}.\]

From now on, we fix a wandering point $x$ and let $\delta=\delta(G,x).$
We adopt the construction of Patterson-Sullivan measures to our setting.
This method is known to the experts.
One subtle point is that the constructed measure may not be supported on $\Lambda$, since $\omega(x)$ can be strictly larger than $\Lambda$.
In order to address this issue, we study whether a given point is an atom of the constructed conformal measure. This is the content of  Lemma~\ref{lem: conformal 4} below, which draws inspiration from \cite{ADU}.

\begin{lemma}[{\cite[Lemma 3.1]{Pat76}}]\label{lem: conformal 1}
If $P(x,s)$ converges at $s=\delta,$ then there exists a decreasing function $h:(0,\infty)\to (0,\infty)$ such that the series
	\[\wt P(x,s)=\sum_{g\in G}h \bigl(g'(x)\bigr) g'(x)^s\]
	diverges for $s\leq \delta$ and converges for $s>\delta.$ Moreover, for every $\ve>0$, there exists $t_0=t_0(\ve)>0$ such 
\[\forall \lambda\in(0,1),\, \forall t\in (0,t_0),\quad h(\lambda t)\leq \lambda^{-\ve} h(t). \]
\end{lemma}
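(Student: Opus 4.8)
The plan is to transfer Patterson's classical weighting trick \cite[Lemma 3.1]{Pat76} from hyperbolic Poincar\'e series to our series $P(x,s)=\sum_{g\in G}g'(x)^s$. The only features of the family $\bigl(g'(x)\bigr)_{g\in G}$ that the argument uses are that $\sum_{g\in G}g'(x)^\delta<\infty$ and that for every $\eta>0$ the level set $\{g\in G : g'(x)\geq\eta\}$ is finite, and both follow at once from the hypothesis: note first that $\delta>0$ (otherwise $P(x,0)=\#G=\infty$, as $G$ is infinite), and then $\#\{g\in G:g'(x)\geq\eta\}\leq\eta^{-\delta}P(x,\delta)<\infty$.

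First I would pass to a logarithmic parametrisation. Enumerate $G=\{g_i\}_{i\geq1}$ so that $t_i:=g_i'(x)$ is non-increasing; by finiteness of the level sets, $t_i\to0$. Put $s_i:=-\log t_i$, so that $s_i\to+\infty$ ($\log$ being in base $2$, as everywhere in the paper) and the hypothesis reads $\sum_i 2^{-\delta s_i}<\infty$. Through the substitution $h(t):=k(-\log t)$, constructing $h$ as in the statement is equivalent to producing a non-decreasing $k\colon\RR\to(0,\infty)$ which is \emph{slowly increasing} --- for every $\ve>0$ there is $r_0(\ve)$ with $k(r+u)\leq 2^{\ve u}k(r)$ for all $r\geq r_0(\ve)$, $u\geq0$ --- and for which $\sum_i k(s_i)2^{-ss_i}$ diverges when $s\leq\delta$ and converges when $s>\delta$. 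Indeed, writing $\lambda=2^{-u}\in(0,1)$ and $t=2^{-r}$, the slowly-increasing bound for $k$ turns into $h(\lambda t)\leq\lambda^{-\ve}h(t)$ for $t<t_0(\ve):=2^{-r_0(\ve)}$; that $k$ is non-decreasing (in fact strictly increasing, after the smoothing below) makes $h$ decreasing and positive.

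Next I would recall the construction of $k$. Let $T(r):=\sum_{i\,:\,s_i>r}2^{-\delta s_i}$ be the tail function; it is non-increasing, right-continuous and $T(r)\to0$ as $r\to+\infty$. The naive choice $k=1/T$ already forces $\sum_i k(s_i)2^{-\delta s_i}$ to diverge --- this is the elementary fact that $\sum_n a_n\big/\!\sum_{m\geq n}a_m=\infty$ for any positive summable $(a_n)$, proved by a short computation with the tails --- but $1/T$ jumps at each $s_i$ and is not slowly increasing. Following Patterson, one replaces $1/T$ by a continuous, strictly increasing function $k$ obtained by spreading $\log(1/T)$ out over windows $[r,r+\rho(r)]$ whose widths $\rho(r)\to+\infty$ are chosen to grow slowly enough; the flattening preserves the divergence at $s=\delta$ while giving, say, $k(s_i)2^{-\delta s_i/2}\to0$, so that for $s>\delta$
\[\sum_i k(s_i)2^{-ss_i}\leq\sum_i\bigl(k(s_i)2^{-\delta s_i/2}\bigr)2^{-(s-\delta)s_i}\ll\sum_i 2^{-(s-\delta)s_i}<\infty,\]
and the slow growth of $\rho$ delivers the slowly-increasing estimate. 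Setting $h(t)=k(-\log t)$ and unwinding the substitution then yields the function required by the statement.

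In the final write-up I would simply cite \cite[Lemma 3.1]{Pat76}, recording the dictionary $s_i=-\log g_i'(x)$ playing the role of the hyperbolic displacements $d(o,g_io)$, together with the two structural inputs verified in the first paragraph. The one point that is not a formality --- and the main obstacle --- is arranging the slowly-increasing property of $k$ simultaneously with the sharp convergence/divergence dichotomy at $s=\delta$: one genuinely cannot take $k=1/T$, and the smoothing width $\rho(r)$ has to be tuned against the gaps $s_{i+1}-s_i$ so that it tends to infinity yet slowly. This tuning is exactly the content of Patterson's lemma, which I would invoke rather than reprove.
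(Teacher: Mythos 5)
Your proposal is correct and matches the paper's treatment: the paper gives no independent argument but simply invokes Patterson's Lemma 3.1, exactly as you do after checking the two structural inputs (summability at $s=\delta$ and finiteness of the level sets $\{g: g'(x)\geq \eta\}$) and recording the dictionary $-\log g'(x)\leftrightarrow d(o,go)$. Your extra sketch of the tail-function construction and the smoothing is a faithful summary of Patterson's argument and introduces no gap.
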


We take $h\equiv 1$ if $P(x,s)$ diverges at $s=\delta.$ 
For every $s > \delta$, consider the probability measures
\[\nu_s= \frac{1}{\wt P(x,s)}\sum_{g\in G}h\bigl(g'(x)\bigr)  g'(x)^s \delta_{g(x)}.\]

By weak compactness, 
there is some sequence $s_n \downarrow \delta$ such that
$\nu_{s_n}$ weakly converges to some probability measure $\nu$.
\begin{lemma}\label{lem: conformal 2}
The probability measure $\nu$ is a $\delta$-conformal measure.
\end{lemma}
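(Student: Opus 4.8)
The plan is to verify the conformality relation $\nu(gA)=\int_A g'(x)^\delta\dd\nu(x)$ for every $g\in G$ and every Borel set $A$, by first establishing it at the level of the approximating measures $\nu_{s_n}$ and then passing to the limit. First I would compute, for a fixed $g\in G$ and $s>\delta$, the pushforward $g_*\nu_s$. Using the cocycle identity $(gh)'(x)=g'(hx)\cdot h'(x)$, which gives $h'(x)=g'(hx)^{-1}(gh)'(x)$, one rewrites
\[
g_*\nu_s=\frac{1}{\wt P(x,s)}\sum_{h\in G}h\bigl(h'(x)\bigr)h'(x)^s\,\delta_{gh(x)}
=\frac{1}{\wt P(x,s)}\sum_{f\in G}h\bigl(g'(fx)^{-1}f'(x)\bigr)\bigl(g'(fx)^{-1}f'(x)\bigr)^s\,\delta_{f(x)},
\]
where I substituted $f=gh$ (a bijection of $G$). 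Comparing with the definition of $\nu_s$, this says that $g_*\nu_s$ is absolutely continuous with respect to $\nu_s$ with Radon–Nikodym derivative
\[
\frac{\dd g_*\nu_s}{\dd \nu_s}(f(x))=\frac{h\bigl(g'(fx)^{-1}f'(x)\bigr)}{h\bigl(f'(x)\bigr)}\,g'(fx)^{-s}.
\]

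The next step is to control the ratio $h(g'(fx)^{-1}f'(x))/h(f'(x))$ and show it tends to $1$ uniformly as $s_n\downarrow\delta$, at least on the relevant part of the circle. Here I would use the slowly-varying property of $h$ from Lemma~\ref{lem: conformal 1}: for $\ve>0$ there is $t_0(\ve)$ such that $h(\lambda t)\le\lambda^{-\ve}h(t)$ for $\lambda\in(0,1)$ and $t<t_0$, together with the fact that $h$ is decreasing and that $g'$ is bounded above and below by constants depending only on $g$ (as $G\sbs\Diff_+^\omega(\SS^1)$ and $g$ is fixed). Since the atoms $f(x)$ of $\nu_s$ accumulate only on $\omega(x)$, and for those $f$ with $f(x)$ near a point $y$ one has $f'(x)\to 0$ (hence $f'(x)<t_0$ eventually), the correction factor is squeezed between $\|g'\|_{C^0}^{-\ve}$ and $\|(g^{-1})'\|_{C^0}^{\ve}$ times $1$ on the bulk of the mass; the finitely many remaining atoms carry mass tending to $0$ in $\nu_{s_n}$ because $\wt P(x,s_n)\to+\infty$ when $h\equiv1$ is used, and are handled directly otherwise. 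Letting $n\to\infty$ and then $\ve\to 0$ yields, for every continuous $\phi$,
\[
\int\phi\,\dd g_*\nu=\int \phi(f)\,g'\bigl(g^{-1}f\bigr)^{-\delta}\dd\nu(f),
\]
i.e. $\dfrac{\dd g_*\nu}{\dd\nu}(y)=g'(g^{-1}y)^{-\delta}$, which by the change of variable $y=g(z)$ is exactly $\nu(gA)=\int_A g'(z)^\delta\dd\nu(z)$.

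Finally I would record that $\nu$ is genuinely a probability measure (weak limits of probability measures on the compact space $\SS^1$ are probability measures, so no mass escapes) and that the conformality identity extends from continuous test functions to all Borel sets by a standard monotone-class/regularity argument, which is routine and I would not spell out. The main obstacle is the uniform control of the ratio of $h$-values: one must argue carefully that the weight $h(g'(fx)^{-1}f'(x))$ can be compared to $h(f'(x))$ uniformly over $f\in G$, exploiting that the only accumulation of atoms happens where $f'(x)$ is small (so the slowly-varying estimate applies) while the contribution of the finitely many atoms with $f'(x)$ not small is negligible in the limit. Everything else — the cocycle bookkeeping, the substitution $f=gh$, and the passage to the limit for continuous functions — is mechanical once that point is in place.
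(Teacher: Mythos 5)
Your proposal is correct and takes essentially the same route as the paper: a change of variables in the sum defining $\nu_s$, the slowly-varying estimate for $h$ applied off a finite exceptional set (finite because $P(x,\delta)<\infty$ in the convergent case) whose contribution vanishes since $\wt P(x,s_n)\to+\infty$, and then passage to the weak limit — the paper merely streamlines your two-sided squeeze by computing $f^{-1}_*\nu_s=(f')^s\nu_s$, proving only the inequality $f^{-1}_*\nu\leq (f')^\delta\nu$, and obtaining the reverse by replacing $f$ with $f^{-1}$. The only slip is in your displayed formulas: after substituting $f=gh$, the derivative of $g$ must be evaluated at $h(x)=g^{-1}f(x)$, not at $f(x)$, which is what your stated cocycle identity and your final formula $\frac{\dd g_*\nu}{\dd\nu}(y)=g'(g^{-1}y)^{-\delta}$ in fact require.
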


\begin{proof}
In the following we will consider the usual product of a function with a probability measure, and for finite measures $\eta_1,\eta_2$ we say $\eta_2 \leq \eta_1$ if $\eta_1-\eta_2$ is positive.
Consider first the case where $P(x,\delta)$ is divergent.
Let $f \in G$. Using a change of variable,
\begin{align*}
f^{-1}_*\nu_s &= \frac{1}{P(x,s)} \sum_{g\in G} g'(x)^s \delta_{(f^{-1}g)(x)}\\
&=\frac{1}{P(x,s)} \sum_{g\in G} (fg)'(x)^s \delta_{g(x)}\\
&=\frac{1}{P(x,s)} \sum_{g\in G} f'(g(x))^s g'(x)^s \delta_{g(x)}\\
&= (f')^s \nu_s.
\end{align*}

Letting $s \downarrow \delta$ along $(s_n)$, we find $f^{-1}_* \nu = (f')^\delta \nu$.

The case where $P(x,\delta)$ is convergent can be treated similarly.
For every $\ve > 0$, the set $E_\ve$ of $g \in G$ such that $g'(x)\geq t_0(\ve)$ is finite. By the property of $h$,
\[g \in G \sm E_\ve,\quad h\bigl( (fg)'(x) \bigr) \leq \max\{1, f'(g(x))^{-\ve}\} h(g'(x)).\]
Hence, by the same change of variable,
\begin{equation}
\label{eqn: nu s conformal}
f^{-1}_*\nu_s \leq \max\{1, (f')^{-\ve}\} (f')^s \nu_s + \eta_{\ve,s}
\end{equation}
where $\eta_{\ve,s}$ is a measure on $E_\ve \cdot x$ and of total mass 
\[ \frac{1}{\wt P(x,s)} \sum_{g \in E_\ve} h((fg)'(x)) (fg)'(x)^s.\]
The sum is bounded uniformly in $s$ and $\wt P(x,s) \to +\infty$ as $s \to\delta$.
Thus, letting $s \to \delta$ along $(s_n)$ and then $\ve \to 0^+$, we obtain 
\[f^{-1}_*\nu \leq (f')^\delta \nu.\]
The inequality in the opposite direction can be obtained by inverting $f$.
\end{proof}

\begin{lemma}\label{lem: conformal 3}
The measure	$\nu$ is supported on $\omega(x)$. 
\end{lemma}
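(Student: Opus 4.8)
The plan is to show that any weak-* accumulation point $\nu$ of the measures $\nu_{s}$ (as $s \downarrow \delta$ along a suitable sequence) charges only the $\omega$-limit set $\omega(x)$. The key structural observation is that each $\nu_{s}$ is a countable combination of Dirac masses $\delta_{g(x)}$, $g\in G$, with weights $h(g'(x))\,g'(x)^{s}/\wt P(x,s)$, and since $\wt P(x,s)\to+\infty$ as $s\downarrow\delta$ (by Lemma~\ref{lem: conformal 1}, or because $P(x,\delta)$ diverges with $h\equiv 1$), the weight carried by any fixed finite subset of $G$ tends to $0$. So, morally, all the mass escapes to infinity in $G$, which forces the limit to sit on $\omega(x)$.

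To make this precise, first I would fix an arbitrary open set $U$ with $\ol U \cap \omega(x) = \vn$ and show $\nu(\ol U) = 0$, which suffices since $\omega(x)$ is closed. By definition of $\omega(x)$, the set $G_U = \lb{g\in G : g(x)\in \ol U}$ is \emph{finite}: otherwise we could extract a sequence of distinct elements $g_n$ with $g_n(x)\in\ol U$, and any accumulation point of $g_n(x)$ would lie in $\omega(x)\cap\ol U$, a contradiction. Then for every $s>\delta$,
\[\nu_s(\ol U) = \frac{1}{\wt P(x,s)}\sum_{g\in G_U} h\bigl(g'(x)\bigr)\,g'(x)^{s}.\]
Since $G_U$ is finite, the numerator is bounded by a constant independent of $s$ (for $s$ in a bounded range, say $s\in(\delta,\delta+1]$), while $\wt P(x,s)\to+\infty$ as $s\downarrow\delta$. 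Hence $\nu_s(\ol U)\to 0$ along the sequence $(s_n)$. By the portmanteau theorem (upper semicontinuity of measure on closed sets under weak-* convergence), $\nu(\ol U)\leq\limsup_n \nu_{s_n}(\ol U)=0$.

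Finally I would cover the complement $\SS^1\sm\omega(x)$ by countably many such open sets $U$ (possible since $\SS^1\sm\omega(x)$ is open, hence a countable union of open intervals whose closures can be taken inside $\SS^1\sm\omega(x)$ after a mild shrinking, using that $\omega(x)$ is closed); each contributes zero $\nu$-mass, so $\nu(\SS^1\sm\omega(x))=0$, i.e. $\supp\nu\sbs\omega(x)$. I do not anticipate a serious obstacle here: the only point requiring a little care is the uniform-in-$s$ bound on the numerator when $h\not\equiv 1$, but since $h$ is a fixed decreasing positive function and $g'(x)$ ranges over a finite set of positive values for $g\in G_U$, the quantity $\sum_{g\in G_U}h(g'(x))g'(x)^s$ is a finite sum of continuous functions of $s$, hence bounded on $(\delta,\delta+1]$. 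The divergence $\wt P(x,s)\to\infty$ is exactly the content of Lemma~\ref{lem: conformal 1} in the convergent case and is immediate in the divergent case.
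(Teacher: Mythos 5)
Your proposal follows essentially the same route as the paper: for a set whose closure misses $\omega(x)$, only finitely many $g\in G$ send $x$ into it, their contribution to the numerator is bounded uniformly in $s$, and $\wt P(x,s)\to+\infty$ as $s\downarrow\delta$, so the $\nu_{s_n}$-mass there tends to $0$; this is exactly the paper's two-line argument, just with the covering of $\SS^1\sm\omega(x)$ spelled out.

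One step is misstated, though it is harmless: the portmanteau inequality for closed sets reads $\limsup_n\nu_{s_n}(\ol U)\leq\nu(\ol U)$, not the reverse, so you cannot conclude $\nu(\ol U)=0$ the way you wrote it (e.g.\ $\delta_{1/n}\to\delta_0$ with $F=\{0\}$). The fix is immediate: since $\nu_{s_n}(U)\leq\nu_{s_n}(\ol U)\to 0$ and $U$ is open, the correct direction $\nu(U)\leq\liminf_n\nu_{s_n}(U)=0$ gives $\nu(U)=0$, and your countable cover of $\SS^1\sm\omega(x)$ by such open sets $U$ finishes the proof. With that correction the argument is complete and matches the paper's.
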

\begin{proof}
If $y \notin \omega(x)$, there is $\ve > 0$ for which the set $\lb{\,g \in G : gx \in B(y,\ve)\,}$ is finite.
Their contribution to the sum $\wt P(x,s)$ is bounded uniformly in $s > \delta$. 
As $s\downarrow \delta,$ we have $\wt P(x,s)\to +\infty$.
Hence $\nu(B(y,\ve)) = 0$.
\end{proof} 
In the setting of a Fuchsian group, the $\omega$-limit set of any point in the hyperbolic plane is precisely the limit set of the group.
However, in our setting, $\omega(x)$ may be strictly larger than $\Lambda$.
Indeed, let $J \sbs \SS^1 \sm \Lambda$ be the connected component of $\SS^1 \sm \Lambda$ containing $x$, then by Hector's Theorem~\ref{thm: Hector}, $\Stab_G(J)$ is either trivial or infinite cyclic.
If $\Stab_G(J)$ is trivial then $\omega(x) = \Lambda$. 
If $\Stab_G(J)$ is infinitely generated by $f \in G$, let $x_1$ and $x_2$ be the endpoints of the connected component of $J \sm \Fix(f)$ containing $x$, where $\Fix(f)$ denotes the set of fixed points of $f$. Then $\omega(x) = \Lambda\cup Gx_1 \cup Gx_2$. In particular, $\omega(x)$ can be different to $\Lambda$ if $G$ has fixed points outside $\Lambda$. See Example~\ref{eg: parabolic fixed points}.

\begin{lemma}\label{lem: conformal 4}
For any $y \in \SS^1,$ if $\delta > \frac{k(y)}{k(y)+1},$ then $y$ is not an atom of $\nu$.
\end{lemma}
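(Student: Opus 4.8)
\textit{Proof proposal.} Suppose for contradiction that $\nu(\{y\}) = c > 0$. First, $y \in \omega(x)$, since otherwise $\nu(\{y\}) = 0$ by Lemma~\ref{lem: conformal 3}. The next step is a reduction to a ``maximal atom'' in the orbit of $y$. By $\delta$-conformality, every point of $Gy$ is an atom, with $\nu(\{gy\}) = g'(y)^\delta\nu(\{y\})$, and $\sum_{z\in Gy}\nu(\{z\}) \leq \nu(\SS^1) = 1$. As these masses are summable, the supremum is attained at some $y_0 \in Gy$; maximality gives $g'(y_0)^\delta = \nu(\{gy_0\})/\nu(\{y_0\}) \leq 1$ for all $g \in G$, i.e. $y_0 \in \NE(G)$. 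Note $y_0 \in \omega(x)$, $\Stab_G(y_0)$ is conjugate to $\Stab_G(y)$ (so $k(y_0) = k(y)$), and if $\Stab_G(y)$ is nontrivial then $y_0 \notin Gx$ (because $x$ is wandering, hence has trivial stabilizer).

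I then distinguish cases according to $\Stab_G(y_0)$. If $\Stab_G(y_0)$ is trivial: since points of $Gx_1 \cup Gx_2$ have nontrivial stabilizers ($x_1,x_2 \in \Fix(f)$), the description of $\omega(x)$ forces $y_0 \in \Lambda$, so $y_0 \in \NE \cap \Lambda$; but property $(\Lambda\star)$ provides a nontrivial element fixing $y_0$, a contradiction. If $\Stab_G(y_0)$ is nontrivial, it is infinite cyclic by Hector's Theorem~\ref{thm: Hector}, generated by some $g_0$ fixing $y_0$. If $g_0'(y_0) \neq 1$, then $\nu(\{y_0\}) = g_0'(y_0)^\delta\nu(\{y_0\})$ together with $\delta > 0$ forces $\nu(\{y_0\}) = 0$, a contradiction. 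The remaining and principal case is $g_0'(y_0) = 1$: as $g_0$ is a nontrivial \emph{analytic} map, $y_0$ is a parabolic fixed point of $g_0$ of some multiplicity $k+1$ with $k = k(y_0) = k(y) \geq 1$, and the hypothesis reads $\delta > k/(k+1)$.

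For the parabolic case the plan is to estimate the mass the approximating measures $\nu_s$ put near $y_0$, crucially using that $\nu_s(\{y_0\}) = 0$ since $y_0 \notin Gx$. After replacing $g_0$ by $g_0^{\pm1}$ and treating the two sides of $y_0$ separately, fix a side on which $g_0$ contracts a one-sided neighborhood $V_*$ into itself and put $W_* = V_* \setminus g_0 V_*$; the annuli $g_0^m W_*$, $m \geq 0$, partition $V_* \setminus \{y_0\}$, and by Proposition~\ref{prop: derivative around parabolic}, Lemma~\ref{lem: asymp of xn} and the uniform distortion bound along the orbit (as in Lemma~\ref{lem: distortion along orbit}) one has $(g_0^m)'|_{W_*} \asymp (m+1)^{-(k+1)/k}$ while $g_0^m W_*$ lies at distance $\asymp (m+1)^{-1/k}$ from $y_0$. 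Decomposing the sum defining $\nu_s(g_0^m W_*)$ along the factorization $g = g_0^m \tilde g$ with $\tilde g x \in W_*$, and using the regularity of $h$ from Lemma~\ref{lem: conformal 1} to bound $h(g'(x)) \ll (m+1)^{(k+1)\varepsilon'/k} h(\tilde g'(x))$ for an arbitrarily small $\varepsilon' > 0$, one obtains
\[\nu_s(g_0^m W_*) \ll_{\varepsilon'} (m+1)^{-(k+1)(s-\varepsilon')/k}\,\nu_s(W_*) + o_s(1),\]
the $o_s(1)$ absorbing the finitely many small $m$ and exceptional $\tilde g$, negligible because $\widetilde P(x,s) \to +\infty$ as $s \to \delta$. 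Choosing $\varepsilon' < \delta - \tfrac{k}{k+1}$ (positive by hypothesis) and summing over the $\asymp \varepsilon^{-k}$ annuli contained in $B(y_0,\varepsilon)$, the exponent $(k+1)(s-\varepsilon')/k$ stays $>1$ for $s$ near $\delta$, so the tail sum is $\ll \varepsilon^{\,(k+1)(\delta-\varepsilon') - k} \to 0$. Hence $\nu(\{y_0\}) \leq \nu(B(y_0,\varepsilon)) \leq \liminf_{s\to\delta}\nu_s(B(y_0,\varepsilon)) \to 0$ as $\varepsilon \to 0$, contradicting that $y_0$ is an atom and completing the proof.

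The main obstacle is precisely this final estimate. Working with $\nu$ directly is circular: $\nu$ of any punctured neighborhood of $y_0$ shrinks to $0$ automatically while the atom persists. The resolution is to run the telescoping estimate on the \emph{atomless} approximants $\nu_s$, and to organize it around a \emph{fixed} reference annulus $W_*$ rather than the annulus adjacent to $\partial B(y_0,\varepsilon)$ (whose size depends on $\varepsilon$ and would produce an $\varepsilon$-dependent constant destroying the bound). The convergence of $\sum_m (m+1)^{-(k+1)(\delta-\varepsilon')/k}$, i.e. the inequality $\delta > k/(k+1)$, is exactly what makes the argument close; the rest is bookkeeping with distortion and the regularity of $h$.
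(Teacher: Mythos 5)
Your proposal is correct and follows essentially the same route as the paper's proof: after reducing to a point with nontrivial stabilizer fixed parabolically (the trivial and hyperbolic cases being excluded via property $(\Lambda\star)$ and conformality), the heart of the argument is the identical telescoping estimate bounding $\nu_s$ of the annuli $g_0^mW_*$ by $(m+1)^{-\frac{k+1}{k}(s-\ve')}$ up to the regularity loss of $h$, with the hypothesis $\delta>\frac{k}{k+1}$ making the tail of the series vanish uniformly in $s$. Your variations — the maximal-atom reduction to $\NE$ in place of the bounded-derivative/DKN09 step, the direct conformality contradiction in the hyperbolic case, and absorbing the exceptional set $E_{\ve'}$ into an $o_s(1)$ (the paper instead chooses the reference interval disjoint from $E_{\ve'}x$, which is the cleaner way to make those error terms vanish identically before summing over infinitely many $m$) — are minor and do not change the structure of the argument.
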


\begin{proof}
Suppose $y \in \SS^1$ is an atom of $\nu$ then 
\[\forall g\in G,\quad \nu(\{g y\}) = g'(y)^\delta \nu(\{y\}).\]
Hence $g'(y)$ is bounded uniformly in $g \in G$.
Moreover, $g'(y) =1$ for all $g \in \Stab_G(y)$.

By Lemma~\ref{lem: conformal 3}, we only need to consider the case $y\in\omega(x)$. Moreover, we conclude  $\Stab_G(y)$ is not trivial,  $y$ is a fixed point of some element $f \in G \sm\{\mr{id}\}$. In fact either $y \notin \Lambda$ then $y\in Gx_1\cup Gx_2$, where $x_1,x_2$ are defined in the proof of Lemma~\ref{lem: conformal 3}, by definition of $x_1,x_2$ we get $\Stab_G(y)$ is not trivial; or
$y \in \Lambda$ then $y\in G(\NE)$ by Theorem~\ref{thm: DKN09} and by property $(\Lambda \star)$, $\Stab_G(y)$ is not trivial either.

We first consider the case that $y$ is a parabolic fixed point.
Thus we denote by $k = k(y)$. Then the multiplicity of $f$ at $y$ is $k +1 \geq 2$.

We claim that
\[\sup_{s > \delta} \nu_s (B(y,\rho))  \to 0, \text{ as } \rho \to 0^+.\]
This finishes the proof of the lemma since $\nu(B(y,\rho)) \leq \liminf_{n \to +\infty} \nu_{s_n}(B(y,\rho))$ for any $\rho > 0$.
To prove the claim, first note that $\omega(x) \cap Gx = \vn$ because $x$ is a wandering point, hence $\nu_s(\{y\})=0$. Next, we establish 
\begin{equation}
\label{eqn: nu s y right}
\sup_{s > \delta} \nu_s \bigl( {]y,y + \rho[}\bigr)  \to 0, \text{ as } \rho \to 0^+.
\end{equation}

Indeed, replacing $f$ by $f^{-1}$ if necessary, we may assume that $f$ is contracting on $]y,y+\ve'[$ for some $\ve' > 0$, so that we can use the arguments in subsection~\ref{sec: 10.1}.
Let  $y_0\in ]y,y + \ve'[$ be a point close to $y$ and let $I_1 = [f(y_0),y_0]$.
Moreover, set $I_n = f^{n-1} I_1$ for $n \in \NN$.
By \eqref{eqn: n k length In} and Lemma \ref{lem: distortion along orbit},
\[\forall n \geq 1,\, \forall z\in I_1,\quad  (f^n)'(z) \ll_{f,y_0} n^{-\frac{k+1}{k}}.\]

Without loss of generality that $P(x,s)$ converges at $s=\delta.$ 
Take $\ve=\frac{1}{2}(\delta- \frac{k}{k+1})>0$. 
The set $E_\ve = \lb{\, g \in G : g'(x)\geq t_0(\ve)\,}$ is finite, as in the proof of Lemma~\ref{lem: conformal 2}. 
Shrink $\ve' > 0$ if necessary, we can assume that $E_\ve x \cap I_1 = \vn$.
For every $n \in \NN$, by \eqref{eqn: nu s conformal}, for all $s>\delta$,
\[\nu_s(I_{n+1})=\nu_s(f^n I_1)\leq\int_{I_1}(f^n)'(z)^{s-\ve}\dd \nu_s(z)\ll_{f,y_0} n^{-\frac{k+1}{k}(s-\ve)} \nu_s(I_1) \ll n^{-1-\frac{k+1}{k}\ve}.\]
For $N \in \NN$, we have $\bigcup_{n \geq N}I_{n+1} = {]y,f^N(y_0)]}$. 
Therefore,
\[ \sup_{s > \delta} \nu_s\bigl( {]y,f^N(y_0)]} \bigr) \ll_{f,y_0} \sum_{n \geq N} n^{-1-\frac{k+1}{k}\ve}.\]
The righthand-side being the tail of a convergent series, we obtain~\eqref{eqn: nu s y right}.
The same estimate for ${]y-\rho,y[}$ can be established in the same way, finishing the proof of the claim.

For the case that $y$ is a contracting hyperbolic fixed point, we obtain a similar estimate
\[\nu_s(]y,f^N(y_0)[)\ll_{f,y_0}\sum_{n\geqslant N}\lambda^s\leqslant \sum_{n\geqslant N}\lambda^\delta \]
for a fixed $\lambda<1$ and every $s>\delta.$ The conclusion also holds.
\end{proof}

\begin{proof}[Proof of Proposition~\ref{prop: nonatomic conformal measure}]
It follows immediately from Lemmas~\ref{lem: conformal 2},~\ref{lem: conformal 3} and~\ref{lem: conformal 4}.
\end{proof}

\subsection{The pointwise dynamical critical exponent (Proof of Theorems \ref{thm: critical exponent and exceptional}, \ref{thm: existence of conformal measures} and Corollary \ref{cor: critical exponent and minimal})}

Now we state the main proposition of an estimate for critical exponents.
\begin{proposition}\label{prop: estimate of critical exponents}
For every wandering point $x,$ we have
\[\delta(G,x)= \max\lb{\dim_{\mr H}\Lambda,\sup_{y\in\omega(x)}\frac{k(y)}{k(y)+1}} =\max\lb{\dim_{\mr H}\Lambda,\max_{y\in\omega(x)\sm\Lambda}\frac{k(y)}{k(y)+1}} < 1.\]
\end{proposition}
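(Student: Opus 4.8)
The plan is to establish the chain of equalities and the final strict inequality by combining the lower bounds already obtained in the excerpt with two upper bounds, one "combinatorial/covering" and one coming from the constructed conformal measure. First I would record the trivial inequalities. By Corollary~\ref{cor: critical exponent at points}, $\delta(G,x) \geq \dimH\Lambda$ and $\delta(G,x) \leq 1$ for every wandering point $x$. By Proposition~\ref{prop: delta k k+1} (more precisely by the localized estimate \eqref{eqn: explain parabolic} and the monotonicity/orbit-invariance properties in Lemma~\ref{lem: union critical exp}), for every $y \in \omega(x)$ that is a parabolic fixed point of multiplicity $k(y)+1$ of some element of $G$, the cyclic group generated by that element already forces $\delta(G, \{z\}) \geq \frac{k(y)}{k(y)+1}$ for $z$ in the basin; pushing $z$ towards $y$ and using that $\omega(x)$ is $G$-invariant and that $\delta(G,\cd)$ is monotone and invariant under the $G$-action and under taking closures (Lemma~\ref{lem: union critical exp}(1),(4),(5),(3)), one deduces $\delta(G,x) = \delta(G,\omega(x)) \geq \frac{k(y)}{k(y)+1}$ for every $y\in\omega(x)$. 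Taking the supremum gives $\delta(G,x) \geq \max\{\dimH\Lambda,\ \sup_{y\in\omega(x)} \frac{k(y)}{k(y)+1}\}$. Since $k(\cd)$ vanishes on $\Lambda \setminus G(\NE)$ and on hyperbolic fixed points, and since $\omega(x)\setminus\Lambda$ consists (by the analysis in the proof of Lemma~\ref{lem: conformal 3}, using Hector's Theorem~\ref{thm: Hector}) of at most two $G$-orbits $Gx_1, Gx_2$ of endpoints of a component of $J\setminus\Fix(f)$, the supremum over $y\in\omega(x)$ equals the supremum over $y\in\omega(x)\setminus\Lambda$ whenever the latter contributes more than $\dimH\Lambda$; in any case $\max\{\dimH\Lambda, \sup_{\omega(x)}\} = \max\{\dimH\Lambda, \max_{\omega(x)\setminus\Lambda}\}$, which handles the second equality of the statement (with the convention that the $\max$ over an empty set is $0 \leq \dimH\Lambda$).

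For the reverse inequality $\delta(G,x) \leq \max\{\dimH\Lambda,\ \sup_{y\in\omega(x)} \frac{k(y)}{k(y)+1}\}$, I would argue by contradiction: suppose $\delta := \delta(G,x) > \max\{\dimH\Lambda,\ \sup_{y\in\omega(x)}\frac{k(y)}{k(y)+1}\}$. Then the hypothesis of Proposition~\ref{prop: nonatomic conformal measure} is satisfied (the strict inequality $\delta(G,x) > \frac{k(y)}{k(y)+1}$ holds for all $y \in \omega(x)$), so there exists an atomless $\delta$-conformal measure $\nu$ with $\supp\nu \subset \Lambda$. But then Lemma~\ref{lem: delta leq dim} applies (using that $G\subset\Diff_+^\omega(\SS^1)$ with exceptional minimal set satisfies $(\Lambda\star)$ by Theorem~\ref{thm: DKN18}), giving $\delta \leq \dimH\Lambda$, contradicting $\delta > \dimH\Lambda$. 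This forces the desired upper bound, and hence the first equality. The strict inequality $\delta(G,x) < 1$ at the end then follows: we already know $\delta(G,x)\leq 1$; if $\delta(G,x) = 1$, then since $\frac{k(y)}{k(y)+1} < 1$ always, the first equality just proved gives $\delta(G,x) = \dimH\Lambda$, so $\dimH\Lambda = 1$. This contradicts item (1) of the Main Theorem / Theorem~\ref{thm: critical exponent and exceptional}, i.e. that an exceptional minimal set of a finitely generated real-analytic group has Hausdorff dimension strictly less than $1$ — but to avoid circularity I would instead invoke directly the input from \cite[Theorem F]{DKN09}: an atomless $\delta$-conformal measure on an exceptional $\Lambda$ (which exists here for $\delta = \dimH\Lambda$ by Proposition~\ref{prop: nonatomic conformal measure}, since $1 > \frac{k(y)}{k(y)+1}$ for all $y$) forces $\delta < 1$. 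Either way $\delta(G,x) < 1$.

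The main obstacle I expect is bookkeeping around $\omega(x)\setminus\Lambda$ and the "$\sup$ versus $\max$" point: one must verify carefully, via Hector's theorem and the structure of $\Fix(f)$ for the (at most one, up to the two boundary orbits) nontrivial stabilizer occurring in $\overline{G x}\setminus\Lambda$, that $\sup_{y\in\omega(x)}\frac{k(y)}{k(y)+1}$ is actually attained and equals $\max_{y\in\omega(x)\setminus\Lambda}\frac{k(y)}{k(y)+1}$ (noting $k$ is constant on $G$-orbits, so the max is over finitely many orbit-classes). A secondary subtlety is ensuring the strict-inequality hypothesis of Proposition~\ref{prop: nonatomic conformal measure} is genuinely met for \emph{all} $y\in\omega(x)$ and not merely for those in $\omega(x)\setminus\Lambda$: for $y\in\Lambda$ one uses $k(y)=0$ unless $y\in G(\NE)$, and property $(\Lambda\star)$ together with Theorem~\ref{thm: DKN09}, plus the already-known bound $\delta(G,x)\geq\dimH\Lambda=\delta(G)$ and the fact (Proposition~\ref{prop: delta k k+1} applied within $\Lambda$) that $\delta(G) > \frac{k(y)}{k(y)+1}$ for parabolic $y\in\Lambda$; this last point is exactly where the strictness in Proposition~\ref{prop: delta k k+1} — as opposed to the soft bound \eqref{eqn: explain parabolic} — is indispensable. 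Once these verifications are in place, the rest is a direct assembly of the cited results.
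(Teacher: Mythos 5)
Your proposal is correct and follows essentially the same route as the paper's proof: lower bounds from Corollary~\ref{cor: critical exponent at points} and Proposition~\ref{prop: derivative around parabolic} via Lemma~\ref{lem: union critical exp}, the upper bound by contradiction through Proposition~\ref{prop: nonatomic conformal measure} combined with Theorem~\ref{thm: conformal dimension} (equivalently Lemma~\ref{lem: delta leq dim}), the reduction of the supremum to $\omega(x)\sm\Lambda$ via Proposition~\ref{prop: delta k k+1} and the finitely many orbit classes in $\omega(x)\sm\Lambda$, and strictness via \cite[Theorem F(2)]{DKN09}, correctly avoiding circularity with the Main Theorem exactly as the paper does. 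The only cosmetic slip is the incidental claim $\delta(G,x)=\delta(G,\omega(x))$, which is neither needed nor justified at that stage; the inequality you actually use, $\delta(G,x)=\delta(G,Gx)\geq\delta(G,z)$ for $z\in Gx$ in the basin of attraction, is the right one and is what the paper's argument relies on.
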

\begin{proof}
Corollary \ref{cor: critical exponent at points} tells us $\delta = \delta(G,x) \geq \dimH \Lambda$.
Let $y \in \omega(x)$, either $k(y) = 0$ or there is $f \in G\sm \{\Id\}$ such that $y$ is a parabolic fixed point of $f$. In the latter case,  $Gx$ intersect the basin of attraction of $y$ for $f$. Then, by Proposition~\ref{prop: derivative around parabolic}, $\delta(G,Gx) \geq \frac{k(y)}{k(y)+1}$.
By Lemma~\ref{lem: union critical exp}, $\delta(G,x) = \delta(G,Gx)$.
To summarize, we have proved
\[ \delta \geq \max \Bigl\{ \dimH \Lambda, \sup_{y \in \omega(x)} \frac{k(y)}{k(y)+1} \Bigr\}.\]

Assume for a contradiction that this inequality is strict.
Then by Proposition~\ref{prop: nonatomic conformal measure}, there exists a $\delta$-conformal probability measure $\nu$ on $\omega(x)$.
But every point in $\omega(x) \sm \Lambda$ is isolated in $\omega(x)$, hence $\nu$ is supported on $\Lambda.$ 
Then Theorem~\ref{thm: conformal dimension} implies $\delta =\dimH \Lambda$, leading to a contradiction.
Therefore,
\[ \delta = \max \Bigl\{ \dimH \Lambda, \sup_{y \in \omega(x)} \frac{k(y)}{k(y)+1} \Bigr\}.\]
Taking into account Proposition \ref{prop: delta k k+1},
\[\delta= \max \Bigl\{ \dimH \Lambda, \sup_{y \in \omega(x) \sm \Lambda} \frac{k(y)}{k(y)+1} \Bigr\}.\]

Recall that $\omega(x)$ is union of $\Lambda$ with at most two $G$-orbits (see the discussions before Lemma \ref{lem: conformal 4}), thus the supremum on the right-hand side is the maximum between two values, both $< 1$.
Thus, if $\delta \geq 1$, then $\dimH \Lambda = \delta = 1$ and 
$\delta > \frac{k(y)}{k(y)+1}$ for every $y \in \omega(x)$.
By Proposition~\ref{prop: nonatomic conformal measure} again, there exists an atomless $1$-conformal measure on $\Lambda$.
But \cite[Theorem F(2)]{DKN09} states that, every atomless conformal measure supported on the exceptional minimal set must have the conformal exponent strictly less than $1.$
We obtain a contradiction. 
Hence, $\delta(G,x) < 1$.
\end{proof}

\begin{lemma}\label{lem: finite multiplicity choices}
There are only finitely many possible values of $k(y)$ for $y\in\SS^1\sm \Lambda.$
\end{lemma}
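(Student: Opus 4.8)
The goal is to prove Lemma~\ref{lem: finite multiplicity choices}: there are only finitely many possible values of $k(y)$ for $y \in \SS^1 \sm \Lambda$. The plan is to combine the estimate on the pointwise dynamical critical exponent from Proposition~\ref{prop: estimate of critical exponents} with the uniform upper bound $\delta(G,x) < 1$ and the structure of fixed points outside $\Lambda$ coming from Hector's theorem.

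First I would recall that, by Definition~\ref{def: multiplicity at points}, $k(y) \ne 0$ only when $y$ is a parabolic fixed point of multiplicity $k(y)+1$ of some nontrivial $g \in G$. By Theorem~\ref{thm: Hector}, the stabilizer $\Stab_G(y)$ of any point is trivial or infinite cyclic; so for each connected component $J$ of $\SS^1 \sm \Lambda$ with nontrivial stabilizer $\Stab_G(J) = \langle f \rangle$, all points of $J$ with nontrivial stabilizer are fixed points of $f$, and these form a finite set (an analytic nonzero function $f(z)-z$ has finitely many zeros on the compact interval $\overline J$). Moreover $k(\cdot)$ is constant along $G$-orbits, so only the finitely many components $J$ lying in distinct $G$-orbits matter, and there are finitely many such orbits of components with nontrivial stabilizer? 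This is where the argument needs the critical exponent bound.

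The main step: fix any wandering point $x$ (which exists — e.g.\ all but finitely many points in any component $J$ of $\SS^1 \sm \Lambda$ are wandering, by the discussion before Lemma~\ref{lem: conformal 4}). If $y \in \SS^1 \sm \Lambda$ has $k(y) \geq 1$, then $y$ is a parabolic fixed point of some $f \in G \sm \{\Id\}$, and some translate $g x$ ($g \in G$) lands in the basin of attraction of $y$ for $f$ or $f^{-1}$; hence $y \in \omega(gx)$, and $\omega(gx) = \omega(x)$ up to the $G$-action, so by Lemma~\ref{lem: union critical exp}(4) and the argument in Proposition~\ref{prop: estimate of critical exponents}, $\frac{k(y)}{k(y)+1} \leq \delta(G, gx) = \delta(G,x) < 1$. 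Since $t \mapsto t/(t+1)$ is strictly increasing, this gives $k(y) \leq \frac{\delta(G,x)}{1 - \delta(G,x)}$, a finite bound independent of $y$. Therefore $k(y)$ takes only finitely many values.

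I do not expect a serious obstacle here: the hard analytic work is already contained in Proposition~\ref{prop: estimate of critical exponents} (and behind it, Proposition~\ref{prop: nonatomic conformal measure} and Theorem~\ref{thm: conformal dimension}), which packages the bound $\delta(G,x) < 1$. The only point requiring a little care is the reduction showing that every $y$ with $k(y) \geq 1$ actually appears in the $\omega$-limit set of \emph{some} wandering point whose critical exponent is controlled — but this follows because $G$ acts on $\SS^1 \sm \Lambda$ with each component eventually wandering, and the basin of any parabolic fixed point $y \in J$ contains points of $\SS^1 \sm \Lambda$ arbitrarily close to $y$, in particular wandering points; alternatively one can invoke Lemma~\ref{lem: union critical exp}(2),(4),(5) to pass freely between $\delta(G,x)$, $\delta(G,Gx)$ and the contribution of a single parabolic orbit. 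Hence the bound $k(y) \leq \delta(G,x)/(1-\delta(G,x))$ holds uniformly, completing the proof.
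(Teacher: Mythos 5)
There is a genuine gap, and it sits exactly where you flagged uncertainty. The lemma asserts a \emph{uniform} bound: the set of values $\{k(y): y\in\SS^1\sm\Lambda\}$ is finite, i.e.\ $\sup_y k(y)<\infty$. Your argument only yields, for each individual parabolic point $y$, the inequality $\frac{k(y)}{k(y)+1}\leq\delta(G,x_y)<1$ where $x_y$ is a wandering point chosen in the basin of $y$ (hence depending on $y$): Proposition~\ref{prop: estimate of critical exponents} gives $\delta(G,x)<1$ pointwise, but nothing bounds $\delta(G,x_y)$ away from $1$ uniformly in $y$, so the conclusion $k(y)\leq\delta/(1-\delta)$ is not uniform and the argument does not exclude a sequence of parabolic orbits with $k\to\infty$. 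The attempt to use a single fixed wandering point $x$ instead fails for a different reason: $\omega(x)\sm\Lambda$ consists of at most two $G$-orbits (the orbits of the fixed points of the generator of $\Stab_G(J)$ adjacent to $x$, where $J$ is the component containing $x$), and $Gx$ never meets a component of $\SS^1\sm\Lambda$ outside the $G$-orbit of $J$; since $f$ fixes $\partial J'$ for the component $J'$ containing $y$, the basin of $y$ lies inside $J'$, so no translate of $x$ enters it unless $J'\in G\cdot J$. Finally, trying to repair this via $\delta(G,\SS^1)<1$ would be circular: that inequality is Theorem~\ref{thm: critical exponent and exceptional}, whose proof invokes precisely this lemma to turn the supremum $\sup_{y}\frac{k(y)}{k(y)+1}$ into a maximum.

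The paper's proof is much more elementary and avoids critical exponents entirely: by \cite[Corollary 1.18]{DKN18}, $\SS^1\sm\Lambda$ is a union of \emph{finitely many} $G$-orbits of intervals, so by Theorem~\ref{thm: Hector} the stabilizers of components lie in finitely many conjugacy classes of infinite cyclic groups; each generator, being a nontrivial analytic diffeomorphism, has finitely many fixed points (each with a well-defined multiplicity), and $k(\cdot)$ is constant along $G$-orbits, so only finitely many values occur. The finiteness of orbits of components is exactly the input your proposal is missing (you raised it in your first paragraph and then tried to bypass it); once you add it, your structural observations about Hector's theorem and analyticity already complete the proof, and the critical-exponent machinery becomes unnecessary.
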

\begin{proof}
By~\cite[Corollary 1.18]{DKN18}, $\SS^1\sm\Lambda$ is the union of finitely many orbits of intervals. Hence the family of elements in $G$ that stabilize a connected component of $\SS^1\sm\Lambda$ is contained in a finite union of conjugate classes of some infinite cyclic groups by Theorem \ref{thm: Hector}.
\end{proof}

\begin{proof}[Proof of Theorem \ref{thm: critical exponent and exceptional}]
The set of wandering points is dense in $\SS^1 \sm \Lambda$.
Thus by Lemma~\ref{lem: union critical exp},
\[\delta(G,\SS^1)= \sup\lb{\,\delta(G,x): x\text{ is wandering}\,}.\]
Note that every fixed point is in the $\omega$-limit set of some wandering point.
Thus, combined with Proposition~\ref{prop: estimate of critical exponents},
\[\delta(G,\SS^1)= \max\lb{\dim_{\mr H}\Lambda,\sup_{y\in \SS^1} \frac{k(y)}{k(y)+1}} = \max\lb{\dim_{\mr H}\Lambda,\sup_{y\in \SS^1 \sm \Lambda} \frac{k(y)}{k(y)+1}}.\]
The last supremum is actually a maximum, by Lemma~\ref{lem: finite multiplicity choices}.
Therefore, by the inequality in Proposition~\ref{prop: estimate of critical exponents}, $\delta(G,\SS^1) < 1$.
Finally, by Theorem \ref{thm: C1 dynamical critical exponent},
\[\dimH \Lambda = \delta(G) \leq \delta(G,\SS^1).\qedhere\]
\end{proof}

\begin{proof}[Proof of Corollary \ref{cor: critical exponent and minimal}]
It follows immediately from Theorem \ref{thm: critical exponent and exceptional}.
\end{proof}

\begin{proof}[Proof of Theorem \ref{thm: existence of conformal measures}]
By the assumption, we have $\delta(G,x)=\delta(G)=\dimH \Lambda$ for every wandering point $x$ by Theorem \ref{thm: critical exponent and exceptional}. In particular, $\delta(G,x)>k(y)/(k(y)+1)$ for every $y\in \SS^1$ by Proposition \ref{prop: delta k k+1}. The conclusion follows from Proposition~\ref{prop: nonatomic conformal measure}.
\end{proof}

\section{Conclusion and further discussions}\label{se:12}

\subsection{Proofs of Main Theorem and other results}\label{subsec: supplementary proofs}

\begin{proof}[Proof of Main Theorem]
    The equality $\dim_{\mr H}\Lambda=\delta(G)$ and $\dimH\Lambda<1$ is given by Theorem \ref{thm: critical exponent and exceptional}. The third item is demonstrated in Proposition \ref{prop: delta k k+1}. It remains to show $\delta(G)>0.$ By the existence of a perfect pingpong pair (Proposition \ref{prop: generate pingpong pair}) or \cite{Mar00}, $G$ contains a free sub-semigroup (indeed a free subgroup) freely generated by $h_1,h_2\in G.$ It follows from~\eqref{eq:ceDiffomega} that
\[
\delta(G)\geqslant \frac{1}{\log \max\lb{\|(h_1^{-1})'\|_{C^0},\|(h_2^{-1})'\|_{C^0}}}> 0.
\qedhere
\]
\end{proof}

\begin{remark}
    The positivity of $\dimH\Lambda$ can also be deduced from the positivity of $\dimH\nu$ for a stationary measure $\nu$ supported on $\Lambda$. This follows by combining Theorem \ref{thm: exact dimensionality} and $h_{\mr F}(\mu,\nu)>0$, or a recent result on H\"older regularity of stationary measures \cite{GKM22}.
\end{remark}

\begin{proof}[Proof of Corollary \ref{cor: orbit closure classifications}]
    If $H$ has no finite orbits, then either $H$ acts minimally or $G$ has an exceptional minimal set of dimension $\delta(G)$. In the latter case, the orbit closure of a point $x$ equals $Gx\cup \omega(x)$, where $\omega(x)$ is the union of $\Lambda$ with at most two $G$-orbits (see the discussions before Lemma \ref{lem: conformal 4}). Therefore, the conclusion follows.
	
    If $H$ has a finite orbit $F=\lb{x_i:i\in[n]}$ where $x_0,\cdots,x_{n-1}$ arrange in cyclic order and $H$ acts transitively on $F.$ For every $h\in H,$ there exists a unique translation number $\tau=\tau(h)\in [n]\cong\ZZ/n\ZZ$ such that $h(x_i)=x_{i+\tau}.$ It induces a surjective group homomorphism $\tau:H\to\ZZ/n\ZZ.$ Let $H_1=\ker\tau,$ which is a normal subgroup of $H.$ If $H_1$ is not isomorphic to $\ZZ,$ then by \cite[Proposition 3.5]{Na06}, every orbit closure is a finite set, a finite union of closed intervals or the whole circle, hence we are in the second case of the theorem.

    Now we consider the case that $H_1=\langle f\rangle$ for some nontrivial element $f\in H$ that fixes every point in $F$. In this case, every $H$-orbit is a finite union of $\langle f\rangle$-orbits, and therefore every $H$-orbit closure is either a finite set or a countable set of points. 
    Since $H/H_1\cong \ZZ/n\ZZ,$ there exists $g\in H$ such that $g(x_i)=x_{i+1}$ for every $i\in[n].$ 
    As $g^n\in\langle f\rangle$, there are integers $s$, $m$ such that 
\[g f g^{-1} = f^s\quad \text{and}\quad g^n = f^m.\]
It is also clear that every element of $H$ can be written in the form $f^k g^l$, $k, l \in \ZZ$.
Note that $g^n$ commutes with $f$, hence $f = g^nfg^{-n}=f^{s^n}$.
Remembering that $f$ is not torsion, we obtain $s^n = 1$.
Only two cases are possible.
\begin{enumerate}
    \item Either $s=1$, then $H$ is abelian. Specifically, $H\cong \ZZ\times \ZZ/k\ZZ$ where $k=\gcd(m,n).$
    \item Or $s=-1$ and $n=2k$ for some positive integer $k.$ Then conjugating the relation $g^n = f^m$ by $g$, we obtain
\[f^m = g^n = gf^mg^{-1} = f^{ms} = f^{-m}.\]
Again, as $f$ has no torsion, we conclude that $m = 0$.
It follows that  $H$ can be presented as $\pair{a,b|bab^{-1}=a^{-1},b^{2k}=1}$.
\end{enumerate} 
This concludes the proof of the orbit closure classification.

Now assume that $G$ is freely generated by $\lb{f_1,\cdots,f_n}\sbs\Diff_+^\omega(\SS^1)$ and $\max_{1\leqslant i\leqslant n}\lb{\|f_i'\|_{C^0}, \|(f_i^{-1})'\|_{C^0}}\leqslant 2n-1.$
Then using
\[\#\lb{f_1,\cdots,f_n,f_1^{-1},\cdots,f_n^{-1}}^{*m}\geq (2n-1)^m.\]
and the definition of $\delta(G)$, we find $\delta(G)\geqslant 1$. 
By the main theorem, $G$ acts minimally on $\SS^1$. 
The bound is sharp since for any $\ve > 0$, one can construct $\lb{f_1,\cdots,f_n}\sbs\Diff_+^\omega(\SS^1)$ with a pingpong partition of $2n$ disjoint intervals satisfying $\max \{ \|({f_i}^{-1})' \|_{C^0},  \|f_i'\|_{C^0} \}\leq  (2n-1)+\ve$.
\end{proof}

\begin{remark}\label{rem: finite orbit example}
    The case $H\cong \pair{a,b|bab^{-1}=a^{-1},b^{2k}=1}$ can indeed be realized in $\Diff_+^\omega(\SS^1).$ Take $g=(x\mapsto x+1/2k)$ on $\SS^1=\RR/\ZZ.$ We construct $f$ as follow. Let $X(x)=\sin(2k\pi x),$ which is a real analytic vector field on $\RR/\ZZ$ satisfying $X(x+1/2k)=-X(x).$ Let $\phi_t$ be the flow generated by $X(x),$ then $\phi_t\in \Diff_+^\omega(\SS^1).$ Besides $\phi_t$ fixes $\lb{\ell/2k:\ell\in[2k]}$ and $\phi_t\circ g=g\circ \phi_{-t}.$ Let $f=\phi_1,$ then $gfg^{-1}=f^{-1}.$ Hence $f$ and $g$ generate the desired subgroup of $\Diff_+^\omega(\SS^1).$ 
\end{remark}

\begin{proof}[Proof of Theorem \ref{thm: numb min sets}]
    If $T$ preserves an invariant probability measure $\nu$ on $\SS^1.$ Then $\nu$ is atomless since $T$ has no finite orbits. Recall the rotation number $\rho(f)$ of an element $f\in\Homeo_+(\SS^1).$ Then the conclusion follows from the following lemma.
    \begin{lemma}
        The rotation spectrum $\rho(T)\defeq\lb{\rho(f):f\in T}$ is a dense sub-semigroup of $\RR/\ZZ.$ Therefore, $\Delta=\supp\nu$ is the unique minimal set of both $T$ and $T^{-1}.$
    \end{lemma}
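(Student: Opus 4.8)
The plan is to linearize the $T$-action using the invariant measure. First I would fix a point $x_0\in\Delta=\supp\nu$ and set $h(x)=\nu\bigl([x_0,x)\bigr)$. Since $\nu$ is atomless (as noted above, because $T$ has no finite orbit), $h\colon\SS^1\to\RR/\ZZ$ is continuous, monotone of degree one and onto; it collapses the closure of each connected component (``gap'') of $\SS^1\sm\Delta$ to a single point, so that $C:=h(\SS^1\sm\Delta)$ is countable and $h$ restricts to a bijection of $\Delta\sm h^{-1}(C)$ onto $(\RR/\ZZ)\sm C$. Using the identity $\nu\bigl([x,f(x))\bigr)=\rho(f)$ recalled in the proof of Lemma~\ref{lem: invariant measure and finite orbit}, I would verify the intertwining relation $h\circ f=R_{\rho(f)}\circ h$ for every $f\in T$, where $R_\theta$ denotes the rotation $t\mapsto t+\theta$; surjectivity of $h$ then forces $\rho(fg)=\rho(f)+\rho(g)$, so $\rho\colon T\to\RR/\ZZ$ is a semigroup morphism and $\rho(T)$ is a sub-semigroup of $\RR/\ZZ$.

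For density I would pass to the closure: $\ol{\rho(T)}$ is a closed sub-semigroup of the compact group $\RR/\ZZ$, hence a closed subgroup, so it is either all of $\RR/\ZZ$ (density, and we are done) or a finite cyclic subgroup $\Gamma=\tfrac1q\ZZ/\ZZ$. To exclude the latter I would argue by contradiction: as $C$ is countable and $\Gamma$ finite, pick $y\in\RR/\ZZ$ with $(y+\Gamma)\cap C=\vn$; then $F:=h^{-1}(y+\Gamma)$ is a set of exactly $q$ points, all lying in $\Delta$, and the intertwining relation gives $h\bigl(f(z)\bigr)=h(z)+\rho(f)\in y+\Gamma$ for all $f\in T$ and $z\in F$, whence $f(z)\in F$ since each of these $h$-fibres is a single point. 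This yields a finite $T$-invariant set, contradicting the hypothesis; hence $\rho(T)$ is dense.

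For the statement about minimal sets I would use that a dense sub-semigroup of rotations acts minimally on $\RR/\ZZ$. For $z\in\Delta$ one gets $h\bigl(\ol{Tz}\bigr)=\ol{h(z)+\rho(T)}=\RR/\ZZ$, so $\ol{Tz}$ meets every $h$-fibre and therefore contains $\Delta\sm h^{-1}(C)$; since $\nu$ is atomless with support $\Delta$, each gap endpoint is a limit of points of $\Delta$ lying off the countable set $h^{-1}(C)$, so $\Delta\sm h^{-1}(C)$ is dense in $\Delta$ and $\ol{Tz}=\Delta$, i.e. $\Delta$ is $T$-minimal. Any $T$-minimal set $K$ has $h(K)=\RR/\ZZ$ by minimality of the factor, hence meets $\Delta$, hence $K\sbs\Delta$ by minimality of $K$, hence $K=\Delta$ by minimality of $\Delta$. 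Finally $T^{-1}$ is again a semigroup with no finite orbit (a finite $T^{-1}$-invariant set is automatically $T$-invariant) for which $\nu$ remains invariant, so the same argument shows $\Delta$ is its unique minimal set as well.

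The step I expect to be the main obstacle is the bookkeeping around the non-injectivity of $h$ on the gaps: checking that the set $F$ in the finiteness argument is genuinely finite and contained in $\Delta$, and that $\Delta\sm h^{-1}(C)$ is dense in $\Delta$ — which is precisely where atomlessness and $\supp\nu=\Delta$ are used. The ancillary facts — that a closed sub-semigroup of a compact group is a subgroup, and that a dense sub-semigroup of rotations acts minimally — are classical and routine.
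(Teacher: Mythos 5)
Your proof is correct, and at its core it runs on the same two ingredients as the paper's: the translation-number identity $\nu([x,f(x)[)=\rho(f)$ (so that $\rho$ is additive on $T$), and the observation that failure of density would produce a finite $T$-invariant subset of $\supp\nu$, contradicting the absence of finite orbits. The packaging differs in both halves. For density, the paper simply notes that if $\rho(T)$ were finite one gets the contradiction of Lemma~\ref{lem: invariant measure and finite orbit}, and that an infinite sub-semigroup of $\RR/\ZZ$ is automatically dense; you instead classify $\ol{\rho(T)}$ as a closed subgroup of $\RR/\ZZ$ and, in the finite-cyclic case, build the finite invariant set out of singleton fibres of $h$ — essentially the same finite set as in Lemma~\ref{lem: invariant measure and finite orbit}, reached through the conjugating map rather than directly. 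For the second half, the paper proves the stronger statement that $\Delta\sbs\ol{Ty}$ for \emph{every} $y\in\SS^1$, by trapping a given $x\in\Delta$ inside an arc $[f_1(y),f_2(y)]$ of $\nu$-measure $<\ve$ using density of $\rho(T)$ and the fact that $x\in\supp\nu$; you instead factor through the rotation action via $h$, use minimality of a dense rotation semigroup, and handle uniqueness through $h(K)=\RR/\ZZ$ together with the singleton-fibre analysis. Both routes are valid; the paper's is more hands-on and yields the slightly stronger conclusion about arbitrary orbit closures, while yours is more structural. Two small points you use implicitly and should state: $\Delta=\supp\nu$ is $T$- and $T^{-1}$-invariant (from $f_*\nu=\nu$), which is what gives $\ol{Tz}\sbs\Delta$ for $z\in\Delta$; and the density of $\Delta\sm h^{-1}(C)$ in $\Delta$ is cleanest via the remark that $h^{-1}(C)$ is $\nu$-null while every neighbourhood of a point of $\supp\nu$ has positive $\nu$-measure.
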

    \begin{proof}
        For every $x\in\SS^1,$ we have $\nu([x,f(x)])=\rho(f)$ for every $f\in T\cup T^{-1}.$ It follows that $\rho(fg)=\rho(f)+\rho(g)$ for every $f,g\in T$ and hence $\rho(T)$ is a sub-semigroup of $\RR/\ZZ.$ If $\rho(T)$ is finite, we get a contradiction as in the proof of Lemma \ref{lem: invariant measure and finite orbit}. Then $\rho(T)$ is infinite and hence dense in $\RR/\ZZ.$

        In order to show the second statement, we take arbitrary points $x\in\Delta$ and $y\in\SS^1.$ Note that $\nu([y,f(y)])=\rho(f)$ for every $f\in T.$ Combining with $\rho(T)$ is dense in $\RR/\ZZ$, for every $\ve>0,$ there exists $f_1,f_2\in T$ such that $\rho(f_1)<\nu([y,x])<\rho(f_2)$ and $|\rho(f_2)-\rho(f_1)|<\ve.$ Then $x\in [f_1(y),f_2(y)]$ and $\nu([f_1(y),f_2(y)])<\ve.$ Recalling that $\nu$ is continuous, we conclude that $x\in\ol{Ty}$ and hence $\Delta\sbs\ol{Ty}$ for every $y\in\SS^1.$ The same argument also holds for $T^{-1},$ hence $\Delta$ is the unique minimal set of both $T$ and $T^{-1}.$
    \end{proof}

    Otherwise $T$ does not preserve any probability measure. The case that $T$ is finitely generated follows by Theorems \ref{thm: supports of stationary measures} and \ref{thm: structure of random walk 1} (2). For general cases, we first take a finitely generated subgroup $T_0\sbs T$ such that $T_0$ does not preserve any probability measure. This is because for every $f\in\Homeo_+(\SS^1),$ the set of all $f$-invariant probability measures is a weak* compact subset in the space of all Radon measures on $\SS^1$. Now recall Malicet's result \cite{Mal17} which asserts that $T$ has only finitely many minimal sets. We need the following lemma.
\begin{lemma}
    Every $T$ minimal set contains at least one $T_0$-minimal set and hence the number of $T_0$-minimal sets at least the number of $T$'s. Furthermore, if the strict inequality holds, then there exists $f\in T$ such that $\pair{T_0,f},$ the semigroup generated by $T_0$ and $f$, has a strictly less number of minimal sets than $T_0.$
\end{lemma}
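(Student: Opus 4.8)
The plan is to prove the two assertions in turn. For the first, I would observe that a $T$-minimal set $\Lambda$, being a nonempty compact $T$-invariant subset of $\SS^1$, is in particular $T_0$-invariant; since the intersection of a decreasing chain of nonempty compact $T_0$-invariant sets is again of this type, Zorn's lemma yields a $T_0$-minimal set inside $\Lambda$. Together with the elementary fact that two distinct minimal sets of a semigroup action are disjoint (their intersection is closed and invariant, hence equals each of them if nonempty), this shows that distinct $T$-minimal sets contain distinct $T_0$-minimal sets, giving the inequality on their (finite) numbers. Along the way I would record two facts for later use: the \emph{dichotomy} that for a $T_0$-minimal set $\Delta$ and any $T$- or $\pair{T_0,f}$-minimal set $\Lambda$ one has $\Delta\cap\Lambda=\vn$ or $\Delta\subset\Lambda$ (because $\Delta\cap\Lambda$ is closed and $T_0$-invariant), and that for $f\in T$ a $T_0$-minimal set $\Delta$ is $\pair{T_0,f}$-minimal precisely when $f(\Delta)\subset\Delta$.

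For the second assertion, assume the strict inequality, write $p$ for the number of $T_0$-minimal sets, and note that $T_1:=\pair{T_0,f}$ is finitely generated and does not preserve a probability measure for any $f\in T$, so it too has finitely many minimal sets. The first step is combinatorial: there exist a $T_0$-minimal set $\Delta_1$, a point $a_1\in\Delta_1$, and a \emph{distinct} $T_0$-minimal set $\Delta_2$ with $\Delta_2\subset\overline{Ta_1}$. Indeed, if no such configuration existed, then for every $T_0$-minimal $\Delta$ and every $x\in\Delta$ the only $T_0$-minimal set contained in $\overline{Tx}$ would be $\Delta$ itself; since $\overline{Tx}$ contains a $T$-minimal set and that in turn contains a $T_0$-minimal set, every $T_0$-minimal set would then lie inside a unique $T$-minimal set, and, using $\overline{Tx}=\Lambda$ for $x$ in a $T$-minimal set $\Lambda$, no two distinct $T_0$-minimal sets would be contained in the same $T$-minimal set; together with the first assertion this makes the containment map a bijection between $T_0$-minimal sets and $T$-minimal sets, forcing the two numbers to be equal, a contradiction. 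The second step uses hyperbolicity: by Lemma~\ref{lem: 2dr fixed points} and its construction in Lemma~\ref{lem: A R and f} (together with Theorem~\ref{thm: supports of stationary measures}), fix $h\in T_0$ with an attracting fixed point $a_2\in\Delta_2$, choose a neighbourhood $V$ of $a_2$ whose closure lies in the basin of $a_2$ and is disjoint from $\Delta_1$, and — since $a_2\in\Delta_2\subset\overline{Ta_1}$ — pick $f\in T$ with $f(a_1)\in V$. Then $h^n(f(a_1))\to a_2$, so $a_2\in\overline{T_1a_1}$ and hence $\Delta_2=\overline{T_0a_2}\subset\overline{T_1a_1}$, the latter being closed and $T_0$-invariant.

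Finally I would run the counting argument. As in the first assertion, each $T_1$-minimal set contains a $T_0$-minimal set and distinct ones contain disjoint such sets, so $T_1$ has at most $p$ minimal sets, with equality only if every $T_0$-minimal set sits inside some $T_1$-minimal set and no $T_1$-minimal set contains two of them. But if $\Delta_1$ lies in a $T_1$-minimal set $\Lambda'$, then $a_1\in\Lambda'$ forces $\overline{T_1a_1}\subset\Lambda'$, hence $a_2\in\Lambda'$, and the dichotomy gives $\Delta_2\subset\Lambda'$; thus $\Lambda'$ contains the two distinct $T_0$-minimal sets $\Delta_1,\Delta_2$, while if instead $\Delta_1$ lies in no $T_1$-minimal set the equality likewise fails. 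In either case $\pair{T_0,f}$ has strictly fewer minimal sets than $T_0$. The main obstacle is the \emph{closure issue}: one cannot directly arrange $f(a_1)\in\Delta_2$, since the $T$-orbit of $a_1$ typically only accumulates on $\Delta_2$; and one must prevent the count from staying put by $\Delta_1$ merely enlarging into a bigger $\pair{T_0,f}$-minimal set. Both are handled by exploiting the \emph{open} basin of the attracting fixed point $a_2$ of $h$, which turns accumulation into an exact landing on $a_2$, and by the dichotomy, which forces any $\pair{T_0,f}$-minimal set meeting $\Delta_1$ to swallow $\Delta_2$ as well.
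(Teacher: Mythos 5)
Your proposal is correct and follows essentially the same route as the paper: produce a point of one $T_0$-minimal set whose $T$-orbit accumulates on a different $T_0$-minimal set, use an element of $T_0$ with an attracting fixed point inside the target set to turn accumulation into exact landing via $h^n f$, and conclude by the disjointness/counting dichotomy for minimal sets of $\pair{T_0,f}$. The only cosmetic differences are that the paper obtains the attracting fixed point inside the target minimal set directly from Proposition~\ref{prop: generate pingpong pair} (rather than re-deriving it from Lemmas~\ref{lem: 2dr fixed points} and~\ref{lem: A R and f}, where one still needs the small invariance argument showing the fixed point lies in the minimal set), and that the paper locates the accumulating configuration directly from an ``extra'' $T_0$-minimal set rather than by your contradiction argument.
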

\begin{proof}
    The first part of this lemma is obvious. For the second part, let us denote $\wt\Delta_1,\cdots,\wt\Delta_d$ to be the minimal sets of $T.$ For each $i\in [d],$ let $\Delta_i$ to be one of $T_0$-minimal sets which is contained in $\wt\Delta_i.$ Let $\Delta$ to be another $T_0$-minimal set. Then $\ol{T\Delta}$ must contain one of $T$-minimal sets. Without loss of generality, $\Delta_1\sbs\wt\Delta_1\sbs\ol{T\Delta}.$ Applying Proposition \ref{prop: generate pingpong pair} to $T_0$ and an open interval $I$ that intersects $\Delta_1$ and is disjoint with other minimal sets of $T_0.$ Then there exists $h\in T_0$ with an isolated attracting fixed point $x\in\Delta_1.$ Take $\ve>0$ such that there is no other fixed point of $h$ on $B(x,\ve).$ Take $f\in T$ and $y\in\Delta$ such that $f(y)\in B(x,\ve).$ Then $h^nf(y)\to x$ as $n\to +\infty.$

    We consider the semigroup $T_1$ generated by $T_0$ and $f.$ Assume that it has a same number of minimal sets with $T_0.$ Then there are two minimal sets of $T_1$ contain $\Delta_1$ and $\Delta$ respectively. This is not the case since $h^nf\in T_1$ and $h^nf(y)\to x$ as $n\to +\infty$ where $x\in\Delta$ and $y\in\Delta_1.$
\end{proof}

Starting with the finitely generated sub-semigroup $T_0\sbs T$ without invariant probability measures, we can apply this lemma finitely many times to obtain a finitely generated subgroup $T_0\sbs T_1\sbs T$ with the same number of minimal sets as $T$ and without invariant probability measures. We can then repeat this argument for $T_1^{-1}$ and $T^{-1}$. Combining with the monotonicity of the number of minimal sets with respect to the semigroup, we can find a finitely generated subgroup $T_1\sbs T_2\sbs T$ such that 
\begin{enumerate}
    \item $T_2$ has no invariant probability measures,
    \item $T_2$ and $T$ has the same number of minimal sets,
    \item $T_2^{-1}$ and $T^{-1}$ has the same number of minimal sets.
\end{enumerate}
Since the conclusion holds for $T_2,$ then so does $T.$
\end{proof}

\begin{proof}[Proof of Theorem \ref{thm: pingpong pair}]
    For the case when $T$ is finitely generated, the result follows immediately from Proposition \ref{prop: generate pingpong pair}. For the general case, we can use the argument in the proof of Theorem \ref{thm: numb min sets} to find a finitely generated subgroup $T_0\sbs T$ that does not preserve any probability measure.
\end{proof}

\subsection{Comparison with the critical exponent of Fuchsian groups}
\label{subsec: classical critical exponents}
In this subsection, we will discuss how our theory works for Fuchsian group actions. This provides a \textit{dynamical} proof  Theorem \ref{thm: Fuchsian case intro}.
 
In this subsection, let $\Gamma$ be a finitely generated non-elementary Fuchsian group. This group acts on $\SS^1=\partial\DD$, where $\DD$ is the Poincar\'e disk. As such, $\Gamma$ can be viewed as a locally discrete subgroup of $\Diff_+^\omega(\SS^1)$. Furthermore, $\Gamma$ has no finite orbits on $\SS^1$ and the limit set $\Lambda_\Gamma$ corresponds to its unique minimal set on $\SS^1.$ To avoid confusion, we denote the critical exponent of the Fuchsian group $\Gamma$ by $\wt\delta(\Gamma)$, which can be expressed as
\[\wt\delta(\Gamma)=\limsup_{n\to\infty}\frac{1}{n}\log\#\lb{g\in \Gamma :\|g\|_{\SL(2,\RR)}\leq 2^{\frac{n}{2}}},\]
where $\|\cdot\|_{\SL(2,\RR)}$ is the operator norm as $\SL(2,\RR)$ acting on $\RR^2.$ A priori, we can show that $\wt\delta(\Gamma)=\delta(\Gamma)=\delta_2(\Gamma)$ in our setting.
\begin{proposition}
\label{pr: delta are same}
	$\wt\delta(\Gamma)=\delta(\Gamma)=\delta_2(\Gamma).$
\end{proposition}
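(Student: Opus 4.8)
The plan is to establish the two equalities $\wt\delta(\Gamma) = \delta(\Gamma)$ and $\delta(\Gamma) = \delta_2(\Gamma)$ separately, with the first being the substantive one and the second essentially a consequence of the bounded-distortion phenomenon special to M\"obius maps. I would begin by recalling the explicit dictionary between the $\SL(2,\RR)$-action on $\DD$ and the induced action on $\SS^1 = \partial\DD$. For $g \in \Gamma$ with matrix norm $\|g\|_{\SL(2,\RR)}$, the derivative of $g$ acting on $\SS^1$ (with respect to the standard metric) satisfies the classical estimate $g'(\xi) \asymp \|g\|^{-2}$ for $\xi$ outside a fixed neighborhood of the ``repelling direction'' of $g$, while $g'(\xi)$ can be as large as $\asymp \|g\|^{2}$ near that repelling direction. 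More precisely, one has the well-known identity relating the derivative to the Busemann cocycle: $\log g'(\xi) = -2\,\beta_\xi(g^{-1}o, o)$ where $o \in \DD$ is the origin and $\beta$ is the Busemann function; and $\beta_\xi(g^{-1}o,o)$ is comparable to $d_\DD(o, g^{-1}o) = d_\DD(o, go)$ up to an additive error that is uniformly bounded when $\xi$ stays in a fixed conical neighborhood. Since $d_\DD(o,go) = 2\log\|g\|_{\SL(2,\RR)} + O(1)$, this gives $\log g'(\xi) = -2 d_\DD(o,go) + O(1) = -4\log\|g\|_{\SL(2,\RR)} + O(1)$... wait, one must be careful with the factor: the correct normalization gives $\log g'(\xi) \asymp -2 d_\DD(o, go)$, hence $g'|_{B(\xi,\ve)} \geq 2^{-n}$ corresponds, for a fixed $\ve$ and $\xi$ in $\Lambda_\Gamma$ suitably chosen, to $d_\DD(o,go) \leq \tfrac{n}{2}\log 2 + O(1)$, i.e. $\|g\|_{\SL(2,\RR)} \leq 2^{n/4}\cdot O(1)$. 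I would double-check the exponent bookkeeping against the stated formula $\wt\delta(\Gamma) = \limsup \tfrac1n \log\#\{g : \|g\|\leq 2^{n/2}\}$, reconciling by rescaling $n$, so that the counting exponents match exactly.

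For the direction $\delta(\Gamma) \le \wt\delta(\Gamma)$: fix $\ve > 0$ and $\xi \in \Lambda_\Gamma$. Any $g$ counted in $\delta(\Gamma)$ satisfies $g'|_{B(x,\ve)} \geq 2^{-n}$ for some $x \in \Lambda_\Gamma$; using Lemma~\ref{lem: equiv def of C1 CE} (or directly Corollary~\ref{cor: CE in minimal} together with the transitivity provided by Lemma~\ref{lem: contracting constant}) I may assume $x = \xi$ is a single fixed basepoint, at the cost of composing with a bounded word. Then the derivative estimate above forces $\|g\|_{\SL(2,\RR)} \leq C\, 2^{cn}$ for appropriate constants, hence $g$ lies in a ball of matrix-norm radius growing at the right exponential rate, giving $\delta(\Gamma) \leq \wt\delta(\Gamma)$. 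For the reverse $\wt\delta(\Gamma) \le \delta(\Gamma)$: given $g$ with $\|g\|_{\SL(2,\RR)} \leq 2^{n/2}$, I need a point $x \in \Lambda_\Gamma$ with $g'|_{B(x,\ve)} \geq 2^{-O(n)}$. The attracting fixed point $\xi_g^+ \in \Lambda_\Gamma$ of $g$ works: there $g'(\xi_g^+) = \|g\|^{-2}_{\SL(2,\RR)}\cdot(1+o(1)) \geq 2^{-n}(1+o(1))$, but I need a \emph{uniform} $\ve$-ball, which requires the distortion of $g$ on $B(\xi_g^+,\ve)$ to be controlled independently of $g$ — this holds because $\xi_g^+$ is an attracting point and the ball $B(\xi_g^+,\ve)$ is contracted, so $\vk(g, B(\xi_g^+,\ve))$ is uniformly bounded by the distortion estimates of Section~\ref{sec: distortion}; alternatively one uses that $\Lambda_\Gamma$ has a point at uniformly bounded distance from $\xi_g^+$ inside the ``good'' conical region. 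This matches the counting and yields $\wt\delta(\Gamma) \leq \delta(\Gamma)$. Combining, $\wt\delta(\Gamma) = \delta(\Gamma)$.

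Finally, $\delta(\Gamma) = \delta_2(\Gamma)$: the inequality $\delta_2(\Gamma) \leq \delta(\Gamma)$ is immediate from the definitions since imposing the extra constraint $\wt\vk(g, B(x,\ve)) \leq C$ only shrinks the counted set. For $\delta(\Gamma) \geq \delta_2(\Gamma)$... actually one wants $\delta_2(\Gamma) \geq \delta(\Gamma)$, which is where M\"obius rigidity enters: for a M\"obius transformation $g$, the Schwarzian derivative vanishes, and more concretely, \emph{wherever} $g'(\xi) \geq 2^{-n}$ on a ball $B(x,\ve)$ with $x \in \Lambda_\Gamma$, one can shrink $\ve$ slightly to a fixed smaller $\ve'$ and the distortion $\wt\vk(g, B(x,\ve'))$ becomes uniformly bounded. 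This follows either from the explicit form of M\"obius maps or from the Busemann-cocycle description: the variation of $\log g'$ over a ball of radius $\ve'$ that avoids the repelling direction is $O(\ve')$ uniformly in $g$. Hence every $g$ counted for $\delta(\Gamma)$ at scale $\ve$ is, after passing to scale $\ve'$, counted for $\delta_2(\Gamma)$ with a uniform constant $C = C(\ve')$, which gives $\delta(\Gamma) \leq \delta_2(\Gamma)$ and closes the chain.

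\textbf{Main obstacle.} The delicate point is the \emph{uniformity} of the distortion control in the reverse direction $\wt\delta(\Gamma) \leq \delta(\Gamma)$ and in $\delta(\Gamma) \leq \delta_2(\Gamma)$: one must locate, for each group element $g$, a genuine \emph{ball} (not just a point) inside the minimal set $\Lambda_\Gamma$ on which the derivative of $g$ is both large and of bounded distortion, with all constants independent of $g$. This requires carefully separating the ``good'' region (away from the repelling fixed direction of $g$, where derivatives are uniformly comparable to $\|g\|^{-2}$) from the ``bad'' region, and verifying that $\Lambda_\Gamma$ — being $\Gamma$-invariant and non-elementary, hence containing all fixed points of hyperbolic and parabolic elements — always meets the good region in a ball of fixed radius. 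The rest is bookkeeping of exponents and invoking the already-established derivative/distortion estimates from Section~\ref{sec: distortion}.
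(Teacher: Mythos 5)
Your overall scheme (derivative $\asymp \|g\|^{-2}$ away from the repelling direction, then match the counting) is the right one, and your direction $\delta(\Gamma)\le\wt\delta(\Gamma)$, as well as the trivial $\delta_2(\Gamma)\le\delta(\Gamma)$, are fine; note also that $\wt\delta(\Gamma)\le\delta(\Gamma)$ is even easier than you make it, since for the round metric the minimum of $g'$ over all of $\SS^1$ is exactly $\|g\|_{\SL(2,\RR)}^{-2}$ (compute with the Cartan form $\diag(\chi,\chi^{-1})$), so \emph{every} ball satisfies the $C^1$ lower bound when $\|g\|\le 2^{n/2}$; in particular no rescaling of $n$ is needed or allowed, and your wobble about factors ($-2d_\DD$ vs.\ $-4\log\|g\|$) must resolve to $g'\asymp\|g\|^{-2}$ exactly, as you first wrote.

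The genuine gap is in your step $\delta(\Gamma)\le\delta_2(\Gamma)$. You claim that whenever $g'\ge 2^{-n}$ on $B(x,\ve)$ with $x\in\Lambda_\Gamma$, shrinking to a fixed $\ve'$ makes $\wt\vk(g,B(x,\ve'))$ uniformly bounded, "because the variation of $\log g'$ over a ball avoiding the repelling direction is $O(\ve')$". The parenthetical does all the work and is not granted by the hypothesis: precisely because $\min_{\SS^1}g'=\|g\|^{-2}$, the derivative condition carries essentially no information about where the given ball sits relative to the repelling direction of $g$. Concretely, take hyperbolic $g\in\Gamma$ whose axis is far from the basepoint, so that both fixed points lie inside $B(x,\ve'/2)$ for some $x\in\Lambda_\Gamma$, with multiplier $\lambda^2$ of size roughly $2^{n}$ up to the geometry; then $g'\ge c_\ve 2^{-n}$ can hold on all of $B(x,\ve)$, yet $\log g'$ varies on $B(x,\ve')$ between about $-2\log\lambda$ (at the attractor) and $+2\log\lambda$ (at the repeller), so the distortion there is of order $n$ and no constant $C(\ve')$ exists. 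So the ball-by-ball upgrade from $\delta$-counted to $\delta_2$-counted elements fails. The fix is the paper's routing: close the chain as $\delta_2(\Gamma)\le\delta(\Gamma)\le\wt\delta(\Gamma)\le\delta_2(\Gamma)$, proving the last inequality by \emph{choosing} the ball for each $g$ rather than inheriting it: writing $g=r_1\wh g r_2$ in Cartan form, one has $g'\asymp_\ve\|g\|^{-2}$ and $(\log g')'\ll_\ve 1$ outside the arc $r_2^{-1}B(\pi,\ve)$, and since $\Lambda_\Gamma$ is not contained in any arc of length $5\ve$ (for $\ve$ small), there is $x\in\Lambda_\Gamma$ with $B(x,\ve)$ inside this good region, which yields simultaneously the derivative lower bound and the distortion bound — exactly the uniformity you flagged as the "main obstacle", but which your written argument for $\delta\le\delta_2$ does not actually secure.
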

\begin{proof}
By Cartan decomposition, for every $g\in\SL(2,\RR)$, we can express $g$ as $r_1\wh gr_2$, where $r_1,r_2\in\SO(2,\RR)$ and $\wh g=\diag(\chi,\chi^{-1})$  with $\chi=\|g\|_{\SL(2,\RR)}.$ For $\theta\in\RR/(2\pi\ZZ),$ we have 
	\[\wh g(\theta)=2\arctan\frac{1}{\chi^2}\tan\frac{\theta}{2},\]
	\[\wh g'(\theta)=\frac{1}{\chi^2\cos^2(\theta/2)+\chi^{-2}\sin^2(\theta/2)},\]
	\[(\log\wh g')'(\theta)=\frac{\wh g''(\theta)}{\wh g'(\theta)\ln 2}=\frac{1}{2\ln 2}\frac{(\chi^2-\chi^{-2})\sin \theta}{\chi^2\cos^2(\theta/2)+\chi^{-2}\sin^2(\theta/2)}.\]
	For every $\ve>0,$ $\wh g'(\theta)\asymp_\ve\chi^{-2}$ and $(\log\wh g')'\ll_\ve 1$ for every $\theta\notin B(\pi,\ve).$ This implies that
	\[\limsup_{n\to\infty}\frac{1}{n}\log\#\lb{g\in \Gamma:\exists x\in\Lambda_\Gamma ,g'|_{B(x,\ve)}\geq 2^{-n}}\leq \wt\delta(\Gamma ).\]
	Hence $\delta_2(\Gamma )\leqslant\delta(\Gamma) \leq \wt\delta(\Gamma).$ On the other hand, for every $\ve>0$ small enough so that $\Lambda_\Gamma$ cannot be covered by an interval with length $5\ve,$ we can find $x\in\Lambda_\Gamma$ such that
	\[g'|_{B(x,\ve)}\gg_\ve \chi^{-2},\quad (\log g')'|_{B(x,\ve)}\ll_\ve 1.\]
	This shows that $\delta_2(\Gamma )=\delta(\Gamma )=\wt\delta(\Gamma ).$
\end{proof}
Consequently, through the Main Theorem, we obtain a new proof of Theorem~\ref{thm: Fuchsian case intro}, from a dynamical point of view.

Note that the equality $\dimH(\Lambda) = \wt\delta(G)$ still holds when $\Gamma$ is a finitely generated Fuchsian group of the first kind. In this case again, in view of Proposition~\ref{pr: delta are same}, Theorem~\ref{thm: C2 dynamical critical exponent} gives a new proof of this equality. Indeed, every non-elementary Fuchsian group satisfies $(\star)$ or $(\Lambda\star).$ 
The following proof is known to experts. We include it for the reader's convenience.
\begin{proposition}
    The action of a  non-elementary Fuchsian group on $\SS^1=\partial \DD$ satisfies property $(\star)$ or $(\Lambda\star).$
\end{proposition}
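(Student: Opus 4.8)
The plan is to prove this by a direct geometric analysis of how a non-elementary Fuchsian group $\Gamma$ acts near a non-expandable point of $\SS^1 = \partial\DD$, using the classification of elements of $\mathrm{PSL}(2,\RR)$. First I would recall that the action of $\Gamma$ on $\SS^1$ is either minimal (first kind) or has an exceptional minimal set $\Lambda_\Gamma$ (second kind), so it suffices to check: in the first-kind case, for every $x\in\NE(\Gamma)$ there are $g_+,g_-\in\Gamma$ with $g_\pm(x)=x$ and $x$ isolated from the right (resp.\ left) in $\Fix(g_+)$ (resp.\ $\Fix(g_-)$); and in the second-kind case, the same for every $x\in\NE(\Gamma)\cap\Lambda_\Gamma$. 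The starting observation is that a point $x$ is non-expandable precisely when $g'(x)\leq 1$ for all $g\in\Gamma$; since $\Gamma$ is non-elementary and $x$ lies in (the closure of) an orbit, $x$ must be a fixed point of some nontrivial $g\in\Gamma$, and moreover — because derivatives at $x$ are bounded above by $1$ along the whole group and the multiplicative cocycle relation $(gh)'(x) = g'(h(x))h'(x)$ forces $g'(x)=1$ whenever $g\in\Stab_\Gamma(x)$ — every element of $\Stab_\Gamma(x)$ has derivative exactly $1$ at $x$. In $\mathrm{PSL}(2,\RR)$ this means $\Stab_\Gamma(x)$ consists of parabolic (or identity) elements fixing $x$, hence is an infinite cyclic group $\langle p\rangle$ generated by a parabolic $p$.

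Next I would exploit the precise local dynamics of a parabolic element. A parabolic $p$ fixing $x\in\SS^1$ has $x$ as its unique fixed point on $\SS^1$, and on one side of $x$ the orbit of any nearby point converges monotonically to $x$ under forward iteration while on the other side it converges under backward iteration (equivalently, $x$ is topologically one-sided attracting/repelling). Concretely, writing $\vp(y) = p(y) - y$ in a chart, $x$ is an isolated zero of $\vp$ (in fact the only zero), so $x$ is automatically isolated from both sides in $\Fix(p)$ and also in $\Fix(p^{-1})=\Fix(p)$. Thus one may take $g_+ = p^{\pm 1}$ and $g_- = p^{\mp 1}$ (choosing signs according to the side), and the isolation conditions required by property $(\star)$ / $(\Lambda\star)$ hold trivially, since $\{x\}$ is an isolated point — indeed the only point — of $\Fix(p)$ near $x$. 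The only remaining point to address is that in the second-kind case we additionally know $x\in\Lambda_\Gamma$, but this does not interfere: the parabolic fixed points of $\Gamma$ lying in $\Lambda_\Gamma$ are exactly the cusps, and the argument above applies verbatim.

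I would then assemble these pieces: given $x\in\NE$ (and in $\Lambda_\Gamma$ in the exceptional case), first argue $\Stab_\Gamma(x)\neq\{\mathrm{id}\}$ — this is where one uses that $\Gamma$ is finitely generated and non-elementary, so $\NE$ is contained in the (finite, in the minimal case via \cite{DKN09}, or at worst countable) set of fixed points of nontrivial elements; a point with trivial stabilizer and lying in the minimal set would, by minimality/expansion dynamics of a non-elementary group, fail to be non-expandable (some element expands near it). Then identify $\Stab_\Gamma(x) = \langle p\rangle$ with $p$ parabolic (hyperbolic generators would give $g'(x)\neq 1$, contradicting non-expandability), and conclude via the parabolic normal form that $x$ is an isolated fixed point from both sides. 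The main obstacle is the very first step: cleanly ruling out the possibility that a non-expandable point has trivial stabilizer, i.e.\ showing $\NE(\Gamma) \subseteq \bigcup_{g\neq\mathrm{id}}\Fix(g)$. For Fuchsian groups this follows from the classical fact that the limit set is the closure of the set of fixed points of hyperbolic elements together with a compactness/North–South-dynamics argument: if $x$ had trivial stabilizer, one could find $g\in\Gamma$ with an attracting hyperbolic fixed point close to $x$ (or apply the expansion near points of $\Lambda_\Gamma\setminus G(\NE)$ from Theorem~\ref{thm: DKN09}), yielding $g'(x)>1$ for a suitable $g$, a contradiction; wandering points outside $\Lambda_\Gamma$ with trivial stabilizer are not in $\NE$ either, again by the North–South dynamics of hyperbolic elements whose axes accumulate to the relevant component of $\SS^1\setminus\Lambda_\Gamma$. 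Packaging this North–South argument carefully — separating the first-kind and second-kind cases and handling the finitely many cusps — is the technical heart; everything after it is soft.
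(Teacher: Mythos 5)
Your high-level reduction coincides with the paper's: show that every non-expandable point of the limit set is fixed by a parabolic element, then note that a parabolic fixed point is an isolated point of the fixed-point set, so one may take $g_+=g_-=p$. The soft half of your argument is fine: if $x\in\NE$ and $g(x)=x$ then $g'(x)\leq 1$ and $(g^{-1})'(x)=g'(x)^{-1}\leq 1$ force $g'(x)=1$, so a nontrivial stabilizer consists of parabolics and is infinite cyclic, and the isolation condition in properties $(\star)$/$(\Lambda\star)$ is then automatic.

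The gap is precisely the step you flag as the technical heart: proving $\NE\cap\Lambda_\Gamma\sbs\bigcup_{g\ne\Id}\Fix(g)$, and neither mechanism you offer closes it. Invoking Theorem~\ref{thm: DKN09} is circular, since that theorem assumes property $(\star)$ or $(\Lambda\star)$ — the very statement being proved. The North--South sketch is also inadequate: a hyperbolic element with \emph{attracting} fixed point near $x$ contracts near $x$, and even one with repelling fixed point near $x$ need not expand at $x$, because the arc where $g'>1$ is a neighborhood of the repelling point whose size shrinks as the translation length grows; density of hyperbolic fixed points in $\Lambda_\Gamma$ alone gives no way to place $x$ inside such an arc. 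More fundamentally, the condition ``$g'(x)\leq 1$ for all $g\in\Gamma$'' means (via $g'(x)=e^{B_x(o,g^{-1}o)}$ with $o$ the center of $\DD$) that the orbit of $o$ never enters the open horoball at $x$ through $o$; ruling this out for non-parabolic limit points genuinely uses finite generation — for infinitely generated Fuchsian groups $\Lambda_\Gamma\sm\Lambda_h$ can contain non-parabolic points — so a soft argument that never exploits this hypothesis cannot succeed. The paper closes the step exactly here: the Busemann identity converts boundedness of $\lb{g'(x):g\in\Gamma}$ into ``$x$ is not a horocyclic limit point'', and then the classification for finitely generated Fuchsian groups (cited from Dal'bo, \cite{Dal}) says every point of $\Lambda_\Gamma\sm\Lambda_h$ is a parabolic fixed point. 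Equivalently, in your language: finitely generated Fuchsian groups are geometrically finite, so $\Lambda_\Gamma$ splits into conical limit points and parabolic fixed points, and at a conical point the same Busemann identity produces a sequence $g_n$ with $g_n'(x)\to\infty$. Supplying one of these inputs would complete your proof; without it the key inclusion remains unproved.
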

\begin{proof}
    We fix a base point $o\in \DD.$ Recall that $\Lambda= \Lambda_\Gamma$ corresponds to the limit set of $\Gamma .$ For every $x\in\Lambda$ and $g\in \Gamma ,$ by the formula of derivatives \cite[Lemma 3.4.2]{Nic},
\[g'(x)=\frac{1-|g^{-1}o|^2}{|x-g^{-1}o|^2}\bigg/\frac{1-|o|^2}{|x-o|^2}=e^{B_x(o,g^{-1}o)}\]
where $B_x(\cd,\cd)$ is the Busemann cocycle and $|\cd|$ denotes the Euclidean norm on $\CC$. Combining with \cite[Proposition 3.10]{Dal}, we have
\begin{align*}
    &\lb{x\in \Lambda: g'(x) \text{ is bounded for all }g\in G}\\
    =&\lb{x\in \Lambda: B_x(o,go) \text{ is upper bounded for all }g\in G}=\Lambda\sm\Lambda_h,
\end{align*}
where $\Lambda_h$ is the horocyclic limit points. The conclusion follows by the fact that every point in $\Lambda\sm\Lambda_h$ is fixed by a parabolic element \cite[Theorem 4.13]{Dal} .
\end{proof}

\subsection{Some counterexamples}\label{subsec: counterexamples}
In the definition~\eqref{eq:ceDiffomega} of the dynamical critical exponent, we can ask whether the local $C^1$ contracting norm can be replaced with a global one and whether the condition $x\in\Lambda$ can be removed. While these conditions are not necessary in the case of Fuchsian groups acting on the circle, they are necessary for general circle diffeomorphisms, even when $G$ is chosen in $\Diff^\omega_+(\SS^1)$.
This will be clarified in the two examples below. The key difference between the action of $\Diff_+^\omega(\SS^1)$ and $\SL(2,\RR)$ on the circle is the lack of global rigidity. An element in $\Diff_+^\omega(\SS^1)$ can have multiple attracting and repelling fixed points, generating independent dynamics in different cones.
\begin{example}\label{eg: critical exponent}
	Recall the example of $2$-perfect pingpong pair $(h_1,h_2)$ given in Figure \ref{eg: perfect pingpong pair}. Figure~\ref{fig:1208} illustrates the same example but with different notation. For each $i$,  the points $a_{i,j}$ are the attracting fixed points of $h_i$ and $r_{i,j}$ are the repelling fixed points of $h_i.$ We assume that the contracting rates at $a_{1,1}$ and $a_{2,1}$ are much less than the contracting or repelling rates at $a_{i,2}$'s and $r_{i,j}$'s. Specifically,
	\[-\log (h_i^{-1})'|_{U_i^-}>1000,\quad -\log h_i'|_{U_{i,2}^+}>1000,\quad -\log h_i'|_{U_{i,1}^+}<100,\]
	where $U_{i,j}^+$ is the connected component of $U_i^+$ containing $a_{i,j}.$ This can be achieved in the real analytic settings.

	\begin{figure}[!ht]
 \centering
 \begin{tikzpicture}
        \def\radius{2.6cm}
		\def\radone{2.2cm}
		\def\radtwo{3cm}
		\draw[gray] (0,0) circle (\radius);
		\draw[-,very thick] (10:\radius) arc[radius=\radius, start angle=10, end angle=30];
		\draw[-,very thick] (60:\radius) arc[radius=\radius, start angle=60, end angle=80];
		\draw[-,very thick] (100:\radius) arc[radius=\radius, start angle=100, end angle=120];
		\draw[-,very thick] (150:\radius) arc[radius=\radius, start angle=150, end angle=170];
		\draw[-,very thick] (190:\radius) arc[radius=\radius, start angle=190, end angle=210];
		\draw[-,very thick] (240:\radius) arc[radius=\radius, start angle=240, end angle=260];
		\draw[-,very thick] (280:\radius) arc[radius=\radius, start angle=280, end angle=300];
		\draw[-,very thick] (330:\radius) arc[radius=\radius, start angle=330, end angle=350];
		
		\fill[red] (0,0) ++(70:\radius) circle[radius=2pt];
		\fill[red] (0,0) ++(110:\radius) circle[radius=2pt];
		\fill[red] (0,0) ++(250:\radius) circle[radius=2pt];
		\fill[red] (0,0) ++(290:\radius) circle[radius=2pt];
		
		\fill[blue] (0,0) ++(20:\radius) circle[radius=2pt];
		\fill[blue] (0,0) ++(-20:\radius) circle[radius=2pt];
		\fill[blue] (0,0) ++(160:\radius) circle[radius=2pt];
		\fill[blue] (0,0) ++(200:\radius) circle[radius=2pt];
		
		\fill (0,0) ++(10:\radius) circle[radius=1.5pt];
		\fill (0,0) ++(30:\radius) circle[radius=1.5pt];
		\fill (0,0) ++(60:\radius) circle[radius=1.5pt];
		\fill (0,0) ++(80:\radius) circle[radius=1.5pt];
		\fill (0,0) ++(100:\radius) circle[radius=1.5pt];
		\fill (0,0) ++(120:\radius) circle[radius=1.5pt];
		\fill (0,0) ++(150:\radius) circle[radius=1.5pt];
		\fill (0,0) ++(170:\radius) circle[radius=1.5pt];
		\fill (0,0) ++(190:\radius) circle[radius=1.5pt];
		\fill (0,0) ++(210:\radius) circle[radius=1.5pt];
		\fill (0,0) ++(240:\radius) circle[radius=1.5pt];
		\fill (0,0) ++(260:\radius) circle[radius=1.5pt];
		\fill (0,0) ++(280:\radius) circle[radius=1.5pt];
		\fill (0,0) ++(300:\radius) circle[radius=1.5pt];
		\fill (0,0) ++(330:\radius) circle[radius=1.5pt];
		\fill (0,0) ++(350:\radius) circle[radius=1.5pt];

		\node at (70:\radone) {$a_{1,1}$};
		\node at (250:\radone) {$a_{1,2}$};
		\node at (200:\radone) {$r_{1,2}$};
		\node at (-20:\radone) {$r_{1,1}$};
		
		\draw[->,>=latex,semithick] (-10:\radone) arc[radius=\radone, start angle=-10, end angle=60];
		\draw[<-,>=latex,semithick] (80:\radone) arc[radius=\radone, start angle=80, end angle=190];
		\draw[->,>=latex,semithick] (210:\radone) arc[radius=\radone, start angle=210, end angle=240];
		\draw[<-,>=latex,semithick] (260:\radone) arc[radius=\radone, start angle=260, end angle=330];
		
		\node at (110:\radtwo) {$a_{2,1}$};
		\node at (290:\radtwo) {$a_{2,2}$};
		\node at (20:\radtwo) {$r_{2,1}$};
		\node at (160:\radtwo) {$r_{2,2}$};
		
		\draw[->,>=latex,semithick] (30:\radtwo) arc[radius=\radtwo, start angle=30, end angle=100];
		\draw[<-,>=latex,semithick] (120:\radtwo) arc[radius=\radtwo, start angle=120, end angle=150];
		\draw[->,>=latex,semithick] (170:\radtwo) arc[radius=\radtwo, start angle=170, end angle=280];
		\draw[<-,>=latex,semithick] (300:\radtwo) arc[radius=\radtwo, start angle=300, end angle=370];

       \end{tikzpicture}
       \stepcounter{theorem}
       \caption{Depiction of Example \ref{eg: critical exponent}}\label{fig:1208}
\end{figure}
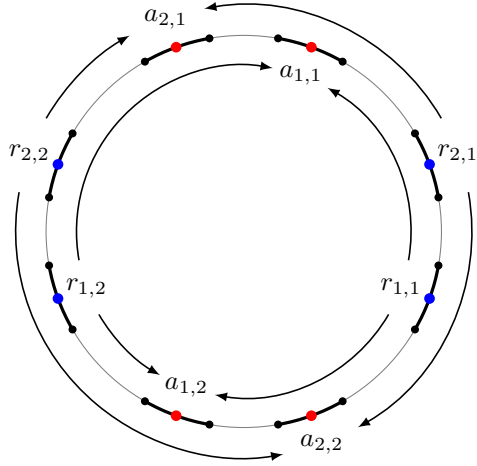
	
	We consider a minimal set of the semigroup generated by $(h_1,h_2),$ which is contained in the closed interval $[a_{1,1},a_{2,1}].$ Applying the dimension formula to the random walk induced by $\mu=\frac{1}{2}\delta_{h_1}+\frac{1}{2}\delta_{h_2}$, the Hausdorff dimension of this minimal set is at least $\frac{1}{100}.$ Let $G$ be the group generated by $(h_1,h_2),$ then $\dim_{\mr H}\Lambda\geq \frac{1}{100}$ where $\Lambda$ is the exceptional minimal set of $G.$
	
    For an element $g=\gamma_m\cdots\gamma_1 \in G,$ where $\gamma_i\in\lb{h_1,h_2,h_1^{-1},h_2^{-1}}, \gamma_{i+1}\gamma_i\ne\Id.$ Then there exists $x\in\SS^1$ such that at least $m/2$ points in the sequence
	\[x,~\gamma_1 x,~\gamma_2\gamma_1 x,~\cdots,\gamma_{m-1}\cdots \gamma_1 x\]
	do not fall in $U_{1,1}^+\cup U_{2,1}^+.$ Then we have $\log g'(x)< -500 m.$ Hence if $g'|_{\SS^1}\geq 2^{-n},$ then $m\leq n/500.$ Therefore there are at most $4^{n/500}=2^{n/250}$ such elements. If we replace the local co-norm by a global co-norm in Definition \ref{def: C1 critical exponent} i.e.
    \[\delta'(G)=\limsup_{n\to\infty}\frac{1}{n}\log\#\lb{g\in G:g'(x)\geqslant 2^{-n},\forall x\in\SS^1},\]
	then $\delta'(G)\leqslant 1/250<\dimH\Lambda.$
\end{example}

\begin{example}\label{eg: parabolic fixed points}
	We construct two diffeomorphisms $f,g\in \Diff_+^\omega(\SS^1)$ with
	\begin{itemize}
		\item $g$ has three fixed points: one hyperbolic attracting, one hyperbolic repelling and a parabolic fixed point of multiplicity $2k.$
		\item $h$ has two fixed points: one hyperbolic attracting and one hyperbolic repelling.
	\end{itemize}
	The dynamics of $g,h$ are illustrated below.	
	\begin{figure}[!ht]
	\centering
	\begin{tikzpicture}
		\def\radius{2.6cm}
		\def\radone{2.2cm}
		\def\radtwo{3cm}
		\draw[gray,semithick] (0,0) circle (\radius);
		\draw[-,very thick] (90:\radius) arc[radius=\radius, start angle=90, end angle=100];
		\draw[-,very thick] (125:\radius) arc[radius=\radius, start angle=125, end angle=145];
		\draw[-,very thick] (215:\radius) arc[radius=\radius, start angle=215, end angle=235];
		\draw[-,very thick] (260:\radius) arc[radius=\radius, start angle=260, end angle=270];

		\fill[green] (0,0) ++(0:\radius) circle[radius=2pt];
		\fill[red] (0,0) ++(90:\radius) circle[radius=2pt];
		\fill[blue] (0,0) ++(270:\radius) circle[radius=2pt];
				
		\fill[blue] (0,0) ++(135:\radius) circle[radius=2pt];
		\fill[red] (0,0) ++(225:\radius) circle[radius=2pt];
		
		\fill (0,0) ++(100:\radius) circle[radius=1.5pt];
		\fill (0,0) ++(125:\radius) circle[radius=1.5pt];
		\fill (0,0) ++(145:\radius) circle[radius=1.5pt];
		\fill (0,0) ++(215:\radius) circle[radius=1.5pt];
		\fill (0,0) ++(235:\radius) circle[radius=1.5pt];
		\fill (0,0) ++(260:\radius) circle[radius=1.5pt];

		\node at (135:\radone) {$r_h$};
		\node at (225:\radone) {$a_h$};
		
		\draw[<-,>=latex,semithick] (235:\radone) arc[radius=\radone, start angle=-125, end angle=125];
		\draw[->,>=latex,semithick] (145:\radone) arc[radius=\radone, start angle=145, end angle=215];
		
		\node at (0:\radtwo) {$p_g$};
		\node at (90:\radtwo) {$a_g$};
		\node at (270:\radtwo) {$r_g$};
		
		\draw[->,>=latex,semithick] (10:\radtwo) arc[radius=\radtwo, start angle=10, end angle=80];
		\draw[<-,>=latex,semithick] (100:\radtwo) arc[radius=\radtwo, start angle=100, end angle=260];
		\draw[->,>=latex,semithick] (280:\radtwo) arc[radius=\radtwo, start angle=280, end angle=350];

\end{tikzpicture}
\stepcounter{theorem}
\caption{Depiction of Example \ref{eg: parabolic fixed points}}
\end{figure}
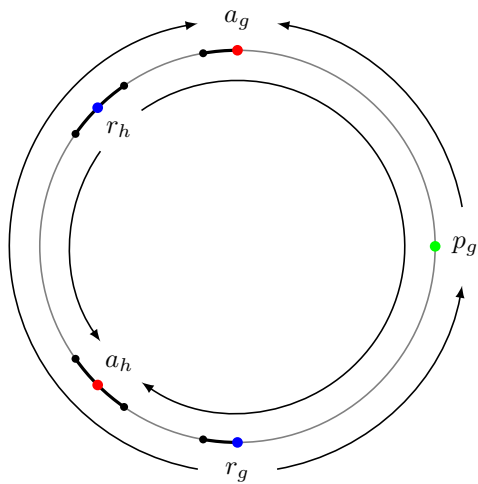
	
	To be specific, we take $g_0:\RR/\ZZ\to\RR/\ZZ$ as
	\[g_0(x)=x+\ve \sin^{2k}(\pi x)\cos (2\pi x),\]
	for some $\ve>0$ sufficiently small such that $g_0\in\Diff_+^\omega(\SS^1)$. Then $0$ is a fixed point of $g_0$ of multiplicity $2k$ and $1/4$ is an attracting fixed point, $3/4$ is a repelling fixed point. Take $h_0\in\Diff_+^\omega(\SS^1)$ to be an arbitrary hyperbolic element with only two fixed points at $3/8$ and $5/8.$ Take $n$ be a sufficiently large positive integer and let $g=g_0^n,h=h_0^n.$ Such that there are disjoint cones $U_1^+=B(1/4,\delta),U_1^-=B(3/4,\delta),$ $U_2^+=B(5/8,\delta),U_2^-=B(3/8,\delta)$ satisfying
	\[h(\SS^1\sm U_2^-)\sbs U_2^+,\quad h^{-1}(\SS^1\sm U_2^+)\sbs U_2^-,\]
	\[g(U_1^+\cup U_2^+ \cup U_2^-)\sbs U_1^+,\quad g^{-1}(U_1^-\cup U_2^+ \cup U_2^-)\sbs U_1^-.\]
	Moreover, we can assume that $g,g^{-1},h,h^{-1}$ have large contracting rates on the corresponding cones, for instance, the derivatives are less than $2^{-100}$. Let $G$ be the group generated by $\lb{g,h}.$ Then $G$ has an exceptional minimal set $\Lambda\sbs\bigcup U_i^\pm$ and $\Lambda$ does not intersect the open arc $\wideparen{r_gp_ga_g}.$ 
\begin{claim}
    $\dimH\Lambda\leqslant \frac{\log 3}{100}.$
\end{claim}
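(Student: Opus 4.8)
\textbf{Proof plan for the claim $\dimH\Lambda\leqslant\frac{\log 3}{100}$.}

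The idea is to bound $\dimH\Lambda$ from above by exhibiting an efficient sequence of covers of $\Lambda$ indexed by reduced words in $g$ and $h$, and using the strong contraction hypothesis. First I would set up notation: write $\cG=\{g,h,g^{-1},h^{-1}\}$ and for a reduced word $w=\gamma_m\cdots\gamma_1$ (with $\gamma_i\in\cG$, $\gamma_{i+1}\gamma_i\ne\Id$) let $U_w$ denote the image under $\gamma_m\cdots\gamma_1$ of the appropriate pingpong cone associated to $\gamma_m$; by the pingpong inclusions $g(U_1^+\cup U_2^+\cup U_2^-)\subset U_1^+$, $g^{-1}(U_1^-\cup U_2^+\cup U_2^-)\subset U_1^-$, $h(\SS^1\sm U_2^-)\subset U_2^+$ and $h^{-1}(\SS^1\sm U_2^+)\subset U_2^-$, the sets $U_w$ with $|w|=m$ form a cover of $\Lambda$ consisting of at most $3\cdot 4^{m-1}$ (in fact, counting reduced words, at most $4\cdot 3^{m-1}$) intervals. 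The key point is that each of the generators and their inverses has derivative bounded above by $2^{-100}$ on the corresponding cone, so by the chain rule and the nesting of the cones, $|U_w|\leqslant 2^{-100 m}$ for every reduced word $w$ of length $m$ (note that here there is no loss from distortion because we only need the crude bound $\|\gamma'\|_{C^0}\leqslant 2^{-100}$ on each cone, not a lower bound).

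With these two facts in hand, the estimate is immediate. For any $s>0$, the $s$-dimensional Hausdorff premeasure at scale $\rho_m=2^{-100 m}$ satisfies
\[
H^s_{\rho_m}(\Lambda)\leqslant \sum_{|w|=m}|U_w|^s\leqslant 4\cdot 3^{m-1}\cdot 2^{-100 m s}=\frac{4}{3}\sb{3\cdot 2^{-100 s}}^m.
\]
If $s>\frac{\log 3}{100}$ (recall $\log$ is base $2$), then $3\cdot 2^{-100 s}<1$, so the right-hand side tends to $0$ as $m\to+\infty$; since $\rho_m\to 0$, this gives $H^s(\Lambda)=0$, hence $\dimH\Lambda\leqslant s$. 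Letting $s\downarrow\frac{\log 3}{100}$ yields $\dimH\Lambda\leqslant\frac{\log 3}{100}$, as claimed. (If one wants to count reduced words more carefully, the first letter $\gamma_1$ has $4$ choices and each subsequent letter has $3$ choices, giving $4\cdot 3^{m-1}$; one can equally bound crudely by $4^m$ and still conclude $\dimH\Lambda\leqslant\frac{\log 4}{100}=\frac{1}{50}$, but the sharper count gives the stated bound.)

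The only genuine point requiring care — and thus the main obstacle — is verifying that the family $\{U_w:|w|=m\}$ really does cover $\Lambda$ and that each $U_w$ is a single interval on which the relevant derivative bound $2^{-100}$ applies. Both follow from the pingpong structure: $\Lambda$ is contained in $\bigcup_i U_i^\pm$ and is $G$-invariant, so any point of $\Lambda$ lies in some cone $U_{\gamma_1}^{\pm}$, and applying the inclusions repeatedly (using that the image of any cone under $\gamma$ lands in $U_\gamma^\pm$ minus the ``forbidden'' cone, so that the next letter can be chosen to continue the reduced word) shows every point of $\Lambda$ lies in $U_w$ for words $w$ of every length; moreover each $U_w$ is the diffeomorphic image of an interval hence an interval, and lies inside the cone where the final letter's derivative bound holds. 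I would spell out this bookkeeping — in particular that no cancellation occurs because consecutive cones are disjoint — and the rest is the elementary computation above.
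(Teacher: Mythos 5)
Your proposed route (a direct Hausdorff-measure estimate via a cover of $\Lambda$ by cylinders $\gamma_m\cdots\gamma_2(\text{cone of }\gamma_1)$ over reduced words) is genuinely different from the paper's, which never codes $\Lambda$ at all: the paper passes through $\dim_{\mr H}\Lambda\leq\delta_2(G)$ (available because $G\sbs\Diff_+^\omega(\SS^1)$ is locally discrete and satisfies $(\Lambda\star)$) and then only has to \emph{count} group elements whose derivative at some point of $\Lambda$ is $\geq 2^{-n}$, for which a forward-orbit argument suffices. The problem with your version is the covering claim itself. In this example $g$ is deliberately \emph{not} a full pingpong element: the hypotheses give $g(U_1^+\cup U_2^+\cup U_2^-)\sbs U_1^+$ but say nothing about $g(U_1^-)$, and in fact for the large power $g=g_0^n$ used in the construction the interval $g(U_1^-)$ overlaps $U_1^+$ (its endpoint $g_0^n(3/4-\delta)$ converges to $1/4$, so it enters $B(1/4,\delta)$). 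Consequently a point $x\in\Lambda\cap U_1^+$ could a priori satisfy $g^{-1}x\in U_1^-$; for such a point no backward step stays in the allowed cones ($gx\in U_1^+$, $h^{-1}x\in U_2^-$, $hx\in U_2^+$ are all forbidden continuations), and $x$ lies in \emph{none} of your depth-$m$ cylinders: every cylinder contained in $U_1^+$ sits inside $g(U_1^+\cup U_2^+\cup U_2^-)$, which misses the right fringe $[\,g(3/4-\delta),\,1/4+\delta)$ of $U_1^+$. So ``applying the inclusions repeatedly'' only yields the forward statement (images of allowed cones nest and contract along reduced words); it does not yield that every point of $\Lambda$ admits such a backward itinerary, which is exactly what your cover needs. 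Closing the gap would require proving, say, that $\Lambda$ is disjoint from $g(U_1^-)\cap U_1^+$ and from $g^{-1}(U_1^+)\cap U_1^-$, i.e.\ that $\Lambda$ really is the attractor of the restricted (graph-directed) pingpong system; this is not among the listed properties of the example and is precisely the delicate point created by the parabolic fixed point.

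By contrast, the paper's argument sidesteps this: for a reduced word $f=\gamma_m\cdots\gamma_1$ and a point $x\in\Lambda^{(\ve)}\sbs\bigcup U_i^\pm$ not lying in the repelling cone of $\gamma_1$, the \emph{forward} inclusions alone force every intermediate image into a contracting cone, so $f'(x)\leq 2^{-100m}$; hence $f'|_{B(x,\ve)}\geq 2^{-n}$ forces $m\leq n/100$ and the count is $\ll 3^{n/100}$, which bounds $\delta_2(G)$ and therefore $\dim_{\mr H}\Lambda$. If you want to keep a covering proof you must either supply the missing structural fact about $\Lambda$ or switch, as the paper does, to an estimate that only uses forward itineraries. (A minor additional slip: your cylinders should be images of a cone on which the \emph{first} letter $\gamma_1$ contracts, not ``the cone associated to $\gamma_m$''; as written the definition does not match the nesting you use later, though this is cosmetic compared with the covering issue.)
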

\begin{proof}
    This estimate can be proved by an estimate on $C^2$-dynamical critical exponent $\delta_2(G).$ Note that $G$ is locally discrete. It suffices to show for every $\ve$ sufficiently small and $C>0,$
    \[\limsup_{n\to\infty}\frac{1}{n}\log\#\lb{f\in G:2^{-n} \leq f'|_{B(x,\ve)} \leq 2^{-n+C+1},\, \wt\vk(f,B(x,\ve)) \leq C}\leqslant \frac{\log 3}{100},\]
    as we discussed in the proof of Theorem \ref{thm: C2 dynamical critical exponent}. For $n>C+1,$ it requires that $f'|_{B(x,\ve)}<1.$ Writing $f$ in the normal form $f=\gamma_m\cdots\gamma_1 \in G,$ where $\gamma_i\in\lb{g,h,g^{-1},h^{-1}}, \gamma_{i+1}\gamma_i\ne\Id.$ Then $x$ cannot be chosen in the cone that expanded by $\gamma_1.$ Since for $\ve$ small enough, we have $\Lambda^{(\ve)}\sbs \bigcup U_i^\pm.$ Note that the group have a Schotkky-group-like dynamics on the cones $U_i^\pm.$ Combining with the assumption of contracting rates, we have $m\leqslant n/100.$ Hence
	\[\#\lb{f\in G:2^{-n} \leq f'|_{B(x,\ve)} \leq 2^{-n+C+1},\, \wt\vk(f,B(x,\ve)) \leq C}\ll 3^{n/100},\]
showing the claim.
\end{proof}
    
    But there exists a parabolic fixed point $p_g=0$ of multiplicity $2k.$ This example demonstrates that it is possible to find $x\in\SS^1\sm\Lambda$ such that $\frac{k(x)}{k(x)+1}>\dim_{\mr H}\Lambda.$ By our discussions on generalized dynamical critical exponents, $\delta(G,\SS^1)>\delta(G)=\dim_{\mr H}\Lambda$ in this example.
\end{example}


\appendix
\section{An example of dimension $1$ exceptional minimal set in the $C^\infty$-setting}
\label{sec:A}

In this appendix, we construct an example showing the necessity of the real analytic regularity assumption in the first item of the Main Theorem. 
\begin{proposition}\label{prop: dim 1 exmp}
There exists a finitely generated subgroup $G\sbs\Diff_+^\infty(\SS^1)$ with an exceptional minimal set $\Lambda$ of Hausdorff dimension $\dimH\Lambda=1.$
\end{proposition}
The construction is inspired by the third item in the Main Theorem.
It suffices to see that, under $C^\infty$-regularity, there might be a fixed point of infinite multiplicity in the exceptional minimal set. 
Infinite multiplicity will lead to a different decay rate of the derivatives as (2) in the following lemma, comparing to the one in the real analytic case given by Proposition \ref{prop: derivative around parabolic}.
\begin{lemma}\label{lem: appd lem cal}
There is an odd $C^\infty$-function $\phi: [-1/100, 1/100]\to (-1/100, 1/100)$ 
such that
\begin{enumerate}
    \item $\phi'(0)=1$ and $|\phi(x)|<|x|$ for every $x\ne 0$.
    \item $(\phi^n)'(x)\gg \frac{1}{n(\ln n)^2}$ for every $x\in [-1/100,1/100]$ and every $ n\geq 1.$
\end{enumerate}
\end{lemma}
\begin{proof}[Proof of Proposition \ref{prop: dim 1 exmp} assuming Lemma \ref{lem: appd lem cal}] We take four open intervals $U_1^\pm, U_2^\pm$ on $\SS^1$ with length $1/10$ whose closures are pairwise disjoint. Let $a_1\in U_1^+$ such that $B(a_1, 1/50)\sbs U_1^+$ and 
$f_0:B(a_1,1/100)\to \SS^1$ be $f_0(x)=\phi(x-a_1)+a_1$ for $\phi$ in Lemma \ref{lem: appd lem cal}. 
Then $f_0$ is $C^\infty$-differentiable with $a_1$ as the unique fixed point, which is topologically attracting. To construct $G$ with a pingpong dynamics, we take   $f_1,f_2\in\Diff_+^\infty(\SS^1)$ satisfying the following properties.
    \begin{enumerate}
        \item $f_1|_{B(a_1,1/100)}=f_0.$
        \item For each $i=1,2,$ $f_i$ has only two fixed points: one topologically attracting fixed point $a_i\in U_i^+$ and one repelling fixed point $r_i\in U_i^-.$
        \item For $i=1,2,$ $f_i(\SS^1\sm U_i^-)\sbs U_i^+$ and $f_i^{-1}(\SS^1\sm U_i^+)\sbs U_i^-.$
    \end{enumerate}
   
   Let $G\defeq\pair{f_1,f_2}\sbs\Diff_+^\infty(\SS^1)$. By (2) and (3), $G$ is a free group freely generated by $f_1,f_2$, locally discrete and preserves an exceptional minimal set $\Lambda\sbs U_1^+\cup U_1^-\cup U_2^+\cup U_2^-.$
Since $f_1^n(x)\to a_1$ for every $x\ne r_1$ as $n\to +\infty,$ we have $a_1\in\Lambda.$ 
    By Theorem~\ref{thm: C1 dynamical critical exponent} (2) combined with Lemma~\ref{lem: union critical exp} (5), we have
    \[\dimH\Lambda\geqslant\delta(G)=\delta(G,a_1)\geqslant\delta(\pair{f_1},a_1).\]
    By Lemma \ref{lem: appd lem cal}, we have $(f_1^n)'|_{B(a_1,1/100)}\gg 1/n(\ln n)^2$. Therefore 
    \[\delta(\pair{f_1},a_1)\geqslant \limsup_{m\to+\infty}\frac{1}{m}\log\# \lb{n\geqslant 0: (f_1^n)'|_{B(a_1,1/100)}\geqslant 2^{-m} }=1. \]
  which implies $\dimH\Lambda=1.$
\end{proof}
\begin{proof}[Proof of Lemma \ref{lem: appd lem cal}]
Let $\phi:[-1/100,1/100]\to [-1/100,1/100]$ be an odd function given by 
    \[\phi(x)=\case{&x-x^2e^{-1/x},& x>0;\\ &0,&x=0; \\&-\phi(-x),& x<0.} \]
    Then $\phi$ is $C^\infty$-differentiable, $\phi'(0)=1$ and $|\phi(x)|<|x|$ for every $x\ne 0.$

    To verify (2) in the lemma, we make use of the distortion control arguments in Section \ref{sec: 10.1}. Let $x_0=1/100$ and $x_n=\phi^n(x_0).$
    Take $h(x)=e^{1/x}$ and let $z_n=h(x_n).$ We consider
    \[\wt \phi(z)\defeq h\circ \phi\circ h^{-1}(z)=\exp\mb{\sb{\frac{1}{\ln z}- \frac{1}{z(\ln z)^2}}^{-1} },\quad z\geqslant z_0=e^{100}.\]
    Then $z_n={\wt \phi}^{n}(z_0).$
    Note that the sequence $\lb{z_n}_{n\geqslant 0}$ is increasing to $+\infty$ and $\lim_{z\to+\infty} \wt \phi(z)-(z+1)=0$.
    We have $|z_{n+1}-z_n| \to 1$, whence $z_n=(1+o(1))n$.
    It follows that
    \[x_n=\frac{1}{\ln ( (1+o(1))n)},\text{ and } x_n-x_{n+1}=x_n^2e^{-1/x_n}=\frac{1 }{(1+o(1))n\cdot [\ln ( (1+o(1))n)]^2 }.\]
    Applying Lemma \ref{lem: distortion along orbit} (which holds for general $C^2$-functions $g$), we have
    \begin{equation*}\label{eqn: infty multiple fixed point}
        (\phi^n)'(x)\asymp \frac{\phi^n(x_m)-\phi^n(x_{m+1})}{x_m-x_{m+1}} = \frac{x_{m+n}-x_{m+n+1} }{x_m-x_{m+1}}\gg \frac{1}{n(\ln n)^2},\quad\forall x\in[x_{m+1},x_m],
    \end{equation*}
    for every $n\geqslant 1$, $m\geqslant 0$.
    Since $(0,1/100]=\bigcup_{m\geqslant 0}[x_{m+1},x_m]$ and $\phi$ is odd, we obtain (2).
\end{proof}

\subsection*{Acknowledgement}
W. H. is supported by the National Key R\&D Program of China (No. 2022YFA1007500) and the National Natural Science Foundation of China (No. 12288201). 
D. X. is supported by NSFC grant 12090015. Part of this work was done during D. X. visiting AMSS and the University of Chicago, D. X. thanks AMSS and the University of Chicago for their hospitality during the visit. 
We thank A. Avila for useful discussions and the suggestions to study Theorem \ref{thm: ACW approximation}. We thank A. Wilkinson for useful suggestions and pointing out an earlier version of Theorem \ref{cor: orbit closure classifications} can be improved. We thank De-Jun Feng, Shaobo Gan, Fran\c cois Ledrappier, Pablo Lessa, Jialun Li, Minghui Ouyang, Wenyu Pan, Yi Shi, Wenyuan Yang for useful discussions. 

\begin{small}

\addcontentsline{toc}{section}{References}
\bibliographystyle{abbrv}
\bibliography{ref.bib}

\vspace{0.3cm}

\noindent Weikun He\\
Institute of Mathematics, Academy of Mathematics and System Science, Chinese Academy of Sciences\\
No.55 Zhongguancun East Road, Haidian District, Beijing, China, 100190\\
heweikun@amss.ac.cn

\vspace{0.3cm}

\noindent Yuxiang Jiao\\
Beijing International Center for Mathematical Research, Peking University\\
No.5 Yiheyuan Road, Haidian District, Beijing, China, 100871\\
ajorda@pku.edu.cn

\vspace{0.3cm}

\noindent Disheng Xu\\
Department of Science, Great Bay University\\
Building A5, Songshanhu international community, Dongguan, Guangdong, China, 523000\\
xudisheng@gbu.edu.cn

\end{small}

\end{document}